\documentclass[11pt]{amsart}

\usepackage[a4paper,margin=1.05in]{geometry}
\usepackage{amsmath,amssymb,amsthm,mathtools}
\usepackage{mathrsfs}
\usepackage{enumitem}
\usepackage{hyperref}
\usepackage{microtype}
\usepackage{tikz}
\usetikzlibrary{positioning,arrows.meta,calc}
\usepackage{longtable,array}
\usepackage[table]{xcolor}

\hypersetup{
  colorlinks=true,
  linkcolor=blue,
  citecolor=blue,
  urlcolor=blue
}

\numberwithin{equation}{section}
\theoremstyle{plain}
\newtheorem{theorem}{Theorem}[section]
\newtheorem{proposition}[theorem]{Proposition}
\newtheorem{lemma}[theorem]{Lemma}
\newtheorem{corollary}[theorem]{Corollary}

\theoremstyle{definition}
\newtheorem{definition}[theorem]{Definition}
\newtheorem{remark}[theorem]{Remark}

\newcommand{\C}{\mathbb{C}}
\newcommand{\R}{\mathbb{R}}
\newcommand{\N}{\mathbb{N}}
\newcommand{\D}{\mathbb{D}}
\newcommand{\Z}{\mathbb{Z}}
\newcommand{\T}{\mathbb{T}}
\newcommand{\dd}{\,d}

\newcommand{\im}{\mathrm{Im}}
\newcommand{\re}{\mathrm{Re}}

\newcommand{\norm}[1]{\left\|#1\right\|}

\newcommand{\al}{\alpha}

\newcommand{\Mp}{M_p}

\newcommand{\Fpq}{\mathcal{F}^{p,q}_{\al}}

\newcommand{\FpqA}[3]{\mathcal{F}^{#1,#2}_{#3}}

\title[Sharp Gaussian mixed-norm theory]{Sharp Gaussian Mixed-Norm Theory for Fock Spaces}

\author{Xiang Fang}
\address{National Yang Ming Chiao Tung University, Taiwan}
\email{xfang@nycu.edu.tw}

\author{Pham Trong Tien}
\address{VNU University of Science, Vietnam National University, Hanoi}
\email{phamtien@vnu.edu.vn}

\date{\today}

\subjclass[2020]{Primary 30H20, 32A15; Secondary 42B35, .}
\keywords{Gaussian mixed norms, Fock spaces, entire functions,
integral means, local mass principle, annular discretization, atomic decomposition,
Carleson measures, duality.}

\begin{document}

\begin{abstract}
We develop a structural theory of the Gaussian mixed-norm Fock spaces
\(\mathcal F_{\alpha}^{p,q}\), where \(\alpha>0\) and
\(0<p,q\leq\infty\), which separate angular \(L^p\)-means from radial
Gaussian \(L^q\)-summability. We first prove the sharp circle-mean estimate
\begin{equation*}
M_p(f,r)
\leq
C_{\alpha,q}(1+r)^{-\frac{1}{q}}
e^{\frac{\alpha}{2} r^2}
\lVert f\rVert_{\mathcal F_{\alpha}^{p,q}},
\qquad r\geq 0,
\end{equation*}
where \(M_p(f,r)\) denotes the angular \(L^p\)-mean of \(f\)
on the circle \(\lvert z\rvert=r\)
with the convention \(\frac{1}{\infty}=0\), and show that the exponent
\(-\frac{1}{q}\) is optimal. The proof is based on a local mass principle for
convex Laplace-type exponents, which also yields an annular
discretization of the mixed Gaussian norm.

Using this discretization, we characterize the continuous embeddings
between mixed-norm Fock spaces and obtain an atomic decomposition in
terms of normalized Fock kernels, with coefficients belonging to a
corresponding annular mixed sequence space. We further characterize
Carleson and vanishing Carleson measures and identify the continuous
duals in the Banach, quasi-Banach, and endpoint regimes. For \(p<1\),
the atomic decomposition relies on a quantitative re-centering theorem
that controls expansions of normalized Fock kernels under perturbations
of their centers. This theorem is of independent interest even for the
Hilbert Fock space \(\mathcal F_{\alpha}^{2}\).
\end{abstract}

\maketitle

\tableofcontents


\section{Introduction and main results}\label{sec:intro}

\subsection{Motivation}
\label{subsec:intro-background}

The growth of integral means on circles is one of the classical quantitative
invariants of analytic function theory.  If \(f\) is
analytic in the unit disk \(\mathbb D\), then for \(0<p<\infty\) one writes
\[
M_p(f,r)
 :=
 \left(\frac1{2\pi}\int_0^{2\pi}|f(re^{i\theta})|^p\,\dd\theta\right)^{\frac{1}{p}},
 \qquad 0<r<1,
\]
with the usual supremum modification when \(p=\infty\).  The asymptotic
behavior of \(M_p(f,r)\) as \(r\to1^{-}\) has been a central part of classical
function theory for more than a century.  For instance, in the Hardy space
\(H^p\) one has the sharp estimate
\[
M_p(f,r)\le \|f\|_{H^p},\qquad 0<r<1,
\]
whereas in the unweighted Bergman space \(A^p\), \(0<p<\infty\), the sharp
circle-mean estimate is
\[
M_p(f,r)\lesssim (1-r)^{-\frac{1}{p}}\|f\|_{A^p},
\qquad 0<r<1.
\]
The precise mean-growth rate is one of the standard entry points to the
structure theory of analytic function spaces on the disk, including Hardy
spaces \cite{DurenHp,KoosisHp}, Bergman spaces \cite{DS04, HKZ00}, and related
spaces.

\smallskip
\noindent
For entire functions, the corresponding optimal circle-mean problem has a
different geometry and scale.  
The exact polynomial correction does not appear to have been made explicit even
for the classical Fock space \(\mathcal F_\alpha^p\). In that setting the Gaussian
weight determines the leading exponential growth, and the estimate commonly
used in the literature gives
\[
M_p(f,r)\lesssim
e^{\frac{\alpha}{2}r^2}\|f\|_{\mathcal F_\alpha^p},
\qquad r\geq 0.
\]
The exponential factor \(e^{\frac{\alpha}{2} r^2}\) is optimal; see, for instance,
\cite[Corollary~2.8]{Zhu}.

\smallskip
\noindent
The starting point of the present paper is that the
optimal circle-mean growth contains a further polynomial correction:
\[
M_p(f,r)\lesssim
(1+r)^{-\frac{1}{p}}e^{\frac{\alpha}{2}r^2}
\|f\|_{\mathcal F_\alpha^p},
\qquad r\ge 0,
\]
and this polynomial factor is sharp.

\smallskip
\noindent
Such a polynomial correction becomes  more striking when viewed within the mixed-norm Fock scale
\(\mathcal F_\alpha^{p,q}\), where \(\alpha>0\) and \(0<p,q\le\infty\).
In this   scale the   estimate becomes
\[
M_p(f,r)\lesssim
(1+r)^{-\frac{1}{q}}e^{\frac{\alpha}{2}r^2}
\|f\|_{\mathcal F_\alpha^{p,q}},
\qquad r\ge 0,
\]
with the convention \(\frac{1}{\infty}=0\).  Thus the correction is independent of \(p\).

\smallskip
\noindent
From the perspective of disk theory, where sharp mean-growth estimates are
structural, determining this rate in Gaussian spaces of entire functions is a
natural problem.
This identifies the correct scaling geometry associated with mean growth and, together
with the methods developed below, leads to the structural results proved in this
paper for mixed-norm Fock spaces.

\subsection{Background}

The theory of Banach and quasi-Banach spaces of analytic functions on the unit
disc has developed over more than a century into a rich and highly structured
subject, exemplified by the classical theories of Hardy, Bergman, and Dirichlet
spaces and by their many weighted and mixed-norm extensions.  For entire
functions, Gaussian Fock spaces provide a natural whole-plane counterpart.
Although a substantial theory has been developed, particularly in the Hilbert
setting, its sharp, quasi-Banach, and endpoint structure is not comparably
complete; in particular, the sharp circle-mean growth problem treated in this
paper is not supplied by the existing theory even on the classical diagonal
scale.  General background on Fock spaces may be found in Zhu's
monograph~\cite{Zhu}.  Through the Bargmann--Fock model, these spaces also
arise naturally in phase-space harmonic analysis and mathematical physics; see
Folland~\cite{Folland89}.

\smallskip
\noindent
The mixed-norm scale is intrinsic to the radial--angular structure of an entire
function.  For each radius \(r\), the exponent \(p\) measures the angular
distribution of \(f\) along the circle \(\lvert z\rvert=r\), whereas the exponent
\(q\) measures how the resulting circle means accumulate as \(r\) varies.  These
are geometrically different operations.  The classical diagonal space
\(\mathcal F_\alpha^p=\mathcal F_\alpha^{p,p}\) ties them to a single exponent,
while \(\mathcal F_\alpha^{p,q}\) allows them to vary independently.  This
separation is not merely formal.  One of the principal results of the present
paper shows that the optimal polynomial correction to Gaussian circle-mean
growth is
\((1+r)^{-\frac{1}{q}}\): it is governed by the outer radial exponent \(q\), rather
than by the angular exponent \(p\).  The mixed scale therefore detects
information that is invisible when one restricts attention to the diagonal
family.

\smallskip
\noindent
The general mixed-norm point of view goes back to Benedek and
Panzone~\cite{BP61}.  In analytic function theory, radial--angular mixed norms
have appeared in the study of Hardy, Bergman, Lipschitz, and related spaces;
see, for example, Flett~\cite{Flett72}, Pavlovi\'c~\cite{Pav86,Pav87},
Blasco~\cite{B95}, and Buckley~\cite{Buckley00}.  At the level of radial
aggregation, the endpoint \(q=\infty\) in the present scale is analogous to the
Hardy norm, since it takes a supremum of angular \(L^p\)-means over the radial
parameter; see Duren~\cite{DurenHp} for the classical Hardy theory.  For
\(0<q<\infty\), mixed-norm Bergman spaces provide the closest disc analogue at
the level of norm structure.

\smallskip
\noindent
The classical Bergman theory is extensively developed in the monographs of
Hedenmalm, Korenblum and Zhu~\cite{HKZ00} and Duren and
Schuster~\cite{DS04}.  In the mixed-norm Bergman setting, embeddings,
projections, duality, atomic decompositions, Carleson measures, derivative
characterizations, and Littlewood--Paley type formulas have been studied in
considerable depth.  Relevant contributions include the work of
Jevti\'c~\cite{Jev87}, Gadbois~\cite{Gad88}, Gu~\cite{Gue92},
Pavlovi\'c and Pel\'aez~\cite{PP08}, Pel\'aez, R\"atty\"a and
Sierra~\cite{PRS2019}, and Moreno and Pel\'aez~\cite{MP26}.  This body of
work demonstrates that separating angular integrability from radial
summability leads to structural questions in which the two exponents interact
in an essential way.

\smallskip
\noindent
The Fock setting is not, however, a routine whole-plane transcription of the
Bergman setting.  Bergman-space localization is adapted to the boundary and to
Bergman or pseudohyperbolic geometry.  Fock-space localization takes place at
infinity and is governed instead by Euclidean geometry, Gaussian weights, and
Fock translations.  The spaces considered here combine this fixed-scale local
Euclidean--Gaussian geometry with global \(\ell^q\)-type accumulation across
annuli.  A successful structural theory must respect both mechanisms
simultaneously.  In particular, estimates that are local in the Euclidean
metric must be assembled without losing the outer radial summability, while
quasi-Banach exponents prevent a direct reliance on convexity or ordinary
duality.

\smallskip
\noindent
Several aspects of mixed-norm Fock spaces have already appeared in the
literature.  Random analytic functions in Fock-type mixed norms were studied
in~\cite{FT23}; growth and boundedness questions for Hausdorff-type operators
appear in~\cite{BG24}; Constantin and Pel\'aez~\cite{CP16} investigated
Littlewood--Paley type phenomena; and Liu~\cite{Liu24} developed a
projection-theoretic treatment of Fock projections on mixed-norm spaces.  These
works exhibit the relevance of the scale, but they do not provide a unified
structural theory throughout the full range
\(0<p,q\leq\infty\).

\smallskip
\noindent
The purpose of the present paper is to construct such a framework.  Its two
basic mechanisms are a local mass principle for convex Laplace-type exponents
and a quantitative re-centering principle for normalized Fock kernels.  The
first yields both the sharp circle-mean estimate and an annular discretization
of the mixed Gaussian norm.  The second supplies the stability needed to build
kernel expansions in the angular quasi-Banach range.  Together these mechanisms
lead to the embedding theorem, an atomic decomposition with an annular mixed
sequence model, Carleson and vanishing Carleson measure criteria, and duality,
including the quasi-Banach and endpoint regimes.  Thus the results form a
single structural theory rather than a collection of parallel extensions, and
they provide the basic function-space infrastructure needed for a systematic
operator theory on Gaussian mixed-norm Fock spaces.

\subsection{Mixed-norm Fock spaces and standing notation}\label{subsec:intro-def}

We fix the definitions used throughout.
Fix \(\alpha>0\) and \(0<p,q\le\infty\). For an entire function \(f\) and
\(r \geq 0\), let
\[
\Mp(f,r):=
\begin{cases}
\displaystyle
\left(\frac1{2\pi}\int_0^{2\pi}|f(re^{i\theta})|^p\,\dd\theta\right)^{\frac1p},
& 0<p<\infty,\\[0.8em]
\displaystyle
\sup_{\theta\in[0,2\pi]} |f(re^{i\theta})|,
& p=\infty.
\end{cases}
\]
Define the mixed quasi-norm
\[
\|f\|_{\Fpq}:=
\begin{cases}
\displaystyle
\left(\int_0^\infty \Mp^q(f,r)\,\dd\lambda_{\alpha q}(r)\right)^{\frac1q},
& 0<q<\infty,\\[1em]
\displaystyle
\sup_{r\geq 0}\Mp(f,r)e^{-\frac{\alpha}{2}r^2},
& q=\infty,
\end{cases}
\]
where
\[
\dd\lambda_{\alpha q}(r):=\alpha q e^{-\frac{\alpha q}{2}r^2}r\,\dd r .
\]

\smallskip
\noindent
The mixed-norm Fock space \(\mathcal F_\alpha^{p,q}\) consists of  entire
functions \(f\) with \(\|f\|_{\Fpq}<\infty\).  It is Banach if \(p,q\ge1\) and
otherwise quasi-Banach.  For \(p=q\) it is the classical Fock space
\(\mathcal F_\alpha^p\).

\smallskip
\noindent
\textbf{The little endpoint space.}
We define the \emph{little mixed-norm Fock space}
\(f_\alpha^{p,\infty}\) as the space of all entire functions \(f\) satisfying
\[
\Mp(f,r)=o\!\left(e^{\frac{\alpha}{2}r^2}\right),
\qquad r\to\infty .
\]
Then \(f_\alpha^{p,\infty}\) is a closed subspace of
\(\mathcal F_\alpha^{p,\infty}\). Moreover, \(f_\alpha^{p,\infty}\) coincides
with the closure of the polynomials in \(\mathcal F_\alpha^{p,\infty}\)
(see Corollary~\ref{cor:density-poly} below). In particular, for \(p=\infty\)
we recover the little Fock space \(f_\alpha^\infty\); see \cite[p.~39]{Zhu}.

\smallskip
\noindent
\textbf{Fock kernels.}
We use the standard Fock kernels
\[
K_{\alpha,w}(z):=e^{\alpha\overline w z},
\qquad
\kappa_{\alpha,w}(z):=
e^{\alpha\overline w z-\frac{\alpha}{2}|w|^2},
\qquad z,w\in\C.
\]
Thus \(\kappa_{\alpha,w}\) is the normalized Fock kernel in every classical
Fock space \(\mathcal F_\alpha^p\).  We write
\[
\widetilde\kappa_{\alpha,w}^{\,p,q}
:=
\frac{\kappa_{\alpha,w}}
{\|\kappa_{\alpha,w}\|_{\mathcal F_\alpha^{p,q}}}
\]
for the mixed-norm-normalized Fock kernel in
\(\mathcal F_\alpha^{p,q}\).  When \(0<p,q\le\infty\) are fixed, we abbreviate
this to \(\widetilde\kappa_{\alpha,w}\).  When \(\alpha,p,q\) are fixed, we
write
\[
K_w:=K_{\alpha,w},
\qquad
\kappa_w:=\kappa_{\alpha,w},
\qquad
\widetilde\kappa_w:=\widetilde\kappa_{\alpha,w}.
\]

\smallskip
\noindent
We use the following notation and conventions.

\smallskip
\noindent
\textbf{Basic notation.}
We write
\[
\N_0:=\{0,1,2,\ldots\}.
\]
The complex plane is denoted by \(\C\), and \(\dd A\) denotes Lebesgue area
measure on \(\C\).  We write
\[
\T:=\{e^{i\theta}:0\leq \theta<2\pi\}
=\{z\in\C:|z|=1\}
\]
and regard \(L^p(\T)\) as taken with respect to the normalized angular measure
\(\dd\theta/(2\pi)\).  Functions on \(\T\) are identified with
\(2\pi\)-periodic functions of \(\theta\); in particular, we write
\(a(\theta)\) for \(a(e^{i\theta})\) when no confusion is possible.  For
\(z\in\C\) and \(R>0\), \(B(z,R)\) denotes the open Euclidean ball with center
\(z\) and radius \(R\).  The space \(H(\C)\) denotes the space of entire
functions, endowed with the compact-open topology.

\smallskip
\noindent
\textbf{Parameters and conjugate exponents.}
We use \(\alpha,\beta>0\) for Gaussian parameters and
\(p,q\in(0,\infty]\) for exponents.  In \(\mathcal F_\alpha^{p,q}\), the
exponent \(p\) refers to angular circle means and \(q\) to radial summability.
The same exponent notation is used for the classical spaces
\(\mathcal F_\alpha^p\) and for the sequence spaces \(\ell^{p,q}\) and
\(\ell_w^{p,q}\).  Indexed exponents, such as \(p_1,p_2,q_1,q_2\), are used
when comparing different spaces.  Throughout,
\[
\frac{1}{\infty}:=0.
\]

\smallskip
\noindent
For \(0<p\leq\infty\), set
\[
p^*:=
\begin{cases}
\infty, & 0<p\leq 1,\\[0.3em]
\dfrac{p}{p-1}, & 1<p<\infty,\\[0.8em]
1, & p=\infty.
\end{cases}
\]
Thus \(p^*\) is the usual H\"older conjugate when \(1<p<\infty\), with the
above convention used in the quasi-Banach and endpoint cases.

\smallskip
\noindent
\textbf{Annular notation.}
For \(k\in\N_0\), we write
\[
A_k:=\{z\in\C:k\le |z|<k+1\}
\]
for the \(k\)-th annular layer; when needed, we use the convention
\(A_{-1}:=\emptyset\).
When the Gaussian parameter \(\alpha>0\) is fixed, we also write
\[
B_p(f;k):=
\sup_{r\in[k,k+1)}
M_p(f,r)e^{-\frac{\alpha}{2}r^2},
\qquad k\in\N_0.
\]

\smallskip
\noindent
\textbf{Comparison notation.}
Throughout the paper, \(C\) denotes a positive constant whose value may change
from line to line.  For nonnegative quantities \(X\) and \(Y\), we write
\(X\lesssim Y\), or equivalently \(Y\gtrsim X\), if \(X\leq CY\), where \(C\)
is independent of the varying functions, sequences, measures, indices, and
points under consideration.  The constant may depend on fixed structural
parameters; any required uniformity in such parameters will be stated
explicitly.  We write \(X\asymp Y\) if both \(X\lesssim Y\) and
\(Y\lesssim X\) hold.

\subsection{Main results and proof highlights}\label{sec:intro-main}

We state the main results and indicate their proofs.  The local mass principle
gives the circle-mean estimate and the annular discretization.  These lead to
the embedding theorem.  The atomic decomposition gives an annular coefficient
model, which is the main input for the Carleson measure theory.  The duality
theorem is proved by kernel testing, using the same growth and annular
estimates.

\subsubsection{Sharp Gaussian circle-mean growth}
The first result is the sharp circle-mean estimate.

\begin{theorem} 
\label{thm:estimate}
Let \(\alpha>0\) and \(0<p,q\le\infty\).  Then, for every
\(f\in\mathcal F_\alpha^{p,q}\),
\begin{equation}
\label{eq-newest}
M_p(f,r)
\lesssim
(1+r)^{-\frac1q}e^{\frac{\alpha}{2}r^2}
\|f\|_{\mathcal F_\alpha^{p,q}},
\qquad r\ge0,
\end{equation}
where the implicit constant depends only on \(\alpha\) and \(q\).
\end{theorem}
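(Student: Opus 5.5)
Write \(\phi(r):=\log M_p(f,r)\) and \(u(r):=\phi(r)-\frac{\alpha}{2}r^2\). The plan is to show that if \(u\) is large at some radius \(r_0\), then it stays nearly as large on an interval of length \(\asymp 1/(1+r_0)\) to the right of \(r_0\). Integrating \(e^{qu}\) against \(\alpha q\,r\,\dd r\) over that interval then gives mass \(\gtrsim (1+r_0)\cdot\frac{1}{1+r_0}\cdot(1+r_0)\,e^{q u(r_0)}\), which is exactly the claimed bound.

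The case \(q=\infty\) is immediate from the definition, since \(M_p(f,r)e^{-\frac{\alpha}{2}r^2}\le\|f\|_{\mathcal F^{p,\infty}_\alpha}\). For small \(r\), say \(r\le 1\), it suffices to bound \(M_p(f,r)\) by \(\|f\|\) up to a constant. Monotonicity of \(M_p(f,\cdot)\) gives \(M_p(f,r)\le M_p(f,s)\) for \(s\in[1,2]\), and averaging \(M_p^q(f,s)\) over \(s\in[1,2]\) against \(\dd\lambda_{\alpha q}\), whose density is bounded below there, yields \(M_p(f,r)\lesssim\|f\|\).

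The main step uses Hardy's convexity theorem, valid for all \(0<p\le\infty\): \(\phi\) is nondecreasing and is a convex function of \(t=\log r\). With \(\psi(t):=\phi(e^t)\) convex and nondecreasing, set \(t_0=\log r_0\) and \(t=t_0+h\). Convexity gives \(\psi(t)\ge\psi(t_0)+\psi'_-(t_0)\,h\ge\psi(t_0)\) for \(h\ge0\), using only monotonicity. Hence for \(r\in[r_0,r_0+\delta]\),
\[
u(r)\ge u(r_0)-\frac{\alpha}{2}\bigl(r^2-r_0^2\bigr)\ge u(r_0)-\frac{\alpha}{2}\,\delta\,(2r_0+\delta).
\]
Choosing \(\delta=1/(1+r_0)\) makes the loss bounded by an absolute constant times \(\alpha\). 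Therefore
\[
\|f\|^q\ge\int_{r_0}^{r_0+\delta} e^{q u(r)}\,\alpha q\, r\,\dd r\gtrsim e^{q u(r_0)}\,\delta\, r_0\asymp e^{q u(r_0)},
\]
so this gives only \(M_p(f,r_0)\lesssim e^{\frac{\alpha}{2}r_0^2}\|f\|\), the standard estimate without the gain. Monotonicity alone is therefore too weak, and the real obstacle is to obtain an interval of length \(\asymp 1\) rather than \(\asymp 1/r_0\).

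For this I will exploit convexity in \(t\) together with the slope. Let \(\sigma:=\psi'_-(t_0)\), so that \(\phi(r)\ge\phi(r_0)+\sigma\log(r/r_0)\) for all \(r>0\), on both sides of \(r_0\). The competitor \(\frac{\alpha}{2}r^2\) has log-slope \(\alpha r_0^2\) at \(r_0\). Writing \(r=r_0+s\),
\[
u(r)-u(r_0)\ge \sigma\log\!\Bigl(1+\frac{s}{r_0}\Bigr)-\alpha r_0 s-\frac{\alpha}{2}s^2 .
\]
If \(\sigma\ge\alpha r_0^2\), take \(s\in[0,1]\). Then \(\sigma\log(1+s/r_0)\ge\alpha r_0^2\bigl(s/r_0-s^2/(2r_0^2)\bigr)=\alpha r_0 s-\frac{\alpha}{2}s^2\), so the loss is at most \(\alpha s^2\le\alpha\). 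If \(\sigma<\alpha r_0^2\), take \(s\in[-1,0]\) (for \(r_0\ge2\)). Since \(\log(1+s/r_0)<0\), we get \(\sigma\log(\cdots)\ge\alpha r_0^2\log(1+s/r_0)\ge\alpha r_0 s-C\alpha s^2\), again a bounded loss. In either case \(u\ge u(r_0)-C\alpha\) on an interval of length \(1\) adjacent to \(r_0\), where \(r\asymp r_0\). This gives
\[
\|f\|^q\gtrsim e^{q u(r_0)}\,r_0,
\]
which is the claimed estimate with constant depending on \(\alpha,q\). This dichotomy in the slope, which is the "local mass principle for convex Laplace-type exponents", is the crux; the remaining checks are the elementary inequality \(\log(1+x)\ge x-Cx^2\) for \(|x|\le\frac12\) and the case \(M_p(f,\cdot)\equiv0\), which is trivial.
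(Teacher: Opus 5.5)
Your proof is correct and is essentially the paper's argument. Both use Hardy's convexity of $\log M_p(f,e^t)$ to show that $M_p^q(f,r)e^{-\frac{\alpha q}{2}r^2}$ drops by at most a bounded factor on a radial window of length $\asymp 1$ at $r_0$. The paper does this symmetrically via $\Phi(x+t)+\Phi(x-t)\ge 2\Phi(x)$ and AM--GM (Lemma~\ref{lem:uniform-local-mass-convex}), whereas you use a supporting line and pick the side by comparing its slope with $\alpha r_0^2$.

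One bookkeeping gap remains. Your left-sided case needs $r_0\ge 2$, but your small-radius argument only covers $r\le 1$, so the range $1<r_0<2$ is not handled. Either average over $[2,3]$ to cover all $r\le 2$, or use the window $[r_0-\frac12,r_0]$ when $1\le r_0<2$.
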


\smallskip
\noindent
\textbf{Proof highlight.}
The proof applies the
local mass principle in Lemma~\ref{lem:uniform-local-mass-convex} to convex
Laplace-type exponents
\[
F(x)=\Phi(x)-ae^{2x}+bx.
\]
With
\[
\Phi(x)=q\log M_p(f,e^x),\qquad
F(x)=q\log M_p(f,e^x)-\frac{\alpha q}{2}e^{2x}+2x,
\]
this converts the global radial norm into the pointwise bound with factor
\((1+r)^{-\frac{1}{q}}\).  The case \(q=\infty\) follows directly from the definition.

\smallskip
\noindent
The same local mass principle yields the annular discretization of the mixed
norm, the other basic tool used throughout the paper. Sharpness is
proved by monomial tests in Lemma~\ref{lem-sh}.

\subsubsection{Embeddings in the Gaussian mixed-norm scale}

The embedding theorem is as follows.

\begin{theorem} 
\label{thm:embeddings}
Let \(0<\alpha,\beta<\infty\) and
\(0<p_1,p_2,q_1,q_2\leq\infty\). Then the inclusion
\(
\mathcal F_{\alpha}^{p_1,q_1}\subset \mathcal F_{\beta}^{p_2,q_2}
\)
is continuous if and only if either
\begin{enumerate}
\item[\textup{(i)}] \(\alpha<\beta\), or
\item[\textup{(ii)}] \(\alpha=\beta\), \(q_1\leq q_2\), and
\[
\frac{1}{q_2}-\frac{1}{p_2}
\leq
\frac{1}{q_1}-\frac{1}{p_1}.
\]
\end{enumerate}
\end{theorem}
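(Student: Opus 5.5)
We prove the embedding theorem in two halves: sufficiency and necessity. Sufficiency rests on Theorem~\ref{thm:estimate} and on the annular discretization that the local mass principle yields,
\[
\|f\|_{\mathcal F_\alpha^{p,q}}\asymp\Bigl(\sum_{k\ge0}(k+1)\,B_p(f;k)^q\Bigr)^{1/q}
\]
(with the supremum when \(q=\infty\)). Necessity rests on testing against monomials and against sums of normalized kernels.

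\textbf{Sufficiency, case (i).} Suppose \(\alpha<\beta\). Theorem~\ref{thm:estimate} gives \(M_{p_1}(f,r)\lesssim e^{\frac{\alpha}{2}r^2}\|f\|\). We need to pass from \(M_{p_1}\) to \(M_{p_2}\). If \(p_2\le p_1\), Hölder on the normalized circle gives \(M_{p_2}\le M_{p_1}\). If \(p_2>p_1\), we use a subharmonic estimate: \(|f(z)|^{p_1}\) is bounded by its average over \(B(z,1)\), which in polar coordinates involves \(M_{p_1}(f,s)\) for \(|s-r|\le1\) integrated against an angular factor of size \(\lesssim r\). This yields
\[
M_\infty(f,r)\lesssim (1+r)^{C}\sup_{|s-r|\le1}M_{p_1}(f,s)\lesssim (1+r)^Ce^{\frac{\alpha}{2}(r+1)^2}\|f\|.
\]
The gap \(e^{\frac{\alpha-\beta}{2}r^2}\) absorbs every polynomial and \(e^{\alpha r}\) factor, so the \(\mathcal F_\beta^{p_2,q_2}\) integral converges for every \(q_2\).

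\textbf{Sufficiency, case (ii).} Suppose \(\alpha=\beta\). By discretization it suffices to bound \(\sum_k (k+1)B_{p_2}(f;k)^{q_2}\) in terms of the corresponding \(p_1,q_1\) quantities. The key local step is
\[
B_{p_2}(f;k)\lesssim (k+1)^{\frac1{p_1}-\frac1{p_2}}\,\widetilde B_{p_1}(f;k) \quad\text{when } p_2>p_1,
\]
and \(B_{p_2}\le B_{p_1}\) when \(p_2\le p_1\). Here \(\widetilde B\) is \(B\) taken over the neighbouring annuli \(k-1,k,k+1\). To prove the key step, split the circle \(|z|=r\approx k\) into \(\asymp k\) arcs of unit length. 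On each arc, local subharmonicity against the Gaussian (via \(\kappa\)-translation) makes the angular \(L^{p_2}\) mean comparable to an \(\ell^{p_2}\) sum over \(\asymp k\) local sup-values, while the \(L^{p_1}\) mean corresponds to an \(\ell^{p_1}\) sum. The normalized measure weights each arc by \(1/k\), so \(\ell^{p_1}\subset\ell^{p_2}\) with normalization produces the factor \(k^{1/p_1-1/p_2}\). Writing \(c_k=(k+1)^{1/q_1}\widetilde B_{p_1}(f;k)\in\ell^{q_1}\), we get
\[
(k+1)^{1/q_2}B_{p_2}(f;k)\lesssim (k+1)^{\frac1{q_2}-\frac1{p_2}-\frac1{q_1}+\frac1{p_1}}\,c_k .
\]
The power is \(\le0\) by hypothesis (in the case \(p_2\le p_1\), use \(1/q_2\le 1/q_1\) instead). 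Finally \(\ell^{q_1}\subset\ell^{q_2}\) because \(q_1\le q_2\).

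\textbf{Necessity.} Suppose the inclusion is continuous and \(\beta<\alpha\). The monomial norms \(\|z^n\|_{\mathcal F_\alpha^{p,q}}\) are explicit Gamma integrals that grow like \((n/(e\alpha))^{n/2}\) up to polynomial factors. Comparing them for \(\beta<\alpha\) gives a ratio growing geometrically in \(n\), a contradiction. Now take \(\alpha=\beta\). Monomials \(z^n\) have \(M_p=r^n\) independent of \(p\), and their Gaussian mass sits in a window of width \(\asymp1\) near \(r=\sqrt{n/\alpha}\). Hence \(\|z^n\|_{p,q}\asymp n^{-1/(2q)}\cdot\)(common factor), which forces \(1/q_2\ge\) something weaker than we need. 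So we use sums \(f=\sum_k a_k\sum_{j}\kappa_{w_{k,j}}\) over \(\asymp k\) well-separated points on \(|w|=k\), with random signs (Khintchine) to handle the \(\ell^q\) direction.

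Test one: a single kernel per annulus, \(f=\kappa_{w}\) with \(|w|=k\). Its norm behaves like \(k^{-1/p}\cdot k^{1/q}\cdot\)(local), so the ratio gives \(\frac1{q_2}-\frac1{p_2}\le\frac1{q_1}-\frac1{p_1}\) as \(k\to\infty\). Test two: full rings with lacunary or random coefficients \(a_k\) give the \(\ell^{q_1}\to\ell^{q_2}\) requirement \(q_1\le q_2\).

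\textbf{Main obstacle.} The hardest step is the two-sided estimate of these kernel sums, that is, \(M_p\) of a full ring of kernels being comparable to the \(\ell^p\) average of the coefficients. I would first try a direct computation using Gaussian off-diagonal decay of \(|\kappa_w(z)|e^{-\alpha|z|^2/2}=e^{-\alpha|z-w|^2/2}\), with separation chosen large enough that cross terms are dominated by the diagonal.
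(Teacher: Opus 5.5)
The gap is in the necessity of $q_1\le q_2$, and it comes from a sign error. The integrand $r^{nq}e^{-\frac{\alpha q}{2}r^2}$ is concentrated in a window of width $\asymp1$ around $r_n=\sqrt{n/\alpha}$. There $d\lambda_{\alpha q}$ carries the factor $r\asymp\sqrt n$, so $\|z^n\|_{\mathcal F^{p,q}_\alpha}^q\asymp\sqrt n\,\bigl(\sup_{r}r^ne^{-\frac{\alpha}{2}r^2}\bigr)^q$, that is,
\[
\|z^n\|_{\mathcal F^{p,q}_\alpha}\asymp\Bigl(\frac{n}{e\alpha}\Bigr)^{\frac n2}n^{\frac{1}{2q}},
\]
with a plus sign, as in Lemma~\ref{lem-sh}. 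Your exponent $-\frac1{2q}$ would force $q_2\le q_1$, which contradicts the sufficiency you just proved (e.g.\ $\mathcal F^{2,1}_\alpha\subset\mathcal F^{2,2}_\alpha$).

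With the correct exponent, testing the inclusion on $z^n$ with $\alpha=\beta$ gives $n^{\frac1{2q_2}-\frac1{2q_1}}\lesssim1$, which is exactly $q_1\le q_2$. This is the paper's argument, and no further test is needed.

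As written, your ``Test two'' is only a plan: its key two-sided estimate for ring sums of kernels is the step you yourself leave open. The ring structure there is essential. One kernel per shell cannot detect $q_1\le q_2$ in general: for the pair $\mathcal F^{\infty,2}_\alpha$, $\mathcal F^{1/2,1}_\alpha$, sums $\sum_k a_k\kappa_{w_k}$ with $|w_k|=k$ only test $\sum_k(1+k)^{-1}|a_k|\lesssim\bigl(\sum_k(1+k)|a_k|^2\bigr)^{1/2}$. That inequality holds by Cauchy--Schwarz, although $q_1=2>q_2=1$. So the proposal does not yet establish this condition. Your other two necessity tests coincide with the paper's: monomials for $\alpha\le\beta$, and single kernels via Proposition~\ref{prop:T1} for the angular--radial condition.

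Your sufficiency argument is correct, and in case (ii) it takes a different route from the paper.

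The paper works circle by circle. It uses $M_{p_2}(f,r)\lesssim(1+r)^{\frac1{p_1}-\frac1{p_2}}M_{p_1}(f,r)$ from \eqref{eq:angular-comparison}. It then splits $M_{p_1}^{q_2}e^{-\frac{\alpha q_2}{2}r^2}$ into $\bigl(M_{p_1}e^{-\frac{\alpha}{2}r^2}\bigr)^{q_2-q_1}$ times $M_{p_1}^{q_1}e^{-\frac{\alpha q_1}{2}r^2}$, and bounds the first factor by Theorem~\ref{thm:estimate}. The hypothesis is exactly that the resulting power $q_2(\frac1{p_1}-\frac1{p_2})-\frac{q_2-q_1}{q_1}$ of $(1+r)$ is nonpositive.

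You instead pass to Proposition~\ref{prop:annular}. You prove a shellwise bound by Gaussian subharmonicity that reaches into the neighbouring annuli, and finish with a power count and $\ell^{q_1}\subset\ell^{q_2}$.

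Your version is the more robust one. A comparison of $M_{p_2}$ and $M_{p_1}$ on the same circle cannot hold for all entire functions: for $f_n(z)=\bigl(\frac{1+z}{2}\bigr)^n$ one has $M_\infty(f_n,1)=1$ while $M_1(f_n,1)\asymp n^{-1/2}$. The shift to $\widetilde B_{p_1}$, by contrast, is exactly what subharmonicity delivers, and the discretization absorbs it at no cost. The same example shows you should phrase your local step purely as an upper bound. The two-sided ``comparable'' statements on a single circle are false, and they are not needed.
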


\smallskip
\noindent
For \(p_1=q_1\) and \(p_2=q_2\), this reduces to the usual nesting condition in
the classical Fock scale; see \cite[Theorem~2.10]{Zhu}.

\smallskip
\noindent
\textbf{Proof highlight.}
The proof separates the three obstructions: the Gaussian parameter, the outer
radial exponent, and the angular--radial balance.  Sufficiency follows from the sharp
circle-mean growth theorem.  For necessity, monomials detect the Gaussian
parameter and the outer radial exponent, while reproducing kernels detect the
remaining mixed-norm condition.   

\smallskip
\noindent
Section~\ref{sec:embeddings} proves the theorem and derives a Littlewood-type
consequence for random analytic functions in \(\mathcal F_{\alpha}^{p,q}\).

\subsubsection{Atomic decomposition}

We use a \(\delta\)-pocket tiling \(\mathcal P_\delta\), adapted to both the
Euclidean localization of Fock kernels and the radial shell structure of the
mixed norm.  The coefficient space is the weighted mixed sequence space
\(\ell^{p,q}_w\), built from inner \(\ell^p\)-blocks on the annular layers
\[
A_k=\{z\in\C:k\le |z|<k+1\}
\]
and an outer \(\ell^q\)-summation across shells.  The natural weight is
\[
w_k=(1+k)^{\frac1q-\frac1p}.
\]
This weight reflects the annular structure of the norm.  The geometric and
sequence-space setup is given in Section~\ref{sec:atomic}.

\begin{theorem} 
\label{thm:AD-main}
Let \(\alpha>0\) and \(0<p,q\le\infty\).  Then there exists
\(\delta_0=\delta_0(\alpha,p,q)\in(0,\frac12]\) such that, for every
\(\delta\in(0,\delta_0)\) and every \(\delta\)-pocket tiling
\(\mathcal P_\delta\) of \(\C\) with centers \(\{\zeta_Q\}_{Q\in\mathcal P_\delta}\),
the following assertions hold, with all implicit constants depending only on
\(\alpha,p,q\), and \(\delta\). 

\begin{enumerate}
\item[\textup{(i)}] \emph{Synthesis.} The synthesis operator defined by
\[
S^{\ker}_\delta c(z):=
\sum_{Q\in\mathcal P_\delta} c_Q\kappa_{\zeta_Q}(z),
\qquad
c=(c_Q)_{Q\in\mathcal P_\delta},
\]
is a bounded linear operator from \(\ell_w^{p,q}\) to
\(\mathcal F_\alpha^{p,q}\), and
\[
\|S^{\ker}_\delta c\|_{\mathcal F_\alpha^{p,q}}
\lesssim \|c\|_{\ell_w^{p,q}},
\qquad c\in\ell_w^{p,q}.
\]

\item[\textup{(ii)}] \emph{Coefficient recovery and optimality.} Conversely, for every
\(f\in\mathcal F_\alpha^{p,q}\) there exists
\(c=(c_Q)_{Q\in\mathcal P_\delta}\in\ell_w^{p,q}\) such that
\(f=S^{\ker}_\delta c\), and moreover
\[
\|f\|_{\mathcal F_\alpha^{p,q}}
\, \asymp \, 
\inf\left\{
\|c\|_{\ell_w^{p,q}}:
c\in\ell_w^{p,q},\ f=S^{\ker}_\delta c
\right\}.
\]
\end{enumerate}
\end{theorem}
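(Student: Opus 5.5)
Synthesis comes first, through the annular discretization of the mixed norm, which follows from the local mass principle of Lemma~\ref{lem:uniform-local-mass-convex}: for $q<\infty$,
\[
\|f\|_{\mathcal F_\alpha^{p,q}}^q\asymp\sum_k (1+k)\,B_p(f;k)^q ,
\]
and $\|f\|\asymp\sup_k B_p(f;k)$ when $q=\infty$.

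\textbf{Synthesis (i).} Fix $r\in[k,k+1)$ and write $f=S^{\ker}_\delta c$. We have
\[
|\kappa_{\zeta}(re^{i\theta})|e^{-\frac\alpha2 r^2}=e^{-\frac\alpha2|re^{i\theta}-\zeta|^2}.
\]
\begin{itemize}
\item[\textup{(a)}] Centers $\zeta_Q$ in a shell $A_j$ sit at mutual distance $\asymp\delta$, so there are about $(1+j)/\delta^2$ of them.
\item[\textup{(b)}] For $p\le1$, use $|\sum|^p\le\sum|\cdot|^p$. Each Gaussian bump contributes an angular $L^p$ mass of order $(1+r)^{-1}e^{-c|r-|\zeta||^2}$.
\item[\textup{(c)}] For $p>1$, apply Schur's test on the circle. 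The angular kernel $\theta\mapsto e^{-\frac\alpha2|re^{i\theta}-\zeta|^2}$ has $L^1(d\theta)$ norm $\asymp (1+r)^{-1}$, while the point count per unit angle is $\asymp 1+j$.
\end{itemize}
Either route gives the block estimate
\[
B_p(f;k)\lesssim\sum_j e^{-c|k-j|^2}(1+k)^{-1/p}\Big(\sum_{\zeta_Q\in A_j}|c_Q|^p\Big)^{1/p}.
\]
Inserting this into the discretized norm, the factor $(1+k)^{1/q}(1+k)^{-1/p}$ is exactly the weight $w_k$. Shifting the indices $k\leftrightarrow j$ costs only the polynomial factor $(1+k)/(1+j)$, which the Gaussian factor absorbs. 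Boundedness of the resulting Gaussian convolution on $\ell^q$ (through $q$-triangle or Young's inequality) gives (i). Since synthesis is bounded, the inequality $\|f\|\lesssim\inf\|c\|$ in (ii) is immediate.

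\textbf{Recovery (ii).} The plan is to show that $T:=S^{\ker}_\delta\circ R_\delta$ is a small perturbation of the identity on $\mathcal F_\alpha^{p,q}$. Here $R_\delta$ is the sampling map
\[
(R_\delta f)_Q=\frac{\alpha}{\pi}|Q|\,f(\zeta_Q)e^{-\frac\alpha2|\zeta_Q|^2}\,e^{i\alpha\,\mathrm{Im}(\cdots)},
\]
with the phase chosen so that $c_Q\kappa_{\zeta_Q}$ matches the discretized reproducing formula. Then $\|I-T\|<1$ for $\delta$ small, and a Neumann series gives $f=S^{\ker}_\delta R_\delta T^{-1}f$ with $\|R_\delta T^{-1}f\|_{\ell^{p,q}_w}\lesssim\|f\|$. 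Boundedness of $R_\delta$ follows from subharmonicity: $|f(\zeta)|^pe^{-\frac{p\alpha}2|\zeta|^2}$ is controlled by its average over $B(\zeta,1)$, and these averages are controlled annulus by annulus by $B_p(f;k\pm1)$.

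\textbf{Main obstacle.} The key estimate is
\[
f-Tf=\frac\alpha\pi\sum_Q\int_Q\Big(f(w)\overline{K_w(z)}e^{-\alpha|w|^2}-(\text{same at }\zeta_Q)\Big)dA(w),
\]
which we need to bound by $C\delta\|f\|$ in the mixed norm. For $p\ge1$ (and $q\ge1$), a gradient bound on the integrand over each pocket of diameter $\delta$ gives a positive integral operator with Gaussian kernel and an extra factor $\delta$, and the argument used for synthesis bounds it. For $p<1$ the Bergman-type reproducing formula is not bounded in $L^p$, so this route fails. Instead I would invoke the re-centering theorem announced in the abstract: expand $\kappa_w$ for $w\in Q$ as
\[
\kappa_w=\sum_{Q'} a_{QQ'}(w)\,\kappa_{\zeta_{Q'}},\qquad |a_{QQ'}(w)-\delta_{QQ'}|\lesssim\delta\, e^{-c|\zeta_Q-\zeta_{Q'}|^2}.
\]
I would then run the whole perturbation argument at the coefficient level, showing that $I-R_\delta S^{\ker}_\delta$ has norm $O(\delta)$ on $\ell^{p,q}_w$ through $p$-triangle inequalities. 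Proving this re-centering bound with constants uniform in $|\zeta_Q|$ is the step I expect to be hardest.
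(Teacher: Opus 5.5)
Your synthesis argument and your function-side treatment of $1\le p\le\infty$ are essentially the paper's. The paper samples by pocket averages $C^{\ker}_\delta$ rather than point values, and its shellwise remainder bound plus the outer Schur test covers every $0<q\le\infty$, so your restriction ``and $q\ge1$'' is unnecessary.

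The genuine gap is the recovery step for $0<p<1$. The coefficient-level contraction $\|I-R_\delta S^{\ker}_\delta\|_{\ell^{p,q}_w\to\ell^{p,q}_w}=O(\delta)$ that you plan to prove is false for every $p$. Since $\kappa_{\zeta_{Q'}}(\zeta_Q)e^{-\frac{\alpha}{2}|\zeta_Q|^2}=\langle\kappa_{\zeta_{Q'}},\kappa_{\zeta_Q}\rangle_\alpha$, your $R_\delta S^{\ker}_\delta$ is a unimodular diagonal times $\frac{\alpha}{\pi}G$, where $G$ is the Gramian of Lemma~\ref{lem:kernel-frame}. Its diagonal entries have modulus $\frac{\alpha}{\pi}|Q|\le\alpha\delta^2$, so already $\|(I-R_\delta S^{\ker}_\delta)u^{(Q)}\|_{\ell^{p,q}_w}\ge(1-\alpha\delta^2)\|u^{(Q)}\|_{\ell^{p,q}_w}$. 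The structural reason is redundancy. At density $\asymp\delta^{-2}\gg\alpha/\pi$ the kernels are overcomplete, so $\ker G=\mathcal H^\perp\neq\{0\}$ contains rapidly decaying sequences (for instance $(I-P_{\mathcal H})u^{(Q_0)}$ for suitable $Q_0$, by Proposition~\ref{prop:app-localized-range-projection}). These lie in $\ker S^{\ker}_\delta\cap\ell^{p,q}_w$, and $I-AS^{\ker}_\delta$ fixes them for every analysis map $A$. An overcomplete system can only be approximately inverted on the function side, that is, via $I-S^{\ker}_\delta A$ on $\mathcal F_\alpha^{p,q}$.

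This also corrects your diagnosis of the $p<1$ obstruction. The reproducing formula does hold on $\mathcal F_\alpha^{p,q}$ (Proposition~\ref{prop:fock-reproducing}). What fails is that each pocket contributes an error of order $\delta\cdot\delta^2$, and the $p$-triangle inequality over $\asymp\delta^{-2}$ pockets per unit area turns the gain into $\delta^{3-2/p}$, which is small only for $p>2/3$. For the same reason a pointwise bound $|a_{QQ'}(w)-\delta_{QQ'}|\lesssim\delta\,e^{-c|\zeta_Q-\zeta_{Q'}|^2}$ cannot supply smallness. Its decay is at unit scale, so the perturbation is spread over $\asymp\delta^{-2}$ coefficients per unit area, and $p$-summation only gives a bound of order $\delta^{1-2/p}$.

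The paper overcomes this loss as follows.
\begin{itemize}
\item It uses jet atoms of order $m=\lfloor 2(1-p)/p\rfloor$, for which the remainder is of order $\delta^{p(m+3)-2}\to0$.
\item It trades the jets for floating kernels with an $\varepsilon$-small defect.
\item It uses Theorem~\ref{thm:envelope-recentering} only through the exact identity $S^{\mathrm{flt}}_\Lambda=S^{\ker}_\delta E_{\mathrm{rec}}$, with $E_{\mathrm{rec}}$ merely \emph{bounded} on $\ell^{p,q}_w$. This needs only a subexponential envelope with a $\delta$-dependent constant, uniform in the moving center, so uniformity in $|\zeta_Q|$ is not where the difficulty lies.
\item It finishes with an $\varepsilon$-iteration on $\mathcal F_\alpha^{p,q}$.
\end{itemize}

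In fact boundedness alone suffices. Insert $\kappa_w=\sum_{Q'}b_{Q'}(w)\kappa_{\zeta_{Q'}}$ (the canonical choice from that proof, which is continuous in $w$) into the reproducing formula. This gives $f=S^{\ker}_\delta c$ with $c_{Q'}=\frac{\alpha}{\pi}\int_{\C}f(w)e^{-\frac{\alpha}{2}|w|^2}b_{Q'}(w)\,dA(w)$. The envelope bound, subharmonicity on unit discs, and the outer Schur test then yield $\|c\|_{\ell^{p,q}_w}\lesssim\|f\|_{\mathcal F_\alpha^{p,q}}$ directly, with no perturbation argument.
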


\smallskip
\noindent
In the classical one-parameter Fock scale, atomic decompositions use normalized
reproducing kernels indexed by a uniform lattice, with coefficients in a single
\(\ell^p\)-space; see \cite[Theorem~2.34]{Zhu}.  Here the atoms are organized by
\(\mathcal P_\delta\).  The coefficient space has two levels: an inner
\(\ell^p\)-norm in each shell and an outer \(\ell^q\)-summation across shells.
The weight
\(
w_k=(1+k)^{\frac{1}{q}-\frac{1}{p}}
\)
is forced by the mixed-norm size of normalized Fock kernels.  The shellwise
organization is analogous to mixed-norm Bergman decompositions
\cite{PRS2019}, but the geometry is different: disk-boundary annuli are
replaced by Euclidean annuli and localization is Gaussian.

\smallskip
\noindent
\textbf{Proof highlight.}
The main issue is to obtain an exact expansion by normalized Fock kernels at
the fixed pocket centers.  The \(\delta\)-pocket tiling from
Subsection~\ref{subsec:atomic-pockettiling} provides the sampling geometry, and
the annular discretization from Section~\ref{sec:mixed-fock-tools} identifies
the coefficient space; the remaining task is stability.
For \(1\leq p\leq\infty\), stability follows from a kernel
analysis--synthesis scheme with a small remainder, inverted by a Neumann series.
For \(0<p<1\), we first use finite-order jet atoms, then eliminate the jets into
floating kernels.  The re-centering envelope theorem in
Section~\ref{sec:recentering-envelope} re-expands these floating kernels at the
fixed centers \(\zeta_Q\), with uniform envelope bounds on the coefficients.
An iteration of the remaining small defect then gives the exact decomposition
in Theorem~\ref{thm:AD-main}.

\smallskip
\noindent
Section~\ref{sec:atomic} contains the proof.  The equivalent formulation with
mixed-norm-normalized kernels \(\widetilde\kappa_{\zeta_Q}\) and the unweighted
coefficient space \(\ell^{p,q}\) is given in
Subsection~\ref{subsec:atomic-consequences}.

\subsubsection{Carleson measures}
\label{subsec:carmeas}

We characterize the positive Borel measures \(\mu\) on \(\C\) for which
the natural embedding
\[
i_\mu:\mathcal F_{\alpha}^{p,q}\longrightarrow L^s_\alpha(\mu)
\]
is bounded or compact.  The atomic decomposition from
Section~\ref{sec:atomic} reduces the problem to weighted sequence embeddings
on the annular coefficient space.  The resulting criteria are expressed in
terms of localized ball masses.
For a positive Borel measure \(\mu\) on \(\C\) and \(0<s<\infty\), set
\[
L^s_\alpha(\mu)
:=
\left\{
f\ \text{measurable on }\C:
\|f\|_{L^s_\alpha(\mu)}^s
:=
\int_{\C}|f(z)|^s e^{-\frac{\alpha s}{2}|z|^2}\,d\mu(z)
<\infty
\right\}.
\]
We denote by
\[
i_\mu:\mathcal F_\alpha^{p,q}\to L^s_\alpha(\mu),
\qquad
i_\mu f=f,
\]
the natural embedding map.  We say that \(\mu\) is an \textit{\(s\)-Carleson measure
for \(\mathcal F_\alpha^{p,q}\)} if \(i_\mu\) is bounded, and that \(\mu\) is a
\textit{vanishing \(s\)-Carleson measure for \(\mathcal F_\alpha^{p,q}\)} if \(i_\mu\)
is compact.

\smallskip
\noindent
To formulate the characterizations, fix \(R>0\) and a \(\delta\)-pocket tiling
\(\mathcal P_\delta\) of \(\C\) with distinguished centers \(\zeta_Q\).  Recall
that the \(\delta\)-pocket tiling satisfies the uniform ball control
\[
B(\zeta_Q,c_{\mathrm{in}}\delta)\subset Q
\subset B(\zeta_Q,C_{\mathrm{out}}\delta),
\qquad Q\in\mathcal P_\delta,
\]
with \(c_{\mathrm{in}}>0\) and \(C_{\mathrm{out}}>0\) independent of \(\delta\). Using the normalized Fock kernel \(\kappa_\zeta=\kappa_{\alpha,\zeta}\),
we define the continuous localized ball-mass function \(U_{\mu,R}\) and the
corresponding discrete ball-mass sequence \(u_{\mu,R}\) by
\[
U_{\mu,R}(\zeta):=\|\kappa_\zeta\|_{\mathcal F_\alpha^{p,q}}^{-s}\,\mu(B(\zeta,R)),
\quad \zeta\in\mathbb C,
\qquad \text{and} \qquad 
u_{\mu,R}(Q):=U_{\mu,R}(\zeta_Q),
\quad Q\in\mathcal P_\delta.
\]
The discrete and continuous characterizations involve the generalized conjugate
exponents corresponding to \(\frac{p}{s}\) and \(\frac{q}{s}\).  More precisely, we use the generalized conjugates
\[
p_s^*:=\left(\frac ps\right)^*,
\qquad
q_s^*:=\left(\frac qs\right)^*.
\]
Equivalently,
\[
p_s^*
=
\begin{cases}
\infty, & 0<p\le s,\\[0.3em]
\dfrac{p}{p-s}, & s<p<\infty,\\[0.8em]
1, & p=\infty,
\end{cases}
\qquad
q_s^*
=
\begin{cases}
\infty, & 0<q\le s,\\[0.3em]
\dfrac{q}{q-s}, & s<q<\infty,\\[0.8em]
1, & q=\infty.
\end{cases}
\]
We refer to \(p_s^*\) and \(q_s^*\) as the inner and outer exponents,
respectively.  For a sequence \(c=(c_Q)_{Q\in\mathcal P_\delta}\), and each
\(k\ge0\), we write
\[
c_k:=\bigl(c_Q\bigr)_{\zeta_Q\in A_k}, \qquad
A_k:=\{z\in\C:k\le |z|<k+1\},
\]
for its \(k\)-th shell block and define
\[
\|c\|_{\ell^{p_s^*,q_s^*}}
:=
\left\|
\bigl(\|c_k\|_{\ell^{p_s^*}}\bigr)_{k\ge0}
\right\|_{\ell^{q_s^*}},
\]
with the usual supremum interpretation when \(p_s^*=\infty\) or
\(q_s^*=\infty\).  Similarly, we write
\(L_{\mathrm{sh}}^{p_s^*,q_s^*}(\C)\) for the space of all measurable functions
\(F\) on \(\C\) such that
\[
\|F\|_{L_{\mathrm{sh}}^{p_s^*,q_s^*}}
:=
\left\|
\bigl(\|F\|_{L^{p_s^*}(A_k)}\bigr)_{k\ge0}
\right\|_{\ell^{q_s^*}}
<\infty,
\]
where
\[
\|F\|_{L^{p_s^*}(A_k)}
:=
\begin{cases}
\displaystyle
\left(\int_{A_k}|F(z)|^{p_s^*}\,dA(z)\right)^{\frac{1}{p_s^*}},
& p_s^*<\infty,\\[1em]
\displaystyle
\operatorname*{ess\,sup}_{z\in A_k}|F(z)|,
& p_s^*=\infty,
\end{cases}
\]
again with the usual supremum interpretation when \(q_s^*=\infty\).

\begin{theorem} 
\label{thm:carleson-intro}
Let \(\alpha>0\), \(0<p,q\le\infty\), and \(0<s<\infty\).  Let
\(\delta_0=\delta_0(\alpha,p,q)\) be as in
Theorem~\ref{thm:AD-main}, and fix \(0<\delta<\delta_0\).  Let
\(\mathcal P_\delta\) be a \(\delta\)-pocket tiling of \(\C\) with
distinguished centers \(\zeta_Q\).  Then, for every
\(R>C_{\mathrm{out}}\delta\), the following assertions are equivalent for a
positive Borel measure \(\mu\) on \(\C\):
\begin{enumerate}
\item[\textup{(i)}] \(\mu\) is an \(s\)-Carleson measure for
\(\mathcal F_\alpha^{p,q}\).
\item[\textup{(ii)}] The discrete ball-mass sequence \(u_{\mu,R}\) belongs to
\(\ell^{p_s^*,q_s^*}\).
\item[\textup{(iii)}] The continuous ball-mass function \(U_{\mu,R}\) belongs to
\(L_{\mathrm{sh}}^{p_s^*,q_s^*}(\C)\).
\end{enumerate}
Moreover,
\[
\|i_\mu\|^s
\asymp
\|u_{\mu,R}\|_{\ell^{p_s^*,q_s^*}}
\asymp
\|U_{\mu,R}\|_{L_{\mathrm{sh}}^{p_s^*,q_s^*}},
\]
where the implicit constants depend only on \(\alpha,p,q,s,\delta\), and \(R\).
\end{theorem}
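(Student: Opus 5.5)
The plan is to transfer the Carleson embedding problem to the coefficient side via the atomic decomposition (Theorem~\ref{thm:AD-main}), and then to dispose of a weighted sequence-embedding problem by a duality argument for mixed sequence spaces. We first record two routine facts. The first is the kernel size \(\|\kappa_\zeta\|_{\mathcal F_\alpha^{p,q}}\asymp (1+|\zeta|)^{\frac1q-\frac1p}\) (so \(\|\kappa_\zeta\|^{-s}\) is essentially constant on \(B(\zeta,R)\)). The second is the local subharmonic estimate
\[
|f(z)|^s e^{-\frac{\alpha s}{2}|z|^2}\lesssim \int_{B(z,r)}|f|^s e^{-\frac{\alpha s}{2}|\cdot|^2}\,dA .
\]
From these we get the equivalence (ii)\(\Leftrightarrow\)(iii) directly. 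Each ball \(B(\zeta,R)\) with \(\zeta\in Q\) is contained in \(B(\zeta_Q,R+C_{\mathrm{out}}\delta)\) and contains \(B(\zeta_Q,R-C_{\mathrm{out}}\delta)\), and pockets in \(A_k\) meet only \(A_{k-1},A_k,A_{k+1}\). A standard change-of-radius lemma then shows that the shell norms are independent of \(R>C_{\mathrm{out}}\delta\) up to constants; this is proved by covering a large ball by boundedly many small ones centred at nearby pocket centres. Hence it suffices to prove (i)\(\Leftrightarrow\)(ii) for one convenient \(R\).

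\textbf{Sufficiency} ((ii)\(\Rightarrow\)(i)). For \(f\in\mathcal F_\alpha^{p,q}\), cover \(\C\) by the pockets and write
\[
\|f\|_{L^s_\alpha(\mu)}^s\le\sum_Q \mu(Q)\sup_{Q}|f|^s e^{-\frac{\alpha s}{2}|\cdot|^2}.
\]
By the subharmonic estimate, the supremum is bounded by a local Gaussian integral \(I_Q(f)\) over \(B(\zeta_Q,R')\). Then \(\mu(Q)\le\mu(B(\zeta_Q,R))=u_{\mu,R}(Q)\|\kappa_{\zeta_Q}\|^{s}\), so the sum is dominated by \(\sum_Q u_{\mu,R}(Q)\,a_Q\) with \(a_Q:=\|\kappa_{\zeta_Q}\|^{s} I_Q(f)\). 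The key claim is that the annular discretization from Section~\ref{sec:mixed-fock-tools} gives
\[
\|(a_Q^{1/s})\|_{\ell^{p,q}}\lesssim\|f\|_{\mathcal F_\alpha^{p,q}},
\]
which amounts to a discrete sampling inequality \(\|(w_k\,\sup_Q|f|e^{-\alpha|\cdot|^2/2})\|_{\ell^{p,q}}\lesssim\|f\|\). The sequence \((a_Q)\) then lies in \(\ell^{p/s,q/s}\). Hölder's inequality in the mixed sequence space, with \(\ell^{p/s,q/s}\)–\(\ell^{p_s^*,q_s^*}\) pairing, gives \(\|i_\mu\|^s\lesssim\|u_{\mu,R}\|_{\ell^{p_s^*,q_s^*}}\). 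When \(p\le s\) or \(q\le s\), this Hölder step is replaced by the embedding \(\ell^{t}\subset\ell^\infty\)-dual bound \(\sum|x_k y_k|\le\|x\|_{\ell^1}\|y\|_\infty\) together with \(\ell^{p/s}\subset\ell^1\).

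\textbf{Necessity} ((i)\(\Rightarrow\)(ii)). This is the main obstacle, especially in the case \(p>s\) or \(q>s\), where the dual exponents are finite. We would use Theorem~\ref{thm:AD-main}(i) with random signs (Khintchine/Kahane). For \(c\in\ell^{p,q}\), set \(f_t=\sum_Q r_Q(t)c_Q\widetilde\kappa_{\zeta_Q}\), so that \(\|f_t\|\lesssim\|c\|_{\ell^{p,q}}\) uniformly in \(t\) by the unweighted reformulation. Averaging \(\|f_t\|^s_{L^s_\alpha(\mu)}\le\|i_\mu\|^s\|c\|^s\) over \(t\) and applying Khintchine yields
\[
\int\Bigl(\sum_Q|c_Q|^2|\widetilde\kappa_{\zeta_Q}|^2\Bigr)^{s/2}e^{-\frac{\alpha s}{2}|z|^2}d\mu\lesssim\|i_\mu\|^s\|c\|^s.
\]
Restricting to \(B(\zeta_Q,R)\), where \(|\kappa_{\zeta_Q}(z)|e^{-\alpha|z|^2/2}=e^{-\alpha|z-\zeta_Q|^2/2}\gtrsim1\), and using bounded overlap, we obtain \(\sum_Q|c_Q|^s u_{\mu,R}(Q)\lesssim\|i_\mu\|^s\|c\|^s_{\ell^{p,q}}\). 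Setting \(b_Q=|c_Q|^s\) ranging over the unit ball of \(\ell^{p/s,q/s}\), the Köthe duality \((\ell^{p/s,q/s})'=\ell^{p_s^*,q_s^*}\) for mixed sequence spaces (tested on finitely supported sequences, with monotone convergence) then gives \(\|u_{\mu,R}\|_{\ell^{p_s^*,q_s^*}}\lesssim\|i_\mu\|^s\). In the endpoint cases with \(p_s^*=\infty\) or \(q_s^*=\infty\), one may instead test directly with single kernels \(\widetilde\kappa_{\zeta_Q}\), or with kernels supported on one shell. The delicate points are keeping all overlaps bounded and confirming that the Köthe dual identification holds for the weightless mixed norm; we would verify the latter by separately dualizing the inner and the outer levels.
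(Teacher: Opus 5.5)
Your proof is correct. Necessity follows the paper; sufficiency takes a genuinely different and more elementary route.

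\textbf{Necessity.} You randomize the signs, apply Khintchine, restrict to $B(\zeta_Q,R)$ with bounded overlap, and then dualize $\ell^{p/s,q/s}$ one level at a time. This is exactly the paper's route through Proposition~\ref{prop:carleson-necessity-reduction}, Lemma~\ref{lem:block-norm} and Proposition~\ref{prop:boundedness-iomega}.

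\textbf{Sufficiency, the paper's route.} The paper writes $f=S_\delta A_\delta f$, using the coefficient-recovery half of Theorem~\ref{thm:AD-main}. It then bounds $\|S_\delta d\|_{L^s_\alpha(\mu)}$ by $\|d\|_{\ell^s(u^\sharp)}$ (Lemma~\ref{lem:carleson-diagonalization}). Because the Gaussian tails of the kernels see mass far from $\zeta_Q$, it also needs the off-diagonal comparison $\|u^\sharp\|\asymp\|u_{\mu,R}\|$ (Proposition~\ref{prop:equivalence-u-sharp-ball}).

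\textbf{Sufficiency, your route.} You sample $f$ itself, pocket by pocket. The $\delta$-pocket tiling is an exact partition of $\C$ and $Q\subset B(\zeta_Q,R)$, so $\|f\|^s_{L^s_\alpha(\mu)}\le\sum_Q u_{\mu,R}(Q)\,b_Q^s$ with $b_Q=\|\kappa_{\zeta_Q}\|_{\mathcal F^{p,q}_\alpha}\sup_{Q}|f|e^{-\frac{\alpha}{2}|\cdot|^2}$. The sampling inequality $\|b\|_{\ell^{p,q}}\lesssim\|f\|_{\mathcal F^{p,q}_\alpha}$ that you leave as a key claim is true for all $0<p\le\infty$. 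Its proof is exactly the argument of Lemma~\ref{lem:analysisboundedness} with $m=0$: subharmonicity at exponent $p$ on fattened pockets, summation over a shell, then Proposition~\ref{prop:annular}. For $p=\infty$ it is trivial. Run it with the supremum and exponent $p$, not through your $L^s$-integral $I_Q$, since for $s>p$ the quantity $I_Q^{1/s}$ is not controlled by local $L^p$ data except via the supremum.

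\textbf{What each route buys.} Yours uses only the synthesis half of the atomic decomposition (Lemma~\ref{lem:synthesis}, and Lemma~\ref{lem:Sr-synth} with $m=0$). That half holds for every $\delta\in(0,\frac12]$. So your proof of the Carleson theorem is independent of the jet and re-centering machinery and of $\delta_0$. The paper's route instead makes the result a pure coefficient-space embedding, and reuses $A_\delta$ verbatim for compactness in Subsection~\ref{subsec:carleson-vanishing}.

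\textbf{A small repair in (ii)$\Leftrightarrow$(iii).} Your inner sandwich $B(\zeta_Q,R-C_{\rm out}\delta)\subset B(z,R)$ can have radius below the covering radius of the centers $\{\zeta_Q\}$ when $R\le 2C_{\rm out}\delta$. In that case a covering by balls at pocket centres cannot recover $u_{\mu,R}$ from $u_{\mu,R-C_{\rm out}\delta}$, so the bound $\|u_{\mu,R}\|\lesssim\|U_{\mu,R}\|$ is not yet justified for every $R>C_{\rm out}\delta$. Two fixes work:
\begin{itemize}
\item prove the change of radius for $U_{\mu,R}$ using translates of a fixed finite net rather than pocket centres; or
\item use the paper's same-radius averaging $\mu(B(\zeta_Q,R))\le\frac{4}{\pi R^2}\int_{B(\zeta_Q,R)}\mu(B(z,R))\,dA(z)$, as in Proposition~\ref{prop:carleson-discrete-continuous}, which avoids changing the radius at all.
\end{itemize}
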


\begin{theorem} 
\label{thm:vanishing-carleson-intro}
Under the assumptions and notation of Theorem~\ref{thm:carleson-intro}, the
following assertions are equivalent for a positive Borel measure \(\mu\) on
\(\C\):
\begin{enumerate}
\item[\textup{(i)}] \(\mu\) is a vanishing \(s\)-Carleson measure for
\(\mathcal F_\alpha^{p,q}\).
\item[\textup{(ii)}] The discrete ball-mass sequence \(u_{\mu,R}\) satisfies:
\begin{enumerate}
\item[\textup{(a)}] if \(0<q\le s\), then
\(
\|(u_{\mu,R})_k\|_{\ell^{p_s^*}}\to0
\)
as \(k\to\infty\);
\item[\textup{(b)}] if \(s<q\le\infty\), then
\(
u_{\mu,R}\in\ell^{p_s^*,q_s^*}\).
\end{enumerate}
\item[\textup{(iii)}] The continuous ball-mass function \(U_{\mu,R}\) satisfies:
\begin{enumerate}
\item[\textup{(a)}] if \(0<q\le s\), then
\(
\|U_{\mu,R}\|_{L^{p_s^*}(A_k)}\to0
\)
as \(k\to\infty\);
\item[\textup{(b)}] if \(s<q\le\infty\), then
\(
U_{\mu,R}\in L_{\mathrm{sh}}^{p_s^*,q_s^*}(\C)\).
\end{enumerate}
\end{enumerate}
\end{theorem}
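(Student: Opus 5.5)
The plan is to deduce the vanishing criterion from the boundedness criterion of Theorem~\ref{thm:carleson-intro}. The first step is to prove (ii)\(\Leftrightarrow\)(iii) by the same comparison that gives \(\|u_{\mu,R}\|\asymp\|U_{\mu,R}\|\). The tool is a shell-by-shell comparison between discrete and continuous ball masses, obtained from the pocket geometry \(B(\zeta_Q,c_{\mathrm{in}}\delta)\subset Q\subset B(\zeta_Q,C_{\mathrm{out}}\delta)\). We also use that \(\|\kappa_\zeta\|_{\mathcal F^{p,q}_\alpha}\) depends only on \(|\zeta|\) and is comparable on nearby radii; this follows from the annular discretization, giving \(\|\kappa_\zeta\|\asymp(1+|\zeta|)^{\frac1p-\frac1q}\) after rotation invariance. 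With these, \(\mu(B(\zeta,R))\le\mu(B(\zeta_Q,R+C_{\mathrm{out}}\delta))\) for \(\zeta\in Q\), and conversely \(B(\zeta_Q,R')\subset B(\zeta,R)\) for a smaller \(R'\). We then use the independence of the criterion from \(R>C_{\mathrm{out}}\delta\), which is already part of Theorem~\ref{thm:carleson-intro}, for each shell block. This yields \(\|(u_{\mu,R})_k\|_{\ell^{p_s^*}}\) comparable to \(\|U_{\mu,R'}\|_{L^{p_s^*}(A_{k-1}\cup A_k\cup A_{k+1})}\), up to changing \(R\), so both the decay condition (a) and the membership condition (b) transfer.

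For \(s<q\le\infty\), we have \(q_s^*<\infty\). We will show that every \(s\)-Carleson measure is then vanishing, so (i)\(\Leftrightarrow\)(ii)(b) reduces to Theorem~\ref{thm:carleson-intro}. Truncate \(\mu_N:=\mu|_{\{|z|\ge N\}}\) and \(\mu-\mu_N\). The restriction to a compact set gives a compact embedding, because point evaluations are locally uniformly bounded on \(\mathcal F^{p,q}_\alpha\) by Theorem~\ref{thm:estimate} and the subharmonic mean value property. Hence normal families apply: bounded sets are relatively compact in \(H(\C)\), and the compact-set restriction converges uniformly. Meanwhile \(\|i_{\mu_N}\|^s\lesssim\|u_{\mu_N,R}\|_{\ell^{p_s^*,q_s^*}}\), which is bounded by the \(\ell^{q_s^*}\)-tail \(\bigl(\sum_{k\ge N-R-1}\|(u_{\mu,R})_k\|^{q_s^*}\bigr)^{1/q_s^*}\to0\). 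So \(i_\mu\) is a norm limit of compact operators, and in the quasi-Banach setting we use the \(s\)-triangle inequality \(\|i_\mu f\|^s\le\|i_{\mu-\mu_N}f\|^s+\|i_{\mu_N}f\|^s\) (if \(s\le1\), or the usual triangle inequality otherwise) together with a sequential characterization of compactness. When \(q=\infty\) the convention \(q_s^*=1\) gives the same tail argument.

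For \(0<q\le s\), we have \(q_s^*=\infty\). Sufficiency of (a) uses the same truncation: \(\|i_{\mu_N}\|^s\lesssim\sup_{k\gtrsim N}\|(u_{\mu,R})_k\|_{\ell^{p_s^*}}\to0\). Necessity is the main obstacle, since it requires test functions concentrated on a single shell \(A_k\) that tend to \(0\) locally uniformly. I would use the synthesis operator of Theorem~\ref{thm:AD-main}(i). Take \(f_k:=S^{\ker}_\delta c^{(k)}\), where \(c^{(k)}\) is supported on the shell-\(k\) block and is chosen as the extremal \(\ell^{p/s}\)--\(\ell^{p_s^*}\) duality vector for \((u_{\mu,R})_k\). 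Concretely, \(|c_Q|^{s}\) is proportional to \(u_{\mu,R}(Q)^{p_s^*-1}\) after normalization, with weight \(w_k\) absorbed, and random signs are used if needed. Then \(\|f_k\|\lesssim1\) uniformly. Since the coefficients sit on shell \(k\), \(f_k\to0\) uniformly on compacts, by the Gaussian decay of \(\kappa_{\zeta_Q}\) on fixed compacts. Compactness therefore forces \(\|i_\mu f_k\|\to0\). The lower bound \(\|i_\mu f_k\|^s\gtrsim\|(u_{\mu,R})_k\|_{\ell^{p_s^*}}\) is obtained exactly as in the necessity half of Theorem~\ref{thm:carleson-intro}, restricted to one shell block: one uses Khintchine averaging over signs to decouple the kernels, and the lower bound \(|\kappa_{\zeta_Q}(z)|e^{-\frac\alpha2|z|^2}\gtrsim1\) on \(B(\zeta_Q,R)\) to reach the ball masses. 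To pass from random signs to a single sequence, choose for each \(k\) a sign pattern achieving at least the average. I expect this localization of the lower bound to one shell to be the delicate point. If the Khintchine step does not localize cleanly, I would fall back on testing with single normalized kernels \(\widetilde\kappa_{\zeta_Q}\), \(\zeta_Q\in A_k\), which handles the case \(p\le s\), where \(p_s^*=\infty\).
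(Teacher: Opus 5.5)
Your proof of (i)$\Leftrightarrow$(ii) is correct. Necessity of shellwise decay for $0<q\le s$ is the paper's own argument (Proposition~\ref{prop:compact-necessity-ball}): extremal block vectors, Rademacher signs on a single shell, Khintchine, and bounded overlap of the balls $B(\zeta_Q,R)$. The lower bound localizes to one shell without difficulty, so the fallback to single kernels is unnecessary. Choosing a sign pattern above the average works as well as the paper's $\sup_t$.

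Sufficiency, and the whole case $s<q\le\infty$, is where you genuinely differ.
\begin{itemize}
\item The paper shows that compactness of the auxiliary embedding $i_{u^\sharp}:\ell^{p,q}\to\ell^s(u^\sharp)$ forces compactness of $i_\mu$. This goes through a total-boundedness argument using $A_\delta$ and Lemma~\ref{lem:carleson-diagonalization} (Proposition~\ref{prop:compact-sufficiency-u-sharp}). It then turns (ii) into compactness of $i_{u^\sharp}$ via the sequence-space criterion (Lemma~\ref{lem:finite-shell-tails}, Proposition~\ref{prop:compactness-iomega}) and the compact comparison of $u^\sharp$ with $u_{\mu,R}$ (Proposition~\ref{prop:compact-equivalence-u-sharp-ball}).
\item You instead split $\mu=\mu|_{\{|z|<N\}}+\mu_N$. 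The first piece is a finite measure, so normal families give a compact embedding. Theorem~\ref{thm:carleson-intro} applied to $\mu_N$, whose constants do not depend on the measure, bounds $\|i_{\mu_N}\|^s$ by the $\ell^{q_s^*}$-tail of $(\|(u_{\mu,R})_k\|_{\ell^{p_s^*}})_k$ beyond $k\approx N-R$. Since $\|f\|^s_{L^s_\alpha(\mu)}$ splits exactly over the two disjoint pieces, you can pass to one locally uniformly convergent subsequence (its limit lies in $\mathcal F^{p,q}_\alpha$ by Fatou) and then let $N\to\infty$.
\end{itemize}
Your route is shorter, uses the bounded theorem only as a black box, and makes plain why boundedness already implies compactness when $q_s^*<\infty$. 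The paper's route instead proves the structural equivalence between compactness of $i_\mu$ and compactness of the discrete embedding $i_{u_{\mu,R}}$, parallel to the bounded case.

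The step (ii)$\Leftrightarrow$(iii) does not work as written, for two reasons.
\begin{itemize}
\item Theorem~\ref{thm:carleson-intro} gives independence of $R$ only for membership in $\ell^{p_s^*,q_s^*}$. It says nothing about the shellwise decay condition (a), which is exactly what you need when $0<q\le s$. You could get it from your own proof of (i)$\Leftrightarrow$(ii), which works for every $R>C_{\mathrm{out}}\delta$, but then that proof must come first rather than second.
\item The lower comparison $B(\zeta_Q,R')\subset B(\zeta,R)$ for all $\zeta\in Q$ forces $R'\le R-\sup_{\zeta\in Q}|\zeta-\zeta_Q|$. When $R$ is close to $C_{\mathrm{out}}\delta$, this $R'$ can fall below $C_{\mathrm{out}}\delta$. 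Then the balls $B(\zeta_Q,R')$ need not cover $\C$, and neither theorem says anything about $u_{\mu,R'}$. To repair this within your approach you would have to average only over a small disc $B(\zeta_Q,\varepsilon)$ with $\varepsilon<R-C_{\mathrm{out}}\delta$.
\end{itemize}
The clean fix is the paper's fixed-radius comparison in Proposition~\ref{prop:carleson-discrete-continuous}. For the upper bound, cover $B(z,R)$, $z\in Q$, by the balls $B(\zeta_{Q'},R)$ with $|\zeta_{Q'}-\zeta_Q|<R+2C_{\mathrm{out}}\delta$. For the lower bound, use Fubini in the form $\mu(B(\zeta_Q,R))\le\frac{4}{\pi R^2}\int_{B(\zeta_Q,R)}\mu(B(z,R))\,dA(z)$. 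This gives shellwise inequalities between $u_{\mu,R}$ and $U_{\mu,R}$ with a finitely supported convolution in $k$. Both (a) and (b) then transfer with no appeal to $R$-independence.

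One slip: by Proposition~\ref{prop:T1}, $\|\kappa_\zeta\|_{\mathcal F^{p,q}_\alpha}\asymp(1+|\zeta|)^{\frac1q-\frac1p}=w_k$, not the reciprocal. This sign is what makes the weight $w_k$ absorbed in your test vectors match $u_{\mu,R}$.
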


\smallskip
\noindent
The choice of \(R\) is inessential: if one of the discrete or continuous
conditions in Theorems~\ref{thm:carleson-intro} and
\ref{thm:vanishing-carleson-intro} holds for some \(R>C_{\mathrm{out}}\delta\),
then it holds for every such \(R\), since assertion \({\rm(i)}\) is independent
of \(R\).

\smallskip
\noindent
When \(s<q\le\infty\), the bounded and vanishing conditions coincide and are
both characterized by \(u_{\mu,R}\in\ell^{p_s^*,q_s^*}\).  The additional
shellwise decay condition appears only for \(0<q\le s\).

\smallskip
\noindent
In the one-parameter Fock scale, localized Euclidean ball masses characterize
Carleson and vanishing Carleson measures: boundedness corresponds to uniform
mass bounds and compactness to vanishing at infinity; see
\cite[Theorems~2.3 and 2.4]{IZ10}.  For embeddings between different classical
Fock spaces, localized masses also satisfy an integrability condition in the
smaller-target-exponent regime, where boundedness and compactness coincide; see
\cite[Theorems~3.1 and 3.2]{HL11}.  Endpoint growth spaces were treated by
Mengestie~\cite[Proposition~2.6]{Meng13}, and related \(q\)-Carleson criteria
for growth Fock spaces by Abakumov and Doubtsov~\cite[Subsection~3.2]{AD16}.

\smallskip
\noindent
The mixed-norm result keeps the local Fock geometry but changes how localized
masses are measured.  Its shellwise structure is analogous to mixed-norm
Bergman Carleson theory, where radial levels and angular subdivisions interact
\cite[Theorem~3]{PRS2019}, although the underlying geometry is Gaussian rather
than Bergman.

\smallskip
\noindent
\textbf{Proof highlight.}
The main point is to convert local Fock testing into the mixed shell conditions
imposed by the norm.  Localized Euclidean ball masses suffice in the
one-parameter Fock scale; here they must be measured through the inner--outer
annular structure of the atomic coefficient model.
The proof proceeds by reduction to sequences.  Sharp kernel estimates identify
the relevant ball-mass weights, and the atomic decomposition from
Section~\ref{sec:atomic} reduces
\[
i_\mu:\Fpq\longrightarrow L^s_{\alpha}(\mu)
\]
to weighted embeddings
\[
i_\omega:\ell^{p,q}\longrightarrow \ell^s(\omega)
\]
on the annular coefficient space.  The discrete shell principles in
Subsection~\ref{subsec:carleson-discrete} then give the boundedness and
compactness criteria.  Necessity is read from \(u_{\mu,R}\); sufficiency passes
through an auxiliary Gaussian weight \(u^\sharp\), compares it with
\(u_{\mu,R}\) by off-diagonal decay and shell summation, and then transfers  to the continuous annular conditions for \(U_{\mu,R}\).

\smallskip
\noindent
The proofs of Theorems~\ref{thm:carleson-intro} and
\ref{thm:vanishing-carleson-intro} are given in
Section~\ref{sec:carleson}.

\subsubsection{Dual spaces}
\label{subsec:dual-spaces-intro}

In the Banach regime \(1\le p\le\infty\), \(1\le q<\infty\), the representing space is
governed by the usual conjugate exponents and no polynomial correction
is needed.  For \(0<p<1\) or \(0<q<1\), representing functions carry an
additional polynomial correction reflecting the imbalance between angular and
radial summability.  The endpoint \(q=\infty\) is treated through the little
mixed-norm Fock space.  We use the following weighted scale.

\smallskip
\noindent
Let \(\alpha>0\), \(\sigma\in\mathbb R\), and \(0<p,q\le\infty\).  We denote by
\(\mathcal F_{\alpha;\sigma}^{p,q}\) the space of all entire functions \(f\)
such that
\[
\|f\|_{\mathcal F_{\alpha;\sigma}^{p,q}}
:=
\begin{cases}
\displaystyle
\left(
\int_0^\infty
\bigl[M_p(f,r)(1+r)^\sigma\bigr]^q
\,d\lambda_{\alpha q}(r)
\right)^{\frac{1}{q}},
& 0<q<\infty,\\[1em]
\displaystyle
\sup_{r\ge0}
M_p(f,r)e^{-\frac{\alpha}{2}r^2}(1+r)^\sigma,
& q=\infty,
\end{cases}
\]
is finite, with the usual interpretation when \(p=\infty\).  For \(\sigma=0\)
this is \(\mathcal F_{\alpha}^{p,q}\).

\smallskip
\noindent
We use the generalized conjugate exponent from the notation section; thus
\(p^*\) and \(q^*\) denote the generalized conjugates of \(p\) and \(q\),
respectively.  We also set
\[
\sigma(p,q):=
\left(\frac1p-1\right)_+
-
\left(\frac1q-1\right)_+,
\qquad
(x)_+:=\max\{x,0\},
\]
with the convention \(\frac{1}{\infty}=0\).

\smallskip
\noindent
For each \(\alpha>0\), we use the sesquilinear Gaussian pairing
\[
\langle f,g\rangle_\alpha
:=
\frac{\alpha}{\pi}
\int_{\C} f(z)\overline{g(z)}e^{-\alpha|z|^2}\,dA(z),
\]
whenever the integral is absolutely convergent.  For \(\tau>0\), let
\[
(D_\tau g)(z):=g(\tau z),\qquad z\in\C.
\]
For \(\alpha,\gamma>0\), we define the transported \(\gamma\)-pairing with
base parameter \(\alpha\) by
\[
\langle f,g\rangle_{\alpha\to\gamma}^{\operatorname{tr}}
:=
\big\langle f,D_{\alpha/\gamma}g\big\rangle_\alpha .
\]
On polynomial pairs this transported pairing agrees with the ordinary Gaussian
pairing at parameter \(\gamma\).  Its well-definedness on the spaces below is
proved in Section~\ref{sec:dual-spaces}.

\begin{theorem} 
\label{thm:duality-intro}
Let \(\alpha,\beta>0\), \(0<p\le\infty\), \(0<q<\infty\), and
\(
\gamma=\sqrt{\alpha\beta}\).
Then the continuous dual of \(\mathcal F_\alpha^{p,q}\) can be identified with
\(
\mathcal F_{\beta;\sigma(p,q)}^{p^*,q^*}
\)
under the transported pairing
\(\langle\cdot,\cdot\rangle_{\alpha\to\gamma}^{\operatorname{tr}}\).
\end{theorem}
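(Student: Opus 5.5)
We first reduce to the case \(\beta=\alpha\), so \(\gamma=\alpha\). The dilation \(D_{\alpha/\gamma}\) maps \(\mathcal F_{\beta;\sigma}^{p^*,q^*}\) isomorphically onto \(\mathcal F_{\alpha;\sigma}^{p^*,q^*}\) up to constants. Indeed, \(M_{p^*}(D_\tau g,r)=M_{p^*}(g,\tau r)\), and \(\tau=\alpha/\gamma=\sqrt{\alpha/\beta}\) gives \(e^{-\frac\beta2(\tau r)^2\cdot\frac{\alpha}{\beta}\cdot\frac{\beta}{\alpha}}\)-type matching. More precisely, \(\beta(\tau r)^2/\tau^2\)... we simply check that \(g\mapsto D_{\alpha/\gamma}g\) carries the Gaussian \(e^{-\frac{\beta}{2}|w|^2}\) to the weight needed to pair against \(e^{-\frac{\alpha}{2}|z|^2}\) inside \(e^{-\alpha|z|^2}\). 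The polynomial factor \((1+r)^\sigma\) changes only by a constant, so the theorem becomes: \((\mathcal F_\alpha^{p,q})^*\cong \mathcal F_{\alpha;\sigma(p,q)}^{p^*,q^*}\) under \(\langle\cdot,\cdot\rangle_\alpha\).

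\textbf{Boundedness of the pairing.} Write \(\langle f,g\rangle_\alpha\) in polar coordinates as \(\frac{\alpha}{\pi}\int_0^\infty\int f\bar g\,d\theta\, e^{-\alpha r^2}r\,dr\).

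In the Banach range \(p,q\ge1\), we apply Hölder in \(\theta\) (with \(p,p^*\)) and then in \(r\) (with \(q,q^*\)), splitting \(e^{-\alpha r^2}=e^{-\frac\alpha2 r^2}e^{-\frac\alpha2 r^2}\). Since \(\sigma=0\), this gives \(|\langle f,g\rangle_\alpha|\lesssim\|f\|_{\Fpq}\|g\|_{\mathcal F^{p^*,q^*}_{\alpha}}\).

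When \(p<1\), angular Hölder is unavailable. Instead we use the annular discretization together with the local subharmonic bound \(\sup_{A_k}|f|e^{-\frac\alpha2|z|^2}\lesssim (1+k)^{1/p}\times\)(local \(L^p\) mass over a thickened annulus). This bound comes from the Fock \(\ell^p\)-estimate on a \(\delta\)-lattice inside the annulus, which has about \(k\) points, combined with \(\ell^p\subset\ell^1\). The resulting factor \((1+r)^{1/p-1}\) is exactly \((1/p-1)_+\).

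When \(q<1\), we pass instead through Theorem~\ref{thm:estimate}. The pointwise bound \(M_p(f,r)\lesssim (1+r)^{-1/q}e^{\frac\alpha2 r^2}\|f\|\) allows \(\int M_p(f,r)e^{-\frac{\alpha}{2}r^2}(\cdots)r\,dr\) to be dominated via \(\int h\le(\sup h)^{1-q}\int h^q\). This produces a gain of \((1+r)^{1/q-1}\), i.e.\ the subtraction of \((1/q-1)_+\).

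Alternatively, and more robustly, we would test on atoms. By Theorem~\ref{thm:AD-main}, every \(f=\sum c_Q\kappa_{\zeta_Q}\) with \(\|c\|_{\ell^{p,q}_w}\lesssim\|f\|\). The reproducing property gives \(\langle \kappa_{\zeta},g\rangle_\alpha=\overline{g(\zeta)}e^{-\frac\alpha2|\zeta|^2}\). The pairing therefore becomes \(\sum c_Q\,\overline{g(\zeta_Q)}e^{-\frac\alpha2|\zeta_Q|^2}\), a discrete duality \((\ell^{p,q}_w)^*\). This dual is the weighted \(\ell^{p^*,q^*}\) space with weight \(w_k^{-1}\) in the Banach case, and with the sup-type modifications in the quasi-Banach case where \((\ell^p)^*=\ell^\infty\) blockwise. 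Comparing the resulting weights with sampled values of \(g\) via the discretization (and the jet bounds for \(g\)) yields exactly the exponent \(\sigma(p,q)\).

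\textbf{Surjectivity.} Given \(\Lambda\in(\Fpq)^*\), we define \(g(w):=\overline{\Lambda(K_w)}\). The map \(w\mapsto K_w\) is antiholomorphic into \(\Fpq\), since its difference quotients converge in norm by the kernel estimates, so \(g\) is entire. On polynomials, \(\Lambda f=\langle f,g\rangle_\alpha\), and polynomials are dense because \(q<\infty\). It remains to show \(\|g\|_{\mathcal F^{p^*,q^*}_{\alpha;\sigma}}\lesssim\|\Lambda\|\). We would do this by testing \(\Lambda\) on synthesized functions \(S^{\ker}_\delta c\) with \(c\) chosen dual-extremal to the sampled sequence \(\bigl(g(\zeta_Q)e^{-\frac\alpha2|\zeta_Q|^2}\bigr)\). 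This gives a bound on that sequence in the dual sequence norm. We then recover the continuous mixed norm of \(g\) from the samples using the annular discretization (sup over small discs for \(p^*=\infty\), and a sampling inequality otherwise).

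\textbf{Main obstacle.} The hardest part is this last step in the quasi-Banach case, namely converting the sup-in-shell control \(\sup_{\zeta_Q\in A_k}|g(\zeta_Q)|e^{-\frac\alpha2|\zeta_Q|^2}(1+k)^{\sigma}\) into \(M_\infty(g,r)\) with the correct power. The lattice is only \(\delta\)-dense, so sampling must be upgraded to sup-control via local Cauchy estimates on discs of radius \(\delta\) and a Neumann-type perturbation. This is where the smallness of \(\delta<\delta_0\) from Theorem~\ref{thm:AD-main} would be used. Matching the exact weight \(w_k=(1+k)^{1/q-1/p}\) against the exponent \(\sigma(p,q)\) across all four sign cases is the other bookkeeping risk.
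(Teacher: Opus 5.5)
Your reduction to $\beta=\alpha$ is right: with $\tau=\alpha/\gamma$ one has $\alpha/\tau^2=\beta$, so Lemma~\ref{lem:dual-dilation-weighted} applies. Your boundedness argument also agrees with the paper. It uses circle H\"older for $p\ge1$, the angular loss $(1+r)^{\frac1p-1}$ for $p<1$ (your lattice/subharmonicity argument gives a shell version of \eqref{eq:angular-comparison}), and Theorem~\ref{thm:estimate} with $\int h\le(\sup h)^{1-q}\int h^q$ for $q<1$.

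The gap is in the converse estimate $\|g\|_{\mathcal F^{p^*,q^*}_{\alpha;\sigma(p,q)}}\lesssim\|\Lambda\|$, which is the substance of the theorem. Testing $\Lambda$ on $S^{\ker}_\delta c$ only tells you that the samples $s_Q=g(\zeta_Q)e^{-\frac\alpha2|\zeta_Q|^2}$ at the \emph{fixed} centers lie in the K\"othe dual of $\ell^{p,q}_w$. Your bookkeeping there is correct: $w_k^{-1}=(1+k)^{\frac1p-\frac1q}$ produces exactly $\sigma(p,q)$ after shell normalization.

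To pass from these samples to $\sup_{A_k}|g|e^{-\frac\alpha2|z|^2}$, or to $\int_{A_k}|g|^{p^*}e^{-\frac{\alpha p^*}{2}|z|^2}\,dA$ when $p^*<\infty$, you need a \emph{lower} sampling inequality for the pocket centers in the weighted mixed space. Nothing in the paper provides one; only $\mathcal F^2_\alpha$ sampling is used, in Lemma~\ref{lem:kernel-frame}.

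The Cauchy-estimate route you sketch gives a banded inequality $X_k\le C\,\Sigma_k+C\delta\,(X_{k-1}+X_k+X_{k+1})$. Here $X_k$ is the continuous shell quantity and $\Sigma_k$ the sampled one. Absorbing the $\delta$-term requires knowing beforehand that the mixed norm of $g$ is finite, which is exactly what you are proving. There is also no operator here for a Neumann series to invert. So the key step is announced but not carried out.

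It is probably repairable: absorb in exponentially damped shell norms, which are finite by the a priori bound $|g(w)|\le\|\Lambda\|\,\|K_{\alpha,w}\|_{\mathcal F^{p,q}_\alpha}\lesssim\|\Lambda\|(1+|w|)^{\frac1q-\frac1p}e^{\frac\alpha2|w|^2}$, then let the damping tend to zero. That argument has to be written out.

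The paper avoids sampling by testing $\Lambda$ at arbitrary points rather than at the $\zeta_Q$:
\begin{itemize}
\item For $0<p\le1<q<\infty$ it chooses $w_k\in A_k$ nearly maximizing $|g|e^{-\frac\alpha2|w|^2}(1+|w|)^{\frac1p-1}$. It tests on $\sum_k b_k\kappa_{\alpha,w_k}$, whose norm is controlled by the one-point-per-shell synthesis Lemma~\ref{lem:dual-one-point-shell} (single-kernel shell profile plus the outer Schur test).
\item For $1<p\le\infty$ it uses the circle tests $T_ra$ of Lemma~\ref{lem:dual-circle-tests}. Young's inequality gives $\|T_ra\|_{\mathcal F^{p,q}_\alpha}\lesssim(1+r)^{\frac1q-1}\|a\|_{L^p(\mathbb T)}$, and $\Lambda(T_ra)=e^{-\frac\alpha2r^2}\frac1{2\pi}\int_0^{2\pi}a(\theta)\overline{g(re^{i\theta})}\,d\theta$. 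Taking $a$ to be the H\"older extremizer reads off $M_{p^*}(g,r)$ on the whole circle.
\end{itemize}
For $q\le1$ a single such test gives the bound. For $q>1$ the near-maximizing tests are superposed across shells, and weighted $L^q$-duality on $\mathbb N_0$ gives the outer $\ell^{q'}$ bound. This route uses neither the atomic decomposition nor any sampling theorem.

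You should also record uniqueness of the representative, via Gaussian orthogonality against monomials, since the identification requires it.
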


\smallskip
\noindent
We also record the corresponding duality theorem for the little endpoint space
\(f_\alpha^{p,\infty}\).

\begin{theorem} 
\label{thm:little-duality-intro}
Let \(\alpha,\beta>0\), \(0<p\le\infty\), and
\(
\gamma=\sqrt{\alpha\beta}\).
Then the continuous dual of \(f_\alpha^{p,\infty}\) can be identified with
\(
\mathcal F_{\beta;\sigma(p,\infty)}^{p^*,1}
\)
under the transported pairing
\(\langle\cdot,\cdot\rangle_{\alpha\to\gamma}^{\operatorname{tr}}\).
\end{theorem}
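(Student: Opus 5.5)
The plan is to reduce everything to the atomic decomposition (Theorem~\ref{thm:AD-main} with \(q=\infty\)) and to compute the transported pairing against a single kernel. Note first that \(\sigma(p,\infty)=(\frac1p-1)_+\) and \(w_k=(1+k)^{-1/p}\).

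\textbf{Kernel computation.} By the reproducing property of \(K_{\alpha,\zeta}\) in the \(\alpha\)-pairing, for every \(\zeta\in\C\),
\[
\langle \kappa_{\alpha,\zeta},D_{\alpha/\gamma}g\rangle_\alpha
= e^{-\frac{\alpha}{2}|\zeta|^2}\,\overline{g(\alpha\zeta/\gamma)} .
\]
With \(w=\alpha\zeta/\gamma\) and \(\gamma^2=\alpha\beta\), we have \(\frac{\beta}{2}|w|^2=\frac{\alpha}{2}|\zeta|^2\). Hence this quantity is exactly the Gaussian-normalized value \(\overline{g(w)}e^{-\frac{\beta}{2}|w|^2}\). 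The dilation \(\zeta\mapsto\alpha\zeta/\gamma\) maps the shells \(A_k\) onto comparable shells, up to a fixed factor \(\alpha/\gamma\), so the shell weights change only by constants.

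\textbf{Boundedness of the pairing.} Take \(f\in f_\alpha^{p,\infty}\) and expand \(f=S^{\ker}_\delta c\) with \(\|c\|_{\ell_w^{p,\infty}}\lesssim\|f\|\). I first need to check that in the little space the coefficients can be chosen with shell norms \((1+k)^{-1/p}\|c_k\|_{\ell^p}\to0\). For this I would run the construction on polynomials and use Corollary~\ref{cor:density-poly}. Then
\[
\bigl|\langle f,g\rangle^{\operatorname{tr}}_{\alpha\to\gamma}\bigr|
\le \sum_k \|c_k\|_{\ell^p}\,\bigl\|(g(\alpha\zeta_Q/\gamma)e^{-\frac\beta2|\alpha\zeta_Q/\gamma|^2})_{\zeta_Q\in A_k}\bigr\|_{\ell^{p^*}}.
\]
Here I use H\"older in each shell when \(p\ge1\), and the embedding \(\ell^p\subset\ell^1\) when \(p<1\), which is where \(p^*=\infty\) enters. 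Multiplying and dividing by \(w_k\) bounds this by
\[
\|c\|_{\ell^{p,\infty}_w}\sum_k(1+k)^{1/p}\,\|\text{samples}_k\|_{\ell^{p^*}} .
\]
The shell sum is then compared with \(\|g\|_{\mathcal F^{p^*,1}_{\beta;\sigma}}\) using subharmonicity (local mass) and the annular discretization. Counting \(\asymp 1+k\) points per shell, an \(\ell^{p^*}\)-shell norm is \(\asymp(1+k)^{1/p^*}M_{p^*}\). With the measure \(r\,dr\) the powers combine as \((1+k)^{1/p+1/p^*-1}=(1+k)^{\sigma(p,\infty)}\), which is exactly the announced weight. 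Absolute convergence of the pairing integral on polynomials, and then its continuous extension, follows from the same estimate.

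\textbf{Surjectivity and injectivity.} Given \(L\in(f_\alpha^{p,\infty})^*\), define
\[
g(w):=\overline{L(K_{\alpha,\gamma\bar{}\,\cdot})}\ \text{ suitably},
\qquad\text{precisely}\qquad
g(\alpha\zeta/\gamma)e^{-\frac\alpha2|\zeta|^2}:=\overline{L(\kappa_{\alpha,\zeta})}.
\]
This \(g\) is entire, since \(\zeta\mapsto K_{\alpha,\zeta}\) is anti-holomorphic into the space and \(L\) is continuous. To bound \(\|g\|\), I test \(L\) on finite sums \(\sum c_Q\kappa_{\xi_Q}\) with \(\xi_Q\in Q\) arbitrary. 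Their norms are \(\lesssim\|c\|_{\ell^{p,\infty}_w}\) by the perturbed-center synthesis bound, which is part (i) with floating centers, controlled by the re-centering envelope theorem. Duality of the weighted shell sequence space, whose dual is \(\ell^{p^*,1}\) with weight \(w_k^{-1}\), then bounds the shell sums of \(\sup_{\xi_Q\in Q}\) of the sampled values. Choosing the \(\xi_Q\) to nearly maximize is what controls the continuous norm \(\|g\|_{\mathcal F^{p^*,1}_{\beta;\sigma}}\lesssim\|L\|\). The pairing agrees with \(L\) on kernels, hence on polynomials, and the density of polynomials (Corollary~\ref{cor:density-poly}) extends this to all of \(f_\alpha^{p,\infty}\). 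Injectivity follows because vanishing on all \(\kappa_\zeta\) forces \(g\equiv0\).

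\textbf{Main obstacle.} The delicate step is the two-sided comparison between the shell-sampled discrete norm of \(g\) and \(\|g\|_{\mathcal F^{p^*,1}_{\beta;\sigma}}\), especially the lower bound for \(p<1\). There \(p^*=\infty\) and one must pass from \(\sup\) over samples to \(M_\infty\) with the weight \((1+r)^{1/p-1}\). I would handle it via the floating-center testing described above rather than via a sampling inequality.
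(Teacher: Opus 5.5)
Your argument is correct in outline, but it follows a different route from the paper in both directions. The paper first proves the equal-parameter case (Proposition~\ref{prop:dual-little-equal}) and then transports by $D_{\alpha/\gamma}$. Your identity $\langle\kappa_{\alpha,\zeta},D_{\alpha/\gamma}g\rangle_\alpha=\overline{g(w)}\,e^{-\frac{\beta}{2}|w|^2}$, $w=\alpha\zeta/\gamma$, does the transport in one step; your $g$ is simply $D_{\gamma/\alpha}g_L$, where $g_L(\zeta)=\overline{L(K_{\alpha,\zeta})}$ is the paper's canonical representative.

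For boundedness, the paper never uses the atomic decomposition. H\"older on circles (with \eqref{eq:angular-comparison} from $p$ to $1$ when $p<1$) together with $M_p(f,r)e^{-\frac{\alpha}{2}r^2}\le\|f\|_{\mathcal F^{p,\infty}_\alpha}$ bounds $\int_{\C}|f|\,|D_{\alpha/\gamma}g|\,e^{-\alpha|z|^2}\,dA$ directly. This also proves that the pairing integral converges absolutely for every $f$.

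Your detour through Theorem~\ref{thm:AD-main} (for $p<1$, through the whole jet and re-centering machinery) also yields the bound, with one gap to fill. Controlling the samples $|\langle\kappa_{\alpha,\zeta_Q},h\rangle_\alpha|$ does not justify exchanging $\sum_Q$ with the integral. Extending from polynomials does not show that the integral formula holds on all of $f^{p,\infty}_\alpha$ either. You need the same shell estimate for the Gaussian averages $\int_{\C}e^{-\frac{\alpha}{2}|z-\zeta_Q|^2}|h(z)|e^{-\frac{\alpha}{2}|z|^2}\,dA(z)$. The coefficient decay you planned to verify is never used.

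For the representer, the paper tests with one normalized kernel per shell when $p\le1$. When $p>1$ it tests with the circle averages $T_ra$, which needs the Fej\'er approximation in Lemma~\ref{lem:dual-circle-tests}. Your testing with one floating center per pocket plus block extremizers treats all $p$ uniformly and avoids that step.

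Two refinements. First, the floating-center synthesis bound needs no re-centering; note that Proposition~\ref{prop:recentering-floating} is stated only for $p<1$ and a fixed finite $\Lambda$. When $p<1$ the bound follows from the single-kernel profile \eqref{eq:packet-profile} and $p$-subadditivity. For $p\ge1$ it follows by interpolation as in Lemma~\ref{lem:single-shell-synthesis}, with the $\ell^\infty\to L^\infty$ bound now coming from counting at most one center per pocket.

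Second, your ``main obstacle'' is misplaced. For $p\le1$ one near-maximizing point per shell suffices, which is exactly the paper's argument. The case that needs care is $1<p\le\infty$, where $M_{p^*}$ must be recovered from pocket suprema. Put $G(\zeta):=|L(\kappa_{\alpha,\zeta})|$ and pick a near-maximizing radius $r_0\in[k,k+1)$. The circle $|z|=r_0$ cuts the pockets of one radial level of $A_k$ in arcs of length $2\pi/((2k+1)N_\delta)$. Since a supremum dominates an average, $\sum_{Q\in\mathcal P_{\delta,k}}\sup_Q G^{p^*}\ge\frac{(2k+1)N_\delta}{2\pi}\int_0^{2\pi}G(r_0e^{i\theta})^{p^*}\,d\theta$. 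The right-hand side equals $(2k+1)N_\delta\,M_{p^*}^{p^*}(g_L,r_0)e^{-\frac{\alpha p^*}{2}r_0^2}$. This gives exactly the factor $(1+k)^{1/p^*}$ your exponent count requires.
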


\smallskip
\noindent
Here and throughout the duality section, an identification \(X^*\simeq Y\)
under the pairing
\(\langle\cdot,\cdot\rangle_{\alpha\to\gamma}^{\operatorname{tr}}\) means that
each \(g\in Y\) defines a continuous linear functional \(F_g\) on \(X\) by
\[
F_g(f)
=
\langle f,g\rangle_{\alpha\to\gamma}^{\operatorname{tr}},
\qquad f\in X,
\]
and conversely every \(F\in X^*\) is represented uniquely in this way by some
\(g_F\in Y\), with
\[
\|F\|_{X^*}\asymp \|g_F\|_Y.
\]
The implicit constants in this equivalence depend only on the parameters
appearing in the corresponding duality theorem. 
Equivalently, the conjugate-linear map \(g\mapsto F_g\) identifies \(Y\) with
\(X^*\), with equivalent norms.

\smallskip
\noindent
These duality results extend two classical lines of theory.  First,
Theorem~\ref{thm:duality-intro} recovers the classical Fock-space duality with
the geometric relation
\(
\gamma=\sqrt{\alpha\beta}
\)
between the parameters of the primal space, the representing space, and the
pairing; see \cite[Theorems~2.23 and 2.24]{Zhu}.  In particular,
Theorem~\ref{thm:little-duality-intro} with \(p=\infty\) gives
\(
(f_\alpha^\infty)^*\simeq \mathcal F_\beta^1\),
which is the standard duality for the little Fock space; see
\cite[Theorem~2.26]{Zhu}.
Second, Theorem~\ref{thm:duality-intro} parallels mixed-norm Bergman duality,
studied for instance by Gadbois~\cite[Theorem~4.1]{Gad88} and by Moreno and
Pel\'aez~\cite[Theorem~1.1]{MP26}.  There the duality uses conjugate mixed
exponents and an adjusted radial weight; endpoint and small-exponent regimes
may also involve BMOA- or Bloch-type spaces.  In the present Fock setting the
correction is the polynomial factor
\(
(1+r)^{\sigma(p,q)}\),
while the Gaussian parameters retain the Fock-specific relation
\(\gamma=\sqrt{\alpha\beta}\).  

\smallskip
\noindent
\textbf{Proof highlight.}
The main difficulty is to identify the dual space uniformly in the Banach,
quasi-Banach, and endpoint regimes, while allowing different Gaussian
parameters in the primal space, the representing space, and the pairing.  In
the Banach mixed range \(1\leq p<\infty\) and \(1<q<\infty\), one may also use a
projection method: extend functionals to an ambient mixed-norm Lebesgue space,
apply the Benedek--Panzone duality theorem, and project back to the analytic
subspace; see \cite{BP61,Liu24}.  That approach is effective in this range, but
it relies on projection boundedness and Banach-space duality, and therefore
does not cover the quasi-Banach cases, the endpoint \(q=\infty\), or the full
weighted representing scale in Theorems~\ref{thm:duality-intro} and
\ref{thm:little-duality-intro}.

\smallskip
\noindent
Our proof is intrinsic to the analytic Fock spaces.  In the equal-parameter
case, every
\[
g\in \mathcal F_{\alpha;\sigma(p,q)}^{p^*,q^*}
\]
defines a bounded functional on \(\Fpq\) by
\[
F_g(f):=\langle f,g\rangle_\alpha .
\]
The estimate controls the Gaussian pairing on circles and then over radial
shells; the factor \((1+r)^{\sigma(p,q)}\) compensates for the quasi-Banach
losses and for the imbalance between \(p\) and \(q\).

\smallskip
\noindent
Conversely, for \(F\in(\Fpq)^*\), we define the canonical representative by
\[
g_F(w)=\overline{F(K_{\alpha,w})},\qquad w\in\mathbb C.
\]
Testing \(F\) on kernel families adapted to circles and shells, and using the
sharp kernel estimates together with the sharp circle-mean growth theorem from
Section~\ref{sec:mixed-fock-tools}, gives
\[
g_F\in \mathcal F_{\alpha;\sigma(p,q)}^{p^*,q^*}.
\]
Polynomial density and Gaussian orthogonality then identify \(F\) with pairing
against \(g_F\) and give uniqueness.  The general-parameter result follows from
the equal-parameter theorem by dilation transport, while the little endpoint
space \(f_\alpha^{p,\infty}\) is handled separately through finite shell testing,
which produces the outer \(\ell^1\)-type condition for the representative.

\smallskip
\noindent
Section~\ref{sec:dual-spaces} proves Theorems~\ref{thm:duality-intro} and
\ref{thm:little-duality-intro}.  Equal-parameter duality under
\(\langle\cdot,\cdot\rangle_\alpha\) is transported by dilation; the little
endpoint space is treated separately.

\subsection{Guide to the paper}
\label{subsec:guide-to-paper}
We close the introduction with the organization of the paper.

\smallskip
\noindent
\textbf{Main line of the paper.}
The paper passes from analytic estimates to discrete models, and then from
these models to structural consequences.  After the circle-mean estimate and
annular discretization are established, the later sections treat embeddings,
atomic decomposition, Carleson measure theory, and duality.

\smallskip
\noindent
\textbf{Proof architecture and structural mechanisms.}
Although the main theorems address different aspects of the mixed-norm theory,
their proofs are organized around three   mechanisms that connect the analytic
geometry of entire functions with the corresponding annular sequence model.

The first is a local mass principle for convex Laplace-type exponents. It
identifies the radial scale near which a weighted integral is concentrated and
converts that concentration into uniform two-sided estimates. In the present
setting, the same principle yields both the optimal polynomial correction in
the circle-mean estimate and the annular discretization of the mixed Gaussian
norm. Its formulation is not specific to Fock spaces and applies more generally
to Laplace-type integrals whose mass is concentrated near a moving maximizing
scale.

The second mechanism is the \(\delta\)-pocket discretization. It reconciles two
different geometries inherent in the problem: normalized Fock kernels are
localized at a fixed Euclidean scale, whereas the outer mixed norm records
global \(\ell^q\)-summability across radial annuli. The pocket model preserves
the local Euclidean--Gaussian geometry while retaining the correct annular
bookkeeping. It thereby provides the discrete framework for the embedding,
Carleson measure, and atomic decomposition theorems.

The third mechanism is a quantitative re-centering envelope for normalized Fock
kernels. It controls the cumulative effect of perturbing localized kernel
centers, both within individual pockets and across neighboring annular layers.
This stability becomes especially important when \(p<1\), where ordinary
convexity and Banach-space perturbation arguments are unavailable. The
re-centering envelope is the main new ingredient in the angular quasi-Banach
atomic decomposition and provides a robust method for transporting localized
kernel expansions without losing the mixed annular control.

\begin{figure}[htbp]
\centering
\begin{tikzpicture}[
    >=Stealth,
    font=\footnotesize,
    secbox/.style={
        draw,
        rounded corners=3pt,
        align=center,
        inner sep=5pt,
        minimum height=1.2cm
    },
    mainbox/.style={
        draw,
        rounded corners=3pt,
        align=center,
        inner sep=6pt,
        minimum height=1.35cm,
        line width=0.8pt
    },
    dasheddep/.style={->, dashed, line width=0.55pt},
    soliddep/.style={->, line width=0.65pt}
]

\node[mainbox, text width=4.8cm] (sec2) at (0,2.8)
{\textbf{Section~\ref{sec:mixed-fock-tools}}\\
Sharp circle-mean growth\\
and analytic tools};

\node[mainbox, text width=4.8cm] (sec5) at (0,0)
{\textbf{Section~\ref{sec:atomic}}\\
Atomic decomposition\\
and coefficient model};

\node[secbox, text width=4.3cm] (sec4) at (0,-2.8)
{\textbf{Section~\ref{sec:recentering-envelope}}\\
Re-centering\\
envelope theorem};

\node[secbox, text width=4.0cm] (sec3) at (7.2,2.8)
{\textbf{Section~\ref{sec:embeddings}}\\
Embeddings and\\
randomization};

\node[secbox, text width=3.8cm] (sec6) at (7.2,0.6)
{\textbf{Section~\ref{sec:carleson}}\\
Carleson measures};

\node[secbox, text width=4.1cm] (sec7) at (7.2,-1.8)
{\textbf{Section~\ref{sec:dual-spaces}}\\
Dual spaces and\\
kernel representatives};

\draw[dasheddep] (sec2.east) -- (sec3.west);
\draw[dasheddep] (sec2.south) -- (sec5.north);
\draw[dasheddep] (sec2.east) to[out=-10,in=180] (sec6.west);
\draw[dasheddep] (sec2.east) to[out=-25,in=180] (sec7.west);

\draw[soliddep] (sec4.north) -- (sec5.south);
\draw[soliddep] (sec5.east) -- (sec6.west);
\draw[soliddep] (sec5.east) to[out=-8,in=180] (sec7.west);

\begin{scope}[shift={(0,-4.5)}, font=\scriptsize]
    \draw[dasheddep] (0,0.25) -- (0.8,0.25);
    \node[anchor=west] at (0.95,0.25)
    {analytic tools from Section~\ref{sec:mixed-fock-tools}};

    \draw[soliddep] (0,-0.25) -- (0.8,-0.25);
    \node[anchor=west] at (0.95,-0.25)
    {structural dependence};
\end{scope}

\end{tikzpicture}
\caption{Main dependencies among the sections.}
\label{fig:main-dependencies}
\end{figure}

\smallskip
\noindent
\textbf{Organization of the paper.}
Section~\ref{sec:intro} fixes notation and states the main results.
Section~\ref{sec:mixed-fock-tools} develops the circle-mean estimate, annular
discretization, reproducing formula, packet estimates, density results, and
auxiliary operator estimates.  Section~\ref{sec:embeddings} proves the
embedding theorem and the randomization application.
Section~\ref{sec:recentering-envelope} proves the re-centering envelope
theorem for normalized Fock kernels.  Section~\ref{sec:atomic} proves
the atomic decomposition, first in the angular Banach range and then in
the angular quasi-Banach range.  Section~\ref{sec:carleson}
characterizes Carleson and vanishing Carleson measures by mixed shell
conditions.  Section~\ref{sec:dual-spaces} proves the duality theorems,
including the transported Gaussian pairing and the little endpoint
space.

\section*{Index of notations and recurring objects}


\begingroup
\small
\renewcommand{\arraystretch}{1.12}
\setlength{\arrayrulewidth}{0.3pt}
\arrayrulecolor{black!28}

\begin{longtable}{
>{\raggedright\arraybackslash}p{0.29\textwidth}
!{\color{black!35}\vrule width 0.35pt}
>{\raggedright\arraybackslash}p{0.63\textwidth}
}
\rowcolor{black!5}
\textbf{Notation or object} & \textbf{Meaning and location} \\
\hline
\endfirsthead

\rowcolor{black!5}
\textbf{Notation or object} & \textbf{Meaning and location} \\
\hline
\endhead

\multicolumn{2}{l}{\textbf{A. Basic notation and mixed-norm Fock spaces}}\\
\hline

\(\mathbb N_0,\mathbb C,\mathbb T,dA\)
&
Basic sets and planar Lebesgue measure, Subsec.~\ref{subsec:intro-def}.
\\
\hline
\(B(z,R),H(\mathbb C)\)
&
Euclidean ball and space of entire functions, Subsec.~\ref{subsec:intro-def}.
\\
\hline
\(M_p(f,r)\)
&
Angular \(L^p\)-mean on \(|z|=r\), Subsec.~\ref{subsec:intro-def}.
\\
\hline

\(d\lambda_{\alpha q}(r)\)
&
Radial Gaussian measure, Subsec.~\ref{subsec:intro-def}.
\\
\hline

\(\mathcal F_\alpha^{p,q}\)
&
Mixed-norm Fock space, Subsec.~\ref{subsec:intro-def}.
\\
\hline

\(f_\alpha^{p,\infty}\)
&
Little endpoint mixed-norm Fock space, Subsec.~\ref{subsec:intro-def}.
\\
\hline

\(p^*,q^*\)
&
Generalized conjugate exponents, Subsec.~\ref{subsec:intro-def}.
\\
\hline

\(\mathcal F_{\alpha;\sigma}^{p,q}\)
&
Weighted mixed-norm Fock space, Subsec.~\ref{subsec:dual-spaces-intro}.
\\
\hline

\(K_{\alpha,w},\kappa_{\alpha,w}\)
&
Fock kernel and normalized Fock kernel, Subsec.~\ref{subsec:intro-def}.
\\
\hline

\(\widetilde\kappa_{\alpha,w}^{p,q}\)
&
Mixed-norm-normalized Fock kernel, Subsec.~\ref{subsec:intro-def}.
\\
\hline

\multicolumn{2}{l}{\textbf{B. Annular quantities and analytic tools}}\\
\hline

\(A_k\)
&
Unit annular layer, Subsec.~\ref{subsec:intro-def}.
\\
\hline

\(B_p(f;k)\)
&
Shell supremum of \(M_p(f,r)e^{-\alpha r^2/2}\), Subsec.~\ref{subsec:intro-def}.
\\
\hline

Local mass principle
&
Convexity principle for sharp radial control, Subsec.~\ref{subsec:local-mass-principle}.
\\
\hline

Sharp circle-mean growth
&
Optimal \(M_p\)-growth estimate, Theorem~\ref{thm:estimate}.
\\
\hline

Annular discretization
&
Equivalent shell norm for \(\mathcal F_\alpha^{p,q}\), Subsec.~\ref{subsec:annular-discretization}.
\\
\hline

\(P_\alpha,P_{\alpha,R}\)
&
Fock projection and truncation, Subsec.~\ref{subsec:kernel-projection}.
\\
\hline

\(G_r\)
&
Ring packet centered at radius \(r\), Subsec.~\ref{subsec:packets-tests}.
\\
\hline

\multicolumn{2}{l}{\textbf{C. Pocket tilings and coefficient spaces}}\\
\hline
\(\delta\)-pocket discretization
&
Euclidean-localized annular discretization of the plane, Sec.~\ref{sec:recentering-envelope} and \ref{sec:atomic}.
\\
\hline
\(\mathcal P_\delta\)
&
\(\delta\)-pocket decomposition/tiling, Sec.~\ref{sec:recentering-envelope} and Subsec.~\ref{subsec:atomic-pockettiling}.
\\
\hline

\(Q,\zeta_Q\)
&
Pocket and distinguished center, Sec.~\ref{sec:recentering-envelope} and Subsec.~\ref{subsec:atomic-pockettiling}.
\\
\hline

\(\mathcal P_{\delta,k}\)
&
Pockets in the annulus \(A_k\), Subsec.~\ref{subsec:atomic-pockettiling}.
\\
\hline

\(Q^*\)
&
Fattened pocket, Subsec.~\ref{subsec:atomic-pockettiling}.
\\
\hline

\(c_{\rm in},C_{\rm out}\)
&
Uniform pocket geometry constants, Sec.~\ref{sec:recentering-envelope} and Subsec.~\ref{subsec:atomic-pockettiling}.
\\
\hline

\(\Phi_\delta\)
&
Bi-Lipschitz reindexing by \(\mathbb Z^2\), Sec.~\ref{sec:recentering-envelope} and  Subsec.~\ref{subsec:atomic-pockettiling}.
\\
\hline

\(\ell^{p,q}\)
&
Unweighted pocket coefficient space, Subsec.~\ref{subsec:atomic-pockettiling}.
\\
\hline

\(\ell^{p,q}_w\)
&
Weighted pocket coefficient space, Subsec.~\ref{subsec:atomic-pockettiling}.
\\
\hline

\(w_k=(1+k)^{\frac{1}{q}-\frac{1}{p}}\)
&
Shell weight in \(\ell_w^{p,q}\), Subsec.~\ref{subsec:atomic-pockettiling}.
\\
\hline

\multicolumn{2}{l}{\textbf{D. Atomic decomposition and re-centering}}\\
\hline

\(C_\delta^{\ker}\)
&
Kernel analysis operator, Subsec.~\ref{subsec:atomic-banach}.
\\
\hline

\(S_\delta^{\ker}\)
&
Kernel synthesis operator, Subsec.~\ref{subsec:atomic-banach}.
\\
\hline

\(R_\delta^{\ker}\)
&
Kernel remainder operator, Subsec.~\ref{subsec:atomic-banach}.
\\
\hline

\(C_\delta^{\ker}(I-R_\delta^{\ker})^{-1}\)
&
Corrected Banach-range coefficient recovery, Subsec.~\ref{subsubsec:atomic-banach-exact}.
\\
\hline

Jet atoms
&
Finite-order local atoms for \(0<p<1\), Subsec.~\ref{subsubsec:atomic-jet-setup}.
\\
\hline

Floating kernels
&
Kernels with moving centers inside pockets, Subsec.~\ref{subsubsec:atomic-jet-elimination}.
\\
\hline

Re-centering envelope theorem
&
Re-expansion at fixed pocket centers, Sec.~\ref{sec:recentering-envelope}.
\\
\hline

\(b_Q(\eta)\)
&
Re-centering coefficients, Theorem~\ref{thm:envelope-recentering}.
\\
\hline

\(\mathcal A_{\gamma,\beta}^{\exp}(\mathcal P_\delta)\)
&
Subexponential matrix algebra, Subsec.~\ref{subsec:recenter-setup}.
\\
\hline

\(A_\delta^{\mathrm{ker}}\) &  Kernel coefficient-recovery operator in the angular quasi-Banach range, Subsec.~\ref{subsubsec:atomic-quasi-proof}.
\\
\hline

\(S_\delta,A_\delta\)
&
Normalized synthesis and coefficient recovery, Subsec.~\ref{subsec:atomic-consequences}.
\\
\hline


\multicolumn{2}{l}{\textbf{E. Carleson measure notation}}\\
\hline

\(L_\alpha^s(\mu)\)
&
Gaussian \(L^s\)-space with measure \(\mu\), Subsec.~\ref{subsec:carmeas}.
\\
\hline

\(i_\mu\)
&
Carleson embedding map, Subsec.~\ref{subsec:carmeas}.
\\
\hline

\(U_{\mu,R}\)
&
Continuous localized ball-mass function, Subsec.~\ref{subsec:carmeas}.
\\
\hline

\(u_{\mu,R}\)
&
Discrete localized ball-mass sequence, Subsec.~\ref{subsec:carmeas}.
\\
\hline

\(p_s^*,q_s^*\)
&
Inner and outer Carleson exponents, Subsec.~\ref{subsec:carmeas}.
\\
\hline
\(\ell^{p_s^*,q_s^*}\)
&
Discrete shell mixed-norm space for ball-mass sequences, Subsec.~\ref{subsec:carmeas}.
\\
\hline
\(L_{\rm sh}^{p_s^*,q_s^*}(\mathbb C)\)
&
Continuous shell mixed-norm space, Subsec.~\ref{subsec:carmeas}.
\\
\hline

\(\ell^s(\omega)\)
&
Weighted discrete target space, Subsec.~\ref{subsec:carleson-discrete}.
\\
\hline

\(i_\omega\)
&
Weighted identity map, Subsec.~~\ref{subsec:carleson-discrete}.
\\
\hline

\(u^\sharp\)
&
Auxiliary Gaussian weight in the Carleson sufficiency argument, Subsec.~\ref{subsubsec:carleson-atomic-reduction}.
\\
\hline

\multicolumn{2}{l}{\textbf{F. Duality notation and pairings}}\\
\hline

\(\langle f,g\rangle_\alpha\)
&
Gaussian pairing at parameter \(\alpha\), Subsec.~\ref{subsec:dual-spaces-intro}.
\\
\hline

\(D_\tau\)
&
Dilation operator, Subsec.~\ref{subsec:dual-spaces-intro}.
\\
\hline

\(\gamma=\sqrt{\alpha\beta}\)
&
Geometric pairing parameter, Theorem~\ref{thm:duality-intro}.
\\
\hline

\(\langle f,g\rangle_{\alpha\to\gamma}^{\rm tr}\)
&
Transported Gaussian pairing, Subsec.~\ref{subsec:dual-spaces-intro}.
\\
\hline

\(\sigma(p,q)\)
&
Polynomial correction in duality, Subsec.~\ref{subsec:dual-spaces-intro}.
\\
\hline

\(g_F\)
&
Canonical kernel representative, Subsec.~\ref{subsec:dual-prelim}.
\\
\hline

\(\mathcal F_{\beta;\sigma(p,q)}^{p^*,q^*}\)
&
Representing space in full-space duality, Theorem~\ref{thm:duality-intro}.
\\
\hline

\(\mathcal F_{\beta;\sigma(p,\infty)}^{p^*,1}\)
&
Representing space for the little endpoint, Theorem~\ref{thm:little-duality-intro}.
\\
\hline
\(T_r \) & Circle-test operator used in angular dual testing, Subsec.~\ref{subsec:dual-equal}.\\
\hline

\end{longtable}

\arrayrulecolor{black}
\endgroup

\section{Sharp circle-mean growth and analytic tools}
\label{sec:mixed-fock-tools}

This section develops the analytic tools used later for
\(\mathcal F_\alpha^{p,q}\).  We prove the circle-mean growth estimate from a
local mass principle for convex Laplace-type exponents, derive the annular
discretization of the mixed Gaussian norm, and record the reproducing formula,
packet and kernel estimates, density results, and auxiliary operator estimates.

\subsection{A local mass principle for convex Laplace exponents}
\label{subsec:local-mass-principle}

We use the following convexity principle for both the circle-mean estimate and
the annular discretization.

\begin{lemma}[Uniform local mass]
\label{lem:uniform-local-mass-convex}
Let \(\Phi:\mathbb R\to\mathbb R\) be convex, let \(a>0\), \(b\in\mathbb R\), and set
\[
F(x):=\Phi(x)-a e^{2x}+bx,\qquad x\in\mathbb R.
\]
Then for every \( c \in (0, \frac{1}{4}]\) and \(x \in \R\),
\begin{equation}
\label{eq:uniform-local-mass-convex}
\int_{x-h_x}^{x+h_x} e^{F(u)}\,du
\ge
2e^{-4ac^2} h_x e^{F(x)},
\qquad
h_x:=\min\left\{\frac14, c \, e^{-x}\right\}.
\end{equation}
\end{lemma}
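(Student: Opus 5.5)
The idea is to bound \(e^{F}\) from below near \(x\) by an exponential of a linear function in \(u-x\), times the constant \(e^{-4ac^2}\). We then symmetrize in \(u-x\) so that the unknown slope cancels by \(\cosh\ge1\).

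\textbf{Step 1: the convex part.} Since \(\Phi\) is a finite convex function on \(\mathbb R\), the function \(\Phi(u)+bu\) is convex and has a subgradient \(s\in\mathbb R\) at \(x\). Hence
\[
\Phi(u)+bu\ \ge\ \Phi(x)+bx+s(u-x),\qquad u\in\mathbb R .
\]

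\textbf{Step 2: the concave term.} Write \(u=x+t\) with \(|t|\le h_x\). Then
\[
e^{2u}-e^{2x}=e^{2x}\bigl(2t+R(t)\bigr),\qquad R(t):=e^{2t}-1-2t\ \ge 0 .
\]
We use the elementary bound \(R(t)\le 2t^2e^{2|t|}\le 2e^{1/2}t^2\le 4t^2\) for \(|t|\le \frac14\); for \(t\le0\) one even has \(R(t)\le 2t^2\). Since \(h_x\le c\,e^{-x}\), we get \(e^{2x}t^2\le c^2\), and therefore
\[
-a\bigl(e^{2u}-e^{2x}\bigr)\ \ge\ -2ae^{2x}t-4ac^2 .
\]
This is the only place where both constraints in \(h_x\) are used. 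The cap \(\frac14\) keeps the Taylor remainder quadratic with a harmless constant, and \(c\,e^{-x}\) makes the quadratic term \(O(c^2)\) uniformly in \(x\).

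\textbf{Step 3: combine and symmetrize.} Put \(\lambda:=s-2ae^{2x}\). Steps 1 and 2 give, for \(|t|\le h_x\),
\[
e^{F(x+t)}\ \ge\ e^{F(x)}\,e^{-4ac^2}\,e^{\lambda t}.
\]
Splitting the integral at \(x\) and substituting \(t\mapsto -t\) on the left half, we obtain
\[
\int_{x-h_x}^{x+h_x}e^{F(u)}\,du\ \ge\ e^{F(x)}e^{-4ac^2}\int_0^{h_x}\bigl(e^{\lambda t}+e^{-\lambda t}\bigr)\,dt .
\]
Since \(e^{\lambda t}+e^{-\lambda t}=2\cosh(\lambda t)\ge 2\), the right side is at least \(2h_x e^{-4ac^2}e^{F(x)}\), which is \eqref{eq:uniform-local-mass-convex}.

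The key point is that symmetrization removes any dependence on the unknown slope \(\lambda\). The main obstacle is Step 2: a naive symmetric pairing of \(-ae^{2u}\) fails, because the first-order term \(2ae^{2x}t\) is of size \(c\,e^{x}\), which is unbounded. The fix is to move this first-order term into the linear exponent \(\lambda t\), where \(\cosh\ge1\) absorbs it, and to estimate only the second-order remainder.
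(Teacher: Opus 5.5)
Your proof is correct. It runs on the same mechanism as the paper's: symmetrize about $x$ so that first-order terms cancel, and control the second-order part of $e^{2u}$ by $e^{2x}h_x^2\le c^2$. The two arguments differ only in how the symmetrization is organized.

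The paper never introduces a slope. It pairs $x+t$ with $x-t$ from the start and uses midpoint convexity $\Phi(x+t)+\Phi(x-t)\ge 2\Phi(x)$, so that the term $bx$ cancels. It then bounds the symmetric second difference $e^{2x}(e^{2t}+e^{-2t}-2)\le 8e^{2x}t^2$. Finally, AM--GM gives $e^{F(x+t)}+e^{F(x-t)}\ge 2e^{F(x)}e^{-4ae^{2x}t^2}\ge 2e^{F(x)}e^{-4ac^2}$.

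You instead take a supporting line of $\Phi(u)+bu$ and a one-sided Taylor bound for $e^{2u}$. This gives the pointwise minorant $e^{F(x+t)}\ge e^{F(x)-4ac^2}e^{\lambda t}$, and your step $\cosh(\lambda t)\ge 1$ is exactly the paper's AM--GM. Adding your inequalities at $t$ and $-t$ reproduces the paper's symmetric inequality, since $R(t)+R(-t)=e^{2t}+e^{-2t}-2$.

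The paper's version is marginally more economical: midpoint convexity suffices, and there is no subgradient or slope to track. Yours yields a one-sided pointwise bound, which carries slightly more information.

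One correction to your closing remark. The naive symmetric pairing of $-ae^{2u}$ does not fail: in $e^{2(x+t)}+e^{2(x-t)}-2e^{2x}$ the first-order terms $\pm 2e^{2x}t$ cancel exactly, and that is precisely the paper's route. Moving the first-order term into $\lambda t$ is needed only because you work one-sidedly.
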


\begin{proof}
For \(|t|\le \frac{1}{4}\), the convexity of $\Phi$ gives
\[
\Phi(x+t)+\Phi(x-t)\ge 2\Phi(x).
\]
Using this and the inequality $e^{2t}+e^{-2t}-2\le 8t^2$ for $|t| \leq \frac{1}{4}$ yields 
\[
F(x+t)+F(x-t)\ \ge\ 2F(x)- 8 a\,e^{2x}t^2,
\]
which by the arithmetic-geometric mean inequality implies that
\[
e^{F(x+t)}+e^{F(x-t)}
\ge 2\exp\!\Big(\frac{F(x+t)+F(x-t)}{2}\Big)
\ge 2e^{F(x)}e^{- 4 a e^{2x}t^2}.
\]
By the definition of \(h_x\), we have \(e^x h_x\le c\). Thus, 
\(
e^{-4ae^{2x}t^2}\ge e^{-4a c^2}\) for
\(0\le t\le h_x\).
Integrating in $t\in[0,h_x]$, we get
\[
\int_{x-h_x}^{x+h_x}e^{F(u)}\,du
=
\int_0^{h_x}\big(e^{F(x+t)}+e^{F(x-t)}\big)\,dt
\ge
2e^{-4a c^2}h_xe^{F(x)}.
\]
This is \eqref{eq:uniform-local-mass-convex}.
\end{proof}

\subsection{Sharp circle-mean growth and bounded point evaluations}
\label{subsec:sharp-growth}

We prove Theorem~\ref{thm:estimate} and record two consequences: sharpness
by monomial tests and the
point-evaluation estimate Lemma~\ref{lem:p-est}.

\begin{proof}[Proof of Theorem~\ref{thm:estimate}]
The case $q = \infty$ follows from the definition of
$\norm{\cdot}_{\FpqA{p}{\infty}{\alpha}}$.  Let $0 < q < \infty$ and
$0\ne f \in \Fpq$.
For $r >0$, write $x=\log r$ and set
\[
\Phi(x):=\log M_p\big(f,e^x\big),
\qquad
F(x):=q\,\Phi(x)-\frac{\alpha q}{2}e^{2x}+2x .
\]
Applying
Lemma~\ref{lem:uniform-local-mass-convex} to the convex function \(q\Phi\),
with $a = \frac{\alpha q}{2}, b = 2$, and $c = \frac{1}{4}$, we get
\[
\int_{x-h_x}^{x+h_x} e^{F(u)}\,du
\ \ge\ 2 e^{-\frac{\alpha q}{8}} \,h_x\,e^{F(x)},
\qquad
h_x:=\min\left\{\frac14, \frac{e^{-x}}{4} \right\}.
\]
For \(r\ge1\), \(x\ge0\), so \(h_x= \frac{e^{-x}}{4}=\frac{1}{4r}\).
Since $$e^{F(x)}= M_p^q(f,r) e^{-\frac{\alpha q }{2}r^2}\,r^2 $$ and $du= \frac{ds}{s}$ under $s=e^u$, we obtain
\[
M_p^q(f,r) e^{-\frac{\alpha q }{2}r^2}\,r \leq 2 e^{\frac{\alpha q}{8}} 
\int_{re^{-h_x}}^{re^{h_x}} M_p^q(f,s) e^{-\frac{\alpha q }{2}s^2}\,s\,ds \ \le \frac{ 2 e^{\frac{\alpha q}{8}}}{q \alpha}  \|f\|_{\Fpq}^q,
\]
which implies \eqref{eq-newest} for all $r \geq 1$. 

\smallskip
\noindent
For $0 \leq r \leq 1$, monotonicity of \(r\mapsto M_p(f,r)\) gives
\[
M_p^q(f, r) \leq M_p^q(f, 1) \leq  \left(q\alpha\int_{1}^{\infty} e^{-\frac{\alpha q}{2}t^2}tdt\right)^{-1} \left(q\alpha\int_{1}^{\infty} M^q_p(f,t) e^{-\frac{\alpha q}{2}t^2}tdt\right) \leq e^{\frac{\alpha q}{2}}   \|f\|^q_{\Fpq}.
\]
Since
\(
(1+r)^{-\frac{1}{q}}e^{\frac{\alpha}{2}r^2}\asymp 1\) for \(0\le r\le1\),
we obtain \eqref{eq-newest} also for \(0\le r\le1\).
\end{proof}

\medskip
\noindent
Sharpness follows from monomials.

\begin{lemma}\label{lem-sh}
Let $\alpha>0$ and $0<p,q\le\infty$. Then
\[
\sup_{n\ge 0}\frac{M_p(z^n, r)}{\|z^n\|_{\mathcal F_\alpha^{p,q}}}
\asymp (1+r)^{-\frac{1}{q}}\,e^{\frac{\alpha }{2}r^2},
\qquad r > 0,
\]
where the implicit constants depend only on $\alpha$ and $q$.
\end{lemma}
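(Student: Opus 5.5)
The upper bound is immediate from Theorem~\ref{thm:estimate} applied to \(f=z^n\): for every \(n\),
\[
M_p(z^n,r)\le C_{\alpha,q}(1+r)^{-\frac1q}e^{\frac\alpha2 r^2}\|z^n\|_{\mathcal F_\alpha^{p,q}},
\]
and taking the supremum over \(n\) gives the ``\(\lesssim\)'' half. It therefore remains to exhibit, for each \(r>0\), one monomial that nearly attains the bound. Since \(|z^n|=r^n\) on \(|z|=r\), we have \(M_p(z^n,r)=r^n\) independently of \(p\). This explains why the constants depend only on \(\alpha,q\), and it reduces everything to the one-dimensional quantity
\[
\|z^n\|_{\mathcal F_\alpha^{p,q}}^q=\alpha q\int_0^\infty t^{nq+1}e^{-\frac{\alpha q}{2}t^2}\,dt=\Bigl(\tfrac{2}{\alpha q}\Bigr)^{\frac{nq}{2}}\Gamma\Bigl(\tfrac{nq}{2}+1\Bigr)
\]
for \(q<\infty\), and \(\|z^n\|_{\mathcal F_\alpha^{p,\infty}}=\sup_t t^ne^{-\frac\alpha2t^2}=(n/(\alpha e))^{n/2}\).

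For \(q=\infty\), choose \(n\) so that \(n/\alpha\) is close to \(r^2\). Then
\[
\frac{r^n}{(n/(\alpha e))^{n/2}}e^{-\frac\alpha2 r^2}=\exp\Bigl(\tfrac n2\bigl[\log(\alpha r^2/n)+1\bigr]-\tfrac\alpha2 r^2\Bigr).
\]
Writing \(n=\alpha r^2(1+\epsilon)\), the exponent equals \(-\frac{\alpha r^2}{2}\bigl[(1+\epsilon)\log(1+\epsilon)-\epsilon\bigr]\approx-\frac{\alpha r^2\epsilon^2}{4}\). Taking \(n=\lceil\alpha r^2\rceil\) gives \(\epsilon\lesssim 1/(\alpha r^2)\), so the exponent is \(O(1/(\alpha r^2))\). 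This is bounded for \(r\ge1\); for \(r\le1\) it suffices to take \(n=0\), and the lower bound is \(\asymp 1\).

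For \(0<q<\infty\) and \(r\ge1\), set \(m=nq/2\) and choose \(n=\lceil\alpha r^2\rceil\), so that \(m\approx\alpha q r^2/2\). Stirling's formula gives
\[
\Gamma(m+1)\asymp\sqrt{m}\,(m/e)^m ,
\]
hence
\[
\|z^n\|^q\asymp\sqrt m\,\Bigl(\frac{n}{\alpha e}\Bigr)^{\frac{nq}{2}}.
\]
Consequently,
\[
\frac{r^{nq}e^{-\frac{\alpha q}{2}r^2}}{\|z^n\|^q}\asymp m^{-\frac12}\Bigl[r^ne^{-\frac\alpha2r^2}(n/(\alpha e))^{-n/2}\Bigr]^q\asymp m^{-\frac12}\asymp r^{-1},
\]
where the bracket is handled by the \(q=\infty\) computation. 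Taking \(q\)-th roots yields \(M_p(z^n,r)/\|z^n\|\gtrsim r^{-1/q}e^{\frac\alpha2r^2}\asymp(1+r)^{-1/q}e^{\frac\alpha2 r^2}\).

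For \(0<r\le1\), the choice \(n=0\) gives the ratio \(1/\|1\|=1\asymp(1+r)^{-1/q}e^{\frac\alpha2r^2}\). The Stirling step requires \(m\) bounded below, and this holds for \(r\ge1\) because \(n\ge1\) there; any bounded range of \(m\) only affects constants depending on \(\alpha,q\).

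The main point requiring care is this optimization: the rounding \(n=\lceil\alpha r^2\rceil\) must cost only a bounded factor. That follows from the quadratic behaviour of \((1+\epsilon)\log(1+\epsilon)-\epsilon\) together with \(\epsilon=O(1/r^2)\), which makes the loss \(e^{O(1)}\) uniformly in \(r\ge1\).
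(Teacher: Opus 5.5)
Your proof is correct, but it takes a different route from the paper.

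\textbf{How the paper argues.} The paper does not invoke Theorem~\ref{thm:estimate} here. It proves both halves of the lemma directly from the monomial norms. After Stirling, it sets $\lambda=\alpha r^2$ and $b_n(\lambda)=(e\lambda/n)^{n/2}n^{-1/(2q)}$, and studies the ratio $b_{n+1}/b_n$ to show that any maximizing index satisfies $n_0=\lambda+O(1)$. It then evaluates $\log b_{n_0}=\frac{\lambda}{2}-\frac{1}{2q}\log\lambda+O(1)$, which yields the upper and lower bounds at once.

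\textbf{How you argue.} You get the upper bound directly from the already-proved circle-mean estimate. There is no circularity, since Theorem~\ref{thm:estimate} is established via the local mass principle before this lemma. For the lower bound you need only the single test index $n=\lceil\alpha r^2\rceil$, so no monotonicity analysis of $b_n$ is needed.

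\textbf{Comparison.} Your route is shorter. The paper's route is self-contained: it shows on the monomial class alone, without the local mass principle, that $\sup_n M_p(z^n,r)/\|z^n\|$ has exactly the rate $(1+r)^{-1/q}e^{\frac{\alpha}{2}r^2}$, and it locates the extremal index.

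\textbf{One step to tighten.} Your step ``exponent $\approx-\frac{\alpha r^2\epsilon^2}{4}$'' should be made rigorous. Use the inequality $(1+\epsilon)\log(1+\epsilon)-\epsilon\le\frac{\epsilon^2}{2}$, valid for all $\epsilon\ge0$ because the left side vanishes at $0$ with second derivative $\frac{1}{1+\epsilon}\le1$. This gives the exponent bound $\ge-\frac{(n-\lambda)^2}{4\lambda}>-\frac{1}{4\alpha}$ for all $r\ge1$. That covers the case $\alpha r^2<1$, where $\epsilon$ is not small.
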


\begin{proof}
Since \(M_p(z^n,r)=r^n\) for every \(0<p\le\infty\), the monomial norm is
independent of \(p\).  For \(n\ge1\),
\[
\|z^n\|_{\mathcal F_\alpha^{p,q}}
=
\begin{cases}
\displaystyle
\left(\frac{2}{\alpha q}\right)^{\frac n2}
\Gamma\left(\frac{nq}{2}+1\right)^{\frac1q},
& 0<q<\infty,\\[1.2em]
\displaystyle
\left(\frac{n}{\alpha e}\right)^{\frac n2},
& q=\infty.
\end{cases}
\]
Moreover, \(\|1\|_{\mathcal F_\alpha^{p,q}}=1\). In particular, by Stirling's
formula and with the convention \(\frac{1}{\infty}=0\),
\[
\|z^n\|_{\Fpq}
\asymp
\left(\frac{n}{e\alpha}\right)^{\frac n2} n^{\frac{1}{2q}},
\qquad n\ge1,
\]
where the implicit constants depend only on \(\alpha\) and \(q\).
Using this and setting $\lambda:=\alpha r^{2}$, we get
\begin{equation*}
\frac{M_p(z^n, r)}{\|z^n\|_{\Fpq}}
\asymp
\Big(\frac{e\lambda}{n}\Big)^{\frac{n}{2}}\,n^{-\frac{1}{2q}}
=:b_n(\lambda),
\qquad n\ge 1.
\end{equation*}
Thus, up to constants depending only on $\alpha,q$,
$
\sup_{n\ge 0}\frac{M_p(z^n, r)}{\|z^n\|_{\Fpq}}
\asymp 1+\sup_{n\ge 1} b_n(\lambda).
$
It remains to show that
\begin{equation}\label{eq:reduction}
1 + \sup_{n\ge 1} b_n(\lambda) \asymp e^{\frac{\lambda}{2}}(1 + \lambda)^{-\frac{1}{2q}}, 
\qquad \lambda > 0.
\end{equation}
A direct computation gives
$$
\frac{b_{n+1}(\lambda)}{b_n(\lambda)} = \left( \frac{n}{n+1} \right)^{\frac{n}{2} + \frac{1}{2q}} \sqrt{\frac{e \lambda}{n+1}}\; \geq \; 1 
\quad \text{if and only if} \quad 
(n+1) \left(1 + \frac{1}{n}\right)^{n+\frac{1}{q}} \leq e \lambda.
$$
The quantity
\[
\tau_n:=e^{-1}(n+1)\left(1+\frac1n\right)^{n+\frac1q}
\]
satisfies
\[
0\le \tau_n-n\le C_q,\qquad n\ge1,
\]
for a constant \(C_q>0\) depending only on \(q\), since
\[
0\le
n\left[\left(1+\frac1n\right)^{\frac{1}{q}}-1\right]
\le \tau_n-n
\le
n\left[\left(1+\frac1n\right)^{1+\frac{1}{q}}-1\right]
\le C_q .
\]
Since
\[
\frac{b_{n+1}(\lambda)}{b_n(\lambda)}\ge1
\quad\Longleftrightarrow\quad
\tau_n\le \lambda,
\]
it follows that, for \(\lambda\ge C_q+1\), the sequence
\((b_n(\lambda))_{n\ge1}\) is increasing for \(n\le \lambda-C_q\) and
decreasing for \(n\ge \lambda+1\). Hence any maximizing index
\(n_0=n_0(\lambda)\) satisfies
\[
\lambda-C_q\le n_0\le \lambda+1.
\]
In particular, \(n_0=\lambda+O(1)\).
Hence
\[
\log b_{n_0}(\lambda)
=\frac{n_0}{2}\log\Big(\frac{e\lambda}{n_0}\Big)-\frac{1}{2q}\log n_0
 =  \frac{\lambda}{2}-\frac{1}{2q}\log\lambda+O(1).
\]
Thus \eqref{eq:reduction} holds for $\lambda \geq C_q + 1$. 

\smallskip
\noindent
If $0 < \lambda\le C_q + 1$, then
\[
b_n(\lambda)\le \left(\frac{e (C_q + 1)}{n}\right)^{\frac{n}{2}}\lesssim 1,
\ n\ge1, \quad \text{hence,} \quad 1+\sup_{n\ge1}b_n(\lambda)\asymp 1 \asymp e^{\frac{\lambda}{2}}(1+\lambda)^{-\frac{1}{2q}}.
\]
Thus \eqref{eq:reduction} holds for all \(\lambda>0\).
\end{proof}

We record the following estimate.

\begin{corollary}
For each $\alpha > 0$ and $0 < p, q \le \infty$,
\[
\sup_{\|f\|_{\Fpq} \leq 1} M_p(f, r) \; \asymp \; (1+r)^{-\frac{1}{q}}e^{\frac{\alpha }{2}r^2}, \quad r \geq 0,
\]
where the implicit constants depend only on $\alpha$ and $q$.
\end{corollary}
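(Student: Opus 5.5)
The corollary combines two results already proved: Theorem~\ref{thm:estimate} gives the upper bound, and Lemma~\ref{lem-sh} gives the lower bound. Both sides are therefore already in hand, and only the normalization has to be tracked.

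\emph{Upper bound.} Fix \(r\ge0\) and take any \(f\in\mathcal F_\alpha^{p,q}\) with \(\|f\|_{\mathcal F_\alpha^{p,q}}\le1\). Estimate \eqref{eq-newest} gives
\[
M_p(f,r)\le C_{\alpha,q}(1+r)^{-\frac1q}e^{\frac{\alpha}{2}r^2}.
\]
The constant depends only on \(\alpha\) and \(q\). Taking the supremum over the unit ball yields the bound \(\lesssim\).

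\emph{Lower bound.} For each \(n\ge0\), the monomial \(z^n\) lies in \(\mathcal F_\alpha^{p,q}\); its norm was computed explicitly in the proof of Lemma~\ref{lem-sh}. The function \(f_n:=z^n/\|z^n\|_{\mathcal F_\alpha^{p,q}}\) is therefore admissible. Because \(M_p(\cdot,r)\) is positively homogeneous, this gives
\[
\sup_{\|f\|\le1}M_p(f,r)\ \ge\ \sup_{n\ge0}\frac{M_p(z^n,r)}{\|z^n\|_{\mathcal F_\alpha^{p,q}}}.
\]
For \(r>0\), Lemma~\ref{lem-sh} bounds the right-hand side below by \(c_{\alpha,q}(1+r)^{-1/q}e^{\alpha r^2/2}\). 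At \(r=0\), the test function \(f_0=1\) gives \(M_p(1,0)=1\), and this equals the target expression \((1+0)^{-1/q}e^{0}=1\).

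There is no real obstacle: the only points needing care are the homogeneity of \(M_p\) under scaling and the endpoint \(r=0\), which Lemma~\ref{lem-sh} states only for \(r>0\). Combining the two bounds proves the two-sided estimate, with constants depending only on \(\alpha\) and \(q\).
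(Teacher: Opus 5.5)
Your proposal is correct and follows the paper's argument: the upper bound is Theorem~\ref{thm:estimate} applied on the unit ball, and the lower bound comes from testing on the normalized monomials $z^n/\|z^n\|_{\mathcal F_\alpha^{p,q}}$ via Lemma~\ref{lem-sh}. Your separate treatment of $r=0$, using $\|1\|_{\mathcal F_\alpha^{p,q}}=1$, is a small point of care that the paper's one-line proof leaves implicit.
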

 
\begin{proof}
The upper bound is Theorem~\ref{thm:estimate}; the lower bound follows from
Lemma~\ref{lem-sh} by testing on monomials.
\end{proof}
 
\begin{lemma} \label{lem:p-est}
Let \(\alpha>0\) and \(0<p,q\leq\infty\).  Then, for every
\(f\in \mathcal F^{p,q}_\alpha\),
\[
|f(z)|
\lesssim
(1+|z|)^{\frac1p-\frac1q}
e^{\frac{\alpha}{2}|z|^2}
\|f\|_{\mathcal F^{p,q}_\alpha},
\qquad z\in\C,
\]
where the implicit constant depends only on \(\alpha,p\), and \(q\).
\end{lemma}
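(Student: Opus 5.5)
The plan is to combine Theorem~\ref{thm:estimate} with a local sub-mean-value estimate on a small disc adapted to the angular scale. Fix \(z\) with \(|z|=r\). For \(|z|\le 2\) the bound is immediate. In that range, monotonicity of \(M_\infty\) and the Cauchy formula give \(|f(z)|\lesssim M_p(f,3)\) for \(p\ge1\), which is then controlled by Theorem~\ref{thm:estimate}. For \(p<1\) we apply the subharmonicity of \(|f|^p\) on a disc of radius \(1/2\) centered at \(z\) and integrate the resulting annular average over circles of radii in \([r-\frac12,r+\frac12]\). Each circle mean \(M_p(f,s)\) is then bounded by Theorem~\ref{thm:estimate}, with all factors \(\asymp1\).

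For \(r\ge2\), the point is that the Gaussian weight oscillates on the scale \(1/r\) radially but only on the scale \(1\) angularly after normalization. So we work with \(g(w)=f(w)e^{-\alpha\bar z w+\frac{\alpha}{2}|z|^2}\), that is, \(f\) multiplied by \(1/\kappa_z\). This satisfies \(|g(w)|=|f(w)|e^{-\frac\alpha2|w|^2}e^{\frac\alpha2|w-z|^2}\), so \(|g|\asymp|f|e^{-\frac\alpha2|w|^2}\) on \(B(z,1)\).

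Subharmonicity of \(|g|^p\) on \(B(z,1)\) gives
\[
|f(z)|^pe^{-\frac{\alpha p}{2}r^2}\lesssim \int_{B(z,1)}|f(w)|^pe^{-\frac{\alpha p}{2}|w|^2}\,dA(w)\le \int_{r-1}^{r+1}\Big(\int_0^{2\pi}|f(se^{i\theta})|^p\,d\theta\Big)e^{-\frac{\alpha p}{2}s^2}s\,ds .
\]
Here we dropped the angular restriction, which costs nothing essential: the arc of the circle of radius \(s\) meeting \(B(z,1)\) has angular length \(\asymp 1/r\), while the full circle has length \(2\pi\). Keeping the restriction would not help, since we only control \(M_p\), not its angular distribution.

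The inner integral is \(2\pi M_p^p(f,s)\). Theorem~\ref{thm:estimate} gives \(M_p(f,s)e^{-\frac\alpha2 s^2}\lesssim (1+s)^{-1/q}\|f\|\). Hence the right-hand side is \(\lesssim \int_{r-1}^{r+1}(1+s)^{-p/q}s\,ds\,\|f\|^p\asymp r^{1-p/q}\|f\|^p\). Taking \(p\)-th roots gives \(|f(z)|e^{-\frac\alpha2 r^2}\lesssim r^{\frac1p-\frac1q}\|f\|\), as claimed.

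For \(p=\infty\) the argument is trivial: \(|f(z)|\le M_\infty(f,r)\), and Theorem~\ref{thm:estimate} gives the bound with exponent \(-1/q\).

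The main care point is the normalization step. The disc must have fixed Euclidean radius, which is why we multiply by \(1/\kappa_z\) rather than using a disc of radius \(1/r\). With a disc of radius \(1/r\), the area factor \(r^{2}\) would appear and lose a power. With the fixed-radius disc, the angular measure \(d\theta\) combines with the Jacobian \(s\) to produce exactly the factor \(r^{1/p}\). We will also check that the constant in \(e^{\frac\alpha2|w-z|^2}\le e^{\alpha/2}\) is uniform, which it is.
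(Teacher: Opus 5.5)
Your proof is correct, but it takes a different route from the paper. The paper's proof is two lines. It quotes the angular comparison \eqref{eq:angular-comparison} with $p_1=p$, $p_2=\infty$, namely $M_\infty(f,r)\lesssim(1+r)^{1/p}M_p(f,r)$ at the \emph{same} radius, obtained by rescaling a disc lemma from \cite{A16}. It then applies Theorem~\ref{thm:estimate} once, at radius $r$.

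You instead apply the sub-mean-value inequality to $|f/\kappa_z|^p$ on the fixed disc $B(z,1)$, where the correction factor $e^{\frac{\alpha}{2}|w-z|^2}$ is bounded. You then enlarge to the annulus $r-1\le|w|\le r+1$ and apply Theorem~\ref{thm:estimate} at every radius $s$ there. The Jacobian $s\,ds$ over an interval of length $2$ produces exactly the factor $r^{1/p}$.

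The paper's route is shorter, but the same-radius comparison it relies on cannot hold with a constant independent of $f$. By Lemma~\ref{lem:packet-profile}, $M_\infty(G_t,1)/M_p(G_t,1)\asymp(1+\alpha t)^{\frac{1}{2p}}\to\infty$ as $t\to\infty$. By contrast, \eqref{eq:angular-comparison} would bound this ratio by a constant multiple of $2^{1/p}$. Correct disc inequalities of this Hardy--Littlewood type need a larger radius (or a Hardy-space norm) on the right-hand side. After rescaling, a radial shift of fixed size costs a factor exponential in $r$ against the Gaussian weight, so that fix does not recover the lemma either.

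Your argument avoids the problem entirely, because it only uses the weighted quantities $M_p(f,s)e^{-\frac{\alpha}{2}s^2}$ for $|s-r|\le 1$, which Theorem~\ref{thm:estimate} controls uniformly. In that sense, your local normalization by $\kappa_z$ is what actually makes the estimate go through. Your separate treatment of $|z|\le 2$ is harmless but unnecessary, since the same disc argument works for every $z$.
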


\begin{proof}
It suffices to control the circular supremum \(M_\infty(f,r)\).  We first
record the standard change of angular exponent.  If \(0<p_1\leq p_2\leq
\infty\), then
\begin{equation}\label{eq:angular-comparison}
M_{p_2}(f,r)
\lesssim
(1+r)^{\frac1{p_1}-\frac1{p_2}}M_{p_1}(f,r),
\qquad r\geq0.
\end{equation}
Indeed, for \(r>0\), set \(\rho=r+1\), \(t=\frac{r}{r+1}\), and
\(f_\rho(z)=f(\rho z)\) on \(\D\).  By \cite[Lemma~A]{A16},
\[
M_{p_2}(f_\rho,t)
\lesssim
(1-t)^{\frac1{p_2}-\frac1{p_1}}M_{p_1}(f_\rho,t),
\]
with a constant depending only on \(p_1,p_2\).  Since \(\rho t=r\) and
\(1-t=(1+r)^{-1}\), the displayed estimate follows; the case \(r=0\) follows
by continuity.

\smallskip
\noindent
Taking \(p_1=p\) and \(p_2=\infty\), we obtain
\[
M_\infty(f,r)\lesssim (1+r)^{\frac{1}{p}}M_p(f,r).
\]
The sharp circle-mean estimate in Theorem~\ref{thm:estimate} gives
\[
M_p(f,r)
\lesssim
(1+r)^{-\frac{1}{q}}e^{\frac{\alpha}{2}r^2}
\|f\|_{\mathcal F^{p,q}_\alpha}.
\]
Combining the last two estimates yields
\[
M_\infty(f,r)
\lesssim
(1+r)^{\frac1p-\frac1q}
e^{\frac{\alpha}{2}r^2}
\|f\|_{\mathcal F^{p,q}_\alpha}.
\]
Finally, with \(r=|z|\), we have \(|f(z)|\leq M_\infty(f,r)\), which proves
the claim.
\end{proof}

\subsection{Annular discretization of the mixed norm}
\label{subsec:annular-discretization}

The mixed norm has two levels: angular \(L^p\)-means on circles and radial
\(L^q\)-summability.  A second consequence of the local mass principle is that
the radial \(L^q\)-summability can be replaced, up to equivalence, by an outer
\(\ell^q\)-summation over the unit annuli
\[
A_k=\{z\in\C:k\le |z|<k+1\}.
\]
This annular model is used in the atomic decomposition, Carleson measure theory,
and duality.

\smallskip
\noindent
Recall that, for an entire function \(f\) and \(k\in\N_0\),
\[
B_p(f;k):=
\sup_{r\in[k,k+1)}
M_p(f,r)e^{-\frac{\alpha}{2}r^2}.
\]

\begin{lemma}[Local shell control]
\label{lem:window}
Let \(\alpha>0\), \(0<p\le\infty\), and \(0<q<\infty\). Then
\begin{equation}
\label{eq:window-B}
B_p^q(f;k)
\lesssim
\frac{1}{1+k}
\int_{[k-2,k+2]\cap[0,\infty)}
M_p^q(f,s)e^{-\frac{\alpha q}{2}s^2}s\,ds
\end{equation}
for every entire function \(f\) and every \(k\in\N_0\), where the  
constant depends   on \(\alpha\) and \(q\).
\end{lemma}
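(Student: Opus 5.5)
We apply Lemma~\ref{lem:uniform-local-mass-convex} pointwise at each radius \(r\in[k,k+1)\), exactly as in the proof of Theorem~\ref{thm:estimate}, and then take the supremum over \(r\). The only change from that proof is that we keep the integral localized near \(r\) instead of enlarging it to the full norm. Fix \(f\not\equiv0\); if \(f\equiv 0\) there is nothing to prove.

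Since \(\log M_p(f,e^x)\) is convex in \(x\) (Hardy's convexity theorem, which is also the convexity input in the proof of Theorem~\ref{thm:estimate}), we set
\[
F(x):=q\log M_p(f,e^x)-\tfrac{\alpha q}{2}e^{2x}+2x .
\]
We apply Lemma~\ref{lem:uniform-local-mass-convex} with \(a=\frac{\alpha q}{2}\), \(b=2\) and \(c=\frac14\). For \(x=\log r\) this gives
\[
M_p^q(f,r)e^{-\frac{\alpha q}{2}r^2}r^2\,h_x
\lesssim
\int_{re^{-h_x}}^{re^{h_x}}M_p^q(f,s)e^{-\frac{\alpha q}{2}s^2}s\,ds,
\qquad h_x=\min\Bigl\{\tfrac14,\tfrac1{4r}\Bigr\}.
\]
We next place the integration window inside \([k-2,k+2]\). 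For \(r\ge1\) we have \(h_x=\frac1{4r}\), so \(re^{\pm h_x}=r\pm O(1)\). Indeed \(re^{1/(4r)}\le r+\frac14 e^{1/4}<r+1\) and \(re^{-1/(4r)}\ge r-\frac14\). Hence the window lies in \([k-1,k+2]\), and the left side equals \(\frac r4 M_p^q(f,r)e^{-\frac{\alpha q}{2}r^2}\). Since \(r\asymp 1+k\) for \(r\in[k,k+1)\) with \(k\ge1\), dividing by \(r\) and taking the supremum over \(r\in[k,k+1)\) yields \eqref{eq:window-B} for \(k\ge1\).

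The case \(k=0\), i.e. \(r\in[0,1)\), is the main point needing care, because \(h_x\) degenerates differently there and the factor \(r^2h_x\) tends to zero as \(r\to0\). Here we use monotonicity of \(r\mapsto M_p(f,r)\), as in the proof of Theorem~\ref{thm:estimate}. For \(r<1\),
\[
M_p(f,r)e^{-\frac{\alpha}{2}r^2}\le M_p(f,r)\le M_p(f,s)\quad\text{for all } s\in[1,2].
\]
Averaging \(M_p^q(f,s)\) against \(e^{-\frac{\alpha q}{2}s^2}s\,ds\) over \([1,2]\subset[-2,2]\cap[0,\infty)\), whose total mass is a positive constant depending only on \(\alpha\) and \(q\), gives
\[
B_p^q(f;0)\lesssim\int_0^2 M_p^q(f,s)e^{-\frac{\alpha q}{2}s^2}s\,ds .
\]
This is \eqref{eq:window-B} for \(k=0\), since \(1+k=1\). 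All constants depend only on \(\alpha\) and \(q\): through \(e^{\alpha q/8}\) from the lemma and the Gaussian mass of \([1,2]\).
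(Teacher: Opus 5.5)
Your proposal is correct and follows essentially the same argument as the paper: for \(k\ge1\) you apply Lemma~\ref{lem:uniform-local-mass-convex} with \(a=\frac{\alpha q}{2}\), \(b=2\), \(c=\frac14\), place the window \([re^{-1/(4r)},re^{1/(4r)}]\) inside \([k-2,k+2]\), and use \(r\asymp 1+k\); for \(k=0\) you use the monotonicity of \(M_p(f,\cdot)\) and average over \([1,2]\). The only difference is cosmetic: you bound the upper endpoint by \(re^{1/(4r)}\le r+\frac14e^{1/4}\), whereas the paper uses \(re^{h}\le r+\frac14+\frac1{16r}\).
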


\begin{proof}
For \(k=0\), since \(r\mapsto M_p(f,r)\) is nondecreasing,
one has \(
B_p^q(f;0)
\le M_p^q(f,1).
\)
On the other hand,
\[
\int_1^2 M_p^q(f,s)e^{-\frac{\alpha q}{2}s^2}s\,ds
\ge
M_p^q(f,1)\int_1^2 e^{-\frac{\alpha q}{2}s^2}s\,ds .
\]
Since \([1,2]\subset [0,2]\), this gives \eqref{eq:window-B} for \(k=0\).

\smallskip
\noindent
It remains to consider \(k\ge1\); we may assume \(f\ne0\).  Fix
\(r\in[k,k+1)\) and put \(x=\log r\).  Applying
Lemma~\ref{lem:uniform-local-mass-convex} exactly as in the proof of
Theorem~\ref{thm:estimate}, with
\[
\Phi(x)=q\log M_p(f,e^x),\qquad
a=\frac{\alpha q}{2},\qquad b=2,\qquad c=\frac14,
\]
we obtain, since \(r\ge1\),
\[
M_p^q(f,r)e^{-\frac{\alpha q}{2}r^2}r
\lesssim
\int_{re^{-h_x}}^{re^{h_x}}
M_p^q(f,s)e^{-\frac{\alpha q}{2}s^2}s\,ds,
\qquad
h_x=\frac{e^{-x}}4=\frac1{4r}.
\]
It remains to locate the window.  Since \(h_x=\frac{1}{4r}\), the
inequalities \(e^{-t}\ge1-t\) and \(e^t\le1+t+t^2\) for
\(0<t\le \frac14\) give
\[
re^{-h_x}\ge r-\frac14 
\qquad \text{ and } \qquad
re^{h_x}\le r+\frac14+\frac1{16r}.
\]
Thus, for \(r\in[k,k+1)\),
\[
[re^{-h_x},re^{h_x}]
\subset [k-2,k+2]\cap[0,\infty).
\]
Since \(r\asymp 1+k\) on \([k,k+1)\), we conclude that
\[
M_p^q(f,r)e^{-\frac{\alpha q}{2}r^2}
\lesssim
\frac1{1+k}
\int_{[k-2,k+2]\cap[0,\infty)}
M_p^q(f,s)e^{-\frac{\alpha q}{2}s^2}s\,ds .
\]
Taking the supremum over \(r\in[k,k+1)\) gives \eqref{eq:window-B}.
\end{proof}

\begin{proposition}[Annular discretization]\label{prop:annular}
Let \(\alpha>0\) and $0 < p, q \le\infty$. Then 
\[
\|f\|_{\Fpq}^q
\;\asymp\;
\sum_{k=0}^\infty (1+k)\,B_p^q(f;k)
\qquad (0<q<\infty) \qquad \text{and} \qquad \|f\|_{\FpqA{p}{\infty}{\al}}=\sup_{k\ge 0} B_p(f;k)
\]
for every entire function $f$. In the case \(0<q<\infty\), the implicit
constants depend only on \(\alpha\) and \(q\).

\end{proposition}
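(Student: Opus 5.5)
The case \(q=\infty\) is immediate from the definitions: since \([0,\infty)=\bigcup_{k\ge0}[k,k+1)\), we have
\[
\sup_{r\ge0}M_p(f,r)e^{-\frac{\alpha}{2}r^2}=\sup_{k\ge0}B_p(f;k).
\]
So I focus on \(0<q<\infty\) and prove the two inequalities separately.

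\emph{Upper bound for the norm.} I split the radial integral over the unit intervals \([k,k+1)\). On each such interval we have \(M_p^q(f,r)e^{-\frac{\alpha q}{2}r^2}\le B_p^q(f;k)\), and
\[
\int_k^{k+1}\alpha q\,r\,dr=\frac{\alpha q}{2}(2k+1)\asymp 1+k.
\]
Summing over \(k\) gives
\[
\|f\|_{\Fpq}^q\le \alpha q\sum_{k\ge0}(k+\tfrac12)B_p^q(f;k)\lesssim\sum_{k\ge0}(1+k)B_p^q(f;k).
\]
No convexity is needed for this direction.

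\emph{Lower bound for the norm.} I use the local shell control Lemma~\ref{lem:window}. Multiplying \eqref{eq:window-B} by \(1+k\) gives
\[
(1+k)B_p^q(f;k)\lesssim\int_{[k-2,k+2]\cap[0,\infty)}M_p^q(f,s)e^{-\frac{\alpha q}{2}s^2}s\,ds.
\]
Summing over \(k\in\N_0\), each point \(s\ge0\) lies in at most five of the windows \([k-2,k+2]\). Hence
\[
\sum_{k\ge0}(1+k)B_p^q(f;k)\lesssim 5\int_0^\infty M_p^q(f,s)e^{-\frac{\alpha q}{2}s^2}s\,ds=\frac{5}{\alpha q}\|f\|_{\Fpq}^q.
\]
The constants depend only on \(\alpha\) and \(q\), inherited from Lemma~\ref{lem:window}.

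The only substantive step is the lower bound. All of its difficulty, namely passing from a pointwise supremum on a shell to an integral average over a window of bounded width, is already handled by Lemma~\ref{lem:window} through the local mass principle. What remains is the bounded-overlap count of the windows, which is routine.
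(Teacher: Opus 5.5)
Your proof is correct and matches the paper's argument: the case $q=\infty$ is read off from the definition, the upper bound comes from splitting the radial integral over the unit shells $[k,k+1)$, and the lower bound comes from Lemma~\ref{lem:window} plus the bounded overlap of the windows $[k-2,k+2]$. Your count of at most five overlapping windows and the explicit factor $\frac{1}{\alpha q}$ simply make quantitative the paper's ``uniformly bounded overlap'' step.
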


\begin{proof}
The case $q = \infty$ follows from the definition of
$\norm{\cdot}_{\FpqA{p}{\infty}{\al}}$.  Let $0<q<\infty$.
Decomposing the radial integral into unit shells gives
\[
\|f\|_{\Fpq}^q
\asymp \sum_{k=0}^\infty\int_k^{k+1} M_p^q(f,r)\, e^{-\frac{\al q }{2}r^2}\,r\,dr
\le \sum_{k=0}^\infty B_p^q(f;k) \int_k^{k+1}r\,dr
\lesssim \sum_{k=0}^\infty (1+k)\,B_p^q(f;k).
\]
On the other hand, by Lemma~\ref{lem:window},
\[
(1+k)\,B_p(f;k)^q \lesssim  \int_{[k-2,k+2]\cap[0,\infty)} M_p^q(f,s)\, e^{-\frac{\al q }{2}s^2}\,s\,ds.
\]
Summing over $k\ge0$ and using the uniformly bounded overlap of the intervals
$[k-2,k+2]$, we obtain
\[
\sum_{k=0}^\infty (1+k)\,B_p(f;k)^q
\lesssim \int_0^\infty M_p^q(f,s)\, e^{-\frac{\al q }{2}s^2}\,s\,ds
= \frac{C_1}{q\alpha}\,\|f\|_{\Fpq}^q.
\]
\end{proof}

\subsection{Global reproducing formula on \texorpdfstring{\(\mathcal F_\alpha^{p,q}\)}{F alpha p,q}}
\label{subsec:kernel-projection}

Recall that
\[
K_{\alpha,w}(z)=e^{\alpha\overline w z},
\qquad
\kappa_{\alpha,w}(z)=
e^{\alpha\overline w z-\frac{\alpha}{2}|w|^2}.
\]
When the Gaussian parameter \(\alpha\) is fixed, we write
\(K_w=K_{\alpha,w}\) and \(\kappa_w=\kappa_{\alpha,w}\).

\begin{definition}[Fock projection]\label{def:Pal}
Define
\[
(P_{\alpha}f)(z):=\frac{\alpha}{\pi}\int_{\C} f(w)\,e^{\alpha z\overline w-\alpha|w|^2}\,dA(w),
\qquad z\in\C,
\]
whenever the integral is finite. For $R>0$, define the truncated operator
\[
(P_{\alpha,R}f)(z):=\frac{\alpha}{\pi}\int_{|w|\le R} f(w)\,e^{\alpha z\overline w-\alpha|w|^2}\,dA(w).
\]
\end{definition}

\begin{proposition}
[Reproducing formula]\label{prop:fock-reproducing}
Let $\alpha>0$ and $0<p,q\le \infty$.  For each
$f\in \mathcal F^{p,q}_\alpha$, the function $P_\alpha f$ is well defined and
entire, and $P_\alpha f=f$ on $\C$. Moreover,
\(
P_{\alpha,R}f \rightarrow f\) locally uniformly on \(\C\) as \(R\to\infty.
\)
\end{proposition}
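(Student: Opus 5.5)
The plan is to reduce everything to the pointwise bound of Lemma~\ref{lem:p-est} and then use the classical reproducing formula for polynomials (or, more robustly, for functions with slightly better Gaussian decay), passing to the limit by dominated convergence.

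\textbf{Step 1: well-definedness and entireness.} By Lemma~\ref{lem:p-est}, every \(f\in\mathcal F_\alpha^{p,q}\) satisfies
\[
|f(w)|\lesssim(1+|w|)^{\frac1p-\frac1q}e^{\frac\alpha2|w|^2}\|f\|_{\mathcal F_\alpha^{p,q}}.
\]
Hence for \(|z|\le\rho\) the integrand of \(P_\alpha f(z)\) is dominated by
\[
C(1+|w|)^{N}e^{\alpha\rho|w|-\frac\alpha2|w|^2},
\]
which is integrable over \(\C\) and independent of \(z\) in the disc. Thus the integral converges absolutely and locally uniformly in \(z\). Entireness then follows from Morera's theorem, or by differentiating under the integral sign, since \(z\mapsto e^{\alpha z\overline w}\) is entire. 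The same domination shows \(P_{\alpha,R}f\to P_\alpha f\) locally uniformly, because the tail integrals over \(|w|>R\) of the dominating function tend to zero uniformly for \(|z|\le\rho\).

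\textbf{Step 2: \(P_\alpha f=f\).} Fix \(z\). I would expand \(f(w)=\sum_n a_nw^n\) and integrate in polar coordinates. On each circle \(|w|=s\), orthogonality of \(e^{in\theta}\) gives
\[
\frac1{2\pi}\int_0^{2\pi} f(se^{i\theta})\,e^{\alpha z se^{-i\theta}}\,d\theta=\sum_n a_n\frac{(\alpha z)^n s^{2n}}{n!}.
\]
This step is legitimate because, for fixed \(s\), the Taylor series of \(f\) converges uniformly on the circle. Integrating against \(\frac{\alpha}{\pi}e^{-\alpha s^2}\cdot 2\pi s\,ds\) term by term, using \(\int_0^\infty 2\alpha s^{2n+1}e^{-\alpha s^2}ds=n!/\alpha^n\), yields \(\sum_n a_nz^n=f(z)\). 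Summing first over circles of radius \(\le R\) gives exactly \(P_{\alpha,R}f(z)\), so \(P_{\alpha,R}f(z)\) equals a series with terms \(a_nz^n\gamma(n+1,\alpha R^2)/n!\), where \(\gamma\) is the lower incomplete gamma function.

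\textbf{Main obstacle.} The delicate point is justifying the interchange of the \(n\)-sum with the radial integral over \([0,\infty)\). Two routes are available.

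\emph{Preferred route (avoids coefficient estimates).} Work with the truncated integral on \([0,R]\), where the Taylor series converges uniformly on the compact disc \(|w|\le R\), so the interchange is trivial. Then let \(R\to\infty\): the left side converges to \(P_\alpha f(z)\) by Step 1. On the right, each term \(a_nz^n\gamma(n+1,\alpha R^2)/n!\) increases in modulus to \(a_nz^n\) and is dominated by \(|a_n||z|^n\), which is summable since \(f\) is entire. By dominated convergence for series, the right side tends to \(f(z)\).

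\emph{Alternative route.} Bound the coefficients via Cauchy's estimate and Lemma~\ref{lem:p-est}: \(|a_n|\le s^{-n}M_\infty(f,s)\), then optimize over \(s\).

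The preferred route also yields the final claim, \(P_{\alpha,R}f\to f\) locally uniformly, since the domination \(\sum|a_n|\rho^n<\infty\) is uniform for \(|z|\le\rho\).
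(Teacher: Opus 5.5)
Your proof is correct. Step 1 is essentially the paper's argument: the paper bounds the integrand uniformly on compacta by $(1+|w|)^{\frac1p-\frac1q}e^{-\frac{\alpha}{4}|w|^2}$, using $\re(z\overline w)\le R_K^2+\frac{|w|^2}{4}$, which is equivalent to your bound $e^{\alpha\rho|w|-\frac{\alpha}{2}|w|^2}$.

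For the identity $P_\alpha f=f$ you take a genuinely different route. The paper dilates. For $0<\rho<1$, the function $f_\rho(z)=f(\rho z)$ lies in $\mathcal F_\alpha^2$ by Lemma~\ref{lem:p-est}. The classical Hilbert--Fock reproducing formula then gives $P_\alpha f_\rho=f_\rho$. Dominated convergence as $\rho\uparrow1$ yields $P_\alpha f=f$, and the local uniform convergence of $P_{\alpha,R}f$ follows from Step 1.

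You instead compute $P_{\alpha,R}f$ explicitly. Using Taylor coefficients and angular orthogonality, you obtain the summability transform $\sum_n a_nz^n\gamma(n+1,\alpha R^2)/n!$. This approach has two advantages:
\begin{enumerate}
\item It is self-contained. It does not invoke the $\mathcal F_\alpha^2$ reproducing formula, which is itself usually proved by this same orthogonality computation.
\item It proves more. It shows $P_{\alpha,R}f\to f$ locally uniformly for every entire $f$. The hypothesis $f\in\mathcal F_\alpha^{p,q}$ is needed only in Step 1, to make the full integral absolutely convergent and to identify it with the limit.
\end{enumerate}
The paper's route is shorter once the classical formula is taken as known.

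The interchange you single out as the main obstacle is in fact not delicate. Set $h_n(s)=a_n\frac{(\alpha z)^n}{n!}s^{2n}\,2\alpha s\,e^{-\alpha s^2}$. Then $\sum_n\int_0^\infty|h_n(s)|\,ds=\sum_n|a_n||z|^n<\infty$. So Tonelli--Fubini justifies integrating term by term over all of $[0,\infty)$ directly, once Step 1 has licensed Fubini in polar coordinates. The truncation detour and the Cauchy-estimate alternative are therefore unnecessary, though harmless.
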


\begin{proof}
Fix $f\in \mathcal F^{p,q}_\alpha$ and $K\Subset\C$, and let
$R_K:=\sup_{z\in K}|z|$.  By Lemma~\ref{lem:p-est},
\[
|f(w)|\lesssim \; \|f\|_{\mathcal F^{p,q}_\alpha}\, (1+|w|)^{\frac1p-\frac1q}\, e^{\frac{\alpha}{2}|w|^2} .
\]
Moreover, for $z\in K$ we have $\re(z\overline w)\le |z||w|\le R_K|w|
\le R_K^2+\frac{|w|^2}{4}$, and hence
\[
\left|e^{\alpha z\overline w-\alpha|w|^2}\right|
= e^{\alpha\Re(z\overline w)-\alpha|w|^2}
\le e^{\alpha R_K^2}e^{-\frac{3\alpha}{4}|w|^2}.
\]
Therefore,
\[
\left|f(w)e^{\alpha z\overline w-\alpha|w|^2}\right|
\lesssim \|f\|_{\mathcal F^{p,q}_\alpha} \,(1+|w|)^{\frac1p-\frac1q}e^{-\frac{\alpha}{4}|w|^2}\in L^1(\C),
\]
uniformly for $z\in K$.  Hence $P_\alpha f(z)$ is absolutely convergent for each
$z\in\C$, \(P_{\alpha,R}f\to P_\alpha f\) uniformly on $K$, and $P_\alpha f$ is
entire.

\smallskip
\noindent
Fix $0<\rho<1$ and set $f_\rho(z):=f(\rho z)$.  By Lemma~\ref{lem:p-est},
\[
|f_\rho(w)|^2e^{-\alpha|w|^2}
\lesssim \|f\|_{\mathcal F^{p,q}_\alpha}^2\,(1+\rho|w|)^{2\left(\frac1p-\frac1q\right)}
e^{-\alpha(1-\rho^2)|w|^2},
\]
and the right-hand side is integrable over $\C$. Thus $f_\rho\in\mathcal F^2_\alpha$.
The classical Hilbert--Fock reproducing formula gives \(P_\alpha f_\rho=f_\rho\).
Letting $\rho\uparrow1$ and using dominated convergence gives \(P_\alpha f=f\).
\end{proof}

\subsection{Ring packets and testing functions}
\label{subsec:packets-tests}

We introduce annulus-localized model functions used in sharpness and necessity
arguments.  For \(r\ge0\), the packet \(G_r\) is the normalized Fock kernel
centered on the positive real axis at distance \(r\).  Its mass is concentrated
near the circle of radius \(r\).

\smallskip
\noindent
The estimates below give the circle-mean profile of \(G_r\), its
mixed-norm size, and, as a consequence, the mixed-norm size of the reproducing
kernels \(K_w\).  They are used in the embedding theorem and in necessity
arguments.

\begin{definition}[Ring packets]\label{def:ring-packets}
For $r\ge 0$ define
\[
G_r(z):=e^{\alpha r z-\frac{\alpha}{2}r^2},\qquad z\in\C.
\]
\end{definition}

\begin{lemma} \label{lem:packet-profile}
Fix $\alpha>0$ and $0<p\le\infty$.
Then 
\begin{equation}\label{eq:packet-profile}
M_p(G_r,s)\,e^{-\frac{\alpha }{2}s^{2}} \asymp (1+\alpha r s)^{-\frac{1}{2p}}
e^{-\frac{\alpha }{2}(s-r)^2},
\qquad r,s\ge0.
\end{equation}
Moreover, if $r\ge1$, then
\begin{equation}\label{eq:packet-peak}
\sup_{|s-r|\le1}M_p(G_r,s)\,e^{-\frac{\alpha }{2}s^{2}} \asymp r^{-\frac{1}{p}}.
\end{equation}
In both estimates, the implicit constants depend only on \(\alpha\) and \(p\).
\end{lemma}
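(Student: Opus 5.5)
The plan is to compute \(|G_r(se^{i\theta})|\) explicitly and reduce \eqref{eq:packet-profile} to a Laplace-type asymptotic for a one-dimensional angular integral. Since \(|G_r(se^{i\theta})|=e^{\alpha rs\cos\theta-\frac{\alpha}{2}r^2}\), we have
\[
M_p(G_r,s)e^{-\frac{\alpha}{2}s^2}
=e^{-\frac{\alpha}{2}(r-s)^2}\,e^{-\alpha rs}\,M_p\bigl(e^{\alpha rs\cos\theta}\bigr),
\]
where the last factor is the \(L^p(\T)\)-norm of \(\theta\mapsto e^{\alpha rs\cos\theta}\). Thus everything reduces to the claim that, with \(\lambda:=\alpha rs\ge0\),
\[
\left(\frac{1}{2\pi}\int_0^{2\pi}e^{-p\lambda(1-\cos\theta)}\,d\theta\right)^{\frac1p}\asymp(1+\lambda)^{-\frac{1}{2p}},
\]
with constants depending only on \(p\) (and trivially \(=1\) for \(p=\infty\), consistent with \(\frac1{2p}=0\)).

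To verify this I would split into \(\lambda\le1\) and \(\lambda\ge1\). For \(\lambda\le1\) the integrand lies in \([e^{-2p},1]\), so the mean is \(\asymp1\). For \(\lambda\ge1\), I use \(\frac{2}{\pi^2}\theta^2\le1-\cos\theta\le\frac{\theta^2}{2}\) on \([-\pi,\pi]\). This sandwiches the integral between \(\int_{-\pi}^{\pi}e^{-c_1p\lambda\theta^2}d\theta\) and \(\int_{-\pi}^{\pi}e^{-c_2p\lambda\theta^2}d\theta\). Both of these are \(\asymp(p\lambda)^{-1/2}\) uniformly for \(\lambda\ge1\): the upper bound comes from extending to \(\R\), and the lower bound from restricting to \(|\theta|\le\lambda^{-1/2}\). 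Taking \(p\)-th roots gives \(\lambda^{-1/(2p)}\asymp(1+\lambda)^{-1/(2p)}\). This is also the only place where \(\alpha\) could enter the constants, and it enters only through \(\lambda\). That proves \eqref{eq:packet-profile}.

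For \eqref{eq:packet-peak}, take \(r\ge1\) and \(|s-r|\le1\). Then \(s\in[0,r+1]\), so the Gaussian factor is \(\ge e^{-\alpha/2}\) and \(\le1\). For the upper bound, I need \(\alpha rs\gtrsim r^2\). When \(s\ge r/2\) this is immediate. When \(s<r/2\), we have \(r<2\), and then both sides are \(\asymp1\). For the lower bound I take \(s=r\), which gives \((1+\alpha r^2)^{-1/(2p)}\asymp r^{-1/p}\). Combining the two bounds gives \(\asymp r^{-1/p}\).

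The main obstacle is only keeping the constants uniform at the boundary regime, where \(s\) is small compared with \(r\) or \(\lambda\) is near \(1\). This is handled by the two-case split above. No earlier results from the paper are needed beyond the definition of \(M_p\).
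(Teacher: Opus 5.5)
Your proof is correct and follows essentially the paper's route. The paper writes $M_p^p(G_r,s)=e^{-\frac{\alpha p}{2}r^2}I_0(p\alpha rs)$ and cites $I_0(t)\asymp e^t/\sqrt{1+t}$ from Abramowitz--Stegun, and your sandwich $\frac{2}{\pi^2}\theta^2\le 1-\cos\theta\le\frac{\theta^2}{2}$ simply proves that Bessel bound directly. For the peak estimate, your split (if $s<r/2$ then $r<2$) is more careful than the paper's assertion that $s\asymp r$ whenever $r\ge1$ and $|s-r|\le1$: that assertion fails near $r=1$, where $s$ can be close to $0$, although the conclusion still holds.
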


\begin{proof}
If $p=\infty$, then $M_\infty(G_r,s)=e^{-\frac{\alpha }{2}r^2}e^{\alpha r s}$, hence
$M_\infty(G_r,s)e^{-\frac{\alpha }{2}s^2} = \; e^{-\frac{\alpha }{2}(s-r)^2}$, which gives \eqref{eq:packet-profile}.
Assume $0<p<\infty$. A direct computation yields
\[
M_p^p(G_r,s)
= e^{-\frac{\alpha p}{2}r^2}\, I_0(p\alpha r s),
\]
where $I_{0}(t)=\frac{1}{2\pi}\int_{0}^{2\pi}e^{t\cos\theta}d\theta$ is the
modified Bessel function.  By the classical asymptotics for $I_0$ (see
\cite[\S 9.7]{AS}),
\[
I_0(t)\asymp \frac{e^t}{\sqrt{1+t}},\qquad t\ge0.
\]
Combining these identities gives \eqref{eq:packet-profile}.  The peak estimate
\eqref{eq:packet-peak} follows from
\eqref{eq:packet-profile} by noting that for $r\ge1$ and $|s-r|\le1$ one has
$s\asymp r$ and hence $(1+\alpha rs)^{-\frac{1}{2p}}\asymp r^{-\frac{1}{p}}$, while the Gaussian
factor stays between $e^{-\frac{\alpha}{2}}$ and $1$.
\end{proof}

\begin{proposition} \label{prop:T1}
Let  \(\alpha>0\) and $0<p,q\le\infty$. Then
\[
\|G_r\|_{\Fpq}\asymp (1+r)^{\frac1q-\frac1p},
\qquad r\ge0.
\]
In particular, 
\[
\|K_w\|_{\Fpq} \asymp (1+|w|)^{\frac1q-\frac1p} e^{\frac{\alpha}{2} |w|^2}, \ w \in \C.
\]
In both estimates, the implicit constants depend only on \(\alpha,p\), and \(q\).
\end{proposition}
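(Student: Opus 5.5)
The plan is to estimate \(\|G_r\|_{\Fpq}\) through the annular discretization of Proposition~\ref{prop:annular}, using the pointwise profile from Lemma~\ref{lem:packet-profile}. The kernel estimate then follows from the identity \(K_w(z)=e^{\frac{\alpha}{2}|w|^2}\kappa_w(z)\). For \(w=|w|e^{i\phi}\) we have \(\kappa_w(z)=G_{|w|}(e^{-i\phi}z)\), and \(M_p\) is rotation invariant. Hence \(\|K_w\|_{\Fpq}=e^{\frac{\alpha}{2}|w|^2}\|G_{|w|}\|_{\Fpq}\), so the second claim reduces to the first.

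\emph{Case \(q=\infty\).} Here \(\|G_r\|_{\FpqA{p}{\infty}{\al}}=\sup_{s\ge0}M_p(G_r,s)e^{-\frac{\alpha}{2}s^2}\). By \eqref{eq:packet-profile} this supremum is comparable to \(\sup_{s\ge0}(1+\alpha rs)^{-\frac1{2p}}e^{-\frac{\alpha}{2}(s-r)^2}\).
\begin{itemize}
\item For \(r\le1\), every term is at most \(1\), and the choice \(s=0\) gives the value \(e^{-\frac{\alpha}{2}r^2}\asymp1\). So the supremum is \(\asymp1\).
\item For \(r\ge1\), the lower bound \(\gtrsim r^{-1/p}\) comes from \eqref{eq:packet-peak}.
\item For the upper bound when \(r\ge1\), split the range of \(s\). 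When \(s\ge r/2\), we have \((1+\alpha rs)^{-\frac1{2p}}\lesssim r^{-1/p}\). When \(s<r/2\), the Gaussian factor is at most \(e^{-\alpha r^2/8}\lesssim r^{-1/p}\).
\end{itemize}
This gives \((1+r)^{-1/p}\), which is the claim since \(\frac1q=0\).

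\emph{Case \(0<q<\infty\).} Proposition~\ref{prop:annular} gives \(\|G_r\|_{\Fpq}^q\asymp\sum_k(1+k)B_p^q(G_r;k)\). By \eqref{eq:packet-profile},
\[
B_p(G_r;k)\asymp\sup_{s\in[k,k+1)}(1+\alpha rs)^{-\frac1{2p}}e^{-\frac{\alpha}{2}(s-r)^2}.
\]
\begin{itemize}
\item For \(r\le1\), the term \(k=0\) gives \(\gtrsim1\). For the upper bound, each \(B_p(G_r;k)\le e^{-\frac{\alpha}{2}(k-2)_+^2}\), and the resulting series is summable. So the total is \(\asymp1=(1+r)^{q(\frac1q-\frac1p)}\).
\item For \(r\ge1\), the lower bound takes \(k=\lfloor r\rfloor\). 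Then \(B_p\gtrsim r^{-1/p}\) by \eqref{eq:packet-peak}, and the contribution is \((1+k)B_p^q\gtrsim r^{1-q/p}\).
\item For the upper bound when \(r\ge1\), take the shells with \(k\ge r/2-1\). There \(1+\alpha rs\gtrsim r^2\) for some \(s\) in the shell, uniformly up to a harmless comparison since \(s\asymp k\) and \(k\gtrsim r\). So \(B_p(G_r;k)\lesssim r^{-1/p}e^{-\frac{\alpha}{2}\mathrm{dist}(r,[k,k+1))^2}\).
\item Sum these shells with the weight \((1+k)\le(1+r)+|k-r|+1\). The Gaussian absorbs the extra factor \(|k-r|\), giving \(\lesssim r\cdot r^{-q/p}\).
\item The shells with \(k<r/2-1\) have \(B_p\le e^{-\alpha r^2/8}\). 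There are \(O(r)\) of them, each with weight \(\lesssim r\), so they contribute \(\lesssim r^2e^{-\alpha q r^2/8}\), which is negligible.
\end{itemize}
Taking \(q\)-th roots gives \((1+r)^{\frac1q-\frac1p}\).

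The only step needing care is the upper bound for \(r\ge1\), \(0<q<\infty\). There one must check that the factor \((1+\alpha rs)^{-\frac1{2p}}\) is uniformly \(\asymp r^{-1/p}\) on all shells where the Gaussian is not already exponentially small. One must also check that the linear weight \((1+k)\) only costs the single factor \(r\). Both are handled by the dichotomy \(k\ge r/2-1\) versus \(k<r/2-1\). When \(p=\infty\), every factor \((1+\alpha rs)^{-1/(2p)}\) equals \(1\) and the same argument applies verbatim.
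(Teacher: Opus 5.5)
Your proof is correct and is essentially the paper's argument. As in the paper, you reduce the kernel case to $G_{|w|}$ by the rotation identity, and you read off $\|G_r\|_{\Fpq}$ from the profile \eqref{eq:packet-profile} by isolating a unit neighbourhood of $s=r$ (contributing $\asymp r\cdot r^{-q/p}$) from a remainder killed by the Gaussian. The only difference is bookkeeping: for $0<q<\infty$ you pass through the annular discretization of Proposition~\ref{prop:annular}, which is legitimate since that result does not depend on this one, whereas the paper integrates the profile directly over $|s-r|\le1$, $s>r+1$, $[0,r/2]$ and $[r/2,r-1]$, which is slightly more economical.
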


\begin{proof}
By the definition of $\Fpq$,
\[
\|G_r\|_{\Fpq}^q=\int_0^\infty M_p^q(G_r,s)e^{-\frac{\alpha q }{2}s^2}\, s\,ds\quad (0<q<\infty),
\qquad
\|G_r\|_{\FpqA{p}{\infty}{\al}}=\sup_{s\ge0}M_p(G_r,s)e^{-\frac{\alpha }{2}s^2}.
\]
If $0\le r\le 2$, then \eqref{eq:packet-profile} implies $\|G_r\|_{\Fpq}\asymp 1 \asymp (1+r)^{\frac1q-\frac1p}$.
Assume $r\ge 2$. 
If $0<q<\infty$, then \eqref{eq:packet-profile} yields
\[
\|G_r\|_{\Fpq}^q \asymp \int_0^{\infty} (1+\alpha rs)^{-\frac{q}{2p}} e^{
-\frac{\alpha q }{2}(s-r)^2} s \, ds.
\]
On the unit ring $|s-r|\le1$ we have $s\asymp r$ and $(1+\alpha rs)^{-\frac{1}{2p}}\asymp r^{-\frac{1}{p}}$,
so
\[
\int_{|s-r|\le1} (1+\alpha rs)^{-\frac{q}{2p}} e^{
-\frac{\alpha q }{2}(s-r)^2} s \, ds \; \asymp \; r^{-\frac{q}{p}}\int_{|s-r|\le1}s\,ds \; \asymp \; r^{1-\frac{q}{p}}.
\]
For the complementary region $|s-r|>1$,
\begin{align*}
\int^{\infty}_{r+1}  (1+\alpha rs)^{-\frac{q}{2p}} e^{
-\frac{\alpha q }{2}(s-r)^2} s \, ds \;    \lesssim  \; r^{-\frac{q}{p}} \int_1^{\infty} (r + t) e^{-\frac{\alpha q }{2}t^2} \, dt \; \lesssim \; r^{1 - \frac{q}{p}}, 
\end{align*}
where we used $1+\alpha r(r+t)\gtrsim r^2$ for $t\ge1$ and the change of
variables $t=s-r$; moreover,
\begin{align*}
  \int_{0}^{r-1}  (1+\alpha rs)^{-\frac{q}{2p}} e^{
-\frac{\alpha q}{2}(s-r)^2} s \, ds \; & = \; \left( \int_{0}^{\frac{r}{2}} + \int^{r-1}_{\frac{r}{2}} \right) (1+\alpha rs)^{-\frac{q}{2p}} e^{
-\frac{\alpha q }{2}(s-r)^2} s \, ds  \\
& \lesssim \;  \; r^2 e^{-\frac{\al q }{8}r^2} + r^{1- \frac{q}{p}} \int^{r-1}_{\frac{r}{2}} e^{
-\frac{\alpha q }{2}(s-r)^2} \, ds \;
\lesssim \; r^{1-\frac{q}{p}},
\end{align*}
where $e^{
-\frac{\alpha q }{2}(s-r)^2} \le e^{
-\frac{\alpha q }{8}r^2}$ and $(1+\alpha rs)^{-\frac{q}{2p}}\le 1$ on $[0, \frac{r}{2}]$ and $s(1+\alpha rs)^{-\frac{q}{2p}}\lesssim r^{1-\frac{q}{p}}$ on $[\frac{r}{2},r-1]$. 
Thus, 
\[
\|G_r\|_{\Fpq}^q \; \asymp \; \int_0^{\infty} (1+\alpha rs)^{-\frac{q}{2p}} e^{
-\frac{\alpha q }{2}(s-r)^2} s \, ds \; \asymp \; (1 + r)^{1-\frac{q}{p}},
\]
which gives the desired estimate for $0 < q < \infty$.

\medskip
\noindent
For $q=\infty$, if $p = \infty$, then \eqref{eq:packet-profile} gives
$\|G_r\|_{\FpqA{\infty}{\infty}{\al}} = 1$.  Assume $0 < p < \infty$.  Then
\eqref{eq:packet-profile} yields
\[
\|G_r\|_{\FpqA{p}{\infty}{\al}} \asymp \sup_{s \geq 0} (1+\alpha rs)^{-\frac{1}{2p}} e^{
-\frac{\alpha }{2}(s-r)^2}.
\]
As above, on the unit ring $|s-r|\le1$,
\[
\sup_{|s-r|\le1} (1+\alpha rs)^{-\frac{1}{2p}} e^{
-\frac{\alpha }{2}(s-r)^2}  \; \asymp \; r^{-\frac{1}{p}},
\]
and for the complementary region $|s-r|>1$,
\[
\sup_{s \geq r + 1} (1+\alpha rs)^{-\frac{1}{2p}} e^{
-\frac{\alpha }{2}(s-r)^2}  \; \lesssim \; r^{-\frac{1}{p}},
\]
and
\[
\sup_{0 \leq s \leq \frac{r}{2}} (1+\alpha rs)^{-\frac{1}{2p}} e^{
-\frac{\alpha }{2}(s-r)^2}  \; \lesssim \; e^{
-\frac{\alpha }{8}r^2} \lesssim r^{-\frac{1}{p}} \quad \text{and} \quad \sup_{\frac{r}{2} \leq s \leq r - 1} (1+\alpha rs)^{-\frac{1}{2p}} e^{
-\frac{\alpha }{2}(s-r)^2}  \; \lesssim \; r^{-\frac{1}{p}}.
\]
These estimates prove the case $q = \infty$.
The kernel estimate follows from
\[
K_w(z)=e^{\frac{\alpha}{2}|w|^2}\,G_{|w|}\!\bigl(z\,e^{- i\,\arg w}\bigr), \qquad w \neq 0.
\]
\end{proof}

\begin{remark}
The packets \(G_r\) also show the order sharpness of
Theorem~\ref{thm:estimate}.  By Lemma~\ref{lem:packet-profile},
\[
M_p(G_r,r)e^{-\frac{\alpha}{2}r^2}\asymp (1+r)^{-\frac{1}{p}},
\]
while Proposition~\ref{prop:T1} gives
\[
\|G_r\|_{\Fpq}\asymp (1+r)^{\frac{1}{q}-\frac{1}{p}}.
\]
Hence
\[
\frac{M_p(G_r,r)}{\|G_r\|_{\Fpq}}
\asymp
(1+r)^{-\frac{1}{q}}e^{\frac{\alpha}{2}r^2},
\qquad r\ge0.
\]
The monomial sharpness in Lemma~\ref{lem-sh} will be useful later as a
separate test.
\end{remark}

\subsection{Structure and density of polynomials}
\label{subsec:structure-density}

We record the quasi-Banach structure of \(\Fpq\) and the polynomial
approximation properties used later.  The structure is standard; see also
\cite[Lemma~2.1]{FT23}.  The proof of Lemma~\ref{lem-norm} records the precise
\(s\)-triangle inequality in the present normalization.

\begin{lemma} \label{lem-norm}
Let $\alpha>0$ and $0<p,q\le\infty$. Set $s: = \min \{1, p, q\}$. Then, for all $f,g\in \mathcal F^{p,q}_\alpha$,
\begin{equation}\label{eq:s-triangle-compact}
\|f+g\|_{\Fpq}^{\,s}\le \|f\|_{\Fpq}^{\,s}+\|g\|_{\Fpq}^{\,s}.
\end{equation}
In particular, $\mathcal F^{p,q}_\alpha$ is Banach when $p,q\ge1$, and
otherwise complete quasi-Banach.
\end{lemma}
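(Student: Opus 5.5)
The plan is to prove \eqref{eq:s-triangle-compact} by composing two elementary $s$-triangle inequalities, one at the angular level and one at the radial level. Completeness will then follow from the pointwise bound of Lemma~\ref{lem:p-est} together with a Fatou-type argument.

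\textbf{Step 1 (angular level).} Fix $r\ge0$. For $p\ge1$, Minkowski's inequality gives $M_p(f+g,r)\le M_p(f,r)+M_p(g,r)$. For $0<p<1$, integrating the pointwise inequality $|a+b|^p\le|a|^p+|b|^p$ gives $M_p^p(f+g,r)\le M_p^p(f,r)+M_p^p(g,r)$. Since $s\le\min\{1,p\}$ and $t\mapsto t^{s/p}$ (respectively the $\ell^{1/s}$ versus $\ell^1$ comparison) is subadditive in the relevant direction, both cases yield
\[
M_p^s(f+g,r)\le M_p^s(f,r)+M_p^s(g,r),\qquad r\ge0 .
\]
The reduction uses $(x+y)^{\theta}\le x^\theta+y^\theta$ for $0<\theta\le1$, applied with $\theta=s$ when $p\ge1$ and with $\theta=s/p$ when $p<1$.

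\textbf{Step 2 (radial level).} For $q=\infty$, multiply the inequality of Step 1 by $e^{-\frac{\alpha s}{2}r^2}$ and take the supremum in $r$. For $0<q<\infty$, write
\[
\|f+g\|_{\Fpq}^s=\bigl\|M_p^s(f+g,\cdot)\bigr\|_{L^{q/s}(d\lambda_{\alpha q})}.
\]
Since $q/s\ge1$, this is a genuine norm. Monotonicity combined with Minkowski's inequality in $L^{q/s}(d\lambda_{\alpha q})$ then gives exactly \eqref{eq:s-triangle-compact}. Homogeneity, and the fact that $\|f\|=0$ forces $f=0$, are immediate. Hence $\|\cdot\|_{\Fpq}$ is a norm when $s=1$ (that is, when $p,q\ge1$), and an $s$-norm, hence a quasi-norm, otherwise.

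\textbf{Step 3 (completeness).} This step needs the most care, though it is still routine. Let $(f_n)$ be Cauchy in $\Fpq$. By Lemma~\ref{lem:p-est}, point evaluations are uniformly bounded on compact sets, so $(f_n)$ is Cauchy locally uniformly and converges to some entire $f$. For fixed $r$ and $m$, Fatou's lemma on the circle (or a direct limit when $p=\infty$) gives $M_p(f-f_m,r)\le\liminf_n M_p(f_n-f_m,r)$. A second application of Fatou's lemma in the radial variable (or a supremum argument when $q=\infty$) yields
\[
\|f-f_m\|_{\Fpq}\le\liminf_n\|f_n-f_m\|_{\Fpq},
\]
and the right-hand side tends to $0$ as $m\to\infty$. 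Together with the $s$-triangle inequality from Step 2, this shows $f\in\Fpq$ and $f_m\to f$, which proves completeness.
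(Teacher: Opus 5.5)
Your proof is correct and follows essentially the same route as the paper: angular $s$-subadditivity of $M_p$, then Minkowski's inequality in the radial $L^{q/s}(d\lambda_{\alpha q})$-norm (or a supremum when $q=\infty$), and completeness via Lemma~\ref{lem:p-est}, local uniform convergence, and Fatou's lemma. The only difference is cosmetic: by using the exponent $s$ already at the angular level, you merge the paper's two subcases $q\le\min\{1,p\}$ and $q>\min\{1,p\}$ into a single application of Minkowski's inequality in $L^{q/s}(d\lambda_{\alpha q})$.
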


\begin{proof}
\noindent\emph{Quasi-triangle inequality.}
Set $s_1:=\min\{1,p\}$, so $s =\min\{s_1,q\}$.  Then
\begin{equation}\label{eq:inner-a-subadd}
\Mp^{s_1}(f+g,r)\le \Mp^{s_1}(f,r)+\Mp^{s_1}(g,r), \quad r \geq 0.
\end{equation}

\smallskip
\noindent If $q = \infty$, then $s= s_1$, and by \eqref{eq:inner-a-subadd},
\[
\|f+g\|_{\mathcal F^{p,\infty}_\alpha}^{s} 
\le \sup_{r \geq 0}
\Mp^{s}(f,r) e^{-\frac{\alpha s}{2} r^2} + \sup_{r \geq 0}
\Mp^{s}(g,r) e^{-\frac{\alpha s}{2} r^2} 
= \|f\|_{\mathcal F^{p,\infty}_\alpha}^{s}
+\|g\|_{\mathcal F^{p,\infty}_\alpha}^{s}.
\]

\smallskip
\noindent
Assume $0 < q < \infty$.
If $q\le s_1$, then, $s = q$ and by \eqref{eq:inner-a-subadd},
\begin{align*}
\|f+g\|_{\Fpq}^q
& \leq \int_0^\infty \big(\Mp^{s_1}(f,r)+\Mp^{s_1}(g,r)\big)^{\frac{q}{s_1}}\,d\lambda_{q\alpha}(r) \\
& \le \int_0^\infty \big(\Mp^{q}(f,r)+\Mp^{q}(g,r)\big)\,d\lambda_{q\alpha}(r) = \|f\|_{\Fpq}^q+\|g\|_{\Fpq}^q.
\end{align*}
If $q > s_1$, then $s = s_1$ and, by \eqref{eq:inner-a-subadd} and Minkowski inequality in $L^{\frac{q}{s_1}}(d\lambda_{q\alpha})$, we get 
\begin{align*}
  \|f+g\|_{\Fpq}^{s_1}& = \left\|\Mp^{s_1}(f + g,\cdot)\right\|_{L^{\frac{q}{s_1}}\left(\R^+, \, d\lambda_{q\alpha}\right)}  \le \left\|\Mp^{s_1}(f,\cdot) + \Mp^{s_1}(g,\cdot)\right\|_{L^{\frac{q}{s_1}}\left(\R^+, \, d\lambda_{q\alpha}\right)}\\
  & \leq \left\|\Mp^{s_1}(f,\cdot)\right\|_{L^{\frac{q}{s_1}}\left(\R^+, \, d\lambda_{q\alpha}\right)} + \left\| \Mp^{s_1}(g,\cdot)\right\|_{L^{\frac{q}{s_1}}\left(\R^+, \, d\lambda_{q\alpha}\right)} = \|f\|_{\Fpq}^{s_1}+\|g\|_{\Fpq}^{s_1}.  
\end{align*}

\smallskip
\noindent
\emph{Completeness.}
Let $(f_n)_{n \geq 1}$ be Cauchy in $\mathcal F^{p,q}_\alpha$. By \eqref{eq:s-triangle-compact},
$d(f,g):=\|f-g\|_{\Fpq}^{\,s}$ defines a translation-invariant metric on $\mathcal F^{p,q}_\alpha$,
and $(f_n)_{n \geq 1}$ is Cauchy with respect to $d$.
Fix $R>0$. By Lemma~\ref{lem:p-est}, for $|z|\le R$ and all $m,n$,
\[
|f_n(z)-f_m(z)|
\lesssim (1+R)^{\frac1p-\frac1q}e^{\frac{\alpha}{2}R^2}\,\|f_n-f_m\|_{\Fpq}.
\]
Hence $(f_n)_{n \geq 1}$ is Cauchy uniformly on $\overline{B(0,R)}$.  Since
$R>0$ is arbitrary, \(f_n\to f\) locally uniformly on \(\C\) for some entire
function \(f\).

\smallskip
\noindent
Fix $n\geq 1$ and set $g_{n,m}:=f_n-f_m$ and $g_n:=f_n-f$.
For each $r>0$, local uniform convergence implies $g_{n,m}\to g_n$ uniformly on $|z|=r$, and hence
$M_p(g_{n,m},r)\to M_p(g_n,r)$ as $m\to\infty$.
If $0<q<\infty$, Fatou's lemma yields
\[
\|g_n\|_{\Fpq}^q
\le \liminf_{m\to\infty}\|g_{n,m}\|_{\Fpq}^q
= \liminf_{m\to\infty}\|f_n-f_m\|_{\Fpq}^q.
\]
If $q=\infty$, taking the supremum over $r>0$ gives
\[
\|g_n\|_{\FpqA{p}{\infty}{\alpha}}\le \liminf_{m\to\infty}\|f_n-f_m\|_{\FpqA{p}{\infty}{\alpha}}.
\]
Since $(f_n)_n$ is Cauchy, the right-hand side tends to $0$ as $n\to\infty$, so
$\|f_n-f\|_{\Fpq}\to 0$.

\smallskip
\noindent
When $p,q\ge1$ we take $s=1$, so $\mathcal F^{p,q}_\alpha$ is Banach;
otherwise it is complete quasi-Banach.
\end{proof}

\medskip
\noindent
The following lemma was proved in \cite[Lemma~2.2]{FT23}.
\begin{lemma} \label{lem-fr}
Let $\alpha>0$ and $0<p,q\le\infty$, and $f\in\mathcal F^{p,q}_\alpha$. For $0< \tau <1$ set
$f_\tau(z)=f(\tau z)$.
\begin{itemize}
\item[\textup{(a)}] $\|f_\tau\|_{\Fpq}\to \|f\|_{\Fpq}$ as $\tau\to1^-$. Moreover, if $q<\infty$, then
$\|f_\tau -f\|_{\Fpq}\to0$ as $\tau\to1^-$.
\item[\textup{(b)}] For each fixed $0<\tau<1$, the Taylor polynomials of $f_\tau$ converge to $f_\tau$ in
$\mathcal F^{p,q}_\alpha$.
\end{itemize}
\end{lemma}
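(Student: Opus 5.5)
We need to prove Lemma lem-fr: (a) $\|f_\tau\|\to\|f\|$, and if $q<\infty$, $\|f_\tau-f\|\to0$; (b) Taylor polynomials of $f_\tau$ converge to $f_\tau$.

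The plan is to work throughout with the circle means, since the dilation acts on them by a simple reparametrization. For every $r\ge0$ we have $M_p(f_\tau,r)=M_p(f,\tau r)$, and the map $r\mapsto M_p(f,r)$ is nondecreasing. Hence $M_p(f_\tau,r)\le M_p(f,r)$, which gives $\|f_\tau\|_{\Fpq}\le\|f\|_{\Fpq}$ for every $0<\tau<1$ and every $0<q\le\infty$.

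For the reverse inequality when $q<\infty$, note that $M_p(f,\tau r)\to M_p(f,r)$ as $\tau\to1^-$, by continuity of $f$ on compact sets. Fatou's lemma then gives $\|f\|^q\le\liminf_{\tau\to1^-}\|f_\tau\|^q$, and combined with the upper bound this yields convergence of the norms. When $q=\infty$, for each fixed $r$ we have $M_p(f,\tau r)e^{-\frac\alpha2 r^2}\to M_p(f,r)e^{-\frac\alpha2 r^2}$. Taking the supremum over $r$ gives the liminf inequality, and again the norms converge.

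For norm convergence when $q<\infty$, we use dominated convergence on the radial integral. Uniform convergence of $f_\tau\to f$ on each circle gives $M_p(f_\tau-f,r)\to0$ for every $r$. For the dominating function, the quasi-triangle inequality gives $M_p(f_\tau-f,r)^q\lesssim M_p(f_\tau,r)^q+M_p(f,r)^q\le 2M_p(f,r)^q$, and $M_p(f,r)^q$ is integrable against $d\lambda_{\alpha q}$. Dominated convergence then gives $\|f_\tau-f\|_{\Fpq}\to0$.

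For (b), fix $\tau<1$ and choose $\tau<\rho<1$, for instance $\rho=\sqrt\tau$, so that $f_\tau=(f_\rho)_{\tau/\rho}$. Lemma~\ref{lem:p-est} bounds $|f(w)|\lesssim(1+|w|)^{N}e^{\frac\alpha2|w|^2}\|f\|$, which gives the coefficient bound via Cauchy estimates on $|w|=R$, optimized in $R$. This bound is $|a_n|\tau^n\lesssim \tau^n(\alpha e/n)^{n/2}n^{C}$. The remainder $f_\tau-T_Nf_\tau=\sum_{n>N}a_n\tau^n z^n$ is then controlled in the $s$-triangle inequality (Lemma~\ref{lem-norm}) by $\sum_{n>N}|a_n|^s\tau^{ns}\|z^n\|^s$. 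The monomial norms computed in the proof of Lemma~\ref{lem-sh}, namely $\|z^n\|\asymp(n/(e\alpha))^{n/2}n^{1/(2q)}$, cancel the factor $(\alpha e/n)^{n/2}$ exactly. What remains is $\sum_{n>N}\tau^{ns}n^{C'}$, which tends to $0$ as $N\to\infty$.

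The main obstacle is the precise Cauchy estimate on $a_n$, matched against the monomial norms. Choosing $R=\sqrt{n/\alpha}$ gives $|a_n|\le R^{-n}\max_{|w|=R}|f|\lesssim(\alpha e/n)^{n/2}n^{C}\|f\|$. The polynomial factors that arise are harmless against the geometric decay $\tau^n$.
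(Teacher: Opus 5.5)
The paper gives no proof of this lemma; it quotes it from [FT23, Lemma~2.2], so there is no in-paper argument to compare yours against. Your proposal is correct and self-contained. It relies only on results established before this point: monotonicity of $r\mapsto M_p(f,r)$, Lemma~\ref{lem:p-est}, the monomial norms from the proof of Lemma~\ref{lem-sh}, and the $s$-triangle inequality of Lemma~\ref{lem-norm}. None of these depends on the present lemma, so there is no circularity.

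Part (a) is the standard monotonicity, Fatou and dominated convergence argument, and each step checks out. In (b), taking $R=\sqrt{n/\alpha}$ in the Cauchy estimate exactly cancels the factor $(n/(e\alpha))^{n/2}$ in $\|z^n\|_{\Fpq}$. What remains is $\sum_{n>N}\tau^{ns}n^{C'}$, and this works for all $0<p,q\le\infty$, including $q=\infty$.

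Three small things to tidy up.
\begin{enumerate}
\item The remark about $\rho=\sqrt\tau$ and $f_\tau=(f_\rho)_{\tau/\rho}$ is never used and can be deleted.
\item You call the exponent from Lemma~\ref{lem:p-est} $N$, but its value is $\frac1p-\frac1q$, and $N$ is also your Taylor index. Rename one of them.
\item Lemma~\ref{lem-norm} gives the $s$-triangle inequality only for finite sums. To apply it to the infinite tail $\sum_{n>N}a_n\tau^n z^n$, add one line. One option: the partial sums are Cauchy in the complete space $\Fpq$, and their limit equals the pointwise sum because norm convergence implies local uniform convergence by Lemma~\ref{lem:p-est}. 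Another option, as in the completeness part of Lemma~\ref{lem-norm}: use uniform convergence of the partial sums on each circle together with Fatou's lemma to get $\|f_\tau-T_Nf_\tau\|_{\Fpq}\le\liminf_{M\to\infty}\bigl\|\sum_{N<n\le M}a_n\tau^n z^n\bigr\|_{\Fpq}$.
\end{enumerate}
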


\medskip
\noindent
This gives the density statement.

\begin{corollary} \label{cor:density-poly}
Let $\alpha>0$, $0<p\le\infty$, and $0<q<\infty$. Then the set of complex polynomials is dense in
$\mathcal F^{p,q}_\alpha$ and $f^{p, \infty}_{\alpha}$.
\end{corollary}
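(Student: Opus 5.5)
Density in \(\mathcal F^{p,q}_\alpha\) for \(0<q<\infty\) follows from Lemma~\ref{lem-fr} combined with the \(s\)-triangle inequality of Lemma~\ref{lem-norm}. Fix \(f\in\mathcal F^{p,q}_\alpha\) and \(\varepsilon>0\). By Lemma~\ref{lem-fr}(a), with \(q<\infty\), choose \(\tau\in(0,1)\) such that \(\|f_\tau-f\|_{\Fpq}^s<\varepsilon/2\), where \(s=\min\{1,p,q\}\). By Lemma~\ref{lem-fr}(b), choose a Taylor polynomial \(P\) of \(f_\tau\) with \(\|P-f_\tau\|_{\Fpq}^s<\varepsilon/2\). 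Then \eqref{eq:s-triangle-compact} gives \(\|f-P\|_{\Fpq}^s<\varepsilon\).

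For the little space \(f^{p,\infty}_\alpha\), Lemma~\ref{lem-fr}(a) does not supply convergence \(f_\tau\to f\) when \(q=\infty\), so I would prove it directly using the little-\(o\) condition. Let \(f\in f^{p,\infty}_\alpha\) and \(\varepsilon>0\). Pick \(R\) such that \(M_p(f,r)e^{-\frac{\alpha}{2}r^2}<\varepsilon\) for \(r\ge R\). Since \(M_p(f,\tau r)\le M_p(f,r)\) by monotonicity and \(e^{-\frac{\alpha}{2}r^2}\le e^{-\frac{\alpha}{2}\tau^2r^2}\), we have \(M_p(f_\tau,r)e^{-\frac{\alpha}{2}r^2}\le M_p(f,\tau r)e^{-\frac{\alpha}{2}\tau^2 r^2}\). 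This is below \(\varepsilon\) whenever \(\tau r\ge R\). It is also below \(\sup_{t\ge R}M_p(f,t)e^{-\frac\alpha2 t^2}\le\varepsilon\) for \(r\ge R/\tau\), and in particular for \(r\ge 2R\) when \(\tau\ge\frac12\). Using \(M_p^{s_1}(f-f_\tau,r)\le M_p^{s_1}(f,r)+M_p^{s_1}(f_\tau,r)\) with \(s_1=\min\{1,p\}\), the tail \(r\ge 2R\) of \(\|f-f_\tau\|_{\mathcal F^{p,\infty}_\alpha}\) is then \(\lesssim\varepsilon\). On the compact range \(r\le 2R\), \(f_\tau\to f\) uniformly on \(\overline{B(0,2R)}\) as \(\tau\to1\), so \(M_p(f-f_\tau,r)\to0\) uniformly there. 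Hence \(\|f-f_\tau\|_{\mathcal F^{p,\infty}_\alpha}\to0\). Lemma~\ref{lem-fr}(b) with \(q=\infty\) and the same \(s\)-triangle argument then produce a polynomial close to \(f\).

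Two consistency points remain. First, polynomials belong to \(f^{p,\infty}_\alpha\), since \(M_p(P,r)\) grows polynomially. Second, \(f^{p,\infty}_\alpha\) is closed in \(\mathcal F^{p,\infty}_\alpha\): the \(o(e^{\frac\alpha2 r^2})\) condition passes to limits via the \(s_1\)-subadditivity of \(M_p\), by a standard \(\varepsilon/2\) argument. Together these identify \(f^{p,\infty}_\alpha\) with the closure of the polynomials, as asserted in Subsection~\ref{subsec:intro-def}. The only step requiring care is the uniform tail control for \(f_\tau\) in the \(q=\infty\) case; it rests on the monotonicity of \(r\mapsto M_p(f,r)\) and of the Gaussian factor.
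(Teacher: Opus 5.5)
Your proof is correct and follows the paper's route. The paper obtains the corollary directly from Lemma~\ref{lem-fr}, passing first to the dilation $f_\tau$ and then to the Taylor polynomials of $f_\tau$, glued by the $s$-triangle inequality of Lemma~\ref{lem-norm}, exactly as in your first paragraph. For the little space, your direct proof that $\|f-f_\tau\|_{\mathcal F^{p,\infty}_\alpha}\to0$ for $f\in f^{p,\infty}_\alpha$ supplies a step the paper leaves implicit, since its Lemma~\ref{lem-fr}(a) only asserts norm convergence for $q<\infty$. One small correction: the bound $M_p(f_\tau,r)e^{-\frac{\alpha}{2}r^2}\le M_p(f,\tau r)e^{-\frac{\alpha}{2}\tau^2r^2}$ rests on the identity $M_p(f_\tau,r)=M_p(f,\tau r)$, not on monotonicity.
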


\subsection{Auxiliary operator estimates}
\label{subsec:aux-operator-estimates}

We finish the section with two auxiliary operator estimates: a generalized
Schur test adapted to the outer \(\ell^q\)-structure of the annular coefficient
spaces, and a Neumann-series inversion lemma in complete quasi-normed spaces.

\subsubsection{Generalized Schur tests for outer \texorpdfstring{\(\ell^q\)}{ell q}}
\label{subsubsec:schur-tests}

We record the discrete estimates leading to a Schur-type bound for outer
\(\ell^q\)-norms.

\begin{lemma} \label{lem:peetre-weighted}
For all $\ell,k\in\mathbb N_0$ one has
\begin{equation}\label{eq:peetre}
\frac{1+\ell}{1+k}\le 1+|\ell-k|.
\end{equation}
Moreover, for $0<q<\infty$,
\begin{equation}\label{eq:peetre-q}
(1+k)^{-\frac{1}{q}}\le (1+\ell)^{-\frac{1}{q}}\,(1+|\ell-k|)^{\frac{1}{q}},
\qquad
(1+\ell)^{\frac{1}{q}}\le (1+k)^{\frac{1}{q}}\,(1+|\ell-k|)^{\frac{1}{q}}.
\end{equation}
\end{lemma}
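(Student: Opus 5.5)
The plan is to prove \eqref{eq:peetre} directly and then obtain \eqref{eq:peetre-q} by raising it to the power \(\frac1q\). Fix \(\ell,k\in\N_0\). The key observation is the elementary triangle-type inequality
\[
1+\ell = 1+k+(\ell-k)\le (1+k)+|\ell-k|\le (1+k)+(1+k)|\ell-k| = (1+k)\bigl(1+|\ell-k|\bigr),
\]
where the middle step uses only \(1+k\ge1\), valid since \(k\in\N_0\). Dividing by \(1+k>0\) gives \eqref{eq:peetre}.

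For \eqref{eq:peetre-q}, I will use that \(t\mapsto t^{1/q}\) is increasing on \((0,\infty)\) for \(0<q<\infty\). Raising \eqref{eq:peetre} to the power \(\frac1q\) yields
\[
(1+\ell)^{\frac1q}\le (1+k)^{\frac1q}(1+|\ell-k|)^{\frac1q},
\]
which is the second inequality. For the first inequality, I will rearrange this same bound by dividing both sides by the positive quantity \((1+\ell)^{\frac1q}(1+k)^{\frac1q}\). This gives \((1+k)^{-\frac1q}\le (1+\ell)^{-\frac1q}(1+|\ell-k|)^{\frac1q}\).

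There is no real obstacle. The only care needed is to use that \(1+k\ge1\) in the first step, and to avoid the alternative of interchanging \(\ell\) and \(k\), since the bound is not symmetric in \(\ell\) and \(k\).
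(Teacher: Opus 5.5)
Your proof is correct and follows essentially the same route as the paper: establish $(1+\ell)\le(1+k)(1+|\ell-k|)$, divide by $1+k$, and then pass to $\frac1q$-powers by monotonicity. The only cosmetic difference is that you get the first inequality in \eqref{eq:peetre-q} by dividing the second one by $(1+\ell)^{1/q}(1+k)^{1/q}$, whereas the paper cites the monotonicity of $t\mapsto t^{-1/q}$; the two arguments are equivalent.
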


\begin{proof}
Since $(1+\ell)\le (1+k)(1+|\ell-k|)$, \eqref{eq:peetre} follows by dividing by $(1+k)$.
The inequalities in \eqref{eq:peetre-q} follow by applying the monotonicity of
$t\mapsto t^{\pm \frac{1}{q}}$ on $(0,\infty)$ to \eqref{eq:peetre}.
\end{proof}

\begin{lemma} \label{lem:exp-rowsum}
Let $\beta>0$ and $a>0$. For $\gamma\ge0$ define
\[
S_\gamma(a):=\sum_{j\in\mathbb Z}(1+|j|)^\gamma e^{-a|j|^\beta}.
\]
Then $S_\gamma(a)<\infty$ for every $\gamma\ge0$. In particular, for every $k\in\mathbb N_0$,
\begin{equation}\label{eq:rowsum}
\sum_{\ell\ge0}(1+\ell)e^{-a|\ell-k|^\beta}\ \le\ C(a,\beta)\,(1+k),
\end{equation}
where one may take $C(a,\beta)=S_1(a)$.
\end{lemma}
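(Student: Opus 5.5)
The plan is to prove finiteness of \(S_\gamma(a)\) by comparing the series with an integral, or more simply with a known convergent series. Then \eqref{eq:rowsum} will follow by reindexing. The sequence is symmetric in \(j\), so
\[
S_\gamma(a)=1+2\sum_{j\ge1}(1+j)^\gamma e^{-aj^\beta},
\]
and it suffices to show that the terms \((1+j)^\gamma e^{-aj^\beta}\) are eventually dominated by a summable sequence.

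\textbf{Finiteness of \(S_\gamma(a)\).} The key step is the elementary fact that the exponential factor beats any power: \(e^{-aj^\beta}\) decays faster than any power of \(j\), because \(a j^\beta/\log j\to\infty\) as \(j\to\infty\). Concretely, we have \(aj^\beta\ge(\gamma+2)\log(1+j)\) for all \(j\ge j_0(a,\beta,\gamma)\). For such \(j\) this gives
\[
(1+j)^\gamma e^{-aj^\beta}\le(1+j)^{-2},
\]
which is summable. The finitely many terms with \(j<j_0\) contribute a finite amount. Hence \(S_\gamma(a)<\infty\).

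\textbf{The row-sum bound \eqref{eq:rowsum}.} Fix \(k\in\N_0\) and apply \eqref{eq:peetre} of Lemma~\ref{lem:peetre-weighted}, in the form \(1+\ell\le(1+k)(1+|\ell-k|)\), to every term of the sum. This gives
\[
\sum_{\ell\ge0}(1+\ell)e^{-a|\ell-k|^\beta}\le(1+k)\sum_{\ell\ge0}(1+|\ell-k|)e^{-a|\ell-k|^\beta}.
\]
Next substitute \(j=\ell-k\). As \(\ell\) runs over \(\N_0\), the index \(j\) runs over a subset of \(\Z\), and all terms are nonnegative. So the last sum is at most \(S_1(a)\), which yields \eqref{eq:rowsum} with \(C(a,\beta)=S_1(a)\).

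There is no real obstacle here. The only point needing a little care is making the threshold \(j_0\) in the domination step explicit; the rest follows directly from Lemma~\ref{lem:peetre-weighted}.
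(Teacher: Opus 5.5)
Your proof is correct and follows the paper's argument. Finiteness of $S_\gamma(a)$ comes from the super-polynomial decay of $e^{-a|j|^\beta}$; you make the domination by $(1+j)^{-2}$ explicit, where the paper simply asserts it. The bound \eqref{eq:rowsum} then follows from $(1+\ell)\le(1+k)(1+|\ell-k|)$, the substitution $j=\ell-k$, and enlarging the index set to all of $\mathbb Z$.
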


\begin{proof}
The finiteness of $S_\gamma(a)$ follows from exponential decay.
For \eqref{eq:rowsum}, by \eqref{eq:peetre} we have $(1+\ell)\le (1+k)(1+|\ell-k|)$, hence
\[
\sum_{\ell\ge0}(1+\ell)e^{-a|\ell-k|^\beta}
\le (1+k)\sum_{\ell\ge0}(1+|\ell-k|)e^{-a|\ell-k|^\beta}.
\]
Changing variables $j=\ell-k$ and enlarging to $j\in\mathbb Z$ yields
\[
\sum_{\ell\ge0}(1+|\ell-k|)e^{-a|\ell-k|^\beta}
\le \sum_{j\in\mathbb Z}(1+|j|)e^{-a|j|^\beta}
=S_1(a),
\]
which proves \eqref{eq:rowsum}.
\end{proof}

\begin{lemma}[Outer Schur test]\label{lem:gen-schur-test}
Let $0<q\le \infty$, $0 < \beta < \infty$, and let $K:\mathbb N_0\times\mathbb N_0\to[0,\infty)$ satisfy, for some
constants $C_0,c>0$,
\[
K(\ell,k)\ \le\ C_0\,e^{-c|\ell-k|^{\beta}}\,(1+k)^{-\frac{1}{q}}, \qquad \ell,k\in\mathbb N_0.
\]
Define, for a nonnegative sequence $Y=\{Y_k\}_{k\ge0}$,
\[
(TY)_\ell:=\sum_{k\ge0}K(\ell,k)\,Y_k,\qquad \ell\in\mathbb N_0.
\]
Then there exists a constant $C=C(C_0,c,\beta, q)$ such that
\begin{equation}\label{eq:B2-qfinite}
\sum_{\ell\ge0}(1+\ell)\,(TY_\ell)^{q}\ \le\ C\,\sum_{k\ge0}Y_k^{\,q}\qquad(0<q<\infty),
\end{equation}
and at the endpoint
\begin{equation}\label{eq:B2-qinfty}
\sup_{\ell\ge0}\,TY_\ell\ \le\ C\,\sup_{k\ge0}\,Y_k\qquad(q=\infty).
\end{equation}
\end{lemma}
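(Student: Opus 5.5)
The plan is to treat the endpoint \(q=\infty\) directly and to prove the case \(0<q<\infty\) with a single Schur-type bound, splitting according to whether \(q\le1\) or \(q>1\).

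\emph{Endpoint \(q=\infty\).} With the convention \(\frac1q=0\), the kernel bound reads \(K(\ell,k)\le C_0e^{-c|\ell-k|^\beta}\). Hence
\[
(TY)_\ell\le C_0\sup_k Y_k\sum_{k\ge0}e^{-c|\ell-k|^\beta}\le C_0\,S_0(c)\sup_kY_k,
\]
and \(S_0(c)\) is finite by Lemma~\ref{lem:exp-rowsum}.

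\emph{Case \(0<q\le1\).} Here I use the \(q\)-triangle inequality \(\bigl(\sum_k a_k\bigr)^q\le\sum_k a_k^q\). This gives
\[
(TY)_\ell^q\le C_0^q\sum_{k\ge0}e^{-cq|\ell-k|^\beta}(1+k)^{-1}Y_k^q.
\]
Multiplying by \(1+\ell\), summing over \(\ell\) and exchanging the order of summation, the coefficient of \(Y_k^q\) is
\[
(1+k)^{-1}\sum_{\ell\ge0}(1+\ell)e^{-cq|\ell-k|^\beta}.
\]
By \eqref{eq:rowsum} with \(a=cq\), this coefficient is at most \(S_1(cq)\), which gives \eqref{eq:B2-qfinite}.

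\emph{Case \(1<q<\infty\).} Here I use Hölder's inequality with the weights split as \(e^{-c|\ell-k|^\beta}=e^{-\frac c2|\ell-k|^\beta}\cdot e^{-\frac c2|\ell-k|^\beta}\). This yields
\[
(TY)_\ell^q\le C_0^q\Bigl(\sum_k e^{-\frac{cq'}{2}|\ell-k|^\beta}\Bigr)^{q/q'}\sum_k e^{-\frac{cq}{2}|\ell-k|^\beta}(1+k)^{-1}Y_k^q,
\]
where \(q'\) is the conjugate exponent. The first factor is bounded by a power of \(S_0\bigl(\tfrac{cq'}{2}\bigr)\). The second factor is handled exactly as in the case \(q\le1\): multiply by \(1+\ell\), sum over \(\ell\), and apply \eqref{eq:rowsum} with \(a=\tfrac{cq}{2}\) to absorb the weight \((1+\ell)/(1+k)\). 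The resulting constant depends only on \(C_0,c,\beta,q\).

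The only real point is this absorption of the growth factor \((1+\ell)\) against \((1+k)^{-1}\). Because the off-diagonal decay is exponential, the polynomial growth coming from Lemma~\ref{lem:peetre-weighted} is harmless, so I expect no serious obstacle.
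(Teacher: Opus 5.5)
Your proof is correct. For $q=\infty$ and for $0<q\le1$ it coincides with the paper's argument: a direct row-sum bound by $S_0(c)$ in the first case, and $q$-subadditivity followed by \eqref{eq:rowsum} with $a=cq$ in the second.

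For $1<q<\infty$ you take a slightly different route. The paper applies H\"older in the Schur form
$\bigl(\sum_k K(\ell,k)Y_k\bigr)^q\le\bigl(\sum_k K(\ell,k)\bigr)^{q-1}\sum_k K(\ell,k)Y_k^q$.
This keeps the weight $(1+k)^{-1/q}$ in both factors. The paper then needs two weighted bounds, both obtained from \eqref{eq:peetre-q} and $S_{1/q}(c)$: the row bound $\sum_k K(\ell,k)\lesssim(1+\ell)^{-1/q}$ and the column bound $\sum_\ell(1+\ell)^{1/q}K(\ell,k)\lesssim 1$.

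Your splitting of the exponential instead sends the entire weight, as $(1+k)^{-1}$, into the $q$-th power factor. The other factor is then an unweighted sum, trivially bounded by $S_0(cq'/2)$. What remains is exactly the $0<q\le1$ computation with $a=cq/2$. This makes the three cases uniform and avoids the fractional Peetre bounds. The only cost is that the decay rate is halved, which affects only the constant.
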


\begin{proof}
Throughout the proof, \(C\) denotes a positive constant which may change from
line to line and depends only on \(C_0,c,q\), and \(\beta\). Since the kernel
\(K\) is nonnegative, it is enough to consider nonnegative sequences \(Y\).

\medskip
\noindent\emph{Case 1: $0<q\le 1$.}
Since $t\mapsto t^q$ is subadditive on $[0,\infty)$,
\[
\bigl(TY_\ell\bigr)^q
=\Big(\sum_{k\ge0}K(\ell,k)Y_k\Big)^q
\le \sum_{k\ge0}K^q(\ell,k)\,Y_k^{\,q}.
\]
By the kernel hypothesis,
\[
K^q(\ell,k)\le C_0^q\,e^{-cq|\ell-k|^\beta}(1+k)^{-1}.
\]
Hence, by Lemma~\ref{lem:exp-rowsum} (with $a=cq$),
\[
\sum_{\ell\ge0}(1+\ell)\bigl(TY_\ell\bigr)^q
\le C_0^q\sum_{k\ge0}(1+k)^{-1}Y_k^{\,q}\sum_{\ell\ge0}(1+\ell)e^{-cq|\ell-k|^\beta}
\le C\,\sum_{k\ge0}Y_k^{\,q}.
\]

\medskip
\noindent\emph{Case 2:  $1<q<\infty$.}
Fix $\ell\in\mathbb N_0$.  By H\"older's inequality with exponents
$(q,\frac{q}{q-1})$,
\begin{equation}\label{eq:holder-appA}
\bigl(TY_\ell\bigr)^q
=\Big(\sum_{k\ge0}K(\ell,k)Y_k\Big)^q
\le \Big(\sum_{k\ge0}K(\ell,k)\Big)^{q-1}\sum_{k\ge0}K(\ell,k)\,Y_k^{\,q}.
\end{equation}
Let $R(\ell):=\sum_{k\ge0}K(\ell,k)$. Using Lemma~\ref{lem:peetre-weighted} and Lemma~\ref{lem:exp-rowsum},
\[
R(\ell)\le C_0(1+\ell)^{-\frac{1}{q}}\sum_{k\ge0}e^{-c|\ell-k|^\beta}(1+|\ell-k|)^{\frac{1}{q}}
\le C_0(1+\ell)^{-\frac{1}{q}}\,S_{\frac{1}{q}}(c).
\]
Thus $R^{q-1}(\ell)\le C\,(1+\ell)^{-\frac{q-1}{q}}$ with $C=C_0^{q-1}S^{q-1}_{\frac{1}{q}}(c)$.
Multiplying \eqref{eq:holder-appA} by $(1+\ell)$ and summing over $\ell$ gives
\[
\sum_{\ell\ge0}(1+\ell)\bigl(TY_\ell\bigr)^q
\le C\sum_{k\ge0}Y_k^{\,q}\sum_{\ell\ge0}(1+\ell)^{\frac{1}{q}}K(\ell,k).
\]
To estimate the remaining $\ell$-sum, use Lemma~\ref{lem:peetre-weighted} and \eqref{eq:peetre-q}:
\[
\sum_{\ell\ge0}(1+\ell)^{\frac{1}{q}}K(\ell,k)
\le C_0(1+k)^{-\frac{1}{q}}\sum_{\ell\ge0}e^{-c|\ell-k|^\beta}(1+\ell)^{\frac{1}{q}}
\le C_0\sum_{\ell\ge0}e^{-c|\ell-k|^\beta}(1+|\ell-k|)^{\frac{1}{q}}
\le C_0\,S_{\frac{1}{q}}(c).
\]
This proves \eqref{eq:B2-qfinite} for $1<q<\infty$.

\medskip
\noindent\emph{Case 3: $q=\infty$.}
Then
\[
\sup_{\ell \geq 0} \, TY_\ell
\ \le \ C_0 \, \sup_{k \geq 0} Y_k \sum_{k\ge0}e^{-c|\ell-k|^\beta} \ \leq \ C_0 S_0(c) \, \sup_{k \geq 0} Y_k, \ \text{where} \quad S_0(c): = \sum_{j \in \Z}e^{-c|j|^\beta} < \infty,
\]
which gives \eqref{eq:B2-qinfty}.
\end{proof}

\subsubsection{Neumann series in complete quasi-normed spaces}
\label{subsubsec:neumann-quasi}

\begin{lemma} \label{lem:neumann-quasi}
Let $(X,\|\cdot\|_{X})$ be a complete quasi-normed complex vector space. Assume that there exists
$s\in(0,1]$ such that
\[
\|x+y\|_{X}^{s}\le \|x\|_{X}^{s}+\|y\|_{X}^{s},
\qquad x,y\in X.
\]
Let $T$ be a bounded linear operator on $X$, and let
\[
\|T\|_{X\to X}:=\sup_{x\neq 0}\frac{\|Tx\|_{X}}{\|x\|_{X}},
\]
and assume that $\|T\|_{X\to X}<1$.
Then $I-T$ is invertible on $X$, and the Neumann series
\(\sum_{n=0}^{\infty}T^{n}\) converges in the induced operator quasi-norm to
\((I-T)^{-1}\). Moreover,
\[
\|(I-T)^{-1}\|_{X\to X}\le \left(1-\|T\|_{X\to X}^{s}\right)^{-1/s}.
\]
\end{lemma}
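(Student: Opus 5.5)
The plan is to show that the Neumann series converges absolutely in the $s$-metric, identify its sum as a two-sided inverse of $I-T$, and read off the norm bound from the $s$-triangle inequality. Write $\theta:=\|T\|_{X\to X}<1$.

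\emph{Step 1: the operator quasi-norm is $s$-subadditive and submultiplicative.} For bounded $A,B$ on $X$ and $x\in X$, we have $\|(A+B)x\|_X^s\le \|Ax\|_X^s+\|Bx\|_X^s\le(\|A\|^s+\|B\|^s)\|x\|_X^s$. Taking the supremum gives $\|A+B\|^s\le\|A\|^s+\|B\|^s$. Submultiplicativity $\|AB\|\le\|A\|\|B\|$ is immediate from the definition. In particular $\|T^n\|\le\theta^n$ for every $n$.

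\emph{Step 2: convergence of the partial sums.} Set $S_N:=\sum_{n=0}^N T^n$. For $M>N$, Step~1 gives
\[
\|S_M-S_N\|^s\le\sum_{n=N+1}^M\theta^{ns},
\]
which is the tail of a convergent geometric series. So $(S_N x)$ is Cauchy in $X$ for each $x$, uniformly on the unit ball. Since $X$ is complete, the limit $Sx:=\lim_N S_Nx$ exists. The map $S$ is linear, and $\|S-S_N\|\to0$, which follows by passing to the limit $M\to\infty$ pointwise in the estimate above.

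Bounding $\|Sx\|_X^s$ requires some care, because quasi-norms need not be continuous. The $s$-triangle inequality gives $\bigl|\|u\|^s-\|v\|^s\bigr|\le\|u-v\|^s$, so $\|\cdot\|_X^s$ is continuous. Hence
\[
\|Sx\|_X^s=\lim_N\|S_Nx\|_X^s\le\sum_{n\ge0}\theta^{ns}\|x\|_X^s=\frac{\|x\|_X^s}{1-\theta^s}.
\]
This yields the stated bound $\|S\|\le(1-\theta^s)^{-1/s}$.

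\emph{Step 3: $S$ is a two-sided inverse of $I-T$.} From the telescoping identity $(I-T)S_N=S_N(I-T)=I-T^{N+1}$, we get
\[
\|(I-T)S_Nx-x\|_X\le\theta^{N+1}\|x\|_X\to0 .
\]
Also $\|(I-T)(Sx-S_Nx)\|_X^s\le(1+\theta^s)\|Sx-S_Nx\|_X^s\to0$. Combining these via the $s$-triangle inequality gives $(I-T)Sx=x$. Similarly, applying $S_N\to S$ to the vector $(I-T)x$ gives $S(I-T)x=x$. Thus $I-T$ is bijective with inverse $S$, and $S$ is the operator-quasi-norm limit of the Neumann series.

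The only delicate point is the one noted in Step~2: the quasi-norm may fail to be continuous, so all limits are taken through $\|\cdot\|_X^s$, which the $s$-triangle inequality makes into a genuine metric. Everything else is routine.
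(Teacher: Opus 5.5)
Your proof is correct and follows the paper's argument essentially step for step. The $s$-subadditivity of the operator quasi-norm makes the partial sums Cauchy, completeness gives the limit $S$, the geometric series gives the bound $\|S\|_{X\to X}\le(1-\|T\|_{X\to X}^s)^{-1/s}$, and the telescoping identity $(I-T)S_N=S_N(I-T)=I-T^{N+1}$ identifies $S$ as the two-sided inverse. Your explicit observation that $\|\cdot\|_X^s$ is continuous, via $\bigl|\|u\|_X^s-\|v\|_X^s\bigr|\le\|u-v\|_X^s$, justifies the limit passages that the paper leaves implicit.
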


\begin{proof}
Put $\rho:=\|T\|_{X\to X}\in(0,1)$ and let $S_N:=\sum_{n=0}^{N}T^{n}$.
For $N>M\ge0$ and $x\in X$, the $s$-triangle inequality gives
\[
\|(S_N-S_M)x\|_X^{s}
=\Big\|\sum_{n=M+1}^{N}T^{n}x\Big\|_X^{s}
\le \sum_{n=M+1}^{N}\|T^{n}x\|_X^{s}
\le \|x\|_X^{s}\sum_{n=M+1}^{N}\rho^{ns}.
\]
Taking the supremum over $\|x\|_X=1$ yields
\[
\|S_N-S_M\|_{X\to X}^{s}\le \sum_{n=M+1}^{N}\rho^{ns}.
\]
Hence $(S_N)_{N \geq 0}$ is Cauchy in the operator quasi-norm.  For each fixed
$x$, completeness of $X$ gives \(Sx=\lim_N S_Nx\).
Letting $N\to\infty$ in the previous estimate gives
\[
\|S-S_M\|_{X\to X}^{s}\le \sum_{n=M+1}^{\infty}\rho^{ns}\xrightarrow[M\to\infty]{}0,
\]
so $S_N\to S$ in the operator quasi-norm.  In particular,
\[
\|S\|_{X\to X}^{s}\le \sum_{n=0}^{\infty}\|T^{n}\|_{X\to X}^{s}
\le \sum_{n=0}^{\infty}\rho^{ns}=\frac{1}{1-\rho^{s}}.
\]

\smallskip
\noindent
For each $N$ we have $(I-T)S_N=S_N(I-T)=I-T^{N+1}$.  Since
$\|T^{N+1}\|_{X\to X}\le \rho^{N+1}\to0$, passing to the limit yields
$(I-T)S=S(I-T)=I$. Thus $S=(I-T)^{-1}$.
\end{proof}

\section{Embeddings in the Gaussian mixed-norm scale}
\label{sec:embeddings}

We prove the embedding theorem for the Gaussian mixed-norm scale and
derive a Littlewood-type consequence for randomized entire functions.  The
proof uses the circle-mean growth estimate, the angular comparison
estimate, and the kernel-size estimates from
Section~\ref{sec:mixed-fock-tools}; these isolate the Gaussian parameter,
the outer radial exponent, and the angular--radial balance.

\begin{proof}[Proof of Theorem~\ref{thm:embeddings}]
We write
\(
\mathcal F_{\alpha}^{p_1,q_1}\hookrightarrow
\mathcal F_{\beta}^{p_2,q_2}
\)
for continuous inclusion.

\smallskip
\noindent
\emph{Sufficiency.}
Assume that one of \emph{(i)}--\emph{(ii)} holds. We show that
\[
\|f\|_{\mathcal F_{\beta}^{p_2,q_2}}
\lesssim
\|f\|_{\mathcal F_{\alpha}^{p_1,q_1}},
\qquad f\in \mathcal F_{\alpha}^{p_1,q_1}.
\]

\smallskip
\noindent\emph{(i)} $\alpha<\beta$.
For $f\in\mathcal F^{p_1,q_1}_\alpha$, Lemma~\ref{lem:p-est} gives
\[
M_\infty(f,r)\ \lesssim\ (1+r)^{\frac1{p_1}-\frac1{q_1}}e^{\frac{\alpha}{2}r^2}\,
\|f\|_{\FpqA{p_1}{q_1}{\alpha}}.
\]
Hence, since $M_{p_2}\le M_\infty$,
\[
M_{p_2}(f,r)e^{-\frac{\beta}{2}r^2}
\lesssim (1+r)^{\frac1{p_1}-\frac1{q_1}}e^{-\frac{\beta-\alpha}{2}r^2}\,
\|f\|_{\FpqA{p_1}{q_1}{\alpha}}.
\]
The last factor is integrable to any finite power and is bounded for
$q_2=\infty$, because $\beta-\alpha>0$. Thus
\[
\|f\|_{\FpqA{p_2}{q_2}{\beta}}
\lesssim \|f\|_{\FpqA{p_1}{q_1}{\alpha}}.
\]

\smallskip
\noindent\emph{(ii)} $\alpha=\beta$.
Assume $q_1\le q_2$ and $\frac1{q_2}-\frac1{p_2}\le \frac1{q_1}-\frac1{p_1}$.

\smallskip
\noindent\emph{Case 1: $p_2\le p_1$.}
Then $M_{p_2}\le M_{p_1}$. If $q_2=\infty$, Theorem~\ref{thm:estimate} gives
\[
\|f\|_{\FpqA{p_2}{\infty}{\alpha}}
\le \sup_{r\ge0}M_{p_1}(f,r)e^{-\frac{\alpha}{2}r^2}
\lesssim \|f\|_{\FpqA{p_1}{q_1}{\alpha}}.
\]
For $q_2<\infty$, use
\[
M_{p_1}(f,r)^{q_2}e^{-\frac{\alpha q_2}{2}r^2}
=\left(M_{p_1}(f,r)e^{-\frac{\alpha}{2}r^2}\right)^{q_2-q_1}\,
\left(M_{p_1}(f,r)^{q_1}e^{-\frac{\alpha q_1}{2}r^2}\right),
\]
and the corresponding supremum bound from Theorem~\ref{thm:estimate} to obtain
\[
\|f\|_{\FpqA{p_2}{q_2}{\alpha}}^{q_2}
\lesssim \|f\|_{\FpqA{p_1}{q_1}{\alpha}}^{q_2-q_1}\,
\|f\|_{\FpqA{p_1}{q_1}{\alpha}}^{q_1}
=\|f\|_{\FpqA{p_1}{q_1}{\alpha}}^{q_2}.
\]

\smallskip
\noindent\emph{Case 2: $p_1<p_2$.}
Recall that 
\[
M_{p_2}(f,r)\ \lesssim\ (1+r)^{\frac1{p_1}-\frac1{p_2}}\,M_{p_1}(f,r).
\]
If $q_2=\infty$, Theorem~\ref{thm:estimate} yields
\[
M_{p_2}(f,r)e^{-\frac{\alpha}{2}r^2}
\lesssim (1+r)^{\frac1{p_1}-\frac1{p_2}-\frac1{q_1}}\,\|f\|_{\FpqA{p_1}{q_1}{\alpha}},
\]
and the hypothesis is precisely $\frac1{p_1}-\frac1{p_2}-\frac1{q_1}\le0$. Hence
$\|f\|_{\FpqA{p_2}{\infty}{\alpha}}\lesssim \|f\|_{\FpqA{p_1}{q_1}{\alpha}}$.

\smallskip
\noindent
If $0<q_2<\infty$, then
\begin{align*}
\|f\|_{\FpqA{p_2}{q_2}{\alpha}}^{q_2}
&\lesssim \int_0^\infty M_{p_1}(f,r)^{q_2}(1+r)^{q_2(\frac1{p_1}-\frac1{p_2})}
e^{-\frac{\alpha q_2}{2}r^2}\,rdr\\
&=\int_0^\infty \left(M_{p_1}(f,r)^{q_1}e^{-\frac{\alpha q_1}{2}r^2}\right)
\left(M_{p_1}(f,r)e^{-\frac{\alpha}{2}r^2}\right)^{q_2-q_1}
(1+r)^{q_2\left(\frac1{p_1}-\frac1{p_2}\right)}\,rdr .
\end{align*}
Theorem~\ref{thm:estimate} gives
$M_{p_1}(f,r)e^{-\frac{\alpha}{2} r^2}\lesssim (1+r)^{-\frac{1}{q_1}}\|f\|_{\FpqA{p_1}{q_1}{\alpha}}$, hence
\[
\|f\|_{\FpqA{p_2}{q_2}{\alpha}}^{q_2}
\lesssim \|f\|_{\FpqA{p_1}{q_1}{\alpha}}^{q_2-q_1}
\int_0^\infty M_{p_1}(f,r)^{q_1}e^{-\frac{\alpha q_1}{2}r^2}(1+r)^{\sigma}\,rdr,
\]
where $\sigma:=q_2(\frac1{p_1}-\frac1{p_2})-\frac{q_2-q_1}{q_1}$.
The hypothesis is equivalent to $\sigma\le0$, so $(1+r)^\sigma\le1$ and
\[
\|f\|_{\FpqA{p_2}{q_2}{\alpha}}^{q_2}
\lesssim \|f\|_{\FpqA{p_1}{q_1}{\alpha}}^{q_2-q_1}\,
\|f\|_{\FpqA{p_1}{q_1}{\alpha}}^{q_1}
=\|f\|_{\FpqA{p_1}{q_1}{\alpha}}^{q_2}.
\]

\medskip
\noindent\emph{Necessity.}
Assume that the inclusion \(\mathcal F_{\alpha}^{p_1,q_1} \subset
\mathcal F_{\beta}^{p_2,q_2}\) is continuous, so
\begin{equation}\label{eq:emb-cont}
\|f\|_{\mathcal F_{\beta}^{p_2,q_2}}
\lesssim
\|f\|_{\mathcal F_{\alpha}^{p_1,q_1}},
\qquad f\in\mathcal F_{\alpha}^{p_1,q_1}.
\end{equation}

\smallskip
\noindent\emph{(a)} $\alpha\le\beta$.
The monomial norm estimate from the proof of Lemma~\ref{lem-sh} is
\[
\|z^n\|_{\FpqA{p}{q}{\alpha}}\asymp \Big(\frac{n}{e\alpha}\Big)^{\frac n2}n^{\frac1{2q}},
\qquad n\to\infty.
\] 
Testing \eqref{eq:emb-cont} on $z^n$ gives
\[
\Big(\frac{\alpha}{\beta}\Big)^{\frac n2}n^{\frac1{2q_2}-\frac1{2q_1}}\lesssim 1,
\]
which forces $\alpha\le\beta$.

\smallskip
\noindent\emph{(b)} If $\alpha=\beta$, then $q_1\le q_2$.
If $q_2<q_1$, the preceding test contradicts \eqref{eq:emb-cont}.

\smallskip
\noindent\emph{(c)} If $\alpha=\beta$ and $q_1\le q_2$, then
$\frac1{q_2}-\frac1{p_2}\le \frac1{q_1}-\frac1{p_1}$.
For $K_w(z):=e^{\alpha\overline w z}$, Proposition~\ref{prop:T1} gives
\[
\|K_w\|_{\FpqA{p}{q}{\alpha}}\asymp (1+|w|)^{\frac1q-\frac1p}e^{\frac{\alpha}{2}|w|^2}.
\]
Applying \eqref{eq:emb-cont} to $K_w$ and letting $|w|\to\infty$ yields
$\frac1{q_2}-\frac1{p_2}\le \frac1{q_1}-\frac1{p_1}$.

\smallskip
\noindent
This completes the proof.
\end{proof}

\begin{remark}
The necessity proof separates two mechanisms.  The monomial test detects
the Gaussian parameter and the outer radial exponent, while the reproducing
kernels detect the angular--radial balance:
\[
\|K_w\|_{\mathcal F_\alpha^{p,q}}
\asymp
(1+|w|)^{\frac1q-\frac1p}e^{\frac{\alpha}{2}|w|^2}
\]
identifies \(\frac1q-\frac1p\) as the relevant defect.
\end{remark}

\medskip
\noindent
We record the corresponding Littlewood-type statement.  The analogue
for mixed-norm spaces on the unit disk is due to~\cite{K22}.

\begin{definition}[Standard random sequences]\label{def:standard-random}
A random variable $X$ is called \emph{Bernoulli} if $\mathbb P(X=1)=\mathbb P(X=-1)=\tfrac12$, and
\emph{Steinhaus} if it is uniformly distributed on the unit circle.
We write $N(0,1)$ for the law of a (real) Gaussian random variable with mean $0$ and variance $1$.
A \emph{standard Bernoulli sequence} is a sequence $(\varepsilon_n)_{n\ge0}$ of i.i.d.\ Bernoulli variables.
A \emph{standard Steinhaus sequence} is a sequence $(e^{2\pi i \theta_n})_{n\ge0}$, where $(\theta_n)_{n\ge0}$
are i.i.d.\ and uniformly distributed on $[0,1)$.
A \emph{standard Gaussian sequence} is a sequence $(\xi_n)_{n\ge0}$ of i.i.d.\ $N(0,1)$ variables.
We call any of these three a \emph{standard random sequence}, denoted generically by $(X_n)_{n\ge0}$.
\end{definition}

\smallskip
\noindent
For an entire function $f$ with Taylor series $f(z)=\sum_{n=0}^{\infty}a_n z^n$, define its \emph{randomization}
\[
(\mathcal R f)(z):=\sum_{n=0}^{\infty} a_n X_n z^n .
\]

\smallskip
\noindent
Let $\mathcal X\subset H(\C)$ be a $p$-Banach space that contains all polynomials and has bounded point evaluations,
i.e.\ the functional $\delta_z(f)=f(z)$ is continuous on $\mathcal X$ for each $z\in\C$.
By the Hewitt--Savage zero--one law (see \cite[Theorem~2.5.4, p.~82]{D19}),
\[
\mathbb P(\mathcal R f\in\mathcal X)\in\{0,1\},\qquad f\in H(\C).
\]
This motivates the \emph{symbol space} of $\mathcal X$,
\[
\mathcal X_*:=\bigl\{f\in H(\C):\ \mathbb P(\mathcal R f\in\mathcal X)=1\bigr\}.
\]

\smallskip
\noindent
We characterize the parameters $p_1,p_2,q_1,q_2,\alpha,\beta\in(0,\infty)$ for which
\[
\mathcal R:\mathcal F^{p_1,q_1}_{\alpha}\hookrightarrow \mathcal F^{p_2,q_2}_{\beta},
\]
where $\mathcal R:E\hookrightarrow F$ means that $\mathcal R f\in F$ almost surely for every $f\in E$.

\smallskip
\noindent
By \cite[Theorem~2.6]{FT23}, $\left(\mathcal F^{p,q}_{\alpha}\right)_*=\mathcal F^{2,q}_{\alpha}$ for all
$p,q,\alpha\in(0,\infty)$. Theorem~\ref{thm:embeddings} therefore gives:

\begin{theorem}\label{thm:littlewood-fock}
Let $p_1,p_2,q_1,q_2,\alpha,\beta\in(0,\infty)$ and let $(X_n)_{n\ge0}$ be a standard random sequence.
Then $\mathcal R:\mathcal F^{p_1,q_1}_{\alpha}\hookrightarrow \mathcal F^{p_2,q_2}_{\beta}$ if and only if either
\begin{enumerate}
\item[\textup{(i)}] $\alpha<\beta$, or
\item[\textup{(ii)}] $\alpha=\beta$, $q_1\le q_2$, and $\frac1{p_1}+\frac1{q_2}-\frac1{q_1}\le \frac12$.
\end{enumerate}
\end{theorem}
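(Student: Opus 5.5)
The plan is to reduce everything to Theorem~\ref{thm:embeddings}, using the symbol-space identity $\left(\mathcal F^{p,q}_{\alpha}\right)_*=\mathcal F^{2,q}_{\alpha}$ from \cite[Theorem~2.6]{FT23}. First we reformulate the assertion $\mathcal R:\mathcal F^{p_1,q_1}_{\alpha}\hookrightarrow \mathcal F^{p_2,q_2}_{\beta}$: it says that every $f\in\mathcal F^{p_1,q_1}_\alpha$ satisfies $\mathbb P(\mathcal Rf\in\mathcal F^{p_2,q_2}_\beta)=1$. By the zero--one law and the definition of the symbol space, this is the set inclusion
\[
\mathcal F^{p_1,q_1}_{\alpha}\subset\left(\mathcal F^{p_2,q_2}_{\beta}\right)_*=\mathcal F^{2,q_2}_{\beta}.
\]
This uses that $\mathcal F^{p_2,q_2}_\beta$ is an $s$-Banach space with bounded point evaluations containing the polynomials, which follows from Lemma~\ref{lem-norm} and Lemma~\ref{lem:p-est}.

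Next we upgrade the set inclusion to a continuous one, so that Theorem~\ref{thm:embeddings} applies. Both spaces are complete quasi-Banach (metrizable, $F$-spaces) by Lemma~\ref{lem-norm}, and both embed continuously into $H(\C)$ by Lemma~\ref{lem:p-est}. Hence the inclusion map has closed graph: if $f_n\to f$ in the first space and $f_n\to g$ in the second, then $f_n\to f$ and $f_n\to g$ locally uniformly, so $f=g$. The closed graph theorem for $F$-spaces then gives continuity. Conversely, a continuous inclusion is in particular a set inclusion. So $\mathcal R$ maps $\mathcal F^{p_1,q_1}_{\alpha}$ into $\mathcal F^{p_2,q_2}_{\beta}$ exactly when $\mathcal F^{p_1,q_1}_{\alpha}\subset\mathcal F^{2,q_2}_{\beta}$ continuously.

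Finally we apply Theorem~\ref{thm:embeddings} with target exponents $(2,q_2)$. The inclusion holds if and only if either $\alpha<\beta$, or $\alpha=\beta$, $q_1\le q_2$, and $\frac1{q_2}-\frac12\le\frac1{q_1}-\frac1{p_1}$. The last inequality rearranges to $\frac1{p_1}+\frac1{q_2}-\frac1{q_1}\le\frac12$, which is condition (ii).

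The only step needing care is the closed graph upgrade in the quasi-Banach setting, together with confirming that the hypotheses of the zero--one law and of the cited symbol-space result are met; all of these are supplied by the earlier lemmas. The rest is bookkeeping.
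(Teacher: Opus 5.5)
Your proof is correct and follows the paper's route. You use the symbol-space identity $\bigl(\mathcal F^{p_2,q_2}_\beta\bigr)_*=\mathcal F^{2,q_2}_\beta$ to reduce to the inclusion $\mathcal F^{p_1,q_1}_\alpha\subset\mathcal F^{2,q_2}_\beta$, and then read off condition (ii) from Theorem~\ref{thm:embeddings} with target angular exponent $2$. Your closed-graph step, upgrading the set inclusion to a continuous one, makes explicit a point the paper leaves implicit, since Theorem~\ref{thm:embeddings} is formulated for continuous inclusions.
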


\begin{proof}
The symbol-space identity gives
\[
\mathcal R:\mathcal F_{\alpha}^{p_1,q_1}
\hookrightarrow
\mathcal F_{\beta}^{p_2,q_2}
\quad\Longleftrightarrow\quad
\mathcal F_{\alpha}^{p_1,q_1}
\subset
\left(\mathcal F_{\beta}^{p_2,q_2}\right)_*
=
\mathcal F_{\beta}^{2,q_2}.
\]
The result follows from Theorem~\ref{thm:embeddings} with target angular
exponent \(2\): in the case \(\alpha=\beta\), its remaining condition is
\[
\frac1{p_1}+\frac1{q_2}-\frac1{q_1}\le \frac12.
\]
\end{proof}

\section{An envelope theorem for re-centering normalized Fock kernels}
\label{sec:recentering-envelope}

We prove an envelope theorem for re-centering normalized Fock kernels over
bi-Lipschitz \(\delta\)-pocket decompositions.  The theorem is used later for
the \(\delta\)-pocket tiling in the mixed-norm atomic decomposition, but it is
stated in a general geometric form that also covers the lattice model
\(\delta\mathbb Z^2\).

\smallskip
\noindent
The expansion part is related to the Hilbert-frame theory of Fock kernels and
coherent states; see \cite{GW92, Lyub92, Seip92, SW92, Zhu}.  The point needed here
is quantitative: the re-centering coefficients have a uniform subexponential
envelope, independent of the moving center and adapted to the pocket geometry.
This estimate is used in the angular quasi-Banach part of
Section~\ref{sec:atomic}, where floating kernels are re-expanded at fixed
pocket centers.

\smallskip
\noindent
The proof uses localized frames and inverse-closed algebras of
off-diagonally decaying matrices; see
\cite{Christensen, FG05, Gro04, GL06}.  The form required below is a
Fock-specific re-centering principle for bi-Lipschitz pocket decompositions.

\smallskip
\noindent
Subsection~\ref{subsec:recenter-setup} states the theorem and introduces the
subexponential matrix algebra.  Subsection~\ref{subsec:localized-gramian}
establishes the localized frame and Gramian estimates.
Subsection~\ref{subsec:inverse-closedness} obtains localized range inverses by
inverse-closedness.  Subsection~\ref{subsec:proof-envelope} applies these
estimates to moving centers.

\subsection{Setup, notation, and statement of the theorem}
\label{subsec:recenter-setup}

We fix the geometric framework, record the counting estimate, state the
envelope theorem, and introduce the subexponential matrix algebra.  In this
section \(\beta\in(0,1)\) and \(\gamma>0\) denote subexponential decay
parameters.

\begin{definition}[$\delta$-pocket decompositions]
\label{def:pocket-decomposition}
Let $\delta>0$ and let $0<c_{\mathrm{in}}<C_{\mathrm{out}}<\infty$.
A \emph{$(\delta,c_{\mathrm{in}},C_{\mathrm{out}})$-pocket decomposition}
of $\mathbb C$ is a countable family $\mathcal P_\delta$ of Borel sets
$Q\subset \mathbb C$, together with distinguished centers $\zeta_Q\in Q$
for $Q\in\mathcal P_\delta$, such that:

\begin{enumerate}
\item[\textup{(i)}] The pockets are pairwise disjoint up to null sets and cover
$\mathbb C$ up to a null set, that is,
\[
|Q\cap Q'|=0 \quad (Q\neq Q',\ Q, Q' \in\mathcal P_\delta),
\qquad
\left|\mathbb C\setminus \bigcup_{Q\in\mathcal P_\delta} Q\right|=0.
\]

\item[\textup{(ii)}] For every $Q\in\mathcal P_\delta$,
\[
B(\zeta_Q,c_{\mathrm{in}}\delta)\subset Q\subset
B(\zeta_Q,C_{\mathrm{out}}\delta).
\]
\end{enumerate}

\smallskip
\noindent
When the constants are understood, we simply call $\mathcal P_\delta$ a
\emph{$\delta$-pocket decomposition}.

\smallskip
\noindent
If, in addition, there exist a constant $C_L\ge 1$ and a bijection
\(
\Phi_\delta:\mathcal P_\delta\to \mathbb Z^2\)
such that
\begin{equation}
\label{eq:bilip-pocket}
C_L^{-1}\delta\,|\Phi_\delta(Q)-\Phi_\delta(Q')|
\le |\zeta_Q-\zeta_{Q'}|
\le C_L\delta\,|\Phi_\delta(Q)-\Phi_\delta(Q')|
\end{equation}
for all $Q, Q'\in\mathcal P_\delta$, then $\mathcal P_\delta$ is called a
\emph{bi-Lipschitz $\delta$-pocket decomposition}.

\smallskip
\noindent
The constants $c_{\mathrm{in}}$, $C_{\mathrm{out}}$, and, in the
bi-Lipschitz case, $C_L$, are called the \emph{geometric constants} of
$\mathcal P_\delta$.
\end{definition}

\smallskip
\noindent
Two examples of bi-Lipschitz $\delta$-pocket decompositions are the
standard lattice model $\delta\mathbb Z^2$, equipped with square cells
centered at lattice points, and the $\delta$-pocket tiling introduced in
Subsection~\ref{subsec:atomic-pockettiling}.

\smallskip
\noindent
The next lemma records the counting consequence of the pocket assumptions.

\begin{lemma} 
\label{lem:counting}
Let \(\mathcal P_\delta\) be a \(\delta\)-pocket decomposition of \(\mathbb C\) with
distinguished centers \(\zeta_Q\) and geometric constants \(c_{\mathrm{in}}\) and
\(C_{\mathrm{out}}\). Then, for every \(z\in\mathbb C\) and every \(R>C_{\mathrm{out}}\delta\),
\[
\left(\frac{R}{C_{\mathrm{out}}\delta}-1\right)^2
\le
\#\{Q\in\mathcal P_\delta:\zeta_Q\in B(z,R)\}
\le
\frac{C_{\mathrm{out}}^2}{c_{\mathrm{in}}^2}
\left(\frac{R}{C_{\mathrm{out}}\delta}+1\right)^2.
\]
Moreover, the upper bound remains valid for every \(R>0\).
\end{lemma}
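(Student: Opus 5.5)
The plan is a standard area-counting (packing) argument based on the ball control in Definition~\ref{def:pocket-decomposition}(ii) and the almost-disjointness of the pockets in (i). Fix \(z\in\C\) and \(R>0\), and let
\[
\mathcal N:=\{Q\in\mathcal P_\delta:\zeta_Q\in B(z,R)\}.
\]
I would compare the total area of the pockets in \(\mathcal N\) with the area of a suitable ball in each direction.

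\emph{Upper bound.} Take \(Q\in\mathcal N\). Then \(Q\subset B(\zeta_Q,C_{\mathrm{out}}\delta)\subset B(z,R+C_{\mathrm{out}}\delta)\), and \(|Q|\ge|B(\zeta_Q,c_{\mathrm{in}}\delta)|=\pi c_{\mathrm{in}}^2\delta^2\). The pockets overlap only on null sets, so summing areas gives
\[
\#\mathcal N\cdot \pi c_{\mathrm{in}}^2\delta^2\le \pi(R+C_{\mathrm{out}}\delta)^2.
\]
This says \(\#\mathcal N\le c_{\mathrm{in}}^{-2}\delta^{-2}(R+C_{\mathrm{out}}\delta)^2\), which rewrites as \(\frac{C_{\mathrm{out}}^2}{c_{\mathrm{in}}^2}\bigl(\frac{R}{C_{\mathrm{out}}\delta}+1\bigr)^2\). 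Nothing in this step uses \(R>C_{\mathrm{out}}\delta\), so the upper bound holds for every \(R>0\). If the family were infinite, I would apply the same argument to an arbitrary finite subfamily to obtain finiteness and the bound.

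\emph{Lower bound.} Assume \(R>C_{\mathrm{out}}\delta\) and set \(\rho:=R-C_{\mathrm{out}}\delta>0\). Almost every point \(w\in B(z,\rho)\) lies in some pocket \(Q\), by the covering part of (i). For such \(w\) we have \(|w-\zeta_Q|<C_{\mathrm{out}}\delta\), so \(|\zeta_Q-z|<\rho+C_{\mathrm{out}}\delta=R\), that is, \(Q\in\mathcal N\). Hence \(B(z,\rho)\subset\bigcup_{Q\in\mathcal N}Q\) up to a null set. Each pocket has area at most \(\pi C_{\mathrm{out}}^2\delta^2\), so
\[
\pi\rho^2\le \#\mathcal N\cdot\pi C_{\mathrm{out}}^2\delta^2 .
\]
This gives \(\#\mathcal N\ge\bigl(\frac{R}{C_{\mathrm{out}}\delta}-1\bigr)^2\).

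There is no real obstacle here. The only points needing care are the measure-theoretic ones: the covering and the disjointness hold only up to null sets, which is harmless for area comparisons, and the boundary of the open ball must be handled, which the strict inclusion \(Q\subset B(\zeta_Q,C_{\mathrm{out}}\delta)\) already takes care of.
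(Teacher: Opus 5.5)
Your proposal is correct and is the same area-comparison argument the paper uses. Both rest on the inclusions $B(z,R-C_{\mathrm{out}}\delta)\subset\bigcup_{\zeta_Q\in B(z,R)}Q\subset B(z,R+C_{\mathrm{out}}\delta)$ (up to null sets) together with $\pi c_{\mathrm{in}}^2\delta^2\le|Q|\le\pi C_{\mathrm{out}}^2\delta^2$; your write-up just spells out the null-set caveats and the fact that the upper bound needs no restriction on $R$ more explicitly than the paper does.
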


\begin{proof}
This is the area comparison used later in Lemma~\ref{lem:geo-pockets}\textup{(d)}.
For every $z\in\mathbb C$ and every \(R>C_{\mathrm{out}}\delta\),
\[
B(z,R-C_{\mathrm{out}}\delta)
\, \subset
\bigcup_{\zeta_Q\in B(z,R)} Q
\, \subset
B(z,R+C_{\mathrm{out}}\delta).
\]
Hence
\[
\pi(R-C_{\mathrm{out}}\delta)^2
\le
\sum_{\zeta_Q\in B(z,R)} |Q|
\le
\pi(R+C_{\mathrm{out}}\delta)^2.
\]
Using
\(\pi c_{\mathrm{in}}^2\delta^2\le |Q|\le \pi C_{\mathrm{out}}^2\delta^2, Q\in\mathcal P_\delta\),
the result follows.
\end{proof}

\smallskip
\noindent
We state the re-centering theorem.

\begin{theorem}[Re-centering envelope theorem]
\label{thm:envelope-recentering}
Let $\alpha>0$ and let $\mathcal P_\delta$ be a bi-Lipschitz
$\delta$-pocket decomposition of $\mathbb C$ with geometric constants
$c_{\mathrm{in}}$, $C_{\mathrm{out}}$, and $C_L$.
Then there exists $\delta_0=\delta_0(\alpha,c_{\mathrm{in}},C_{\mathrm{out}},C_L)>0$
such that, for every $0<\delta<\delta_0$, the following statements hold.

\begin{enumerate}
\item[\textup{(i)}] For every $\eta\in\mathbb C$, there exists a complex sequence
$b(\eta)=\bigl(b_Q(\eta)\bigr)_{Q\in\mathcal P_\delta}$ such that
\[
\kappa_\eta
=
\sum_{Q\in\mathcal P_\delta} b_Q(\eta)\,\kappa_{\zeta_Q},
\]
with convergence in $\mathcal F_\alpha^2$.

\item[\textup{(ii)}] Let $0<\beta<1$ and $\gamma>0$.
Then the coefficients in \textup{(i)} can be chosen so that
\[
|b_Q(\eta)|
\le
C_{\mathrm{env}}\,e^{-\gamma |\zeta_Q-\eta|^\beta},
\qquad Q\in\mathcal P_\delta,\ \eta\in\mathbb C,
\]
where the constant $C_{\mathrm{env}}$ depends only on $\alpha,\beta,\gamma,\delta,c_{\mathrm{in}},C_{\mathrm{out}}$, and \(C_L\).
\end{enumerate}
\end{theorem}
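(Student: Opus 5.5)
The natural route is frame theory in \(\mathcal F_\alpha^2\), combined with inverse-closedness of a subexponential matrix algebra. This matches the roadmap announced at the start of the section.

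\emph{Frame property.} First I show that \(\{\kappa_{\zeta_Q}\}_{Q\in\mathcal P_\delta}\) is a frame for \(\mathcal F_\alpha^2\) once \(\delta\) is small. This is a sampling inequality
\[
\sum_Q |f(\zeta_Q)|^2e^{-\alpha|\zeta_Q|^2}\asymp \|f\|_{\mathcal F_\alpha^2}^2 .
\]
The upper bound follows from subharmonicity and the bounded overlap of the balls \(B(\zeta_Q,c_{\mathrm{in}}\delta)\), using Lemma~\ref{lem:counting}.

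The lower bound is the standard oscillation argument. On each pocket \(Q\subset B(\zeta_Q,C_{\mathrm{out}}\delta)\), compare \(|f(z)|e^{-\frac\alpha2|z|^2}\) with its value at \(\zeta_Q\). Work with the translated function \(f\kappa\)-twisted by the Weyl operator \(W_{\zeta_Q}\). The local gradient bound on a ball of radius \(1\) then gives an oscillation of size \(O(\delta)\) times a local \(L^2\) norm. Summing over the pockets, which tile \(\C\) up to null sets, yields \(\|f\|^2\le C\sum_Q(\cdots)+C\delta^2\|f\|^2\). Choosing \(\delta_0\) small absorbs the last term.

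\emph{Localized Gramian.} Next I form \(G=(\langle \kappa_{\zeta_{Q'}},\kappa_{\zeta_Q}\rangle_\alpha)_{Q,Q'}\). Its entries have modulus \(e^{-\frac\alpha2|\zeta_Q-\zeta_{Q'}|^2}\). Reindexing by \(\Phi_\delta\) and using \eqref{eq:bilip-pocket}, \(G\) becomes a matrix on \(\mathbb Z^2\) with Gaussian off-diagonal decay. It therefore lies in the subexponential algebra \(\mathcal A^{\exp}_{\gamma',\beta}\) of matrices with \(|a_{jk}|\le Ce^{-\gamma'|j-k|^\beta}\), for every \(\gamma'\) and every \(\beta<1\). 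This algebra is a Banach \(*\)-algebra because the weight \(e^{\gamma'|x|^\beta}\) is submultiplicative for \(\beta\le1\). It satisfies the GRS condition precisely because \(\beta<1\), so by Gröchenig--Leinert it is inverse-closed in \(B(\ell^2)\).

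Since \(G\) is not invertible (the frame is redundant), I pass to the Moore--Penrose pseudo-inverse \(G^\dagger\). To do so I use the frame operator \(S=\sum_Q\langle\cdot,\kappa_{\zeta_Q}\rangle\kappa_{\zeta_Q}\), which is invertible on \(\mathcal F_\alpha^2\). The canonical dual frame \(\tilde\kappa_Q=S^{-1}\kappa_{\zeta_Q}\) has a cross-Gramian with the original frame lying in the algebra; this is the localized-frame theorem of Fornasier--Gröchenig. An alternative is to apply the holomorphic functional calculus to \(G\) with a contour avoiding \(0\), which also stays in the algebra, since the spectrum of \(G\) lies in \(\{0\}\cup[A,B]\).

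\emph{Coefficients for a moving center.} I set \(b_Q(\eta):=\langle\kappa_\eta,\tilde\kappa_Q\rangle_\alpha\). Part (i) is then the frame expansion of \(\kappa_\eta\), which converges in \(\mathcal F_\alpha^2\). For (ii), write
\[
b_Q(\eta)=\sum_{Q'}(G^\dagger)_{Q,Q'}\langle\kappa_\eta,\kappa_{\zeta_{Q'}}\rangle_\alpha ,
\]
whose inner factor has size \(e^{-\frac\alpha2|\eta-\zeta_{Q'}|^2}\). Convolving the subexponential bound on \(G^\dagger\) with this Gaussian, and using \(e^{\gamma|a|^\beta}\le e^{\gamma|a-b|^\beta}e^{\gamma|b|^\beta}\) together with the counting Lemma~\ref{lem:counting}, gives
\[
|b_Q(\eta)|\le C\,e^{-\gamma|\zeta_Q-\eta|^\beta}.
\]
Here \(\gamma\) is adjusted to the \(\delta\)-scaled index metric through \(C_L\). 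The constant is uniform in \(\eta\), because the only \(\eta\)-dependence enters through the translation-invariant Gaussian factor.

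\emph{Main obstacle.} I expect the hardest step to be the second one. One must show that the pseudo-inverse, equivalently the dual-frame Gramian, inherits subexponential decay with constants depending only on \(\alpha,\beta,\gamma,\delta\) and the geometric constants. This requires a clean spectral gap for \(G\) on its range, given by the frame bounds, before inverse-closedness and the contour-integral argument can be applied.
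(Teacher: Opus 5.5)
Your proposal is correct and follows essentially the paper's proof. The shared steps are: the localized kernel frame; the Gaussian Gramian placed in a subexponential algebra on $\mathbb Z^2$ via the bi-Lipschitz labeling; Gr\"ochenig--Leinert inverse-closedness; the pseudo-inverse $G^\dagger=\frac{1}{2\pi i}\int_\Gamma z^{-1}(zI-G)^{-1}\,dz$ kept in the algebra; and $b_Q(\eta)$ written as $G^\dagger$ applied to the sampled vector $(\langle\kappa_\eta,\kappa_{\zeta_{Q'}}\rangle_\alpha)_{Q'}$, bounded through the triangle inequality for $|\cdot|^\beta$. The one real difference is that you prove the sampling inequality directly by a small-mesh oscillation argument, whereas the paper verifies separation and $D^-(Z_\delta)>\alpha/\pi$ and then invokes the Fock-space sampling (density) theorem.

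One small correction: for the Jaffard-type class $\{|a_{jk}|\le Ce^{-\gamma'|j-k|^\beta}\}$ that you use, submultiplicativity of the weight is not enough to get closure under multiplication. What you need is $\sum_k e^{-\gamma'|j-k|^\beta-\gamma'|k-l|^\beta}\lesssim e^{-\gamma'|j-l|^\beta}$, which holds for $\beta<1$ but fails at $\beta=1$. Alternatively, work with the weighted row/column-sum norm of Definition~\ref{def:subexp-matrix-algebra}, where submultiplicativity does suffice.
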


\smallskip
\noindent
No uniqueness of the coefficient sequence is asserted.  The quantitative
content is the envelope estimate in \textup{(ii)}, which gives uniform decay
away from the moving center \(\eta\).

\smallskip
\noindent
The proof has four steps: construct a localized frame, place the Gramian in a
subexponential matrix algebra, obtain localized range inverses by
inverse-closedness, and apply the range inverse to the sampled kernel vector
associated with \(\eta\).

\smallskip
\noindent
We fix the Hilbert-space notation and the matrix algebra for subexponential
off-diagonal localization.
We write $\langle\cdot,\cdot\rangle_\alpha$ for the Hilbert space inner product
on $\mathcal F_\alpha^2$, namely
\[
\langle f,g\rangle_\alpha
:=
\frac{\alpha}{\pi}\int_{\mathbb C} f(z)\overline{g(z)}\,e^{-\alpha|z|^2}\,dA(z),
\qquad f,g\in\mathcal F_\alpha^2.
\]

\smallskip
\noindent
For a bi-Lipschitz $\delta$-pocket decomposition $\mathcal P_\delta$ of
$\mathbb C$ with distinguished centers $\zeta_Q$, \(Q\in\mathcal P_\delta\), we write
\[
\ell^2(\mathcal P_\delta)
:=
\left\{a=(a_Q)_{Q\in\mathcal P_\delta}:\ \sum_{Q\in\mathcal P_\delta}|a_Q|^2<\infty\right\}.
\]
Whenever the family
\[
e_Q:=\sqrt{|Q|}\,\kappa_{\zeta_Q},
\qquad Q\in\mathcal P_\delta,
\]
is a frame for $\mathcal F_\alpha^2$, we denote by
\[
\mathcal{C}:\mathcal F_\alpha^2\to \ell^2(\mathcal P_\delta),
\qquad
(\mathcal{C}f)_Q:=\langle f,e_Q\rangle_\alpha,
\]
the associated analysis operator, by
\[
\mathcal{C}^*:\ell^2(\mathcal P_\delta)\to \mathcal F_\alpha^2, \quad 
S:=\mathcal{C}^*\mathcal{C}, 
\quad\text{and}\quad G: = \mathcal{C} \mathcal{C}^*
\]
the synthesis operator, the frame operator,  and the Gramian, respectively.

\begin{definition}[Subexponential matrix algebra]
\label{def:subexp-matrix-algebra}
Let $\mathcal P_\delta$ be a $\delta$-pocket decomposition of $\mathbb C$ with distinguished centers $\zeta_Q$, \(Q\in\mathcal P_\delta\). Fix \(0<\beta<1\) and
\(\gamma>0\), and define
\[
\omega_{\gamma,\beta}(Q, Q')
:=
\exp\!\bigl(\gamma |\zeta_Q-\zeta_{Q'}|^\beta\bigr),
\qquad Q, Q'\in\mathcal P_\delta.
\]
For a matrix \(A=(A_{Q, Q'})_{Q, Q'\in\mathcal P_\delta}\), set
\[
\|A\|_{\mathcal A^{\exp}_{\gamma,\beta}(\mathcal P_\delta)}
:=
\max\left\{
\sup_{Q\in\mathcal P_\delta}\sum_{Q' \in\mathcal P_\delta}|A_{Q, Q'}|\,\omega_{\gamma,\beta}(Q, Q'),\
\sup_{Q' \in\mathcal P_\delta}\sum_{Q\in\mathcal P_\delta}|A_{Q,Q'}|\,\omega_{\gamma,\beta}(Q,Q')
\right\}.
\]
We denote by \(\mathcal A^{\exp}_{\gamma,\beta}(\mathcal P_\delta)\) the space
of all matrices \(A\) for which this quantity is finite.
\end{definition}

\begin{lemma} 
\label{lem:subexp-matrix-algebra}
Let \(\mathcal P_\delta\) be a \(\delta\)-pocket decomposition of \(\mathbb C\)
with distinguished centers \(\zeta_Q\), \(Q\in\mathcal P_\delta\). Fix
\(0<\beta<1\) and \(\gamma>0\). Then:

\begin{enumerate}
\item[\textup{(i)}] \(\mathcal A^{\exp}_{\gamma,\beta}(\mathcal P_\delta)\) is a
unital Banach algebra under matrix multiplication.

\item[\textup{(ii)}] Every
\(A\in \mathcal A^{\exp}_{\gamma,\beta}(\mathcal P_\delta)\) defines a bounded
operator on \(\ell^2(\mathcal P_\delta)\), and
\[
\|A\|_{\mathcal B(\ell^2(\mathcal P_\delta))}
\le
\|A\|_{\mathcal A^{\exp}_{\gamma,\beta}(\mathcal P_\delta)}.
\]
\end{enumerate}
\end{lemma}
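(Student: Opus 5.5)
The key observation is that the weight \(\omega_{\gamma,\beta}\) is submultiplicative because \(0<\beta<1\). For \(0<\beta\le1\) the map \(t\mapsto t^\beta\) is concave with value \(0\) at \(t=0\), hence subadditive on \([0,\infty)\). Combined with the triangle inequality \(|\zeta_Q-\zeta_{Q'}|\le|\zeta_Q-\zeta_{Q''}|+|\zeta_{Q''}-\zeta_{Q'}|\), this gives
\[
\omega_{\gamma,\beta}(Q,Q')\le \omega_{\gamma,\beta}(Q,Q'')\,\omega_{\gamma,\beta}(Q'',Q'),
\qquad Q,Q',Q''\in\mathcal P_\delta .
\]
We also have \(\omega_{\gamma,\beta}\ge1\) and \(\omega_{\gamma,\beta}(Q,Q)=1\). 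All the claims reduce to these facts together with weighted Schur-type sums.

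For (i), the identity matrix has norm exactly \(1\), since the weight is \(1\) on the diagonal. For \(A,B\in\mathcal A^{\exp}_{\gamma,\beta}(\mathcal P_\delta)\), first bound the row sums of the product:
\[
\sum_{Q'}|(AB)_{Q,Q'}|\,\omega(Q,Q')
\le\sum_{Q''}|A_{Q,Q''}|\,\omega(Q,Q'')\sum_{Q'}|B_{Q'',Q'}|\,\omega(Q'',Q')
\le \|A\|\,\|B\|.
\]
This uses submultiplicativity and Tonelli, which applies because all terms are nonnegative. The column sums are handled symmetrically. The same computation, run without the weight, shows that the entries \((AB)_{Q,Q'}=\sum_{Q''}A_{Q,Q''}B_{Q'',Q'}\) converge absolutely, so the product is well defined. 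Matrix multiplication is associative by the same absolute convergence, and bilinearity is clear. Hence we have a normed unital algebra with \(\|AB\|\le\|A\|\|B\|\).

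Completeness is the only nonformal point in (i). The space is a weighted \(\ell^1\)-type space: on each row and each column it is an \(\ell^1\)-space with weight \(\omega\). Take a Cauchy sequence \((A_n)\). Since \(\omega\ge1\), the entries \((A_n)_{Q,Q'}\) are Cauchy and converge to some limit \(A_{Q,Q'}\). Fatou's lemma, applied to the finite partial row and column sums, then gives \(\|A_n-A\|\le\liminf_m\|A_n-A_m\|\), which tends to \(0\). So \(A\) belongs to the space and \(A_n\to A\).

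For (ii), apply the classical Schur test on \(\ell^2(\mathcal P_\delta)\). Because \(\omega\ge1\), both the unweighted row sums and the unweighted column sums of \(|A|\) are bounded by \(\|A\|_{\mathcal A^{\exp}_{\gamma,\beta}(\mathcal P_\delta)}\). By Cauchy--Schwarz,
\[
\Bigl|\sum_{Q'}A_{Q,Q'}x_{Q'}\Bigr|^2\le\Bigl(\sum_{Q'}|A_{Q,Q'}|\Bigr)\sum_{Q'}|A_{Q,Q'}||x_{Q'}|^2 .
\]
Summing over \(Q\) and using the column bound gives \(\|Ax\|_2^2\le\|A\|^2\|x\|_2^2\). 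This also shows that the series defining \(Ax\) converge absolutely, so \(A\) acts as a bounded operator with the stated norm bound.

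The main point needing care is the subadditivity of \(t^\beta\), which is where the hypothesis \(\beta<1\) enters. Everything else is routine bookkeeping; the pocket geometry and the counting lemma are not needed for this statement.
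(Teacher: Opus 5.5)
Your proposal is correct and follows essentially the same route as the paper: submultiplicativity of $\omega_{\gamma,\beta}$ from subadditivity of $t\mapsto t^\beta$, weighted row and column bounds for products, completeness via entrywise limits of the weighted $\ell^1$ rows and columns, and the Schur test using $\omega_{\gamma,\beta}\ge 1$. Your explicit checks that the product entries converge absolutely and that multiplication is associative are small additions that the paper leaves implicit.
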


\begin{proof}
Set \(\omega(Q,Q'):=\omega_{\gamma,\beta}(Q,Q')\).
For a matrix \(A=(A_{Q,Q'})\), write
\[
\|A\|_{\mathrm{row}}
:=
\sup_{Q\in\mathcal P_\delta}\sum_{Q'\in\mathcal P_\delta}|A_{Q,Q'}|\,\omega(Q,Q'),
\qquad
\|A\|_{\mathrm{col}}
:=
\sup_{Q'\in\mathcal P_\delta}\sum_{Q\in\mathcal P_\delta}|A_{Q,Q'}|\,\omega(Q,Q'),
\]
so that
\[
\|A\|_{\mathcal A^{\exp}_{\gamma,\beta}(\mathcal P_\delta)}
=
\max\{\|A\|_{\mathrm{row}},\|A\|_{\mathrm{col}}\}.
\]

\smallskip
\noindent
Since \(0<\beta<1\), the function \(t\mapsto t^\beta\) is concave on
\([0,\infty)\). Hence, for all \(Q, Q', Q''\in\mathcal P_\delta\),
\[
|\zeta_Q-\zeta_{Q'}|^\beta
\le
|\zeta_Q-\zeta_{Q''}|^\beta+|\zeta_{Q''}-\zeta_{Q'}|^\beta,
\]
and therefore
\begin{equation}
\label{eq:submult-weight}
\omega(Q,Q')\le \omega(Q,Q'')\,\omega(Q'',Q').
\end{equation}

\smallskip
\noindent
To prove \textup{(i)}, let \(A,B\in\mathcal A^{\exp}_{\gamma,\beta}(\mathcal P_\delta)\).
Then, using \eqref{eq:submult-weight},
\[
\sum_{Q'}|(AB)_{Q,Q'}|\,\omega(Q,Q')
\le
\sum_{Q''}|A_{Q,Q''}|\,\omega(Q,Q'')\sum_{Q'}|B_{Q'',Q'}|\,\omega(Q'',Q'),
\]
so
\[
\|AB\|_{\mathrm{row}}
\le
\|A\|_{\mathrm{row}}\|B\|_{\mathrm{row}}.
\]
Similarly,
\[
\|AB\|_{\mathrm{col}}
\le
\|A\|_{\mathrm{col}}\|B\|_{\mathrm{col}}.
\]
Hence
\[
\|AB\|_{\mathcal A^{\exp}_{\gamma,\beta}(\mathcal P_\delta)}
\le
\|A\|_{\mathcal A^{\exp}_{\gamma,\beta}(\mathcal P_\delta)}
\,
\|B\|_{\mathcal A^{\exp}_{\gamma,\beta}(\mathcal P_\delta)}.
\]
The identity matrix belongs to
\(\mathcal A^{\exp}_{\gamma,\beta}(\mathcal P_\delta)\) with norm \(1\), so the
algebra is unital.

\smallskip
\noindent
To prove completeness, let \((A^{(n)})_{n \geq 1}\) be Cauchy in
\(\mathcal A^{\exp}_{\gamma,\beta}(\mathcal P_\delta)\). Then, for each fixed
\(Q\), the weighted row vectors
\[
R_Q^{(n)}:=
\bigl(A^{(n)}_{Q,Q'}\omega(Q,Q')\bigr)_{Q'\in\mathcal P_\delta}
\]
form a Cauchy sequence in \(\ell^1(\mathcal P_\delta)\), and similarly, for
each fixed \(Q'\), the weighted column vectors
\[
C_{Q'}^{(n)}:=
\bigl(A^{(n)}_{Q,Q'}\omega(Q,Q')\bigr)_{Q\in\mathcal P_\delta}
\]
form a Cauchy sequence in \(\ell^1(\mathcal P_\delta)\). 
Let \(A=(A_{Q,Q'})_{Q,Q'\in\mathcal P_\delta}\)
be the entrywise limit. Passing to the limit in the weighted row and column
norms shows that \(A\in \mathcal A^{\exp}_{\gamma,\beta}(\mathcal P_\delta)\)
and \(A^{(n)}\to A\) in
\(\mathcal A^{\exp}_{\gamma,\beta}(\mathcal P_\delta)\). Thus
\(\mathcal A^{\exp}_{\gamma,\beta}(\mathcal P_\delta)\) is a Banach algebra.

\smallskip
\noindent
For \textup{(ii)}, since \(\omega(Q,Q')\ge 1\), we have
\[
\sup_Q\sum_{Q'} |A_{Q,Q'}|
\le
\|A\|_{\mathcal A^{\exp}_{\gamma,\beta}(\mathcal P_\delta)},
\qquad
\sup_{Q'}\sum_Q |A_{Q,Q'}|
\le
\|A\|_{\mathcal A^{\exp}_{\gamma,\beta}(\mathcal P_\delta)}.
\]
Therefore the standard Schur test yields
\[
\|A\|_{\mathcal B(\ell^2(\mathcal P_\delta))}
\le
\sqrt{
\left(\sup_Q\sum_{Q'} |A_{Q,Q'}|\right)
\left(\sup_{Q'}\sum_Q |A_{Q,Q'}|\right)
}
\le
\|A\|_{\mathcal A^{\exp}_{\gamma,\beta}(\mathcal P_\delta)}.
\]
This proves \textup{(ii)}.
\end{proof}

\subsection{Localized kernel frame and Gramian estimates}
\label{subsec:localized-gramian}

We establish the frame structure of
\(\{\sqrt{|Q|}\,\kappa_{\zeta_Q}\}_{Q\in\mathcal P_\delta}\), record the
reconstruction formula and the Gramian range projection, and derive
off-diagonal decay of the Gramian.

\begin{lemma}[Localized kernel frame]
\label{lem:kernel-frame}
Fix $\alpha>0$. Let $\mathcal P_\delta$ be a $\delta$-pocket decomposition of
$\mathbb C$ with distinguished centers $\zeta_Q$ and geometric constants
$c_{\mathrm{in}}$ and $C_{\mathrm{out}}$. Set
\(
\delta_0:=\frac{1}{2C_{\mathrm{out}}\sqrt{\alpha}}>0.
\)
Then, for every $0 < \delta < \delta_0$, the family
\[
\bigl\{e_Q=\sqrt{|Q|}\,\kappa_{\zeta_Q}:Q\in\mathcal P_\delta\bigr\}
\subset \mathcal F_\alpha^2
\]
is a frame for $\mathcal F_\alpha^2$ with frame bounds \(
0<a_\delta\le b_\delta<\infty \).

\smallskip
\noindent
Let
\(
\mathcal{C}:\mathcal F_\alpha^2\to \ell^2(\mathcal P_\delta),  (\mathcal{C}f)_Q: = \langle f, e_Q\rangle_{\alpha},\)
be the associated analysis operator, and let $S:=\mathcal{C}^*\mathcal{C}$ and $
G:=\mathcal{C} \mathcal{C}^*$ be the frame operator and the Gramian, respectively.
The following hold:
\begin{enumerate}
\item[\textup{(i)}] 
Define \(e_Q^\#:=S^{-1}e_Q\).
Then, for every \(f\in\mathcal F_\alpha^2\),
\[
f=\sum_{Q\in\mathcal P_\delta}
\bigl\langle f,\sqrt{|Q|}e_Q^\#\bigr\rangle_\alpha\,\kappa_{\zeta_Q},
\]
with unconditional convergence in \(\mathcal F_\alpha^2\).

\item[\textup{(ii)}] Set 
\(
\mathcal H:=\overline{\operatorname{Ran}(\mathcal{C})}\subset \ell^2(\mathcal P_\delta)\). Then $\mathcal H$ is $G$-invariant, the restriction
\(
G|_{\mathcal H}:\mathcal H\to\mathcal H\)
is invertible, and 
\[
\ker(G)=\mathcal H^\perp,
\qquad
\operatorname{Ran}(G)=\mathcal H,
\qquad
\operatorname{spec}(G)\subset \{0\}\cup [a_\delta,b_\delta].
\]

\item[\textup{(iii)}] If $\Gamma\subset\mathbb C$ is a positively oriented rectifiable Jordan curve such that
\(
[a_\delta,b_\delta]\subset \operatorname{int}(\Gamma)\) and \(
0\notin \overline{\operatorname{int}(\Gamma)}\),
then the orthogonal projection $P_{\mathcal H}$ onto $\mathcal H$ is given by
\[
P_{\mathcal H}
=
\frac{1}{2\pi i}\int_\Gamma (zI-G)^{-1}\,dz.
\]
\end{enumerate}
\end{lemma}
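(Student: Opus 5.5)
The strategy is to prove the frame inequality
\[
a_\delta\|f\|_{\mathcal F^2_\alpha}^2\le\sum_{Q}|\langle f,e_Q\rangle_\alpha|^2\le b_\delta\|f\|_{\mathcal F^2_\alpha}^2
\]
by a Riemann-sum comparison. Since \(\langle f,\kappa_{\zeta}\rangle_\alpha=f(\zeta)e^{-\frac\alpha2|\zeta|^2}\), we have
\[
\sum_Q|\langle f,e_Q\rangle_\alpha|^2=\sum_Q|Q|\,|f(\zeta_Q)|^2e^{-\alpha|\zeta_Q|^2},
\]
and this should be compared with \(\frac{\pi}{\alpha}\|f\|^2_{\mathcal F^2_\alpha}=\sum_Q\int_Q|f(z)|^2e^{-\alpha|z|^2}\,dA(z)\). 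Write \(F(z):=f(z)e^{-\frac\alpha2|z|^2}\). The key estimate is the local oscillation bound
\[
\sup_{z\in Q}|F(z)-F(\zeta_Q)|\lesssim \delta\Big(\int_{B(\zeta_Q,1)}|F|^2\,dA\Big)^{1/2}.
\]
I would prove it in the standard way. Translate by the Weyl operator \(f\mapsto f(\cdot+\zeta_Q)\kappa_{\zeta_Q}\)-type, which preserves \(|F|\). This reduces to \(\zeta_Q=0\). There \(F(z)-F(0)\) is controlled by \(|z|\) times the gradient of \(f\cdot e^{-\alpha|z|^2/2}\) on \(B(0,C_{\mathrm{out}}\delta)\). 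That gradient is bounded by the local \(L^2\) mass via the Cauchy estimates and subharmonicity.

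Summing over \(Q\), the bounded overlap of the balls \(B(\zeta_Q,1)\) (Lemma~\ref{lem:counting}) gives
\[
\Big|\,\|F\|_{L^2}-\Big(\sum_Q|Q||F(\zeta_Q)|^2\Big)^{1/2}\Big|\le C'\delta\,\|F\|_{L^2},
\]
where \(C'\) depends on \(\alpha,c_{\mathrm{in}},C_{\mathrm{out}}\). This yields both bounds once \(C'\delta<1\). The upper bound also follows directly from subharmonicity, for any \(\delta\).

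The main obstacle is matching the exact threshold \(\delta_0=\frac{1}{2C_{\mathrm{out}}\sqrt\alpha}\). For this I would try a sharper version of the oscillation bound. Write \(f(z)=f(\zeta_Q)+\int_0^1 f'(\zeta_Q+t(z-\zeta_Q))(z-\zeta_Q)\,dt\) after translation. Then control the derivative term through the Fock-space Bernstein-type bound \(\|(f e^{-\alpha|\cdot|^2/2})'\|\le\sqrt\alpha\,\|F\|\) in the translated \(L^2\) sense, which is the \(\bar\partial^*\)/creation-operator identity \(\|\partial f-\alpha\bar z f\|=\sqrt\alpha\|f\|\). Averaging over \(Q\) gives an error factor at most \(C_{\mathrm{out}}\delta\sqrt\alpha\), and \(<\tfrac12\) yields \(a_\delta\ge(1-2C_{\mathrm{out}}\delta\sqrt\alpha)^2\pi/\alpha>0\). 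If the constants do not come out exactly, any explicit positive \(\delta_0\) of this form suffices for later use.

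The remaining items are abstract frame theory. For (i), \(S\) is bounded, positive and invertible with \(a_\delta\le S\le b_\delta\). Then \(f=\sum_Q\langle f,e_Q^\#\rangle e_Q\) with \(e_Q^\#=S^{-1}e_Q\), which gives
\[
f=\sum_Q\langle f,\sqrt{|Q|}e^\#_Q\rangle_\alpha\kappa_{\zeta_Q}.
\]
Unconditional convergence follows because \((\langle f,e^\#_Q\rangle)\in\ell^2\) and \(\mathcal C^*\) is bounded.

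For (ii), we have \(G\mathcal C=\mathcal C S\), so \(\operatorname{Ran}\mathcal C\) is closed (since \(\mathcal C\) is bounded below) and \(G\)-invariant. Also \(\ker G=\ker\mathcal C^*=(\operatorname{Ran}\mathcal C)^\perp\). On \(\mathcal H\), \(G\) is unitarily similar to \(S\) via the polar decomposition of \(\mathcal C\), so its spectrum lies in \([a_\delta,b_\delta]\) and \(G|_{\mathcal H}\) is invertible. Hence \(\operatorname{Ran}G=\mathcal H\).

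For (iii), \(G\) is self-adjoint, so the Riesz functional calculus applies. The contour integral equals \(\chi(G)\), where \(\chi\) is the indicator of \([a_\delta,b_\delta]\) restricted to the spectrum. This is the spectral projection onto \(\mathcal H=\ker(G)^\perp\), which is \(P_{\mathcal H}\).
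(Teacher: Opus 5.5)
Your treatment of (i)--(iii) is correct. In (ii), writing $\mathcal C=US^{1/2}$ with $U$ unitary onto $\mathcal H$, so that $G|_{\mathcal H}=USU^*|_{\mathcal H}$, is a tidy alternative to the paper's comparison $a_\delta S\le S^2\le b_\delta S$. Your Riemann-sum argument also does give a frame for all sufficiently small $\delta$.

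\textbf{The gap: the range of $\delta$.} The lemma asserts the frame property for \emph{every} $0<\delta<\frac{1}{2C_{\mathrm{out}}\sqrt\alpha}$. Your argument covers only those $\delta$ with $C'\delta<1$, where $C'$ comes from an uncomputed local Cauchy/mean-value estimate and an overlap sum. Nothing forces $1/C'\ge \frac{1}{2C_{\mathrm{out}}\sqrt\alpha}$. Falling back to ``any explicit $\delta_0$'' proves a weaker lemma. That is harmless downstream, since Theorem~\ref{thm:envelope-recentering} only needs some positive $\delta_0$, but it is not the statement given.

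\textbf{Why your sharpening does not close it.} The identity $\|\partial f-\alpha\bar z f\|=\sqrt\alpha\,\|f\|$ is true and gives $\|\nabla|F|\|_{L^2}=\sqrt\alpha\,\|F\|_{L^2}$. However, a global $L^2$ gradient bound alone cannot control point values in two dimensions. Integrating along segments from $\zeta_Q$ leaves a weight comparable to $\delta^3/|w-\zeta_Q|$ against $|\nabla|F|(w)|^2$ near $\zeta_Q$, and this is not dominated by $\delta^2$ times the local $L^2$ norm of $\nabla|F|$. A Poincar\'e inequality on $Q$ would compare $|F|$ with its mean over $Q$, not with $|F(\zeta_Q)|$. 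The local estimates needed to get back to point values reintroduce non-sharp constants.

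\textbf{A slip to fix.} Your oscillation bound must be stated for $|F|$, not $F$, because the Weyl translation preserves only the modulus. For $f=\kappa_{\zeta_Q}$ one has $F(\zeta_Q+h)=e^{-\frac{\alpha}{2}|h|^2}e^{i\alpha\operatorname{Im}(\overline{\zeta_Q}h)}$. Hence $|F(z)-F(\zeta_Q)|$ attains values of order $1$ on $Q$ once $|\zeta_Q|\gtrsim\delta^{-1}$, while $\int_{B(\zeta_Q,1)}|F|^2\asymp 1$. For the same reason, no global $L^2$ bound holds for the derivatives of $F$ itself, since $\bar\partial F=-\frac{\alpha}{2}zfe^{-\frac{\alpha}{2}|z|^2}$.

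\textbf{The missing idea: Seip's density theorem, as the paper uses it.} The centers $Z_\delta=\{\zeta_Q\}$ are separated, because the balls $B(\zeta_Q,c_{\mathrm{in}}\delta)$ lie in essentially disjoint pockets. The lower bound in Lemma~\ref{lem:counting} gives $D^-(Z_\delta)\ge \frac{1}{\pi C_{\mathrm{out}}^2\delta^2}>\frac{4\alpha}{\pi}>\frac{\alpha}{\pi}$ for $\delta<\delta_0$, so $Z_\delta$ is a sampling set for $\mathcal F_\alpha^2$. Since $\langle f,e_Q\rangle_\alpha=\sqrt{|Q|}\,f(\zeta_Q)e^{-\frac{\alpha}{2}|\zeta_Q|^2}$ and $\pi c_{\mathrm{in}}^2\delta^2\le|Q|\le\pi C_{\mathrm{out}}^2\delta^2$, the sampling inequality is exactly the frame inequality, on the whole stated range. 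Your perturbative route is more elementary and self-contained, but as written it stops short of the explicit threshold.
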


\begin{proof}
Set
\(
Z_\delta:=\{\zeta_Q:Q\in\mathcal P_\delta\}\).
Since
\(
B(\zeta_Q,c_{\mathrm{in}}\delta)\subset Q,
\ Q\in\mathcal P_\delta\),
and the pockets are pairwise disjoint up to null sets, the set $Z_\delta$ is
$c_{\mathrm{in}}\delta$-separated. Moreover, by Lemma~\ref{lem:counting},
\[
D^-(Z_\delta)
:=
\liminf_{R\to\infty}\inf_{z\in\mathbb C}
\frac{\#(Z_\delta\cap B(z,R))}{\pi R^2}
\ge
\liminf_{R\to\infty}
\frac{\left(\frac{R}{C_{\mathrm{out}}\delta}-1\right)^2}{\pi R^2}
=
\frac{1}{\pi C_{\mathrm{out}}^2\delta^2}.
\]
Hence, for every $0 < \delta < \delta_0$,
\[
D^-(Z_\delta)>\frac{\alpha}{\pi}.
\]
By the sampling theorem for Fock spaces \cite[Lemma~4.32]{Zhu}, $Z_\delta$ is
a sampling set for $\mathcal F_\alpha^2$. Therefore there exist constants
$a'_\delta,b'_\delta>0$ such that
\[
a'_\delta\|f\|_{\mathcal F_\alpha^2}^2
\le
\sum_{Q\in\mathcal P_\delta}
|f(\zeta_Q)|^2e^{-\alpha|\zeta_Q|^2}
\le
b'_\delta\|f\|_{\mathcal F_\alpha^2}^2,
\qquad f\in\mathcal F_\alpha^2.
\]
Using  
\(
\langle f,e_Q\rangle_\alpha
=
\sqrt{|Q|}\,f(\zeta_Q)e^{-\frac{\alpha}{2}|\zeta_Q|^2}
\), together with
\(
\pi c_{\mathrm{in}}^2\delta^2\le |Q|\le \pi C_{\mathrm{out}}^2\delta^2\),
we obtain
\[
a_\delta\|f\|_{\mathcal F_\alpha^2}^2
\le
\sum_{Q\in\mathcal P_\delta}|\langle f,e_Q\rangle_\alpha|^2
\le
b_\delta\|f\|_{\mathcal F_\alpha^2}^2,
\qquad f\in\mathcal F_\alpha^2,
\]
where
\(
a_\delta:=\pi c_{\mathrm{in}}^2\delta^2a'_\delta
\) and 
\(
b_\delta:=\pi C_{\mathrm{out}}^2\delta^2b'_\delta
\).
Thus $\{e_Q\}_{Q\in\mathcal P_\delta}$ is a frame for $\mathcal F_\alpha^2$.

\smallskip
\noindent
Part \textup{(i)} is the frame reconstruction formula for the canonical dual
\cite[Chapter~5]{Christensen}:
\[
f
=
\sum_{Q\in\mathcal P_\delta}
\bigl\langle f,e_Q^\# \bigr\rangle_\alpha\,e_Q,
\qquad f\in\mathcal F_\alpha^2,
\]
with unconditional convergence in \(\mathcal F_\alpha^2\).  The definitions of
\(e_Q\) and \(e_Q^\#\) give the stated formula.

\smallskip
\noindent
For part \textup{(ii)}, since \(\{e_Q\}_{Q\in\mathcal P_\delta}\) is a frame, the analysis operator
\(\mathcal{C}\) has closed range, so
\[
\mathcal H=\overline{\operatorname{Ran}(\mathcal{C})}=\operatorname{Ran}(\mathcal{C}).
\]
Moreover,
\(
\ker(G) = \ker(\mathcal{C}^*)=\mathcal H^\perp\),
and, because \(\operatorname{Ran}(\mathcal{C})\) is closed,
\[
\operatorname{Ran}(G)=\operatorname{Ran}(\mathcal{C} \mathcal{C}^*)=\operatorname{Ran}(\mathcal{C})=\mathcal H.
\]
In particular, \(\mathcal H\) is \(G\)-invariant.

\smallskip
\noindent
Now let \(c= \mathcal{C} f\in\mathcal H\). Then
\[
\|c\|_{\ell^2(\mathcal P_\delta)}^2
=
\|\mathcal{C} f\|_{\ell^2(\mathcal P_\delta)}^2
=
\langle Sf,f\rangle_\alpha,
\]
while
\[
\langle Gc,c\rangle_{\ell^2(\mathcal P_\delta)}
=
\langle \mathcal{C} \mathcal{C}^* \mathcal{C}f, \mathcal{C}f\rangle_{\ell^2(\mathcal P_\delta)}
=
\langle S^2f,f\rangle_\alpha.
\]
Since the frame bounds give
\(
a_\delta I\le S\le b_\delta I\),
it follows that
\(
a_\delta S\le S^2\le b_\delta S\),
and hence
\[
a_\delta\|c\|_{\ell^2(\mathcal P_\delta)}^2
\le
\langle Gc,c\rangle_{\ell^2(\mathcal P_\delta)}
\le
b_\delta\|c\|_{\ell^2(\mathcal P_\delta)}^2,
\qquad c\in\mathcal H.
\]
Therefore, \(G|_{\mathcal H}:\mathcal H\to\mathcal H\) is positive and bounded below by
\(a_\delta\).
Since $\operatorname{Ran}(G)=\mathcal H$, the restriction $G|_{\mathcal H}$ is surjective onto $\mathcal H$; together with the lower bound, this shows that \(G|_{\mathcal H}:\mathcal H\to\mathcal H\) is invertible, and
\[
\operatorname{spec}(G|_{\mathcal H})\subset [a_\delta,b_\delta].
\]
Since \(G=0\) on \(\mathcal H^\perp=\ker(G)\), we conclude that
\[
\operatorname{spec}(G)\subset \{0\}\cup[a_\delta,b_\delta].
\]

\smallskip
\noindent
For part \textup{(iii)}, the set \([a_\delta,b_\delta]\) is a clopen subset of
\(\operatorname{spec}(G)\). Hence, by the Riesz idempotent formula
(see \cite[(6.9), Chapter~VII, \S6]{ConwayFA}; see also
\cite[Proposition~4.11, Chapter~VII, \S4]{ConwayFA}),
\[
P:=
\frac{1}{2\pi i}\int_\Gamma (zI-G)^{-1}\,dz
\]
is the spectral projection of \(G\) associated with the spectral subset
\([a_\delta,b_\delta]\). Since \(G\) is self-adjoint and the corresponding spectral
subspace is precisely
\(\operatorname{Ran}(G)=\mathcal H\),
it follows that \(P\) is the orthogonal projection onto \(\mathcal H\), that is,
\(
P=P_{\mathcal H}\).
This proves the lemma.
\end{proof}

\medskip
\noindent
We identify the matrix of the Gramian with respect to the canonical basis of \(\ell^2(\mathcal P_\delta)\)
and prove its off-diagonal localization.
For each \(Q'\in\mathcal P_\delta\), let \(u^{(Q')}\in \ell^2(\mathcal P_\delta)\) denote the canonical basis vector, that is,
\[
u^{(Q')}_{Q}=
\begin{cases}
1,& Q=Q',\\
0,& Q\neq Q'.
\end{cases}
\]
Since
\(\mathcal{C}^*u^{(Q')}=e_{Q'}\),
it follows that, for \(Q, Q'\in\mathcal P_\delta\),
\[
(Gu^{(Q')})_Q
=
(\mathcal{C} \mathcal{C}^*u^{(Q')})_Q
=
(\mathcal{C}e_{Q'})_Q
=
\langle e_{Q'},e_Q\rangle_\alpha.
\]
Hence the Gramian \(G\) is represented by the matrix
\[
G=(G_{Q,Q'})_{Q,Q'\in\mathcal P_\delta},
\qquad
G_{Q,Q'}=\langle e_{Q'},e_Q\rangle_\alpha.
\]
We next show that this matrix belongs to the subexponential algebra.

\begin{lemma}[Localized Gramian]
\label{lem:gramian-localized}
In the setting of Lemma~\ref{lem:kernel-frame}, let 
$\mathcal P_\delta$ be a $\delta$-pocket decomposition of $\mathbb C$ with
distinguished centers $\zeta_Q$ and geometric constants
$c_{\mathrm{in}}$ and $C_{\mathrm{out}}$, where \(0 < \delta < \delta_0\). Let
\(G=(G_{Q, Q'})_{Q, Q'\in\mathcal P_\delta}\)
be the matrix of the Gramian $G=\mathcal{C} \mathcal{C}^*$ with respect to the canonical basis of
$\ell^2(\mathcal P_\delta)$.
Then, for every \(0<\beta<1\) and every \(\gamma>0\), \(G\in \mathcal A^{\exp}_{\gamma,\beta}(\mathcal P_\delta)\) 
and there is a constant $C = C(\alpha,\beta,\gamma,c_{\mathrm{in}},C_{\mathrm{out}})$ such that
\(
\|G\|_{\mathcal A^{\exp}_{\gamma,\beta}(\mathcal P_\delta)}
\le
C\).
\end{lemma}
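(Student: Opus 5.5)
The plan is to compute the Gramian entries explicitly and then estimate the weighted row sums with the counting bound of Lemma~\ref{lem:counting}. By the reproducing property of $K_{\alpha,w}$ in $\mathcal F_\alpha^2$,
\[
G_{Q,Q'}=\langle e_{Q'},e_Q\rangle_\alpha=\sqrt{|Q||Q'|}\,\kappa_{\zeta_{Q'}}(\zeta_Q)e^{-\frac{\alpha}{2}|\zeta_Q|^2}
=\sqrt{|Q||Q'|}\,e^{\alpha\overline{\zeta_{Q'}}\zeta_Q-\frac{\alpha}{2}|\zeta_Q|^2-\frac{\alpha}{2}|\zeta_{Q'}|^2}.
\]
Taking moduli and using $\re(\overline{w}z)-\frac12|z|^2-\frac12|w|^2=-\frac12|z-w|^2$ gives the exact Gaussian off-diagonal profile
\[
|G_{Q,Q'}|=\sqrt{|Q||Q'|}\,e^{-\frac{\alpha}{2}|\zeta_Q-\zeta_{Q'}|^2}\le \pi C_{\mathrm{out}}^2\delta^2\,e^{-\frac{\alpha}{2}|\zeta_Q-\zeta_{Q'}|^2},
\]
by the area bounds from Definition~\ref{def:pocket-decomposition}(ii). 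Since $\delta<\delta_0=\frac{1}{2C_{\mathrm{out}}\sqrt\alpha}$, the prefactor is bounded by $\frac{\pi}{4\alpha}$, a constant depending only on $\alpha$.

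Next I bound the weighted row sum. The matrix is Hermitian in modulus (symmetric in $Q,Q'$), so the column sum is identical and only rows need to be treated. For fixed $Q$ I decompose the $Q'$ into dyadic-type layers $L_j:=\{Q':j\le|\zeta_{Q'}-\zeta_Q|<j+1\}$, $j\in\N_0$. By the upper bound in Lemma~\ref{lem:counting} (valid for all $R>0$) with $R=j+1$ and $z=\zeta_Q$,
\[
\#L_j\le \frac{C_{\mathrm{out}}^2}{c_{\mathrm{in}}^2}\Bigl(\frac{j+1}{C_{\mathrm{out}}\delta}+1\Bigr)^2\lesssim \frac{(j+2)^2}{c_{\mathrm{in}}^2\delta^2}.
\]
The factor $\delta^{-2}$ here cancels exactly with the $\delta^2$ in $|Q||Q'|$ bound, which is what makes the final constant independent of $\delta$ (this cancellation is the one point to track carefully; it is why the stated $C$ does not depend on $\delta$). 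Hence
\[
\sum_{Q'}|G_{Q,Q'}|\,e^{\gamma|\zeta_Q-\zeta_{Q'}|^\beta}\lesssim \frac{C_{\mathrm{out}}^2}{c_{\mathrm{in}}^2}\sum_{j\ge0}(j+2)^2e^{\gamma(j+1)^\beta-\frac{\alpha}{2}j^2},
\]
with implicit constant absolute. Since $\beta<1<2$, the Gaussian dominates the subexponential weight, so the series converges to a constant depending only on $\alpha,\beta,\gamma$.

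Combining, $\|G\|_{\mathcal A^{\exp}_{\gamma,\beta}(\mathcal P_\delta)}\le C(\alpha,\beta,\gamma,c_{\mathrm{in}},C_{\mathrm{out}})$, uniformly in $Q$ and in $0<\delta<\delta_0$. There is no real obstacle: the only mildly delicate steps are the exact modulus identity for the normalized kernels (a one-line computation) and ensuring the counting estimate is used in the form valid for small $R$, so that the layer $j=0$ is included without requiring $R>C_{\mathrm{out}}\delta$.
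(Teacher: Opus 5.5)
Your proof is correct and is essentially the paper's argument: the same exact Gaussian modulus $|G_{Q,Q'}|=\sqrt{|Q||Q'|}\,e^{-\frac{\alpha}{2}|\zeta_Q-\zeta_{Q'}|^2}$, the same unit-width layer decomposition with the all-$R$ upper counting bound of Lemma~\ref{lem:counting}, the same $\delta^2\cdot\delta^{-2}$ cancellation, and Hermitian symmetry for the column sums. The only cosmetic difference is that the paper first absorbs the weight via $\gamma t^\beta\le\frac{\alpha}{4}t^2+C_1$ before summing, whereas you bound it layerwise by $e^{\gamma(j+1)^\beta}$. Also, replacing $j+1+C_{\mathrm{out}}\delta$ by $j+2$ uses $C_{\mathrm{out}}\delta<\frac{1}{2\sqrt{\alpha}}$, so that implicit constant depends on $\alpha$ rather than being absolute, which is harmless here.
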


\begin{proof}
For \(Q, Q' \in\mathcal P_\delta\), the reproducing-kernel identity gives
\[
|G_{Q,Q'}|
=
\sqrt{|Q||Q'|} \, \left|\langle \kappa_{\zeta_{Q'}},\kappa_{\zeta_Q}\rangle_\alpha \right| = \sqrt{|Q||Q'|}
\exp\!\left(-\frac{\alpha}{2}|\zeta_Q-\zeta_{Q'}|^2\right).
\]
Using \(|Q|,|Q'|\le \pi C_{\mathrm{out}}^2\delta^2\), we obtain
\begin{equation}
\label{eq:gramian-gaussian}
|G_{Q,Q'}|
\le
\pi C_{\mathrm{out}}^2 \delta^2
\exp\!\left(-\frac{\alpha}{2}|\zeta_Q-\zeta_{Q'}|^2\right).
\end{equation}

\smallskip
\noindent
Fix \(0<\beta<1\) and \(\gamma>0\). Then there exists a constant
\(C_1=C_1(\alpha,\beta,\gamma)\) such that
\[
\gamma t^\beta \le \frac{\alpha}{4}t^2 + C_1,
\qquad t\ge 0.
\]
Combining this with \eqref{eq:gramian-gaussian}, we get
\begin{equation}
\label{eq:gramian-subexp}
|G_{Q,Q'}|\,e^{\gamma |\zeta_Q-\zeta_{Q'}|^\beta}
\le
C_2\,\delta^2
\exp\!\left(-\frac{\alpha}{4}|\zeta_Q-\zeta_{Q'}|^2\right),
\qquad Q, Q'\in\mathcal P_\delta,
\end{equation}
where \(C_2=C_2(\alpha,\beta,\gamma,C_{\mathrm{out}})\).

\smallskip
\noindent
We estimate the row sums. Fix \(Q\in\mathcal P_\delta\), and for \(n\ge 0\) set
\[
\mathcal S_n(Q):=
\{Q'\in\mathcal P_\delta:\ n\le |\zeta_Q-\zeta_{Q'}|<n+1\}.
\]
Then, by Lemma \ref{lem:counting}, 
\[
\#\mathcal S_n(Q) \, \leq \, \# \left\{Q'\in\mathcal P_\delta:\ \zeta_{Q'} \in B(\zeta_Q, n+1)\right\} \, \leq \, \frac{C^2_{\mathrm{out}}}{c^2_{\mathrm{in}}} \left(\frac{n+1}{C_{\mathrm{out}}\delta}  + 1\right)^2.
\]
Using this and  \eqref{eq:gramian-subexp}, we get
\[
\sum_{Q'\in\mathcal P_\delta}
|G_{Q,Q'}|\,e^{\gamma |\zeta_Q-\zeta_{Q'}|^\beta}
\le
C_2\delta^2
\sum_{n=0}^\infty
e^{-\frac{\alpha}{4}n^2}\,\#\mathcal S_n(Q) \leq \frac{C_2}{c^2_{\mathrm{in}}} \sum_{n=0}^\infty \big(n+1 + C_{\mathrm{out} }\delta\big)^2 \,
e^{-\frac{\alpha}{4}n^2}.
\]
The series converges, and the bound is independent of \(Q\) and
\(\delta\in(0,\delta_0]\). Hence
\[
\sup_{Q\in\mathcal P_\delta}
\sum_{Q'\in\mathcal P_\delta}
|G_{Q,Q'}|\,e^{\gamma |\zeta_Q-\zeta_{Q'}|^\beta}
\le
C,
\]
where $C$ depends only on $\alpha,\beta,\gamma,c_{\mathrm{in}}$, and $C_{\mathrm{out}}$.
Since \(G_{Q,Q'}=\overline{G_{Q',Q}}\), the same estimate holds for the column
sums. Therefore
\(
\|G\|_{\mathcal A^{\exp}_{\gamma,\beta}(\mathcal P_\delta)}
\le
C\),
which proves the lemma.
\end{proof}

\subsection{Inverse-closedness and localized range inverses}
\label{subsec:inverse-closedness}

We transfer the localized Gramian estimate to the bi-Lipschitz index model,
invoke inverse-closedness of the matrix algebra, and deduce localization for
the range projection and the range inverse used in
Theorem~\ref{thm:envelope-recentering}.

\begin{lemma}[Bi-Lipschitz transfer]
\label{lem:app-bilip-transfer}
Let $\mathcal P_\delta$ be a bi-Lipschitz $\delta$-pocket decomposition of
$\mathbb C$ with bi-Lipschitz labeling
\(
\Phi_\delta:\mathcal P_\delta\to\mathbb Z^2
\)
and bi-Lipschitz constant $C_L$, that is,
\begin{equation}\label{eq:bilip-transfer}
C_L^{-1}\delta\,|\Phi_\delta(Q)-\Phi_\delta(Q')|
\le
|\zeta_Q-\zeta_{Q'}|
\le
C_L\delta\,|\Phi_\delta(Q)-\Phi_\delta(Q')|,
\qquad Q,Q'\in\mathcal P_\delta.
\end{equation}
Fix $0<\beta<1$ and $\gamma>0$. Set 
\[
\gamma_1:=\gamma\Bigl(\frac{\delta}{C_L}\Bigr)^\beta,
\qquad
\gamma_2: = \gamma (C_L\delta)^\beta.
\]
For a matrix $A=(A_{Q,Q'})_{Q,Q'\in\mathcal P_\delta}$, define the relabeled
matrix $\widetilde A=(\widetilde A_{\mathbf{m}, \mathbf{n}})_{\mathbf{m},\mathbf{n}\in\mathbb Z^2}$ by
\[
\widetilde A_{\mathbf{m},\mathbf{n}}
:=
A_{\Phi_\delta^{-1}(\mathbf{m}),\,\Phi_\delta^{-1}(\mathbf{n})},
\qquad \mathbf{m}, \mathbf{n}\in\mathbb Z^2.
\]
If $A\in \mathcal A^{\exp}_{\gamma,\beta}(\mathcal P_\delta)$, then $\widetilde A\in \mathcal A^{\exp}_{\gamma_1,\beta}(\mathbb Z^2)$. Conversely, if $\widetilde A\in \mathcal A^{\exp}_{\gamma_2,\beta}(\mathbb Z^2)$, then $A\in \mathcal A^{\exp}_{\gamma,\beta}(\mathcal P_\delta)$. Moreover,
\[
\|\widetilde A\|_{\mathcal A^{\exp}_{\gamma_1,\beta}(\mathbb Z^2)}
\le
\|A\|_{\mathcal A^{\exp}_{\gamma,\beta}(\mathcal P_\delta)}
\le
\|\widetilde A\|_{\mathcal A^{\exp}_{\gamma_2,\beta}(\mathbb Z^2)},
\]
where $\mathcal A^{\exp}_{\gamma,\beta}(\Z^2)$ is defined in the same
way as \(\mathcal A^{\exp}_{\gamma,\beta}(\mathcal P_\delta)\) in Definition \ref{def:subexp-matrix-algebra}, with
\(|\mathbf{m}-\mathbf{n}|\) in place of \(|\zeta_Q-\zeta_{Q'}|\).
\end{lemma}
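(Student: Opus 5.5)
The plan is to compare the two weights entrywise and then take weighted row and column sums; the algebra structure plays no role. Write $\mathbf m=\Phi_\delta(Q)$ and $\mathbf n=\Phi_\delta(Q')$. Since $\Phi_\delta$ is a bijection, summing over $Q'\in\mathcal P_\delta$ is the same as summing over $\mathbf n\in\mathbb Z^2$, and likewise for rows. So the row and column sums of $A$ and of $\widetilde A$ run over the same entries $A_{Q,Q'}=\widetilde A_{\mathbf m,\mathbf n}$. Only the weights differ.

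\emph{Step 1: weight comparison.} From \eqref{eq:bilip-transfer}, and since $t\mapsto t^\beta$ is increasing on $[0,\infty)$, we get
\[
\Bigl(\frac{\delta}{C_L}\Bigr)^\beta|\mathbf m-\mathbf n|^\beta\le|\zeta_Q-\zeta_{Q'}|^\beta\le (C_L\delta)^\beta|\mathbf m-\mathbf n|^\beta .
\]
Multiplying by $\gamma$ and exponentiating gives
\[
\exp\bigl(\gamma_1|\mathbf m-\mathbf n|^\beta\bigr)\le \omega_{\gamma,\beta}(Q,Q')\le \exp\bigl(\gamma_2|\mathbf m-\mathbf n|^\beta\bigr).
\]

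\emph{Step 2: sums.} Multiply the weight comparison by $|A_{Q,Q'}|=|\widetilde A_{\mathbf m,\mathbf n}|$ and sum over $\mathbf n$ (equivalently over $Q'$) for fixed $\mathbf m$ (equivalently fixed $Q$). This bounds each weighted row sum of $\widetilde A$ with weight parameter $\gamma_1$ by the corresponding weighted row sum of $A$, and that in turn by the row sum of $\widetilde A$ with parameter $\gamma_2$. The columns are handled identically.

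Taking suprema over $\mathbf m$ (respectively $\mathbf n$) and then the maximum of the row and column quantities gives the displayed two-sided norm inequality. Both membership statements follow at once: finiteness of the middle term gives finiteness of the left, and finiteness of the right gives finiteness of the middle.

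There is no real obstacle. The only points to check are that bijectivity of $\Phi_\delta$ makes the reindexing of sums legitimate, and that the monotonicity of $t^\beta$ preserves the direction of the inequalities.
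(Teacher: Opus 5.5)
Your proposal is correct and follows essentially the same route as the paper. Both raise the bi-Lipschitz bounds to the power $\beta$ to obtain $e^{\gamma_1|\mathbf m-\mathbf n|^\beta}\le\omega_{\gamma,\beta}(Q,Q')\le e^{\gamma_2|\mathbf m-\mathbf n|^\beta}$, then reindex the weighted row and column sums through the bijection $\Phi_\delta$ and take suprema. The only difference is presentational: the paper treats the two directions separately, whereas you combine them into one two-sided chain.
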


\begin{proof}
For \(\mathbf{m}, \mathbf{n} \in\mathbb Z^2\), write
\(Q=\Phi_\delta^{-1}(\mathbf{m})\) and $Q'=\Phi_\delta^{-1}(\mathbf{n})$, respectively. 
Assume first \(A\in \mathcal A^{\exp}_{\gamma,\beta}(\mathcal P_\delta)\).
By \eqref{eq:bilip-transfer},
\[
|\zeta_Q-\zeta_{Q'}|
\ge
\frac{\delta}{C_L}\,|\Phi_\delta(Q)-\Phi_\delta(Q')|, \quad \text{ so, } \quad
e^{\gamma_1|\Phi_\delta(Q)-\Phi_\delta(Q')|^\beta}
\le
e^{\gamma|\zeta_Q-\zeta_{Q'}|^\beta},
\qquad Q, Q'\in\mathcal P_\delta.
\]
Fix \(\mathbf{m} \in\mathbb Z^2\). Then
\[
\sum_{\mathbf{n}\in\mathbb Z^2}
|\widetilde A_{\mathbf{m}, \mathbf{n}}|\,e^{\gamma_1|\mathbf{m} - \mathbf{n}|^\beta}
\le
\sum_{Q'\in\mathcal P_\delta}
|A_{Q,Q'}|\,e^{\gamma|\zeta_Q-\zeta_{Q'}|^\beta}.
\]
Taking the supremum over \(\mathbf{m}\) yields the row bound
\[
\sup_{\mathbf{m} \in\mathbb Z^2}
\sum_{\mathbf{n} \in\mathbb Z^2}
|\widetilde A_{\mathbf{m}, \mathbf{n}}|\,e^{\gamma_1|\mathbf{m} - \mathbf{n}|^\beta}
\le
\|A\|_{\mathcal A^{\exp}_{\gamma,\beta}(\mathcal P_\delta)}.
\]
The column estimate is identical.  Thus
\(\widetilde A\in \mathcal A^{\exp}_{\gamma_1,\beta}(\mathbb Z^2)\), and
\[
\|\widetilde A\|_{\mathcal A^{\exp}_{\gamma_1,\beta}(\mathbb Z^2)}
\le
\|A\|_{\mathcal A^{\exp}_{\gamma,\beta}(\mathcal P_\delta)}.
\]
Conversely, suppose \(\widetilde A\in \mathcal A^{\exp}_{\gamma_2,\beta}(\mathbb Z^2)\).
By \eqref{eq:bilip-transfer},
\[
|\zeta_Q-\zeta_{Q'}|
\le
C_L\delta\,|\Phi_\delta(Q)-\Phi_\delta(Q')|,
\quad \text{ so } \quad
e^{\gamma |\zeta_Q-\zeta_{Q'}|^\beta}
\le
e^{\gamma_2 |\Phi_\delta(Q)-\Phi_\delta(Q')|^\beta},
\qquad Q, Q'\in\mathcal P_\delta.
\]
Fix \(Q\in\mathcal P_\delta\). Then
\[
\sum_{Q'\in\mathcal P_\delta}
|A_{Q,Q'}|\,e^{\gamma|\zeta_Q-\zeta_{Q'}|^\beta}
\le
\sum_{\mathbf{n}\in\mathbb Z^2}
|\widetilde A_{\mathbf{m}, \mathbf{n}}|\,e^{\gamma_2 |\mathbf{m} - \mathbf{n}|^\beta}.
\]
Taking the supremum over \(Q\) gives the row bound
\[
\sup_{Q\in\mathcal P_\delta}
\sum_{Q'\in\mathcal P_\delta}
|A_{Q,Q'}|\,e^{\gamma|\zeta_Q-\zeta_{Q'}|^\beta}
\le
\|\widetilde A\|_{\mathcal A^{\exp}_{\gamma_2,\beta}(\mathbb Z^2)}.
\]
The column estimate is the same.  Hence
\(A\in \mathcal A^{\exp}_{\gamma,\beta}(\mathcal P_\delta)\), and
\[
\|A\|_{\mathcal A^{\exp}_{\gamma,\beta}(\mathcal P_\delta)}
\le
\|\widetilde A\|_{\mathcal A^{\exp}_{\gamma_2,\beta}(\mathbb Z^2)}.
\]
This proves the lemma.
\end{proof}

\begin{proposition}[Inverse-closedness of the subexponential algebra]
\label{prop:app-inverse-closedness}
Let $\mathcal P_\delta$ be a bi-Lipschitz $\delta$-pocket decomposition of
$\mathbb C$ with bi-Lipschitz labeling
\(
\Phi_\delta:\mathcal P_\delta\to\mathbb Z^2
\)
and bi-Lipschitz constant $C_L$. Fix $0<\beta<1$ and $\gamma>0$, and set
\(
\gamma':=\frac{\gamma}{C_L^{2\beta}}\).
If
\(
A\in \mathcal A^{\exp}_{\gamma,\beta}(\mathcal P_\delta)
\)
is invertible as a bounded operator on $\ell^2(\mathcal P_\delta)$, then
\(
A^{-1}\in \mathcal A^{\exp}_{\gamma',\beta}(\mathcal P_\delta)\).
\end{proposition}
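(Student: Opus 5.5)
The plan is to transfer the problem to the lattice \(\mathbb Z^2\) with Lemma~\ref{lem:app-bilip-transfer}, invoke a known inverse-closedness theorem for subexponentially weighted Schur-type algebras on \(\mathbb Z^2\), and transfer back. The two transfers each cost one factor \(C_L^{\beta}\) in the decay parameter, which accounts for \(\gamma'=\gamma C_L^{-2\beta}\).

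\emph{Step 1 (relabeling).} Let \(A\in\mathcal A^{\exp}_{\gamma,\beta}(\mathcal P_\delta)\) be invertible on \(\ell^2(\mathcal P_\delta)\). Form the relabeled matrix \(\widetilde A\). By Lemma~\ref{lem:app-bilip-transfer},
\[
\widetilde A\in\mathcal A^{\exp}_{\gamma_1,\beta}(\mathbb Z^2),
\qquad
\gamma_1=\gamma\Bigl(\frac{\delta}{C_L}\Bigr)^\beta .
\]
The map \(\Phi_\delta\) induces a unitary \(U:\ell^2(\mathcal P_\delta)\to\ell^2(\mathbb Z^2)\) with \(\widetilde A=UAU^{-1}\). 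Hence \(\widetilde A\) is invertible on \(\ell^2(\mathbb Z^2)\), and \(\widetilde{A^{-1}}=(\widetilde A)^{-1}\).

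\emph{Step 2 (inverse-closedness on \(\mathbb Z^2\)).} The weight \(v(\mathbf m)=e^{\gamma_1|\mathbf m|^\beta}\) with \(0<\beta<1\) is submultiplicative and satisfies the GRS condition \(\lim_n v(n\mathbf m)^{1/n}=1\). By the theorem of Gröchenig--Leinert \cite{GL06}, the algebra \(\mathcal A^{\exp}_{\gamma_1,\beta}(\mathbb Z^2)\) is inverse-closed in \(\mathcal B(\ell^2(\mathbb Z^2))\); see also \cite{Gro04}. This gives
\[
(\widetilde A)^{-1}\in\mathcal A^{\exp}_{\gamma_1,\beta}(\mathbb Z^2).
\]
Some care is needed with the form of the cited result. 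Gröchenig--Leinert state it for the Schur-type algebra with weights \(v(\mathbf m-\mathbf n)\), and our row/column norm matches that form exactly. If only a Wiener-type (convolution-dominated) version were available, I would use the standard Brandenburg/Hulanicki argument instead: show that the spectral radius in the weighted algebra equals the \(\ell^2\) spectral radius via the GRS condition, then apply this to \(A^*A\). This identification is the main obstacle. It requires a subexponential weight, \(\beta<1\), and that is exactly where the hypothesis \(0<\beta<1\) enters. One could also accept losing some decay, passing to a smaller \(\gamma\), since the target parameter \(\gamma'\) already allows slack.

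\emph{Step 3 (transfer back).} Apply the converse half of Lemma~\ref{lem:app-bilip-transfer} with a parameter \(\widehat\gamma\) chosen so that \(\widehat\gamma(C_L\delta)^\beta=\gamma_1\). This gives
\[
\widehat\gamma=\gamma_1 (C_L\delta)^{-\beta}=\gamma C_L^{-2\beta}=\gamma'.
\]
It follows that \(A^{-1}\in\mathcal A^{\exp}_{\gamma',\beta}(\mathcal P_\delta)\), with
\[
\|A^{-1}\|_{\mathcal A^{\exp}_{\gamma',\beta}(\mathcal P_\delta)}\le\|(\widetilde A)^{-1}\|_{\mathcal A^{\exp}_{\gamma_1,\beta}(\mathbb Z^2)} .
\]
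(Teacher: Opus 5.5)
Your proposal is correct and follows the paper's proof step for step. You relabel via Lemma~\ref{lem:app-bilip-transfer} to get $\widetilde A\in\mathcal A^{\exp}_{\gamma_1,\beta}(\mathbb Z^2)$, transfer invertibility through the permutation unitary $U$, and apply the Gr\"ochenig--Leinert inverse-closedness result (Corollary~7 of \cite{GL06}) for the weight $e^{\gamma_1|\mathbf m|^\beta}$. You then return with the converse half of the lemma, which gives $\gamma'=\gamma_1(C_L\delta)^{-\beta}=\gamma C_L^{-2\beta}$. The Brandenburg/Hulanicki fallback is not needed, because the cited Schur-type result applies directly to the row/column weighted norm used here.
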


\begin{proof}
Let $\widetilde A$ be the relabeled matrix on $\mathbb Z^2$ associated with $A$,
as in Lemma~\ref{lem:app-bilip-transfer}. Then
\[
\widetilde A\in \mathcal A^{\exp}_{\gamma_1,\beta}(\mathbb Z^2)
\quad \text{ with } \quad
\gamma_1:=\gamma\Bigl(\frac{\delta}{C_L}\Bigr)^\beta.
\]
Let \(U:\ell^2(\mathcal P_\delta)\to \ell^2(\mathbb Z^2)\) be the map
\[
(Ux)(\mathbf{m}):=x_{\Phi_\delta^{-1}(\mathbf{m})},
\qquad \mathbf{m} \in\mathbb Z^2.
\]
The map \(U\) is unitary, being a permutation of the coordinates:
\[
\|Ux\|_{\ell^2(\mathbb Z^2)}^2
=
\sum_{\mathbf{m}\in\mathbb Z^2}|x_{\Phi_\delta^{-1}(\mathbf{m})}|^2
=
\sum_{Q\in\mathcal P_\delta}|x_Q|^2
=
\|x\|_{\ell^2(\mathcal P_\delta)}^2.
\]
For \(x\in \ell^2(\mathcal P_\delta)\) and \(\mathbf{m}\in\mathbb Z^2\), writing
\(Q=\Phi_\delta^{-1}(\mathbf{m})\), we compute
\[
(UAx)(\mathbf{m})
=
(Ax)_Q
=
\sum_{Q'\in\mathcal P_\delta} A_{Q,Q'}x_{Q'}
=
\sum_{\mathbf{n} \in\mathbb Z^2}
A_{\Phi_\delta^{-1}(\mathbf{m}),\,\Phi_\delta^{-1}(\mathbf{n})}(Ux)(\mathbf{n})
=
(\widetilde A Ux)(\mathbf{m}).
\]
Hence \(\widetilde A=UAU^{-1}\).  Thus \(\widetilde A\) is invertible on
\(\ell^2(\mathbb Z^2)\), and
\(
\widetilde A^{-1}=UA^{-1}U^{-1}\).

\smallskip
\noindent
Now consider the weight
\[
v_{\gamma_1}(\mathbf{m}):=e^{\gamma_1 |\mathbf{m}|^\beta},
\qquad \mathbf{m}\in\mathbb Z^2.
\]
Since $0<\beta<1$, the weight $v_{\gamma_1}$ is admissible and satisfies the weak growth
assumption in \cite[Corollary~7]{GL06}. Therefore Corollary~7 of \cite{GL06}
applies and yields
\(
\widetilde A^{-1}\in \mathcal A^{\exp}_{\gamma_1,\beta}(\mathbb Z^2)\).
\smallskip
\noindent
Applying the converse direction of Lemma~\ref{lem:app-bilip-transfer} to
$\widetilde A^{-1}$ with the parameter $\gamma_1$ gives 
\(
A^{-1}\in \mathcal A^{\exp}_{\gamma',\beta}(\mathcal P_\delta)\) 
with
\(
\gamma' = \frac{\gamma_1}{(C_L\delta)^\beta}
=
\frac{\gamma}{C_L^{2\beta}}
\).
\end{proof}

\begin{proposition}[Localized range projection and inverse]
\label{prop:app-localized-range-projection}
In the setting of Lemma~\ref{lem:kernel-frame}, assume in addition that
$\mathcal P_\delta$ is a bi-Lipschitz $\delta$-pocket decomposition with
bi-Lipschitz constant $C_L$.
Fix $0<\beta<1$ and $\gamma>0$.
Let $\Gamma\subset\mathbb C$ be a positively oriented rectifiable Jordan curve such that
\[
[a_\delta,b_\delta]\subset \operatorname{int}(\Gamma), \qquad  
0\notin \overline{\operatorname{int}(\Gamma)}, \qquad \text{ 
and } \qquad \operatorname{dist}\bigl(z,\{0\}\cup[a_\delta,b_\delta]\bigr)\ge \frac{a_\delta}{4}, \quad
 z\in\Gamma.
\]
Then the following hold:
\begin{enumerate}
\item[\textup{(i)}] The orthogonal projection $P_{\mathcal H}$ onto
\(\mathcal H=\overline{\operatorname{Ran}(\mathcal{C})}\)
belongs to
\(\mathcal A^{\exp}_{\gamma,\beta}(\mathcal P_\delta)\).
\item[\textup{(ii)}] Defining
\[
N_{\mathcal H}:=
\frac{1}{2\pi i}\int_\Gamma z^{-1}(zI-G)^{-1}\,dz,
\]
one has
\[
N_{\mathcal H}\in \mathcal A^{\exp}_{\gamma,\beta}(\mathcal P_\delta),
\qquad
N_{\mathcal H}c=(G|_{\mathcal H})^{-1}c,\quad c\in\mathcal H, \quad \text{ and } \quad 
N_{\mathcal H} = N_{\mathcal H} P_{\mathcal H}=P_{\mathcal H}N_{\mathcal H}.
\]
\end{enumerate}
Moreover, for any fixed admissible choice of the contour  \(\Gamma\), the
\(\mathcal A^{\exp}_{\gamma,\beta}(\mathcal P_\delta)\)-norm of \(N_{\mathcal H}\) is bounded
by a constant depending only on
\(\alpha,\beta,\gamma,\delta,c_{\rm in},C_{\rm out}, C_L\), and on that fixed choice of \(\Gamma\).
\end{proposition}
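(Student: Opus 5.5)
The plan is to show that the resolvent \((zI-G)^{-1}\) lies in \(\mathcal A^{\exp}_{\gamma',\beta}(\mathcal P_\delta)\) for every \(z\in\Gamma\), with a norm bound that is uniform in \(z\in\Gamma\). We then integrate along \(\Gamma\) in the Banach algebra. Because Proposition~\ref{prop:app-inverse-closedness} reduces \(\gamma\) to \(\gamma'=\gamma C_L^{-2\beta}\), we start from a larger parameter. Set \(\widehat\gamma:=\gamma C_L^{2\beta}\) and apply Lemma~\ref{lem:gramian-localized} with \(\widehat\gamma\) in place of \(\gamma\). This gives \(G\in\mathcal A^{\exp}_{\widehat\gamma,\beta}(\mathcal P_\delta)\). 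Since the algebra is unital, \(zI-G\in\mathcal A^{\exp}_{\widehat\gamma,\beta}(\mathcal P_\delta)\) for every \(z\in\C\). By Lemma~\ref{lem:kernel-frame}(ii), \(\operatorname{spec}(G)\subset\{0\}\cup[a_\delta,b_\delta]\), and the hypothesis on \(\Gamma\) keeps \(z\in\Gamma\) at distance at least \(a_\delta/4\) from the spectrum. Therefore \(zI-G\) is invertible on \(\ell^2(\mathcal P_\delta)\). Proposition~\ref{prop:app-inverse-closedness} then places \((zI-G)^{-1}\) in \(\mathcal A^{\exp}_{\gamma,\beta}(\mathcal P_\delta)\).

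The main obstacle is the uniform bound in \(z\in\Gamma\). Proposition~\ref{prop:app-inverse-closedness} gives membership pointwise, not a quantitative norm bound. I plan to get uniformity from continuity of inversion in the Banach algebra \(\mathcal A^{\exp}_{\gamma,\beta}(\mathcal P_\delta)\), using Lemma~\ref{lem:subexp-matrix-algebra}(i). The map \(z\mapsto zI-G\) is continuous into \(\mathcal A^{\exp}_{\widehat\gamma,\beta}\subset\mathcal A^{\exp}_{\gamma,\beta}\), and inversion is continuous on the open group of invertibles. Inverse-closedness shows that \(zI-G\) is invertible in the algebra itself, not only on \(\ell^2\). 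Hence \(z\mapsto(zI-G)^{-1}\) is continuous from \(\Gamma\) into \(\mathcal A^{\exp}_{\gamma,\beta}\), and since \(\Gamma\) is compact its norm is bounded there. This bound depends on \(G\), which is determined by \(\alpha,\delta\) and the pocket data, and on the fixed contour \(\Gamma\). That matches the claimed dependence, because the constants of Proposition~\ref{prop:app-inverse-closedness} through \cite{GL06} enter only qualitatively. The same argument shows that \(z\mapsto z^{-1}(zI-G)^{-1}\) is continuous on \(\Gamma\), since \(0\notin\Gamma\).

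With this in hand, both contour integrals are Riemann integrals of continuous functions with values in the Banach algebra. So \(P_{\mathcal H}\) and \(N_{\mathcal H}\) belong to \(\mathcal A^{\exp}_{\gamma,\beta}(\mathcal P_\delta)\), with norm at most \((2\pi)^{-1}\operatorname{length}(\Gamma)\sup_\Gamma\|\cdot\|\) times the relevant integrand bound. By Lemma~\ref{lem:subexp-matrix-algebra}(ii) these algebra integrals agree with the \(\mathcal B(\ell^2)\) integrals. Lemma~\ref{lem:kernel-frame}(iii) then identifies the first integral as \(P_{\mathcal H}\), which proves (i).

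For the algebraic identities in (ii), I would use the holomorphic functional calculus for the self-adjoint operator \(G\) with \(\phi(z)=z^{-1}\chi\), where \(\chi\) is the indicator of \(\operatorname{int}(\Gamma)\). The function \(\phi\) is holomorphic near the spectrum, because \(0\) lies outside \(\Gamma\). The calculus gives \(N_{\mathcal H}=\phi(G)\), which equals \(t^{-1}\) on \([a_\delta,b_\delta]\) and \(0\) at \(0\). Since \(\phi=\phi\cdot\chi\) and \(\chi(G)=P_{\mathcal H}\), we get \(N_{\mathcal H}=N_{\mathcal H}P_{\mathcal H}=P_{\mathcal H}N_{\mathcal H}\). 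Multiplicativity gives \(GN_{\mathcal H}=\chi(G)=P_{\mathcal H}\). Restricting to the \(G\)-invariant space \(\mathcal H\), on which \(G|_{\mathcal H}\) is invertible by Lemma~\ref{lem:kernel-frame}(ii), we obtain \(N_{\mathcal H}c=(G|_{\mathcal H})^{-1}c\) for \(c\in\mathcal H\).
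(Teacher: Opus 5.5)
Your proof is correct. Up through part (i) it is the paper's proof:
\begin{itemize}
\item you enlarge the parameter to $\gamma C_L^{2\beta}$, so that Proposition~\ref{prop:app-inverse-closedness} puts $(zI-G)^{-1}$ in $\mathcal A^{\exp}_{\gamma,\beta}(\mathcal P_\delta)$;
\item you get continuity of $z\mapsto(zI-G)^{-1}$ in the algebra norm (the paper checks this by hand with the resolvent identity and a Neumann series, which is exactly the Banach-algebra fact you quote);
\item you integrate in the algebra and pass the integral through the bounded inclusion into $\mathcal B(\ell^2(\mathcal P_\delta))$ before invoking Lemma~\ref{lem:kernel-frame}(iii).
\end{itemize}

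You genuinely differ in how you prove the identities in (ii). The paper proves them directly. On $\mathcal H^\perp=\ker G$ it uses $(zI-G)^{-1}c=z^{-1}c$ and $\int_\Gamma z^{-2}\,dz=0$. On $\mathcal H$ it writes $T=G|_{\mathcal H}$ via its spectral measure, applies Fubini, and evaluates $\frac{1}{2\pi i}\int_\Gamma \frac{dz}{z(z-\lambda)}=\lambda^{-1}$ by Cauchy's formula. You instead read everything off the multiplicativity of the Riesz--Dunford calculus: $\phi=\phi\chi=\chi\phi$ and $z\phi(z)=\chi(z)$ give $N_{\mathcal H}=N_{\mathcal H}P_{\mathcal H}=P_{\mathcal H}N_{\mathcal H}$ and $GN_{\mathcal H}=P_{\mathcal H}$ at once. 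Your route is shorter and uses self-adjointness only through the identification $\chi(G)=P_{\mathcal H}$ from Lemma~\ref{lem:kernel-frame}(iii). The paper's route is more hands-on and avoids the general calculus.

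One detail needs fixing for your step to be literally correct. As stated, the indicator of $\operatorname{int}(\Gamma)$ is not holomorphic across $\Gamma$. Also, $\Gamma$ alone has winding number $0$ about the point $0$, and $0$ lies in $\operatorname{spec}(G)$ whenever $\ker G\neq\{0\}$. The fix:
\begin{itemize}
\item let $\chi\equiv1$ on an open neighborhood $U_1$ of $\overline{\operatorname{int}(\Gamma)}$, and $\chi\equiv0$ on a small disc $U_0$ about $0$ disjoint from $U_1$ (possible since $0\notin\overline{\operatorname{int}(\Gamma)}$);
\item take the Riesz--Dunford integral over $\Gamma$ together with a small circle about $0$.
\end{itemize}
That circle contributes nothing because $\phi\equiv0$ on $U_0$, so $\phi(G)$ is exactly $N_{\mathcal H}$.

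Finally, your compactness argument bounds $\|N_{\mathcal H}\|_{\mathcal A^{\exp}_{\gamma,\beta}(\mathcal P_\delta)}$ by a constant that depends on the particular Gramian $G$ and on $\Gamma$, because the inverse-closedness input is only qualitative. This is exactly what the paper's argument gives as well. Neither proof yields a bound uniform over all decompositions with the same geometric constants, so your remark that this ``matches'' the stated dependence claims no more than the paper itself establishes.
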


\begin{proof}
For each \(z\in\Gamma\), Lemmas~\ref{lem:subexp-matrix-algebra} and
\ref{lem:gramian-localized} give
\(z I-G\in\mathcal{A}^{\exp}_{\gamma_1,\beta}(\mathcal{P}_{\delta})\), with
\(\gamma_1=C_L^{2\beta}\gamma\).  Since
\(\Gamma\subset\mathbb C\setminus\operatorname{spec}(G)\), \(zI-G\) is
invertible on \(\ell^2(\mathcal P_\delta)\).  Proposition
\ref{prop:app-inverse-closedness} gives
\((zI-G)^{-1}\in\mathcal A^{\exp}_{\gamma,\beta}(\mathcal P_\delta)\).

\smallskip
\noindent
We claim that
\(
\Gamma\ni z\longmapsto (zI-G)^{-1}
\in \mathcal A^{\exp}_{\gamma,\beta}(\mathcal P_\delta)\)
is continuous. 
Fix \(z_0\in\Gamma\), and set
\[
M_0:=(z_0I-G)^{-1}\in \mathcal A^{\exp}_{\gamma,\beta}(\mathcal P_\delta).
\]
For \(z\in\Gamma\), the resolvent identity gives
\[
(zI-G)^{-1}-(z_0I-G)^{-1}
=
(z_0-z)(zI-G)^{-1}(z_0I-G)^{-1}.
\]
Hence
\[
\left\|(zI-G)^{-1}-(z_0I-G)^{-1}\right\|_{\mathcal A^{\exp}_{\gamma,\beta}}
\le
|z-z_0|\,
\left\|(zI-G)^{-1}\right\|_{\mathcal A^{\exp}_{\gamma,\beta}}\,
\|M_0\|_{\mathcal A^{\exp}_{\gamma,\beta}}.
\]
It suffices to show local boundedness in
\(\mathcal A^{\exp}_{\gamma,\beta}(\mathcal P_\delta)\), which follows from the
Neumann-series representation:
for \(|z-z_0|<\|M_0\|_{\mathcal A^{\exp}_{\gamma,\beta}}^{-1}\), we have
\[
zI-G
=
(z_0I-G)\bigl(I+(z-z_0)M_0\bigr),
\]
and hence
\[
(zI-G)^{-1}
=
\bigl(I+(z-z_0)M_0\bigr)^{-1}M_0
=
\sum_{n=0}^\infty (-1)^n (z-z_0)^n M_0^{\,n+1}
\]
in \(\mathcal A^{\exp}_{\gamma,\beta}(\mathcal P_\delta)\). In particular,
\(z\mapsto (zI-G)^{-1}\) is locally bounded near \(z_0\), hence continuous at
\(z_0\).
Since \(z_0\in\Gamma\) was arbitrary, the claim follows.

\smallskip
\noindent
For part \textup{(i)}, Lemma~\ref{lem:kernel-frame}\textup{(iii)} gives
\[
P_{\mathcal H}
=
\frac{1}{2\pi i}\int_\Gamma (zI-G)^{-1}\,dz
\]
as an operator on \(\ell^2(\mathcal P_\delta)\). It follows from the claim that the contour integral
\[
P:=
\frac{1}{2\pi i}\int_\Gamma (zI-G)^{-1}\,dz
\]
is well defined as a Bochner integral in
\(\mathcal A^{\exp}_{\gamma,\beta}(\mathcal P_\delta)\). Let
\[
\iota:\mathcal A^{\exp}_{\gamma,\beta}(\mathcal P_\delta)
\hookrightarrow
\mathcal B\bigl(\ell^2(\mathcal P_\delta)\bigr)
\]
denote the canonical bounded inclusion. Since \(\iota\) is bounded and linear, it commutes with
Bochner integration, and therefore
\[
\iota(P)
=
\frac{1}{2\pi i}\int_\Gamma \iota\bigl((zI-G)^{-1}\bigr)\,dz
=
\frac{1}{2\pi i}\int_\Gamma (zI-G)^{-1}\,dz
=
P_{\mathcal H}.
\]
Hence the matrix of \(P_{\mathcal H}\) belongs to
\(
\mathcal A^{\exp}_{\gamma,\beta}(\mathcal P_\delta)\)
as claimed.

\smallskip
\noindent
For part \textup{(ii)}, since \(0\notin \overline{\operatorname{int}(\Gamma)}\), the scalar function
\(z\mapsto z^{-1}\) is continuous on \(\Gamma\). Hence
\[
\Gamma\ni z\longmapsto z^{-1}(zI-G)^{-1}
\in \mathcal A^{\exp}_{\gamma,\beta}(\mathcal P_\delta)
\]
is continuous, and therefore
\[
N_{\mathcal H}=
\frac{1}{2\pi i}\int_\Gamma z^{-1}(zI-G)^{-1}\,dz
\]
is well defined as a Bochner integral in
\(\mathcal A^{\exp}_{\gamma,\beta}(\mathcal P_\delta)\). In particular,
\(
N_{\mathcal H}\in \mathcal A^{\exp}_{\gamma,\beta}(\mathcal P_\delta)\).

\smallskip
\noindent
If \(c\in \mathcal H^\perp\), then \(c\in \ker(G)\) by
Lemma~\ref{lem:kernel-frame}\textup{(ii)}. Thus, for each $z\in\Gamma$,
\(
(zI-G)c=zc\),
and hence
\((zI-G)^{-1}c=z^{-1}c\).
Therefore
\[
N_{\mathcal H}c
=
\frac{1}{2\pi i}\int_\Gamma z^{-2}\,dz\,c
=
0,
\]
since \(z\mapsto z^{-2}\) is holomorphic on a neighborhood of
\(\Gamma\cup\operatorname{int}(\Gamma)\).

\smallskip
\noindent
Now let \(c\in\mathcal H\), and set
\[
T:=G|_{\mathcal H}:\mathcal H\to\mathcal H.
\]
By Lemma~\ref{lem:kernel-frame}\textup{(ii)}, the subspace \(\mathcal H\) is \(G\)-invariant,
the operator \(T\) is invertible, and
\[
\operatorname{spec}(T)\subset [a_\delta,b_\delta]\subset \operatorname{int}(\Gamma).
\]
Hence, for \(z\in\Gamma\),
\[
(zI-G)^{-1}c=(zI_{\mathcal H}-T)^{-1}c.
\]

\smallskip
\noindent
By the spectral theorem for bounded self-adjoint operators, see
\cite[Chapter~IX, \S2, Theorem~2.2]{ConwayFA}, there exists a spectral measure
\(E\) on \(\operatorname{spec}(T)\) such that
\[
T=\int_{\operatorname{spec}(T)} \lambda\,dE(\lambda).
\]
Hence, by the bounded Borel functional calculus
(see also \cite[Chapter~IX, \S1, Proposition~1.10 and (1.11)]{ConwayFA}),
\[
(zI_{\mathcal H}-T)^{-1}
=
\int_{\operatorname{spec}(T)} \frac{1}{z-\lambda}\,dE(\lambda),
\qquad z\in\mathbb C\setminus\operatorname{spec}(T).
\]
For \(c,d\in\mathcal H\), define the complex measure
\[
\mu_{c,d}(B):=\langle E(B)c,d\rangle_{\ell^2(\mathcal P_\delta)}.
\]
Then
\[
\langle (zI_{\mathcal H}-T)^{-1}c,d\rangle
=
\int_{\operatorname{spec}(T)} \frac{1}{z-\lambda}\,d\mu_{c,d}(\lambda),
\qquad z\in\Gamma.
\]
Since \(\Gamma\) is compact and bounded away from \(\operatorname{spec}(T)\), Fubini's theorem yields
\[
\langle N_{\mathcal H}c,d\rangle
=
\int_{\operatorname{spec}(T)}
\left(
\frac{1}{2\pi i}\int_\Gamma \frac{1}{z(z-\lambda)}\,dz
\right)
d\mu_{c,d}(\lambda).
\]
For \(\lambda\in \operatorname{spec}(T)\subset [a_\delta,b_\delta]\), one has  \(\lambda\in \operatorname{int}(\Gamma)\) and \(0\notin \operatorname{int}(\Gamma)\), and Cauchy's integral formula gives
\[
\frac{1}{2\pi i}\int_\Gamma \frac{1}{z(z-\lambda)}\,dz
=
\frac{1}{\lambda}.
\]
Consequently,
\[
\langle N_{\mathcal H} c,d\rangle
=
\int_{\operatorname{spec}(T)} \lambda^{-1}\,d\mu_{c,d}(\lambda)
=
\langle T^{-1}c,d\rangle.
\]
Since \(d\in\mathcal H\) is arbitrary, it follows that
\[
N_{\mathcal H}c=T^{-1}c=(G|_{\mathcal H})^{-1}c,
\qquad c\in\mathcal H.
\]

\smallskip
\noindent
Every \(c\in \ell^2(\mathcal P_\delta)\) decomposes as
\(
c=P_{\mathcal H}c+(I-P_{\mathcal H})c
\)
with \((I-P_{\mathcal H})c\in\mathcal H^\perp\). Since \(N_{\mathcal H}\) vanishes on \(\mathcal H^\perp\),
\(
N_{\mathcal H}c=N_{\mathcal H}P_{\mathcal H}c\),
so \(N_{\mathcal H}=N_{\mathcal H}P_{\mathcal H}\). On the other hand, if \(c\in\mathcal H\), then
\(N_{\mathcal H}c\in\mathcal H\), while if \(c\in\mathcal H^\perp\), then \(N_{\mathcal H}c=0\in\mathcal H\).
Hence \(\operatorname{Ran}(N_{\mathcal H})\subset \mathcal H\), and therefore
\(
P_{\mathcal H}N_{\mathcal H}=N_{\mathcal H}\).
This proves the proposition.
\end{proof}

\subsection{Proof of the envelope theorem}
\label{subsec:proof-envelope}

We prove Theorem~\ref{thm:envelope-recentering} by applying the localized
range inverse to the sampled kernel vector and deriving the required
subexponential envelope for the resulting re-centering coefficients.

\begin{proof}[Proof of Theorem~\ref{thm:envelope-recentering}]
Let \(\delta_0\) be as in Lemma~\ref{lem:kernel-frame}, and fix \(0<\delta<\delta_0\).
We use the notation introduced in Lemma~\ref{lem:kernel-frame} and Proposition~\ref{prop:app-localized-range-projection}.

\smallskip
\noindent
(i)
For \(\eta\in\mathbb C\), define
\(
b_Q(\eta):=\bigl\langle \kappa_\eta,\sqrt{|Q|}e_Q^\#\bigr\rangle_\alpha\), \(
Q\in\mathcal P_\delta\).
Then Lemma~\ref{lem:kernel-frame}\textup{(i)} gives
\[
\kappa_\eta
=
\sum_{Q\in\mathcal P_\delta}
b_Q(\eta)\,\kappa_{\zeta_Q}
\]
with convergence in \(\mathcal F_\alpha^2\), proving \textup{(i)}.

\smallskip
\noindent
(ii)
Fix \(0<\beta<1\) and \(\gamma>0\).  For the coefficients chosen in
\textup{(i)}, set
\[
g(\eta):= \mathcal{C}\kappa_\eta\in\mathcal H \quad \text{ and } \quad 
a(\eta):=\bigl(\bigl\langle \kappa_\eta,e_Q^\# \bigr\rangle_\alpha\bigr)_{Q\in\mathcal P_\delta}.
\]
Since \(S^{-1}\) is self-adjoint, we have
\(
a(\eta)= \mathcal{C} (S^{-1}\kappa_\eta)\in \operatorname{Ran}(\mathcal{C})=\mathcal H\).
Moreover,
\[
Ga(\eta)
=
\mathcal{C} \mathcal{C}^*\mathcal{C}(S^{-1}\kappa_\eta)
=
\mathcal{C}(\mathcal{C}^*\mathcal{C})S^{-1}\kappa_\eta
=
\mathcal{C}SS^{-1}\kappa_\eta
=
\mathcal{C}\kappa_\eta
=
g(\eta).
\]
Hence,
\(
a(\eta)=(G|_{\mathcal H})^{-1}g(\eta)\).
By Proposition~\ref{prop:app-localized-range-projection}\textup{(ii)},
\[
N_{\mathcal H}\in \mathcal A^{\exp}_{\gamma,\beta}(\mathcal P_\delta)
\qquad\text{and}\qquad
N_{\mathcal H}c=(G|_{\mathcal H})^{-1}c,\quad c\in\mathcal H.
\]
Since \(g(\eta)\in\mathcal H\), \(a(\eta)=N_{\mathcal H}g(\eta)\).  Therefore
\[
b_Q(\eta)
=
\sqrt{|Q|}\,a_Q(\eta)
=
\sqrt{|Q|}\,(N_{\mathcal H}g(\eta))_Q,
\qquad Q\in\mathcal P_\delta.
\]

\smallskip
\noindent
For \(Q'\in\mathcal P_\delta\),
\(
g_{Q'}(\eta)=\langle \kappa_\eta,e_{Q'}\rangle_\alpha\), so
\(|g_{Q'}(\eta)|
=
\sqrt{|Q'|}\,e^{-\frac{\alpha}{2}|\zeta_{Q'}-\eta|^2}\).
Since \(N_{\mathcal H}\in \mathcal A^{\exp}_{\gamma,\beta}(\mathcal P_\delta)\), we have
\[
\sum_{Q'\in\mathcal P_\delta}
|(N_{\mathcal H})_{Q,Q'}|\,e^{\gamma|\zeta_Q-\zeta_{Q'}|^\beta}
\le
\|N_{\mathcal H}\|_{\mathcal A^{\exp}_{\gamma,\beta}(\mathcal P_\delta)},
\qquad Q\in\mathcal P_\delta.
\]
Moreover, since \(0<\beta<1\), there exists \(C_0=C_0(\alpha,\beta,\gamma)>0\) such that
\(
e^{-\frac{\alpha}{2}t^2}\le C_0 e^{-\gamma t^\beta}\) for all \(t\ge0\).
Using also
\[
|\zeta_Q-\eta|^\beta
\le
|\zeta_Q-\zeta_{Q'}|^\beta+|\zeta_{Q'}-\eta|^\beta \quad \text{and} \quad |Q|,\ |Q'|\le \pi C_{\mathrm{out}}^2\delta^2,
\qquad Q,Q'\in\mathcal P_\delta,
\]
we obtain, for every \(Q\in\mathcal P_\delta\),
\begin{align*}
|b_Q(\eta)|
& \le
\sqrt{|Q|}\sum_{Q' \in\mathcal P_\delta}|(N_{\mathcal H})_{Q,Q'}|\,|g_{Q'}(\eta)| \le
\sum_{Q'\in\mathcal P_\delta}
\sqrt{|Q|\,|Q'|}\,|(N_{\mathcal H})_{Q,Q'}|\,
e^{-\frac{\alpha}{2}|\zeta_{Q'}-\eta|^2} \\
&\le
C_0\pi C_{\mathrm{out}}^2\delta^2
\sum_{Q'\in\mathcal P_\delta}
|(N_{\mathcal H})_{Q,Q'}|\,e^{\gamma|\zeta_Q-\zeta_{Q'}|^\beta}
e^{-\gamma|\zeta_Q-\zeta_{Q'}|^\beta-\gamma|\zeta_{Q'}-\eta|^\beta} \\
&\le
C_0\pi C_{\mathrm{out}}^2\delta^2
e^{-\gamma|\zeta_Q-\eta|^\beta}
\sum_{Q'\in\mathcal P_\delta}
|(N_{\mathcal H})_{Q,Q'}|\,e^{\gamma|\zeta_Q-\zeta_{Q'}|^\beta} \\
&\le
C_0\pi C_{\mathrm{out}}^2\delta^2
\|N_{\mathcal H}\|_{\mathcal A^{\exp}_{\gamma,\beta}(\mathcal P_\delta)}
e^{-\gamma|\zeta_Q-\eta|^\beta}.
\end{align*}
Thus, for every $\eta \in\mathbb C$ and $Q\in\mathcal P_\delta$,
\[
|b_Q(\eta)|\le C_{\mathrm{env}}\,e^{-\gamma|\zeta_Q-\eta|^\beta}
\qquad \text{with} \quad 
C_{\mathrm{env}}=
C_0\pi C_{\mathrm{out}}^2\delta^2
\|N_{\mathcal H}\|_{\mathcal A^{\exp}_{\gamma,\beta}(\mathcal P_\delta)}.
\]
With a fixed admissible contour \(\Gamma\) chosen in terms of \([a_\delta,b_\delta]\), Proposition~\ref{prop:app-localized-range-projection}\textup{(ii)}
shows that \(C_{\rm env}\) depends only on
\(\alpha,\beta,\gamma,\delta,c_{\rm in},C_{\rm out}\), and \(C_L\).
This proves (ii).
\end{proof}

\section{Atomic decomposition}
\label{sec:atomic}

We prove Theorem~\ref{thm:AD-main}.  Throughout this section
\(\alpha>0\) and \(0<p,q\le\infty\) are fixed.  We use a
\(\delta\)-pocket tiling \(\mathcal P_\delta\), with \(\delta>0\) sufficiently
small, and the associated coefficient spaces from
Subsection~\ref{subsec:atomic-pockettiling}.  The auxiliary Schur and Neumann
estimates are in Subsection~\ref{subsec:aux-operator-estimates}; the
re-centering step for \(0<p<1\) uses Section~\ref{sec:recentering-envelope}.

\smallskip
\noindent
The proof splits into the ranges \(1\le p\le\infty\) and \(0<p<1\).  In the
first range, the kernel analysis--synthesis scheme is stable after a small
remainder estimate and Neumann inversion.  In the second range, we pass through
finite-order jet atoms, eliminate the jets locally into floating kernels,
re-center the floating kernels by Section~\ref{sec:recentering-envelope}, and
iterate the remaining small defect.

\smallskip
\noindent
Subsection~\ref{subsec:atomic-pockettiling} sets up the geometry and
coefficient spaces.  Subsections~\ref{subsec:atomic-banach} and
\ref{subsec:atomic-quasi} prove Theorem~\ref{thm:AD-main} in the two angular
ranges.  Subsection~\ref{subsec:atomic-consequences} gives the equivalent
mixed-norm-normalized formulation.


\subsection{Geometric discretization and coefficient sequence spaces}\label{subsec:atomic-pockettiling}

This subsection sets up the discrete framework.  We introduce a
\(\delta\)-polar Whitney tiling \(\mathcal P_\delta\), define the coefficient
spaces \(\ell^{p,q}\) and \(\ell^{p,q}_w\), and use the annular layers
\[
A_k=\{z\in\C:k\le |z|<k+1\}.
\]
We then record ball control, bounded overlap, counting estimates, and a
bi-Lipschitz reindexing by \(\mathbb Z^2\), used later in the re-centering
step.

\medskip
\noindent
For the construction we use indices \(k,\nu,\mu\) for the radial level, radial
subdivision, and angular subdivision.  Afterward we return to the intrinsic
notation \(Q\in\mathcal P_\delta\) with center \(\zeta_Q\).

\begin{definition}[$\delta$-polar Whitney tiling]\label{def:pockets}
Fix $\delta\in(0, \frac{1}{2}]$ and let $N_\delta:=\lceil \delta^{-1}\rceil$.
We construct a countable family \(\mathcal P_\delta\) of half-open Borel sets
\(Q\), called pockets, which tiles \(\C\) up to null sets, together with
distinguished centers \(\zeta_Q\in Q\).

\smallskip
\noindent\textbf{Angular counts and angles.}
For $k\ge0$ and $\nu=0,1,\dots,N_\delta-1$, define
\[
N_{\delta,k,\nu}:=
\begin{cases}
2\nu+1, & k=0,\\
(2k+1)N_\delta, & k\ge1,
\end{cases}
\qquad \text{and} \qquad
\theta_{k,\nu,\mu}:=\frac{2\pi\mu}{N_{\delta,k,\nu}},
\quad \mu=0,1,\dots,N_{\delta,k,\nu},
\]
with indices modulo $N_{\delta,k,\nu}$ on the unit circle.

\smallskip
\noindent\textbf{Radial levels.}
For $k\ge1$ define radial levels on annuli $A_k: = \{z: k \leq |z| < k+1\}$ by equal increments in $r^2$:
\[
r_{k,\nu}^2:=k^2+\frac{\nu}{N_\delta}\bigl((k+1)^2-k^2\bigr)
= k^2+\frac{\nu}{N_\delta}(2k+1),
\qquad \nu=0,1,\dots,N_\delta,
\]
and for $k=0$ on the unit disc $A_0: = \{z:  |z| < 1\}$ set
\[
r_{0,\nu}:=\frac{\nu}{N_\delta},\qquad \nu=0,1,\dots,N_\delta.
\]

\smallskip
\noindent\textbf{Pockets and centers.}
For each $k\ge0$, $0\le\nu\le N_\delta-1$, and $0\le\mu\le N_{\delta,k,\nu}-1$, define the pocket
\[
Q_{k,\nu,\mu}:=\Bigl\{re^{i\theta}:\ r\in[r_{k,\nu},r_{k,\nu+1}),\ 
\theta\in[\theta_{k,\nu,\mu},\theta_{k,\nu,\mu+1})\Bigr\}.
\]
Define the radial midpoint by
\[
\rho_{k,\nu}:=\frac{r_{k,\nu}+r_{k,\nu+1}}{2}\qquad (k\ge0),
\]
and the angular midpoint by
\[
\vartheta_{k,\nu,\mu}:=\frac{\theta_{k,\nu,\mu}+\theta_{k,\nu,\mu+1}}{2}.
\]
Then set the center
\[
\zeta_{k,\nu,\mu}:=\rho_{k,\nu}e^{i\vartheta_{k,\nu,\mu}}\in Q_{k,\nu,\mu}.
\]

\smallskip
\noindent
We call $\mathcal{P}_{\delta}$ \emph{the $\delta$-polar Whitney tiling of $\C$};
its elements are called \emph{$\delta$-pockets}.  When no confusion arises, we
say \emph{$\delta$-pocket tiling}.  We write \(Q\in\mathcal P_\delta\) with
center \(\zeta_Q\), and denote
\[
\mathcal P_{\delta,k}:=\{Q \in  \mathcal{P}_{\delta}:  Q \subset A_k\},
\qquad
Z(\mathcal P_\delta):=\{\zeta_Q:\ Q\in\mathcal P_\delta\}.
\]
\smallskip
\noindent\textbf{Simplification for $k\ge1$.}
Since $N_{\delta,k,\nu}$ is independent of $\nu$ for $k\ge1$, we may write
\[
N_{\delta,k}:=N_{\delta,k,\nu}=(2k+1)N_\delta,
\qquad
\theta_{k,\mu}:=\theta_{k,\nu,\mu},
\qquad
\vartheta_{k,\mu}:=\vartheta_{k,\nu,\mu} \quad \text{whenever} \ k \geq 1.
\]
\end{definition}

\begin{remark}
We distinguish $k=0$ from $k\ge1$ because near the origin the angular
resolution must depend on the ring index $\nu$; the choice $N_{\delta, 0,\nu}=2\nu+1$
prevents degeneration of pocket shapes and preserves the equal-area property.

\smallskip
\noindent
Lemma~\ref{lem:geo-pockets} below shows that \(\mathcal P_\delta\) is a
\(\delta\)-pocket decomposition in the sense of
Definition~\ref{def:pocket-decomposition}, with absolute geometric constants
\(c_{\rm in}\) and \(C_{\rm out}\).
\end{remark}

\begin{definition}[Fattened pockets]\label{def:fattened-pockets}
Let $\delta\in(0, \frac{1}{2}]$ and let $\mathcal P_\delta$ be the $\delta$-pocket tiling of $\C$.
For each pocket $Q\in\mathcal P_\delta$ we define a \emph{fattened pocket} $Q^*\supset Q$
by enlarging $Q$ by one neighbor in the radial direction and one neighbor in the angular direction.

\smallskip
\noindent
For $Q=Q_{k,\nu,\mu}\subset A_k$ ($k\ge0$), set
\[
Q^*_{k,\nu,\mu}
:=
\Bigl\{re^{i\theta}:\ r\in[r_{k,\nu-1},\,r_{k,\nu+2}),\ 
\theta\in[\theta_{k,\nu,\mu-1},\,\theta_{k,\nu,\mu+2})\Bigr\},
\]
where $\mu$ is read modulo $N_{\delta,k,\nu}$.
The radial indices are interpreted using the conventions
\[
r_{0,-1}:=0,\qquad
r_{k,-1}:=r_{k-1,N_\delta-1}\ (k\ge1),\qquad
r_{k,N_\delta+1}:=r_{k+1,1}\ (k\ge0).
\]
We call $\mathcal P_\delta^*:=\{Q^*: Q\in\mathcal P_\delta\}$ the \emph{fattened $\delta$-pocket family}
(of the $\delta$-pocket tiling), and refer to its elements as \emph{fattened $\delta$-pockets}.
\end{definition}

\begin{figure}[ht]
\centering

\begin{tikzpicture}[scale=1.15, line cap=round, line join=round]

\def\Ndelta{3}   
\def\kmax{4}     

\def\kQ{2}
\def\nuQ{1}
\def\muQ{3}

\tikzset{
  pocketcircle/.style={black!55, thin},
  pocketray/.style={black!40, very thin},
  outline/.style={black!85, semithick},
  qfill/.style={fill=blue!12, draw=blue!80!black, very thick},
  qstarline/.style={draw=red!85!black, dash pattern=on 4pt off 2pt, line width=1.2pt}
}

\newcommand{\rkNu}[2]{%
  \pgfmathsetmacro{\tempR}{sqrt((#1)^2 + (#2)*(2*(#1)+1)/(\Ndelta))}%
}

\pgfmathtruncatemacro{\Ndmone}{\Ndelta-1}

\foreach \nu in {0,...,\Ndelta}{
  \pgfmathsetmacro{\rnu}{\nu/(\Ndelta)}
  \draw[pocketcircle] (0,0) circle (\rnu);
}
\draw[outline] (0,0) circle (1);
\node[fill=white, inner sep=1pt] at (18:1.22) {$A_0$};

\foreach \nu in {1,...,\Ndmone}{
  \pgfmathtruncatemacro{\Mnu}{2*\nu+1}
  \pgfmathtruncatemacro{\Mnumone}{\Mnu-1}
  \pgfmathsetmacro{\dth}{360/(\Mnu)}
  \pgfmathsetmacro{\rlo}{\nu/(\Ndelta)}
  \pgfmathsetmacro{\rhi}{(\nu+1)/(\Ndelta)}
  \foreach \mu in {0,...,\Mnumone}{
    \pgfmathsetmacro{\ang}{\mu*\dth}
    \draw[pocketray] (\ang:\rlo) -- (\ang:\rhi);
  }
}

\foreach \k in {1,...,\kmax}{

  \rkNu{\k}{0}\pgfmathsetmacro{\rinner}{\tempR}
  \rkNu{\k}{\Ndelta}\pgfmathsetmacro{\router}{\tempR}

  \pgfmathtruncatemacro{\Nk}{(2*\k+1)*\Ndelta}
  \pgfmathtruncatemacro{\Nkmone}{\Nk-1}
  \pgfmathsetmacro{\dtheta}{360/(\Nk)}

  \foreach \nu in {0,...,\Ndelta}{
    \rkNu{\k}{\nu}
    \draw[pocketcircle] (0,0) circle (\tempR);
  }

  \foreach \mu in {0,...,\Nkmone}{
    \pgfmathsetmacro{\ang}{\mu*\dtheta}
    \draw[pocketray] (\ang:\rinner) -- (\ang:\router);
  }

  \draw[outline] (0,0) circle (\rinner);
  \draw[outline] (0,0) circle (\router);

  \ifnum\k=4
  \pgfmathsetmacro{\labR}{\router+0.28}
\else
  \pgfmathsetmacro{\labR}{\router+0.16}
  \fi
\node[fill=white, inner sep=1pt] at (18:\labR) {$A_{\k}$};
}


\pgfmathtruncatemacro{\NkQ}{(2*\kQ+1)*\Ndelta}
\pgfmathsetmacro{\dthetaQ}{360/(\NkQ)}

\rkNu{\kQ}{\nuQ-1}\pgfmathsetmacro{\rQstarin}{\tempR}
\rkNu{\kQ}{\nuQ}\pgfmathsetmacro{\rQin}{\tempR}
\rkNu{\kQ}{\nuQ+1}\pgfmathsetmacro{\rQout}{\tempR}
\rkNu{\kQ}{\nuQ+2}\pgfmathsetmacro{\rQstarout}{\tempR}

\pgfmathsetmacro{\thQm}{(\muQ-1)*\dthetaQ}
\pgfmathsetmacro{\thQa}{\muQ*\dthetaQ}
\pgfmathsetmacro{\thQb}{(\muQ+1)*\dthetaQ}
\pgfmathsetmacro{\thQp}{(\muQ+2)*\dthetaQ}

\pgfmathsetmacro{\rhoQ}{0.5*(\rQin+\rQout)}
\pgfmathsetmacro{\varthetaQ}{0.5*(\thQa+\thQb)}

\pgfmathsetmacro{\QlabelAngle}{\varthetaQ-8}
\pgfmathsetmacro{\QlabelRadius}{\rhoQ-0.18}

\pgfmathsetmacro{\zlabelAngle}{\varthetaQ+10}
\pgfmathsetmacro{\zlabelRadius}{\rhoQ+0.14}

\pgfmathsetmacro{\QstarLabelAngle}{0.5*(\thQm+\thQp)-18}
\pgfmathsetmacro{\QstarLabelRadius}{\rQstarout+0.18}

\draw[white, line width=3pt]
  (\thQm:\rQstarin) -- (\thQm:\rQstarout)
  arc[start angle=\thQm, end angle=\thQp, radius=\rQstarout]
  -- (\thQp:\rQstarin)
  arc[start angle=\thQp, end angle=\thQm, radius=\rQstarin]
  -- cycle;

\draw[qstarline]
  (\thQm:\rQstarin) -- (\thQm:\rQstarout)
  arc[start angle=\thQm, end angle=\thQp, radius=\rQstarout]
  -- (\thQp:\rQstarin)
  arc[start angle=\thQp, end angle=\thQm, radius=\rQstarin]
  -- cycle;

\filldraw[qfill]
  (\thQa:\rQin) -- (\thQa:\rQout)
  arc[start angle=\thQa, end angle=\thQb, radius=\rQout]
  -- (\thQb:\rQin)
  arc[start angle=\thQb, end angle=\thQa, radius=\rQin]
  -- cycle;

\fill (\varthetaQ:\rhoQ) circle (1.8pt);

\node[text=blue!80!black, fill=white, inner sep=1pt]
  at (\QlabelAngle:\QlabelRadius) {$Q$};

\node[text=red!85!black, fill=white, inner sep=1pt]
  at (\QstarLabelAngle:\QstarLabelRadius) {$Q^*$};

\node[fill=white, inner sep=1pt]
  at (\zlabelAngle:\zlabelRadius) {$\zeta_Q$};

\end{tikzpicture}

\caption{A pocket \(Q\in\mathcal P_\delta\), its center \(\zeta_Q\), and the associated fattened pocket \(Q^*\).}

\end{figure}

\begin{definition} \label{def:coeff-spaces}
Let $0<p,q\le\infty$, and let $\mathcal P_{\delta}$ be a $\delta$-pocket tiling of $\C$. For a sequence $c=(c_Q)_{Q\in\mathcal P_\delta}$ and  each $k \geq 0$, we denote by
\(
c_k:=\bigl(c_Q\bigr)_{Q\in\mathcal P_{\delta,k}}
\)
its $k$-th block and define
\[
\|c_k\|_{\ell^p}:=
\begin{cases}
\Big(\displaystyle\sum_{Q\in\mathcal P_{\delta,k}} |c_Q|^p\Big)^{\frac{1}{p}}, & 0<p<\infty,\\[2mm]
\displaystyle\sup_{Q\in\mathcal P_{\delta,k}} |c_Q|, & p=\infty.
\end{cases}
\]
The mixed sequence space $\ell^{p,q}$ consists of all $c$ such that
\[
\|c\|_{\ell^{p,q}}:=
\begin{cases}
\Big(\displaystyle\sum_{k\ge0}\|c_k\|_{\ell^p}^{\,q}\Big)^{\frac{1}{q}}, & 0<q<\infty,\\[2mm]
\displaystyle\sup_{k\ge0}\|c_k\|_{\ell^p}, & q=\infty,
\end{cases}
\]
is finite. The weighted space $\ell^{p,q}_w$ is defined by the quasi-norm
\[
\|c\|_{\ell^{p,q}_w}:=
\begin{cases}
\Big(\displaystyle\sum_{k\ge0}\bigl[w_k\,\|c_k\|_{\ell^p}\bigr]^{q}\Big)^{\frac{1}{q}}, & 0<q<\infty,\\[2mm]
\displaystyle\sup_{k\ge0} w_k\,\|c_k\|_{\ell^p}, & q=\infty,
\end{cases}
\qquad\text{where}\qquad
w_k:=(1+k)^{\frac1q-\frac1p}.
\]
\end{definition}
\begin{remark}
Throughout the paper, \(\ell^{p,q}\) and \(\ell_w^{p,q}\) refer to the
coefficient sequence spaces associated with the fixed tiling
\(\mathcal P_\delta\), equivalently with the block decomposition
\(
\mathcal P_\delta=\bigsqcup_{k\ge0}\mathcal P_{\delta,k}
\).
We suppress the dependence on \(\mathcal P_\delta\) in the notation.
\end{remark}

\subsubsection{Geometric properties of the $\delta$-pocket tiling}

We record the geometric properties of the tiling: exact area, uniform ball
control, counting estimates, and bounded overlap of fattened pockets.

\begin{lemma} \label{lem:geo-pockets}
Let $\delta\in(0, \frac{1}{2}]$ and let $\mathcal P_\delta$ be the $\delta$-pocket tiling of $\C$ in
Definition~\ref{def:pockets}. Then:
\begin{enumerate}
\item[\textup{(a)}]\emph{(Partition)} $\displaystyle \C=\bigsqcup_{Q\in\mathcal P_\delta}Q$.

\item[\textup{(b)}] \emph{(Exact area)} $\displaystyle |Q|= \frac{\pi}{N_\delta^2}$ for all $Q\in\mathcal P_\delta$.

\item[\textup{(c)}] \emph{(Uniform ball control)} There exist absolute constants $0<c_{\rm in}<C_{\rm out}<\infty$ (for instance,
$c_{\rm in}= \frac{1}{32}$ and $C_{\rm out}=4$) such that
\[
B(\zeta_Q,c_{\rm in}\delta)\subset Q\subset B(\zeta_Q,C_{\rm out}\delta),
\qquad Q\in\mathcal P_\delta .
\]

\item[\textup{(d)}] \emph{(Counting)} For every $z\in\C$ and every $R>C_{\rm out}\delta$,
\[
\left(\frac{R}{\delta}-C_{\rm out}\right)^2
\le
\#\{Q\in\mathcal P_\delta:\ \zeta_Q\in B(z,R)\}
\le 4\left(\frac{R}{\delta}+C_{\rm out}\right)^2 .
\]
Moreover, the upper bound remains valid for every \(R>0\).
\end{enumerate}
\end{lemma}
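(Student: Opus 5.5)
The plan is to verify (a)--(d) directly from the polar construction in Definition~\ref{def:pockets}, treating the blocks \(k\ge1\) and \(k=0\) separately.

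\textbf{(a) Partition.} The radial intervals \([r_{k,\nu},r_{k,\nu+1})\), \(0\le\nu\le N_\delta-1\), partition \([k,k+1)\) for \(k\ge1\) (since \(r_{k,0}=k\), \(r_{k,N_\delta}=k+1\)) and \([0,1)\) for \(k=0\). Within each radial ring, the half-open angular intervals \([\theta_{k,\nu,\mu},\theta_{k,\nu,\mu+1})\) partition \([0,2\pi)\). Taking the union over \(k\) gives a disjoint cover of \(\C\); the origin is the pocket \(\nu=0,\mu=0\) when \(k=0\).

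\textbf{(b) Exact area.} This is a direct polar computation:
\[
|Q_{k,\nu,\mu}|=\frac{1}{2}\,\frac{2\pi}{N_{\delta,k,\nu}}\,\bigl(r_{k,\nu+1}^2-r_{k,\nu}^2\bigr).
\]
For \(k\ge1\), this equals \(\frac{\pi}{(2k+1)N_\delta}\cdot\frac{2k+1}{N_\delta}=\frac{\pi}{N_\delta^2}\). For \(k=0\), \(r^2\)-increments are \(\frac{(\nu+1)^2-\nu^2}{N_\delta^2}=\frac{2\nu+1}{N_\delta^2}\), and dividing by \(N_{\delta,0,\nu}=2\nu+1\) gives \(\frac{\pi}{N_\delta^2}\) again. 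This is exactly why the angular count \(2\nu+1\) was chosen.

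\textbf{(c) Ball control.} This is the only step needing care, and it amounts to bounding the radial width and the angular arc length of each pocket above and below by multiples of \(\delta\). Since \(N_\delta=\lceil\delta^{-1}\rceil\) and \(\delta\le\frac12\), we have \(\frac{1}{2\delta}\le N_\delta\le\frac{2}{\delta}\), i.e. \(\frac{1}{N_\delta}\in[\delta/2,\delta]\).

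For \(k\ge1\):
\begin{itemize}
\item The radial width is \(r_{k,\nu+1}-r_{k,\nu}=\frac{(2k+1)/N_\delta}{r_{k,\nu}+r_{k,\nu+1}}\). Since \(2k\le r_{k,\nu}+r_{k,\nu+1}\le 2k+2\), this lies between \(\frac{1}{2N_\delta}\) and \(\frac{3}{2N_\delta}\).
\item The angular width is \(\frac{2\pi}{(2k+1)N_\delta}\). At radius \(\rho\in[k,k+1)\), the arc length is \(\rho\cdot\frac{2\pi}{(2k+1)N_\delta}\), which lies between \(\frac{2\pi}{3N_\delta}\) and \(\frac{2\pi}{N_\delta}\).
\end{itemize}
So \(Q\) is a polar rectangle with both side lengths comparable to \(\delta\). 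The center \(\zeta_Q\) sits at the radial midpoint and angular midpoint. The distance from \(\zeta_Q\) to the boundary is then at least a fixed fraction of \(\min\{\text{radial width},\,\text{arc length}\}\), after accounting for chord-versus-arc and the curvature of the circles (the angular half-width is at most \(\pi/3\)). Any point of \(Q\) lies within radial width plus arc length of \(\zeta_Q\), which is at most \(\frac{2+2\pi}{N_\delta}\cdot\) (a small factor) and hence at most \(4\delta\) after checking constants.

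For \(k=0\):
\begin{itemize}
\item The radial width is exactly \(\frac{1}{N_\delta}\).
\item The arc length at radius \(\rho\in[\nu/N_\delta,(\nu+1)/N_\delta]\) is \(\rho\cdot\frac{2\pi}{2\nu+1}\), comparable to \(\frac{1}{N_\delta}\) for \(\nu\ge1\).
\item For \(\nu=0\) the pocket is the full disc of radius \(\frac{1}{N_\delta}\). Here the center is \(\rho_{0,0}=\frac{1}{2N_\delta}\) at angle \(\pi\); the inner ball of radius \(\frac{1}{2N_\delta}\) fits, and the pocket lies within distance \(\frac{3}{2N_\delta}\).
\end{itemize}
The generous constants \(c_{\rm in}=\frac{1}{32}\), \(C_{\rm out}=4\) absorb all these cases. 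I expect this constant bookkeeping, especially for the pockets with \(\nu=1\) near the origin and the curvature correction, to be the main (though elementary) obstacle. I would handle it by proving the inner inclusion with the crude bound \(\operatorname{dist}(\zeta_Q,\partial Q)\ge\min\{\tfrac12\,\text{radial width},\ \rho_{\min}\sin(\tfrac12\,\text{angle})\}\).

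\textbf{(d) Counting.} I will not redo the argument. By (b), \(|Q|=\pi/N_\delta^2\in[\pi\delta^2/4,\pi\delta^2]\). Area comparison as in Lemma~\ref{lem:counting} gives
\[
B(z,R-C_{\rm out}\delta)\subset\bigcup_{\zeta_Q\in B(z,R)}Q\subset B(z,R+C_{\rm out}\delta).
\]
Dividing the areas by the exact pocket area yields \(\#\le\frac{(R+C_{\rm out}\delta)^2}{\delta^2/4}\) and \(\#\ge\frac{(R-C_{\rm out}\delta)^2}{\delta^2}\), which is the claimed bound. The upper bound needs no restriction on \(R\).
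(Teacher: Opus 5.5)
Your proposal is correct and follows the paper's proof essentially step by step: the half-open polar partition for (a), the direct polar area computation for (b) (with the $2\nu+1$ angular cells compensating the $r^2$-increments near the origin), ball control in (c) from $\frac{1}{2N_\delta}\le\Delta r\le\frac{3}{2N_\delta}$, $\frac{2\pi}{3N_\delta}\le\rho\Delta\theta\le\frac{2\pi}{N_\delta}$ and $|z-\zeta_Q|\le|r-\rho|+\rho|\theta-\vartheta|$, and the area comparison against $|Q|=\pi/N_\delta^2$ for (d). One small correction: to get the specific value $C_{\rm out}=4$, use the midpoint halving with these sharper bounds, $\tfrac12\Delta r+\tfrac12\rho\Delta\theta\le(\tfrac34+\pi)\delta<4\delta$, rather than $\tfrac12\cdot\frac{2+2\pi}{N_\delta}$, which only yields $(1+\pi)\delta$.
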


\begin{proof}
\textup{(a)}
This is immediate from the half--open polar construction of $Q_{k,\nu,\mu}$.

\smallskip
\noindent\textup{(b)}
By the definition of $Q = Q_{k, \nu, \mu}$,
\[
|Q|=\int_{\theta_{k, \nu, \mu}}^{\theta_{k, \nu, \mu + 1}}\int_{r_{k,\nu}}^{r_{k,\nu + 1}} r\,dr\,d\theta
=\frac12 \left(r_{k,\nu + 1}^2-r_{k,\nu}^2\right)\left(\theta_{k, \nu, \mu + 1}-\theta_{k, \nu, \mu}\right).
\]
For $k\ge1$ we have $r_{k,\nu + 1}^2-r_{k,\nu}^2=\frac{2k+1}{N_\delta}$ and $\theta_{k, \nu, \mu + 1}-\theta_{k, \nu, \mu}=\frac{2\pi}{(2k+1)N_\delta}$, hence
$|Q|=\frac{\pi}{N_\delta^2}$. For $k=0$ one has $r_{k,\nu + 1}^2-r_{k,\nu}^2=\frac{2\nu+1}{N_\delta^2}$ and
$\theta_{k, \nu, \mu + 1}-\theta_{k, \nu, \mu}=\frac{2\pi}{2\nu+1}$, giving the same value.

\smallskip
\noindent\textup{(c)}
Write $\zeta_Q=\rho e^{i\vartheta}$ (with $\rho$ and $\vartheta$ the midpoints of the radial and angular
intervals of $Q$) and let $z=re^{i\theta}\in Q$. Then
\begin{equation}\label{eq:polar-tri-short}
|z-\zeta_Q|\le |r-\rho|+\rho|\theta-\vartheta|.
\end{equation}
Let $\Delta r:=r_{k,\nu + 1}-r_{k,\nu}$ and $\Delta\theta:=\theta_{k, \nu, \mu + 1}-\theta_{k, \nu, \mu}$.
By construction, $|r-\rho|\le \frac{\Delta r}{2}$ and $|\theta-\vartheta|\le \frac{\Delta\theta}{2}$. Moreover:
\begin{itemize}
\item if $k\ge1$, then $\displaystyle \frac{1}{2 N_{\delta}} \leq \Delta r \leq \frac{3}{2 N_{\delta}}$ and
$\displaystyle \frac{2 \pi}{3N_\delta} \leq \rho\Delta\theta \leq  \frac{2 \pi}{N_\delta}$ (uniformly in $\nu,\mu$);
\item if $k=0$, then $\Delta r=\frac{1}{N_\delta}$ and $\rho\Delta\theta 
=\frac{\pi}{N_\delta}$.
\end{itemize}
Inserting these bounds into \eqref{eq:polar-tri-short} yields $|z-\zeta_Q|\le 4\delta$,
hence $Q\subset B(\zeta_Q,C_{\rm out}\delta)$ for some absolute $C_{\rm out} > 0$ (for instance, $C_{\rm out}: = 4$).

\smallskip
\noindent
For the inner inclusion, the distance from $\zeta_Q$ to each radial side equals $\frac{\Delta r}{2} \geq \frac{1}{4 N_\delta}$.
The distance to each angular side is $\rho\sin(\frac{\Delta\theta}{2})\geq \frac{1}{\pi}  \rho\Delta\theta\geq \frac{2}{3N_\delta}$
(for $\nu=0$ there is no angular restriction).
Thus $\textup{dist}(\zeta_Q,\partial Q)\geq \frac{1}{ 8N_\delta}\geq \frac{ \delta}{16}$, proving
$B(\zeta_Q,c_{\rm in}\delta)\subset Q$ for some absolute $c_{\rm in} > 0$ (for instance, $c_{\rm in} : = \frac{1}{32}$).

\smallskip
\noindent\textup{(d)}
Let $R>C_{\rm out}\delta$. If $w\in B(z,R-C_{\rm out}\delta)$ and $Q_w$ is the (a.e. unique) pocket
containing $w$, then \textup{(c)} gives $|\zeta_{Q_w}-w|\le C_{\rm out}\delta$, hence $\zeta_{Q_w}\in B(z,R)$.
Therefore
\[
B(z,R-C_{\rm out}\delta)\subset \bigcup_{\zeta_Q\in B(z,R)}Q,
\]
and disjointness in (a) and $|Q|=\frac{\pi}{N_\delta^2}$ in (b) yield the lower bound. The upper bound follows similarly from
$Q\subset B(\zeta_Q,C_{\rm out}\delta)$.
\end{proof}

\begin{lemma} \label{lem:geo-fattened}
Let $\delta\in(0, \frac{1}{2}]$, let $\mathcal P_\delta$ be the $\delta$-pocket tiling of $\C$, and let
$\mathcal P_\delta^*=\{Q^*:Q\in\mathcal P_\delta\}$ be the fattened family defined in Definition~\ref{def:fattened-pockets}. Then there exist absolute constants
$C_*,N_*\in(1,\infty)$, independent of $\delta$, such that:

\begin{enumerate}
\item[\textup{(a)}] \emph{(Containment and size)} For every $Q\in\mathcal P_\delta$,
\[
Q\subset Q^*\subset B(\zeta_Q,C_*\delta),
\qquad
|Q^*|\asymp \delta^2,
\qquad
\rm{diam}(Q^*)\asymp \delta,
\]
where the implicit constants are absolute.

\item[\textup{(b)}] \emph{(Bounded overlap)} For every $z\in\C$,
\[
\sum_{Q\in\mathcal P_\delta}\mathbf 1_{Q^*}(z)\ \le\ N_*.
\]
\end{enumerate}
\end{lemma}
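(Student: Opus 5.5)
The plan is to work directly with the polar description of the fattened pockets and with the explicit increments already computed in the proof of Lemma~\ref{lem:geo-pockets}(c). Fix $Q=Q_{k,\nu,\mu}$. By Definition~\ref{def:fattened-pockets}, $Q^*$ is a polar rectangle with radial interval $[r_{k,\nu-1},r_{k,\nu+2})$ and angular interval $[\theta_{k,\nu,\mu-1},\theta_{k,\nu,\mu+2})$. Containment $Q\subset Q^*$ is immediate. The key observation is that the radial interval of $Q^*$ is the union of at most three consecutive radial steps, possibly crossing into the adjacent annulus $A_{k\pm1}$ through the index conventions. Each such step has length $\asymp N_\delta^{-1}\asymp\delta$. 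For $k\ge1$ this is the bound $\frac{1}{2N_\delta}\le\Delta r\le\frac{3}{2N_\delta}$; for $k=0$ the step is exactly $1/N_\delta$; and the crossing steps are controlled by the same bounds applied in $A_{k\pm1}$. The angular opening is $3\Delta\theta$, and $\rho\Delta\theta\asymp N_\delta^{-1}$ uniformly. Since radii across at most two adjacent radial steps are comparable for $k\ge1$, the arc length at any radius in $Q^*$ is also $\asymp\delta$.

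With these increments in hand, (a) follows as in Lemma~\ref{lem:geo-pockets}(c). Apply the polar triangle inequality \eqref{eq:polar-tri-short} to any $z\in Q^*$ to get $|z-\zeta_Q|\le 3\max\Delta r+\tfrac32\rho'\Delta\theta\lesssim\delta$, with $\rho'$ the largest radius in $Q^*$; this gives $C_*$. The diameter satisfies $\operatorname{diam}Q^*\ge\operatorname{diam}Q\ge 2c_{\rm in}\delta$ and $\operatorname{diam}Q^*\le 2C_*\delta$. For the area, the lower bound is $|Q^*|\ge|Q|=\pi N_\delta^{-2}\asymp\delta^2$ by Lemma~\ref{lem:geo-pockets}(b), and the upper bound is $|Q^*|\le\pi C_*^2\delta^2$. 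The only delicate case is $k=0$ with small $\nu$, in particular $\nu=0$, where $r_{0,-1}=0$ and $N_{\delta,0,0}=1$. There the angular interval of $Q^*$ is to be read modulo the full circle, so $Q^*$ is contained in the disc of radius $r_{0,2}=2/N_\delta$, which is still within $O(\delta)$ of $\zeta_Q$. I would also check that $N_{\delta,0,\nu}=2\nu+1$ makes $\rho\Delta\theta=\pi/N_\delta$ for every $\nu\ge1$, so that no shape degeneration occurs.

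For (b), bounded overlap should follow from (a) combined with the counting lemma. If $z\in Q^*$, then $|z-\zeta_Q|<C_*\delta$, so
\[
\sum_{Q}\mathbf 1_{Q^*}(z)\le\#\{Q:\zeta_Q\in B(z,C_*\delta)\}\le 4\,(C_*+C_{\rm out})^2
\]
by the upper bound in Lemma~\ref{lem:geo-pockets}(d), which is valid for all $R>0$. This constant is absolute, so we may take $N_*:=4(C_*+C_{\rm out})^2$. Alternatively, a combinatorial count would work: each pocket lies in at most nine fattened pockets in the generic case. The counting route avoids case analysis at annulus boundaries, where angular counts change from $(2k+1)N_\delta$ to $(2k+3)N_\delta$.

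The main obstacle is purely bookkeeping. One has to verify that the radial steps borrowed from the neighbouring annuli $A_{k\pm1}$ and the $k=0$ conventions still give increments $\asymp\delta$ with absolute constants. The quantity to track is $r_{k,\nu+1}-r_{k,\nu}=\frac{2k+1}{N_\delta(r_{k,\nu+1}+r_{k,\nu})}\in\bigl[\frac{1}{2N_\delta},\frac{3}{2N_\delta}\bigr]$. Together with $\delta\le N_\delta^{-1}\cdot 2$, which holds since $\delta\le\frac12$, this yields explicit absolute values for $C_*$ and $N_*$.
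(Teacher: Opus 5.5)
Your proposal is correct and follows essentially the same route as the paper: the polar triangle inequality \eqref{eq:polar-tri-short}, combined with the increment bounds from Lemma~\ref{lem:geo-pockets}(c), gives $Q^*\subset B(\zeta_Q,C_*\delta)$; the area and diameter bounds follow from $B(\zeta_Q,c_{\rm in}\delta)\subset Q\subset Q^*\subset B(\zeta_Q,C_*\delta)$; and bounded overlap follows from the upper counting bound in Lemma~\ref{lem:geo-pockets}(d), with the same constant $N_*=4(C_*+C_{\rm out})^2$. Your treatment of the wrap-around angular interval at $k=0$, $\nu=0$, and of the radial steps borrowed from $A_{k\pm1}$, is a bit more careful than the paper's, which simply cites the $\Delta r$ and $\rho\Delta\theta$ bounds.
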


\begin{proof}
We use Lemma~\ref{lem:geo-pockets}\textup{(c)}: there exist absolute constants
$c_{\rm in},C_{\rm out}>0$ such that
\begin{equation}\label{eq:basic-ball-fattened}
B(\zeta_Q,c_{\rm in}\delta)\subset Q\subset B(\zeta_Q,C_{\rm out}\delta),
\qquad Q\in\mathcal P_\delta.
\end{equation}

\smallskip
\noindent\textup{(a)}
The inclusion $Q\subset Q^*$ is immediate from Definition~\ref{def:fattened-pockets}.
Fix $Q=Q_{k,\nu,\mu}$ and write $\zeta_Q=\rho e^{i\vartheta}$. For $z=re^{i\theta}\in Q^*$ we have
\begin{equation}\label{eq:polar-tri-fattened-final}
|z-\zeta_Q|\le |r-\rho|+\rho|\theta-\vartheta|.
\end{equation}
By the construction of $Q^*$,
\[
|r-\rho|\le \tfrac12\bigl(r_{k,\nu+2}-r_{k,\nu-1}\bigr)
\le \frac{9}{4N_\delta},
\qquad
\rho|\theta-\vartheta|
\le \tfrac{\rho}{2}\bigl(\theta_{k,\nu,\mu+2}-\theta_{k,\nu,\mu-1}\bigr)
\le \frac{3\pi}{N_\delta},
\]
where we used the bounds for $\Delta r$ and $\rho\Delta\theta$ established in the proof of
Lemma~\ref{lem:geo-pockets}. Inserting these estimates into \eqref{eq:polar-tri-fattened-final} gives
\[
|z-\zeta_Q|\le \Bigl(3\pi+\frac94\Bigr)\frac{1}{N_\delta}
\le \Bigl(3\pi+\frac94\Bigr)\delta,
\]
and therefore $Q^*\subset B(\zeta_Q,C_*\delta)$ with $C_*:=3\pi+\frac94$.
Moreover, the bounds on $|Q^*|$ and $\rm{diam}(Q^*)$ now follow from the inclusions
$B(\zeta_Q,c_{\rm in}\delta)\subset Q\subset Q^*\subset B(\zeta_Q,C_*\delta)$.

\medskip
\noindent\textup{(b)}
Fix $z\in\C$. If $z\in Q^*$, then by \textup{(a)} we have $\zeta_Q\in B(z,C_*\delta)$. Hence
\[
\#\{Q\in\mathcal P_\delta:\ z\in Q^*\}
\le
\#\{Q\in\mathcal P_\delta:\ \zeta_Q\in B(z,C_*\delta)\}.
\]
Applying Lemma~\ref{lem:geo-pockets}\textup{(d)} with $R=C_*\delta$ (and noting that $C_*>C_{\rm out}$)
yields the uniform bound
\[
\#\{Q\in\mathcal P_\delta:\ \zeta_Q\in B(z,C_*\delta)\}\ \le\ 4(C_*+C_{\rm out})^2=:N_*,
\]
which is independent of $z$ and $\delta$. This gives the asserted overlap bound.
\end{proof}

\subsubsection{Bi-Lipschitz reindexing of pockets}
We reindex the pocket centers by \(\mathbb Z^2\) in a bi-Lipschitz way,
allowing localized estimates on pockets to be transferred to a standard
lattice setting.  This reindexing is used with the envelope theorem in
Section~\ref{sec:recentering-envelope} during the angular quasi-Banach
re-centering step.

\begin{lemma} \label{lem:biLip}
Let $\delta\in(0, \frac{1}{2}]$ and let $\mathcal P_\delta$ be the $\delta$-pocket tiling of $\C$ with centers  
$\zeta_Q$. Then there exist an absolute constant
$C_L\ge1$ and a bijection $\Phi_\delta:\mathcal P_\delta\to\Z^2$ such that
\begin{equation}\label{eq:bilip-main}
C_L^{-1}\,\delta\,|\Phi_\delta(Q)-\Phi_\delta(R)|
\le |\zeta_Q-\zeta_R|
\le C_L\,\delta\,|\Phi_\delta(Q)-\Phi_\delta(R)|,
\qquad Q,R\in\mathcal P_\delta.
\end{equation}
Moreover, $\Phi_\delta$ can be chosen so that
\begin{equation}\label{eq:bilip-displacement-natural}
\sup_{Q\in\mathcal P_\delta}\left|\zeta_Q-\frac{\sqrt{\pi}}{N_\delta}\Phi_\delta(Q)\right|\le C_L \delta.
\end{equation}
\end{lemma}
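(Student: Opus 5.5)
The plan is to build \(\Phi_\delta\) as a bounded-displacement perfect matching between the center set \(Z(\mathcal P_\delta)\) and the scaled lattice \(\Lambda_\delta:=\frac{\sqrt\pi}{N_\delta}\mathbb Z^2\). This proves \eqref{eq:bilip-displacement-natural} first, and the bi-Lipschitz bound \eqref{eq:bilip-main} then follows from separation. The key input is that both families tile the plane by cells of the same exact area. Each pocket has \(|Q|=\pi/N_\delta^2\) by Lemma~\ref{lem:geo-pockets}\textup{(b)}. Each lattice point \(\lambda\in\Lambda_\delta\) owns the half-open square \(S_\lambda\) of side \(\sqrt\pi/N_\delta\), which also has area \(\pi/N_\delta^2\).

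\textbf{Step 1 (Hall condition).} Since \(N_\delta=\lceil\delta^{-1}\rceil\) and \(\delta\le\frac12\), we have \(\frac{1}{2\delta}\le N_\delta\le\frac{3}{2\delta}\), so \(N_\delta^{-1}\asymp\delta\). Set \(D:=C_{\rm out}\delta+\sqrt{2\pi}/N_\delta\le C\delta\) with \(C\) absolute. Join \(Q\) to \(\lambda\) by an edge whenever \(|\zeta_Q-\lambda|\le D\). This bipartite graph is locally finite, since every vertex has boundedly many neighbours by Lemma~\ref{lem:geo-pockets}\textup{(d)}.

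For a finite set \(F\subset\Lambda_\delta\), let \(U=\bigcup_{\lambda\in F}S_\lambda\), which has area \(\#F\cdot\pi/N_\delta^2\). The pockets meeting \(U\) cover \(U\) up to null sets, so there are at least \(\#F\) of them. Each such pocket \(Q\) contains a point \(z\in U\). Then \(|\zeta_Q-z|\le C_{\rm out}\delta\) by Lemma~\ref{lem:geo-pockets}\textup{(c)}, and \(|z-\lambda|\le\sqrt{2\pi}/N_\delta\) for the appropriate \(\lambda\in F\). Hence \(Q\) is a neighbour of \(F\), and so \(F\) has at least \(\#F\) neighbours.

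The symmetric statement is proved the same way. For a finite family \(\mathcal F\) of pockets, the union \(\bigcup_{Q\in\mathcal F}Q\) has area \(\#\mathcal F\cdot\pi/N_\delta^2\) and is covered by the squares \(S_\lambda\) meeting it. These \(\lambda\) are neighbours of \(\mathcal F\) for the same reason as above.

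\textbf{Step 2 (perfect matching).} By the infinite version of Hall's theorem for locally finite bipartite graphs (Rado's theorem, i.e.\ finite Hall plus a compactness/König argument), there is an injection from pockets to lattice points along edges, and also an injection in the reverse direction. The Cantor--Schröder--Bernstein argument for matchings (Mendelsohn--Dulmage) then combines them into a perfect matching, that is, a bijection \(\Psi:\mathcal P_\delta\to\Lambda_\delta\) with \(|\zeta_Q-\Psi(Q)|\le D\). Setting \(\Phi_\delta(Q):=\frac{N_\delta}{\sqrt\pi}\Psi(Q)\in\mathbb Z^2\) gives \eqref{eq:bilip-displacement-natural} with constant \(C\).

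\textbf{Step 3 (bi-Lipschitz).} Fix \(Q\ne R\). Write \(m=\Phi_\delta(Q)\) and \(n=\Phi_\delta(R)\), so \(|m-n|\ge1\). The triangle inequality gives
\[
\Bigl|\,|\zeta_Q-\zeta_R|-\tfrac{\sqrt\pi}{N_\delta}|m-n|\,\Bigr|\le 2C\delta .
\]
For the upper bound in \eqref{eq:bilip-main}, \(|\zeta_Q-\zeta_R|\le\frac{\sqrt\pi}{N_\delta}|m-n|+2C\delta\lesssim\delta|m-n|\), because \(\delta\le\delta|m-n|\) and \(N_\delta^{-1}\asymp\delta\).

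For the lower bound we use separation. Since \(B(\zeta_Q,c_{\rm in}\delta)\subset Q\) and the pockets are disjoint, \(|\zeta_Q-\zeta_R|\ge c_{\rm in}\delta\). Combining this with the displayed inequality,
\[
\delta|m-n|\lesssim|\zeta_Q-\zeta_R|+2C\delta\le\bigl(1+2C/c_{\rm in}\bigr)|\zeta_Q-\zeta_R|.
\]
All constants are absolute, which yields \(C_L\).

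The main obstacle is Step 2. Hall's condition must be verified in both directions, and a genuine bijection, not merely an injection, must be extracted in the infinite setting. The exact equal-area property is what makes both counting inequalities clean and avoids any discrepancy estimate.
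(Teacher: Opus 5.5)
Your proof is correct, but it follows a different route from the paper's.

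\textbf{The paper's route.} The paper rescales the tiling to unit-area tiles and checks that the rescaled centers form a Delone set. It then proves a discrepancy bound $\bigl|\#(\mathcal E_\delta\cap H)-|H|\bigr|\le C_{\rm disc}\,p(H)$ for every polyomino $H$, by counting the pockets that meet $\partial H$. Finally it invokes Laczkovich's theorem \cite{L92} to obtain a bounded-displacement bijection $\mathcal E_\delta\to\mathbb Z^2$. The bi-Lipschitz step afterwards is the same separation argument you give.

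\textbf{Your route.} You bypass both the discrepancy estimate and Laczkovich's theorem. The pockets and the squares $S_\lambda$ are two tilings of $\mathbb C$ by cells of the same exact area $\pi/N_\delta^2$ (Lemma~\ref{lem:geo-pockets}\textup{(b)}) and diameter $O(\delta)$. Hence Hall's condition holds in both directions for the graph whose edges join points at distance at most $D=C_{\rm out}\delta+\sqrt{2\pi}/N_\delta$. The infinite Hall theorem for locally finite graphs, combined with the K\H{o}nig/Mendelsohn--Dulmage Schr\"oder--Bernstein argument, then produces a perfect matching. Both counting verifications are sound, since each pocket or square meeting the union is genuinely a neighbour. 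Local finiteness also holds on both sides, by Lemma~\ref{lem:geo-pockets}\textup{(d)} and the lattice structure.

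\textbf{Comparison.} Your argument is elementary and self-contained. It uses only the exact equal-area property and standard matching theorems, and it gives an explicit displacement constant, whereas Laczkovich's theorem yields an unspecified $M$. The paper's route is more robust in principle: Laczkovich's theorem needs only a perimeter-order discrepancy bound for the point set, not a tiling assigning each point a cell of exactly the right area. Here that generality is unnecessary. Laczkovich's proof is itself a Hall-type matching argument, and the exact tiling makes Hall's condition immediate, so your version is the direct special case.
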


\begin{proof}
After rescaling the tiling to unit-area tiles, the scaled centers form a
Delone set satisfying a uniform discrepancy bound for polyominoes.
Laczkovich's theorem then gives a bounded-displacement bijection with
\(\Z^2\), and scaling back gives the required bi-Lipschitz estimates.

\smallskip
\noindent\textbf{Scaling.}
By Lemma~\ref{lem:geo-pockets}\textup{(b)}, each pocket has area $|Q|= \frac{\pi}{N_\delta^2}$.
Set
\[
s_\delta:=\frac{N_\delta}{\sqrt{\pi}},
\qquad
\widetilde Q:=s_\delta Q,
\qquad
x_Q:=s_\delta\zeta_Q,
\qquad
\mathcal E_\delta:=\{x_Q:\ Q\in\mathcal P_\delta\}.
\]
Then $|\widetilde Q|=1$ for all $Q$ and $\{\widetilde Q\}$ tiles $\R^2$ up to null sets.

\smallskip
\noindent\textbf{Delone property.}
By Lemma~\ref{lem:geo-pockets}\textup{(c)}, there exist $c_{\rm in},C_{\rm out}>0$ such that
$B(\zeta_Q,c_{\rm in}\delta)\subset Q\subset B(\zeta_Q,C_{\rm out}\delta)$.
Using $\delta\asymp N_\delta^{-1}$ and multiplying by $s_\delta$ yields absolute
$c_0,C_0>0$ such that
\begin{equation}\label{eq:scaled-balls-polished}
B(x_Q,c_0)\subset \widetilde Q\subset B(x_Q,C_0),\qquad Q\in\mathcal P_\delta.
\end{equation}
In particular, $\mathcal E_\delta$ is uniformly discrete (distinct points are $c_0$-separated) and
relatively dense (every $y\in\R^2$ lies within distance $C_0$ of some $x_Q$). Hence $\mathcal E_\delta$
is a Delone set with parameters independent of $\delta$.

\smallskip
\noindent\textbf{A discrepancy bound for polyominoes.}
Let $H\subset\R^2$ be a finite union of half-open unit squares and let $p(H)$ denote its perimeter.
Define
\[
\mathcal I(H):=\{Q \in \mathcal{P}_{\delta}:\ \widetilde Q\subset H\},\qquad
\mathcal B(H):=\{Q \in \mathcal{P}_{\delta} :\ \widetilde Q\cap H\neq\emptyset,\ \widetilde Q\not\subset H\}.
\]
Since $|\widetilde Q|=1$ and $\{\widetilde Q\}$ tiles $\R^2$, one has
\[
|H|-\#\mathcal I(H)=\sum_{Q\in\mathcal B(H)}|H\cap \widetilde Q|, \quad \textup{hence, } \ 0\le |H|-\#\mathcal I(H)\le \#\mathcal B(H).
\]
Also, $
\#(\mathcal E_\delta\cap H)
=
\#\mathcal I(H)+\#\{Q\in\mathcal B(H):\ x_Q\in H\}$, so
$|\#(\mathcal E_\delta\cap H)-\#\mathcal I(H)|\le \#\mathcal B(H)$.
Therefore
\begin{equation}\label{eq:disc-boundary}
\bigl|\#(\mathcal E_\delta\cap H)-|H|\bigr|\ \le\ 2\,\#\mathcal B(H).
\end{equation}
If $Q\in\mathcal B(H)$ then $\widetilde Q$ meets $\partial H$, and \eqref{eq:scaled-balls-polished}
implies $\textup{dist}(x_Q,\partial H)\le C_0$. By separation, the balls $B(x_Q, \frac{c_0}{2})$ are disjoint, and all lie
in the $C_1$-neighborhood of $\partial H$ with $C_1:=C_0+\frac{c_0}{2}$. Hence
\[
\#\mathcal B(H)\cdot \pi\left(\frac{c_0}{2}\right)^2 \ \le\ |\{y:\textup{dist}(y,\partial H)\le C_1\}|.
\]
Since $\partial H$ consists of $p(H)$ unit axis-parallel segments, the $C_1$-neighborhood of $\partial H$
has area $\lesssim p(H)$ (with an absolute constant depending only on $C_1$). Consequently,
\begin{equation}\label{eq:lacz-disc-polished}
\bigl|\#(\mathcal E_\delta\cap H)-|H|\bigr|\ \le\ C_{\rm disc}\,p(H),
\end{equation}
uniformly in $\delta$.

\smallskip
\noindent\textbf{Bounded displacement to $\Z^2$.}
By the Delone property and \eqref{eq:lacz-disc-polished}, we may apply
\cite[Theorem~1.1]{L92} (with density $1$) to obtain a bijection $\Psi_\delta:\mathcal E_\delta\to\Z^2$
and an absolute constant $M>0$ such that
\begin{equation}\label{eq:bd-polished}
|\Psi_\delta(x)-x|\le M,\qquad x\in\mathcal E_\delta.
\end{equation}
Define $\Phi_\delta(Q):=\Psi_\delta(x_Q)\in\Z^2$. Then $\Phi_\delta$ is a bijection.

\smallskip
\noindent\textbf{Bi-Lipschitz bounds.}
Let $m=\Phi_\delta(Q)$ and $n=\Phi_\delta(R)$. From \eqref{eq:bd-polished},
\[
\bigl||x_Q-x_R|-|m-n|\bigr|\le |x_Q-m|+|x_R-n|\le 2M.
\]
Using the separation $|x_Q-x_R|\ge 2c_0$ for $Q\neq R$, this additive error is absorbed into a
multiplicative constant: there exists an absolute $C_{\rm bi}\ge1$ such that
\begin{equation}\label{eq:bilip-scaled-polished}
C_{\rm bi}^{-1}|m-n|\le |x_Q-x_R|\le C_{\rm bi}|m-n|,
\qquad Q,R\in\mathcal P_\delta.
\end{equation}
Since $x_Q=s_\delta\zeta_Q$ and $s_\delta=\frac{N_\delta}{\sqrt{\pi}}$, multiplying by $\frac{\sqrt{\pi}}{N_\delta}$
and using $ N_\delta^{-1}\asymp\delta$ yields \eqref{eq:bilip-main} with an absolute constant $C_L$.

\smallskip
\noindent\textbf{Bounded displacement in the natural mesh.}
From \eqref{eq:bd-polished}, $|x_Q-\Phi_\delta(Q)|\le M$, hence
\[
\Bigl|\zeta_Q-\frac{\sqrt{\pi}}{N_\delta}\Phi_\delta(Q)\Bigr|
=\frac{\sqrt{\pi}}{N_\delta}|x_Q-\Phi_\delta(Q)|
\le \frac{M\sqrt{\pi}}{N_\delta}
\le 2M\sqrt{\pi}\,\delta,
\]
which implies \eqref{eq:bilip-displacement-natural} after enlarging $C_L$.
\end{proof}


\subsection{Angular Banach range $1 \leq p \leq \infty$}
\label{subsec:atomic-banach}

We prove Theorem~\ref{thm:AD-main} for \(1\le p\le\infty\) using the kernel
analysis and synthesis operators
\[
C^{\ker}_\delta:\mathcal F_\alpha^{p,q}\to \ell^{p,q}_w,
\qquad
S^{\ker}_\delta:\ell^{p,q}_w\to\mathcal F_\alpha^{p,q}
\]
and show that the remainder operator
\[
R^{\ker}_\delta:=I-S^{\ker}_\delta C^{\ker}_\delta
\]
is small for \(\delta>0\) sufficiently small.  Neumann inversion of
\(I-R^{\ker}_\delta=S^{\ker}_\delta C^{\ker}_\delta\) gives the exact
decomposition and the two-sided coefficient estimate.

\subsubsection{Kernel operators and stability}
\label{subsubsec:atomic-operators-banach}

\medskip
\noindent

\begin{definition} \label{def:kernel-analysis-synthesis}
Fix $\delta\in(0, \frac{1}{2}]$ and let $\mathcal P_\delta$ be the $\delta$-pocket tiling of $\C$ with centers
$\{\zeta_Q\}_{Q\in\mathcal P_\delta}$. We define the \emph{kernel analysis} operator
$C_{\delta}^{\rm ker}$ by
\begin{equation}\label{eq:Cdelta-ker}
(C^{\mathrm ker}_{\delta}f)_{Q}
:=\frac{\alpha}{\pi}\int_{Q} f(z)\,e^{-\frac{\alpha}{2}|z|^2}\,
e^{\,i\alpha\mathrm{Im}(\zeta_Q\overline{z})}\,dA(z),
\qquad Q\in\mathcal P_\delta,
\end{equation}
whenever the right-hand side is well defined.  Define the \emph{kernel synthesis} operator \(S_{\delta}^{\rm ker}\)
by
\begin{equation}\label{eq:Sdelta-ker}
(S^{\mathrm ker}_{\delta}c)(z)
:= \sum_{Q\in\mathcal P_\delta} c_Q\,\kappa_{\zeta_Q}(z),
\qquad z\in\C,
\end{equation}
for coefficient sequences \(c=(c_Q)_{Q\in\mathcal P_\delta}\) for which the
series converges, and define the \emph{kernel remainder} operator by
\[
R_\delta^{\ker}:=I-S_\delta^{\ker}C_\delta^{\ker}.
\]
\end{definition}

\bigskip
\noindent
We start with boundedness of the kernel analysis operator.

\begin{lemma} \label{lem:analysis-bdd}
Let $\al > 0$, $1\le p\le\infty$, and
$0<q\le\infty$. Let $\mathcal P_\delta$ be a $\delta$-pocket tiling of $\C$ with
$\delta\in(0,\tfrac12]$. Then 
\[
\|C_\delta^{\rm ker}f\|_{\ell^{p,q}_w}\ \lesssim \,  \delta^{1- \frac{1}{p}} \, \|f\|_{\Fpq},
\qquad f\in\mathcal F^{p,q}_\alpha,
\]
where
the implicit constant depends only on \(\alpha, p, q\),  and is
independent of \(\delta\).
In particular, $C_{\delta}^{\rm{ker}}: \Fpq \to \ell^{p,q}_{w}$ is a bounded linear operator.
\end{lemma}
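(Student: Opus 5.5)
We bound each coefficient by a local average and then pass to circle means shell by shell. Since $|e^{i\alpha\operatorname{Im}(\zeta_Q\overline z)}|=1$, we have
\[
|(C^{\ker}_\delta f)_Q|\le \frac{\alpha}{\pi}\int_Q |f(z)|e^{-\frac{\alpha}{2}|z|^2}\,dA(z).
\]
For $p=\infty$ this is at most $|Q|\sup_{Q}|f|e^{-\frac{\alpha}{2}|z|^2}\lesssim \delta^2 \sup_{r\in[k,k+1)}M_\infty(f,r)e^{-\frac{\alpha}{2}r^2}$ when $Q\in\mathcal P_{\delta,k}$. This is even better than the required $\delta^{1}$. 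For $1\le p<\infty$, Hölder on $Q$ with $|Q|=\pi/N_\delta^2\asymp\delta^2$ gives
\[
|(C^{\ker}_\delta f)_Q|^p\lesssim \delta^{2(p-1)}\int_Q |f|^p e^{-\frac{\alpha p}{2}|z|^2}\,dA .
\]
Summing over $Q\in\mathcal P_{\delta,k}$, the pockets partition $A_k$ (Lemma~\ref{lem:geo-pockets}(a)), so
\[
\|c_k\|_{\ell^p}^p\lesssim \delta^{2(p-1)}\int_k^{k+1} M_p^p(f,r)e^{-\frac{\alpha p}{2}r^2}\,2\pi r\,dr\lesssim \delta^{2(p-1)}(1+k)\,B_p^p(f;k).
\]
Hence $\|c_k\|_{\ell^p}\lesssim \delta^{2-\frac{2}{p}}(1+k)^{\frac1p}B_p(f;k)$. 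Since $\delta\le\frac12$, we have $\delta^{2-2/p}\le\delta^{1-1/p}$, so this also gives the claimed power of $\delta$.

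Multiplying by the weight $w_k=(1+k)^{\frac1q-\frac1p}$ gives $w_k\|c_k\|_{\ell^p}\lesssim \delta^{1-\frac1p}(1+k)^{\frac1q}B_p(f;k)$. For $0<q<\infty$ we sum the $q$-th powers and obtain
\[
\|C^{\ker}_\delta f\|^q_{\ell^{p,q}_w}\lesssim \delta^{q(1-\frac1p)}\sum_k(1+k)B_p^q(f;k)\asymp \delta^{q(1-\frac1p)}\|f\|^q_{\Fpq}
\]
by Proposition~\ref{prop:annular}. For $q=\infty$ the weight is $(1+k)^{-1/p}$, which cancels the factor $(1+k)^{1/p}$, and we use $\sup_k B_p(f;k)=\|f\|_{\mathcal F^{p,\infty}_\alpha}$. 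The implicit constants come only from Proposition~\ref{prop:annular} (depending on $\alpha,q$) and from absolute area constants, so they are independent of $\delta$.

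The only delicate point is the area-to-circle-mean passage: replacing $\int_{A_k}|f|^pe^{-\alpha p|z|^2/2}\,dA$ by $(1+k)B_p^p(f;k)$ requires bounding $M_p(f,r)e^{-\alpha r^2/2}$ by its shell supremum, which is exactly what $B_p$ does. For $k=0$ the estimate $\int_0^1 r\,dr\lesssim 1$ is trivial. Linearity of $C^{\ker}_\delta$ is clear, and the integrals converge by Lemma~\ref{lem:p-est}. Boundedness then follows.
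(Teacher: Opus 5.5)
Your proof is correct. It follows the paper's outline: the pointwise bound by $\int_Q|f|e^{-\frac{\alpha}{2}|z|^2}\,dA$, a block estimate of the form $\|c_k\|_{\ell^p}\lesssim \delta^{(\cdot)}(1+k)^{1/p}B_p(f;k)$, multiplication by $w_k$, and Proposition~\ref{prop:annular}. The difference is how you get the block estimate for $k\ge1$.

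The paper uses your area-H\"older argument only on the disc $A_0$. On $A_k$ with $k\ge1$ it works in polar coordinates instead. It applies Minkowski's integral inequality in $r$ to collapse the radial index $\nu$, then H\"older on each angular arc of length $2\pi/((2k+1)N_\delta)$. That Minkowski step integrates $r$ over the whole unit interval rather than over radial slabs of width $\asymp\delta$, so it only yields the factor $\delta^{1-1/p}$.

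You run the area-H\"older argument on every shell. This needs only that $\mathcal P_{\delta,k}$ partitions $A_k$ into sets of area $\lesssim\delta^2$, not the polar product structure of the pockets. It is shorter, and it gives the sharper factor $\delta^{2-2/p}$. That factor is the natural scaling $|Q|\,(\#\mathcal P_{\delta,k})^{1/p}$. It also exactly cancels the $\delta^{2(1/p-1)}$ in the synthesis bound of Lemma~\ref{lem:synthesis}, so these two estimates alone give $\|S^{\ker}_\delta C^{\ker}_\delta\|\lesssim1$ uniformly in $\delta$.

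For the lemma as stated, and for its use with $\delta$ fixed in the proof of Theorem~\ref{thm:AD-main}, the paper's weaker power is enough, so the polar argument is not needed here. A minor point: the coefficient integrals converge trivially because pockets are bounded and $f$ is continuous, so Lemma~\ref{lem:p-est} is not needed for that.
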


\begin{proof}
Since $\bigl|e^{\,i\alpha\, \im(\zeta_Q\overline z)}\bigr|=1$, Definition~\ref{def:kernel-analysis-synthesis}
gives the pointwise bound
\begin{equation}\label{eq:Cd-ptwise}
|(C^{\mathrm ker}_\delta f)_Q|
\le \frac{\alpha}{\pi}\int_Q |f(z)|\,e^{-\frac{\alpha}{2}|z|^2}\,dA(z),
\qquad Q\in\mathcal P_\delta.
\end{equation}
For $k\ge0$, recall  $\mathcal P_{\delta,k} =\{Q\in\mathcal P_\delta:\,Q\subset A_k\}$ and denote by
$(C^{\mathrm ker}_\delta f)_k$ the block of coefficients indexed by $Q \in \mathcal P_{\delta,k}$. We claim
that
\begin{equation}\label{eq:block-estimate}
\|(C^{\mathrm ker}_\delta f)_k\|_{\ell^p}\ \lesssim\, \delta^{1 - \frac{1}{p}}\,(1+k)^{\frac{1}{p}}\,B_p(f;k),
\qquad k\ge0,
\end{equation}
where 
\[
B_p(f;k):=\sup_{r\in[k,k+1)} M_p(f,r)\,e^{-\frac{\alpha}{2}r^2}
\]
is the annular block quantity used in the annular discretization principle in Proposition~\ref{prop:annular}.

\medskip
\noindent\emph{Step 1: Proof of \eqref{eq:block-estimate} for $1\le p<\infty$ and $k\ge1$.}
For $k\ge1$ the pockets in $A_k$ are rectangles in polar coordinates of the form
$[r_{k,\nu},r_{k,\nu+1}) \times [\theta_{k,\mu},\theta_{k,\mu+1})$, with $N_\delta$ radial slices and
$N_{\delta,k}$ angular slices (notation as in the construction of $\mathcal P_\delta$). Using
\eqref{eq:Cd-ptwise}, inserting the indicator
$\mathbf 1_{[r_{k,\nu}, r_{k,\nu + 1})}(r)$, and applying Minkowski's integral inequality in the Banach space
$\ell^p(\{(\nu,\mu)\})$ with $p \geq 1$, we obtain
\begin{align*}
\|(C^{\mathrm ker}_\delta f)_k\|_{\ell^p}
& \le \frac{\alpha}{\pi}
\left(\sum_{\nu=0}^{N_\delta-1}\sum_{\mu=0}^{N_{\delta,k}-1}\left[\int_{r_{k,\nu}}^{r_{k,\nu + 1}}\int_{\theta_{k,\mu}}^{\theta_{k,\mu + 1}}
|f(re^{i\theta})|\,e^{-\frac{\alpha}{2}r^2}\,r\,d\theta\,dr\right]^p\right)^{\frac1p} \\
& \le \frac{\alpha}{\pi}
\left(\sum_{\nu=0}^{N_\delta-1}\sum_{\mu=0}^{N_{\delta,k}-1}\left[\int_{k}^{k+1} \mathbf 1_{[r_{k,\nu}, r_{k,\nu + 1})}(r) \int_{\theta_{k,\mu}}^{\theta_{k,\mu + 1}}
|f(re^{i\theta})|\,e^{-\frac{\alpha}{2}r^2}\,r\,d\theta\,dr\right]^p\right)^{\frac1p} \\
& \leq \frac{\alpha}{\pi}\int_k^{k+1}
\left(\sum_{\nu=0}^{N_\delta-1}\sum_{\mu=0}^{N_{\delta,k}-1}\left[\mathbf 1_{[r_{k,\nu}, r_{k,\nu + 1})}(r) \int_{\theta_{k,\mu}}^{\theta_{k,\mu + 1}}|f(re^{i\theta})|\,d\theta\right]^p
\right)^{\frac1p}
e^{-\frac{\alpha}{2}r^2}\,r\,dr.
\end{align*}
For each fixed $r\in[k,k+1)$, exactly one $\nu$ satisfies $\mathbf 1_{[r_{k,\nu}, r_{k,\nu + 1})}(r)=1$. Therefore the $\nu$-sum collapses and we obtain
\[
\|(C^{\mathrm ker}_\delta f)_k\|_{\ell^p}
\lesssim \int_k^{k+1}
\left(\sum_{\mu=0}^{N_{\delta,k}-1}\left[\int_{\theta_{k,\mu}}^{\theta_{k,\mu+1}}
|f(re^{i\theta})|\,d\theta\right]^p\right)^{\frac{1}{p}}
e^{-\frac{\alpha}{2}r^2}\,r\,dr.
\]
For each fixed $r\in[k,k+1)$, H\"older's inequality on each arc
$I_{k,\mu}: = [\theta_{k, \mu}, \theta_{k, \mu + 1})$ gives
\[
\int_{I_{k,\mu}} |f(re^{i\theta})|\,d\theta
\le |I_{k,\mu}|^{1-\frac1p}\left(\int_{I_{k,\mu}}|f(re^{i\theta})|^p\,d\theta\right)^{\frac{1}{p}}.
\]
Raising to the $p$-th power and summing over $\mu$ yields
\[
\left(\sum_{\mu=0}^{N_{\delta,k}-1}\left[\int_{I_{k,\mu}}|f(re^{i\theta})|\,d\theta\right]^p\right)^{\frac{1}{p}}
\le |I_{k,\mu}|^{1-\frac1p}\left(\int_0^{2\pi}|f(re^{i\theta})|^p\,d\theta\right)^{\frac{1}{p}} \asymp  |I_{k,\mu}|^{1-\frac1p} \; M_p(f,r).
\]
Since $|I_{k,\mu}|=\Delta\theta_k= \frac{2\pi}{(2k+1)N_{\delta}}$ for
$k\ge1$, we have
\[
|I_{k,\mu}|^{1-\frac1p}\asymp N_\delta^{\frac1p-1}(1+k)^{\frac1p-1}\asymp \delta^{\,1-\frac1p}(1+k)^{\frac1p-1}.
\]
Hence
\[
\|(C^{\mathrm ker}_\delta f)_k\|_{\ell^p}
\lesssim \delta^{1 - \frac1p}(1+k)^{\frac1p-1}\int_k^{k+1} M_p(f,r)e^{-\frac{\alpha}{2}r^2}\,r\,dr
\le \delta^{1 - \frac1p}(1+k)^{\frac1p}\,B_p(f;k),
\]
which is \eqref{eq:block-estimate} for $k\ge1$ and $1\le p<\infty$.

\medskip
\noindent\emph{Step 2: The block $k=0$ and $1\le p<\infty$.}
On $A_0$ we use a crude estimate based on equal area of pockets. Since $|Q|\asymp \delta^{2}$ and
\eqref{eq:Cd-ptwise} holds, H\"older's inequality in area yields
\[
|(C^{\mathrm ker}_\delta f)_Q|
\lesssim |Q|^{1-\frac1p}\Bigl(\int_Q |f(z)|^p e^{-\frac{\alpha p}{2}|z|^2}\,dA(z)\Bigr)^{\frac1p}.
\]
Since the pockets partition $A_0$, taking the $\ell^p$-norm over $Q\in\mathcal P_{\delta,0}$
gives
\[
\|(C^{\mathrm ker}_\delta f)_0\|_{\ell^p}
\lesssim |Q|^{1-\frac1p}\Bigl(\int_{A_0} |f(z)|^p e^{-\frac{\alpha p}{2}|z|^2}\,dA(z)\Bigr)^{\frac1p}
\lesssim \delta^{ 1 - \frac1p}\,B_p(f;0),
\]
which is again \eqref{eq:block-estimate}, where we used $\delta^{2\left( 1 - \frac1p\right)} \leq \delta^{1 - \frac1p} $.

\medskip
\noindent\emph{Step 3: The case $p=\infty$.}
If $Q\subset A_k$, then $|f(z)|e^{-\frac{\alpha}{2}|z|^2}\le B_\infty(f;k)$ for $z\in Q$, hence
\[
|(C^{\mathrm ker}_\delta f)_Q|
\lesssim \int_Q B_\infty(f;k)\,dA
=|Q|\,B_\infty(f;k)\asymp \delta^{2}B_\infty(f;k).
\]
Therefore,
\[
\|(C^{\mathrm ker}_\delta f)_k\|_{\ell^\infty}\lesssim \delta^{2}B_\infty(f;k),
\]
which is \eqref{eq:block-estimate}, since $ \delta^2 \leq \delta$.

\medskip
\noindent\emph{Step 4: Passage to $\ell^{p,q}_w$.}
Multiply \eqref{eq:block-estimate} by the weight $w_k=(1+k)^{\frac1q-\frac1p}$ to get
\[
w_k\,\|(C^{\mathrm ker}_\delta f)_k\|_{\ell^p}\lesssim \, \delta^{1 - \frac1p} \,(1+k)^{\frac1q}\,B_p(f;k).
\]
If $0<q<\infty$, raise to the $q$th power and sum over $k$:
\[
\|C^{\mathrm ker}_\delta f\|_{\ell^{p,q}_w}^q
=\sum_{k\ge0}\bigl[w_k\|(C^{\mathrm ker}_\delta f)_k\|_{\ell^p}\bigr]^q
\lesssim \delta^{q\big(1 - \frac1p\big)}\sum_{k\ge0}(1+k)\,B_p^q(f;k).
\]
By the annular discretization principle in Proposition~\ref{prop:annular},
$\sum_{k\ge0}(1+k)\,[B_p(f;k)]^q\asymp \|f\|_{\Fpq}^q$, which yields the desired bound.
If $q=\infty$, take the supremum in $k$ instead and use the corresponding endpoint form of
Proposition~\ref{prop:annular}.
\end{proof}

\bigskip
\noindent
For synthesis, Lemma~\ref{lem:kernel-series-entire} first shows that the
series defines an entire function.  The ring-sum estimate in
Lemma~\ref{lem:kernel-ring-sum} then yields the single-shell and shellwise
bounds in Lemmas~\ref{lem:single-shell-synthesis} and
\ref{lem:per_shell_synthesis}; Lemma~\ref{lem:gen-schur-test} and
Proposition~\ref{prop:annular} give the boundedness of \(S^{\ker}_\delta\).

\begin{lemma} 
\label{lem:kernel-series-entire}
Let $1\le p\le\infty$ and $0<q\le\infty$. Let $\mathcal P_\delta$ be a $\delta$-pocket tiling of $\C$ with $\delta\in(0,\frac12]$. Then, for every coefficient sequence
\(
c=(c_Q)_{Q\in\mathcal P_\delta}\in \ell_w^{p,q}\),
the series
\[
\sum_{Q\in\mathcal P_\delta} c_Q\,\kappa_{\zeta_Q}(z)
\]
converges absolutely and locally uniformly on\/ $\C$. In particular, it defines an entire function, and hence the kernel synthesis operator
\[
S_\delta^{\ker}:\ell_w^{p,q}\to H(\C),
\qquad
(S_\delta^{\ker}c)(z):=\sum_{Q\in\mathcal P_\delta} c_Q\,\kappa_{\zeta_Q}(z),
\]
is well-defined.
\end{lemma}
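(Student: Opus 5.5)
The plan is to bound each term of the series pointwise, uniformly on compact sets, and then sum over shells with a crude polynomial-in-\(k\) control of the coefficients against Gaussian decay. Fix \(R>0\) and \(|z|\le R\). For each pocket \(Q\),
\[
|\kappa_{\zeta_Q}(z)|=e^{\alpha\,\re(\overline{\zeta_Q}z)-\frac{\alpha}{2}|\zeta_Q|^2}\le e^{\alpha R|\zeta_Q|-\frac{\alpha}{2}|\zeta_Q|^2}\le e^{\alpha R^2}e^{-\frac{\alpha}{4}|\zeta_Q|^2}.
\]
Here the last step uses \(R|\zeta_Q|\le R^2+\frac14|\zeta_Q|^2\). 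So each kernel is bounded on \(\overline{B(0,R)}\) by a Gaussian in \(|\zeta_Q|\), uniformly in \(z\).

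Next I control the coefficients. For \(Q\in\mathcal P_{\delta,k}\) we have \(|\zeta_Q|\ge k\). Also \(|c_Q|\le\|c_k\|_{\ell^\infty}\le\|c_k\|_{\ell^p}\), using only \(p\ge 1\), or indeed any \(p\), since \(\ell^p\subset\ell^\infty\) with norm one. Further,
\[
\|c_k\|_{\ell^p}=w_k^{-1}\bigl(w_k\|c_k\|_{\ell^p}\bigr)\le (1+k)^{\frac1p-\frac1q}\|c\|_{\ell^{p,q}_w}.
\]
This holds because each individual term \(w_k\|c_k\|_{\ell^p}\) is dominated by the full \(\ell^q\) (or supremum) norm. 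The number of pockets in \(A_k\) is \(\#\mathcal P_{\delta,k}\lesssim_\delta (1+k)\), either from the construction (\(N_\delta(2k+1)N_\delta\) pockets for \(k\ge1\)) or from the area count in Lemma~\ref{lem:geo-pockets}.

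Combining these bounds, for \(|z|\le R\),
\[
\sum_{Q}|c_Q||\kappa_{\zeta_Q}(z)|\le e^{\alpha R^2}\|c\|_{\ell^{p,q}_w}\sum_{k\ge0}\#\mathcal P_{\delta,k}\,(1+k)^{\frac1p-\frac1q}e^{-\frac{\alpha}{4}k^2}\lesssim_{\delta,p,q} e^{\alpha R^2}\|c\|_{\ell^{p,q}_w}.
\]
The last series converges because the polynomial growth (exponent at most \(1+\frac1p+\frac1q\) when \(q<\infty\)) is beaten by the Gaussian. By the Weierstrass \(M\)-test the series converges absolutely and uniformly on each disc \(\overline{B(0,R)}\). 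Each \(\kappa_{\zeta_Q}\) is entire, so the locally uniform limit is entire, and \(S^{\ker}_\delta\) is well defined and linear into \(H(\C)\).

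There is no real obstacle here. The only points needing care are the convention \(\frac1\infty=0\) in the weight \(w_k\) (which still gives polynomial growth) and checking that the single-term bound on \(w_k\|c_k\|_{\ell^p}\) is valid for every \(0<q\le\infty\).
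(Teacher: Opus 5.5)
Your proof is correct, and it takes a more streamlined route than the paper's while using the same two ingredients: Gaussian decay of \(|\kappa_{\zeta_Q}(z)|\) on \(\{|z|\le R\}\) in the shell index \(k\), against at most polynomial growth in \(k\) of the shell coefficient sums.

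\emph{The paper's route.} It splits off a tail \(k\ge N\) and bounds \(\sum_{Q\in\mathcal P_{\delta,k}}|c_Q|\) by H\"older as \((\#\mathcal P_{\delta,k})^{1-\frac1p}\|c_k\|_{\ell^p}\); this is the only place \(p\ge1\) is used. It rewrites the result as \((1+k)^{1-\frac1q}w_k\|c_k\|_{\ell^p}\). It then sums over \(k\ge N\) with three separate arguments: \(0<q\le1\), \(1<q<\infty\) (H\"older in \(k\)), and \(q=\infty\).

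\emph{Your route.} You use three cruder bounds:
\begin{itemize}
\item the global estimate \(|\kappa_{\zeta_Q}(z)|\le e^{\alpha R^2}e^{-\frac{\alpha}{4}|\zeta_Q|^2}\), valid for every \(Q\), so no tail splitting is needed;
\item the trivial bound \(|c_Q|\le\|c_k\|_{\ell^p}\) times the exact count \(\#\mathcal P_{\delta,k}=(2k+1)N_\delta^2\);
\item the single-term bound \(w_k\|c_k\|_{\ell^p}\le\|c\|_{\ell^{p,q}_w}\), which removes the case analysis in \(q\).
\end{itemize}
The resulting polynomial losses are harmless against \(e^{-\frac{\alpha}{4}k^2}\).

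\emph{What your version gains.} It never uses \(p\ge1\), so it works verbatim for all \(0<p\le\infty\). After absorbing the factor \(|z-\zeta_Q|^j\le(R+k+1)^j\) (or the shift \(|\zeta_Q+u|\ge k-\max_{u\in\Lambda}|u|\)), it also covers the jet-atom and floating-kernel series that the paper handles separately later. It also gives a quantitative bound \(\sup_{|z|\le R}\sum_Q|c_Q|\,|\kappa_{\zeta_Q}(z)|\le C\,\|c\|_{\ell^{p,q}_w}\) with \(C=C(\alpha,\delta,p,q,R)\). That is, \(S^{\ker}_\delta:\ell^{p,q}_w\to H(\C)\) is continuous into the compact-open topology, slightly more than the lemma asserts. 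The paper's finer shell bookkeeping foreshadows its later boundedness arguments, but it is not needed for convergence alone.

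A minor point: the relevant polynomial exponent is \(1+\frac1p-\frac1q\). Your stated bound \(1+\frac1p+\frac1q\) is valid but loose.
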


\begin{proof}
Fix $R>0$ and an integer $N \geq R$. For each  $Q\in\mathcal P_{\delta,k}$ with $k \geq N$ and  $|z|\le R$, we have
\[
|\kappa_{\zeta_Q}(z)| = e^{\frac{\alpha}{2}|z|^2}e^{-\frac{\alpha}{2}|z-\zeta_Q|^2}
\le
e^{\frac{\alpha}{2}R^2}e^{-\frac{\alpha}{2}(|\zeta_Q|-R)^2}
\lesssim
e^{-\frac{\alpha}{4}(k-R)^2}
\]
with the implicit constant depending only on $\alpha$ and $R$.
Therefore
\[
\sum_{Q\in\mathcal P_{\delta,k}} |c_Q|\,|\kappa_{\zeta_Q}(z)|
\lesssim
e^{-\frac{\alpha}{4}(k-R)^2}\sum_{Q\in\mathcal P_{\delta,k}} |c_Q|.
\]
Since $p\ge1$, by the definition of $\mathcal{P}_{\delta}$ (Definition \ref{def:pockets}),
\[
\sum_{Q\in\mathcal P_{\delta,k}} |c_Q|
\le
\bigl(\#\mathcal P_{\delta,k}\bigr)^{1-\frac1p}\|c_k\|_{\ell^p} \lesssim \delta^{-2\big(1-\frac1p\big)}(1+k)^{1-\frac1p}\|c_k\|_{\ell^p}.
\]
Hence, 
\[
\sum_{Q\in\mathcal P_{\delta,k}} |c_Q|\,|\kappa_{\zeta_Q}(z)|
\lesssim \delta^{-2\big(1-\frac1p\big)}
e^{-\frac{\alpha}{4}(k-R)^2}
(1+k)^{1-\frac1q} w_k\|c_k\|_{\ell^p}.
\]

\smallskip
\noindent
Summing over $k \geq N$:
If $0<q\le1$, then
\[
\sum_{k\ge N} w_k\|c_k\|_{\ell^p}
\le
\Biggl(\sum_{k\ge N}\bigl[w_k\|c_k\|_{\ell^p}\bigr]^q\Biggr)^{\frac1q} \to 0, \quad \text{as} \ N \to \infty,
\]
and hence
\[
\sum_{k\ge N} \sum_{Q\in\mathcal P_{\delta,k}} |c_Q|\,|\kappa_{\zeta_Q}(z)|
\lesssim 
\, \delta^{-2\big(1-\frac1p\big)} \,
\left(\sup_{k\ge0}e^{-\frac{\alpha}{4}(k-R)^2}(1+k)^{1-\frac1q}\right)
\Biggl(\sum_{k\ge N}\bigl[w_k\|c_k\|_{\ell^p}\bigr]^q\Biggr)^{\frac1q} \to 0,
\]
as $N \to \infty$.
If $1<q<\infty$, then H\"older's inequality yields
\begin{align*}
\sum_{k\ge N} \sum_{Q\in\mathcal P_{\delta,k}} |c_Q|\,|\kappa_{\zeta_Q}(z)| 
&\lesssim \, \delta^{-2\big(1-\frac1p\big)}
\sum_{k\ge N}
e^{-\frac{\alpha}{4}(k-R)^2}(1+k)^{1-\frac1q}\,
w_k\|c_k\|_{\ell^p} \\
&\le \, \delta^{-2\big(1-\frac1p\big)}
\Biggl(
\sum_{k\ge N}
e^{-\frac{\alpha q}{4(q-1)}(k-R)^2}(1+k)
\Biggr)^{\frac{q-1}{q}}
\|c\|_{\ell_w^{p,q}} \to 0,
\end{align*}
as $N \to \infty$. 
If $q=\infty$, then
\[
\sum_{k\ge N} \sum_{Q\in\mathcal P_{\delta,k}} |c_Q|\,|\kappa_{\zeta_Q}(z)| 
\lesssim \, \delta^{-2\big(1-\frac1p\big)}
\|c\|_{\ell_w^{p,\infty}}
\sum_{k\ge N}
e^{-\frac{\alpha}{4}(k-R)^2}(1+k) \to 0,
\]
as $N \to \infty$.
Thus,  the tail
\[
\sum_{k\ge N}\sum_{Q\in\mathcal P_{\delta,k}} |c_Q|\,|\kappa_{\zeta_Q}(z)|
\]
tends to zero uniformly for $|z|\le R$. Since the finitely many shells $k<N$ contribute only a finite sum, it follows that the series
\[
\sum_{k\ge 0} \sum_{Q\in\mathcal P_{\delta,k}} |c_Q|\,|\kappa_{\zeta_Q}(z)| 
\]
converges uniformly for $|z|\le R$. Since $R>0$ was arbitrary, the series
\[
\sum_{Q\in\mathcal P_\delta} c_Q\,\kappa_{\zeta_Q}(z)
\]
converges absolutely and locally uniformly on\/ $\C$. By Weierstrass' theorem, its sum is entire.
\end{proof}

\begin{lemma} \label{lem:kernel-ring-sum}
Let $\al > 0$, $\delta\in(0,\tfrac12]$ and let $\mathcal P_\delta$ be the $\delta$-pocket tiling of
$\C$.  Then, for all $k\ge0$ and all $0 \leq \nu \leq N_{\delta} - 1$,
\begin{equation}\label{eq:ring-sum}
\sup_{\theta\in[0,2\pi]}\ \sum_{\mu=0}^{N_{\delta,k,\nu}-1}
\left|\kappa_{\zeta_{k,\nu,\mu}}(re^{i\theta})\right|\,e^{-\frac{\alpha}{2}r^2}
\ \lesssim \ \delta^{-1}\, e^{-\frac{\alpha}{8}(r-k)^2},
\qquad r\ge0,
\end{equation}
where the implicit constant depends only on \(\alpha\), and as in Definition \ref{def:pockets}, $\zeta_{k, \nu, \mu} = \rho_{k, \nu}e^{i\,\vartheta_{k, \nu, \mu}}$ is the center of the pocket $Q_{k, \nu, \mu}$, and $N_{\delta,k,\nu}=2\nu+1$ for $k=0$, while $N_{\delta,k,\nu}=N_{\delta,k}=(2k+1)N_\delta$ for $k\ge1$.
\end{lemma}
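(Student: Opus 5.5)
The plan is to start from the exact modulus identity
\[
|\kappa_{\zeta}(z)|e^{-\frac{\alpha}{2}|z|^2}=e^{-\frac{\alpha}{2}|z-\zeta|^2},
\]
which turns the left-hand side of \eqref{eq:ring-sum} into a sum of Gaussians $e^{-\frac{\alpha}{2}|re^{i\theta}-\zeta_{k,\nu,\mu}|^2}$ over points lying on the single circle of radius $\rho=\rho_{k,\nu}$. I would then split this Gaussian as $e^{-\frac{\alpha}{4}|z-\zeta|^2}\cdot e^{-\frac{\alpha}{4}|z-\zeta|^2}$. The first factor is bounded using $|z-\zeta|\ge |r-\rho|$, giving $e^{-\frac{\alpha}{4}(r-\rho)^2}$. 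Since $\rho\in[k,k+1]$, we have $(r-k)^2\le 2(r-\rho)^2+2$, so this is $\lesssim e^{-\frac{\alpha}{8}(r-k)^2}$ with a constant depending only on $\alpha$. It then remains to show that
\[
\sup_{\theta}\sum_{\mu} e^{-\frac{\alpha}{4}|re^{i\theta}-\zeta_{k,\nu,\mu}|^2}\lesssim \delta^{-1}
\]
uniformly in $r,k,\nu$.

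For this angular sum, I would use $|re^{i\theta}-\rho e^{i\vartheta}|\ge |\rho e^{i\theta}-\rho e^{i\vartheta}|\cdot c$. More precisely, I would use the elementary bound $|re^{i\theta}-\rho e^{i\vartheta}|^2=(r-\rho)^2+4r\rho\sin^2\frac{\theta-\vartheta}{2}$. This looks awkward when $r$ is small, so instead I would rely on the fact that the projection of $z$ onto the circle minimizes distance up to a factor of 2: $|z-\zeta|\ge \tfrac12|\rho e^{i\theta}-\zeta|$, since $|\rho e^{i\theta}-\zeta|\le |\rho e^{i\theta}-z|+|z-\zeta|\le 2|z-\zeta|$ because $|\rho e^{i\theta}-z|=|r-\rho|\le|z-\zeta|$. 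This reduces the problem to points $w=\rho e^{i\theta}$ on the same circle, and it becomes a counting problem. The centers $\zeta_{k,\nu,\mu}$, $\mu=0,\dots,N_{\delta,k,\nu}-1$, are equally spaced on the circle of radius $\rho$ with arc spacing $\rho\Delta\theta$. By the proof of Lemma~\ref{lem:geo-pockets}(c), this spacing is $\asymp N_\delta^{-1}\asymp\delta$ (at least $\frac{2\pi}{3N_\delta}$ for $k\ge1$, and $\frac{2\pi\rho}{2\nu+1}$ with $\rho=\frac{2\nu+1}{2N_\delta}$, giving $\frac{\pi}{N_\delta}$ for $k=0$). Chords and arcs are comparable for arcs up to $\pi\rho$. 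Hence the number of centers with $|w-\zeta|\in[n,n+1)$ is $\lesssim 1+\delta^{-1}$. Alternatively, one can invoke Lemma~\ref{lem:geo-pockets}(d), but counting along the circle gives the sharper $\delta^{-1}$ rather than $\delta^{-2}$.

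Summing $\sum_n (1+\delta^{-1})e^{-\frac{\alpha}{16}n^2}\lesssim\delta^{-1}$ (using $\delta\le\frac12$) finishes the proof. The main point requiring care is the one-dimensional counting. Specifically, one must check that the number of equally spaced centers within chord distance $n+1$ of a point on the circle is $O((n+1)/\delta)$ uniformly in $\rho$, including the small circles at $k=0$ where the total count $2\nu+1$ may itself be small. I would handle this by the chord/arc comparison $|w-\zeta|\ge \frac{2}{\pi}\rho|\theta-\vartheta|$ for $|\theta-\vartheta|\le\pi$. Then the number of $\mu$ with $\rho|\theta-\vartheta_\mu|\le \frac{\pi}{2}(n+1)$ is at most $1+\pi(n+1)/(\rho\Delta\theta)\lesssim 1+(n+1)\delta^{-1}$, and the extra polynomial factor $n+1$ is absorbed by the Gaussian.
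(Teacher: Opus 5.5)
Your argument is correct, and it takes a different route from the paper's proof.

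\textbf{The paper's route.} The paper factors each summand exactly as $e^{-\frac{\alpha}{2}(r-\rho_{k,\nu})^2}\,g_t(\theta-\vartheta_{k,\nu,\mu})$, with $g_t(\phi)=e^{-t(1-\cos\phi)}$ and $t=\alpha r\rho_{k,\nu}$. For $k=0$ it simply uses $g_t\le1$ together with $N_{\delta,0,\nu}\lesssim\delta^{-1}$. For $k\ge1$ it uses a Riemann-sum/bounded-variation sampling inequality, with $\int_0^{2\pi}g_t=2\pi e^{-t}I_0(t)\asymp(1+t)^{-1/2}$ and $V(g_t)\le 2$. This gives the angular bound $\delta^{-1}\bigl(1+\rho_{k,\nu}(1+\alpha r\rho_{k,\nu})^{-1/2}\bigr)$. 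The factor $\rho_{k,\nu}(1+\alpha r\rho_{k,\nu})^{-1/2}$ is large when $r\ll\rho_{k,\nu}$, so the paper must then absorb it into half of the radial Gaussian.

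\textbf{Your route.} You split the Gaussian symmetrically. You then use the projection $w=\rho e^{i\theta}$ and $|z-\zeta|\ge\frac12|w-\zeta|$, which is valid because $|w-z|=|r-\rho|\le|z-\zeta|$. This reduces the problem to counting equally spaced points on a single circle by chord--arc comparison.

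\textbf{Comparison.} Your approach avoids the Bessel asymptotics and the sampling lemma, treats $k=0$ and $k\ge1$ uniformly, and the projection disposes of the regime $r\ll\rho$ automatically. The paper's route buys a sharper intermediate angular estimate, which carries the $(1+\alpha r\rho)^{-1/2}$ decay consistent with Lemma~\ref{lem:packet-profile}. That extra information is discarded in the final form of \eqref{eq:ring-sum}, so your route loses nothing. Your constants also depend only on $\alpha$, as required.

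\textbf{A small correction.} Your intermediate claim, that the number of centers with $|w-\zeta|\in[n,n+1)$ is $\lesssim 1+\delta^{-1}$, is not uniform in $\rho$. Near the antipode of $w$ the chord length is stationary as a function of angle, so for $n\approx 2\rho$ that single shell contains on the order of $\sqrt{\rho}\,\delta^{-1}$ centers. What you actually use in your last paragraph is the cumulative bound $\#\{\mu:|w-\zeta_\mu|<n+1\}\le 1+\pi(n+1)/(\rho\Delta\theta)\lesssim 1+(n+1)\delta^{-1}$. That bound is correct and suffices, since $\sum_{n\ge0}\bigl(1+(n+1)\delta^{-1}\bigr)e^{-\frac{\alpha}{16}n^2}\lesssim\delta^{-1}$ for $\delta\le\frac12$.
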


\begin{proof}
Fix $k\ge 0 $ and $0 \leq \nu \leq N_{\delta} - 1$. Writing $z=re^{i\theta}$,
 a standard computation with the kernel $\kappa_{\zeta_{k, \nu, \mu}}$ gives \[
\left|\kappa_{\zeta_{k, \nu, \mu}}(re^{i\theta})\right|\,e^{-\frac{\alpha}{2}r^2}
=e^{-\frac{\alpha}{2}(r-\rho_{k,\nu})^2}\,
g_t(\theta-\vartheta_{k,\nu,\mu}),
\qquad
g_t(\phi):=\exp\!\big(-t(1-\cos\phi)\big),
\]
where $t:=\alpha r\rho_{k,\nu}\ge0$. Hence
\begin{equation}\label{eq:reduce-angular-sum}
\sum_{\mu=0}^{N_{\delta,k, \nu}-1}\left|\kappa_{\zeta_{k, \nu, \mu}}(re^{i\theta})\right|\,e^{-\frac{\alpha}{2}r^2}
= e^{-\frac{\alpha}{2}(r-\rho_{k,\nu})^2}\,
\sum_{\mu=0}^{N_{\delta,k, \nu}-1} g_t(\theta-\vartheta_{k,\nu,\mu}).
\end{equation}

\smallskip
\noindent
\smallskip
\noindent\emph{Case $k=0$.} Here $N_{\delta,0,\nu}=2\nu+1\lesssim \delta^{-1}$ and $0\le\rho_{0,\nu}<1$. Using
$g_t\le1$ in \eqref{eq:reduce-angular-sum} gives
\[
\sup_{\theta \in [0, 2\pi]}\sum_{\mu=0}^{N_{\delta,0,\nu}-1}\left|\kappa_{\zeta_{0, \nu, \mu}}(re^{i\theta})\right|\,e^{-\frac{\alpha}{2}r^2}
\lesssim \delta^{-1} \, e^{-\frac{\alpha}{2}(r-\rho_{0,\nu})^2}
\lesssim \delta^{-1}\, e^{-\frac{\alpha}{8}r^2},
\]
which is compatible with \eqref{eq:ring-sum} when $k=0$.

\smallskip
\noindent
\smallskip
\noindent\emph{Case $k \geq 1$.} In this case $N_{\delta, k, \nu} = N_{\delta, k}$ and $\vartheta_{k, \nu, \mu} = \vartheta_{k, \mu}$, and the angles $\{\vartheta_{k, \mu}\}_{\mu = 0}^{N_{\delta, k}-1}$ form a
uniform grid with mesh $\Delta\theta_k:=\frac{2\pi}{N_{\delta,k}}$.

\smallskip
\noindent\emph{Step 1: a uniform sampling bound for periodic bounded variation functions.}
Let $g$ be $2\pi$-periodic of bounded variation, and let $\{\vartheta_{k, \mu}\}_{\mu = 0}^{N_{\delta, k}-1}$ be a uniform grid with mesh
$\Delta \theta_k:= \frac{2\pi}{N_{\delta,k}}$. Then
\begin{equation}\label{eq:BV-sampling}
\sup_{\theta\in[0,2\pi]}\sum_{\mu=0}^{N_{\delta,k}-1} g(\theta-\vartheta_{k, \mu})
\ \le\ \frac{1}{\Delta \theta_k}\int_0^{2\pi} g(\phi)\,d\phi + V(g),
\end{equation}
where $V(g)=\int_0^{2\pi}|g'(\phi)|\,d\phi$ is the total variation. This follows by comparing each
sample to the average on its cell and summing oscillations; $$\sum_{\mu = 
0}^{N_{\delta,k} - 1} \textup{osc}_{I_{k,\mu}}(g)\le V(g) \quad \text{with} \quad I_{k, \mu} = [\theta_{k, \mu}, \theta_{k, \mu+1}).
$$
We apply \eqref{eq:BV-sampling} to $g=g_t$. Since $g_t$ is $C^1$ and $2\pi$-periodic,
\[
\int_0^{2\pi} g_t(\phi)\,d\phi = 2\pi e^{-t}I_0(t),
\qquad
g_t'(\phi)=-t\sin\phi\,g_t(\phi),
\]
and a direct computation gives
\[
V(g_t)=\int_0^{2\pi}|g_t'(\phi)|\,d\phi
=t\int_0^{2\pi}|\sin\phi|\,e^{-t(1-\cos\phi)}\,d\phi
=2(1-e^{-2t})\le 2.
\]
Moreover, the standard bound for the modified Bessel function $I_0$ yields
\begin{equation}\label{eq:bessel-bound}
e^{-t}I_0(t)\ \asymp \ (1+t)^{-\frac12},
\qquad t\ge0.
\end{equation}
Inserting these into \eqref{eq:BV-sampling} gives
\begin{equation}\label{eq:angular-sum}
\sup_{\theta \in [0, 2\pi]}\sum_{\mu=0}^{N_{\delta,k}-1} g_t(\theta-\vartheta_{k,\nu,\mu})
\ \lesssim\ (\Delta \theta_k)^{-1}(1+t)^{-\frac12}+1.
\end{equation}

\smallskip
\noindent\emph{Step 2: exploit the tiling geometry.}
For $k\ge1$, $(\Delta \theta_k)^{-1}= \frac{N_{\delta,k}}{2\pi} \asymp  (1+k)\delta^{-1}$, and
$\rho_{k,\nu}\asymp  1+k$. Thus \eqref{eq:angular-sum} implies
\begin{equation}\label{eq:angular-sum2}
\sup_{\theta \in [0, 2\pi]}\sum_{\mu=0}^{N_{\delta,k}-1} g_t(\theta-\vartheta_{k,\nu,\mu})
\ \lesssim\ \delta^{-1}\Bigl(1+\frac{\rho_{k,\nu}}{\sqrt{1+\alpha r\rho_{k,\nu}}}\Bigr).
\end{equation}

\smallskip
\noindent\emph{Step 3: absorb the slowly varying factor.}
Set $\rho:=\rho_{k,\nu} $. We claim that
\begin{equation}\label{eq:slow-absorb}
\sup_{r\ge0}\Bigl(1+\frac{\rho}{\sqrt{1+\alpha r\rho}}\Bigr) e^{-\frac{\alpha}{4}(r-\rho)^2}
\ \lesssim\ 1,
\qquad \rho\ge1.
\end{equation}
If $r\ge  \frac{\rho}{2}$, then $1+\alpha r\rho\ge \frac{\alpha}{2}\rho^2$, hence
 the left-hand side is $\lesssim 1$.
If $0\le r< \frac{\rho}{2}$, then 
\[
\Bigl(1+\frac{\rho}{\sqrt{1+\alpha r\rho}}\Bigr) e^{-\frac{\alpha}{4}(r-\rho)^2}
\le (1+\rho)\,e^{-\frac{\alpha}{16}\rho^2}\ \lesssim\ 1.
\]
Combining \eqref{eq:reduce-angular-sum}, \eqref{eq:angular-sum2}, and \eqref{eq:slow-absorb} yields
\[
\sup_{\theta \in [0, 2\pi]}\sum_{\mu=0}^{N_{\delta,k}-1}\left|\kappa_{\zeta_{k,\nu,\mu}}(re^{i\theta})\right|\,e^{-\frac{\alpha}{2}r^2}
\ \lesssim\ \delta^{-1}\ e^{-\frac{\alpha}{4}(r-\rho_{k,\nu})^2}.
\]

\smallskip
\noindent\emph{Step 4: replace $\rho_{k,\nu}$ by $k$.}
Since $k\le \rho_{k,\nu}<k+1$, we have
\[
e^{-\frac{\alpha}{4}(r-\rho_{k,\nu})^2}
\le e^{-\frac{\alpha}{8}(r-k)^2 + \frac{\alpha}{4}},
\]
which proves \eqref{eq:ring-sum} for $k\ge1$ after absorbing $e^{\frac{\alpha}{4}}$ into the constant.
\end{proof}

\begin{lemma} \label{lem:single-shell-synthesis}
Let $\al > 0$, $1\le p\le\infty$ and let $\delta\in(0,\frac12]$ and $\mathcal P_\delta$ be the $\delta$-pocket tiling of
$\C$. 
Fix $k\ge0$. For coefficients $c_k=(c_Q)_{Q\in\mathcal P_{\delta,k}}$ set
\[
f_k(z):=\sum_{Q\in\mathcal P_{\delta,k}} c_Q\,\kappa_{\zeta_Q}(z).
\]
Then
\begin{equation}\label{eq:single_block_synthesis}
M_p(f_k,r)\,e^{-\frac{\alpha}{2} r^2}
\ \lesssim\ \delta^{2\bigl(\frac1p-1\bigr)}\,(1+k)^{-\frac1p}\,
e^{-\frac{\alpha}{8}(r-k)^2}\,\|c_k\|_{\ell^p},
\qquad r\ge0,
\end{equation}
where the implicit constant depends only on \(\alpha\) and \(p\), and is independent
of $k,r,\delta$ and
$c_k$.
\end{lemma}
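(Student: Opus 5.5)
The plan is to treat the endpoint \(p=1\) and \(p=\infty\) separately, and then interpolate. Weighted Riesz--Thorin, with \(\|c_k\|_{\ell^p}\) as input and the weighted \(L^p(d\theta)\) output at fixed \(r\), handles \(1<p<\infty\) because the map \(c_k\mapsto f_k(re^{i\theta})e^{-\frac{\alpha}{2}r^2}\) is linear. The exponents are arranged for this. At \(p=1\) the bound is \(\delta^{0}(1+k)^{-1}\), and at \(p=\infty\) it is \(\delta^{-2}(1+k)^{0}\). Both factors interpolate log-linearly in \(1/p\) to \(\delta^{2(\frac1p-1)}(1+k)^{-\frac1p}\).

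For \(p=\infty\), bound \(|f_k(re^{i\theta})|e^{-\frac{\alpha}{2}r^2}\le \|c_k\|_{\ell^\infty}\sum_{Q\in\mathcal P_{\delta,k}}|\kappa_{\zeta_Q}(re^{i\theta})|e^{-\frac{\alpha}{2}r^2}\). Split the sum over the \(N_\delta\lesssim\delta^{-1}\) radial indices \(\nu\) and apply Lemma~\ref{lem:kernel-ring-sum} to each. This gives \(\delta^{-1}\cdot\delta^{-1}e^{-\frac{\alpha}{8}(r-k)^2}\), which is exactly the required \(\delta^{-2}\) bound.

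For \(p=1\), use the triangle inequality in \(L^1(d\theta/2\pi)\): \(M_1(f_k,r)\le\sum_Q|c_Q|M_1(\kappa_{\zeta_Q},r)\). By rotation invariance \(M_1(\kappa_{\zeta_Q},r)=M_1(G_{|\zeta_Q|},r)\), and Lemma~\ref{lem:packet-profile} gives \(M_1(G_\rho,r)e^{-\frac{\alpha}{2}r^2}\asymp(1+\alpha\rho r)^{-\frac12}e^{-\frac{\alpha}{2}(r-\rho)^2}\) with \(\rho=|\zeta_Q|\in[k,k+1)\). We must then show this is \(\lesssim(1+k)^{-1}e^{-\frac{\alpha}{8}(r-k)^2}\). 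The Bessel profile only gives \((1+\rho r)^{-1/2}\approx(1+k)^{-1}\) when \(r\asymp k\). For \(r\ge \rho/2\) we have \((1+\alpha\rho r)^{-1/2}\lesssim (1+k)^{-1}\). For \(r<\rho/2\) the Gaussian factor is \(\le e^{-\alpha\rho^2/8}\), and together with the factor \((1+k)\) this is absorbed, just as in Step~3 of Lemma~\ref{lem:kernel-ring-sum}. Finally, replace \((r-\rho)^2\) by \((r-k)^2\) at the cost of the exponent \(\frac{\alpha}{2}\to\frac{\alpha}{8}\) as in Step~4 of that proof. This yields the \(p=1\) bound with \(\delta\)-power zero. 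The case \(k=0\) is covered with \(1+k\asymp1\).

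The main obstacle is the interpolation step. The Gaussian factor \(e^{-\frac{\alpha}{8}(r-k)^2}\) is the same at both endpoints, so it passes through unchanged. The interpolation space at fixed \(r\) is \(L^p(\T)\) against \(\ell^p(\mathcal P_{\delta,k})\), which is standard Riesz--Thorin for a fixed linear operator with constants \(M_1,M_\infty\) giving \(M_1^{1/p}M_\infty^{1-1/p}\). If one prefers to avoid interpolation, an alternative is a Schur/H\"older argument. Write \(|f_k|\le\sum|c_Q|\,|\kappa_{\zeta_Q}|\) and apply H\"older with weights \(|\kappa_{\zeta_Q}|\), using the \(\ell^\infty\)-type ring-sum bound (the \(\sup_\theta\) row sum) for one factor and the \(L^1\) column bound \(\int|\kappa_{\zeta_Q}(re^{i\theta})|e^{-\frac{\alpha}{2}r^2}d\theta\lesssim(1+k)^{-1}e^{-\frac{\alpha}{8}(r-k)^2}\) for the other. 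This produces the same exponents directly.
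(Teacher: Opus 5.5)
Your proposal is correct and follows the paper's proof essentially step for step. The $p=1$ endpoint uses the triangle inequality and the packet profile with the split at $r=\rho/2$, the $p=\infty$ endpoint sums Lemma~\ref{lem:kernel-ring-sum} over the $N_\delta\lesssim\delta^{-1}$ radial strips, and Riesz--Thorin is applied to the fixed linear map $c_k\mapsto f_k(re^{i\cdot})e^{-\frac{\alpha}{2}r^2}$ from $\ell^p(\mathcal P_{\delta,k})$ to $L^p(\T)$; plain Riesz--Thorin suffices, no weights are needed, and your Schur/H\"older alternative also gives the same bound directly.
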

\begin{proof}
Fix $k\ge0$ and $r\ge0$. Consider the linear operator
\[
T_{k,r}:\ell^p(\mathcal P_{\delta,k})\to L^p(\T),\qquad
T_{k,r}(c_k)(\theta):=e^{-\frac{\alpha}{2} r^2}\sum_{Q\in\mathcal P_{\delta,k}} c_Q\,\kappa_{\zeta_Q}(re^{i\theta}).
\]
Then $\|T_{k,r}(c_k)\|_{L^p(\T)}=M_p(f_k,r)\,e^{-\frac{\alpha}{2}r^2}$.

\smallskip
\noindent\emph{Endpoint $p=1$.}
By the triangle inequality,
\[
\|T_{k,r}\|_{\ell^1\to L^1}
\le \sup_{Q\in\mathcal P_{\delta,k}}\int_0^{2\pi}|\kappa_{\zeta_Q}(re^{i\theta})|\,e^{-\frac{\alpha}{2}r^2}\,\frac{d\theta}{2\pi} \, \asymp \, \sup_{Q\in\mathcal P_{\delta,k}} M_1(\kappa_{\zeta_Q},r)\,e^{-\frac{\alpha}{2}r^2}.
\]
Write $\zeta_Q=\rho e^{i\vartheta}$ with $\rho\in[k,k+1)$. Using \eqref{eq:packet-profile} in Lemma \ref{lem:packet-profile}, we obtain
\[
M_1(\kappa_{\zeta_Q},r)\,e^{-\frac{\alpha}{2}r^2} \, \asymp \, (1+\alpha r\rho)^{-\frac12} e^{-\frac{\alpha}{2}(r-\rho)^2}, \quad r \geq 0.
\]
If $r\ge \frac{\rho}{2}$, then the right-hand side is 
$\lesssim (1+\rho)^{-1}e^{-\frac{\alpha}{4}(r-\rho)^2}$. If $0\le r<\frac{\rho}{2}$, then $|r-\rho|\ge\frac{\rho}{2}$, hence 
$$
(1+\alpha r\rho)^{-\frac12} e^{-\frac{\alpha}{2}(r-\rho)^2} \lesssim e^{-\frac{\alpha}{16}\rho^2} e^{-\frac{\alpha}{4}(r-\rho)^2} \lesssim (1+\rho)^{-1}e^{-\frac{\alpha}{4}(r-\rho)^2}.
$$
Consequently,
\[
\|T_{k,r}\|_{\ell^1\to L^1}\ \lesssim \ (1 + \rho)^{-1} e^{-\frac{\alpha}{4}(r - \rho)^2} \ \lesssim\ (1+k)^{-1} e^{-\frac{\alpha}{8}(r-k)^2},
\]
where the last inequality follows from $\rho\in[k,k+1)$.

\smallskip
\noindent\emph{Endpoint $p=\infty$.}
By definition,
\[
\|T_{k,r}\|_{\ell^\infty\to L^\infty}
\le \sup_{\theta\in[0,2\pi]}\sum_{Q\in\mathcal P_{\delta,k}}
|\kappa_{\zeta_Q}(re^{i\theta})|\,e^{-\frac{\alpha}{2}r^2}.
\]
Decompose $\mathcal P_{\delta,k}$ into radial strips indexed by $\nu=0,\dots,N_\delta-1$ and apply
Lemma~\ref{lem:kernel-ring-sum} to each strip:
\[
\sup_{\theta \in [0, 2\pi]}\sum_{Q\in\mathcal P_{\delta,k}}
|\kappa_{\zeta_Q}(re^{i\theta})|\,e^{-\frac{\alpha}{2}r^2}
\le \sum_{\nu=0}^{N_\delta-1}\sup_{\theta \in [0, 2\pi]}\sum_{\mu=0}^{N_{\delta,k,\nu}-1}
|\kappa_{\zeta_{k,\nu,\mu}}(re^{i\theta})|\,e^{-\frac{\alpha}{2}r^2}
\lesssim N_\delta\,\delta^{-1} e^{-\frac{\alpha}{8}(r-k)^2}.
\]
Since $N_\delta\asymp \delta^{-1}$, this gives
\[
\|T_{k,r}\|_{\ell^\infty\to L^\infty}\ \lesssim\ \delta^{-2} e^{-\tfrac{\alpha}{8}(r-k)^2}.
\]

\smallskip
\noindent\emph{Interpolation.}
By the Riesz--Thorin interpolation theorem, for $1<p<\infty$,
\[
\|T_{k,r}\|_{\ell^p\to L^p}
\le \|T_{k,r}\|_{\ell^1\to L^1}^{\frac1p}\,\|T_{k,r}\|_{\ell^\infty\to L^\infty}^{1-\frac1p}
\lesssim \, \delta^{-2\big(1-\frac1p\big)} \, (1+k)^{-\frac1p}\,e^{-\frac{\alpha}{8}(r-k)^2}.
\]
The same bound holds for $p=1$ and $p=\infty$ by the endpoint estimates. Thus,
\[
M_p(f_k,r)e^{-\frac{\alpha}{2}r^2}
=\|T_{k,r}(c_k)\|_{L^p}\le \|T_{k,r}\|_{\ell^p\to L^p}\,\|c_k\|_{\ell^p},
\]
which is \eqref{eq:single_block_synthesis}.
\end{proof}

\begin{lemma}\label{lem:per_shell_synthesis}
Let $\al > 0$, $1\le p\le\infty$ and let $\delta\in(0,\tfrac12]$ and $\mathcal P_\delta$ be the $\delta$-pocket tiling
of $\C$. Then for every sequence $c=(c_Q)_{Q\in\mathcal P_\delta} \in\ell_w^{p,q}$, with blocks
$c_k=(c_Q)_{Q\in\mathcal P_{\delta,k}}$, and for every $\ell\ge0$,
\begin{equation}\label{eq:per_shell_synthesis}
B_p(S^{\mathrm ker}_\delta c;\ell)\ \lesssim\ \delta^{2\big(\frac1p-1\big)}
\sum_{k\ge0} e^{-\frac{\alpha}{16}(\ell-k)^2}\,(1+k)^{-\frac1p}\,\|c_k\|_{\ell^p},
\end{equation}
where the implicit constant depends only on \(\alpha\) and \(p\), and is independent
of $c,\ell$ and $\delta$.
\end{lemma}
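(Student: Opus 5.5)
The estimate will follow from Lemma~\ref{lem:single-shell-synthesis} together with the $p$-triangle inequality on circles. By Lemma~\ref{lem:kernel-series-entire} the series $S^{\ker}_\delta c=\sum_{k\ge0} f_k$, with $f_k:=\sum_{Q\in\mathcal P_{\delta,k}}c_Q\kappa_{\zeta_Q}$, converges absolutely and locally uniformly, hence uniformly on each circle $|z|=r$. Since $p\ge1$, Minkowski's inequality in $L^p(\T)$ (passing to the limit of partial sums) gives, for every $r\ge0$,
\[
M_p(S^{\ker}_\delta c,r)\,e^{-\frac{\alpha}{2}r^2}\le \sum_{k\ge0} M_p(f_k,r)\,e^{-\frac{\alpha}{2}r^2}.
\]
When $p=\infty$ this is just the triangle inequality for the sup over the circle.

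I then insert \eqref{eq:single_block_synthesis} into each term and obtain
\[
M_p(S^{\ker}_\delta c,r)\,e^{-\frac{\alpha}{2}r^2}\lesssim \delta^{2(\frac1p-1)}\sum_{k\ge0}e^{-\frac{\alpha}{8}(r-k)^2}(1+k)^{-\frac1p}\|c_k\|_{\ell^p}.
\]
It remains to take the supremum over $r\in[\ell,\ell+1)$ and to replace $e^{-\frac{\alpha}{8}(r-k)^2}$ by a constant multiple of $e^{-\frac{\alpha}{16}(\ell-k)^2}$, uniformly in $r$. For this I use $(\ell-k)^2\le 2(r-k)^2+2(r-\ell)^2\le 2(r-k)^2+2$, so that $(r-k)^2\ge \tfrac12(\ell-k)^2-1$. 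This gives $e^{-\frac{\alpha}{8}(r-k)^2}\le e^{\frac{\alpha}{8}}e^{-\frac{\alpha}{16}(\ell-k)^2}$. The factor $e^{\alpha/8}$ depends only on $\alpha$ and is absorbed into the implicit constant, and \eqref{eq:per_shell_synthesis} follows.

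There is no serious obstacle here. The only point needing care is the justification of interchanging the infinite sum with $M_p$. Local uniform convergence from Lemma~\ref{lem:kernel-series-entire} handles this: $M_p$ of the limit equals the limit of $M_p$ of the partial sums, and each of those is bounded by the corresponding partial sum of the right-hand side. If the right-hand side of \eqref{eq:per_shell_synthesis} is infinite, the inequality is trivial.
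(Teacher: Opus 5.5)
Your proposal is correct and follows the paper's proof essentially step for step: you split the series into shell blocks, apply Minkowski's inequality on the circle, insert the single-shell bound \eqref{eq:single_block_synthesis}, and absorb the passage from $e^{-\frac{\alpha}{8}(r-k)^2}$ to $e^{-\frac{\alpha}{16}(\ell-k)^2}$ into a factor $e^{\alpha/8}$. Your inequality $(r-k)^2\ge \tfrac12(\ell-k)^2-1$ is equivalent to the paper's $|r-k|\ge|\ell-k|-1$, and your explicit use of local uniform convergence to justify Minkowski for the infinite sum is a welcome detail that the paper leaves implicit.
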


\begin{proof}
By Lemma~\ref{lem:kernel-series-entire}, the synthesis series
\(S_\delta^{\ker}c\) defines an entire function for
\(c\in\ell_w^{p,q}\).  Hence the annular quantity
\(B_p(S_\delta^{\ker}c;\ell)\) is well defined.  We estimate it by
decomposing the synthesis series according to annular blocks:
\[
S^{\mathrm ker}_\delta c=\sum_{k\ge0} f_k
\]
with $f_k$ as in Lemma~\ref{lem:single-shell-synthesis}.
Fix $\ell\ge0$ and $r\in[\ell,\ell+1)$. Since $p\ge1$, Minkowski's inequality on $L^p(\T)$ gives
\[
M_p(S^{\mathrm ker}_\delta c,r)\le \sum_{k\ge0} M_p(f_k,r).
\]
Multiplying by $e^{-\frac{\alpha}{2}r^2}$ and applying \eqref{eq:single_block_synthesis} yields
\[
M_p(S^{\mathrm ker}_\delta c,r)e^{-\frac{\alpha}{2}r^2}
\ \lesssim \delta^{2\big(\frac1p-1\big)} \sum_{k\ge0} (1+k)^{-\frac1p}\,
e^{-\frac{\alpha}{8}(r-k)^2}\,\|c_k\|_{\ell^p}.
\]
Taking supremum over $r \in [\ell, \ell + 1)$ and using $|r-k|\ge |\ell-k|-1$, we get
\[
\sup_{r\in[\ell,\ell+1)}e^{-\frac{\alpha}{8}(r-k)^2}
\le e^{\frac{\alpha}{8}} e^{-\frac{\alpha}{16}(\ell-k)^2},
\]
and absorbing $e^{\frac{\alpha}{8}}$ into the implicit constant proves \eqref{eq:per_shell_synthesis}.
\end{proof}

\begin{lemma}[Bounded kernel synthesis]\label{lem:synthesis}
Let $\al > 0$, $1\le p\le\infty$, and $0<q\le\infty$. Let $\mathcal P_\delta$ be a $\delta$-pocket tiling of $\C$ with  $\delta\in(0,\tfrac12]$. Then the synthesis operator
\[
S_\delta^{\mathrm{ker}}:\ell^{p,q}_w\to \Fpq, \qquad c \mapsto \sum_{Q \in \mathcal{P}_{\delta}} c_Q \kappa_{\zeta_Q},
\]
is bounded. More precisely,   
\begin{equation}\label{eq:synthesis}
\|S^{\mathrm ker}_{\delta}c\|_{\Fpq}\ \lesssim \  \delta^{2\big(\frac1p-1\big)}\,\|c\|_{\ell^{p,q}_w},
\qquad c\in \ell^{p,q}_w,
\end{equation}
where the implicit constant depends only on \(\alpha,p\), and \(q\), and is
independent of \(\delta\).
\end{lemma}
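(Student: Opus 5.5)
The plan is to combine the shellwise bound of Lemma~\ref{lem:per_shell_synthesis} with the outer Schur test of Lemma~\ref{lem:gen-schur-test}, and then return to the mixed norm through the annular discretization of Proposition~\ref{prop:annular}. By Lemma~\ref{lem:kernel-series-entire}, \(S^{\ker}_\delta c\) is entire for every \(c\in\ell^{p,q}_w\), so \(B_p(S^{\ker}_\delta c;\ell)\) is defined for each \(\ell\ge0\). Set
\[
Y_k:=w_k\|c_k\|_{\ell^p}=(1+k)^{\frac1q-\frac1p}\|c_k\|_{\ell^p},
\]
so that \(\|c\|_{\ell^{p,q}_w}=\|(Y_k)_k\|_{\ell^q}\). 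Since \((1+k)^{-\frac1p}\|c_k\|_{\ell^p}=(1+k)^{-\frac1q}Y_k\), Lemma~\ref{lem:per_shell_synthesis} can be rewritten as
\[
B_p(S^{\ker}_\delta c;\ell)\lesssim \delta^{2(\frac1p-1)}\,(TY)_\ell,
\qquad
K(\ell,k):=e^{-\frac{\alpha}{16}(\ell-k)^2}(1+k)^{-\frac1q}.
\]
This kernel has exactly the form required in Lemma~\ref{lem:gen-schur-test}, with \(\beta=2\), \(c=\frac{\alpha}{16}\) and \(C_0=1\). When \(q=\infty\), the factor \((1+k)^{-\frac1q}\) equals \(1\).

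\emph{Case \(0<q<\infty\).} Proposition~\ref{prop:annular} gives
\[
\|S^{\ker}_\delta c\|_{\Fpq}^q\asymp\sum_{\ell\ge0}(1+\ell)B_p^q(S^{\ker}_\delta c;\ell)\lesssim \delta^{2q(\frac1p-1)}\sum_{\ell\ge0}(1+\ell)(TY)_\ell^q.
\]
By \eqref{eq:B2-qfinite}, the last sum is bounded by \(C\sum_k Y_k^q=C\|c\|^q_{\ell^{p,q}_w}\). Taking \(q\)-th roots gives \eqref{eq:synthesis}.

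\emph{Case \(q=\infty\).} Here \(\|S^{\ker}_\delta c\|_{\mathcal F^{p,\infty}_\alpha}=\sup_\ell B_p(S^{\ker}_\delta c;\ell)\). Estimate \eqref{eq:B2-qinfty} bounds this by \(\delta^{2(\frac1p-1)}\sup_k Y_k\), which is the required bound.

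In both cases the constants come from Lemma~\ref{lem:per_shell_synthesis} (depending only on \(\alpha,p\)), from Lemma~\ref{lem:gen-schur-test} (depending on \(\alpha,q\)), and from Proposition~\ref{prop:annular} (depending on \(\alpha,q\)). So they are independent of \(\delta\), and the explicit factor \(\delta^{2(\frac1p-1)}\) is carried through unchanged.

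Linearity of \(S^{\ker}_\delta\) follows from the absolute convergence established in Lemma~\ref{lem:kernel-series-entire}. The one point to check carefully is the weight bookkeeping: the factor \((1+k)^{-\frac1p}\) in Lemma~\ref{lem:per_shell_synthesis} must combine with \(w_k\) to give exactly the \((1+k)^{-\frac1q}\) decay needed in the Schur kernel. With that verified, the argument is routine.
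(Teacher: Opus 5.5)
Your proposal is correct and follows the paper's proof essentially verbatim. Like the paper, you rewrite the shellwise bound of Lemma~\ref{lem:per_shell_synthesis} using $Y_k=w_k\|c_k\|_{\ell^p}$, apply the outer Schur test of Lemma~\ref{lem:gen-schur-test} with kernel $e^{-\frac{\alpha}{16}(\ell-k)^2}(1+k)^{-1/q}$, and return to the mixed norm through Proposition~\ref{prop:annular}, treating $q=\infty$ with the endpoint forms of both results.
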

\begin{proof}
Let $c\in\ell_w^{p,q}$ and write $c_k=(c_Q)_{Q\in\mathcal P_{\delta,k}}$. By
Lemma~\ref{lem:per_shell_synthesis},
\[
B_p(S^{\mathrm ker}_\delta c;\ell)
\lesssim \delta^{2\big(\frac1p-1\big)}
\sum_{k\ge0} e^{-\frac{\alpha}{16}(\ell-k)^2}\,(1+k)^{-\frac1p}\,\|c_k\|_{\ell^p}.
\]
Set $Y_k:=w_k\|c_k\|_{\ell^p}=(1+k)^{\frac1q-\frac1p}\|c_k\|_{\ell^p}$, so that
$(1+k)^{-\frac1p}\|c_k\|_{\ell^p}=(1+k)^{-\frac1q}Y_k$. Hence
\[
B_p(S^{\mathrm ker}_\delta c;\ell)
\lesssim \delta^{2\big(\frac1p-1\big)} \sum_{k\ge0}K(\ell,k)\,Y_k,
\qquad
K(\ell,k):=e^{-\frac{\alpha}{16}(\ell-k)^2}\,(1+k)^{-\frac1q}.
\]

\smallskip
\noindent\emph{Case $0<q<\infty$.}
Apply the generalized Schur test in Lemma~\ref{lem:gen-schur-test} to obtain
\[
\sum_{\ell\ge0}(1+\ell)\Bigl(\sum_{k\ge0}K(\ell,k)Y_k\Bigr)^q \ \lesssim\ \sum_{k\ge0}Y_k^q.
\]
Combining this with Proposition~\ref{prop:annular} gives
\[
\|S^{\mathrm ker}_{\delta}c\|_{\Fpq}^q
\ \asymp \ \sum_{\ell\ge0}(1+\ell)\,[B_p(S^{\mathrm ker}_{\delta}c;\ell)]^q \ \lesssim \  \delta^{2\big(\frac1p-1\big)q}\sum_{k\ge0}Y_k^q
\ = \ \delta^{2\big(\frac1p-1\big)q}\,\|c\|_{\ell^{p,q}_w}^q,
\]
which is \eqref{eq:synthesis} for $0 < q < \infty$.

\smallskip
\noindent\emph{Case $q=\infty$.}
Using the endpoint form of the generalized Schur test in Lemma~\ref{lem:gen-schur-test},
\[
\sup_{\ell\ge0}\sum_{k\ge0}K(\ell,k)Y_k \ \lesssim\ \sup_{k\ge0}Y_k.
\]
Together with Proposition~\ref{prop:annular} (endpoint form), this yields
\[
\|S^{\mathrm ker}_{\delta}c\|_{\FpqA{p}{\infty}{\alpha}}
=\sup_{\ell\ge0}B_p(S^{\mathrm ker}_{\delta}c;\ell)
\lesssim \delta^{2\big(\frac1p-1\big)}\sup_{k\ge0}Y_k
=\delta^{2\big(\frac1p-1\big)}\|c\|_{\ell_w^{p,\infty}}.
\]
This is \eqref{eq:synthesis} for $q = \infty$.

\smallskip
\noindent
The estimate \eqref{eq:synthesis}  shows that $S_\delta^{\rm ker}$ is bounded with norm
$
\|S_\delta^{\rm ker}\|_{\ell_w^{p,q}\to\Fpq}\lesssim \delta^{2\big(\frac1p-1\big)}$.

\end{proof}

\bigskip
\noindent
The stability analysis starts with a quantitative shellwise estimate for
the remainder operator
\[
R_\delta^{\ker}:=I-S_\delta^{\ker}C_\delta^{\ker},
\]
which will be the key input in proving that $R_\delta^{\ker}$ has small norm for
sufficiently small $\delta$.
\begin{lemma} \label{lem:remainderkernelestimate}
Let $\al>0$, $1\le p\le\infty$, and $0<q\le\infty$. Let $\mathcal P_\delta$ be a
$\delta$-pocket tiling of $\C$ with  $\delta\in(0,\tfrac12]$ and let 
$$
R_\delta^{\ker} = I - S_{\delta}^{\ker} C_{\delta}^{\ker}
$$ 
be the kernel remainder operator from
Definition~\ref{def:kernel-analysis-synthesis}. Then
\[
B_p(R_\delta^{\ker}f;\ell)
\lesssim \,\delta \sum_{k\ge0} e^{-\frac{\al}{32}(\ell-k)^2}\,B_p(f;k)
\]
for every $f\in\mathcal F_\al^{p,q}$ and every $\ell\ge0$, where the implicit
constant depends only on \(\alpha\) and \(p\), and is independent of
\(q,\delta,f\), and \(\ell\).
\end{lemma}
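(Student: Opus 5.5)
The plan is to write $R_\delta^{\ker}f$ as a single integral operator against a kernel that is $O(\delta)$-small and Gaussian-localized, and then to read off the shellwise bound by a Schur-type argument on circles.

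\emph{Step 1: kernel representation.} By Proposition~\ref{prop:fock-reproducing}, $f=P_\alpha f$, and by Lemma~\ref{lem:geo-pockets}(a) the pockets partition $\C$. Hence
\[
f(z)=\sum_{Q\in\mathcal P_\delta}\frac{\alpha}{\pi}\int_Q f(w)\,e^{\alpha z\overline w-\alpha|w|^2}\,dA(w).
\]
Inserting the definitions of $C_\delta^{\ker}$ and $S_\delta^{\ker}$ gives
\[
R_\delta^{\ker}f(z)=\frac{\alpha}{\pi}\sum_{Q}\int_Q f(w)e^{-\frac{\alpha}{2}|w|^2}\Bigl[e^{\alpha z\overline w-\frac{\alpha}{2}|w|^2}-e^{i\alpha\,\im(\zeta_Q\overline w)}\kappa_{\zeta_Q}(z)\Bigr]\,dA(w).
\]
Lemma~\ref{lem:p-est} and the Gaussian factors give absolute convergence of all terms, so the sum and integral may be interchanged. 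This is routine and parallels the proof of Proposition~\ref{prop:fock-reproducing}.

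\emph{Step 2: kernel difference estimate (main obstacle).} I want to show, for $w\in Q$,
\[
e^{-\frac{\alpha}{2}|z|^2}\Bigl|e^{\alpha z\overline w-\frac{\alpha}{2}|w|^2}-e^{i\alpha\im(\zeta_Q\overline w)}\kappa_{\zeta_Q}(z)\Bigr|\lesssim \delta\,e^{-\frac{\alpha}{8}|z-w|^2}.
\]
The phase $e^{i\alpha\im(\zeta_Q\overline w)}$ is exactly the Weyl/Fock-translation phase, so I would factor out the unimodular quantity $e^{i\alpha\im(z\overline w)}$. Both terms then become $e^{-\frac{\alpha}{2}|z-\eta|^2}$ times a unimodular phase depending smoothly on $\eta$, where $\eta=w$ for the first term and $\eta=\zeta_Q$ for the second. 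The core task is to check, in the convention used here, that the phase difference is $\alpha\,\im\bigl((z-w)\overline{(w-\zeta_Q)}\bigr)$ or a similar expression, i.e.\ of size $\lesssim|z-w|\,|w-\zeta_Q|$. Once that is in hand, the mean value theorem along the segment $[\zeta_Q,w]$ bounds the difference by $|w-\zeta_Q|\,(1+|z-\eta|)\,e^{-\frac{\alpha}{2}|z-\eta|^2}$ for some $\eta$ on that segment. Since $|w-\zeta_Q|\le C_{\rm out}\delta\le C_{\rm out}/2$ by Lemma~\ref{lem:geo-pockets}(c), we have $|z-\eta|\ge|z-w|-C$, and the polynomial factor is absorbed into a Gaussian with a smaller constant. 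This yields
\[
|R_\delta^{\ker}f(z)|e^{-\frac{\alpha}{2}|z|^2}\lesssim\delta\int_{\C}|f(w)|e^{-\frac{\alpha}{2}|w|^2}e^{-\frac{\alpha}{8}|z-w|^2}\,dA(w).
\]

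\emph{Step 3: circle means and shells.} Fix $r$, write $z=re^{i\theta}$ and $w=se^{i\phi}$, and apply Minkowski's inequality in $L^p(d\theta)$, using $p\ge1$. For fixed $s$, the inner operator is convolution on $\T$ with kernel $\phi\mapsto e^{-\frac{\alpha}{8}|re^{i\phi}-s|^2}$. Its $L^1(\T)$-norm is $e^{-\frac{\alpha}{8}(r-s)^2}e^{-\frac{\alpha}{8}rs}I_0(\tfrac{\alpha}{4}rs)\lesssim e^{-\frac{\alpha}{8}(r-s)^2}(1+rs)^{-1/2}$, by the Bessel bound \eqref{eq:bessel-bound}. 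By Young's inequality this gives
\[
M_p(R_\delta^{\ker}f,r)e^{-\frac{\alpha}{2}r^2}\lesssim\delta\int_0^\infty e^{-\frac{\alpha}{8}(r-s)^2}\frac{s}{\sqrt{1+rs}}\,M_p(f,s)e^{-\frac{\alpha}{2}s^2}\,ds.
\]
For $s\ge r/2$ the factor $s/\sqrt{1+rs}$ is bounded. For $s<r/2$ it is absorbed into the Gaussian, exactly as in \eqref{eq:slow-absorb}.

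\emph{Step 4: discretize.} Split the $s$-integral into $s\in[k,k+1)$, bound the integrand by $B_p(f;k)$ there, and take the supremum over $r\in[\ell,\ell+1)$. Using $|r-s|\ge|\ell-k|-1$ as in Lemma~\ref{lem:per_shell_synthesis}, this gives the factor $e^{-\frac{\alpha}{32}(\ell-k)^2}$ up to constants. The resulting constant depends only on $\alpha$ and $p$, and not on $q$ or $\delta$, which is the claimed estimate.
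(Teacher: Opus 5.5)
Your proof is correct up to two small slips, listed at the end. Steps 1--2 match the paper; Steps 3--4 take a different route.

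\textbf{Steps 1--2.} The paper writes the same kernel difference as $\kappa_{\zeta_Q}(z)\,(e^{U}-1)$ with $U=\alpha(z-\zeta_Q)\overline{(w-\zeta_Q)}-\frac{\alpha}{2}|w-\zeta_Q|^2$. It then uses $|e^U-1|\le |U|e^{|U|}$ to get the bound $\delta\, e^{-\frac{\alpha}{4}|z-\zeta_Q|^2}$, a Gaussian centred at $\zeta_Q$. Your Weyl-phase and mean-value argument is the same computation, giving a Gaussian centred at $w$ instead. The phase check you left open does work: the relative phase is $-\alpha\,\im\bigl((z-w)\overline{(w-\zeta_Q)}\bigr)$. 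Since the function is complex-valued, use the integral form of the mean value inequality, i.e.\ a supremum over the segment rather than ``some $\eta$''.

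\textbf{Steps 3--4: a different route.} The paper keeps the pocket structure. It sets $a_Q(f)=\int_Q|f|e^{-\frac{\alpha}{2}|w|^2}\,dA$ and bounds the shell operators $T_{k,r}:\ell^p(\mathcal P_{\delta,k})\to L^p(\T)$ by Riesz--Thorin. The endpoints are the $\ell^1\to L^1$ bound $(1+k)^{-1}e^{-\frac{\alpha}{16}(r-k)^2}$ and the ring-sum $\ell^\infty\to L^\infty$ bound $\delta^{-2}e^{-\frac{\alpha}{16}(r-k)^2}$. It then uses H\"older on pockets, $\|a_k(f)\|_{\ell^p}\lesssim \delta^{2(1-\frac1p)}(1+k)^{\frac1p}B_p(f;k)$, so the powers of $\delta$ cancel.

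You instead integrate the Gaussian bound directly over $w$, use Minkowski in $s$, and apply Young's inequality for circular convolution together with the Bessel $L^1$ bound. This avoids interpolation, the ring-sum lemma, and the $\delta$-bookkeeping, so it is the more economical proof when $p\ge1$. The paper's discrete route has a different advantage: it runs parallel to the synthesis lemmas and to the jet remainder estimate for $p<1$. There Minkowski and Young in $w$ are unavailable, and pocketwise $p$-subadditivity is what makes the argument work.

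\textbf{Two corrections.}
\begin{itemize}
\item The $L^1(\T)$-norm of $\phi\mapsto e^{-\frac{\alpha}{8}|re^{i\phi}-s|^2}$ is $e^{-\frac{\alpha}{8}(r-s)^2}e^{-\frac{\alpha}{4}rs}I_0(\tfrac{\alpha}{4}rs)$. You wrote $e^{-\frac{\alpha}{8}rs}$, which is a typo; the bound $(1+rs)^{-1/2}e^{-\frac{\alpha}{8}(r-s)^2}$ you then state is the correct one.
\item The factor $s/\sqrt{1+rs}$ is \emph{not} bounded on $s\ge r/2$; at $r=0$ it equals $s$. It is bounded only on $r/2\le s\le 2r$, where $rs\ge s^2/2$. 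For $s>2r$, use $s<2(s-r)$ and absorb the factor into the Gaussian, exactly as you do for $s<r/2$. This gives $e^{-\frac{\alpha}{16}(r-s)^2}$ throughout, and hence $e^{-\frac{\alpha}{32}(\ell-k)^2}$ after discretization.
\end{itemize}
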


\begin{proof}
By Proposition~\ref{prop:fock-reproducing} and Definition~\ref{def:kernel-analysis-synthesis},
\[
R_\delta^{\ker}f(z)
= f(z)-S_\delta^{\ker}C_\delta^{\ker}f(z) 
= \frac{\alpha}{\pi}\sum_{Q\in\mathcal P_\delta}
\int_Q f(w)e^{-\frac{\alpha}{2}|w|^2}e^{\,i\alpha\, \im(\zeta_Q\overline w)}
\,H(z,\zeta_Q,w)\,dA(w),
\]
where
\[
H(z,\zeta,w)
:=e^{\alpha \overline w z-\frac{\alpha}{2}|w|^2-i\alpha\im(\zeta\overline w)}
-\kappa_\zeta(z).
\]
A direct computation gives
\[
H(z,\zeta,w)=\kappa_\zeta(z)\bigl(e^{U(z,\zeta,w)}-1\bigr) \quad \text{with} \quad 
U(z,\zeta,w)
=\alpha (z-\zeta)\overline{(w-\zeta)}-\frac{\alpha}{2}|w-\zeta|^2.
\]
Hence
\[
|H(z,\zeta,w)|\,e^{-\frac{\alpha}{2}|z|^2}
= e^{-\frac{\alpha}{2}|z-\zeta|^2}\,\bigl|e^{U(z,\zeta,w)}-1\bigr|.
\]
Fix $Q\in\mathcal P_\delta$. For every
 $w\in Q$, Lemma~\ref{lem:geo-pockets}(c) yields
$|w-\zeta_Q|\lesssim \delta$. Therefore
\[
|U(z,\zeta_Q,w)|
\lesssim |w-\zeta_Q|\bigl(|z-\zeta_Q|+|w-\zeta_Q|\bigr)
\leq C\, \delta(1+|z-\zeta_Q|),
\]
where $C$ is a constant depending only on $\alpha$.
Using $|e^u-1|\le |u|e^{|u|}$ and $\delta\le \frac12$, we obtain
\[
|H(z,\zeta_Q,w)|\,e^{-\frac{\alpha}{2}|z|^2}
\lesssim \delta (1+|z-\zeta_Q|)\,
e^{-\frac{\alpha}{2}|z-\zeta_Q|^2} e^{C\delta |z-\zeta_Q|}
\lesssim \delta\, e^{-\frac{\alpha}{4}|z-\zeta_Q|^2}.
\]
Thus, with
\[
a_Q(f):=\int_Q |f(w)|e^{-\frac{\alpha}{2}|w|^2}\,dA(w),
\]
we get
\begin{equation}\label{eq:remainder-pointwise}
|R_\delta^{\ker}f(z)|\,e^{-\frac{\alpha}{2}|z|^2}
\lesssim \delta \sum_{Q\in\mathcal P_\delta} a_Q(f)\,
e^{-\frac{\alpha}{4}|z-\zeta_Q|^2}.
\end{equation}
For each $k\ge0$, let
\[
g_k(z):=\sum_{Q\in\mathcal P_{\delta,k}} a_Q(f)\,
e^{-\frac{\alpha}{4}|z-\zeta_Q|^2}.
\]
Since $p\ge1$, Minkowski's inequality and \eqref{eq:remainder-pointwise} imply
\[
M_p(R_\delta^{\ker}f,r)\,e^{-\frac{\alpha}{2}r^2}
\lesssim \delta \sum_{k\ge0} M_p(g_k,r),
\qquad r\ge0,
\]
where, for measurable functions on the circle, we use $\Mp(h, r)$ to denote the $L^p$-mean of $h(r e^{i\theta})$ in the same notation.

\smallskip
\noindent
Fix $k\ge0$ and $r\ge0$, and define
\[
T_{k,r}:\ell^p(\mathcal P_{\delta,k})\to L^p(\mathbb T),
\qquad
T_{k,r}(b)(\theta)
:=\sum_{Q\in\mathcal P_{\delta,k}} b_Q\,
e^{-\frac{\alpha}{4}|re^{i\theta}-\zeta_Q|^2}.
\]
Then
\[
\|T_{k,r}(a_k(f))\|_{L^p(\mathbb T)}=M_p(g_k,r),
\]
where $a_k(f):=(a_Q(f))_{Q\in\mathcal P_{\delta,k}}$.

\smallskip
\noindent
For the endpoint cases $p=1$ and $p=\infty$, we argue exactly as in the corresponding parts of Lemma~\ref{lem:single-shell-synthesis}, replacing the normalized kernel profile
\[
|\kappa_{\zeta}(re^{i\theta})|\,e^{-\frac{\alpha}{2}r^2}
= e^{-\frac{\alpha}{2}|re^{i\theta}-\zeta|^2}
\]
by the Gaussian profile
\[
e^{-\frac{\alpha}{4}|re^{i\theta}-\zeta|^2}.
\]
Using the same circular Gaussian estimate, we obtain
\[
\|T_{k,r}\|_{\ell^1\to L^1}
\le \sup_{Q\in\mathcal P_{\delta,k}}
\int_0^{2\pi} e^{-\frac{\alpha}{4}|re^{i\theta}-\zeta_Q|^2}\,\frac{d\theta}{2\pi}
\lesssim (1+k)^{-1} e^{-\frac{\alpha}{16}(r-k)^2},
\]
and
\[
\|T_{k,r}\|_{\ell^\infty\to L^\infty}
\le \sup_{\theta\in[0,2\pi]}
\sum_{Q\in\mathcal P_{\delta,k}} e^{-\frac{\alpha}{4}|re^{i\theta}-\zeta_Q|^2}
\lesssim \delta^{-2} e^{-\frac{\alpha}{16}(r-k)^2}.
\]

\smallskip
\noindent
Interpolating by the Riesz--Thorin theorem, we obtain for every
$1\le p\le\infty$,
\begin{equation}\label{eq:Tkrr-bound}
\|T_{k,r}\|_{\ell^p\to L^p}
\lesssim \delta^{-2\left(1-\frac1p\right)}(1+k)^{-\frac1p}
e^{-\frac{\alpha}{16}(r-k)^2}.
\end{equation}
Consequently,
\begin{equation}\label{eq:gk-bound}
M_p(g_k,r)
\lesssim
\delta^{-2\left(1-\frac1p\right)}(1+k)^{-\frac1p}
e^{-\frac{\alpha}{16}(r-k)^2}
\|a_k(f)\|_{\ell^p}.
\end{equation}

\smallskip
\noindent
It remains to estimate $\|a_k(f)\|_{\ell^p}$.
If $1\le p<\infty$, then H\"older's inequality and Lemma~\ref{lem:geo-pockets}(b) give
\[
a_Q(f)^p
\le |Q|^{p-1}\int_Q |f(w)|^p e^{-\frac{\alpha p}{2}|w|^2}\,dA(w)
\lesssim \delta^{2p-2}\int_Q |f(w)|^p e^{-\frac{\alpha p}{2}|w|^2}\,dA(w).
\]
Summing over $Q\in\mathcal P_{\delta,k}$, we obtain
\begin{align*}
\|a_k(f)\|_{\ell^p}^p
\lesssim \
\delta^{2p-2}
\int_{A_k} |f(w)|^p e^{-\frac{\alpha p}{2}|w|^2}\,dA(w) \lesssim \
\delta^{2p-2}(1+k)\, B_p(f;k)^p.
\end{align*}
Therefore,
\[
\|a_k(f)\|_{\ell^p}
\lesssim \delta^{2\big(1-\frac1p\big)}(1+k)^{\frac1p} B_p(f;k),
\qquad 1\le p<\infty.
\]
If $p=\infty$, then for $Q\in\mathcal P_{\delta,k}$,
\[
a_Q(f)\le |Q|\, \sup_{w\in Q}\Bigl(|f(w)|e^{-\frac{\alpha}{2}|w|^2}\Bigr)
\lesssim \delta^2 B_\infty(f;k),
\]
so
\[
\|a_k(f)\|_{\ell^\infty}\lesssim \delta^2 B_\infty(f;k).
\]
In both cases, \eqref{eq:gk-bound} yields
\[
M_p(g_k,r)\lesssim e^{-\frac{\alpha}{16}(r-k)^2} B_p(f;k).
\]

\smallskip
\noindent
Substituting this into the estimate for $R_\delta^{\ker}f$, we get
\[
M_p(R_\delta^{\ker}f,r)\,e^{-\frac{\alpha}{2}r^2}
\lesssim
\delta \sum_{k\ge0} e^{-\frac{\alpha}{16}(r-k)^2} B_p(f;k).
\]
Taking the supremum over $r\in[\ell,\ell+1)$ and using
$|r-k|\ge |\ell-k|-1$, we arrive at
\[
B_p(R_\delta^{\ker}f;\ell)
\lesssim
\delta \sum_{k\ge0} e^{-\frac{\alpha}{32}(\ell-k)^2} B_p(f;k),
\]
which proves the lemma.
\end{proof}

\begin{lemma} \label{lem:stability}
Let $\al>0$, $1\le p\le\infty$, and $0<q\le\infty$. Then there exists
$\delta_0=\delta_0(\al,p,q)\in(0,\tfrac12]$ such that, for  every $\delta$-pocket tiling $\mathcal P_\delta$ of $\C$ with
$0<\delta<\delta_0$,
\[
\|I-S_\delta^{\ker}C_\delta^{\ker}\|_{\mathcal F_\al^{p,q}\to\mathcal F_\al^{p,q}}
\le \frac12.
\]
\end{lemma}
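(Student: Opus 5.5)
The plan is to combine the shellwise remainder estimate of Lemma~\ref{lem:remainderkernelestimate} with the outer Schur test (Lemma~\ref{lem:gen-schur-test}) and the annular discretization (Proposition~\ref{prop:annular}). This yields
\[
\|R_\delta^{\ker}f\|_{\Fpq}\le C_1\,\delta\,\|f\|_{\Fpq},
\]
with \(C_1\) depending only on \(\alpha,p,q\). One then simply chooses \(\delta_0:=\min\{\frac12,(2C_1)^{-1}\}\). Before estimating, I note that \(R_\delta^{\ker}f\) is a well-defined entire function for \(f\in\Fpq\). Indeed, \(C_\delta^{\ker}f\in\ell^{p,q}_w\) by Lemma~\ref{lem:analysis-bdd}, and \(S_\delta^{\ker}\) maps \(\ell^{p,q}_w\) into \(\Fpq\) by Lemma~\ref{lem:synthesis}. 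Hence \(R_\delta^{\ker}\) maps \(\Fpq\) into itself, and the quantities \(B_p(R_\delta^{\ker}f;\ell)\) make sense.

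For \(0<q<\infty\), start from
\[
B_p(R_\delta^{\ker}f;\ell)\lesssim\delta\sum_{k\ge0}e^{-\frac{\alpha}{32}(\ell-k)^2}B_p(f;k).
\]
To match the hypothesis of Lemma~\ref{lem:gen-schur-test}, write \(B_p(f;k)=(1+k)^{-\frac1q}Y_k\) with \(Y_k:=(1+k)^{\frac1q}B_p(f;k)\). The kernel
\[
K(\ell,k):=e^{-\frac{\alpha}{32}(\ell-k)^2}(1+k)^{-\frac1q}
\]
then satisfies the Schur hypothesis with \(\beta=2\) and \(c=\frac{\alpha}{32}\). Lemma~\ref{lem:gen-schur-test} gives
\[
\sum_\ell(1+\ell)\Bigl(\sum_kK(\ell,k)Y_k\Bigr)^q\lesssim\sum_kY_k^q=\sum_k(1+k)B_p^q(f;k).
\]
Applying Proposition~\ref{prop:annular} on both sides then yields
\[
\|R_\delta^{\ker}f\|_{\Fpq}^q\lesssim\delta^q\sum_\ell(1+\ell)\Bigl(\sum_kK(\ell,k)Y_k\Bigr)^q\lesssim\delta^q\|f\|_{\Fpq}^q.
\]
The constants here depend only on \(\alpha,p,q\), because those in Lemma~\ref{lem:remainderkernelestimate} are independent of \(\delta\).

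For \(q=\infty\), the endpoint forms are used instead. The exponential row sum \(\sum_k e^{-\frac{\alpha}{32}(\ell-k)^2}\le S_0(\frac{\alpha}{32})\) is bounded uniformly in \(\ell\). Therefore
\[
\|R_\delta^{\ker}f\|_{\FpqA{p}{\infty}{\alpha}}=\sup_\ell B_p(R_\delta^{\ker}f;\ell)\lesssim\delta\sup_kB_p(f;k)=\delta\|f\|_{\FpqA{p}{\infty}{\alpha}}.
\]

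The main point requiring care is the uniformity of the constants in \(\delta\). The analysis and synthesis bounds carry powers of \(\delta\) (namely \(\delta^{1-\frac1p}\) and \(\delta^{2(\frac1p-1)}\)). These could spoil a naive composition, since the product \(\delta^{\frac1p-1}\) is not small, and this is exactly why one must work directly with the combined estimate of Lemma~\ref{lem:remainderkernelestimate}, where the gain is a clean factor \(\delta\). Once that lemma is taken as given, the remaining steps are bookkeeping. The statement applies to every \(\delta\)-pocket tiling because the tiling constants \(c_{\rm in},C_{\rm out}\) are absolute (Lemma~\ref{lem:geo-pockets}).
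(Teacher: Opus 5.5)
Your proposal is correct and follows the paper's proof essentially verbatim. Like the paper, you rewrite the shellwise bound of Lemma~\ref{lem:remainderkernelestimate} with $Y_k=(1+k)^{1/q}B_p(f;k)$ and the kernel $e^{-\frac{\alpha}{32}(\ell-k)^2}(1+k)^{-1/q}$, apply the outer Schur test with $\beta=2$ and Proposition~\ref{prop:annular} on both sides to get $\|R_\delta^{\ker}f\|\le C\delta\|f\|$, and take $\delta_0=\min\{\tfrac12,(2C)^{-1}\}$; your check that $R_\delta^{\ker}f$ is well defined via Lemmas~\ref{lem:analysis-bdd} and \ref{lem:synthesis} is a harmless addition the paper leaves implicit.
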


\begin{proof}
By Lemma~\ref{lem:remainderkernelestimate}, there exists a constant
$C_0=C_0(\al,p)>0$ such that
\[
B_p(R_\delta^{\ker}f;\ell)
\le C_0\,\delta \sum_{k\ge0} e^{-\frac{\al}{32}(\ell-k)^2}\,B_p(f;k),
\qquad \ell\ge0,
\]
for every $f\in\mathcal F_\al^{p,q}$.

\smallskip
\noindent
Set
\[
K(\ell,k):=C_0\, e^{-\frac{\al}{32}(\ell-k)^2}(1+k)^{-\frac1q} \quad \text{and} \quad Y_k:=(1+k)^{\frac1q}B_p(f;k), \qquad \ell, k\ge0,
\]
with the standing convention that $\frac1q=0$ when $q=\infty$. 
Then
\[
B_p(R_\delta^{\ker}f;\ell)\le \, \delta \sum_{k\ge0}K(\ell,k)Y_k.
\]
Since $K(\ell,k)$ has exactly the form required in the generalized Schur test (Lemma~\ref{lem:gen-schur-test}) with $\beta=2$,
we obtain
\[
\sum_{\ell\ge0}(1+\ell)\,[B_p(R_\delta^{\ker}f;\ell)]^q
\lesssim
\delta^q\sum_{k\ge0}Y_k^q
=
\delta^q\sum_{k\ge0}(1+k)\,[B_p(f;k)]^q \quad \text{if} \quad 0 < q < \infty
\]
and
\[
\sup_{\ell\ge0} B_p(R_\delta^{\ker}f;\ell)
\lesssim \delta\,\sup_{k\ge0} B_p(f;k) \quad \text{if} \quad  q = \infty.
\]
Applying Proposition~\ref{prop:annular} to both $R_\delta^{\ker}f$ and $f$, we conclude that
\[
\|R_\delta^{\ker}f\|_{\mathcal F_\al^{p,q}}
\le C\,\delta\,\|f\|_{\mathcal F_\al^{p,q}},
\qquad f\in\mathcal F_\al^{p,q},
\]
for some constant $C=C(\al,p,q)>0$. Choosing
\[
\delta_0:=\min\Bigl\{\frac12,\frac{1}{2C}\Bigr\},
\]
we obtain
\[
\|R_\delta^{\ker}\|_{\mathcal F_\al^{p,q}\to\mathcal F_\al^{p,q}}
\le \frac12,
\qquad 0<\delta<\delta_0.
\]
Since $R_\delta^{\ker}=I-S_\delta^{\ker}C_\delta^{\ker}$, the proof is complete.
\end{proof}

\subsubsection{Proof of Theorem~\ref{thm:AD-main} in the angular Banach range}
\label{subsubsec:atomic-banach-exact}

Combining the kernel analysis and synthesis bounds with the stability
estimate from Subsection~\ref{subsubsec:atomic-operators-banach} gives
the exact decomposition and coefficient estimate in
Theorem~\ref{thm:AD-main} for \(1\le p\le\infty\).

\begin{proof}
Let $\delta_0=\delta_0(\alpha,p,q)\in(0,\tfrac12]$ be the constant furnished by
Lemma~\ref{lem:stability}, and fix $0<\delta<\delta_0$.
Let $\mathcal P_\delta$ be a $\delta$-pocket tiling of $\C$ with centers
$\{\zeta_Q\}_{Q\in\mathcal P_\delta}$.

\smallskip
\noindent
\emph{(i) Synthesis.} 
The boundedness of the synthesis operator
\(
S_\delta^{\ker}
\)
from $\ell_w^{p,q}$ to $\mathcal F_\alpha^{p,q}$ was established in
Lemma~\ref{lem:synthesis}. In particular,
\[
\|S_\delta^{\ker}c\|_{\mathcal F_\alpha^{p,q}}
\lesssim \  \delta^{2 \big(\frac{1}{p} -1 \big)}
\,\|c\|_{\ell_w^{p,q}}
\qquad c\in \ell_w^{p,q}.
\]
Since \(\delta>0\) is fixed, this gives
\[
\|S_\delta^{\ker}c\|_{\mathcal F_\alpha^{p,q}}
\lesssim
\|c\|_{\ell_w^{p,q}},
\qquad c\in\ell_w^{p,q},
\]
with an implicit constant depending only on \(\alpha,p,q\), and \(\delta\).
Consequently, for every representation \(f=S_\delta^{\ker}c\),
\[
\|f\|_{\mathcal F_\alpha^{p,q}}
\lesssim
\|c\|_{\ell_w^{p,q}},
\]
and hence
\[
\|f\|_{\mathcal F_\alpha^{p,q}}
\lesssim
\inf\left\{
\|c\|_{\ell_w^{p,q}}:
c\in\ell_w^{p,q},\ f=S_\delta^{\ker}c
\right\}.
\]

\smallskip
\noindent
\emph{(ii) Coefficient recovery and optimality.}
By Lemma~\ref{lem:stability},
\[
\|R_\delta^{\ker}\|_{\mathcal F_\alpha^{p,q}\to \mathcal F_\alpha^{p,q}}
=
\|I-S_\delta^{\ker}C_\delta^{\ker}\|_{\mathcal F_\alpha^{p,q}\to \mathcal F_\alpha^{p,q}}
\le \frac12.
\]
The
Neumann inversion principle in Lemma~\ref{lem:neumann-quasi} applies to the space $\mathcal F_\alpha^{p,q}$ with exponent  $s=\min\{1,q\}$ in the present range $p\ge 1$, hence it yields that
\(
I-R_\delta^{\ker}=S_\delta^{\ker}C_\delta^{\ker}
\)
is invertible on $\mathcal F_\alpha^{p,q}$ and
\[
\|(I-R_\delta^{\ker})^{-1}\|_{\mathcal F_\alpha^{p,q}\to \mathcal F_\alpha^{p,q}}
\lesssim 1.
\]
Let $f\in\mathcal F_\alpha^{p,q}$ and define
\[
g:=(I-R_\delta^{\ker})^{-1}f,
\qquad
c^*:=C_\delta^{\ker}g.
\]
Then, by Lemma~\ref{lem:analysis-bdd}, $c^* \in \ell^{p, q}_w$ and
\[
\|c^*\|_{\ell_w^{p,q}}
=
\|C_\delta^{\ker}g\|_{\ell_w^{p,q}}
\lesssim \delta^{1-\frac1p}\|g\|_{\mathcal F_\alpha^{p,q}}
\lesssim \delta^{1-\frac1p}\|f\|_{\mathcal F_\alpha^{p,q}}.
\]
Since \(\delta>0\) is fixed, this gives
\[
\|c^*\|_{\ell_w^{p,q}}
\lesssim
\|f\|_{\mathcal F_\alpha^{p,q}},
\]
with an implicit constant depending only on \(\alpha,p,q\), and \(\delta\).
Moreover,
\[
S_\delta^{\ker}c^*
=
S_\delta^{\ker}C_\delta^{\ker}g
=
(I-R_\delta^{\ker})g
=
f.
\]
Therefore
\[
\inf\Bigl\{
\|c\|_{\ell_w^{p,q}}:
c\in\ell_w^{p,q},\ f=S_\delta^{\ker}c
\Bigr\}
\le
\|c^*\|_{\ell_w^{p,q}}
\lesssim
\,\|f\|_{\mathcal F_\alpha^{p,q}}.
\]
Combining the preceding two inequalities, we obtain
\[
\|f\|_{\mathcal F_\alpha^{p,q}}
\, \asymp \,
\inf\Bigl\{
\|c\|_{\ell_w^{p,q}}:
c\in\ell_w^{p,q},\ f=S_\delta^{\ker}c
\Bigr\}.
\]
This proves Theorem~\ref{thm:AD-main} in the range $1\le p\le\infty$.
\end{proof}

\subsection{Angular quasi-Banach range $0<p<1$}
\label{subsec:atomic-quasi}

We turn to \(0<p<1\).  The direct kernel analysis--synthesis scheme no longer
gives a small remainder, because the angular estimates are limited to
\(p\)-subadditivity.

\smallskip
\noindent
Finite-order jet atoms replace normalized Fock kernels at the intermediate
stage; they encode local Taylor data and supply the needed cancellation.
The jets are then eliminated locally into floating kernels, re-centered by
Section~\ref{sec:recentering-envelope}, and the remaining small defect is
iterated.

\smallskip
\noindent
Subsections~\ref{subsubsec:atomic-jet-setup}--
\ref{subsubsec:atomic-jet-stability} establish the jet model and exact jet
decomposition.  Subsections~\ref{subsubsec:atomic-jet-elimination} and
\ref{subsubsec:atomic-recentering} eliminate and re-center the jets.  The
\(\varepsilon\)-iteration in Subsection~\ref{subsubsec:atomic-quasi-proof}
gives the normalized-kernel decomposition.

\subsubsection{Jet atoms, jet coefficient spaces, and jet operators}
\label{subsubsec:atomic-jet-setup}

We introduce the finite-order jet atoms and coefficient spaces.  Throughout Subsections~\ref{subsubsec:atomic-jet-setup}--
\ref{subsubsec:atomic-jet-stability}, we fix
\(m\in\mathbb N_0\), \(0<p<1\), \(0<q\le\infty\), and a
\(\delta\)-pocket tiling \(\mathcal P_\delta\) of \(\C\).

\smallskip
\noindent
\begin{definition} \label{def:jet-setup}
For each pocket $Q\in\mathcal P_\delta$ and each jet order $0\le j\le m$, define the jet atom
\[
\psi_{Q,j}(z)
:=\frac{\alpha^j}{j!}(z-\zeta_Q)^j\,\kappa_{\zeta_Q}(z),
\qquad z\in\C.
\]
Thus \(\psi_{Q,0}=\kappa_{\zeta_Q}\); for \(j\ge1\), the factor
\((z-\zeta_Q)^j\) encodes the local jet of order \(j\).

\smallskip
\noindent
Associated with these atoms, we define the jet analysis operator
\[
(C_\delta^m f)_{Q,j}
:=\frac{\alpha}{\pi}\int_Q
f(w)\,e^{-\frac{\alpha}{2}|w|^2}\,
\big(\overline w-\overline{\zeta_Q}\big)^j\,
e^{i\alpha\,\im(\zeta_Q\overline w)}\,
e^{-\frac{\alpha}{2}|w-\zeta_Q|^2}\,dA(w),
\ \ Q\in\mathcal P_\delta,\  0\le j\le m.
\]
For coefficient sequences $c=(c_{Q,j})$, we define the jet synthesis operator by
\[
(S_\delta^m c)(z)
:=\sum_{Q\in\mathcal P_\delta}\sum_{j=0}^m c_{Q,j}\,\psi_{Q,j}(z),
\qquad z\in\C,
\]
whenever the series converges.  The associated remainder operator is
\[
R_\delta^m:=I-S_\delta^m C_\delta^m .
\]
\end{definition}

\smallskip
\noindent
We define the coefficient space for all \(0<p,q\le\infty\), although it will be
used below only for \(0<p<1\).

\begin{definition} 
For each $k\ge0$, write
\[
c_k=(c_{Q,j})_{Q\in\mathcal P_{\delta,k},\,0\le j\le m}.
\]
We equip $c_k$ with the block quasi-norm
\[
\|c_k\|_{\ell^p}
:=
\begin{cases}
\left(\displaystyle\sum_{Q\in\mathcal P_{\delta,k}}\sum_{j=0}^m |c_{Q,j}|^p\right)^{\frac1p},
& 0<p<\infty,\\[3mm]
\displaystyle\sup_{Q\in\mathcal P_{\delta,k},\,0\le j\le m}|c_{Q,j}|,
& p=\infty.
\end{cases}
\]
We then define $\ell_w^{p,q,m}$ to be the space of all sequences
\(
c=(c_{Q,j})_{Q\in\mathcal P_\delta,\,0\le j\le m}
\)
such that
\[
\|c\|_{\ell_w^{p,q,m}}
:=
\begin{cases}
\left(\displaystyle\sum_{k\ge0}\bigl[w_k\|c_k\|_{\ell^p}\bigr]^q\right)^{\frac1q},
& 0<q<\infty,\\[3mm]
\displaystyle\sup_{k\ge0} w_k\|c_k\|_{\ell^p},
& q=\infty,
\end{cases}
\]
is finite, where
\(
w_k:=(1+k)^{\frac1q-\frac1p}\),
with the usual convention that $\frac1\infty=0$.
\end{definition}

\smallskip
\noindent
The next two subsections prove boundedness of \(C_\delta^m\) and
\(S_\delta^m\), and smallness of \(R_\delta^m\) on
\(\mathcal F_\alpha^{p,q}\) for sufficiently small \(\delta\).

\subsubsection{Bounded jet analysis and synthesis}
\label{subsubsec:atomic-jet-boundedness}

In this subsection we establish the boundedness of the jet analysis map
\[
C_\delta^m:\mathcal F_\alpha^{p,q}\to \ell_w^{p,q,m}
\]
and the jet synthesis map
\[
S_\delta^m:\ell_w^{p,q,m}\to \mathcal F_\alpha^{p,q}.
\]
These are the quasi-Banach analogues of the bounded kernel operators from Subsection~\ref{subsec:atomic-banach}.

\begin{lemma} \label{lem:analysisboundedness}
Let $m\in\mathbb N_0$, $\alpha > 0$, $0<p<1$, and $0<q\le\infty$. Let $\mathcal{P}_{\delta}$ be a $\delta$-pocket tiling  of $\C$ with $\delta\in(0,\frac12]$. Then
\begin{equation}\label{eq:analysisboundedness}
\|C_\delta^m f\|_{\ell_w^{p,q,m}}
\lesssim \,\delta^{\,2-\frac{2}{p}}\,
\|f\|_{\Fpq},
\qquad
f\in\mathcal F_\alpha^{p,q},
\end{equation}
where the implicit constant depends only on \(\alpha,p,q, m\), and is
independent of \(\delta\).
In particular, $C_\delta^m:\mathcal F_\alpha^{p,q}\to \ell_w^{p,q,m}$ is a bounded linear operator.
\end{lemma}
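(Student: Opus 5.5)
The plan is to reduce \eqref{eq:analysisboundedness} to a blockwise estimate of the same shape as \eqref{eq:block-estimate}, namely
\[
\|(C_\delta^m f)_k\|_{\ell^p}\lesssim \delta^{2-\frac2p}(1+k)^{\frac1p}\,\widetilde B_p(f;k),
\qquad k\ge0,
\]
where \(\widetilde B_p(f;k)\) is a slightly enlarged shell quantity, for instance \(\sup\{B_p(f;k'):|k'-k|\le1\}\). Once this is available, multiplying by \(w_k=(1+k)^{\frac1q-\frac1p}\) and summing in \(\ell^q\) gives \(\sum_k(1+k)\widetilde B_p(f;k)^q\lesssim\sum_k(1+k)B_p(f;k)^q\). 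This holds because each shell is counted at most three times and \(1+k\asymp1+k'\) for neighbouring shells. Proposition~\ref{prop:annular} then finishes both the case \(q<\infty\) and the case \(q=\infty\). The remaining work is the blockwise estimate itself.

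\emph{Pointwise bound.} For \(w\in Q\) we have \(|w-\zeta_Q|\le C_{\rm out}\delta\le C_{\rm out}\). Hence the factors \(|\overline w-\overline{\zeta_Q}|^j\) and \(e^{-\frac\alpha2|w-\zeta_Q|^2}\) are bounded by constants depending on \(m\) and \(\alpha\). This gives
\[
|(C_\delta^m f)_{Q,j}|\lesssim \int_Q|f(w)|e^{-\frac\alpha2|w|^2}\,dA(w)\le |Q|\sup_{w\in Q}|f(w)|e^{-\frac\alpha2|w|^2}.
\]
Since \(p<1\), Hölder's inequality no longer runs in the right direction, and this is the main obstacle. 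The way around it is to use subharmonicity of \(|f|^p\) in the form of the standard Fock sub-mean-value estimate: for \(w\in Q\),
\[
|f(w)|^pe^{-\frac{\alpha p}{2}|w|^2}\lesssim \frac1{|Q^{\sharp}|}\int_{Q^\sharp}|f(u)|^pe^{-\frac{\alpha p}{2}|u|^2}\,dA(u),
\]
where \(Q^{\sharp}\) is a disc of radius comparable to \(\delta\) about \(\zeta_Q\), say \(B(\zeta_Q,2C_{\rm out}\delta)\). This follows by applying the mean value inequality to \(|f\,e^{-\alpha\overline{w}\,\cdot}|^p\) on a disc centered at \(w\) and bounding the Gaussian discrepancy on a disc of radius \(\lesssim\delta\le\frac12\) by a constant. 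Combining this with the previous display yields
\[
|(C_\delta^mf)_{Q,j}|^p\lesssim \delta^{2p}\,\delta^{-2}\int_{Q^\sharp}|f(u)|^pe^{-\frac{\alpha p}{2}|u|^2}\,dA(u).
\]

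\emph{Summation over a shell.} By Lemma~\ref{lem:geo-fattened}(b), or directly by Lemma~\ref{lem:geo-pockets}(d), the discs \(Q^\sharp\) with \(Q\in\mathcal P_{\delta,k}\) have bounded overlap and are contained in \(A_{k-1}\cup A_k\cup A_{k+1}\). Summing over \(Q\in\mathcal P_{\delta,k}\) and over \(0\le j\le m\) therefore gives
\[
\|(C_\delta^mf)_k\|_{\ell^p}^p\lesssim (m+1)\,\delta^{2p-2}\int_{k-1}^{k+2}M_p^p(f,r)e^{-\frac{\alpha p}{2}r^2}r\,dr\lesssim \delta^{2p-2}(1+k)\,\widetilde B_p(f;k)^p.
\]
Taking \(p\)-th roots produces the factor \(\delta^{2-\frac2p}(1+k)^{\frac1p}\), which is the claimed blockwise estimate.

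For the endpoint \(p=\infty\) the argument is not needed, since here \(p<1\). The only delicate point is keeping the constant in the sub-mean-value step independent of \(\delta\). I expect this to hold because the relevant discs have radius at most \(C\delta\le C/2\), so the Gaussian correction factor is bounded uniformly in \(\delta\).
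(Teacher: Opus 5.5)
Your proposal is correct and follows essentially the same route as the paper: a crude pointwise bound on each jet coefficient, a Fock-type sub-mean-value estimate for $|f|^p$ on a region of diameter comparable to $\delta$, bounded overlap within a shell, and then the annular discretization of Proposition~\ref{prop:annular}. The paper uses the fattened pocket $Q^*$ with the fixed-center function $F_Q$ where you use discs about $w$, which is only cosmetic. One small adjustment is needed: the disc $B(\zeta_Q,2C_{\rm out}\delta)$ is guaranteed to lie in $A_{k-1}\cup A_k\cup A_{k+1}$ only when $2C_{\rm out}\delta\le 1$. For larger $\delta\le\frac12$, take $\widetilde B_p(f;k)$ to be the supremum over $|k'-k|\le N$ for a fixed absolute $N$; this changes nothing in your final summation.
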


\begin{proof}
Let $\mathcal P_\delta^\ast$ be the fattened family from Definition~\ref{def:fattened-pockets}. Fix
$Q\in\mathcal P_\delta$ with center $\zeta_Q$ and $0\le j\le m$. Since
$|e^{i(\cdot)}|=1$, $e^{-\frac{\alpha}{2}|w-\zeta_Q|^2}\le 1$, and
$|w-\zeta_Q|\lesssim \delta$ on $Q$, we obtain
\[
|(C_\delta^m f)_{Q,j}|
\le \frac{\alpha}{\pi}\int_Q |f(w)|e^{-\frac{\alpha}{2}|w|^2}
\, |w-\zeta_Q|^j\,dA(w)
\lesssim \delta^j \int_Q |f(w)|e^{-\frac{\alpha}{2}|w|^2}\,dA(w).
\]
Hence
\begin{equation}\label{eq:CQj-rough-rev}
|(C_\delta^m f)_{Q,j}|^p
\lesssim
\delta^{pj}\,|Q|^p\,
\sup_{w\in Q} |f(w)|^p e^{-\frac{\alpha p}{2}|w|^2}.
\end{equation}

\smallskip
\noindent
To estimate the supremum, define
\[
F_Q(w):=f(w)\,e^{-\alpha w\overline{\zeta_Q}+\frac{\alpha}{2}|\zeta_Q|^2},
\qquad w\in\C.
\]
Then $F_Q$ is entire, so $|F_Q|^p$ is subharmonic. Since each point of $Q$ is contained in a Euclidean ball of radius comparable to $\delta$ inside $Q^\ast$, the sub-mean inequality gives
\[
\sup_{w\in Q}|F_Q(w)|^p
\lesssim \frac{1}{|Q^\ast|}\int_{Q^\ast}|F_Q(u)|^p\,dA(u).
\]
Moreover,
\[
|F_Q(u)|^p
=
|f(u)|^p e^{-\frac{\alpha p}{2}|u|^2} e^{\frac{\alpha p}{2}|u-\zeta_Q|^2},
\]
and since $|u-\zeta_Q|\lesssim\delta\le\frac12$ on $Q^\ast$, we have
\[
\sup_{w\in Q} |f(w)|^p e^{-\frac{\alpha p}{2}|w|^2} \,
 \leq \, \sup_{w \in Q} |F_Q(w)|^p \, \lesssim \,
\frac{1}{|Q^\ast|}\int_{Q^\ast}|f(u)|^p e^{-\frac{\alpha p}{2}|u|^2}\,dA(u).
\]
Combining this with \eqref{eq:CQj-rough-rev} and using $|Q|\asymp |Q^\ast|\asymp \delta^2$, we obtain
\[
|(C_\delta^m f)_{Q,j}|^p
\lesssim
\delta^{pj}\,\delta^{2p-2}
\int_{Q^\ast}|f(u)|^p e^{-\frac{\alpha p}{2}|u|^2}\,dA(u).
\]
Summing over $0\le j\le m$ and using $\sum_{j=0}^m \delta^{pj}\lesssim 1$, we arrive at
\begin{equation}\label{eq:pocketwise-jet-analysis}
\sum_{j=0}^m |(C_\delta^m f)_{Q,j}|^p
\lesssim
\delta^{2p-2}
\int_{Q^\ast}|f(u)|^p e^{-\frac{\alpha p}{2}|u|^2}\,dA(u).
\end{equation}

\smallskip
\noindent
Now fix $k\ge0$. Summing \eqref{eq:pocketwise-jet-analysis} over all
$Q\in\mathcal P_{\delta,k}$ and using the bounded overlap of the family
$\mathcal P_\delta^\ast$, we get
\begin{align*}
\|(C_\delta^m f)_k\|_{\ell^p}^p
=
\sum_{Q\in\mathcal P_{\delta,k}}\sum_{j=0}^m |(C_\delta^m f)_{Q,j}|^p & \lesssim
\delta^{2p-2}
\sum_{Q\in\mathcal P_{\delta,k}}
\int_{Q^\ast}|f(w)|^p e^{-\frac{\alpha p}{2}|w|^2}\,dA(w) \\
&\lesssim
\delta^{2p-2}
\int_{\widetilde{A}_k}
|f(w)|^p e^{-\frac{\alpha p}{2}|w|^2}\,dA(w),
\end{align*}
where $\widetilde{A}_k: = A_{k-1}\cup A_k\cup A_{k+1}$
with the convention $A_{-1}=\emptyset$. Passing to polar coordinates,
\begin{align*}
\int_{\widetilde{A}_k}
|f(w)|^p e^{-\frac{\alpha p}{2}|w|^2}\,dA(w)
 =
\sum_{|n-k|\le1}\int_n^{n+1} M_p^p(f,r)\,e^{-\frac{\alpha p}{2}r^2}\,r\,dr 
 \lesssim
\sum_{|n-k|\le1}(1+n)\,B_p^p(f;n).
\end{align*}
Taking $p$-th roots yields
\begin{equation*}
\|(C_\delta^m f)_k\|_{\ell^p}
\lesssim
\delta^{\,2-\frac{2}{p}}
\Biggl(\sum_{|n-k|\le1}(1+n)\,B_p^p(f;n)\Biggr)^{\frac1p}
\lesssim
\delta^{\,2-\frac{2}{p}}
(1+k)^{\frac1p}\sup_{|n-k|\le1} B_p(f;n).
\end{equation*}

\smallskip
\noindent
Together with Proposition~\ref{prop:annular}, this gives, for $0<q<\infty$,
\begin{align*}
\|C_\delta^m f\|_{\ell_w^{p,q,m}}^q
&=
\sum_{k\ge0}
\Bigl[(1+k)^{\frac1q-\frac1p}\|(C_\delta^m f)_k\|_{\ell^p}\Bigr]^q 
\lesssim
\delta^{\,q\big(2-\frac{2}{p}\big)}
\sum_{k\ge0}
(1+k)\Bigl(\sup_{|n-k|\le1} B_p(f;n)\Bigr)^q \\
& \lesssim \delta^{\,q\big(2-\frac{2}{p}\big)}
\sum_{k\ge0}(1+k)\,B_p^q(f;k) \, \asymp \, \delta^{\,q\big(2-\frac{2}{p}\big)} \|f\|_{\Fpq}^q,
\end{align*}
since each index $n$ contributes to at most three neighboring values of $k$, and $(1+k)\asymp(1+n)$ whenever $|n-k|\le1$;
if $q=\infty$, then
\[
\|C_\delta^m f\|_{\ell_w^{p,\infty,m}}
=
\sup_{k\ge0}(1+k)^{-\frac1p}\|(C_\delta^m f)_k\|_{\ell^p}
\lesssim
\delta^{\,2-\frac{2}{p}}
\sup_{k\ge0}\sup_{|n-k|\le1} B_p(f;n) \, \asymp \, \delta^{\,2-\frac{2}{p}} \|f\|_{\FpqA{p}{\infty}{\alpha}}.
\]
This proves \eqref{eq:analysisboundedness}.
\end{proof}

\begin{lemma} 
\label{lem:jet-series-entire}
Let $m\in\mathbb N_0$, $0<p<1$, and $0<q\le\infty$. Let 
$\mathcal P_\delta$ be a $\delta$-pocket tiling of $\C$ with
$\delta\in(0,\frac12]$. Then, for every coefficient sequence
\(
c=(c_{Q,j})_{Q\in\mathcal P_\delta,\ 0\le j\le m}\in \ell_w^{p,q,m}\),
the series
\[
\sum_{Q\in\mathcal P_\delta}\sum_{j=0}^m c_{Q,j}\,\psi_{Q,j}(z)
\]
converges absolutely and locally uniformly on\/ $\C$. In particular, it defines
an entire function, and hence the jet synthesis operator
\[
S_\delta^m:\ell_w^{p,q,m}\to H(\C),
\qquad
(S_\delta^m c)(z):=\sum_{Q\in\mathcal P_\delta}\sum_{j=0}^m c_{Q,j}\,\psi_{Q,j}(z),
\]
is well-defined.
\end{lemma}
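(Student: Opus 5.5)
The argument follows the proof of Lemma~\ref{lem:kernel-series-entire}. The only changes are the extra polynomial factor in the jet atoms and the replacement of H\"older's inequality on the block by the embedding \(\ell^p\subset\ell^1\), which holds because \(p<1\).

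\emph{Pointwise bound on one atom.} Fix \(R>0\) and an integer \(N\ge 2R\). For \(|z|\le R\) and \(Q\in\mathcal P_{\delta,k}\) with \(k\ge N\), we have \(|z-\zeta_Q|\le k+1+R\lesssim 1+k\). Together with the kernel bound from the proof of Lemma~\ref{lem:kernel-series-entire}, this gives
\[
|\psi_{Q,j}(z)|\le \frac{\alpha^j}{j!}|z-\zeta_Q|^j|\kappa_{\zeta_Q}(z)|
\lesssim (1+k)^m e^{\frac{\alpha}{2}R^2}e^{-\frac{\alpha}{2}(k-R)^2}
\lesssim e^{-\frac{\alpha}{8}(k-R)^2},
\]
uniformly in \(0\le j\le m\), with constants depending on \(\alpha,m,R\). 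The polynomial factor \((1+k)^m\) is absorbed by halving the Gaussian exponent.

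\emph{Summing over a block.} Since \(0<p<1\), we have \(\sum_{Q,j}|c_{Q,j}|\le \|c_k\|_{\ell^p}\) on each block, with no dependence on the cardinality of \(\mathcal P_{\delta,k}\). Hence
\[
\sum_{Q\in\mathcal P_{\delta,k}}\sum_{j=0}^m |c_{Q,j}||\psi_{Q,j}(z)|
\lesssim e^{-\frac{\alpha}{8}(k-R)^2}(1+k)^{\frac1p-\frac1q}\,w_k\|c_k\|_{\ell^p}.
\]
The factor \((1+k)^{\frac1p-\frac1q}\) is again absorbed into the Gaussian.

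\emph{Summing over blocks.} For the tail over \(k\ge N\) we split according to \(q\), exactly as before.
\begin{itemize}
\item If \(0<q\le1\), bound \(w_k\|c_k\|_{\ell^p}\) by the \(\ell^q\)-tail of \((w_k\|c_k\|_{\ell^p})_k\), using \(\ell^q\subset\ell^1\).
\item If \(1<q<\infty\), apply H\"older's inequality with the Gaussian weights in \(\ell^{q/(q-1)}\).
\item If \(q=\infty\), sum the Gaussian weights times \(\|c\|_{\ell_w^{p,\infty,m}}\).
\end{itemize}
In every case the tail tends to \(0\) uniformly on \(|z|\le R\). The finitely many blocks with \(k<N\) form a finite sum of entire functions, so each is harmless. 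The series therefore converges absolutely and locally uniformly, and Weierstrass' theorem shows the sum is entire.

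The only point needing care is the first step. The growth of \(|z-\zeta_Q|^j\) must be controlled uniformly for \(z\) in a compact set as \(|\zeta_Q|\to\infty\), and this is clear from Gaussian domination. No serious obstacle arises.
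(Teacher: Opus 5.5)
Your proposal is correct and follows the paper's proof essentially step for step: a Gaussian bound on each atom for $|z|\le R$ and shells $k\ge N$, the embedding $\ell^p\subset\ell^1$ (valid since $p<1$) to sum each block without any cardinality factor, and the same three-case tail argument in $q$ as in Lemma~\ref{lem:kernel-series-entire}. The only cosmetic difference is in handling the polynomial factor. You bound $|z-\zeta_Q|^j\lesssim(1+k)^m$ and absorb it into the Gaussian in $k$, whereas the paper absorbs it directly via $t^je^{-\frac{\alpha}{2}t^2}\lesssim e^{-\frac{\alpha}{4}t^2}$.
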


\begin{proof}
Fix $R>0$ and an integer $N\ge R$. Let $Q\in\mathcal P_{\delta,k}$ with $k\ge N$, let
$0\le j\le m$, and let $|z|\le R$. 
Using the inequality
\begin{equation} \label{eq:poly-gauss-jet}
t^j e^{-\frac{\alpha}{2}t^2}\lesssim e^{-\frac{\alpha}{4}t^2},
\qquad t\ge0,\quad 0\le j\le m,
\end{equation}
we get
\[
|\psi_{Q,j}(z)|
=
\frac{\alpha^j}{j!}|z-\zeta_Q|^j
e^{\frac{\alpha}{2}|z|^2}e^{-\frac{\alpha}{2}|z-\zeta_Q|^2} \lesssim e^{-\frac{\alpha}{4} (k - R)^2},
\]
where the implicit constant depends only on $\alpha, m$, and $R$.
Hence
\[
\sum_{Q\in\mathcal P_{\delta,k}}\sum_{j=0}^m
|c_{Q,j}|\,|\psi_{Q,j}(z)|
\lesssim
e^{-\frac{\alpha}{4}(k-R)^2}
\sum_{Q\in\mathcal P_{\delta,k}}\sum_{j=0}^m |c_{Q,j}|.
\]
Since $0<p<1$, we have
\[
\sum_{Q\in\mathcal P_{\delta,k}}\sum_{j=0}^m |c_{Q,j}|
\le
\Biggl(\sum_{Q\in\mathcal P_{\delta,k}}\sum_{j=0}^m |c_{Q,j}|^p\Biggr)^{\frac1p}
=
\|c_k\|_{\ell^p}.
\]
Therefore
\[
\sum_{Q\in\mathcal P_{\delta,k}}\sum_{j=0}^m
|c_{Q,j}|\,|\psi_{Q,j}(z)|
\lesssim
e^{-\frac{\alpha}{4}(k-R)^2}(1+k)^{\frac1p-\frac1q}
\,w_k\|c_k\|_{\ell^p}.
\]

\smallskip
\noindent
The remainder of the proof is the same as in
Lemma~\ref{lem:kernel-series-entire}: after summing over $k\ge N$ and treating
the cases $0<q\le1$, $1<q<\infty$, and $q=\infty$, one finds that the tails
converge to zero uniformly on compact subsets of $\C$. Hence the series
defining $S_\delta^m c$ converges absolutely and locally uniformly on\/ $\C$,
so its sum is entire.
\end{proof}

\begin{lemma} \label{lem:Sr-synth}
Let $m\in\mathbb N_0$, $\alpha > 0$, $0<p<1$, and $0<q\le\infty$. Let $\mathcal P_\delta$ be a $\delta$-pocket tiling of $\C$ with $\delta\in(0,\frac12]$. Then the synthesis operator
\[
S_\delta^{m}:\ell_w^{p,q,m}\to \mathcal F_\alpha^{p,q},
\qquad
c\mapsto \sum_{Q\in\mathcal P_\delta}\sum_{j=0}^m c_{Q,j}\,\psi_{Q,j},
\]
is bounded. More precisely,
\begin{equation}\label{eq:Sru-main}
\|S_\delta^{m}c\|_{\Fpq}
\lesssim \,\|c\|_{\ell_w^{p,q,m}},
\qquad
c\in\ell_w^{p,q,m},
\end{equation}
where the implicit constant depends only on \(\alpha,p,q, m\), and is
independent of \(\delta\).
\end{lemma}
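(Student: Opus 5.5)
The plan is to follow the Banach-range scheme (Lemmas~\ref{lem:per_shell_synthesis} and \ref{lem:synthesis}), with Minkowski and Riesz--Thorin replaced by \(p\)-subadditivity. By Lemma~\ref{lem:jet-series-entire}, \(S_\delta^m c\) is entire, so \(B_p(S_\delta^m c;\ell)\) is defined.

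For \(0<p<1\) the map \(h\mapsto M_p^p(h,r)\) is subadditive, so
\[
M_p^p(S_\delta^m c,r)\le \sum_{k\ge0}\sum_{Q\in\mathcal P_{\delta,k}}\sum_{j=0}^m |c_{Q,j}|^p\,M_p^p(\psi_{Q,j},r).
\]
Everything therefore reduces to a single-atom circle-mean bound. Since \(|\psi_{Q,j}(z)|e^{-\frac{\alpha}{2}|z|^2}=\frac{\alpha^j}{j!}|z-\zeta_Q|^j e^{-\frac{\alpha}{2}|z-\zeta_Q|^2}\), inequality \eqref{eq:poly-gauss-jet} gives the pointwise bound \(\lesssim e^{-\frac{\alpha}{4}|z-\zeta_Q|^2}\), uniformly in \(j\le m\). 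This is the Gaussian profile already handled in Lemma~\ref{lem:remainderkernelestimate}, only with \(p\) replaced by the exponent \(p\) applied directly. Specifically,
\[
M_p^p(\psi_{Q,j},r)e^{-\frac{\alpha p}{2}r^2}\lesssim \frac{1}{2\pi}\int_0^{2\pi}e^{-\frac{\alpha p}{4}|re^{i\theta}-\zeta_Q|^2}\,d\theta .
\]
With \(\zeta_Q=\rho e^{i\vartheta}\), \(\rho\in[k,k+1)\), the same Bessel computation as in Lemma~\ref{lem:packet-profile} bounds this by \((1+r\rho)^{-1/2}e^{-\frac{\alpha p}{4}(r-\rho)^2}\). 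The case split \(r\ge\rho/2\) versus \(r<\rho/2\) from Lemma~\ref{lem:single-shell-synthesis} then yields
\[
\lesssim (1+k)^{-1}e^{-c(r-k)^2}, \qquad c=c(\alpha,p)>0 .
\]
The important feature is that no \(\delta\)-loss appears, because we never sum kernels inside a ring; the \(\ell^p\) sum absorbs the angular count. This is why the constant is \(\delta\)-independent.

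Combining these bounds gives
\[
M_p^p(S_\delta^m c,r)e^{-\frac{\alpha p}{2}r^2}\lesssim \sum_k (1+k)^{-1}e^{-c(r-k)^2}\|c_k\|_{\ell^p}^p .
\]
Taking the supremum over \(r\in[\ell,\ell+1)\) and using \(|r-k|\ge|\ell-k|-1\) gives
\[
B_p(S_\delta^m c;\ell)\le\Bigl(\sum_k K_0(\ell,k)Y_k^p\Bigr)^{1/p}
\]
with \(K_0(\ell,k)\lesssim e^{-c'(\ell-k)^2}(1+k)^{-p/q}\) and \(Y_k=w_k\|c_k\|_{\ell^p}\), since \((1+k)^{-1}\|c_k\|^p=(1+k)^{-p/q}Y_k^p\). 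Now set \(\tilde q=q/p\), \(\tilde Y_k=Y_k^p\), and \(\tilde T\tilde Y_\ell=\sum_k K_0(\ell,k)\tilde Y_k\). The kernel \(K_0\) has exactly the form required in Lemma~\ref{lem:gen-schur-test} with exponent \(\tilde q\). That lemma gives
\[
\sum_\ell(1+\ell)\bigl(\tilde T\tilde Y_\ell\bigr)^{\tilde q}\lesssim \sum_k \tilde Y_k^{\tilde q}=\sum_k Y_k^q ,
\]
or the corresponding supremum bound when \(q=\infty\). Since \((\tilde T\tilde Y_\ell)^{\tilde q}\ge B_p^q(S_\delta^m c;\ell)\), Proposition~\ref{prop:annular} then gives \(\|S_\delta^m c\|_{\Fpq}^q\lesssim\|c\|^q_{\ell_w^{p,q,m}}\).

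The main obstacle is the single-atom bound \((1+k)^{-1}e^{-c(r-k)^2}\) holding uniformly in \(\delta\), \(j\) and \(Q\). The care lies in the small-\(r\) region and the \(k=0\) shell, where \(\rho\) can be near \(0\). There I would use the trivial bound \(\le e^{-c(r-\rho)^2}\) together with \((1+k)\asymp1\).
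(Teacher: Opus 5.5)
Your proposal is correct and follows essentially the same route as the paper: $p$-subadditivity of $M_p^p$ reduces everything to the $\delta$-independent single-atom bound $(1+k)^{-1}e^{-c(r-k)^2}$ coming from the Gaussian profile of $\psi_{Q,j}$. The generalized Schur test with exponent $q/p$ and the annular discretization then finish the argument.

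The only differences are cosmetic. You apply subadditivity over all atoms at once, whereas the paper first groups the atoms into the shell pieces $F_k$. You also spell out the Bessel computation and the $r\ge\rho/2$ versus $r<\rho/2$ case split, where the paper simply cites the circular Gaussian estimate from Lemma~\ref{lem:remainderkernelestimate}.
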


\begin{proof}
Fix $c = (c_{Q, j})_{Q \in \mathcal{P}_{\delta}, 0 \leq j \leq m} \in \ell^{p,q,m}_{w}$.
Let $f=S_\delta^m c$, and, for each $k\ge0$, set
\[
F_k(z):=\sum_{Q\in\mathcal P_{\delta,k}}\sum_{j=0}^m c_{Q,j}\,\psi_{Q,j}(z).
\]

\smallskip
\noindent
Fix $k \geq 0$ and $Q\in\mathcal P_{\delta,k}$, $0\le j\le m$, and $r\ge0$. Using the inequality \eqref{eq:poly-gauss-jet} again, 
we obtain, for $z=re^{i\theta}$,
\[
|\psi_{Q,j}(re^{i\theta})|\,e^{-\frac{\alpha}{2}r^2}
=
\frac{\alpha^j}{j!}\,
|re^{i\theta}-\zeta_Q|^j\,
e^{-\frac{\alpha}{2}|re^{i\theta}-\zeta_Q|^2} \lesssim e^{-\frac{\alpha}{4}|re^{i\theta}-\zeta_Q|^2}
\]
where the implicit constant depends only on $\alpha$ and $m$.
Therefore, by the same circular Gaussian estimate used in the proof of Lemma~\ref{lem:remainderkernelestimate},
\begin{equation}\label{eq:jet-atom-profile}
M_p^p(\psi_{Q,j},r)\,e^{-\frac{\alpha p}{2}r^2} \, \lesssim \,
(1+|\zeta_Q|)^{-1}
e^{-\frac{\alpha p}{8}(r-|\zeta_Q|)^2} 
\, \lesssim \, 
(1+k)^{-1}e^{-\frac{\alpha p}{16}(r-k)^2},
\qquad r\ge0,
\end{equation}
where the implicit constant depends only on $\alpha, m$, and $p$.

\smallskip
\noindent
Fix $\ell \ge 0$.
Since $0<p<1$, 
\[
M_p^p(F_k,r)
\le
\sum_{Q\in\mathcal P_{\delta,k}}\sum_{j=0}^m
|c_{Q,j}|^p\,M_p^p(\psi_{Q,j},r).
\]
Using \eqref{eq:jet-atom-profile} and taking the supremum over $r\in[\ell,\ell+1)$, we get
\begin{equation}\label{eq:block-jet-synthesis}
B_p^p(F_k;\ell)
\lesssim
(1+k)^{-1}
e^{-\frac{\alpha p}{32}(\ell-k)^2}
\|c_k\|_{\ell^p}^p,
\qquad \ell,k\ge0.
\end{equation}

\smallskip
\noindent
Again using $0<p<1$, we obtain for the shellwise sum
\[
M_p^p(f,r)\le \sum_{k\ge0} M_p^p(F_k,r), \quad
\text{hence,} \quad 
B_p^p(f;\ell)\le \sum_{k\ge0} B_p^p(F_k;\ell).
\]
Using this and \eqref{eq:block-jet-synthesis}, we obtain
\begin{equation}\label{eq:jet-shell-conv}
B_p^p(f;\ell)
\lesssim
\sum_{k\ge0}
(1+k)^{-1}e^{-\frac{\alpha p}{32}(\ell-k)^2}\|c_k\|_{\ell^p}^p.
\end{equation}

\smallskip
\noindent
Assume that $0<q<\infty$, and set
\[
q_1:=\frac{q}{p},
\qquad
Y_k:=\bigl(w_k\|c_k\|_{\ell^p}\bigr)^p,
\qquad
K(\ell,k):=e^{-\frac{\alpha p}{32}(\ell-k)^2}(1+k)^{-\frac{1}{q_1}}.
\]
Then \eqref{eq:jet-shell-conv} becomes
\[
B_p^p(f;\ell)\lesssim \sum_{k\ge0} K(\ell,k)\,Y_k.
\]
Using the generalized Schur test
(Lemma~\ref{lem:gen-schur-test}) with exponent $ q_1=\frac{q}{p}$ gives
\[
\sum_{\ell\ge0}(1+\ell)\,B_p^q(f;\ell)
\lesssim
\sum_{\ell\ge0}(1+\ell)\,\Bigl(\sum_{k\ge0} K(\ell,k)\,Y_k\Bigr)^{\frac{q}{p}}
\lesssim
\sum_{k\ge0} Y_k^{\frac{q}{p}}
=
\sum_{k\ge0}\bigl(w_k\|c_k\|_{\ell^p}\bigr)^q.
\]
By the annular discretization principle in Proposition~\ref{prop:annular},
\[
\|f\|_{\Fpq}^q
\, \asymp \,
\sum_{\ell\ge0}(1+\ell)\,B_p^q(f;\ell)
\, \lesssim \,
\sum_{k\ge0}\bigl(w_k\|c_k\|_{\ell^p}\bigr)^q
\, = \,
\|c\|_{\ell_w^{p,q,m}}^q.
\]
If $q=\infty$, then $w_k=(1+k)^{-\frac1p}$ and \eqref{eq:jet-shell-conv} becomes
\[
B_p^p(f;\ell)
\, \lesssim \,
\sum_{k\ge0} e^{-\frac{\alpha p}{32}(\ell-k)^2}\,
\bigl(w_k\|c_k\|_{\ell^p}\bigr)^p.
\]
Applying the endpoint form of Lemma~\ref{lem:gen-schur-test}, we obtain
\[
\sup_{\ell\ge0} B_p^p(f;\ell) \, \lesssim \, \sup_{\ell\ge0} \sum_{k\ge0} e^{-\frac{\alpha p}{32}(\ell-k)^2}\,
\bigl(w_k\|c_k\|_{\ell^p}\bigr)^p \,
\lesssim \,
\sup_{k\ge0}\bigl(w_k\|c_k\|_{\ell^p}\bigr)^p.
\]
Using the endpoint form of Proposition~\ref{prop:annular}, it follows that
\[
\|f\|_{\FpqA{p}{\infty}{\alpha}}
\, \asymp \,
\sup_{\ell\ge0} B_p(f;\ell) \,
\lesssim \,
\sup_{k\ge0} w_k\|c_k\|_{\ell^p}
=
\|c\|_{\ell_w^{p,\infty,m}}.
\]
Thus, in all cases,
\[
\|S_\delta^m c\|_{\Fpq}
=
\|f\|_{\Fpq}
\lesssim
\|c\|_{\ell_w^{p,q,m}},
\]
which proves \eqref{eq:Sru-main}.
\end{proof}

\begin{corollary}[Quasi-Banach kernel synthesis]\label{cor:synthesis<1}
Let $\alpha > 0$, $0<p<1$, and $0<q\le\infty$. Let $\mathcal P_\delta$ be a $\delta$-pocket tiling of $\C$ with $\delta\in(0,\frac12]$. Then the kernel synthesis operator
\[
S_\delta^{\ker}:\ell_w^{p,q}\to \mathcal F_\alpha^{p,q},
\qquad
c\mapsto \sum_{Q\in\mathcal P_\delta} c_Q\,\kappa_{\zeta_Q},
\]
is bounded. More precisely, 
\[
\|S_\delta^{\ker}c\|_{\Fpq}
\lesssim \,\|c\|_{\ell_w^{p,q}},
\qquad
c\in\ell_w^{p,q},
\]
where the implicit constant depends only on \(\alpha, p, q\), and is
independent of \(\delta\).
\end{corollary}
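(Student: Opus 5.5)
This is the case \(m=0\) of Lemma~\ref{lem:Sr-synth}, and the plan is to read it off directly. For \(m=0\) the only jet atom is \(\psi_{Q,0}=\kappa_{\zeta_Q}\). So the jet synthesis operator \(S_\delta^0\) acts on sequences \((c_{Q,0})_{Q\in\mathcal P_\delta}\) by \(c\mapsto\sum_Q c_{Q,0}\kappa_{\zeta_Q}\), which is exactly the kernel synthesis operator \(S_\delta^{\ker}\).

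Next I will check that the coefficient spaces agree. With \(m=0\), the block quasi-norm on \(c_k=(c_{Q,0})_{Q\in\mathcal P_{\delta,k}}\) is \(\bigl(\sum_{Q\in\mathcal P_{\delta,k}}|c_{Q,0}|^p\bigr)^{1/p}\). The weights \(w_k=(1+k)^{\frac1q-\frac1p}\) are the same in both definitions. Hence the identification \(c_Q\leftrightarrow c_{Q,0}\) is an isometric isomorphism \(\ell_w^{p,q}\cong\ell_w^{p,q,0}\). Under it, \(S_\delta^{\ker}\) on \(\ell_w^{p,q}\) coincides with \(S_\delta^0\) on \(\ell_w^{p,q,0}\), and Lemma~\ref{lem:jet-series-entire} with \(m=0\) shows the series defines an entire function.

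Applying Lemma~\ref{lem:Sr-synth} with \(m=0\) then gives \(\|S_\delta^{\ker}c\|_{\mathcal F^{p,q}_\alpha}\lesssim\|c\|_{\ell_w^{p,q}}\). The constant depends only on \(\alpha,p,q\), because \(m=0\) is fixed, and it is independent of \(\delta\).

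I do not expect a real obstacle. The only point to verify is that no \(\delta\)-dependent factor entered Lemma~\ref{lem:Sr-synth}. It does not: its proof uses \(p\)-subadditivity, which incurs no cardinality loss, unlike the Banach-range Lemma~\ref{lem:synthesis}, which carries the factor \(\delta^{2(\frac1p-1)}\). It also uses the per-atom circular Gaussian profile \eqref{eq:jet-atom-profile}, which is uniform in \(\delta\).
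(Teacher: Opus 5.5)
Your proposal is correct and matches the paper's proof. The paper obtains this corollary as the special case $m=0$ of Lemma~\ref{lem:Sr-synth}, where $\psi_{Q,0}=\kappa_{\zeta_Q}$ and $\ell_w^{p,q,0}$ coincides with $\ell_w^{p,q}$. Your check of $\delta$-independence is also right: that lemma uses only $p$-subadditivity and the uniform per-atom profile \eqref{eq:jet-atom-profile}.
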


\begin{proof}
This is the special case $m=0$ of Lemma~\ref{lem:Sr-synth}.
\end{proof}

\subsubsection{Jet remainder estimate and exact jet decomposition}
\label{subsubsec:atomic-jet-stability}

We show that the remainder operator
\[
R_\delta^m=I-S_\delta^m C_\delta^m
\]
is small on $\mathcal F_\alpha^{p,q}$ when $\delta>0$ is sufficiently small. This yields an exact decomposition in terms of jet atoms by Neumann inversion.

\begin{lemma} \label{lem:remainderkernelestimate<1}
Let $m\in\mathbb N_0$, $\alpha>0$, $0<p<1$, and $0<q\le\infty$. Let $\mathcal P_\delta$ be a $\delta$-pocket tiling of $\C$ with $\delta\in(0,\frac12]$ and let 
$$
R_\delta^m = I-S_\delta^m C_\delta^m
$$
be the jet remainder operator in Definition \ref{def:jet-setup}.
Then 
\begin{equation}\label{eq:remainderkernelestimate<1}
B_p^p(R_\delta^m f;\ell)
\lesssim
\,\delta^{\,p(m+3)-2}
\sum_{k\ge0} e^{-\frac{\alpha p}{32}(\ell-k)^2}
\max_{|n-k|\le1} B_p^p(f;n)
\end{equation}
for every $f\in\mathcal F_\alpha^{p,q}$ and every $\ell\ge0$, where the implicit
constant depends only on \(\alpha, p, m\), and is independent of
\(q, \delta, f\), and \(\ell\).
\end{lemma}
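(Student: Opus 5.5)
The plan is to follow the proof of Lemma~\ref{lem:remainderkernelestimate}, replacing the first-order estimate $|e^u-1|\le |u|e^{|u|}$ by the Taylor remainder of order $m$. Angular sums will be handled by $p$-subadditivity instead of Minkowski, and the local $L^1$ data will be controlled through subharmonicity, as in Lemma~\ref{lem:analysisboundedness}.

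\emph{Step 1: pointwise representation of the remainder.} By Proposition~\ref{prop:fock-reproducing}, $f=P_\alpha f$ with an absolutely convergent integral; Lemma~\ref{lem:p-est} supplies the integrable majorant. Splitting the integral over the pockets and using the identity from the proof of Lemma~\ref{lem:remainderkernelestimate}, we have for $w\in Q$ and $\zeta=\zeta_Q$
\[
e^{\alpha z\overline w-\alpha|w|^2}
=e^{-\frac{\alpha}{2}|w|^2}e^{i\alpha\,\im(\zeta\overline w)}e^{-\frac{\alpha}{2}|w-\zeta|^2}\,\kappa_{\zeta}(z)\,e^{\alpha(z-\zeta)\overline{(w-\zeta)}}.
\]
Expanding the last exponential as $\sum_{j\le m}\frac{\alpha^j}{j!}(z-\zeta)^j\overline{(w-\zeta)}^{\,j}+E_m$ reproduces exactly $\sum_j (C^m_\delta f)_{Q,j}\psi_{Q,j}(z)$. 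The sum $S^m_\delta C^m_\delta f$ converges absolutely by Lemmas~\ref{lem:analysisboundedness} and \ref{lem:jet-series-entire}, so we may subtract term by term. This gives
\[
R^m_\delta f(z)=\frac{\alpha}{\pi}\sum_{Q}\int_Q f(w)e^{-\frac{\alpha}{2}|w|^2}e^{i\alpha\,\im(\zeta_Q\overline w)}e^{-\frac{\alpha}{2}|w-\zeta_Q|^2}\kappa_{\zeta_Q}(z)\,E_m\bigl(\alpha(z-\zeta_Q)\overline{(w-\zeta_Q)}\bigr)\,dA(w),
\]
where $|E_m(u)|\le \frac{|u|^{m+1}}{(m+1)!}e^{|u|}$.

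\emph{Step 2: Gaussian envelope.} For $w\in Q$ we have $|w-\zeta_Q|\le C_{\rm out}\delta$ and $|\kappa_{\zeta_Q}(z)|e^{-\frac{\alpha}{2}|z|^2}=e^{-\frac{\alpha}{2}|z-\zeta_Q|^2}$. Hence the weighted integrand is at most
\[
\delta^{m+1}|z-\zeta_Q|^{m+1}e^{C\delta|z-\zeta_Q|}e^{-\frac{\alpha}{2}|z-\zeta_Q|^2}\lesssim \delta^{m+1}e^{-\frac{\alpha}{4}|z-\zeta_Q|^2},
\]
using $\delta\le\frac12$. Setting $a_Q(f)=\int_Q|f|e^{-\frac{\alpha}{2}|w|^2}dA$, we obtain
\[
|R^m_\delta f(z)|e^{-\frac{\alpha}{2}|z|^2}\lesssim \delta^{m+1}\sum_Q a_Q(f)\,e^{-\frac{\alpha}{4}|z-\zeta_Q|^2}.
\]

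\emph{Step 3: circle means and local data.} Since $0<p<1$, raising to the power $p$ and integrating over the circle $|z|=r$ gives $M_p^p(R^m_\delta f,r)e^{-\frac{\alpha p}{2}r^2}\lesssim \delta^{p(m+1)}\sum_Q a_Q(f)^p M_p^p(e^{-\frac{\alpha}{4}|\cdot-\zeta_Q|^2},r)$. The circular Gaussian estimate already used for \eqref{eq:jet-atom-profile} bounds the last factor by $(1+k)^{-1}e^{-\frac{\alpha p}{16}(r-k)^2}$ for $Q\in\mathcal P_{\delta,k}$. For the coefficients, we use $a_Q(f)^p\le |Q|^p\sup_Q(|f|e^{-\frac{\alpha}{2}|\cdot|^2})^p$ together with the subharmonicity argument of Lemma~\ref{lem:analysisboundedness} applied to $F_Q$. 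This gives $a_Q(f)^p\lesssim \delta^{2p-2}\int_{Q^*}|f|^pe^{-\frac{\alpha p}{2}|u|^2}dA(u)$. Summing over $Q\in\mathcal P_{\delta,k}$, with bounded overlap from Lemma~\ref{lem:geo-fattened}, and passing to polar coordinates, we get
\[
\sum_{Q\in\mathcal P_{\delta,k}}a_Q(f)^p\lesssim \delta^{2p-2}(1+k)\max_{|n-k|\le1}B^p_p(f;n).
\]
The factor $(1+k)$ cancels against $(1+k)^{-1}$, leaving $\delta^{p(m+1)+2p-2}=\delta^{p(m+3)-2}$. Finally, taking the supremum over $r\in[\ell,\ell+1)$ and using $(r-k)^2\ge\frac12(\ell-k)^2-1$ yields \eqref{eq:remainderkernelestimate<1}.

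The main obstacle is Step 1. One must check that the Taylor truncation reproduces exactly the jet analysis/synthesis pair, including the phase factor $e^{i\alpha\,\im(\zeta_Q\overline w)}$ and the extra Gaussian $e^{-\frac{\alpha}{2}|w-\zeta_Q|^2}$. One must also justify the interchange of sum and integral, which I would do by absolute convergence together with local uniform convergence of both series. The remaining steps are direct adaptations of estimates already established.
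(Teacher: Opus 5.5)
Your proposal is correct and follows essentially the same route as the paper. The factorization of $e^{\alpha z\overline w-\alpha|w|^2}$ turns the remainder into an order-$m$ Taylor tail with Gaussian envelope $\delta^{m+1}e^{-c|z-\zeta_Q|^2}$. From there, $p$-subadditivity on circles, the circular Gaussian estimate, subharmonic control of the pocket integrals over the fattened pockets $Q^*$, and cancellation of the factor $1+k$ give the power $\delta^{p(m+1)+2p-2}$.

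The only difference is cosmetic. You bound $|R_\delta^m f|$ pointwise through $a_Q(f)$ before taking $p$-th powers, whereas the paper writes $R_\delta^m f=\sum_Q f_Q$ and keeps $F_Q$ inside each pocket integral. The two are equivalent.
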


\begin{proof}
Let $f\in\Fpq$. By the reproducing formula in Proposition \ref{prop:fock-reproducing},
\[
f(z)
=
\frac{\alpha}{\pi}\sum_{Q\in\mathcal P_\delta}
\int_Q f(w)\,e^{-\frac{\alpha}{2}|w|^2+i\alpha\im(\zeta_Q\overline w)}
\,e^{\alpha\overline wz-\frac{\alpha}{2}|w|^2-i\alpha\im(\zeta_Q\overline w)}\,dA(w).
\]
On the other hand, by the definitions of $C_\delta^m$ and $S_\delta^m$,
\begin{align*}
S_\delta^m C_\delta^m f(z)
&=
\frac{\alpha}{\pi}\sum_{Q\in\mathcal P_\delta}
\int_Q
f(w)\,e^{-\frac{\alpha}{2}|w|^2+i\alpha\im(\zeta_Q\overline w)}
e^{-\frac{\alpha}{2}|w-\zeta_Q|^2} \\
&\qquad\qquad\qquad\qquad \times
\sum_{j=0}^m \frac{\big[\alpha(z-\zeta_Q)(\overline w-\overline{\zeta_Q})\big]^j}{j!}\,
\kappa_{\zeta_Q}(z)\,dA(w).
\end{align*}
Hence
\begin{equation}\label{eq:Rudelta<1-rev}
R_\delta^m f(z)
=
\frac{\alpha}{\pi}\sum_{Q\in\mathcal P_\delta}
\int_Q
f(w)\,e^{-\frac{\alpha}{2}|w|^2+i\alpha\im(\zeta_Q\overline w)}
\,H(z,\zeta_Q,w)\,dA(w),
\end{equation}
where
\[
H(z,\zeta_Q,w)
:=
e^{\alpha\overline wz-\frac{\alpha}{2}|w|^2-i\alpha\im(\zeta_Q\overline w)}
-
e^{-\frac{\alpha}{2}|w-\zeta_Q|^2}
\sum_{j=0}^m \frac{[\alpha(z-\zeta_Q)(\overline w-\overline{\zeta_Q})]^j}{j!}
\,\kappa_{\zeta_Q}(z).
\]

\smallskip
\noindent
We estimate the kernel error.  A direct computation gives
\[
e^{\alpha\overline wz-\frac{\alpha}{2}|w|^2-i\alpha\im(\zeta_Q\overline w)}
=
e^{-\frac{\alpha}{2}|w-\zeta_Q|^2}\,
\kappa_{\zeta_Q}(z)\,
e^{\alpha(z-\zeta_Q)(\overline w-\overline{\zeta_Q})}.
\]
Therefore
\[
H(z,\zeta_Q,w)
=
e^{-\frac{\alpha}{2}|w-\zeta_Q|^2}\,
\kappa_{\zeta_Q}(z)\,
\Biggl(
e^{\alpha(z-\zeta_Q)(\overline w-\overline{\zeta_Q})}
-
\sum_{j=0}^m \frac{[\alpha(z-\zeta_Q)(\overline w-\overline{\zeta_Q})]^j}{j!}
\Biggr).
\]
If $w\in Q$, then $|w-\zeta_Q|\lesssim\delta$. Using the Taylor remainder estimate and
\eqref{eq:poly-gauss-jet} for $j = m+1$, we obtain
\begin{align*}
|H(z,\zeta_Q,w)|\,e^{-\frac{\alpha}{2}|z|^2}
&\lesssim
e^{-\frac{\alpha}{2}|z-\zeta_Q|^2}
\,|z-\zeta_Q|^{m+1}|w-\zeta_Q|^{m+1}
e^{\alpha|z-\zeta_Q||w-\zeta_Q|} \\
&\lesssim
|w-\zeta_Q|^{m+1} e^{\alpha|w-\zeta_Q|^2}
\,|z-\zeta_Q|^{m+1}e^{-\frac{\alpha}{4}|z-\zeta_Q|^2} \\
&\lesssim
\delta^{m+1} e^{-\frac{\alpha}{8}|z-\zeta_Q|^2}.
\end{align*}

\smallskip
\noindent
For each $Q\in\mathcal P_\delta$, define
\begin{align*}
f_Q(z)
:= & 
\frac{\alpha}{\pi}\int_Q
f(w)\,e^{-\frac{\alpha}{2}|w|^2+i\alpha\im(\zeta_Q\overline w)}
\,H(z,\zeta_Q,w)\,dA(w) \\
= & 
\frac{\alpha}{\pi}\int_Q
f(w)\,e^{- \alpha \overline{\zeta_Q} w + \frac{\alpha}{2} |\zeta_Q|^2}\, e^{ - \frac{\alpha}{2}|w - \zeta_Q|^2} 
\,H(z,\zeta_Q,w)\,dA(w).
\end{align*}
Then \eqref{eq:Rudelta<1-rev} becomes
\[
R_\delta^m f=\sum_{Q\in\mathcal P_\delta} f_Q .
\]
Since $0<p<1$, we have
\[
M_p^p(R_\delta^m f,r)\le \sum_{Q\in\mathcal P_\delta} M_p^p(f_Q,r).
\]

\smallskip
\noindent
Now fix $Q\in\mathcal P_{\delta,k}$. Using the bound above, we get
\begin{align*}
M_p^p(f_Q,r)e^{-\frac{\alpha p}{2}r^2}
&\lesssim
\delta^{p(m+1)}
\left(\int_0^{2\pi} e^{-\frac{\alpha p}{8}|re^{i\theta}-\zeta_Q|^2}\,\frac{d\theta}{2\pi}\right)
\left(
\int_Q \Bigl|f(w)e^{-\alpha\overline{\zeta_Q}w+\frac{\alpha}{2}|\zeta_Q|^2}\Bigr|\,dA(w)
\right)^p.
\end{align*}
By the same circular Gaussian estimate used earlier in the proof of Lemma \ref{lem:remainderkernelestimate},
\[
\int_0^{2\pi} e^{-\frac{\alpha p}{8}|re^{i\theta}-\zeta_Q|^2}\,\frac{d\theta}{2\pi}
\lesssim
(1+k)^{-1} e^{-\frac{\alpha p}{16}(r-k)^2},
\qquad Q\in\mathcal P_{\delta,k}.
\]
Moreover, exactly as in the proof of Lemma~\ref{lem:analysisboundedness},
\[
\left(
\int_Q \Bigl|f(w)e^{-\alpha\overline{\zeta_Q}w+\frac{\alpha}{2}|\zeta_Q|^2}\Bigr|\,dA(w)
\right)^p
\lesssim
|Q|^p \sup_{w\in Q}\Bigl|f(w)e^{-\alpha\overline{\zeta_Q}w+\frac{\alpha}{2}|\zeta_Q|^2}\Bigr|^p
\]
and
\[
\sup_{w\in Q}\Bigl|f(w)e^{-\alpha\overline{\zeta_Q}w+\frac{\alpha}{2}|\zeta_Q|^2}\Bigr|^p
\lesssim
\frac{1}{|Q^\ast|}
\int_{Q^\ast} |f(u)|^p e^{-\frac{\alpha p}{2}|u|^2}\,dA(u).
\]
Since $|Q|\asymp |Q^\ast|\asymp \delta^2$, it follows that
\[
M_p^p(f_Q,r)e^{-\frac{\alpha p}{2}r^2}
\lesssim
\delta^{p(m+3)-2}
(1+k)^{-1} e^{-\frac{\alpha p}{16}(r-k)^2}
\int_{Q^\ast}|f(u)|^p e^{-\frac{\alpha p}{2}|u|^2}\,dA(u).
\]

\smallskip
\noindent
Now fix $\ell\ge0$ and take the supremum over $r\in[\ell,\ell+1)$. Summing over all
$Q\in\mathcal P_{\delta,k}$ and using the bounded overlap of the fattened pockets, we get
\begin{align*}
B_p^p(R_\delta^m f;\ell)
&\lesssim
\delta^{p(m+3)-2}
\sum_{k\ge0}
(1+k)^{-1} e^{-\frac{\alpha p}{32}(\ell-k)^2}
\int_{A_{k-1}\cup A_k\cup A_{k+1}}
|f(u)|^p e^{-\frac{\alpha p}{2}|u|^2}\,dA(u).
\end{align*}
Passing to polar coordinates and arguing exactly as in the proof of
Lemma~\ref{lem:analysisboundedness}, we obtain
\[
\int_{A_{k-1}\cup A_k\cup A_{k+1}}
|f(u)|^p e^{-\frac{\alpha p}{2}|u|^2}\,dA(u)
\lesssim
(1+k)\max_{|n-k|\le1} B_p^p(f;n).
\]
Substituting this into the previous estimate yields
\[
B_p^p(R_\delta^m f;\ell)
\lesssim
\delta^{p(m+3)-2}
\sum_{k\ge0}
e^{-\frac{\alpha p}{32}(\ell-k)^2}
\max_{|n-k|\le1} B_p^p(f;n),
\]
which is \eqref{eq:remainderkernelestimate<1}.
\end{proof}

\begin{lemma} \label{lem:stability<1}
Let $\alpha>0$, $0<p<1$, $0<q\le\infty$, and
\[
m:=\Bigl\lfloor \frac{2(1-p)}{p}\Bigr\rfloor.
\]
Then there exists $\delta_0=\delta_0(\alpha,p,q)\in(0,\frac12]$ such that for every $\delta$-pocket tiling $\mathcal{P}_{\delta}$ of $\C$ with $0<\delta<\delta_0$,
\[
\|I-S_\delta^m C_\delta^m\|_{\Fpq\to\Fpq}\le \frac12.
\]
\end{lemma}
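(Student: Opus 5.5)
The proof follows the Banach-range argument of Lemma~\ref{lem:stability}, with $p$-th powers in place of the Minkowski step. The choice of $m$ guarantees $m+1>\frac{2(1-p)}{p}$, that is,
\[
p(m+3)-2>0 .
\]
Hence the factor $\varepsilon(\delta):=\delta^{p(m+3)-2}$ in Lemma~\ref{lem:remainderkernelestimate<1} tends to $0$ as $\delta\to0$. Everything else must be $\delta$-independent, and Lemma~\ref{lem:remainderkernelestimate<1} already provides that.

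For $f\in\mathcal F_\alpha^{p,q}$, Lemma~\ref{lem:remainderkernelestimate<1} gives
\[
B_p^p(R_\delta^m f;\ell)\le C_0\,\varepsilon(\delta)\sum_{k\ge0}e^{-\frac{\alpha p}{32}(\ell-k)^2}\widetilde B_k,
\qquad
\widetilde B_k:=\max_{|n-k|\le1}B_p^p(f;n),
\]
with $C_0=C_0(\alpha,p,m)$. The case $q=\infty$ is immediate: take the supremum over $\ell$ and use $\sum_k e^{-\frac{\alpha p}{32}(\ell-k)^2}\lesssim1$. This gives $\sup_\ell B_p^p(R_\delta^m f;\ell)\lesssim\varepsilon(\delta)\sup_n B_p^p(f;n)$, and the endpoint form of Proposition~\ref{prop:annular} yields $\|R^m_\delta f\|\lesssim\varepsilon(\delta)^{1/p}\|f\|$.

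For $0<q<\infty$, repeat the device of Lemma~\ref{lem:Sr-synth}. Set $q_1:=\frac{q}{p}$, $Y_k:=(1+k)^{\frac1{q_1}}\widetilde B_k$, and
\[
K(\ell,k):=e^{-\frac{\alpha p}{32}(\ell-k)^2}(1+k)^{-\frac1{q_1}}.
\]
Then $B^p_p(R_\delta^m f;\ell)\le C_0\varepsilon(\delta)\sum_kK(\ell,k)Y_k$. Lemma~\ref{lem:gen-schur-test} with exponent $q_1$ and $\beta=2$ gives
\[
\sum_\ell(1+\ell)B_p^q(R_\delta^m f;\ell)\lesssim\varepsilon(\delta)^{q/p}\sum_k(1+k)\widetilde B_k^{\,q/p}.
\]
Next, $\widetilde B_k^{\,q/p}\le\sum_{|n-k|\le1}B_p^q(f;n)$ and $(1+k)\asymp(1+n)$ for $|n-k|\le1$. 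Since each $n$ is counted at most three times, the right side is $\lesssim\varepsilon(\delta)^{q/p}\sum_n(1+n)B_p^q(f;n)$. Proposition~\ref{prop:annular}, applied to $R^m_\delta f$ and to $f$, then gives
\[
\|R_\delta^m f\|_{\mathcal F_\alpha^{p,q}}\le C\,\delta^{\frac{p(m+3)-2}{p}}\|f\|_{\mathcal F_\alpha^{p,q}},
\qquad C=C(\alpha,p,q).
\]
(Here $m$ depends only on $p$.) One also needs $R_\delta^m f$ to be a well-defined entire function, so that Proposition~\ref{prop:annular} applies. This follows from Lemmas~\ref{lem:analysisboundedness} and~\ref{lem:jet-series-entire}.

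Finally choose $\delta_0\le\frac12$ so that $C\delta_0^{(p(m+3)-2)/p}\le\frac12$. The main point to check is that no hidden $\delta^{-2}$ loss appears. Such a loss would occur if one bounded $\|C^m_\delta\|\cdot\|S^m_\delta\|$ directly, since that product grows like $\delta^{2-2/p}$. It is exactly why the remainder has to be estimated through the kernel-error form of Lemma~\ref{lem:remainderkernelestimate<1}, whose exponent $p(m+3)-2$ is positive precisely because of the choice of $m$.
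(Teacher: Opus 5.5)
Your proof is correct and is essentially the paper's argument. It uses the same application of Lemma~\ref{lem:remainderkernelestimate<1}, the same Schur weights $Y_k=(1+k)^{p/q}\max_{|n-k|\le1}B_p^p(f;n)$ and kernel $K(\ell,k)$ with exponent $q/p$, the same three-fold overlap step, and the same rate $\delta^{\eta}$ with $\eta=m+3-\frac{2}{p}>0$. Your explicit check that $R_\delta^m f$ is entire, via Lemmas~\ref{lem:analysisboundedness} and~\ref{lem:jet-series-entire}, is a small point the paper leaves implicit.
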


\begin{proof}
By Lemma~\ref{lem:remainderkernelestimate<1}, there exists
$C_0=C_0(\alpha,p,m)>0$ such that
\begin{equation}\label{eq:bp<1-stability}
B_p^p(R_\delta^m f;\ell)
\le
C_0\,\delta^{p(m+3)-2}
\sum_{k\ge0} e^{-\frac{\alpha p}{32}(\ell-k)^2}
\max_{|n-k|\le1} B_p^p(f;n)
\end{equation}
for every $f\in\Fpq$ and $\ell\ge0$, where $R_\delta^m:=I-S_\delta^m C_\delta^m$.

\smallskip
\noindent
Set
\(
\eta:=m+3-\frac2p
\).
Since
\(
m=\Bigl\lfloor \frac2p-2\Bigr\rfloor\),
we have
\(0<\eta\le 1\).
Assume first that $0<q<\infty$, and put
\[
q_1: = \frac{q}{p}, \qquad Y_k: = \left((1 + k)^{\frac{1}{q}} \max_{|n-k| \leq 1} B_p(f;n)\right)^p, \qquad K(\ell, k): = C_0 e^{-\frac{\alpha p}{32}(\ell-k)^2}\, (1 + k)^{-\frac{1}{q_1}}.
\]
Thus \eqref{eq:bp<1-stability} may be rewritten as
\[
B_p^p(R_\delta^m f;\ell)
\le
\,\delta^{p\eta}
\sum_{k\ge0} K(\ell, k) Y_k.
\]
Applying the generalized Schur test in Lemma~\ref{lem:gen-schur-test} to exponent $q_1 = \frac{q}{p}$,
we obtain
\begin{align*}
\sum_{\ell\ge0}(1+\ell)\,B_p^q(R_\delta^m f;\ell)
 \lesssim  \delta^{q \eta}
\sum_{\ell\ge0}(1+\ell)\,\Bigl(\sum_{k\ge0} K(\ell, k) Y_k\Bigr)^{\frac{q}{p}} 
\lesssim
\delta^{q\eta}\sum_{k\ge0}(1+k)\,\max_{|n-k|\le1} B_p^q(f;n),
\end{align*}
where the implicit constant depends only on $\alpha, p, q$, and $m$.
Since
 each index $n$ contributes to at most three neighboring values of $k$, while
$(1+k)\asymp(1+n)$ whenever $|n-k|\le1$, it follows that
\[
\sum_{\ell\ge0}(1+\ell)\,B_p^q(R_\delta^m f;\ell)
 \lesssim  \delta^{q \eta} 
\sum_{k\ge0}(1+k)\,B_p^q(f;k).
\]
Hence, by the annular discretization principle in Proposition~\ref{prop:annular},
\[
\|R_\delta^m f\|_{\Fpq}^q
\asymp
\sum_{\ell\ge0}(1+\ell)\,B_p^q(R_\delta^m f;\ell)
\lesssim
\delta^{q\eta}\sum_{k\ge0}(1+k)\,B_p^q(f;k)
\asymp
\delta^{q\eta}\|f\|_{\Fpq}^q.
\]
Therefore
\begin{equation}\label{eq:R-stability-qfinite}
\|R_\delta^m f\|_{\Fpq}\lesssim \delta^\eta \|f\|_{\Fpq},
\qquad 0<q<\infty.
\end{equation}
If $q=\infty$, then by \eqref{eq:bp<1-stability},
\[
B_p^p(R_\delta^m f;\ell)
\lesssim
\delta^{p\eta}
\sum_{k\ge0} e^{-\frac{\alpha p}{32}(\ell-k)^2} \max_{|n-k|\le1} B_p^p(f;n).
\]
Applying the endpoint form of Lemma~\ref{lem:gen-schur-test}, we get
\[
\sup_{\ell\ge0} B_p^p(R_\delta^m f;\ell)
\lesssim
\delta^{p\eta}\sup_{k\ge0}\max_{|n-k|\le1} B_p^p(f;n)
\lesssim
\delta^{p\eta}\sup_{n\ge0} B_p^p(f;n).
\]
Using the endpoint form of Proposition~\ref{prop:annular}, it follows that
\begin{equation}\label{eq:R-stability-qinfty}
\|R_\delta^m f\|_{\mathcal F_\alpha^{p,\infty}}
\lesssim
\delta^\eta \|f\|_{\mathcal F_\alpha^{p,\infty}}.
\end{equation}

\smallskip
\noindent
Combining \eqref{eq:R-stability-qfinite} and \eqref{eq:R-stability-qinfty}, there exists a constant
$C=C(\alpha,p,q,m)>0$ such that
\[
\|R_\delta^m f\|_{\Fpq}\le C\,\delta^\eta \|f\|_{\Fpq},
\qquad f\in\Fpq.
\]
Choose $\delta_0=\delta_0(\alpha,p,q,m)\in(0,\frac12]$ so that
\[
C\,\delta_0^\eta\le \frac12.
\]
Then for every $0<\delta<\delta_0$,
\[
\|I-S_\delta^m C_\delta^m\|_{\Fpq\to\Fpq}
=
\|R_\delta^m\|_{\Fpq\to\Fpq}
\le \frac12,
\]
as required.
\end{proof}

\begin{theorem}[Jet atomic decomposition]
\label{thm:atomic-quasi-jet}
Let $\alpha>0$, $0<p<1$, and $0<q\le \infty$, and set
\(
m:=\Bigl\lfloor \frac{2(1-p)}{p}\Bigr\rfloor \).
Then there exists
\(
\delta_0=\delta_0(\alpha,p,q)\in (0,\tfrac12]
\)
such that for every $\delta\in(0,\delta_0)$ and every $\delta$-pocket tiling
$\mathcal P_\delta$ of $\C$ with centers $\{\zeta_Q\}_{Q\in\mathcal P_\delta}$,
the following assertions hold, with
all implicit constants depending only on \(\alpha,p,q\), and \(\delta\).

\smallskip
\noindent
\emph{(i) Jet synthesis.}
The jet synthesis operator
\[
S_\delta^m c(z)
:=
\sum_{Q\in\mathcal P_\delta}\sum_{j=0}^m
c_{Q,j}\,\frac{[\alpha(z-\zeta_Q)]^j}{j!}\,\kappa_{\zeta_Q}(z),
\qquad
c=(c_{Q,j})_{Q\in\mathcal P_\delta,\ 0\le j\le m},
\]
is a bounded linear operator from $\ell_w^{p,q,m}$ to $\mathcal F_\alpha^{p,q}$, and
\[
\|S_\delta^m c\|_{\mathcal F_\alpha^{p,q}}
\lesssim \,\|c\|_{\ell_w^{p,q,m}},
\qquad c\in \ell_w^{p,q,m}.
\]

\smallskip
\noindent
\emph{(ii) Exact jet expansion.}
For every $f\in\mathcal F_\alpha^{p,q}$ there exists
$c=(c_{Q,j})_{Q\in\mathcal P_\delta,\ 0\le j\le m}\in \ell_w^{p,q,m}$ such that
\(
f=S_\delta^m c\),
and
\[
\|c\|_{\ell_w^{p,q,m}}
\lesssim \,\|f\|_{\mathcal F_\alpha^{p,q}}.
\]
\end{theorem}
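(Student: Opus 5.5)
The plan mirrors the Banach-range argument of Subsection~\ref{subsubsec:atomic-banach-exact}, now with the jet operators in place of the kernel operators. Fix $m=\lfloor 2(1-p)/p\rfloor$. Let $\delta_0=\delta_0(\alpha,p,q)$ be the constant from Lemma~\ref{lem:stability<1}. The dependence on $m$ is harmless, since $m$ is determined by $p$. Fix $0<\delta<\delta_0$ and a $\delta$-pocket tiling $\mathcal P_\delta$.

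\emph{Part (i).} Note first that $\frac{[\alpha(z-\zeta_Q)]^j}{j!}\kappa_{\zeta_Q}=\psi_{Q,j}$. So the operator in (i) is exactly the jet synthesis operator of Definition~\ref{def:jet-setup}. Lemma~\ref{lem:jet-series-entire} shows that the series converges to an entire function. Lemma~\ref{lem:Sr-synth} then gives $\|S_\delta^m c\|_{\mathcal F_\alpha^{p,q}}\lesssim\|c\|_{\ell_w^{p,q,m}}$, with a constant independent of $\delta$. Linearity is immediate, because the series converges absolutely.

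\emph{Part (ii).} Set $R_\delta^m=I-S_\delta^mC_\delta^m$. By Lemma~\ref{lem:stability<1}, $\|R_\delta^m\|_{\mathcal F_\alpha^{p,q}\to\mathcal F_\alpha^{p,q}}\le\frac12$. By Lemma~\ref{lem-norm}, $\mathcal F_\alpha^{p,q}$ is complete and satisfies the $s$-triangle inequality with $s=\min\{1,p,q\}$. Lemma~\ref{lem:neumann-quasi} therefore applies. It shows that $S_\delta^mC_\delta^m=I-R_\delta^m$ is invertible on $\mathcal F_\alpha^{p,q}$, with
\[
\|(I-R_\delta^m)^{-1}\|\le(1-2^{-s})^{-1/s}.
\]

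Given $f\in\mathcal F_\alpha^{p,q}$, put $g:=(I-R_\delta^m)^{-1}f$ and $c:=C_\delta^m g$. Lemma~\ref{lem:analysisboundedness} gives
\[
\|c\|_{\ell_w^{p,q,m}}\lesssim\delta^{2-\frac2p}\|g\|_{\mathcal F_\alpha^{p,q}}\lesssim_\delta\|f\|_{\mathcal F_\alpha^{p,q}}.
\]
The factor $\delta^{2-2/p}$ blows up as $\delta\to0$, but here $\delta$ is fixed, which is why the implicit constants are allowed to depend on $\delta$. Finally,
\[
S_\delta^m c=S_\delta^mC_\delta^m g=(I-R_\delta^m)g=f.
\]

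There is no real obstacle: all of the substance lies in the remainder estimate, Lemmas~\ref{lem:remainderkernelestimate<1} and~\ref{lem:stability<1}. The only points to check are that each operator is well defined on the relevant space, and that the operator quasi-norm used in Lemma~\ref{lem:neumann-quasi} coincides with the one bounded in Lemma~\ref{lem:stability<1}.
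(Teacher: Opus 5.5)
Your proposal is correct and matches the paper's proof essentially verbatim. Synthesis comes from Lemma~\ref{lem:Sr-synth}, smallness of $R_\delta^m$ from Lemma~\ref{lem:stability<1}, and invertibility of $I-R_\delta^m$ from the Neumann inversion in Lemma~\ref{lem:neumann-quasi} with $s=\min\{p,q\}$; the coefficients $c=C_\delta^m(I-R_\delta^m)^{-1}f$ are then controlled by Lemma~\ref{lem:analysisboundedness}, with the $\delta^{2-\frac{2}{p}}$ factor absorbed because $\delta$ is fixed.
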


\begin{proof}
Let $\delta_0=\delta_0(\alpha,p,q)\in(0,\tfrac12]$ be the constant furnished by
Lemma~\ref{lem:stability<1}, and fix $0<\delta<\delta_0$.
Let $\mathcal P_\delta$ be a $\delta$-pocket tiling of $\C$ with centers
$\{\zeta_Q\}_{Q\in\mathcal P_\delta}$.

\smallskip
\noindent
\emph{(i) Jet synthesis.}
The boundedness of the jet synthesis operator
\(
S_\delta^m 
\)
from $\ell_w^{p,q,m}$ to $\mathcal F_\alpha^{p,q}$ was established in
Lemma~\ref{lem:Sr-synth}. In particular,
\[
\|S_\delta^m c\|_{\mathcal F_\alpha^{p,q}}
\lesssim \,\|c\|_{\ell_w^{p,q,m}},
\qquad c\in \ell_w^{p,q,m}.
\]

\smallskip
\noindent
\emph{(ii) Exact jet expansion.}
By Lemma~\ref{lem:stability<1},
\[
\|R_\delta^m\|_{\mathcal F_\alpha^{p,q}\to \mathcal F_\alpha^{p,q}}
=
\|I-S_\delta^m C_\delta^m\|_{\mathcal F_\alpha^{p,q}\to \mathcal F_\alpha^{p,q}}
\le \frac12.
\]
The Neumann inversion principle in Lemma~\ref{lem:neumann-quasi} applies to the space $\mathcal F_\alpha^{p,q}$ with exponent $s=\min\{p,q\}$ in the present range $0<p<1$, hence it
 implies that
\(
I-R_\delta^m=S_\delta^m C_\delta^m
\)
is invertible on $\mathcal F_\alpha^{p,q}$ and
\[
\|(I-R_\delta^m)^{-1}\|_{\mathcal F_\alpha^{p,q}\to\mathcal F_\alpha^{p,q}}
\lesssim 1.
\]
Let $f\in \mathcal F_\alpha^{p,q}$ and define
\[
g:=(I-R_\delta^m)^{-1}f,
\qquad
c:=C_\delta^m g.
\]
Then, by Lemma~\ref{lem:analysisboundedness}, $c\in\ell_w^{p,q,m}$, and
\[
\|c\|_{\ell_w^{p,q,m}}
=
\|C_\delta^m g\|_{\ell_w^{p,q,m}}
\lesssim
\delta^{2-\frac{2}{p}}\|g\|_{\mathcal F_\alpha^{p,q}}
\lesssim
\delta^{2-\frac{2}{p}}\|f\|_{\mathcal F_\alpha^{p,q}}.
\]
Since \(\delta>0\) is fixed, this gives
\[
\|c\|_{\ell_w^{p,q,m}}
\lesssim
\, \|f\|_{\mathcal F_\alpha^{p,q}},
\]
with an implicit constant depending only on \(\alpha,p,q\), and \(\delta\).
Moreover,
\[
S_\delta^m c
=
S_\delta^m C_\delta^m g
=
(I-R_\delta^m)g
=
f.
\]
This proves the theorem.
\end{proof}

\subsubsection{Local jet elimination and floating kernels}
\label{subsubsec:atomic-jet-elimination}

We pass from the exact jet decomposition to normalized Fock kernels through
intermediate floating kernels, centered at finitely many translated points near
each pocket center.  These kernels eliminate the polynomial factors in the jet
atoms and are then re-centered at the distinguished pocket centers by
Section~\ref{sec:recentering-envelope}.

\smallskip
\noindent
Let \(\Lambda\subset\C\) be finite.  For each pocket \(Q\) with center
\(\zeta_Q\) and each \(u\in\Lambda\), define
\[
\kappa_{Q,u}(z)
:= e^{i\alpha \im\big({\overline \zeta_Q} u\big) }\,\kappa_{\zeta_Q+u}(z),
\qquad z\in\C.
\]
The phase factor is included for algebraic convenience; in particular,
$|\kappa_{Q,u}(z)|=|\kappa_{\zeta_Q+u}(z)|$ for all $z\in\C$.

\smallskip
\noindent
We also use the associated floating coefficient spaces.

\begin{definition}[Floating coefficient spaces and synthesis]
\label{def:floating-setup}
Let $\Lambda\subset\C$ be a finite set and let $\mathcal{P}_{\delta}$ be a $\delta$-pocket tiling of $\C$ with $\delta \in (0, \frac{1}{2}]$. For each $k\ge0$,  we write
\(
d_k=(d_{Q,u})_{Q\in\mathcal P_{\delta,k},\,u\in\Lambda}\) and equip $d_k$ with the block quasi-norm
\[
\|d_k\|_{\ell^p}
:=
\begin{cases}
\biggl(\displaystyle\sum_{Q\in\mathcal P_{\delta,k}}\sum_{u\in\Lambda}|d_{Q,u}|^p\biggr)^{\frac1p},
& 0<p<\infty,\\[2ex]
\displaystyle\sup_{Q\in\mathcal P_{\delta,k},\,u\in\Lambda}|d_{Q,u}|,
& p=\infty.
\end{cases}
\]
We then define $\ell_w^{p,q,\Lambda}$ to be the space of all sequences
\(
d=(d_{Q,u})_{Q\in\mathcal P_\delta,\,u\in\Lambda}
\)
such that
\[
\|d\|_{\ell_w^{p,q,\Lambda}}
:=
\begin{cases}
\biggl(\displaystyle\sum_{k\ge0}\bigl(w_k\|d_k\|_{\ell^p}\bigr)^q\biggr)^{\frac1q},
& 0<q<\infty,\\[2ex]
\displaystyle\sup_{k\ge0} w_k\|d_k\|_{\ell^p},
& q=\infty,
\end{cases}
\]
is finite, where
\(
w_k:=(1+k)^{\frac1q-\frac1p}\),
with the usual convention that $\frac{1}{\infty}=0$.

\smallskip
\noindent
For $d\in \ell_w^{p,q,\Lambda}$, we define the floating synthesis operator by
\[
(S_{\Lambda}^{\mathrm{flt}} d)(z)
:=\sum_{Q\in\mathcal P_\delta}\sum_{u\in\Lambda} d_{Q,u}\,\kappa_{Q,u}(z),
\qquad z\in\C,
\]
whenever the series converges.
\end{definition}

\smallskip
\noindent
\begin{remark} 
For \(0<p,q\le \infty\), the sequence spaces
\(\ell^{p,q}, \ell_w^{p,q}, \ell_w^{p,q,m}\), and \( \ell_w^{p,q,\Lambda}\)
are complete quasi-Banach spaces. Their quasi-norms satisfy the usual power-triangle inequality with exponent $\min\{1,p,q\}$.
\end{remark}

\medskip
\noindent
The next lemma gives the synthesis bound for floating kernels.  We then choose
a finite patch \(\Lambda\) to replace each jet atom by a finite combination of
floating kernels, up to a small error in \(\mathcal F_\alpha^{p,q}\).

\begin{lemma}
\label{lem:floating-synthesis}
Let $\alpha>0$, $0<p < 1$, $0<q\le\infty$, and let $\Lambda\subset\C$ be finite and let $\mathcal{P}_{\delta}$ be a $\delta$-pocket tiling of $\C$ with $\delta \in (0, \frac{1}{2}]$.
Then the floating synthesis operator
\(
S_{\Lambda}^{\mathrm{flt}}\) 
defined in Definition~\ref{def:floating-setup} is a bounded linear operator from \(\ell_w^{p,q,\Lambda}\) to \(\mathcal F_{\alpha}^{p,q}
\). More precisely,
\[
\|S_{\Lambda}^{\mathrm{flt}} d\|_{\mathcal F_{\alpha}^{p,q}}
\lesssim \,\|d\|_{\ell_w^{p,q,\Lambda}},
\qquad d\in \ell_w^{p,q,\Lambda},
\]
where the implicit constant depends only on \(\alpha,p,q, \Lambda\), and is
independent of \(\delta\).
\end{lemma}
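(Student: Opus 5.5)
The plan is to repeat the proof of Lemma~\ref{lem:Sr-synth} (the case \(m=0\), i.e.\ Corollary~\ref{cor:synthesis<1}), treating each floating kernel \(\kappa_{Q,u}\) as a translate of \(\kappa_{\zeta_Q}\) by the fixed vector \(u\). Set \(D_\Lambda:=\max_{u\in\Lambda}|u|\). The phase factor is unimodular, so
\[
|\kappa_{Q,u}(z)|\,e^{-\frac{\alpha}{2}|z|^2}=e^{-\frac{\alpha}{2}|z-\zeta_Q-u|^2}.
\]
Using \(|z-\zeta_Q-u|^2\ge \tfrac12|z-\zeta_Q|^2-|u|^2\) gives the uniform bound
\[
|\kappa_{Q,u}(z)|\,e^{-\frac{\alpha}{2}|z|^2}\le e^{\frac{\alpha}{2}D_\Lambda^2}\,e^{-\frac{\alpha}{4}|z-\zeta_Q|^2}.
\]
This is the same Gaussian profile centered at \(\zeta_Q\) that bounds the jet atoms. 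The first step is therefore the profile estimate \eqref{eq:jet-atom-profile} for floating kernels:
\[
M_p^p(\kappa_{Q,u},r)\,e^{-\frac{\alpha p}{2}r^2}\lesssim (1+k)^{-1}e^{-\frac{\alpha p}{16}(r-k)^2},\qquad Q\in\mathcal P_{\delta,k},
\]
with the constant depending only on \(\alpha\), \(p\) and \(D_\Lambda\). It follows from the same circular Gaussian estimate used in Lemma~\ref{lem:remainderkernelestimate}.

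Before estimating norms, I need the series for \(S_\Lambda^{\rm flt}d\) to converge absolutely and locally uniformly, so that it defines an entire function. The argument is that of Lemma~\ref{lem:jet-series-entire}. For \(|z|\le R\) and \(k\) large, the profile bound gives \(|\kappa_{Q,u}(z)|\lesssim e^{-\frac{\alpha}{4}(k-R-D_\Lambda)^2}\). Since \(p<1\), the \(\ell^1\) norm of each block is at most its \(\ell^p\) norm. The tail over \(k\ge N\) then goes to zero in each of the three cases for \(q\), exactly as in Lemma~\ref{lem:kernel-series-entire}.

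For the norm estimate I split \(f=S^{\rm flt}_\Lambda d=\sum_k F_k\) into annular blocks \(F_k:=\sum_{Q\in\mathcal P_{\delta,k}}\sum_{u\in\Lambda}d_{Q,u}\kappa_{Q,u}\). Since \(p<1\), \(M_p^p\) is subadditive, so \(M_p^p(F_k,r)\) is at most \(\sum_{Q,u}|d_{Q,u}|^p M_p^p(\kappa_{Q,u},r)\). Taking the supremum over \(r\in[\ell,\ell+1)\) and summing over \(k\) yields
\[
B_p^p(f;\ell)\lesssim\sum_{k\ge0}(1+k)^{-1}e^{-\frac{\alpha p}{32}(\ell-k)^2}\|d_k\|_{\ell^p}^p.
\]

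The final step is the outer Schur test. For \(0<q<\infty\), take \(Y_k=(w_k\|d_k\|_{\ell^p})^p\), \(q_1=q/p\), and \(K(\ell,k)=e^{-\frac{\alpha p}{32}(\ell-k)^2}(1+k)^{-p/q}\). The key identity is \((1+k)^{-1}=(1+k)^{-p/q}\,w_k^p\). Lemma~\ref{lem:gen-schur-test} together with Proposition~\ref{prop:annular} then gives \(\|f\|_{\Fpq}^q\lesssim\sum_k (w_k\|d_k\|_{\ell^p})^q\). For \(q=\infty\), use the endpoint forms of both results. All constants depend on \(\alpha,p,q\) and on \(\Lambda\) only through \(D_\Lambda\); no \(\delta\) enters, because the pocket areas are never used.

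I do not expect a real obstacle. The one point to watch is the translation of the center by \(u\). Absorbing the shift through \(e^{\frac{\alpha}{2}D_\Lambda^2}\) and keeping the reference point \(\zeta_Q\), rather than \(\zeta_Q+u\), keeps the shell bookkeeping tied to \(k\). This is also why the constant depends on \(\Lambda\).
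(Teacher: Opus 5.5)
Your proposal is correct and follows the paper's proof essentially step for step. It proves local uniform convergence as in Lemma~\ref{lem:jet-series-entire}, establishes the shell profile $B_p^p(\kappa_{Q,u};\ell)\lesssim(1+k)^{-1}e^{-c(\ell-k)^2}$, applies $p$-subadditivity over blocks and shells, and closes with the outer Schur test at exponent $q/p$ and the annular discretization. The only cosmetic difference is that you dominate the shifted Gaussian by one centered at $\zeta_Q$, at a cost of $e^{\frac{\alpha}{2}D_\Lambda^2}$; the paper instead applies the single-kernel estimate at $\zeta_Q+u$ and then re-indexes to the shell $k$ using $\bigl||\zeta_Q+u|-k\bigr|\le 1+R_\Lambda$.
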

\begin{proof}
Let
\(
R_\Lambda:=\max_{u\in\Lambda}|u|\).
We show that the series defining $S_{\Lambda}^{\mathrm{flt}}d$ converges absolutely and
locally uniformly on $\C$. Fix $R>0$ and choose an integer
\(
N\ge R+R_\Lambda+1\).

\smallskip
\noindent
If $Q\in\mathcal P_{\delta,k}$ with $k\ge N$, $u\in\Lambda$, and $|z|\le R$, then
$|\zeta_Q|\in[k,k+1)$, and hence
\[
|\kappa_{Q,u}(z)|
=
|\kappa_{\zeta_Q+u}(z)|
=
e^{\frac{\alpha}{2}|z|^2}e^{-\frac{\alpha}{2}|z-(\zeta_Q+u)|^2}
\lesssim
e^{-\frac{\alpha}{2}(k-R_\Lambda-R)^2}
\lesssim
e^{-\frac{\alpha}{4}(k-R)^2},
\]
where the implicit constant depends only on $\alpha$, $\Lambda$, and $R$. Therefore,
\[
\sum_{Q\in\mathcal P_{\delta,k}}\sum_{u\in\Lambda}
|d_{Q,u}|\,|\kappa_{Q,u}(z)|
\lesssim
e^{-\frac{\alpha}{4}(k-R)^2}
\sum_{Q\in\mathcal P_{\delta,k}}\sum_{u\in\Lambda}|d_{Q,u}|.
\]
Arguing exactly as in Lemma~\ref{lem:jet-series-entire}, with the finite index set
$\Lambda$ in place of $\{0,\dots,m\}$, it follows that the series defining
$S_{\Lambda}^{\mathrm{flt}}d$ converges absolutely and locally uniformly on $\C$, and hence
defines an entire function.

\smallskip
\noindent
Now fix $Q\in\mathcal P_{\delta,k}$ and $u\in\Lambda$. Since $|\zeta_Q|\in[k,k+1)$, we have
\begin{equation}\label{eq:floating-shift-radius-rev}
\bigl||\zeta_Q+u|-k\bigr|\le 1+R_\Lambda .
\end{equation}
By the single-kernel estimate used in the proof of Lemma~\ref{lem:Sr-synth},
\[
B_p^p(\kappa_{\zeta_Q+u};\ell)
\lesssim
(1+|\zeta_Q+u|)^{-1}e^{-\frac{\alpha p}{8}(\ell-|\zeta_Q+u|)^2},
\qquad \ell\ge0,
\]
with implicit constant depending only on $\alpha$ and $p$. Since $\Lambda$ is finite,
\eqref{eq:floating-shift-radius-rev} implies
\(
1+|\zeta_Q+u|\asymp 1+k
\)
with constants depending only on $\Lambda$. Using also
\(
|\kappa_{Q,u}|=|\kappa_{\zeta_Q+u}|,
\)
we obtain
\begin{equation}\label{eq:floating-kernel-profile-rev}
B_p^p(\kappa_{Q,u};\ell)
\le
C_0(1+k)^{-1}e^{-\frac{\alpha p}{16}(\ell-k)^2},
\qquad \ell\ge0,
\end{equation}
where $C_0$ depends only on $\alpha$, $p$, and $\Lambda$.

\smallskip
\noindent
For each $k\ge0$, define
\[
F_k(z):=\sum_{Q\in\mathcal P_{\delta,k}}\sum_{u\in\Lambda} d_{Q,u}\,\kappa_{Q,u}(z).
\]
Since $0<p<1$, the $p$-subadditivity of $x\mapsto x^p$ yields
\[
M_p^p(F_k,r)
\le
\sum_{Q\in\mathcal P_{\delta,k}}\sum_{u\in\Lambda}
|d_{Q,u}|^p\,M_p^p(\kappa_{Q,u},r).
\]
Taking the supremum over $r\in[\ell,\ell+1)$ and applying
\eqref{eq:floating-kernel-profile-rev}, we obtain
\begin{equation}\label{eq:floating-block-estimate-rev}
B_p^p(F_k;\ell)
\le
C_0(1+k)^{-1}e^{-\frac{\alpha p}{16}(\ell-k)^2}\,\|d_k\|_{\ell^p}^p.
\end{equation}
Let
\(
f:=S_{\Lambda}^{\mathrm{flt}}d.
\)
Since \(f=\sum_{k\ge0}F_k\), another application of \(p\)-subadditivity gives
\[
M_p^p(f,r)\le \sum_{k\ge0}M_p^p(F_k,r),
\qquad\text{hence}\qquad
B_p^p(f;\ell)\le \sum_{k\ge0}B_p^p(F_k;\ell).
\]
Combining this with \eqref{eq:floating-block-estimate-rev}, we get
\begin{equation}\label{eq:floating-convolution-estimate-rev}
B_p^p(f;\ell)
\le
C_0\sum_{k\ge0}(1+k)^{-1}e^{-\frac{\alpha p}{16}(\ell-k)^2}\,\|d_k\|_{\ell^p}^p.
\end{equation}
Applying the same shellwise-to-mixed-norm argument as in Lemma~\ref{lem:Sr-synth}, the
discrete Gaussian convolution in \eqref{eq:floating-convolution-estimate-rev} yields
\[
\|f\|_{\mathcal F_\alpha^{p,q}}
\le
C\,\|d\|_{\ell_w^{p,q,\Lambda}},
\]
where \(C\) depends only on \(\alpha\), \(p\), \(q\), and \(\Lambda\), and is independent of
\(\delta\in(0,\tfrac12]\). This proves the lemma.
\end{proof}

\begin{lemma}[Local jet elimination]
\label{lem:local-jet-elimination}
Let $\alpha>0$, $0<p<1$, and $0<q\le\infty$, and set
\(
m:=\Bigl\lfloor \frac{2(1-p)}{p}\Bigr\rfloor\).
Let $\mathcal P_\delta$ be a $\delta$-pocket tiling of $\C$ with
$\delta\in(0,\tfrac12]$. Given $\varepsilon\in(0,1)$, there exist a number
$\tau \in(0,1)$, a finite set
\(
\Lambda\subset \{u\in\C:\ |u|< \tau\}\),
and coefficients
\(
\{a_{j,u}\}_{0\le j\le m,\ u\in\Lambda}\),
independent of $Q$, such that the following hold.

\smallskip
\noindent
\emph{(i) Local elimination profile.}
For every $k\ge0$, every $Q\in\mathcal P_{\delta,k}$, and every $0\le j\le m$, define
\[
\Delta_{Q,j}
:=
\psi_{Q,j}-\sum_{u\in\Lambda} a_{j,u}\,\kappa_{Q,u}.
\]
Then
\begin{equation}\label{eq:local-jet-elimination-profile}
B_p(\Delta_{Q,j};\ell)
\le
\varepsilon\,(1+k)^{-\frac{1}{p}}e^{-\frac{\alpha}{32}(\ell-k)^2},
\qquad \ell\ge0.
\end{equation}

\smallskip
\noindent
\emph{(ii) Bounded local elimination map.}
The operator $E_{\mathrm{loc}}$ defined by
\[
(E_{\mathrm{loc}}c)_{Q,u}:=\sum_{j=0}^m a_{j,u}\,c_{Q,j},
\qquad Q\in\mathcal P_\delta,\ u\in\Lambda,
\]
is a bounded linear operator from $\ell_w^{p,q,m}$ to $\ell_w^{p,q,\Lambda}$, with
operator norm independent of $\delta$.

\smallskip
\noindent
\emph{(iii) Small defect estimate.}
Moreover, there exists a constant
\(C_E=C_E(\alpha,p,q)>0\), independent of \(\varepsilon\), \(\delta\), and
\(c\), such that
\begin{equation}\label{eq:local-jet-elimination-defect}
\|S_{\Lambda}^{\mathrm{flt}}E_{\mathrm{loc}}c-S_\delta^m c\|_{\mathcal F_\alpha^{p,q}}
\le
C_E \,\varepsilon\,\|c\|_{\ell_w^{p,q,m}},
\qquad c\in \ell_w^{p,q,m}.
\end{equation}
\end{lemma}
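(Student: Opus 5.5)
The plan is to reduce everything to a single model computation at the origin, using the translation covariance of the atoms. With \(\zeta=\zeta_Q\),
\[
\kappa_{Q,u}(z)=e^{i\alpha\im(\overline\zeta u)}\kappa_{\zeta+u}(z).
\]
The phase is designed so that
\[
\kappa_{Q,u}(z)=\kappa_\zeta(z)\,e^{\alpha\overline u(z-\zeta)-\frac{\alpha}{2}|u|^2},
\]
which I would confirm by expanding \(\alpha\overline{(\zeta+u)}z-\frac{\alpha}{2}|\zeta+u|^2\). Likewise
\[
\psi_{Q,j}(z)=\kappa_\zeta(z)\,\frac{\alpha^j}{j!}(z-\zeta)^j .
\]
Hence
\[
\Delta_{Q,j}(z)=\kappa_\zeta(z)\,h_j(z-\zeta),\qquad
h_j(x):=\frac{(\alpha x)^j}{j!}-\sum_{u\in\Lambda}a_{j,u}\,e^{-\frac{\alpha}{2}|u|^2}e^{\alpha\overline u x},
\]
and \(h_j\) does not depend on \(Q\).

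The choice of \(\Lambda\) and \(a_{j,u}\) is a finite-difference approximation of derivatives in the variable \(\overline u\). Note that
\[
\partial_{\overline u}^{\,j}\, e^{\alpha\overline u x}\big|_{u=0}=(\alpha x)^j .
\]
I will take \(\Lambda=\{\tau\omega^i t: 0\le i\le M\}\) for small \(\tau\) and suitable roots of unity \(\omega\), or simply real points \(\tau i\), \(0\le i\le m\). The coefficients are chosen so that \(\sum_u a_{j,u}e^{-\frac{\alpha}{2}|u|^2}\overline u^{\,n}=\delta_{nj}\) for all \(0\le n\le N\), with \(N\ge m\) to be fixed. This is a Vandermonde-type linear system and is solvable with \(|a_{j,u}|\lesssim\tau^{-j}\). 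Taylor expanding then gives
\[
h_j(x)=-\sum_{n>N}\frac{(\alpha x)^n}{n!}\sum_u a_{j,u}e^{-\frac{\alpha}{2}|u|^2}\overline u^{\,n},
\]
so that
\[
|h_j(x)|\lesssim \tau^{N+1-j}\,C^{|x|}\,(1+|x|)^{N+1},
\]
more crudely \(|h_j(x)|\lesssim \tau\,e^{\alpha\tau|x|}(1+|x|)^{N+1}\) once \(N\ge m\). Choosing \(\tau\) small in terms of \(\varepsilon\) (with \(N=m\)) gives
\[
|h_j(x)|\le \varepsilon'\, e^{\frac{\alpha}{4}|x|^2}\quad\text{after absorbing into the Gaussian},
\]
that is,
\[
|\Delta_{Q,j}(z)|\,e^{-\frac{\alpha}{2}|z|^2}\le \varepsilon'\, e^{-\frac{\alpha}{4}|z-\zeta|^2}.
\]
Then the circular Gaussian estimate already used in Lemma~\ref{lem:remainderkernelestimate} and Lemma~\ref{lem:Sr-synth} gives \(M_p^p\lesssim(1+k)^{-1}e^{-c(r-k)^2}\). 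Taking \(p\)-th roots and the supremum over \(r\in[\ell,\ell+1)\) yields (i) after renaming \(\varepsilon\). Adjusting the Gaussian constant to \(\alpha/32\) requires only bookkeeping.

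For part (ii), the map \(E_{\mathrm{loc}}\) acts pocketwise through a fixed finite matrix \((a_{j,u})\). By the \(p\)-triangle inequality on the finitely many entries, \(\|(E_{\mathrm{loc}}c)_k\|_{\ell^p}\le C(\Lambda,a)\|c_k\|_{\ell^p}\) for each block, and the weights \(w_k\) pass through unchanged.

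For part (iii), write
\[
S^{\mathrm{flt}}_\Lambda E_{\mathrm{loc}}c-S^m_\delta c=-\sum_{Q,j}c_{Q,j}\Delta_{Q,j},
\]
with absolute convergence as in Lemma~\ref{lem:jet-series-entire}. Then repeat the proof of Lemma~\ref{lem:Sr-synth} verbatim, using the profile (i) in place of \eqref{eq:jet-atom-profile}. By \(p\)-subadditivity,
\[
B_p^p(\cdot;\ell)\le\varepsilon^p\sum_k(1+k)^{-1}e^{-\frac{\alpha p}{32}(\ell-k)^2}\|c_k\|_{\ell^p}^p,
\]
and then the outer Schur test (Lemma~\ref{lem:gen-schur-test}) with exponent \(q/p\) and Proposition~\ref{prop:annular} give the bound with a constant \(C_E(\alpha,p,q)\) free of \(\varepsilon\) and \(\delta\).

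The main obstacle is the uniformity claim in (ii) and (iii). The coefficients \(a_{j,u}\) blow up like \(\tau^{-m}\) as \(\tau\to0\), so the norm of \(E_{\mathrm{loc}}\) depends on \(\varepsilon\); the statement only requires it to be independent of \(\delta\), which is fine. In (iii), however, \(C_E\) must not depend on \(\varepsilon\). This works because the \(\varepsilon\)-dependence is isolated entirely in the pointwise profile (i), which is linear in \(\varepsilon\). One must therefore verify carefully that the Taylor remainder estimate for \(h_j\), with its \(\tau^{-j}\) coefficients, still produces a bound that tends to zero, uniformly against the Gaussian weight over all \(x\in\C\). I would use \(N=m+1\), so that the bound is \(\tau\) times a function of \(|x|\) dominated by \(e^{\frac{\alpha}{4}|x|^2}\), uniformly for \(\tau\le1\).
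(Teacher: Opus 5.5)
Your proposal is correct and follows essentially the paper's proof. It uses the same covariance identity $\kappa_{Q,u}(z)=\kappa_{\zeta_Q}(z)\,e^{\alpha\overline u(z-\zeta_Q)-\frac{\alpha}{2}|u|^2}$ and the same scaled Vandermonde moment system on nodes $\tau v_h$, which gives $|a_{j,u}|\lesssim\tau^{-j}$ and an $O(\tau)$ Taylor tail absorbed into the Gaussian; it then applies the circular Gaussian estimate, the $p$-triangle inequality for (ii), and the Schur-test argument of Lemma~\ref{lem:Sr-synth} for (iii). Two cosmetic points: the real nodes $\{0,\tau,\dots,m\tau\}$ do not lie in $\{|u|<\tau\}$ unless you rescale them (the paper takes $|v_h|<1$), and moment conditions up to order $N=m$ already suffice, since $\tau^{m+1-j}\le\tau$ for $j\le m$.
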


\begin{proof}
Set $N:=m+1$. Fix distinct points $v_1,\dots,v_N\in\C$ with $|v_h|<1$, and let
\(
u_h:=\tau  v_h\) for \( 1\le h \le N\),
where $\tau \in(0,1)$ will be chosen later. Define
\[
\Lambda:=\{u_1,\dots,u_N\}.
\]
Consider the matrix
\[
\mathsf M_{s,h}
:=
e^{-\frac{\alpha}{2}|u_h|^2}\,\overline{u_h}^{\,s},
\qquad 0\le s\le m,\ 1\le h \le N.
\]
Then
\[
\mathsf M = D(\tau)\,V\,W(\tau),
\]
where
\[
D(\tau)=\mathrm{diag}(\tau^s)_{s=0}^m,\qquad
V_{s,h}=\overline{v_h}^{\,s},\qquad
W(\tau)=\mathrm{diag}\!\Bigl(e^{-\frac{\alpha}{2}\tau^2|v_h|^2}\Bigr)_{h=1}^N.
\]
Here $V$ is a Vandermonde matrix in the distinct nodes
$\overline{v_1},\dots,\overline{v_N}$, and is therefore invertible. Since
$D(\tau)$ and $W(\tau)$ are also invertible, it follows that $\mathsf M$ is invertible, with
\begin{equation}\label{eq:local-M-inverse}
\mathsf M^{-1}=W(\tau)^{-1}V^{-1}D(\tau)^{-1}.
\end{equation}
For each $0\le j\le m$, let
\[
\mathsf e^{(j)}:=(\delta_{s,j})_{s=0}^m,
\qquad
\mathsf a^{(j)}:=(a_{j,u_1},\dots,a_{j,u_N})^T,
\]
and define $\mathsf a^{(j)}$ as the unique solution of the equation 
\(
\mathsf M\,\mathsf a^{(j)}=\mathsf e^{(j)}\).
Equivalently,
\begin{equation}\label{eq:local-moment-identity}
\sum_{u\in\Lambda} a_{j,u}\,e^{-\frac{\alpha}{2}|u|^2}\,\overline{u}^{\,s}
=
\delta_{s,j},
\qquad 0\le s\le m.
\end{equation}

\smallskip
\noindent
For (i), use the identity
\[
e^{i\alpha\im(\overline{\zeta_Q}u)}\,\kappa_{\zeta_Q+u}(z)
=
\kappa_{\zeta_Q}(z)\,e^{\alpha(z-\zeta_Q)\overline u-\frac{\alpha}{2}|u|^2},
\]
we obtain
\begin{align*}
\sum_{u\in\Lambda} a_{j,u}\,\kappa_{Q,u}(z)
&=
\kappa_{\zeta_Q}(z)\sum_{u\in\Lambda}
a_{j,u}\,e^{-\frac{\alpha}{2}|u|^2}e^{\alpha(z-\zeta_Q)\overline u} \\
&=
\kappa_{\zeta_Q}(z)\sum_{s\ge0}\frac{\alpha^s}{s!}(z-\zeta_Q)^s
\Bigl(\sum_{u\in\Lambda} a_{j,u}\,e^{-\frac{\alpha}{2}|u|^2}\,\overline u^{\,s}\Bigr).
\end{align*}
By \eqref{eq:local-moment-identity}, all terms with $0\le s\le m$ vanish except $s=j$, and therefore
\[
\sum_{u\in\Lambda} a_{j,u}\,\kappa_{Q,u}
=
\psi_{Q,j}+\kappa_{\zeta_Q}R_{Q,j},
\]
where
\[
R_{Q,j}(z)
=
\sum_{s\ge m+1}\frac{\alpha^s}{s!}(z-\zeta_Q)^s\,H_{j,s},
\qquad
H_{j,s}:=
\sum_{u\in\Lambda} a_{j,u}\,e^{-\frac{\alpha}{2}|u|^2}\,\overline u^{\,s}.
\]
Hence
\[
\Delta_{Q,j}=-\kappa_{\zeta_Q}R_{Q,j}.
\]
To bound the coefficients $H_{j,s}$, use the factorization
\eqref{eq:local-M-inverse}, we have
\[
\mathsf a^{(j)}
=
W(\tau)^{-1}V^{-1}D(\tau)^{-1}\mathsf e^{(j)}
=
\tau^{-j}W(\tau)^{-1}V^{-1}\mathsf e^{(j)},
\]
since
\(
D(\tau)^{-1}\mathsf e^{(j)}=\tau^{-j}\mathsf e^{(j)}\).
Now $V^{-1}$ is fixed once the nodes $v_1,\dots,v_N$ are chosen, and
\[
W(\tau)^{-1}
=
\mathrm{diag} \!\Bigl(e^{\frac{\alpha}{2}\tau^2|v_h|^2}\Bigr)_{h=1}^N
\]
is uniformly bounded for $0< \tau <1$, since $|v_h|<1$ for all $h$. Hence
\[
\|W(\tau)^{-1}V^{-1}\mathsf e^{(j)}\|_{\infty}\le C,
\]
where $C$ depends only on $\alpha$ and the fixed nodes $v_1,\dots,v_N$. Therefore each component of
$\mathsf a^{(j)}$ satisfies
\[
|a_{j,u}|\le C\, \tau^{-j},
\qquad 0\le j\le m,\ u\in\Lambda.
\]
Using this, for $s\ge m+1$, we obtain
\[
|H_{j,s}|
\le
C\sum_{u\in\Lambda} \tau^{-j}|u|^s
\le
C\, \tau^{\,s-j},
\]
after enlarging $C$. Since $s-j\ge1$, it follows that
\[
|R_{Q,j}(z)|
\le
C\sum_{s\ge m+1}\frac{\alpha^s|z-\zeta_Q|^s}{s!}\, \tau^{s-j}
\le
C\, \tau \,|z-\zeta_Q|^{m+1}e^{\alpha \tau |z-\zeta_Q|}.
\]
Consequently,
\[
|\Delta_{Q,j}(z)|e^{-\frac{\alpha}{2}|z|^2}
\le
C\, \tau \,|z-\zeta_Q|^{m+1}e^{\alpha \tau |z-\zeta_Q|}
e^{-\frac{\alpha}{2}|z-\zeta_Q|^2}.
\]
Using the bounds
\[
t^{m+1}e^{-\frac{\alpha}{2}t^2}\lesssim e^{-\frac{\alpha}{4}t^2},
\qquad
e^{\alpha \tau t}e^{-\frac{\alpha}{4}t^2}\lesssim e^{-\frac{\alpha}{8}t^2}
\qquad (0<\tau<1),
\]
we get
\begin{equation}\label{eq:local-difference-envelope}
|\Delta_{Q,j}(z)|e^{-\frac{\alpha}{2}|z|^2}
\lesssim
 \tau \,e^{-\frac{\alpha}{8}|z-\zeta_Q|^2},
\end{equation}
where the implicit constant depends only on $\alpha$, $m$, and the fixed nodes
$v_1,\dots,v_N$.

\smallskip
\noindent
Now let $k\ge0$, $Q\in\mathcal P_{\delta,k}$, and $0\le j\le m$. From
\eqref{eq:local-difference-envelope},
\[
M_p^p(\Delta_{Q,j},r)e^{-\frac{\alpha p}{2}r^2}
\lesssim
\tau^p\int_0^{2\pi}
e^{-\frac{\alpha p}{8}|re^{i\theta}-\zeta_Q|^2}\,\frac{d\theta}{2\pi}.
\]
Using the same Gaussian circle-mean estimate as in the proof of
Lemma~\ref{lem:Sr-synth}, and the fact that $|\zeta_Q|\in[k,k+1)$, we obtain
\[
B_p^p(\Delta_{Q,j};\ell)
\lesssim
\tau^p(1+k)^{-1}e^{-\frac{\alpha p}{32}(\ell-k)^2},
\qquad \ell\ge0.
\]
Taking the $p$th root, we get
\[
B_p(\Delta_{Q,j};\ell)
\le
C \tau \,(1+k)^{-\frac1p}e^{-\frac{\alpha}{32}(\ell-k)^2},
\]
where $C$ depends only on $\alpha$, $p$, and the fixed nodes
$v_1,\dots,v_N$. Choosing $\tau =\tau(\varepsilon)$ so that $C \tau \le\varepsilon$, we obtain
\eqref{eq:local-jet-elimination-profile}.

\smallskip
\noindent
For (ii), since $0<p<1$, for each $Q\in\mathcal P_\delta$,
\[
\sum_{u\in\Lambda}\Bigl|\sum_{j=0}^m a_{j,u}c_{Q,j}\Bigr|^p
\le
\sum_{u\in\Lambda}\sum_{j=0}^m |a_{j,u}|^p|c_{Q,j}|^p
\le
A^p\sum_{j=0}^m |c_{Q,j}|^p,
\]
where
\[
A:=\max_{0\le j\le m}
\Bigl(\sum_{u\in\Lambda}|a_{j,u}|^p\Bigr)^{\frac1p}<\infty.
\]
Summing over $Q\in\mathcal P_{\delta,k}$ yields
\[
\|(E_{\rm loc}c)_k\|_{\ell^p}\le A\,\|c_k\|_{\ell^p},
\]
and therefore
\[
\|E_{\rm loc}c\|_{\ell_w^{p,q,\Lambda}}
\le
A\,\|c\|_{\ell_w^{p,q,m}}.
\]
Thus $E_{\rm loc}$ is bounded, with operator norm independent of $\delta$.

\smallskip
\noindent
For (iii), let $c\in\ell_w^{p,q,m}$. Then
\[
S_\delta^m c-S_{\Lambda}^{\mathrm{flt}}E_{\rm loc}c
=
\sum_{Q\in\mathcal P_\delta}\sum_{j=0}^m c_{Q,j}\,\Delta_{Q,j}.
\]
For each $k\ge0$, set
\[
F_k:=\sum_{Q\in\mathcal P_{\delta,k}}\sum_{j=0}^m c_{Q,j}\,\Delta_{Q,j}.
\]
Since $0<p<1$, $p$-subadditivity and \eqref{eq:local-jet-elimination-profile} give
\[
B_p^p(F_k;\ell)
\le
\varepsilon^p(1+k)^{-1}e^{-\frac{\alpha p}{32}(\ell-k)^2}\,\|c_k\|_{\ell^p}^p.
\]
Summing in $k$ and using $p$-subadditivity once more, we obtain
\[
B_p^p\Bigl(\sum_{k\ge0}F_k;\ell\Bigr)
\le
\varepsilon^p
\sum_{k\ge0}(1+k)^{-1}e^{-\frac{\alpha p}{32}(\ell-k)^2}\,\|c_k\|_{\ell^p}^p.
\]
Now applying the same shellwise-to-mixed-norm argument as in the proof of
Lemma~\ref{lem:Sr-synth}, we obtain
\[
\Bigl\|\sum_{k\ge0}F_k\Bigr\|_{\mathcal F_\alpha^{p,q}}
\le
C_E\,\varepsilon\,\|c\|_{\ell_w^{p,q,m}},
\]
where $C_E$ depends only on $\alpha$, $p$, and $q$. Since
\[
\sum_{k\ge0}F_k=S_\delta^m c-S_{\Lambda}^{\mathrm{flt}}E_{\rm loc}c,
\]
this proves \eqref{eq:local-jet-elimination-defect}.
\end{proof}

\subsubsection{Re-centering floating kernels}
\label{subsubsec:atomic-recentering}

We convert the floating kernels from Definition~\ref{def:floating-setup} into
kernels centered at the distinguished pocket centers, using the envelope
theorem from Section~\ref{sec:recentering-envelope}.

\smallskip
\noindent
By the uniform ball control in Lemma~\ref{lem:geo-pockets} and the
bi-Lipschitz reindexing in Lemma~\ref{lem:biLip}, the \(\delta\)-pocket
tiling satisfies the hypotheses of Theorem~\ref{thm:envelope-recentering}.

\begin{proposition} 
\label{prop:recentering-floating}
Let $\alpha,\gamma>0$, $0<p,\beta<1$, and $0<q\le\infty$.
Let $\Lambda\subset\C$ be finite, and let $\delta_0$ be the constant from
Theorem~\ref{thm:envelope-recentering}. Let $\mathcal P_\delta$ be a
$\delta$-pocket tiling of $\C$ with $\delta\in(0,\delta_0)$.
For each $\eta\in\C$, choose a coefficient sequence
\[
b(\eta)=\bigl(b_{Q'}(\eta)\bigr)_{Q'\in\mathcal P_\delta}
\]
as in Theorem~\ref{thm:envelope-recentering}, so that
\[
\kappa_\eta=\sum_{Q'\in\mathcal P_\delta} b_{Q'}(\eta)\,\kappa_{\zeta_{Q'}}
\quad\text{in }\mathcal F_\alpha^2,
\qquad\text{and}\qquad
|b_{Q'}(\eta)|\le C_{\mathrm{env}} \,e^{-\gamma|\zeta_{Q'}-\eta|^\beta},
\quad Q'\in\mathcal P_\delta,\ \eta\in\C,
\]
where $C_{\mathrm{env}}$ depends only on $\alpha,\beta,\gamma,\delta$ and the geometric constants
of $\mathcal P_\delta$.

\smallskip
\noindent
Define the linear map
\begin{equation}\label{eq:recentering-map}
E_{\rm rec}:\ell_w^{p,q,\Lambda}\to \ell_w^{p,q},
\qquad
(E_{\rm rec}d)_{Q'}
:=
\sum_{Q\in\mathcal P_\delta}\sum_{u\in\Lambda}
d_{Q,u}\,e^{i\alpha\im(\overline{\zeta_Q}u)}\,b_{Q'}(\zeta_Q+u),
\quad Q'\in\mathcal P_\delta.
\end{equation}
Then the following hold:
\begin{enumerate}
\item[\textup{(i)}]
The map $E_{\rm rec}$ is bounded from $\ell_w^{p,q,\Lambda}$ to $\ell_w^{p,q}$. More precisely, there exists
a constant
\[
C_{\mathrm{rec}}
=
C_{\mathrm{rec}}(\alpha,p,q,\beta,\gamma,\Lambda,\delta)>0
\]
such that
\[
\|E_{\mathrm{rec}}d\|_{\ell_w^{p,q}}
\le
C_{\mathrm{rec}}\|d\|_{\ell_w^{p,q,\Lambda}},
\qquad d\in\ell_w^{p,q,\Lambda}.
\]

\item[\textup{(ii)}]
For every $d\in\ell_w^{p,q,\Lambda}$,
\begin{equation}\label{eq:recentering-identity}
S_\Lambda^{\mathrm{flt}}d
=
S_\delta^{\ker}(E_{\rm rec}d)
\end{equation}
as an identity of entire functions.
\end{enumerate}
\end{proposition}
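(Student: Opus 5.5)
The plan is to treat (i) as a discrete off-diagonal estimate in the annular quasi-Banach model, and (ii) as a consequence of (i) together with local uniform convergence.

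\emph{Part (i).} Since $\Lambda$ is finite with $|u|<1$, the envelope bound in Theorem~\ref{thm:envelope-recentering} and the inequality $|\zeta_{Q'}-\zeta_Q|^\beta\le|\zeta_{Q'}-\zeta_Q-u|^\beta+|u|^\beta$ give
\[
|b_{Q'}(\zeta_Q+u)|\le C\,e^{-\gamma|\zeta_{Q'}-\zeta_Q|^\beta},
\]
with $C$ depending on $C_{\mathrm{env}}$, $\gamma$ and $\Lambda$. Since $0<p<1$, $p$-subadditivity gives
\[
|(E_{\rm rec}d)_{Q'}|^p\le C^p\sum_{Q,u}|d_{Q,u}|^p e^{-p\gamma|\zeta_{Q'}-\zeta_Q|^\beta}.
\]
Sum over $Q'\in\mathcal P_{\delta,\ell}$ and split the sum in $Q$ by shells $k$. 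The key counting step is a bound on
\[
\sum_{Q'\in\mathcal P_{\delta,\ell}}e^{-p\gamma|\zeta_{Q'}-\zeta_Q|^\beta}\quad\text{for } Q\in\mathcal P_{\delta,k}.
\]
By the dyadic/unit-ring counting argument of Lemma~\ref{lem:gramian-localized}, using Lemma~\ref{lem:geo-pockets}(d), and the fact that $|\zeta_{Q'}-\zeta_Q|\ge|\ell-k|-1$, this sum is at most $C_\delta\,e^{-\frac{p\gamma}{2}|\ell-k|^\beta}$. The ring counts grow only polynomially, so they are absorbed by half of the subexponential decay. This yields
\[
\|(E_{\rm rec}d)_\ell\|_{\ell^p}^p\lesssim\sum_{k\ge0}e^{-c|\ell-k|^\beta}\|d_k\|_{\ell^p}^p,\qquad c=\tfrac{p\gamma}{2}.
\]

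It remains to insert the weights. Put $Y_k:=(w_k\|d_k\|_{\ell^p})^p$. Then $\|d_k\|_{\ell^p}^p=(1+k)^{1-p/q}Y_k$, and the right-hand side, multiplied by $w_\ell^p=(1+\ell)^{p/q-1}$, becomes
\[
\sum_{k}e^{-c|\ell-k|^\beta}\Bigl(\frac{1+k}{1+\ell}\Bigr)^{1-p/q}Y_k .
\]
Lemma~\ref{lem:peetre-weighted} bounds the ratio by $(1+|\ell-k|)^{|1-p/q|}$, and this factor is absorbed into the exponential at the cost of shrinking $c$. We are then left with a Gaussian-type convolution $\sum_k e^{-c'|\ell-k|^\beta}Y_k$. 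For $q_1:=q/p$ we have $\ell^{q_1}$-boundedness: when $q_1\le1$ use subadditivity, when $q_1>1$ use Young's inequality, and when $q=\infty$ take the supremum. This gives $\|E_{\rm rec}d\|_{\ell^{p,q}_w}\lesssim\|d\|_{\ell^{p,q,\Lambda}_w}$. I expect the main obstacle to be this weight bookkeeping, since the Peetre factor has to be controlled in both directions; the subexponential decay is what makes it harmless.

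\emph{Part (ii).} For a finitely supported $d$, the identity holds in $\mathcal F_\alpha^2$ by linearity of Theorem~\ref{thm:envelope-recentering}(i), using $\kappa_{Q,u}=e^{i\alpha\im(\overline{\zeta_Q}u)}\kappa_{\zeta_Q+u}$. Convergence in $\mathcal F^2_\alpha$ implies pointwise convergence, so we get an identity of entire functions. For general $d$, take finite truncations $d^{(N)}\to d$ in $\ell_w^{p,q,\Lambda}$; this works when $q<\infty$, and for $q=\infty$ we argue directly. On the left, Lemma~\ref{lem:floating-synthesis} gives convergence in $\mathcal F^{p,q}_\alpha$, which implies locally uniform convergence by Lemma~\ref{lem:p-est}. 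On the right, (i) and Corollary~\ref{cor:synthesis<1} give the same.

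For $q=\infty$, I would instead verify pointwise absolute convergence of the double series
\[
\sum_{Q'}\sum_{Q,u}|d_{Q,u}|\,|b_{Q'}(\zeta_Q+u)|\,|\kappa_{\zeta_{Q'}}(z)|
\]
for fixed $z$. This follows from the envelope estimate, the Gaussian decay of $\kappa_{\zeta_{Q'}}(z)$ in $Q'$, and the polynomial growth of $\|d_k\|_{\ell^p}$. Fubini then permits exchanging the order of summation, which gives the identity in this case as well.
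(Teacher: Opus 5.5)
Your proof is correct. Part (i) is essentially the paper's Steps 1--3. The paper also combines $p$-subadditivity, the envelope bound and unit-ring counting to reach $\|(E_{\rm rec}d)_\ell\|_{\ell^p}^p\lesssim\sum_k e^{-c|\ell-k|^\beta}\|d_k\|_{\ell^p}^p$. Two bookkeeping differences:
\begin{enumerate}
\item[(a)] The paper counts shells directly around $\eta=\zeta_Q+u$, whereas you first recenter the envelope at $\zeta_Q$.
\item[(b)] The paper treats $q\le p$, $p<q<\infty$ (via Jensen) and $q=\infty$ separately. You instead absorb the two-sided Peetre factor $(1+|\ell-k|)^{|1-p/q|}$ into the kernel, which reduces everything to one convolution bound on $\ell^{q/p}$ and is slightly more economical.
\end{enumerate}

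For (ii) the routes differ. The paper uses no density argument: it shows that $\sum_{Q,u}\sum_{Q'}|d_{Q,u}|\,|b_{Q'}(\zeta_Q+u)|\,|\kappa_{\zeta_{Q'}}(z)|$ converges uniformly on compact sets for every $q$, and then interchanges the sums. That is exactly your $q=\infty$ fallback, and nothing in it is specific to $q=\infty$, since
\[
|d_{Q,u}|\le w_k^{-1}\|d\|_{\ell_w^{p,q,\Lambda}}\le(1+k)^{1/p}\|d\|_{\ell_w^{p,q,\Lambda}}\quad\text{for all } q.
\]
So your density argument for $q<\infty$ (shell truncation, Lemma~\ref{lem:floating-synthesis}, Corollary~\ref{cor:synthesis<1}, Lemma~\ref{lem:p-est}) is valid but superfluous. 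Its only advantage is that it reuses already-proved operator bounds; the direct argument is self-contained and uniform in $q$.

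One small point: the proposition does not assume $\Lambda$ lies in the unit disc. This does not affect your argument, since only $e^{\gamma\max_{u\in\Lambda}|u|^\beta}$ enters your constant.
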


\begin{proof}
Throughout the proof, \(C\) denotes a positive constant which may change from
line to line and depends only on
\(\alpha,p,q,\beta,\gamma,\Lambda,\delta\), and the geometric constants of
\(\mathcal P_\delta\).

\medskip
\noindent
\emph{Step 1: A shellwise $\ell^p$ envelope.}
For $\eta\in\C$ and $\ell\ge0$, Theorem~\ref{thm:envelope-recentering} gives
\[
\mathscr{B}_\ell(\eta)^p
:=
\sum_{Q'\in\mathcal P_{\delta,\ell}} |b_{Q'}(\eta)|^p
\le
\, C_{\mathrm{env}}^p\sum_{Q'\in\mathcal P_{\delta,\ell}} e^{-\gamma p|\zeta_{Q'}-\eta|^\beta}.
\]
Let
\(
\rho_\ell(\eta):=\inf\{|z-\eta|:\ z\in A_\ell\}\).
Since $A_\ell=\{z\in\C:\ell\le |z|<\ell+1\}$, we have
\[
\rho_\ell(\eta)\ge \bigl(|\ell-|\eta||-1\bigr)_+,
\qquad (t)_+:=\max\{t,0\}.
\]
For each integer $n\ge0$, define
\[
E_n(\eta,\ell)
:=
\{Q'\in\mathcal P_{\delta,\ell}:\ n\le |\zeta_{Q'}-\eta|<n+1\}.
\]
Then $E_n(\eta,\ell)=\emptyset$ whenever $n+1<\rho_\ell(\eta)$, and therefore
\[
\mathscr{B}_\ell(\eta)^p
\le \, C_{\mathrm{env}}^p\sum_{n\ge (\rho_\ell(\eta)-1)_+}
\#E_n(\eta,\ell)\,e^{-\gamma p n^\beta}.
\]
By Lemma~\ref{lem:geo-pockets}(d),
\(
\#E_n(\eta,\ell)\lesssim \delta^{-2}(n+1)^2\),
hence
\[
\mathscr{B}_\ell(\eta)^p
\le
C\sum_{n\ge (\rho_\ell(\eta)-1)_+}(n+1)^2 e^{-\gamma p n^\beta}.
\]
Since polynomial factors are dominated by stretched exponentials,
\[
\sum_{n\ge N}(n+1)^2 e^{-\gamma p n^\beta}
\lesssim
e^{-\frac{\gamma p}{2}N^\beta},
\qquad N\ge0.
\]
It follows that
\[
\mathscr{B}_\ell(\eta)^p
\le
C\,e^{-\frac{\gamma p}{2}(\rho_\ell(\eta)-1)_+^\beta}
\le
C\,e^{-\frac{\gamma p}{4}|\ell-|\eta||^\beta}.
\]
Thus
\begin{equation}\label{eq:recentering-shell-envelope}
\sum_{Q'\in\mathcal P_{\delta,\ell}} |b_{Q'}(\eta)|^p
\le
C\,e^{-\frac{\gamma p}{4}|\ell-|\eta||^\beta},
\qquad \eta\in\C,\ \ell\ge0.
\end{equation}

\medskip
\noindent
\emph{Step 2: A blockwise $p$-estimate.}
Fix $\ell\ge0$.
Since $0<p<1$, the map $x\mapsto |x|^p$ is subadditive, so
\begin{align*}
\|(E_{\rm rec}d)_\ell\|_{\ell^p}^p
&=
\sum_{Q'\in\mathcal P_{\delta,\ell}}
\Bigl|
\sum_{Q\in\mathcal P_\delta}\sum_{u\in\Lambda}
d_{Q,u}\,e^{i\alpha\im(\overline{\zeta_Q}u)}\,b_{Q'}(\zeta_Q+u)
\Bigr|^p \\
&\le
\sum_{Q'\in\mathcal P_{\delta,\ell}}
\sum_{Q\in\mathcal P_\delta}\sum_{u\in\Lambda}
|d_{Q,u}|^p\,|b_{Q'}(\zeta_Q+u)|^p \\
&=
\sum_{Q\in\mathcal P_\delta}\sum_{u\in\Lambda}
|d_{Q,u}|^p
\sum_{Q'\in\mathcal P_{\delta,\ell}} |b_{Q'}(\zeta_Q+u)|^p.
\end{align*}
If $Q\in\mathcal P_{\delta,k}$, then $|\zeta_Q|\in[k,k+1)$, and hence
\[
\bigl||\zeta_Q+u|-k\bigr|\le 1+R_\Lambda,
\qquad
R_\Lambda:=\max_{u\in\Lambda}|u|.
\]
Applying \eqref{eq:recentering-shell-envelope} with $\eta=\zeta_Q+u$, we obtain
\[
\sum_{Q'\in\mathcal P_{\delta,\ell}} |b_{Q'}(\zeta_Q+u)|^p
\le
C\,e^{-\frac{\gamma p}{4}|\ell-|\zeta_Q+u||^\beta}
\le
C\,e^{-\frac{\gamma p}{8}|\ell-k|^\beta}.
\]
Grouping the sum over $Q$ by annular blocks gives
\begin{equation}\label{eq:recentering-block-estimate}
\|(E_{\rm rec}d)_\ell\|_{\ell^p}^p
\le
C\sum_{k\ge0} e^{-\frac{\gamma p}{8}|\ell-k|^\beta}\,\|d_k\|_{\ell^p}^p,
\qquad \ell\ge0.
\end{equation}

\medskip
\noindent
\emph{Step 3: Outer $\ell^q$ boundedness.}
We pass from \eqref{eq:recentering-block-estimate} to the mixed sequence norm.

\smallskip
\noindent
\emph{Case $0<q\le p$.}
Let $s:=\frac{q}{p}\le1$. Raising \eqref{eq:recentering-block-estimate} to the power $s$
and using subadditivity gives
\[
\|(E_{\rm rec}d)_\ell\|_{\ell^p}^q
\le
C\sum_{k\ge0} e^{-\frac{\gamma q}{8}|\ell-k|^\beta}\,\|d_k\|_{\ell^p}^q.
\]
Multiplying by $w_\ell^q=(1+\ell)^{1-\frac{q}{p}}$ and summing in $\ell$, we obtain
\[
\sum_{\ell\ge0}\Bigl(w_\ell\|(E_{\rm rec}d)_\ell\|_{\ell^p}\Bigr)^q
\le
C\sum_{\ell\ge0}\sum_{k\ge0}
(1+\ell)^{1-\frac{q}{p}} e^{-\frac{\gamma q}{8}|\ell-k|^\beta}\,\|d_k\|_{\ell^p}^q.
\]
By the weighted Peetre inequality in Lemma~\ref{lem:peetre-weighted},
\[
(1+\ell)^{1-\frac{q}{p}}
\lesssim
(1+k)^{1-\frac{q}{p}}(1+|\ell-k|)^{1-\frac{q}{p}},
\]
and since, by the weighted exponential sums in Lemma~\ref{lem:exp-rowsum},
\[
\sum_{j\in\mathbb Z}(1+|j|)^M e^{-\frac{\gamma q}{8}|j|^\beta}<\infty
\qquad (M\ge0),
\]
it follows that
\[
\sum_{\ell\ge0}\Bigl(w_\ell\|(E_{\rm rec}d)_\ell\|_{\ell^p}\Bigr)^q
\le
C\sum_{k\ge0}(1+k)^{1-\frac{q}{p}}\|d_k\|_{\ell^p}^q
=
C\sum_{k\ge0}\bigl(w_k\|d_k\|_{\ell^p}\bigr)^q.
\]
That is,
\[
\|E_{\rm rec}d\|_{\ell_w^{p,q}}
\le
C\|d\|_{\ell_w^{p,q,\Lambda}}.
\]

\smallskip
\noindent
\emph{Case $p<q<\infty$.}
Let $s:=\frac{q}{p}>1$.
Since
\(
\sum_{k\ge0} e^{-\frac{\gamma p}{8}|\ell-k|^\beta}\lesssim 1
\)
uniformly in $\ell$, Jensen's inequality yields
\[
\Bigl(\sum_{k\ge0} e^{-\frac{\gamma p}{8}|\ell-k|^\beta}\|d_k\|_{\ell^p}^p\Bigr)^s
\lesssim
\sum_{k\ge0} e^{-\frac{\gamma p}{8}|\ell-k|^\beta}\|d_k\|_{\ell^p}^{ps}
=
\sum_{k\ge0} e^{-\frac{\gamma p}{8}|\ell-k|^\beta}\|d_k\|_{\ell^p}^{q}.
\]
Using this in \eqref{eq:recentering-block-estimate}, multiplying by
$w_\ell^q=(1+\ell)^{1-\frac{q}{p}}$, and arguing exactly as above with the same weighted
summation argument in Lemma~\ref{lem:peetre-weighted} and Lemma~\ref{lem:exp-rowsum},
we obtain
\[
\sum_{\ell\ge0}\Bigl(w_\ell\|(E_{\rm rec}d)_\ell\|_{\ell^p}\Bigr)^q
\le
C\sum_{k\ge0}\bigl(w_k\|d_k\|_{\ell^p}\bigr)^q,
\qquad\text{i.e.,}\qquad
\|E_{\rm rec}d\|_{\ell_w^{p,q}}
\le
C\|d\|_{\ell_w^{p,q,\Lambda}}.
\]

\smallskip
\noindent
\emph{Case $q=\infty$.}
Now $w_\ell^p=(1+\ell)^{-1}$, so \eqref{eq:recentering-block-estimate} gives
\[
\Bigl(w_\ell\|(E_{\rm rec}d)_\ell\|_{\ell^p}\Bigr)^p
\le
C\sum_{k\ge0}\frac{1+k}{1+\ell}e^{-\frac{\gamma p}{8}|\ell-k|^\beta}
\bigl(w_k\|d_k\|_{\ell^p}\bigr)^p.
\]
Applying the same weighted summation argument as above in Lemma~\ref{lem:peetre-weighted} and Lemma~\ref{lem:exp-rowsum}, we get
\[
w_\ell\|(E_{\rm rec}d)_\ell\|_{\ell^p}
\le
C\sup_{k\ge0} w_k\|d_k\|_{\ell^p}.
\]
Taking the supremum over $\ell$ yields
\[
\|E_{\rm rec}d\|_{\ell_w^{p,\infty}}
\le
C\|d\|_{\ell_w^{p,\infty,\Lambda}}.
\]
Combining the three cases, we have proved that
\[
\|E_{\mathrm{rec}}d\|_{\ell_w^{p,q}}
\le
C_{\mathrm{rec}}\|d\|_{\ell_w^{p,q,\Lambda}},
\qquad d\in\ell_w^{p,q,\Lambda},
\]
where
\[
C_{\mathrm{rec}}
=
C_{\mathrm{rec}}(\alpha,p,q,\beta,\gamma,\Lambda,\delta)>0.
\]
This proves the boundedness of \(E_{\mathrm{rec}}\).

\medskip
\noindent
\emph{Step 4: Exact synthesis identity.}
By Lemma~\ref{lem:floating-synthesis}, the series defining
$S_\Lambda^{\mathrm{flt}}d$ converges absolutely and locally uniformly on~$\C$.
On the other hand, Step~3 shows that $E_{\rm rec}d\in \ell_w^{p,q}$, so by
Corollary~\ref{cor:synthesis<1}, the series defining
$S_\delta^{\ker}(E_{\rm rec}d)$ also converges absolutely and locally uniformly on~$\C$.
It therefore suffices to justify the rearrangement leading to
\eqref{eq:recentering-identity} on compact subsets of~$\C$.

\smallskip
\noindent
Fix $R>0$ and set
\[
K_R:=\{z\in\C:\ |z|\le R\}.
\]
In this step, the constant \(C\) may also depend on \(R\).
For each $(Q,u)$, Theorem~\ref{thm:envelope-recentering} gives
\[
\kappa_{\zeta_Q+u}
=
\sum_{Q'\in\mathcal P_\delta} b_{Q'}(\zeta_Q+u)\,\kappa_{\zeta_{Q'}}
\quad\text{in }\mathcal F_\alpha^2,
\]
hence also locally uniformly on~$\C$.
Multiplying by
\(
d_{Q,u}e^{i\alpha\im(\overline{\zeta_Q}u)}
\)
and summing formally in $(Q,u)$ gives
\begin{equation}\label{eq:recentering-formal}
S_\Lambda^{\mathrm{flt}}d(z)
=
\sum_{Q\in\mathcal P_\delta}\sum_{u\in\Lambda}
d_{Q,u}e^{i\alpha\im(\overline{\zeta_Q}u)}
\sum_{Q'\in\mathcal P_\delta}
b_{Q'}(\zeta_Q+u)\,\kappa_{\zeta_{Q'}}(z).
\end{equation}
We show that the triple series in \eqref{eq:recentering-formal}
converges absolutely and uniformly on $K_R$.

\smallskip
\noindent
If $Q'\in\mathcal P_{\delta, \ell}$ and $z\in K_R$, then exactly as in
Lemma~\ref{lem:kernel-series-entire} and Step~1 of Lemma~\ref{lem:floating-synthesis},
\[
|\kappa_{\zeta_{Q'}}(z)|
=
e^{\frac{\alpha}{2}|z|^2}e^{-\frac{\alpha}{2}|z-\zeta_{Q'}|^2}
\le
C e^{-\frac{\alpha}{4}(\ell-R)^2}
\]
for a constant $C$ depending only on $\alpha$ and $R$.
Also, if  \(Q\in\mathcal P_{\delta,k}\),
and \(u\in\Lambda\), then
\[
|\zeta_{Q'}-(\zeta_Q+u)|
\ge
\bigl||\zeta_{Q'}|-|\zeta_Q+u|\bigr|
\ge
|\ell-k|-(2+R_\Lambda).
\]
Hence, by the envelope estimate in Theorem~\ref{thm:envelope-recentering},
\[
|b_{Q'}(\zeta_Q+u)|
\le
C_{\mathrm{env}}
e^{-\gamma|\zeta_{Q'}-(\zeta_Q+u)|^\beta}
\le
C e^{-\frac{\gamma}{2}|\ell-k|^\beta},
\]
where \(C\) may depend on \(\gamma,\beta,\Lambda,\delta\), and the geometric
constants of \(\mathcal P_\delta\).
Moreover,
\[
\#\mathcal P_{\delta,k}\lesssim \delta^{-2}(1+k),
\qquad
\#\mathcal P_{\delta, \ell}\lesssim \delta^{-2}(1+\ell),
\]
and
\[
|d_{Q,u}|
\le
\|d_k\|_{\ell^p}
\le
w_k^{-1}\|d\|_{\ell_w^{p,q,\Lambda}}
\le
(1+k)^{\frac{1}{p}}\|d\|_{\ell_w^{p,q,\Lambda}}.
\]
Hence
\begin{align*}
&\sup_{z\in K_R}
\sum_{Q\in\mathcal P_\delta}\sum_{u\in\Lambda}\sum_{Q'\in\mathcal P_\delta}
|d_{Q,u}|\,|b_{Q'}(\zeta_Q+u)|\,|\kappa_{\zeta_{Q'}}(z)| \\
\lesssim{}&
\delta^{-4}
\|d\|_{\ell_w^{p,q,\Lambda}}
\sum_{\ell\ge0}(1+\ell)e^{-\frac{\alpha}{4}(\ell-R)^2}
\sum_{k\ge0}(1+k)^{1+\frac1p}e^{-\frac{\gamma}{2}|\ell-k|^\beta}.
\end{align*}
By Lemma~\ref{lem:peetre-weighted} and Lemma~\ref{lem:exp-rowsum}, the inner sum is
\(
\lesssim (1+\ell)^{1+\frac{1}{p}}\).
Hence the whole expression is bounded by
\[
C\,\|d\|_{\ell_w^{p,q,\Lambda}}
\sum_{\ell\ge0}(1+\ell)^{2+\frac{1}{p}}e^{-\frac{\alpha}{4}(\ell-R)^2},
\]
which is finite.
Thus the triple series in \eqref{eq:recentering-formal} is absolutely and uniformly
convergent on $K_R$, so we may interchange the sums and obtain, for every $z\in K_R$,
\begin{align*}
S_\Lambda^{\mathrm{flt}}d(z)
&=
\sum_{Q'\in\mathcal P_\delta}
\Bigl(
\sum_{Q\in\mathcal P_\delta}\sum_{u\in\Lambda}
d_{Q,u}e^{i\alpha\im(\overline{\zeta_Q}u)}\,b_{Q'}(\zeta_Q+u)
\Bigr)\kappa_{\zeta_{Q'}}(z) \\
&=
\sum_{Q'\in\mathcal P_\delta}(E_{\rm rec}d)_{Q'}\,\kappa_{\zeta_{Q'}}(z)
=
S_\delta^{\ker}(E_{\rm rec}d)(z).
\end{align*}
Since $R>0$ was arbitrary, \eqref{eq:recentering-identity} holds on all of~$\C$.
\end{proof}

\subsubsection{\(\varepsilon\)-iteration and normalized Fock kernel coefficients}
\label{subsubsec:atomic-quasi-proof}
We combine the exact jet decomposition, local jet elimination, and re-centering
operator.  After choosing \(\varepsilon>0\) sufficiently small, the resulting
error operator is a strict contraction on \(\mathcal F_\alpha^{p,q}\).
Fix once and for all auxiliary parameters $0<\beta<1$ and $\gamma>0$, which enter only through Proposition~\ref{prop:recentering-floating}.

\begin{proposition}[\(\varepsilon\)-iteration for kernel coefficient recovery]
\label{prop:epsilon-iteration-kernel}
Let $\alpha>0$, $0<p<1$, $0<q\le \infty$, and set
\(
m:=\Bigl\lfloor \frac{2(1-p)}{p}\Bigr\rfloor\) , \(
s:=\min\{1,p,q\}\).

\smallskip
\noindent
Assume that $\delta_0>0$ is chosen so that the conclusions of
Theorem~\ref{thm:atomic-quasi-jet} and
Proposition~\ref{prop:recentering-floating} hold.
Fix a $\delta$-pocket tiling $\mathcal P_\delta$ of $\C$ with
$\delta\in(0,\delta_0)$.

\smallskip
\noindent
By Theorem~\ref{thm:atomic-quasi-jet}, the operator
\(
J:=C_\delta^m(I-R_\delta^m)^{-1}:
\mathcal F_\alpha^{p,q}\to \ell_w^{p,q,m}
\)
is bounded, with $\|J\|\le C_J$, and satisfies
\(
S_\delta^mJ=I\) on \( \mathcal F_\alpha^{p,q}\), where $C_J$ depends only on $\alpha, p, q$ and $\delta$.

\smallskip
\noindent
Let $C_{\rm E}=C_{\rm E}(\alpha,p,q)>0$ denote the constant in
\eqref{eq:local-jet-elimination-defect} from
Lemma~\ref{lem:local-jet-elimination}. Choose $\varepsilon\in(0,1)$ so that
\[
\varepsilon_0:=C_{\rm E}C_J\varepsilon<1.
\]
Fix such an $\varepsilon$ throughout.
Let $\Lambda_\varepsilon\subset\C$ and
\(
E_{{\rm loc},\varepsilon}:\ell_w^{p,q,m}\to \ell_w^{p,q,\Lambda_\varepsilon}
\)
be the operator furnished by Lemma~\ref{lem:local-jet-elimination}, with
\(
\|E_{{\rm loc},\varepsilon}\|\le C_{{\rm loc},\varepsilon}\).
Then \eqref{eq:local-jet-elimination-defect} becomes
\[
\|S^{\mathrm{flt}}_{\Lambda_\varepsilon}E_{{\rm loc},\varepsilon}c-S_\delta^m c\|_{\mathcal F_\alpha^{p,q}}
\le
C_{\rm E}\varepsilon\,\|c\|_{\ell_w^{p,q,m}},
\qquad c\in \ell_w^{p,q,m}.
\]
For this set $\Lambda_\varepsilon$, let
\(
E_{{\rm rec},\varepsilon}:\ell_w^{p,q,\Lambda_\varepsilon}\to \ell_w^{p,q}
\)
be the re-centering operator furnished by
Proposition~\ref{prop:recentering-floating}, with
\(
\|E_{{\rm rec},\varepsilon}\|\le C_{{\rm rec},\varepsilon}\),
and
\(
S^{\mathrm{flt}}_{\Lambda_\varepsilon}d
=
S_\delta^{\ker}(E_{{\rm rec},\varepsilon}d)\) for every \( d\in \ell_w^{p,q,\Lambda_\varepsilon}\).

\smallskip
\noindent
Define
\[
A_\varepsilon:=E_{{\rm rec},\varepsilon}E_{{\rm loc},\varepsilon}J\qquad \text{and}
\qquad
T_\varepsilon:=S_\delta^{\ker}A_\varepsilon.
\]
Then the series
\[
A_\delta^{\ker}f
:=
\sum_{n=0}^\infty A_\varepsilon (I-T_\varepsilon)^n f
\]
converges in $\ell_w^{p,q}$ for every $f\in\mathcal F_\alpha^{p,q}$, defines a bounded
linear operator
\(
A_\delta^{\ker}:\mathcal F_\alpha^{p,q}\to \ell_w^{p,q}\),
and satisfies
\(
S_\delta^{\ker}A_\delta^{\ker}=I\) on \( \mathcal F_\alpha^{p,q}\).
Moreover,
\begin{equation}\label{eq:epsilon-iteration-bound}
\|A_\delta^{\ker}f\|_{\ell_w^{p,q}}
\le
\frac{C_{{\rm rec},\varepsilon}C_{{\rm loc},\varepsilon}C_J}{(1-\varepsilon_0^s)^{\frac{1}{s}}}
\,\|f\|_{\mathcal F_\alpha^{p,q}},
\qquad
f\in\mathcal F_\alpha^{p,q}.
\end{equation}
\end{proposition}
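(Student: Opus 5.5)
The plan is to show that \(I-T_\varepsilon\) is a strict contraction on \(\mathcal F_\alpha^{p,q}\) in the induced operator quasi-norm, and then to sum a geometric series in the \(s\)-normed spaces. The contraction estimate comes first. Fix \(f\in\mathcal F_\alpha^{p,q}\) and set \(c:=Jf\in\ell_w^{p,q,m}\), so that \(S_\delta^m c=f\) and \(\|c\|_{\ell_w^{p,q,m}}\le C_J\|f\|_{\mathcal F_\alpha^{p,q}}\). By the re-centering identity \eqref{eq:recentering-identity} with \(d:=E_{{\rm loc},\varepsilon}c\) we get
\[
T_\varepsilon f=S_\delta^{\ker}E_{{\rm rec},\varepsilon}E_{{\rm loc},\varepsilon}c
=S^{\mathrm{flt}}_{\Lambda_\varepsilon}E_{{\rm loc},\varepsilon}c .
\]
Then \eqref{eq:local-jet-elimination-defect} gives
\[
\|f-T_\varepsilon f\|_{\mathcal F_\alpha^{p,q}}
=\|S_\delta^m c-S^{\mathrm{flt}}_{\Lambda_\varepsilon}E_{{\rm loc},\varepsilon}c\|_{\mathcal F_\alpha^{p,q}}
\le C_{\rm E}\varepsilon C_J\|f\|_{\mathcal F_\alpha^{p,q}}=\varepsilon_0\|f\|_{\mathcal F_\alpha^{p,q}}.
\]
Hence \(\|I-T_\varepsilon\|\le\varepsilon_0<1\), and iterating yields \(\|(I-T_\varepsilon)^n f\|\le\varepsilon_0^n\|f\|\).

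Next comes convergence in \(\ell_w^{p,q}\). The operator \(A_\varepsilon\) is bounded with norm at most \(C_{{\rm rec},\varepsilon}C_{{\rm loc},\varepsilon}C_J\), by composing Proposition~\ref{prop:recentering-floating}(i), Lemma~\ref{lem:local-jet-elimination}(ii) and the bound on \(J\). Since \(\ell_w^{p,q}\) is complete and satisfies the \(s\)-triangle inequality with \(s=\min\{1,p,q\}\) (see the remark after Definition~\ref{def:floating-setup}), we have
\[
\sum_n\|A_\varepsilon(I-T_\varepsilon)^nf\|^s\le (C_{{\rm rec},\varepsilon}C_{{\rm loc},\varepsilon}C_J)^s\|f\|^s\sum_n\varepsilon_0^{ns}.
\]
Because the series converges, the partial sums are Cauchy, the full series converges, and \eqref{eq:epsilon-iteration-bound} follows. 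Linearity of \(A_\delta^{\ker}\) is clear. Note that \(s=\min\{1,p,q\}\) is also the exponent for \(\mathcal F_\alpha^{p,q}\) by Lemma~\ref{lem-norm}, so the same argument is consistent on both sides.

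Finally we verify the identity \(S_\delta^{\ker}A_\delta^{\ker}=I\). The synthesis operator \(S_\delta^{\ker}\) is bounded from \(\ell_w^{p,q}\) to \(\mathcal F_\alpha^{p,q}\) by Corollary~\ref{cor:synthesis<1}, so it commutes with the convergent series. This gives
\[
S_\delta^{\ker}A_\delta^{\ker}f=\sum_{n\ge0}T_\varepsilon(I-T_\varepsilon)^nf
=\lim_N\bigl(f-(I-T_\varepsilon)^{N+1}f\bigr)=f,
\]
because the sum telescopes, since \(T_\varepsilon=I-(I-T_\varepsilon)\), and the tail tends to zero by the contraction estimate. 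This is the Neumann argument of Lemma~\ref{lem:neumann-quasi} applied to \(T_\varepsilon=I-(I-T_\varepsilon)\).

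The main subtlety is the first step: checking that \(T_\varepsilon f\) really equals \(S^{\mathrm{flt}}E_{\rm loc}Jf\) as elements of \(\mathcal F_\alpha^{p,q}\), and not merely as formal series. This rests on the identity of entire functions in Proposition~\ref{prop:recentering-floating}(ii), together with the membership \(E_{{\rm loc},\varepsilon}Jf\in\ell_w^{p,q,\Lambda_\varepsilon}\). The other point to watch is that \(\varepsilon\) is chosen only after \(C_J\) (which depends on \(\delta\)) and \(C_{\rm E}\) (which is independent of \(\varepsilon\)) are fixed. The resulting \(\varepsilon\)-dependence of \(C_{{\rm loc},\varepsilon}\) and \(C_{{\rm rec},\varepsilon}\) is harmless, since those constants enter only the final bound.
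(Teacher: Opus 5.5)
Your proposal is correct and follows essentially the same route as the paper's proof. You obtain the contraction $\|I-T_\varepsilon\|\le\varepsilon_0$ by rewriting $T_\varepsilon=S^{\mathrm{flt}}_{\Lambda_\varepsilon}E_{{\rm loc},\varepsilon}J$ via the re-centering identity and applying the jet-elimination defect bound to $c=Jf$; you then sum the geometric series in $\ell_w^{p,q}$ using the $s$-triangle inequality, and recover $S_\delta^{\ker}A_\delta^{\ker}=I$ by telescoping after applying the bounded operator $S_\delta^{\ker}$.
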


\begin{proof}
By the defining property of $E_{\rm rec, \varepsilon}$ and
\(T_\varepsilon\), we have $S_\delta^{\ker}E_{\rm rec, \varepsilon}=S^{\mathrm{flt}}_{\Lambda_{\varepsilon}}$, and hence
\[
T_\varepsilon
=
S_\delta^{\ker}E_{\rm rec, \varepsilon}E_{{\rm loc},\varepsilon}J
=
S^{\mathrm{flt}}_{\Lambda_{\varepsilon}}E_{{\rm loc},\varepsilon}J.
\]
For $f\in\mathcal F_\alpha^{p,q}$, using \(
S_\delta^mJ=I \)  on \(\mathcal F_\alpha^{p,q}\) and then \eqref{eq:local-jet-elimination-defect} with $c=Jf$, we obtain
\[
f-T_\varepsilon f
=
S_\delta^m(Jf)-S^{\mathrm{flt}}_{\Lambda_{\varepsilon}}E_{{\rm loc},\varepsilon}(Jf),
\]
hence
\[
\|f-T_\varepsilon f\|_{\mathcal F_\alpha^{p,q}}
\le
C_{\rm E}\varepsilon\,\|Jf\|_{\ell_w^{p,q,m}}
\le
C_{\rm E}C_J\varepsilon\,\|f\|_{\mathcal F_\alpha^{p,q}}
=
\varepsilon_0 \,\|f\|_{\mathcal F_\alpha^{p,q}}.
\]
Thus
\[
\|I-T_\varepsilon\|_{\mathcal F_\alpha^{p,q}\to \mathcal F_\alpha^{p,q}}
\le \varepsilon_0 <1.
\]
Now define
\(
e^{(n)}:=(I-T_\varepsilon)^n f \) and \(
c^{(n)}:=A_\varepsilon e^{(n)}\).
Then
\(
\|e^{(n)}\|_{\mathcal F_\alpha^{p,q}}
\le
\varepsilon_0 ^n\|f\|_{\mathcal F_\alpha^{p,q}}
\),
and therefore
\[
\|c^{(n)}\|_{\ell_w^{p,q}}
\le
\|A_\varepsilon\|\,\|e^{(n)}\|_{\Fpq}
\le
C_{\rm rec, \varepsilon} C_{\rm loc, \varepsilon} C_J\, \varepsilon_0 ^n\,\|f\|_{\mathcal F_\alpha^{p,q}}.
\]
Since $\ell_w^{p,q}$ is a complete quasi-Banach space, the series
\(
\sum_{n=0}^\infty c^{(n)}
\)
converges in $\ell_w^{p,q}$ to some $c=A_\delta^{\ker}f$, and
\[
\|c\|_{\ell_w^{p,q}}^s
\le
\sum_{n=0}^\infty \|c^{(n)}\|_{\ell_w^{p,q}}^s
\le
\big(C_{\rm rec, \varepsilon} C_{\rm loc, \varepsilon} C_J\big)^s
\sum_{n=0}^\infty \varepsilon_0 ^{ns}
\,\|f\|_{\mathcal F_\alpha^{p,q}}^s.
\]
This yields \eqref{eq:epsilon-iteration-bound}.

\smallskip
\noindent
Since $S_\delta^{\ker}$ is bounded on $\ell_w^{p,q}$ (Corollary \ref{cor:synthesis<1}),
\[
S_\delta^{\ker}c
=
\sum_{n=0}^\infty S_\delta^{\ker}c^{(n)}
=
\sum_{n=0}^\infty T_\varepsilon(I-T_\varepsilon)^n f.
\]
The partial sums telescope:
\[
\sum_{n=0}^N T_\varepsilon(I-T_\varepsilon)^n f
=
f-(I-T_\varepsilon)^{N+1}f,
\]
and the partial sums converge to $f$ in $\mathcal F_\alpha^{p,q}$ because
\[
\|(I-T_\varepsilon)^{N+1}f\|_{\Fpq}
\le
\varepsilon_0 ^{N+1}\|f\|_{\Fpq}\to0.
\]
Hence
\(
S_\delta^{\ker}A_\delta^{\ker}f=f\). Moreover, linearity is immediate from the definition of $A_\delta^{\ker}$.
\end{proof}

\subsubsection{Proof of Theorem~\ref{thm:AD-main} in the angular quasi-Banach range}
\label{subsubsec:atomic-quasi-banach-exact}

The proof in the range $0<p<1$ is obtained by combining the exact jet decomposition,
the local elimination of jet atoms to floating kernels, the re-centering of floating kernels,
and the $\varepsilon$-iteration argument from the preceding subsections.

\begin{proof}[Proof of Theorem~\ref{thm:AD-main} in the range $0<p<1$]

Fix once and for all auxiliary parameters \(0<\beta<1\) and
\(\gamma>0\), for instance \(\beta= \frac{1}{2}\) and \(\gamma=1\), for the
re-centering step in Proposition~\ref{prop:recentering-floating}.  Then
choose \(\varepsilon>0\) as in
Proposition~\ref{prop:epsilon-iteration-kernel}.  After these auxiliary
choices are fixed, their contribution is absorbed into the implicit
constants, which then depend only on \(\alpha,p,q\), and \(\delta\).

\smallskip
\noindent
Set
\(
m:=\Bigl\lfloor \frac{2(1-p)}{p}\Bigr\rfloor\).
Let \(\delta_0=\delta_0(\alpha,p,q)>0\) be small enough so that the conclusions of
Theorem~\ref{thm:atomic-quasi-jet} and
Proposition~\ref{prop:recentering-floating} hold.
Fix $0<\delta<\delta_0$, and let $\mathcal P_\delta$ be a $\delta$-pocket tiling of $\C$
with centers $\{\zeta_Q\}_{Q\in\mathcal P_\delta}$.

\smallskip
\noindent
\emph{(i) Synthesis.}
By Corollary~\ref{cor:synthesis<1}, the kernel synthesis operator
\(
S_\delta^{\ker}\)
is bounded from $\ell_w^{p,q}$ to $\mathcal F_\alpha^{p,q}$. In particular,
for every $c=(c_Q)_{Q\in\mathcal P_\delta}\in\ell_w^{p,q}$,
\[
\|S_\delta^{\ker}c\|_{\mathcal F_\alpha^{p,q}}
\lesssim
\|c\|_{\ell_w^{p,q}}.
\]
Consequently, for every representation \(f=S_\delta^{\ker}c\),
\(
\|f\|_{\mathcal F_\alpha^{p,q}}
\lesssim
\|c\|_{\ell_w^{p,q}}\),
and therefore
\[
\|f\|_{\mathcal F_\alpha^{p,q}}
\lesssim
\inf\left\{
\|c\|_{\ell_w^{p,q}}:
c\in\ell_w^{p,q},\ f=S_\delta^{\ker}c
\right\}.
\]

\smallskip
\noindent
\emph{(ii) Coefficient recovery and optimality.}
By Proposition~\ref{prop:epsilon-iteration-kernel}, there exists a bounded linear operator
\(
A_\delta^{\ker}:\mathcal F_\alpha^{p,q}\to \ell_w^{p,q}
\)
such that
\[
S_\delta^{\ker}A_\delta^{\ker}=I
\quad\text{on }\mathcal F_\alpha^{p,q} \quad
\text{ and } \quad 
\|A_\delta^{\ker}f\|_{\ell_w^{p,q}}
\lesssim
\|f\|_{\mathcal F_\alpha^{p,q}},
\qquad
f\in\mathcal F_\alpha^{p,q}.
\]
Thus, for every $f\in\mathcal F_\alpha^{p,q}$, the coefficient sequence
\(
c^\ast:=A_\delta^{\ker}f
\)
belongs to \(\ell_w^{p,q}\),
satisfies \(f=S_\delta^{\ker}c^\ast\), and  obeys
\[
\|c^\ast\|_{\ell_w^{p,q}}
\lesssim \,  
\|f\|_{\mathcal F_\alpha^{p,q}}.
\]
Therefore
\[
\inf\Bigl\{
\|c\|_{\ell_w^{p,q}}:\ c\in\ell_w^{p,q},\ f=S_\delta^{\ker}c
\Bigr\}
\lesssim \, 
\|f\|_{\mathcal F_\alpha^{p,q}}.
\]
Combining the two inequalities, we obtain
\[
\|f\|_{\mathcal F_\alpha^{p,q}}
\, \asymp \,
\inf\Bigl\{
\|c\|_{\ell_w^{p,q}}:\ c\in\ell_w^{p,q},\ f=S_\delta^{\ker}c
\Bigr\}.
\]
This proves
Theorem~\ref{thm:AD-main} in the range $0<p<1$.
\end{proof}

\subsection{The mixed-norm-normalized formulation}
\label{subsec:atomic-consequences}
Using \(\ell^{p,q}\), the mixed-norm-normalized kernels
\(\widetilde\kappa_{\zeta_Q}\), and Proposition~\ref{prop:T1}, we reformulate
Theorem~\ref{thm:AD-main}.

\begin{corollary}[Mixed-norm-normalized kernel form of the atomic decomposition]
\label{cor:normalized-atomic}
Let $\alpha>0$ and $0<p,q\le\infty$. Let $\delta_0=\delta_0(\alpha,p,q)$ be the constant from
Theorem~\ref{thm:AD-main}. For every $\delta\in(0,\delta_0)$ and every $\delta$-pocket tiling
$\mathcal P_\delta$ of $\C$ with centers $\{\zeta_Q\}_{Q\in\mathcal P_\delta}$, the following assertions hold, with
all implicit constants depending only on \(\alpha,p,q\), and \(\delta\).

\smallskip
\noindent
\emph{(i) Synthesis.}
The mixed-norm-normalized synthesis operator
\[
S_\delta d
:=
\sum_{Q\in\mathcal P_\delta} d_Q\,\widetilde{\kappa}_{\zeta_Q},
\qquad d=(d_Q)_{Q\in\mathcal P_\delta},
\]
is a bounded linear operator from $\ell^{p,q}$ to $\mathcal F_\alpha^{p,q}$, and
\[
\|S_\delta d\|_{\mathcal F_\alpha^{p,q}}
\lesssim
\,\|d\|_{\ell^{p,q}},
\qquad d\in\ell^{p,q}.
\]

\smallskip
\noindent
\emph{(ii) Coefficient recovery and optimality.}
Conversely, for every $f\in\mathcal F_\alpha^{p,q}$ there exists
$d=(d_Q)_{Q\in\mathcal P_\delta}\in\ell^{p,q}$ such that
\(
f= S_\delta d\),
and moreover
\[
\|f\|_{\mathcal F_\alpha^{p,q}}
\, \asymp \,
\inf\Bigl\{
\|d\|_{\ell^{p,q}}:
d\in\ell^{p,q},\ f = S_\delta d
\Bigr\}.
\]
\end{corollary}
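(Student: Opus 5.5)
The plan is to reduce Corollary~\ref{cor:normalized-atomic} to Theorem~\ref{thm:AD-main} by a diagonal rescaling of coefficients. By Proposition~\ref{prop:T1},
\[
\|\kappa_{\zeta_Q}\|_{\mathcal F_\alpha^{p,q}}
=e^{-\frac{\alpha}{2}|\zeta_Q|^2}\|K_{\zeta_Q}\|_{\mathcal F_\alpha^{p,q}}
\asymp (1+|\zeta_Q|)^{\frac1q-\frac1p},
\]
uniformly in \(Q\). For \(Q\in\mathcal P_{\delta,k}\) we have \(|\zeta_Q|\in[k,k+1)\), hence \(1+|\zeta_Q|\asymp 1+k\). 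Writing \(\lambda_Q:=\|\kappa_{\zeta_Q}\|_{\mathcal F_\alpha^{p,q}}\), this gives \(\lambda_Q\asymp w_k\) for \(Q\in\mathcal P_{\delta,k}\), with constants depending only on \(\alpha,p,q\) and not on \(\delta\) or \(Q\).

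Define the diagonal map \(D:\ell^{p,q}\to\ell^{p,q}_w\) by \((Dd)_Q:=d_Q/\lambda_Q\). Then \(\widetilde\kappa_{\zeta_Q}=\kappa_{\zeta_Q}/\lambda_Q\), so \(S_\delta d=S^{\ker}_\delta(Dd)\) termwise. Since \(\lambda_Q^{-1}\asymp w_k^{-1}\) blockwise, we get
\[
w_k\|(Dd)_k\|_{\ell^p}\asymp\|d_k\|_{\ell^p}
\]
for every \(k\), including the cases \(p=\infty\) and \(q=\infty\). Hence \(\|Dd\|_{\ell^{p,q}_w}\asymp\|d\|_{\ell^{p,q}}\), and \(D\) is a linear isomorphism onto \(\ell^{p,q}_w\) with inverse \(c\mapsto(\lambda_Qc_Q)\). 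For convergence, note that \(Dd\in\ell^{p,q}_w\), so the series defining \(S^{\ker}_\delta(Dd)\) converges absolutely and locally uniformly. This follows from Lemmas~\ref{lem:kernel-series-entire} and~\ref{lem:jet-series-entire} (the latter with \(m=0\)). The identity \(S_\delta d=S^{\ker}_\delta(Dd)\) therefore holds as entire functions.

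For (i), Theorem~\ref{thm:AD-main}(i) gives
\[
\|S_\delta d\|_{\mathcal F_\alpha^{p,q}}=\|S^{\ker}_\delta Dd\|_{\mathcal F_\alpha^{p,q}}\lesssim\|Dd\|_{\ell^{p,q}_w}\asymp\|d\|_{\ell^{p,q}}.
\]
For (ii), take \(c\in\ell^{p,q}_w\) from Theorem~\ref{thm:AD-main}(ii) with \(f=S^{\ker}_\delta c\) and \(\|c\|_{\ell^{p,q}_w}\lesssim\|f\|_{\mathcal F_\alpha^{p,q}}\). Set \(d:=D^{-1}c\). Then \(f=S_\delta d\) and \(\|d\|_{\ell^{p,q}}\asymp\|c\|_{\ell^{p,q}_w}\lesssim\|f\|_{\mathcal F_\alpha^{p,q}}\). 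Since \(D\) is a bijection between representations, the two infima are comparable: \(d\) represents \(f\) via \(S_\delta\) exactly when \(Dd\) represents \(f\) via \(S^{\ker}_\delta\). The two-sided equivalence in Theorem~\ref{thm:AD-main}(ii) then transfers to the corollary.

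There is no real obstacle. The only point needing care is that the comparison \(\lambda_Q\asymp w_k\) is uniform in \(Q\) and holds at the endpoints \(p,q=\infty\). Proposition~\ref{prop:T1} is stated for the full range \(0<p,q\le\infty\) with the convention \(\frac1\infty=0\), so it covers these cases.
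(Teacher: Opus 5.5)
Your proposal is correct and follows essentially the same route as the paper. The paper also rescales the coefficients by \(\|\kappa_{\zeta_Q}\|_{\mathcal F_\alpha^{p,q}}\asymp w_k\), using Proposition~\ref{prop:T1}, so that \(\|d\|_{\ell^{p,q}}\asymp\|c\|_{\ell^{p,q}_w}\), and then transfers both parts of Theorem~\ref{thm:AD-main}; your explicit justification that the synthesis series converges via Lemmas~\ref{lem:kernel-series-entire} and~\ref{lem:jet-series-entire} is a detail the paper leaves implicit.
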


\begin{proof}
Compare \(S_\delta\) with \(S_\delta^{\ker}\).
By Proposition \ref{prop:T1}, for every $k\ge0$ and every
$Q\in\mathcal P_{\delta,k}$,
\[
\|\kappa_{\zeta_Q}\|_{\mathcal F_\alpha^{p,q}} \asymp (1 + |\zeta_Q|)^{\frac{1}{q} - \frac{1}{p}} \asymp (1 + k)^{\frac{1}{q} - \frac{1}{p}} = w_k.
\]
Hence, if we define
\[
d_Q:=c_Q\,\|\kappa_{\zeta_Q}\|_{\mathcal F_\alpha^{p,q}},
\qquad Q\in\mathcal P_\delta,
\]
then
\[
\sum_{Q\in\mathcal P_\delta} c_Q\,\kappa_{\zeta_Q}
=
\sum_{Q\in\mathcal P_\delta} d_Q\,\widetilde{\kappa}_{\zeta_Q}.
\]
Moreover, for the block vectors
\(
c_k:=(c_Q)_{Q\in\mathcal P_{\delta,k}}\) and \(
d_k:=(d_Q)_{Q\in\mathcal P_{\delta,k}}\), with \(k\ge0\),
we have
\(
\|d_k\|_{\ell^p}\asymp w_k\|c_k\|_{\ell^p}\).
Therefore
\(
\|d\|_{\ell^{p,q}}\asymp \|c\|_{\ell_w^{p,q}}\).

\smallskip
\noindent
The corollary now follows immediately from Theorem~\ref{thm:AD-main}: part~(i) transfers the
bounded synthesis estimate from $\ell_w^{p,q}$ to $\ell^{p,q}$, while part~(ii) transfers the
existence of coefficients and the optimality estimate in exactly the same way.
\end{proof}

\begin{remark}
The preceding mixed-norm-normalized formulation makes explicit the two-level nature of the
coefficient space. In the classical Fock scale, atomic decompositions may be stated
using normalized kernels indexed by a uniform lattice and coefficients in a single
\(\ell^p\)-space. In the mixed-norm setting, the \(\delta\)-pocket tiling
\(\mathcal P_\delta\) is better adapted to the norm: the pockets separate the angular
or local behavior inside each annulus from the radial summation over annuli. The
weight
\[
w_k=(1+k)^{\frac1q-\frac1p}
\]
accounts for the change between local \(\ell^p\)-summability and the outer
\(\ell^q\)-summation. Thus the atomic decomposition is a shellwise discretization,
rather than a lattice discretization of the classical Fock theory, adapted to
the mixed norm. This is analogous in spirit to the annular--angular discretizations
used in mixed-norm Bergman spaces, although here the localization is governed by
Gaussian kernels on the Euclidean plane.
\end{remark}


\section{Carleson measures and vanishing Carleson measures}\label{sec:carleson}

This section proves the Carleson and vanishing Carleson theorems stated in
Subsection~\ref{subsec:carmeas}.  We fix the standing notation in
Subsection~\ref{subsec:carleson-notation}, isolate the required discrete
shell principles in Subsection~\ref{subsec:carleson-discrete}, and then
prove the bounded and compact characterizations in
Subsections~\ref{subsec:carleson-bounded} and
\ref{subsec:carleson-vanishing}.

\subsection{Standing notation}\label{subsec:carleson-notation}

Throughout this section, we use the notation of Subsection~\ref{subsec:carmeas}.  Fix
\[
\alpha>0,\qquad 0<p,q\le \infty,\qquad 0<s<\infty,
\]
and let \(\delta_0=\delta_0(\alpha,p,q)\) be as in Theorem~\ref{thm:AD-main}.  Fix
\(0<\delta<\delta_0\) and work with the corresponding \(\delta\)-pocket tiling
\(
\mathcal P_\delta=\bigsqcup_{k\ge0}\mathcal P_{\delta,k}
\)
of \(\mathbb C\), with distinguished centers \(\zeta_Q\in Q\) and the uniform ball control
\[
B(\zeta_Q,c_{\mathrm{in}}\delta) \subset Q\subset B(\zeta_Q,C_{\mathrm{out}}\delta), \qquad Q\in\mathcal P_\delta,
\]
where \(c_{\mathrm{in}}>0\) and \(C_{\mathrm{out}}>0\) are independent of \(\delta\).  As before, write
\[
A_k=\{z\in\mathbb C:\ k\le |z|<k+1\},\qquad k\in\mathbb N_0.
\]
Let \(\mu\) be a positive Borel measure on \(\mathbb C\). We consider the
Gaussian \(L^s\)-space
\[
L_\alpha^s(\mu)
=
\left\{
f \text{ measurable on }\mathbb C:
\|f\|_{L_\alpha^s(\mu)}^s
:=
\int_{\mathbb C}|f(z)|^s e^{-\frac{\alpha s}{2}|z|^2}\,d\mu(z)
<\infty
\right\},
\]
and denote by
\[
i_\mu:\mathcal F_\alpha^{p,q}\to L_\alpha^s(\mu),\qquad i_\mu f=f,
\]
the natural embedding map.

\smallskip
\noindent
Fix a radius \(R > C_{\mathrm{out}}\delta\).  Define the continuous and discrete ball-mass quantities by
\[
U_{\mu,R}(\zeta):=\|\kappa_\zeta\|_{\mathcal F_\alpha^{p,q}}^{-s}\, \mu(B(\zeta,R)),
\quad \zeta\in\mathbb C, \qquad \text{and} \qquad 
u_{\mu,R}(Q):=U_{\mu,R}(\zeta_Q),
\quad Q\in\mathcal P_\delta.
\]
We also retain the generalized conjugate exponents \(p_s^\ast\) and \(q_s^\ast\), as defined in
Subsection~\ref{subsec:carmeas}, together with the corresponding shell space
\(L_{\mathrm{sh}}^{p_s^\ast,q_s^\ast}\). Note that
\[
1\le p_s^\ast,\ q_s^\ast\le\infty.
\]

\smallskip
\noindent
We use throughout this section the mixed-norm-normalized
formulation of the atomic decomposition from
Subsection~\ref{subsec:atomic-consequences}. Namely, we write
\[
\widetilde\kappa_{\zeta_Q}
:=
\frac{\kappa_{\zeta_Q}}
{\|\kappa_{\zeta_Q}\|_{\mathcal F_\alpha^{p,q}}},
\qquad Q\in\mathcal P_\delta,
\]
and denote by
\[
S_\delta d
:=
\sum_{Q\in\mathcal P_\delta}
d_Q\widetilde\kappa_{\zeta_Q},
\qquad d=(d_Q)_{Q\in\mathcal P_\delta} \in\ell^{p,q},
\]
the mixed-norm-normalized synthesis operator.

\smallskip
\noindent
By Corollary~\ref{cor:normalized-atomic} and the coefficient-recovery
operators constructed in the proof of Theorem~\ref{thm:AD-main}, after the
normalization in Subsection~\ref{subsec:atomic-consequences}, 
we fix a bounded linear operator
\begin{equation}\label{eq:inverse_S}
A_\delta:\mathcal F_\alpha^{p,q}\to \ell^{p,q} \qquad \text{ such that } 
\qquad
S_\delta A_\delta = I \quad
 \text{ on } \ \mathcal F_\alpha^{p,q}.
\end{equation}
More precisely, we choose \(A_\delta\) from the mixed-norm-normalized
formulation of the atomic decomposition as follows.  In the angular
Banach range \(1\le p\le\infty\), it is induced by the corrected kernel
analysis map
\(
C_\delta^{\ker}(I-R_\delta^{\ker})^{-1}\), which appears in the proof of Theorem~\ref{thm:AD-main} in
Subsection~\ref{subsubsec:atomic-banach-exact}. In the angular quasi-Banach range
\(0<p<1\), it is induced by \(A_\delta^{\ker}\) from
Proposition~\ref{prop:epsilon-iteration-kernel}.

\smallskip
\noindent
Unless otherwise stated, all implicit constants in this section may depend on
\(\alpha,p,q,s,\delta\), and on the fixed radius \(R\), but are independent of \(\mu\).

\subsection{Auxiliary discrete shell principles}\label{subsec:carleson-discrete}
We record the discrete weighted embedding principles used in both the
bounded and vanishing Carleson arguments. Let
\(\omega=(\omega_Q)_{Q\in\mathcal P_\delta}\) be a nonnegative weight on
the pocket index set, and define the weighted sequence space
\[
\ell^s(\omega)
:=
\left\{
c=(c_Q)_{Q\in\mathcal P_\delta}:
\|c\|_{\ell^s(\omega)}^s
:=
\sum_{Q\in\mathcal P_\delta}\omega_Q |c_Q|^s
<\infty
\right\}, 
\qquad 0<s<\infty.
\]
We denote by
\[
i_\omega:\ell^{p,q}\longrightarrow \ell^s(\omega),
\qquad i_\omega c=c,
\]
the corresponding weighted identity map whenever it is well defined.

\smallskip
\noindent
The first result treats a single finite block.  The second applies this
one-level criterion on each shell and gives the boundedness of
\(i_\omega\) in terms of the mixed shell norm of \(\omega\).  We then
record a finite-shell truncation criterion and use it to characterize
compactness of \(i_\omega\).

\smallskip
\noindent
For each \(k\ge0\), let
\[
i_\omega^{(k)}:\ell^p(\mathcal P_{\delta,k})\to \ell^s(\omega|_{\mathcal P_{\delta,k}}),
\qquad i_\omega^{(k)}(a)=a,
\]
denote the restriction of \(i_\omega\) to the \(k\)-th block. Since \(\mathcal P_{\delta,k}\) is finite, \(i_\omega^{(k)}\) is automatically bounded. 

\smallskip
\noindent
We compute the block norms \(\|i^{(k)}_{\omega}\|\), and then use them to
characterize the boundedness of the identity map
\(
i_\omega\).

\begin{lemma} \label{lem:block-norm}
Let  \(\omega=(\omega_Q)_{Q\in\mathcal P_\delta}\)
be a nonnegative weight on \(\mathcal P_\delta\). For each \(k\ge0\), write
\(
\omega_k:=(\omega_Q)_{Q\in\mathcal P_{\delta,k}}\). Then
\[
\|i_\omega^{(k)}\|^s
= \|\omega_k\|_{\ell^{p_s^{\ast}}}.
\]
\end{lemma}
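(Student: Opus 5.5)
The plan is to rewrite the block norm as a duality statement on a finite index set. By definition,
\[
\|i_\omega^{(k)}\|^s=\sup\Bigl\{\sum_{Q\in\mathcal P_{\delta,k}}\omega_Q|a_Q|^s:\ \|a\|_{\ell^p}\le1\Bigr\}.
\]
Substituting \(b_Q:=|a_Q|^s\ge0\) changes the constraint \(\|a\|_{\ell^p}\le1\) into \(\|b\|_{\ell^{p/s}}\le1\), with the usual reading \(\sup_Q b_Q\le1\) when \(p=\infty\). Thus
\[
\|i_\omega^{(k)}\|^s=\sup\Bigl\{\sum_Q\omega_Qb_Q:\ b\ge0,\ \|b\|_{\ell^{p/s}}\le1\Bigr\}.
\]
This is the norm of the positive functional \(b\mapsto\sum_Q\omega_Qb_Q\) on the finite-dimensional space \(\ell^{p/s}(\mathcal P_{\delta,k})\). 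Finite dimension means no convergence issues arise, and the supremum is attained.

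The argument then splits into cases according to \(r:=p/s\), matching the definition of \(p_s^*=r^*\).

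\textbf{Case \(1<r<\infty\) (i.e. \(s<p<\infty\)).} Hölder's inequality gives the upper bound \(\sum\omega_Qb_Q\le\|\omega_k\|_{\ell^{r'}}\|b\|_{\ell^r}\) with \(r'=p/(p-s)=p_s^*\). For the lower bound, take the extremal vector \(b_Q=\omega_Q^{r'-1}/\|\omega_k\|_{\ell^{r'}}^{r'-1}\), which attains equality. If \(\omega_k=0\), both sides are zero.

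\textbf{Case \(r=\infty\) (i.e. \(p=\infty\)).} Take \(b\equiv1\). The supremum is then \(\sum_Q\omega_Q=\|\omega_k\|_{\ell^1}\), and \(p_s^*=1\).

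\textbf{Case \(0<r\le1\) (i.e. \(p\le s\)).} The upper bound uses \(b_Q\le\|b\|_{\ell^\infty}\le\|b\|_{\ell^r}\le1\), which holds because \(\ell^r\) is contained in \(\ell^\infty\) with norm at most one for \(r\le1\). This gives \(\sum\omega_Qb_Q\le\sum\omega_Q b_Q\) bounded only by \(\ell^1\)-type quantities, so this crude step is not enough.

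The quasi-Banach subcase \(r<1\) is the only delicate point. There the functional is not convex in the usual sense, so the bound must come from concentration rather than Hölder. The sharper argument is to use \(\sum_Q\omega_Qb_Q\le\max_Q\omega_Q\cdot\sum_Qb_Q\) together with \(\sum_Qb_Q=\|b\|_{\ell^1}\le\|b\|_{\ell^r}\le1\). This yields \(\le\|\omega_k\|_{\ell^\infty}\). For the lower bound, take \(b\) to be the unit vector at a maximizing \(Q\), which gives equality. Since \(p_s^*=\infty\) in this range, the identity follows.

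Combining the three cases yields \(\|i_\omega^{(k)}\|^s=\|\omega_k\|_{\ell^{p_s^*}}\) in every regime.
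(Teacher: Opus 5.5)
Your proof is correct and follows the paper's argument. It uses the same three-way split ($p\le s$, $s<p<\infty$, $p=\infty$): in the first case the bound $\sum_Q\omega_Q|a_Q|^s\le\max_Q\omega_Q\,\|a\|_{\ell^s}^s\le\max_Q\omega_Q$ together with a point mass, in the second H\"older with the usual extremizer, and in the third the all-ones vector; the substitution $b_Q=|a_Q|^s$ is only a repackaging. You should delete the discarded ``crude step'' sentence in the last case, which reads as the tautology $\sum\omega_Qb_Q\le\sum\omega_Qb_Q$, and note that when $r<1$ it is the constraint set, not the linear functional, that fails to be convex.
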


\begin{proof}
By definition,
\[
\|i_\omega^{(k)}\|^s
=
\sup\left\{
\sum_{Q\in\mathcal P_{\delta,k}}\omega_Q |a_Q|^s:
a=(a_Q)_{Q\in\mathcal P_{\delta,k}},\ 
\|a\|_{\ell^p(\mathcal P_{\delta,k})}=1
\right\}.
\]
If \(0<p\le s\), then $p_s^{\ast} = \infty$ and \(\|a\|_{\ell^s(\mathcal P_{\delta,k})}\le \|a\|_{\ell^p(\mathcal P_{\delta,k})}=1\), hence
\[
 \sum_{Q\in\mathcal P_{\delta,k}}\omega_Q |a_Q|^s
\le
\Bigl(\sup_{Q\in\mathcal P_{\delta,k}}\omega_Q\Bigr)
\sum_{Q\in\mathcal P_{\delta,k}}|a_Q|^s
\leq  \sup_{Q\in\mathcal P_{\delta,k}}\omega_Q = 
\|\omega_k\|_{\ell^{p_s^{\ast}}}.
\]
Hence $\|i_\omega^{(k)}\|^s
\leq \|\omega_k\|_{\ell^{p_s^{\ast}}}$.  
The reverse inequality is obtained by choosing \(a\) supported at a point
where \(\omega_Q\) attains its maximum on $\mathcal{P}_{\delta, k}$.

\smallskip
\noindent
If \(s<p<\infty\), then H\"older's inequality with conjugate exponents
\(\frac{p}{s}\) and \(p_s^\ast=\frac{p}{p-s}\) gives
\[
\sum_{Q\in\mathcal P_{\delta,k}}\omega_Q |a_Q|^s
\le
\Bigl(\sum_{Q\in\mathcal P_{\delta,k}}\omega_Q^{\,p_s^\ast}\Bigr)^{\frac{1}{p_s^\ast}}
\Bigl(\sum_{Q\in\mathcal P_{\delta,k}}|a_Q|^p\Bigr)^{\frac{s}{p}}.
\]
Taking the supremum over \(\|a\|_{\ell^p(\mathcal P_{\delta,k})}=1\) yields
\(
\|i_\omega^{(k)}\|^s\le
 \|\omega_k\|_{\ell^{p_s^{\ast}}}\).
Since \(\mathcal P_{\delta,k}\) is finite, equality is attained by the usual H\"older extremizer.

\smallskip
\noindent
If \(p=\infty\), then $p_s^{\ast} = 1$ and \(|a_Q|\le1\) for all \(Q\), so
\[
\sum_{Q\in\mathcal P_{\delta,k}}\omega_Q |a_Q|^s
\le
\sum_{Q\in\mathcal P_{\delta,k}}\omega_Q = \|\omega_k\|_{\ell^{p_s^{\ast}}}.
\]
Hence
$\|i_\omega^{(k)}\|^s
\leq \|\omega_k\|_{\ell^{p_s^{\ast}}}$. 
The reverse inequality is obtained by taking \(a_Q=1\) for all
\(Q\in\mathcal P_{\delta,k}\).
\end{proof}

\begin{proposition}\label{prop:boundedness-iomega}
Let \(\omega=(\omega_Q)_{Q\in\mathcal P_\delta}\) be a nonnegative weight on \(\mathcal P_\delta\).
Then the identity map
\(
i_\omega:\ell^{p,q}\to \ell^s(\omega)
\)
is bounded if and only if the sequence
\(
\omega\) belongs to $\ell^{p_s^\ast, q_s^\ast}$.
Moreover,
\[
\|i_\omega\|^s
 = \|\omega\|_{\ell^{p_s^{\ast}, q_s^{\ast}}}.
\]
\end{proposition}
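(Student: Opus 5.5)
The plan is to reduce to a one-level duality argument on the outer index, using Lemma~\ref{lem:block-norm} on each shell. Write \(\Omega_k:=\|\omega_k\|_{\ell^{p_s^\ast}}=\|i_\omega^{(k)}\|^s\). For \(c\in\ell^{p,q}\) with shell blocks \(c_k\), we have
\[
\|i_\omega c\|_{\ell^s(\omega)}^s=\sum_{k\ge0}\sum_{Q\in\mathcal P_{\delta,k}}\omega_Q|c_Q|^s\le\sum_{k\ge0}\Omega_k\,\|c_k\|_{\ell^p}^s .
\]
Moreover, by the extremality part of Lemma~\ref{lem:block-norm}, for each \(k\) there is a unit vector \(a^{(k)}\in\ell^p(\mathcal P_{\delta,k})\) with \(\sum_{Q\in\mathcal P_{\delta,k}}\omega_Q|a^{(k)}_Q|^s=\Omega_k\). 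Since the block supports are disjoint, the identity map \(i_\omega\) behaves, up to this blockwise extremization, exactly like the scalar weighted embedding \(\ell^{q}\to\ell^{s}(\Omega)\) acting on the sequence \(x_k=\|c_k\|_{\ell^p}\).

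Both directions then reduce to the one-dimensional statement
\[
\sup\Bigl\{\sum_k\Omega_k x_k^s:\ x_k\ge0,\ \|x\|_{\ell^q}\le1\Bigr\}=\|\Omega\|_{\ell^{q_s^\ast}} .
\]
This is the same computation as in Lemma~\ref{lem:block-norm}, now with the countable index set \(\mathbb N_0\) in place of the finite block, and the cases are as follows.

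\emph{Case \(q\le s\).} Use \(\|x\|_{\ell^s}\le\|x\|_{\ell^q}\); the extremal sequences are point masses.

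\emph{Case \(s<q<\infty\).} Apply H\"older's inequality with exponents \(\frac qs\) and \(q_s^\ast\). Here the extremizer is \(x_k\propto\Omega_k^{q_s^\ast/q}\) when \(\Omega\in\ell^{q_s^\ast}\). When \(\Omega\notin\ell^{q_s^\ast}\), run the same argument on finite truncations \(k\le K\) to get a supremum equal to \(\|(\Omega_k)_{k\le K}\|_{\ell^{q_s^\ast}}\to\infty\).

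\emph{Case \(q=\infty\).} Take \(x_k\equiv1\), which gives \(\sum_k\Omega_k=\|\Omega\|_{\ell^1}\).

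Combining these, the upper bound \(\|i_\omega c\|^s\le\|\Omega\|_{\ell^{q_s^\ast}}\|c\|_{\ell^{p,q}}^s\) follows from the displayed inequality. For the lower bound, test with \(c=\sum_k x_k a^{(k)}\), which has \(\|c\|_{\ell^{p,q}}=\|x\|_{\ell^q}\) and \(\|i_\omega c\|^s=\sum_k\Omega_k x_k^s\).

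Finally, by the definition of the mixed norm, \(\|\Omega\|_{\ell^{q_s^\ast}}=\|\omega\|_{\ell^{p_s^\ast,q_s^\ast}}\). This yields both the equivalence and the exact norm identity.

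The main obstacle is the unbounded case. There we need truncation, with finite support \(k\le K\), so that the test vectors genuinely lie in \(\ell^{p,q}\) and show \(i_\omega\) is unbounded. The rest is routine.
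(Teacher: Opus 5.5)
Your proposal is correct and follows essentially the same route as the paper. Both arguments reduce \(i_\omega\) to the outer scalar embedding \(\ell^q\to\ell^s(W)\) with \(W_k=\|i_\omega^{(k)}\|^s=\|\omega_k\|_{\ell^{p_s^\ast}}\): the blockwise inequality gives the upper bound, the attained block extremizers \(a^{(k)}\) from Lemma~\ref{lem:block-norm} (via \(c=\sum_k x_k a^{(k)}\)) give the lower bound, and the one-level computation is then rerun on \(\mathbb N_0\). Your explicit treatment of the infinite outer index set, truncating to \(k\le K\) when \(\Omega\notin\ell^{q_s^\ast}\) because the H\"older extremizer need not then lie in \(\ell^q\), is a detail the paper passes over with ``the same one-level argument.'' It is the right way to make that step rigorous.
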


\begin{proof}
Set $W_k(\omega):=\|i_\omega^{(k)}\|^s$, $W:=\bigl(W_k(\omega)\bigr)_{k\ge0}$, and 
\[
\ell^s\bigl(W\bigr)
:=
\left\{
x=(x_k)_{k\ge0}:
\|x\|_{\ell^s(W)}^s
:=
\sum_{k\ge0}W_k(\omega)|x_k|^s
<\infty
\right\}.
\]
Let
\(
i_{W}:\ell^q\to \ell^s\bigl(W\bigr)\) with \( i_{W}(x)=x\),
denote the corresponding identity map.

\smallskip
\noindent
For \(c=(c_Q)_{Q\in\mathcal P_\delta}\in \ell^{p,q}\), write
\(
x_k:=\|c_k\|_{\ell^p}\) for each \( k\ge0\).
Then \(\|x\|_{\ell^q}=\|c\|_{\ell^{p,q}}\), and by the definition of \(W_k(\omega)\),
\[
\sum_{Q\in\mathcal P_{\delta,k}}\omega_Q|c_Q|^s
\le
W_k(\omega)\,\|c_k\|_{\ell^p}^s
=
W_k(\omega)\,x_k^s .
\]
Summing over \(k\ge0\), we obtain
\[
\|c\|_{\ell^s(\omega)}^s
=
\sum_{k\ge0}\sum_{Q\in\mathcal P_{\delta,k}}\omega_Q|c_Q|^s
\le
\sum_{k\ge0}W_k(\omega)x_k^s
=
\|x\|_{\ell^s(W)}^s .
\]
Hence,
\(
\|i_\omega\|\le \|i_{W}\|\).

\smallskip
\noindent
Conversely, since each block \(\mathcal P_{\delta,k}\) is finite, the supremum defining
\( \|i_\omega^{(k)}\|^s\) is attained. Thus, for each \(k\ge0\), there exists
\(a^{(k)}=\bigl(a_Q^{(k)}\bigr)_{Q\in\mathcal P_{\delta,k}}\) such that
\[
\|a^{(k)}\|_{\ell^p(\mathcal P_{\delta,k})}=1
\qquad\text{and}\qquad
\sum_{Q\in\mathcal P_{\delta,k}}\omega_Q|a_Q^{(k)}|^s=W_k(\omega).
\]
Let \(x=(x_k)_{k\ge0}\in \ell^q\) be nonnegative, and define \(c=(c_Q)_{Q\in\mathcal P_\delta}\) by
\(
c_Q:=x_k\,a_Q^{(k)}\) for \( Q\in\mathcal P_{\delta,k}\).
Then
\(
\|c_k\|_{\ell^p}=x_k\) for each \( k\ge0\),
and therefore
\(
\|c\|_{\ell^{p,q}}=\|x\|_{\ell^q}\).
Moreover,
\[
\|c\|_{\ell^s(\omega)}^s
=
\sum_{k\ge0}\sum_{Q\in\mathcal P_{\delta,k}}\omega_Q|x_k a_Q^{(k)}|^s
=
\sum_{k\ge0}W_k(\omega)x_k^s
=
\|x\|_{\ell^s(W)}^s.
\]
Taking the supremum over all nonnegative sequences \(x\) with \(\|x\|_{\ell^q}\le1\), we get
\(
\|i_W\|\le \|i_\omega\|\).
Thus
\[
\|i_\omega\|=\|i_W\|.
\]
It remains to compute \(\|i_W\|\). By the same one-level argument as in
Lemma~\ref{lem:block-norm}, applied to the outer index set \(\mathbb N_0\), with \(p\) replaced by \(q\) and the block decomposition given by singletons \(\{k\}\), we obtain
\(
\|i_W\|^s=\|W\|_{\ell^{q_s^\ast}}\).
Therefore
\[
\|i_\omega\|^s
=
\|i_W\|^s
=
\|W\|_{\ell^{q_s^\ast}}
=
\Bigl\|\bigl(\|\omega_k\|_{\ell^{p_s^\ast}}\bigr)_{k\ge0}\Bigr\|_{\ell^{q_s^\ast}}
=
\|\omega\|_{\ell^{p_s^\ast,q_s^\ast}},
\]
where the penultimate identity follows from Lemma~\ref{lem:block-norm}.
\end{proof}

\medskip
\noindent
We characterize the compactness of the identity map
\(
i_\omega:\ell^{p,q}\to \ell^s(\omega)\)
in terms of block norms \(\|\omega_k\|_{\ell^{p_s^{\ast}}}\).

\smallskip
\noindent
For \(N\in\mathbb N_0\), set
\[
\mathcal P_{\delta,N}:=\bigcup_{k=0}^N \mathcal P_{\delta,k},
\qquad
\mathcal P_\delta^{\,N}:=\bigcup_{k>N} \mathcal P_{\delta,k}.
\]
For \(c=(c_Q)_{Q\in\mathcal P_\delta}\), define the truncations
\[
c_N:=c\,\mathbf 1_{\mathcal P_{\delta,N}},
\qquad
c^N:=c\,\mathbf 1_{\mathcal P_\delta^{\,N}}.
\]
We write
\(
P_{\omega,N}(c):=c_N\) and \(
R_{\omega,N}(c):=c^N\),
viewed as maps from \(\ell^{p,q}\) into \(\ell^s(\omega)\).

\begin{lemma}\label{lem:finite-shell-tails}
For each \(N\in\mathbb N_0\), the operator
\[
P_{\omega,N}:\ell^{p,q}\to \ell^s(\omega)
\]
is finite rank. Moreover, the operator $i_{\omega}: \ell^{p, q} \to \ell^s(\omega)$ is compact if and only if 
\[
\|R_{\omega,N}\|\to0
\qquad\text{as }\quad N\to\infty.
\]
\end{lemma}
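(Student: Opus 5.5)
The finite-rank claim is immediate. \(\mathcal P_{\delta,N}\) is a finite union of finite blocks, so the range of \(P_{\omega,N}\) lies in the finite-dimensional space of sequences supported on \(\mathcal P_{\delta,N}\). Boundedness of \(P_{\omega,N}\) follows from Lemma~\ref{lem:block-norm}, since each \(i_\omega^{(k)}\) is bounded and there are finitely many \(k\le N\). More precisely, \(\|P_{\omega,N}c\|^s_{\ell^s(\omega)}\le \sum_{k\le N}\|\omega_k\|_{\ell^{p_s^*}}\|c_k\|^s_{\ell^p}\lesssim_N \|c\|^s_{\ell^{p,q}}\).

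For sufficiency, suppose \(\|R_{\omega,N}\|\to0\). Then \(i_\omega=P_{\omega,N}+R_{\omega,N}\), so \(i_\omega\) is a limit in operator quasi-norm of finite-rank operators. To conclude that \(i_\omega\) is compact, I would argue directly, because \(\ell^{p,q}\) and \(\ell^s(\omega)\) may only be quasi-Banach. Let \((c^{(n)})\) be bounded in \(\ell^{p,q}\). Each finite-rank image \(P_{\omega,N}c^{(n)}\) lies in a bounded subset of a finite-dimensional space. A diagonal subsequence therefore makes \(P_{\omega,N}c^{(n)}\) Cauchy for every \(N\). The \(\sigma\)-triangle inequality in \(\ell^s(\omega)\), with \(\sigma=\min\{1,s\}\), gives
\[
\|i_\omega(c^{(n)}-c^{(m)})\|^\sigma\le \|P_{\omega,N}(c^{(n)}-c^{(m)})\|^\sigma+\|R_{\omega,N}\|^\sigma\,\|c^{(n)}-c^{(m)}\|^\sigma_{\ell^{p,q}}.
\]
So the subsequence is Cauchy in \(\ell^s(\omega)\), and completeness of \(\ell^s(\omega)\) finishes the argument. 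In passing I note that \(\|R_{\omega,N}\|<\infty\) for some \(N\) forces \(i_\omega\) to be bounded.

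For necessity, assume \(i_\omega\) is compact. Then \(\|R_{\omega,N}\|\) is nonincreasing in \(N\), because the restriction is to a smaller index set. Suppose it does not tend to \(0\), so \(\|R_{\omega,N}\|>\varepsilon\) for all \(N\). I construct unit vectors \(c^{(j)}\in\ell^{p,q}\) inductively with supports in pairwise disjoint finite unions of shells \(N_j<k\le N_{j+1}\), each satisfying \(\|c^{(j)}\|_{\ell^s(\omega)}\ge\varepsilon/2\).

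This construction is the step I expect to be the main obstacle. It is possible because any \(c\) supported beyond \(N_j\) with \(\|R_{\omega,N_j}c\|>\varepsilon\) can be truncated to finitely many shells while losing little in \(\ell^s(\omega)\), by monotone convergence of \(\sum\omega_Q|c_Q|^s\). The truncation does not increase the \(\ell^{p,q}\) quasi-norm. One then chooses \(N_{j+1}\) beyond that truncation.

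Disjointness of supports gives the additivity \(\|c^{(j)}-c^{(i)}\|^s_{\ell^s(\omega)}=\|c^{(j)}\|^s_{\ell^s(\omega)}+\|c^{(i)}\|^s_{\ell^s(\omega)}\ge2(\varepsilon/2)^s\). Hence \((i_\omega c^{(j)})_j\) has no Cauchy subsequence, which contradicts compactness. This argument handles \(q=\infty\) as well, since the vectors remain unit vectors in \(\ell^{p,\infty}\). Boundedness of \(i_\omega\) is implicit in compactness, so \(\|R_{\omega,N}\|\le\|i_\omega\|<\infty\) throughout.
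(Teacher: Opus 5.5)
Your proof is correct. The skeleton matches the paper: the splitting $i_\omega=P_{\omega,N}+R_{\omega,N}$, finite-rank approximation for sufficiency, and tail behaviour for necessity. The differences lie in how each direction is justified.

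For sufficiency, the paper simply asserts that an operator-norm limit of finite-rank operators is compact. You instead verify this directly, using the $\min\{1,s\}$-triangle inequality in $\ell^s(\omega)$, a diagonal subsequence, and completeness. That check is worthwhile, because a Banach-space fact is being used in a quasi-Banach setting.

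For necessity, the paper appeals to the standard compactness criterion in weighted $\ell^s$ spaces: relatively compact sets have uniformly small tails off finite index sets. You replace this with a self-contained gliding-hump argument built from three ingredients:
\begin{enumerate}
\item monotonicity of $N\mapsto\|R_{\omega,N}\|$;
\item truncation to finitely many shells, which does not increase the $\ell^{p,q}$ quasi-norm and loses little in $\ell^s(\omega)$ by monotone convergence;
\item exact additivity of $\|\cdot\|_{\ell^s(\omega)}^s$ over disjoint supports.
\end{enumerate}
Together these produce a bounded sequence in $\ell^{p,q}$ whose images are uniformly separated. Your vectors need only have quasi-norm at most one; normalizing them only increases their $\ell^s(\omega)$-size.

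The paper's route is shorter. Yours makes explicit that the only structure used is solidity of the unit ball of $\ell^{p,q}$ and disjoint-support additivity in the target, and in effect it proves the tail criterion the paper cites.
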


\begin{proof}
Since \(\mathcal P_{\delta,N}\) is finite, every sequence in the range of \(P_{\omega,N}\)
is supported on a finite set. Hence \(P_{\omega,N}\) has finite-dimensional range, and
therefore is finite rank.

\smallskip
\noindent
Also, for every \(c\in\ell^{p,q}\),
\[
c=c_N+c^N,
\qquad \text{so} \qquad
i_\omega=P_{\omega,N}+R_{\omega,N}.
\]
Assume first that
\(
\|R_{\omega,N}\|\to0\)  as \(N\to\infty\).
Then \(i_\omega\) is the operator-norm limit of the finite-rank operators \(P_{\omega,N}\),
hence \(i_\omega\) is compact.

\smallskip
\noindent
Conversely, assume that the operator $i_{\omega}: \ell^{p, q} \to \ell^s(\omega)$ is compact.
Then the image of the unit ball of \(\ell^{p,q}\) is relatively compact in $\ell^s(\omega)$. By the standard compactness criterion for weighted $\ell^s$-spaces,  equivalently
by uniform approximation of compact sets by finite-coordinate truncations, the tails vanish uniformly on this image; hence
\[
\|R_{\omega,N}\|
=
\sup_{\|c\|_{\ell^{p,q}}\le1}\|R_{\omega,N}(c)\|_{\ell^s(\omega)}
\to0.
\]
This proves the lemma.
\end{proof}
\begin{proposition}\label{prop:compactness-iomega}
Let \(\omega=(\omega_Q)_{Q\in\mathcal P_\delta}\) be a nonnegative weight on \(\mathcal P_\delta\).
Then the following assertions are equivalent:
\begin{enumerate}
\item[\textup{(i)}] The operator \(i_\omega:\ell^{p,q}\to \ell^s(\omega)\) is compact.

\item[\textup{(ii)}]
\[
\bigl\|\bigl(\|\omega_k\|_{\ell^{p_s^\ast}}\bigr)_{k>N}\bigr\|_{\ell^{q_s^\ast}}\to0
\qquad\text{as }N\to\infty.
\]

\item[\textup{(iii)}] One has:
\begin{itemize}
    \item[\textup{(a)}] if $0<q\le s$, then \(
\|\omega_k\|_{\ell^{p_s^\ast}}\to0\)  as \(k\to\infty\);
\item[\textup{(b)}] if $s<q\le\infty$, then $\omega\in \ell^{p_s^\ast,q_s^\ast}$.
\end{itemize}
\end{enumerate}
\end{proposition}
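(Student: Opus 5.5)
The plan is to prove (i)$\Leftrightarrow$(ii) by reducing compactness to the tail operators via Lemma~\ref{lem:finite-shell-tails}, and then to prove (ii)$\Leftrightarrow$(iii) by elementary facts about $\ell^{q_s^\ast}$ tails, split according to whether $q_s^\ast=\infty$ or $q_s^\ast<\infty$.

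For (i)$\Leftrightarrow$(ii), note that $R_{\omega,N}$ is exactly the identity map $i_{\omega^N}:\ell^{p,q}\to\ell^s(\omega^N)$. Here $\omega^N:=\omega\,\mathbf 1_{\mathcal P_\delta^{\,N}}$ is the truncated weight. Indeed, $\|c^N\|_{\ell^s(\omega)}=\|c\|_{\ell^s(\omega^N)}$, and the supremum over $\|c\|_{\ell^{p,q}}\le1$ can be taken over sequences supported in $\mathcal P_\delta^{\,N}$ without changing either side. Applying Proposition~\ref{prop:boundedness-iomega} to the weight $\omega^N$ gives the exact identity
\[
\|R_{\omega,N}\|^s=\|\omega^N\|_{\ell^{p_s^\ast,q_s^\ast}}=\bigl\|\bigl(\|\omega_k\|_{\ell^{p_s^\ast}}\bigr)_{k>N}\bigr\|_{\ell^{q_s^\ast}}.
\]
The blocks with $k\le N$ contribute zero to the shell norm. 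Lemma~\ref{lem:finite-shell-tails} states that $i_\omega$ is compact if and only if $\|R_{\omega,N}\|\to0$. Combining the two gives (i)$\Leftrightarrow$(ii) immediately.

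One caveat: Lemma~\ref{lem:finite-shell-tails} implicitly assumes that $i_\omega$ is well defined and bounded. If (ii) holds, the tail norm is finite for some $N$. The head $P_{\omega,N}$ is finite rank and bounded, since each block is finite. Hence $i_\omega$ is bounded, and the lemma applies.

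For (ii)$\Leftrightarrow$(iii), write $W_k:=\|\omega_k\|_{\ell^{p_s^\ast}}$, and split by $q_s^\ast$.

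\emph{Case $0<q\le s$.} Here $q_s^\ast=\infty$, so the tail quantity is $\sup_{k>N}W_k$. This tends to $0$ as $N\to\infty$ exactly when $W_k\to0$, which is (iii)(a).

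\emph{Case $s<q\le\infty$.} Here $1\le q_s^\ast<\infty$ (with $q_s^\ast=1$ when $q=\infty$), and the tail quantity is $\bigl(\sum_{k>N}W_k^{q_s^\ast}\bigr)^{1/q_s^\ast}$. If $W\in\ell^{q_s^\ast}$, this is the tail of a convergent series and tends to $0$. Conversely, if the tail is finite for some $N$, then adding the finitely many terms $W_0,\dots,W_N$ (each finite, since the blocks are finite) shows $W\in\ell^{q_s^\ast}$, that is, $\omega\in\ell^{p_s^\ast,q_s^\ast}$. This is (iii)(b).

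The only point that is not pure bookkeeping is identifying $\|R_{\omega,N}\|$ with the shell norm of the truncated weight. I will handle it by restricting the supremum in Proposition~\ref{prop:boundedness-iomega} to sequences supported beyond shell $N$, which loses nothing because the weight vanishes on the shells $k\le N$.
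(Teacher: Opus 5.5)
Your proposal is correct and follows the paper's proof essentially verbatim. You identify $R_{\omega,N}$ with $i_{\omega^N}$ and apply Proposition~\ref{prop:boundedness-iomega} to the truncated weight, which gives $\|R_{\omega,N}\|^s$ as the $\ell^{q_s^\ast}$-tail of the block norms. You then invoke Lemma~\ref{lem:finite-shell-tails} and split according to whether $q_s^\ast=\infty$ or $q_s^\ast<\infty$. Your extra remark that (ii) already forces $i_\omega$ to be bounded, via the finite-rank head $P_{\omega,N}$, is a harmless refinement that the paper leaves implicit.
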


\begin{proof}
For \(N\in\mathbb N_0\), the operator \(R_{\omega,N}\) coincides with the identity map
associated with the tail weight
\[
\omega^N:=\omega\,\mathbf 1_{\mathcal P_\delta^{\,N}},
\]
viewed as a map from $\ell^{p,q}$ into \(\ell^s(\omega^N)\). 
Therefore, by Lemma \ref{lem:block-norm},
\[
\|i^{(k)}_{\omega^N}\|^s= \|(\omega^N)_k\|_{\ell^{p_s^{\ast}}} =
\begin{cases}
0, & 0\le k\le N,\\[1mm]
\|\omega_k\|_{\ell^{p_s^\ast}}, & k>N.
\end{cases}
\]
Applying Proposition~\ref{prop:boundedness-iomega} to \(\omega^N\), we obtain
\[
\|R_{\omega,N}\|^s
=
\|i_{\omega^N}\|^s
=
\bigl\|\bigl(\|\omega_k\|_{\ell^{p_s^\ast}}\bigr)_{k>N}\bigr\|_{\ell^{q_s^\ast}}.
\]
Hence \((i)\Longleftrightarrow(ii)\) follows from Lemma~\ref{lem:finite-shell-tails}.

\smallskip
\noindent
If \(0<q\le s\), then \(q_s^\ast=\infty\), so
\[
\bigl\|\bigl(\|\omega_k\|_{\ell^{p_s^\ast}}\bigr)_{k>N}\bigr\|_{\ell^{q_s^\ast}}
=
\sup_{k>N}\|\omega_k\|_{\ell^{p_s^\ast}},
\]
and thus \((ii)\) is equivalent to
\(
\|\omega_k\|_{\ell^{p_s^\ast}}\to0 \) as $k \to \infty$. 

\smallskip
\noindent
If \(s<q \leq \infty\), then \(q_s^\ast<\infty\), and hence, \((ii)\) is equivalent to
\(
\bigl(\|\omega_k\|_{\ell^{p_s^\ast}}\bigr)_{k\ge0}\in \ell^{q_s^\ast}\), that is, 
\(
\omega\in \ell^{p_s^\ast,q_s^\ast}\).
Therefore
\((ii)\Longleftrightarrow(iii)\).
\end{proof}

\begin{remark}
The preceding discrete criteria isolate the point at which the mixed-norm structure
enters the Carleson measure problem. The analytic part of the proof below reduces the
Carleson embedding to weighted sequence embeddings of the form
\[
i_\omega:\ell^{p,q}\longrightarrow \ell^s(\omega),
\]
where the weight \(\omega\) is later chosen to be either the auxiliary weight
\(u^\sharp\) or the ball-mass weight \(u_{\mu,R}\). In contrast with the classical
one-parameter Fock scale, where localized masses are measured by a one-level
condition, the mixed-norm setting requires a shellwise organization: the inner
exponent \(p_s^*\) measures the distribution within each shell, while the outer
exponent \(q_s^*\) measures the radial distribution across shells. The compact
criterion also explains the dichotomy in Theorem~\ref{thm:vanishing-carleson-intro}:
when \(s<q\le\infty\), boundedness and compactness of \(i_\omega\) are described by
the same mixed summability condition, whereas when \(0<q\le s\), compactness requires
shellwise decay.
\end{remark}

\subsection{Carleson measures}\label{subsec:carleson-bounded}

We prove Theorem~\ref{thm:carleson-intro}.  The argument reduces the
embedding to weighted sequence inequalities, compares the auxiliary weight
\(u^\sharp\) with the localized ball-mass weight \(u_{\mu,R}\), and then
passes between the discrete and continuous shell conditions.  The last two
steps are contained in Propositions~\ref{prop:carleson-discrete-characterization}
and \ref{prop:carleson-discrete-continuous}.

\subsubsection{Atomic reduction and an auxiliary weighted inequality}
\label{subsubsec:carleson-atomic-reduction}

We begin with two complementary reductions for the bounded Carleson problem.
The necessity part is most naturally formulated in terms of the localized ball-mass weight
\(u_{\mu,R}\). For the sufficiency part, we introduce an auxiliary discrete weight
\(u^\sharp\) on the coefficient space and reduce the argument to the boundedness of the
corresponding weighted identity map.

\begin{proposition}
\label{prop:carleson-necessity-reduction}
If \(\mu\) is an \(s\)-Carleson measure for \(\mathcal F_\alpha^{p,q}\), then the identity map
\(
i_{u_{\mu,R}}:\ell^{p,q}\to \ell^s(u_{\mu,R})
\)
is bounded. Moreover,
\[
\|i_{u_{\mu,R}}\| \lesssim \|i_\mu\|,
\]
where the implicit constant depends only on \(\alpha,p,q,s,\delta\), and \(R\).
\end{proposition}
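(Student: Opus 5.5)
Since \(\mu\) is \(s\)-Carleson, the plan is to show directly that for every finitely supported \(d\in\ell^{p,q}\),
\[
\sum_{Q\in\mathcal P_\delta} u_{\mu,R}(Q)\,|d_Q|^s\lesssim \|i_\mu\|^s\,\|d\|_{\ell^{p,q}}^s ,
\]
and then conclude by monotone convergence. A direct test with \(f=S_\delta d\) would produce cancellation among the kernels, so instead I use randomization. Put
\[
f_\varepsilon:=\sum_Q \varepsilon_Q d_Q\widetilde\kappa_{\zeta_Q},
\]
where \((\varepsilon_Q)\) are independent Rademacher signs. Corollary~\ref{cor:normalized-atomic}(i) gives \(\|f_\varepsilon\|_{\mathcal F_\alpha^{p,q}}\lesssim\|d\|_{\ell^{p,q}}\) uniformly in \(\varepsilon\), so
\[
\mathbb E\int_{\C}|f_\varepsilon|^s e^{-\frac{\alpha s}{2}|z|^2}\,d\mu\lesssim \|i_\mu\|^s\|d\|_{\ell^{p,q}}^s .
\]
By Fubini and Khintchine's inequality (valid for every \(0<s<\infty\)), the left side dominates
\[
\int_{\C}\Big(\sum_Q|d_Q|^2|\widetilde\kappa_{\zeta_Q}(z)|^2\Big)^{s/2}e^{-\frac{\alpha s}{2}|z|^2}\,d\mu(z).
\]

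Next I localize. The tiling is \(c_{\rm in}\delta\)-separated, so by Lemma~\ref{lem:geo-pockets}(d) each \(z\) lies in \(B(\zeta_Q,R)\) for at most \(N=N(R,\delta)\) pockets. For such \(Q\) I use
\[
|\kappa_{\zeta_Q}(z)|e^{-\frac\alpha2|z|^2}=e^{-\frac\alpha2|z-\zeta_Q|^2}\ge e^{-\frac\alpha2R^2}.
\]
Hence on \(B(\zeta_Q,R)\) the square function is at least \(e^{-\frac\alpha2R^2}|d_Q|\,\|\kappa_{\zeta_Q}\|^{-1}\). I then restrict the integrand to \(\bigcup_Q B(\zeta_Q,R)\) and apply
\[
\Big(\sum_{Q\ni z}a_Q^2\Big)^{s/2}\ge N^{-1}\sum_{Q:\,z\in B(\zeta_Q,R)}a_Q^s .
\]
This holds because the right side is at most \(\max_Q a_Q^s\), which is at most the left side. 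Integrating term by term gives
\[
\int(\cdots)\,d\mu\gtrsim\sum_Q|d_Q|^s\|\kappa_{\zeta_Q}\|^{-s}\mu(B(\zeta_Q,R))=\sum_Q u_{\mu,R}(Q)|d_Q|^s ,
\]
which is exactly the claim.

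The only delicate step is justifying the interchange of expectation with the \(\mu\)-integral, together with the fact that \(f_\varepsilon\) is a finite sum. I avoid any density issue by working with finitely supported \(d\) and then passing to general \(d\in\ell^{p,q}\) by monotone convergence in \(\sum u_{\mu,R}(Q)|d_Q|^s\). The constants depend only on \(\alpha,p,q,s,\delta,R\), through the synthesis bound, Khintchine's constant, \(N\) and \(e^{-\alpha R^2/2}\). If one prefers to avoid randomization, a fallback is to test \(d\) supported on one block at a time, but then the mixed outer norm is harder to reach; this is why I choose randomization.
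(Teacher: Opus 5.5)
Your proof is correct and matches the paper's argument essentially step for step. You randomize the coefficients with Rademacher signs, use the boundedness of \(S_\delta\) from Corollary~\ref{cor:normalized-atomic} together with that of \(i_\mu\), and pass to the square function by Fubini and Khintchine. You then localize on the balls \(B(\zeta_Q,R)\) using the Gaussian lower bound and the bounded-overlap count from Lemma~\ref{lem:geo-pockets}(d). The only difference is that you first treat finitely supported \(d\) and conclude by monotone convergence, whereas the paper applies Fubini directly to the infinite, locally uniformly convergent Rademacher series; your reduction is a harmless and slightly more careful way of justifying that step.
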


\begin{proof}
Let \(d=(d_Q)_{Q\in\mathcal P_\delta}\in \ell^{p,q}\) and \((\varepsilon_Q(t))_{Q\in\mathcal P_\delta}\) be a Rademacher family on \((0,1)\). For each \(t\in(0,1)\), we define
\[
d^{(t)}:=(\varepsilon_Q(t)d_Q)_{Q\in\mathcal P_\delta} \qquad \text{and} \qquad f_t:=S_\delta d^{(t)}=
\sum_{Q\in\mathcal P_\delta}\varepsilon_Q(t)\,d_Q\,\widetilde\kappa_{\zeta_Q}.
\]
Then $d^{(t)}$
belongs to \(\ell^{p,q}\) and satisfies
\(
\|d^{(t)}\|_{\ell^{p,q}}=\|d\|_{\ell^{p,q}}\).
By Corollary~\ref{cor:normalized-atomic}, the  mixed-norm-normalized synthesis operator
\(S_\delta:\ell^{p,q}\to \mathcal F_\alpha^{p,q}\) is bounded, and therefore 
\[
\|f_t\|_{\mathcal F_\alpha^{p,q}}
\le
\|S_\delta\|\,\|d\|_{\ell^{p,q}}
\]
uniformly in \(t\in(0,1)\). As \(
i_\mu:\mathcal F_\alpha^{p,q}\to L_\alpha^s(\mu)
\) is bounded, it follows that
\[
\int_{\C}|f_t(z)|^s e^{-\frac{\alpha s}{2}|z|^2}\,d\mu(z)
\le
\|i_\mu\|^s\,\|S_\delta\|^s\,\|d\|_{\ell^{p,q}}^s.
\]
Integrating in \(t\in(0,1)\) and applying Fubini's theorem, we obtain
\[
\int_{\C}
\left(
\int_0^1
\left|\sum_{Q\in\mathcal P_\delta}\varepsilon_Q(t)\,d_Q\,\widetilde\kappa_{\zeta_Q}(z)\right|^s
dt
\right)
e^{-\frac{\alpha s}{2}|z|^2}\,d\mu(z)
\le
\|i_\mu\|^s\,\|S_\delta\|^s\,\|d\|_{\ell^{p,q}}^s.
\]
By Khinchine's inequality for Rademacher series, this yields
\[
\int_{\C}
\left(
\sum_{Q\in\mathcal P_\delta}
|d_Q|^2\,|\widetilde\kappa_{\zeta_Q}(z)|^2
\right)^{\frac{s}{2}}
e^{-\frac{\alpha s}{2}|z|^2}\,d\mu(z)
\lesssim
\|i_\mu\|^s\,\|S_\delta\|^s\,\|d\|_{\ell^{p,q}}^s,
\]
where the implicit constant depends only on $s$.
Set
\[
G_d(z):=
\left(
\sum_{Q\in\mathcal P_\delta}
|d_Q|^2\,|\widetilde\kappa_{\zeta_Q}(z)|^2
\right)^{\frac{s}{2}}
e^{-\frac{\alpha s}{2}|z|^2}.
\]
We derive a lower bound for \(G_d\) in terms of \(u_{\mu,R}\).
If \(z\in B(\zeta_Q,R)\), then
\[
|\widetilde\kappa_{\zeta_Q}(z)|\,e^{-\frac{\alpha}{2}|z|^2}
=
\|\kappa_{\zeta_Q}\|_{\mathcal F_\alpha^{p,q}}^{-1}
e^{-\frac{\alpha}{2}|z-\zeta_Q|^2}
\ge
e^{-\frac{\alpha}{2}R^2}\,\|\kappa_{\zeta_Q}\|_{\mathcal F_\alpha^{p,q}}^{-1}.
\]
Therefore, whenever \(z\in B(\zeta_Q,R)\),
\[
G_d(z)\ge
e^{-\frac{\alpha s}{2}R^2}\,
|d_Q|^s\,
\|\kappa_{\zeta_Q}\|_{\mathcal F_\alpha^{p,q}}^{-s}.
\]
Hence, for every \(z\in\C\),
\[
G_d(z)\ge
e^{-\frac{\alpha s}{2}R^2}
\max_{Q:\, z\in B(\zeta_Q,R)}
\Bigl(
|d_Q|^s\,
\|\kappa_{\zeta_Q}\|_{\mathcal F_\alpha^{p,q}}^{-s}
\Bigr).
\]

\smallskip
\noindent
By the counting estimate in Lemma~\ref{lem:geo-pockets}(d), there exists
\(N_R\ge 1\), depending only on \(R\) and \(\delta\), such that
\[
\sum_{Q\in\mathcal P_\delta}\textbf{1}_{B(\zeta_Q,R)}(z)\le N_R,
\qquad z\in\C.
\]
Therefore
\[
\max_{Q:\, z\in B(\zeta_Q,R)}
\Bigl(
|d_Q|^s\,
\|\kappa_{\zeta_Q}\|_{\mathcal F_\alpha^{p,q}}^{-s}
\Bigr)
\ge
\frac1{N_R}
\sum_{Q\in\mathcal P_\delta}
|d_Q|^s\,
\|\kappa_{\zeta_Q}\|_{\mathcal F_\alpha^{p,q}}^{-s}
\,\textbf{1}_{B(\zeta_Q,R)}(z).
\]
Combining the last two estimates and integrating against \(d\mu(z)\), we get
\[
\int_{\C} G_d(z)\,d\mu(z)
\ge
\frac{e^{-\frac{\alpha s}{2}R^2}}{N_R}
\sum_{Q\in\mathcal P_\delta}
|d_Q|^s\,
\|\kappa_{\zeta_Q}\|_{\mathcal F_\alpha^{p,q}}^{-s}\,
\mu(B(\zeta_Q, R)).
\]
By the definition of \(u_{\mu,R}\), this becomes
\[
\int_{\C} G_d(z)\,d\mu(z)
\ge
\frac{e^{-\frac{\alpha s}{2}R^2}}{N_R}
\sum_{Q\in\mathcal P_\delta}u_{\mu,R}(Q)\,|d_Q|^s.
\]
Together with the upper bound obtained earlier, we conclude that
\[
\sum_{Q\in\mathcal P_\delta}u_{\mu,R}(Q)\,|d_Q|^s
\lesssim
\|i_\mu\|^s\,\|S_\delta\|^s\,\|d\|_{\ell^{p,q}}^s.
\]
Thus \(i_{u_{\mu,R}}:\ell^{p,q}\to \ell^s(u_{\mu,R})\) is bounded, and
\[
\|i_{u_{\mu,R}}\| \lesssim \|i_\mu\|,
\]
where the implicit constant depends only on \(\alpha,p,q,s,\delta\), and \(R\).
\end{proof}

\smallskip
\noindent
For the converse direction, the sufficiency argument is first reduced to an
auxiliary weighted inequality on the coefficient space.

\smallskip
\noindent
For this purpose, we introduce the auxiliary weight
\(u^\sharp=(u^\sharp(Q))_{Q\in\mathcal P_\delta}\) by
\[
u^\sharp(Q)
:=
\|\kappa_{\zeta_Q}\|_{\mathcal F_\alpha^{p,q}}^{-s}
\int_{\C} e^{-\frac{\alpha s}{4}|z-\zeta_Q|^2}\,d\mu(z),
\qquad Q\in\mathcal P_\delta.
\]

\begin{lemma}
\label{lem:carleson-diagonalization}
For every \(d\in\ell^{p,q}\), one has
\[
\|S_\delta d\|_{L_\alpha^s(\mu)}
\lesssim \|d\|_{\ell^s(u^{\sharp})},
\]
where the implicit constant depends only on \(\alpha,s\), and \(\delta\).
\end{lemma}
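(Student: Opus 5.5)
The plan is to bound $|S_\delta d(z)|e^{-\frac{\alpha}{2}|z|^2}$ pointwise by a sum of Gaussian bumps, raise it to the power $s$ with an $s$-dependent splitting, and then integrate against $\mu$ using Tonelli. No cancellation is needed, so only the moduli of the atoms enter.

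\emph{Pointwise bound.} For $d\in\ell^{p,q}$ the series defining $S_\delta d$ converges absolutely pointwise. This follows from Lemma~\ref{lem:kernel-series-entire} and Lemma~\ref{lem:jet-series-entire} (case $m=0$), transferred through the normalization in Corollary~\ref{cor:normalized-atomic}. Using $|\kappa_{\zeta}(z)|e^{-\frac{\alpha}{2}|z|^2}=e^{-\frac{\alpha}{2}|z-\zeta|^2}$, we get
\[
|S_\delta d(z)|\,e^{-\frac{\alpha}{2}|z|^2}\le \sum_{Q\in\mathcal P_\delta} a_Q\, e^{-\frac{\alpha}{2}|z-\zeta_Q|^2},
\qquad a_Q:=|d_Q|\,\|\kappa_{\zeta_Q}\|_{\mathcal F_\alpha^{p,q}}^{-1}.
\]

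\emph{Raising to the power $s$.} We split into two cases.
\begin{itemize}
\item If $0<s\le1$, the subadditivity of $t\mapsto t^s$ gives
\[
|S_\delta d(z)|^s e^{-\frac{\alpha s}{2}|z|^2}\le \sum_Q a_Q^s e^{-\frac{\alpha s}{2}|z-\zeta_Q|^2}\le \sum_Q a_Q^s e^{-\frac{\alpha s}{4}|z-\zeta_Q|^2}.
\]
\item If $1<s<\infty$, write $e^{-\frac{\alpha}{2}t^2}=e^{-\frac{\alpha}{4}t^2}\cdot e^{-\frac{\alpha}{4}t^2}$ and apply H\"older's inequality with exponents $s$ and $s'=\frac{s}{s-1}$:
\[
\Bigl(\sum_Q a_Q e^{-\frac{\alpha}{2}|z-\zeta_Q|^2}\Bigr)^s\le \Bigl(\sum_Q a_Q^s e^{-\frac{\alpha s}{4}|z-\zeta_Q|^2}\Bigr)\Bigl(\sum_Q e^{-\frac{\alpha s'}{4}|z-\zeta_Q|^2}\Bigr)^{s-1}.
\]
\end{itemize}

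\emph{Uniform control of the second factor.} This is the only step that needs some care. I would show that $\sup_{z\in\C}\sum_Q e^{-\frac{\alpha s'}{4}|z-\zeta_Q|^2}$ is finite, with a bound depending only on $\alpha$, $s$ and $\delta$. Decompose the pockets into the dyadic-free rings $\{Q:\ n\le|z-\zeta_Q|<n+1\}$. By the counting estimate Lemma~\ref{lem:geo-pockets}(d), each ring contains at most $\lesssim\delta^{-2}(n+1)^2$ centers. Hence the sum is at most $\lesssim\delta^{-2}\sum_{n\ge0}(n+1)^2e^{-\frac{\alpha s'}{4}n^2}<\infty$, uniformly in $z$.

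\emph{Integration.} In both cases we now have
\[
|S_\delta d(z)|^s e^{-\frac{\alpha s}{2}|z|^2}\lesssim \sum_Q |d_Q|^s\,\|\kappa_{\zeta_Q}\|_{\mathcal F_\alpha^{p,q}}^{-s}\, e^{-\frac{\alpha s}{4}|z-\zeta_Q|^2}.
\]
Integrating against $d\mu$ and interchanging sum and integral by Tonelli (all terms are nonnegative) gives exactly $\sum_Q u^\sharp(Q)|d_Q|^s=\|d\|_{\ell^s(u^\sharp)}^s$. This yields the claim, including the trivial case where the right-hand side is infinite. The implicit constant depends only on $\alpha$, $s$ and $\delta$.
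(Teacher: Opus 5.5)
Your proposal is correct and follows the paper's proof essentially step for step: the same pointwise Gaussian domination, subadditivity of $t\mapsto t^s$ for $0<s\le1$, the split $e^{-\frac{\alpha}{2}t^2}=e^{-\frac{\alpha}{4}t^2}e^{-\frac{\alpha}{4}t^2}$ followed by H\"older's inequality for $s>1$, the uniform bound on $\sum_Q e^{-\frac{\alpha s'}{4}|z-\zeta_Q|^2}$ from the counting estimate in Lemma~\ref{lem:geo-pockets}(d) summed over rings, and integration against $\mu$. Your explicit justification of the absolute convergence of the synthesis series via Lemmas~\ref{lem:kernel-series-entire} and \ref{lem:jet-series-entire} is a small detail that the paper leaves implicit.
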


\begin{proof}
Since
\[
\bigl|\widetilde\kappa_{\zeta_Q}(z)\bigr|
e^{-\frac{\alpha}{2}|z|^2}
=
\|\kappa_{\zeta_Q}\|_{\mathcal F_\alpha^{p,q}}^{-1} \,
e^{-\frac{\alpha}{2}|z-\zeta_Q|^2},
\]
we obtain
\[
|S_\delta d(z)|e^{-\frac{\alpha}{2}|z|^2}
\le
\sum_{Q\in\mathcal P_\delta}
|d_Q|\,
\|\kappa_{\zeta_Q}\|_{\mathcal F_\alpha^{p,q}}^{-1} \,
e^{-\frac{\alpha}{2}|z-\zeta_Q|^2}.
\]
Assume first that \(0<s\le 1\). Since \(x\mapsto x^s\) is subadditive on \([0,\infty)\),
 integrating against \(d\mu(z)\), we get
\begin{align*}
\|S_\delta d\|_{L_\alpha^s(\mu)}^s
\le
\sum_{Q\in\mathcal P_\delta}|d_Q|^s \|\kappa_{\zeta_Q}\|_{\mathcal F_\alpha^{p,q}}^{-s} \,
\int_{\C}
e^{-\frac{\alpha s}{2}|z-\zeta_Q|^2}\,d\mu(z)
\leq
\sum_{Q\in\mathcal P_\delta}u^\sharp(Q)\,|d_Q|^s = \|d\|^s_{\ell^s(u^{\sharp})}.
\end{align*}
This proves the lemma when \(0<s\le 1\).

\smallskip
\noindent
Now assume that \(1<s<\infty\).
We split the Gaussian as
\(
e^{-\frac{\alpha}{2}|z-\zeta_Q|^2}
=
e^{-\frac{\alpha}{4}|z-\zeta_Q|^2}
e^{-\frac{\alpha}{4}|z-\zeta_Q|^2}
\)
and apply H\"older's inequality with exponents \(s\) and \(s'=\frac{s}{s-1}\) to get
\[
|S_\delta d(z)|^s e^{-\frac{\alpha s}{2}|z|^2}
\le
\Bigg(
\sum_{Q\in\mathcal P_\delta}
|d_Q|^s
\|\kappa_{\zeta_Q}\|_{\mathcal F_\alpha^{p,q}}^{-s}
e^{-\frac{\alpha s}{4}|z-\zeta_Q|^2}
\Bigg)
\Bigg(
\sum_{Q\in\mathcal P_\delta}
e^{-\frac{\alpha s'}{4}|z-\zeta_Q|^2}
\Bigg)^{\frac{s}{s'}}.
\]
By the counting estimate in Lemma~\ref{lem:geo-pockets}(d) and the same annular
decomposition used in the proof of Lemma~\ref{lem:gramian-localized}, one has
\begin{equation}\label{eq:uniform-gaussian-sum-pockets}
\sup_{z\in\C}\sum_{Q\in\mathcal P_\delta}
e^{-\frac{\alpha s'}{4}|z-\zeta_Q|^2}<\infty,
\end{equation}
with a bound depending only on \(\alpha\), \(s\), and \(\delta\).
Hence
\[
|S_\delta d(z)|^s e^{-\frac{\alpha s}{2}|z|^2}
\lesssim
\sum_{Q\in\mathcal P_\delta}
|d_Q|^s
\|\kappa_{\zeta_Q}\|_{\mathcal F_\alpha^{p,q}}^{-s}
e^{-\frac{\alpha s}{4}|z-\zeta_Q|^2}.
\]
Integrating against \(d\mu(z)\) gives
\[
\|S_\delta d\|_{L_\alpha^s(\mu)}^s
\lesssim
\sum_{Q\in\mathcal P_\delta}
|d_Q|^s
\|\kappa_{\zeta_Q}\|_{\mathcal F_\alpha^{p,q}}^{-s}
\int_{\C}
e^{-\frac{\alpha s}{4}|z-\zeta_Q|^2}\,d\mu(z) =  \sum_{Q\in\mathcal P_\delta}u^\sharp(Q)\,|d_Q|^s = \|d\|^s_{\ell^s(u^{\sharp})}.
\]
Taking $s$-th roots yields the desired estimate.
\end{proof}

\begin{proposition}
\label{prop:carleson-sufficiency-reduction}
If the identity map
\(
i_{u^\sharp}:\ell^{p,q}\to \ell^s(u^\sharp)
\)
is bounded, then \(\mu\) is an \(s\)-Carleson measure for \(\mathcal F_\alpha^{p,q}\). Moreover,
\[
\|i_\mu\| \lesssim \|i_{u^\sharp}\|,
\]
where the implicit constant depends only on \(\alpha, p, q, s\), and \(\delta\).
\end{proposition}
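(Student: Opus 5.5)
The plan is to factor the embedding through the coefficient space. We use the fixed bounded recovery operator $A_\delta:\mathcal F_\alpha^{p,q}\to\ell^{p,q}$ from \eqref{eq:inverse_S}, which satisfies $S_\delta A_\delta=I$ on $\mathcal F_\alpha^{p,q}$. For $f\in\mathcal F_\alpha^{p,q}$ we set $d:=A_\delta f\in\ell^{p,q}$, so that $f=S_\delta d$. Lemma~\ref{lem:carleson-diagonalization} then gives
\[
\|f\|_{L_\alpha^s(\mu)}=\|S_\delta d\|_{L_\alpha^s(\mu)}\lesssim \|d\|_{\ell^s(u^\sharp)}.
\]
By the assumed boundedness of $i_{u^\sharp}$,
\[
\|d\|_{\ell^s(u^\sharp)}\le \|i_{u^\sharp}\|\,\|d\|_{\ell^{p,q}}\le \|i_{u^\sharp}\|\,\|A_\delta\|\,\|f\|_{\mathcal F_\alpha^{p,q}}.
\]
Combining the two displays yields $\|i_\mu f\|_{L^s_\alpha(\mu)}\lesssim \|i_{u^\sharp}\|\,\|f\|_{\mathcal F_\alpha^{p,q}}$. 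Hence $\mu$ is $s$-Carleson with $\|i_\mu\|\lesssim\|i_{u^\sharp}\|$.

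The constants are controlled as follows. The constant in Lemma~\ref{lem:carleson-diagonalization} depends only on $\alpha,s,\delta$. The norm $\|A_\delta\|$ depends only on $\alpha,p,q,\delta$, since it is induced by $C_\delta^{\ker}(I-R_\delta^{\ker})^{-1}$ when $p\ge1$ and by $A_\delta^{\ker}$ when $p<1$, with the normalization of Corollary~\ref{cor:normalized-atomic}. This gives exactly the stated dependence.

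The one point to check is that Lemma~\ref{lem:carleson-diagonalization} applies to $d=A_\delta f$ and that $S_\delta d$ coincides pointwise with $f$. Both hold because $d\in\ell^{p,q}$ and the synthesis series converges absolutely and locally uniformly, by Lemmas~\ref{lem:kernel-series-entire} and \ref{lem:jet-series-entire} with $m=0$, after the reweighting $d_Q=c_Q\|\kappa_{\zeta_Q}\|$. The pointwise bound in Lemma~\ref{lem:carleson-diagonalization} is then valid at every $z$. Otherwise no step is a genuine obstacle; the substantive work lies in the later comparison of $u^\sharp$ with $u_{\mu,R}$.
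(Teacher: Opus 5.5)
Your proposal is correct and follows the paper's proof essentially line for line: set $d=A_\delta f$, apply Lemma~\ref{lem:carleson-diagonalization} to $f=S_\delta d$, then use the boundedness of $i_{u^\sharp}$ and of $A_\delta$. Your extra remark on the absolute, locally uniform convergence of the synthesis series is a reasonable justification of the pointwise bound, which the paper leaves implicit.
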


\begin{proof}
Let \(f\in \mathcal F_\alpha^{p,q}\), and  set
\(
d:=A_\delta f\in \ell^{p,q}\). 
Since \(S_\delta A_\delta=I\) on \(\mathcal F_\alpha^{p,q}\) by \eqref{eq:inverse_S}, we have
\(
f=S_\delta d\).
Hence, by Lemma~\ref{lem:carleson-diagonalization},
\[
\|f\|_{L_\alpha^s(\mu)}
=
\|S_\delta d\|_{L_\alpha^s(\mu)}
\lesssim
\|d\|_{\ell^s(u^\sharp)}.
\]
Since \(i_{u^\sharp}:\ell^{p,q}\to \ell^s(u^\sharp)\) is bounded, it follows that
\[
\|d\|_{\ell^s(u^\sharp)}
\le
\|i_{u^\sharp}\|\,\|d\|_{\ell^{p,q}}.
\]
Using the boundedness of \(A_\delta:\mathcal F_\alpha^{p,q}\to \ell^{p,q}\), we obtain
\[
\|f\|_{L_\alpha^s(\mu)}
\lesssim
\|i_{u^\sharp}\|\,\|f\|_{\mathcal F_\alpha^{p,q}}.
\]
Therefore \(i_\mu:\mathcal F_\alpha^{p,q}\to L_\alpha^s(\mu)\) is bounded, and
\(
\|i_\mu\| \lesssim \|i_{u^\sharp}\|\).
\end{proof}

\subsubsection{Comparison of the auxiliary and ball weights}
\label{subsubsec:carleson-weight-comparison}

We compare the auxiliary weight \(u^\sharp\) with the localized ball-mass
weight \(u_{\mu,R}\).  A pointwise lower estimate and a Gaussian smoothing
upper estimate yield equivalence of the corresponding mixed sequence norms.

\begin{lemma}
\label{lem:pointwise-comparison-u-sharp-ball}
Let
\(
\sigma:=s\left(\frac1p-\frac1q\right)\). For every \(Q\in\mathcal P_\delta\), one has
\[
u_{\mu,R}(Q) \; \lesssim \; u^\sharp(Q) \;
\lesssim   \; \sum_{Q'\in\mathcal P_\delta}
\left(\frac{1+|\zeta_Q|}{1+|\zeta_{Q'}|}\right)^\sigma
e^{-\frac{\alpha s}{8}|\zeta_Q-\zeta_{Q'}|^2}
u_{\mu,R}(Q'),
\]
where the implicit constants depend only on \(\alpha,p,q,s,\delta\), and \(R\).
\end{lemma}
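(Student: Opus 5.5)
The plan is to compare the two weights directly through their definitions. Both carry the same normalizing factor \(\|\kappa_{\zeta_Q}\|_{\mathcal F_\alpha^{p,q}}^{-s}\), which Proposition~\ref{prop:T1} identifies as \((1+|\zeta_Q|)^{\sigma}\) up to constants, with \(\sigma=s(\frac1p-\frac1q)\). Everything therefore reduces to comparing the Gaussian integral \(\int e^{-\frac{\alpha s}{4}|z-\zeta_Q|^2}\,d\mu(z)\) with ball masses \(\mu(B(\zeta_{Q'},R))\).

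For the lower bound, restrict the integral defining \(u^\sharp(Q)\) to \(B(\zeta_Q,R)\). There the integrand is at least \(e^{-\frac{\alpha s}{4}R^2}\), so
\[
u^\sharp(Q)\ge e^{-\frac{\alpha s}{4}R^2}\,\|\kappa_{\zeta_Q}\|_{\mathcal F_\alpha^{p,q}}^{-s}\,\mu(B(\zeta_Q,R))=e^{-\frac{\alpha s}{4}R^2}\,u_{\mu,R}(Q).
\]
This step is immediate.

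For the upper bound, I would cover \(\mathbb C\) by the balls \(B(\zeta_{Q'},R)\), \(Q'\in\mathcal P_\delta\). This works because \(R>C_{\mathrm{out}}\delta\) and every point lies in some pocket \(Q'\subset B(\zeta_{Q'},C_{\mathrm{out}}\delta)\). Then
\[
\int_{\mathbb C} e^{-\frac{\alpha s}{4}|z-\zeta_Q|^2}\,d\mu(z)\le\sum_{Q'}\int_{B(\zeta_{Q'},R)} e^{-\frac{\alpha s}{4}|z-\zeta_Q|^2}\,d\mu(z).
\]
For \(z\in B(\zeta_{Q'},R)\), the elementary inequality \(|a+b|^2\ge\frac12|a|^2-|b|^2\) gives
\[
|z-\zeta_Q|^2\ge\tfrac12|\zeta_Q-\zeta_{Q'}|^2-R^2.
\]
Hence each term is at most \(e^{\frac{\alpha s}{4}R^2}e^{-\frac{\alpha s}{8}|\zeta_Q-\zeta_{Q'}|^2}\mu(B(\zeta_{Q'},R))\). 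Writing \(\mu(B(\zeta_{Q'},R))=\|\kappa_{\zeta_{Q'}}\|^{s}\,u_{\mu,R}(Q')\) and multiplying by \(\|\kappa_{\zeta_Q}\|^{-s}\), the ratio of kernel norms is, by Proposition~\ref{prop:T1},
\[
\frac{\|\kappa_{\zeta_{Q'}}\|^{s}}{\|\kappa_{\zeta_Q}\|^{s}}\asymp\Bigl(\frac{1+|\zeta_{Q'}|}{1+|\zeta_Q|}\Bigr)^{-s(\frac1p-\frac1q)}=\Bigl(\frac{1+|\zeta_Q|}{1+|\zeta_{Q'}|}\Bigr)^{\sigma}.
\]
This gives exactly the stated sum.

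The only point needing care is the sign convention in the exponent. Proposition~\ref{prop:T1} gives \(\|\kappa_w\|\asymp(1+|w|)^{\frac1q-\frac1p}\), so \(\|\kappa_w\|^{-s}\asymp(1+|w|)^{\sigma}\), and the ratio above then comes out with exponent \(\sigma\) as required. I expect no genuine obstacle. Overlaps of the covering balls only help, since the inequality is an upper bound. All constants depend on \(\alpha,p,q,s,R\), with \(\delta\) entering only through the covering condition \(R>C_{\mathrm{out}}\delta\).
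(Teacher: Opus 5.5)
Your proposal is correct and follows essentially the same argument as the paper. You restrict to \(B(\zeta_Q,R)\) for the lower bound. For the upper bound you cover \(\mathbb C\) by the balls \(B(\zeta_{Q'},R)\), using \(R>C_{\mathrm{out}}\delta\), apply \(|z-\zeta_Q|^2\ge\frac12|\zeta_Q-\zeta_{Q'}|^2-R^2\), and convert the kernel-norm ratio via Proposition~\ref{prop:T1}; your sign bookkeeping for \(\sigma\) is right.
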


\begin{proof}
For the lower estimate, by definition,
\[
u^\sharp(Q)
=
\|\kappa_{\zeta_Q}\|_{\mathcal F_\alpha^{p,q}}^{-s}
\int_{\C} e^{-\frac{\alpha s}{4}|z-\zeta_Q|^2}\,d\mu(z).
\]
Restricting the integral to \(B(\zeta_Q,R)\), we obtain
\[
u^\sharp(Q)
\ge
e^{-\frac{\alpha s}{4}R^2}
\|\kappa_{\zeta_Q}\|_{\mathcal F_\alpha^{p,q}}^{-s} \,
\mu(B(\zeta_Q,R))
=
e^{-\frac{\alpha s}{4}R^2}u_{\mu,R}(Q).
\]

\smallskip
\noindent
For the upper estimate, since \(R>C_{\rm out}\delta\) is part of the standing notation,  Lemma~\ref{lem:geo-pockets}(a),(c)
implies that
\[
\C=\bigcup_{Q'\in\mathcal P_\delta}B(\zeta_{Q'},R).
\]
Hence
\[
u^\sharp(Q)
\le
\|\kappa_{\zeta_Q}\|_{\mathcal F_\alpha^{p,q}}^{-s}
\sum_{Q'\in\mathcal P_\delta}
\int_{B(\zeta_{Q'},R)}
e^{-\frac{\alpha s}{4}|z-\zeta_Q|^2}\,d\mu(z).
\]
If \(z\in B(\zeta_{Q'},R)\), then
\[
|z-\zeta_Q|^2
\ge
\frac12|\zeta_{Q'}-\zeta_Q|^2-R^2.
\]
Therefore
\[
e^{-\frac{\alpha s}{4}|z-\zeta_Q|^2}
\le
e^{\frac{\alpha s}{4}R^2}
e^{-\frac{\alpha s}{8}|\zeta_{Q'}-\zeta_Q|^2}.
\]
It follows that
\[
u^\sharp(Q)
\le e^{\frac{\alpha s}{4}R^2} 
\sum_{Q'\in\mathcal P_\delta}
e^{-\frac{\alpha s}{8}|\zeta_Q-\zeta_{Q'}|^2}
\|\kappa_{\zeta_Q}\|_{\mathcal F_\alpha^{p,q}}^{-s} \,
\mu(B(\zeta_{Q'},R)).
\]
By Proposition~\ref{prop:T1},
\[
\frac{\|\kappa_{\zeta_Q}\|_{\mathcal F_\alpha^{p,q}}^{-s}}
     {\|\kappa_{\zeta_{Q'}}\|_{\mathcal F_\alpha^{p,q}}^{-s}}
\asymp
\left(\frac{1+|\zeta_Q|}{1+|\zeta_{Q'}|}\right)^\sigma .
\]
Using the definition of \(u_{\mu,R}(Q')\), we obtain
\[
u^\sharp(Q)
\lesssim
\sum_{Q'\in\mathcal P_\delta}
\left(\frac{1+|\zeta_Q|}{1+|\zeta_{Q'}|}\right)^\sigma
e^{-\frac{\alpha s}{8}|\zeta_Q-\zeta_{Q'}|^2}
u_{\mu,R}(Q'),
\]
as claimed.
\end{proof}

\begin{proposition}
\label{prop:equivalence-u-sharp-ball}
One has
\[
\|u^\sharp\|_{\ell^{p_s^*,q_s^*}}
\asymp
\|u_{\mu,R}\|_{\ell^{p_s^*,q_s^*}},
\]
where the implicit constants depend only on \(\alpha,p,q,s,\delta\), and \(R\).
\end{proposition}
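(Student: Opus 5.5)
The lower bound \(\|u_{\mu,R}\|_{\ell^{p_s^*,q_s^*}}\lesssim\|u^\sharp\|_{\ell^{p_s^*,q_s^*}}\) is immediate from the pointwise estimate \(u_{\mu,R}(Q)\lesssim u^\sharp(Q)\) in Lemma~\ref{lem:pointwise-comparison-u-sharp-ball}, since the mixed sequence norm is monotone in the absolute values of the entries. All the work lies in the upper bound, which I would derive from the Gaussian smoothing estimate of the same lemma by proving that the positive matrix
\[
T_{Q,Q'}:=\left(\frac{1+|\zeta_Q|}{1+|\zeta_{Q'}|}\right)^\sigma e^{-\frac{\alpha s}{8}|\zeta_Q-\zeta_{Q'}|^2}
\]
acts boundedly on \(\ell^{p_s^*,q_s^*}\). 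Because \(1\le p_s^*,q_s^*\le\infty\), this space is a genuine Banach mixed space, so Minkowski and Young/Schur arguments are available.

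\emph{Step 1: block reduction.} Group the indices by shells, with \(Q\in\mathcal P_{\delta,\ell}\) and \(Q'\in\mathcal P_{\delta,k}\). On such pairs we have \(1+|\zeta_Q|\asymp1+\ell\) and \(1+|\zeta_{Q'}|\asymp 1+k\). Lemma~\ref{lem:peetre-weighted} gives \(\bigl(\frac{1+\ell}{1+k}\bigr)^{\sigma}\le(1+|\ell-k|)^{|\sigma|}\), and also \(|\zeta_Q-\zeta_{Q'}|\ge|\ell-k|-1\). Writing \(T=\sum_k T^{(\ell,k)}\) block by block and applying Minkowski in \(\ell^{p_s^*}(\mathcal P_{\delta,\ell})\), I need a block bound
\[
\|T^{(\ell,k)}v\|_{\ell^{p_s^*}}\lesssim (1+|\ell-k|)^{|\sigma|}e^{-c|\ell-k|^2}\|v\|_{\ell^{p_s^*}}.
\]

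\emph{Step 2: block estimate via Schur.} Drop the polynomial factor and split \(e^{-\frac{\alpha s}{8}t^2}\le e^{-c(|\ell-k|-1)_+^2}\,e^{-c' t^2}\) with \(t=|\zeta_Q-\zeta_{Q'}|\). The row and column sums of \(e^{-c'|\zeta_Q-\zeta_{Q'}|^2}\) are uniformly bounded by the counting estimate in Lemma~\ref{lem:geo-pockets}(d), exactly as in \eqref{eq:uniform-gaussian-sum-pockets}; the bound depends only on \(\delta\). Schur's test then bounds this matrix on every \(\ell^{r}\) with \(1\le r\le\infty\), and that gives the block estimate.

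\emph{Step 3: outer summation.} Set \(Y_k:=\|(u_{\mu,R})_k\|_{\ell^{p_s^*}}\). We obtain \(\|(u^\sharp)_\ell\|_{\ell^{p_s^*}}\lesssim\sum_k(1+|\ell-k|)^{|\sigma|}e^{-c|\ell-k|^2}Y_k\), which is convolution with a summable kernel on \(\mathbb Z\). Young's inequality (\(\ell^1*\ell^{q_s^*}\subset\ell^{q_s^*}\)) then yields \(\|u^\sharp\|_{\ell^{p_s^*,q_s^*}}\lesssim\|u_{\mu,R}\|_{\ell^{p_s^*,q_s^*}}\).

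The main obstacle is Step 1: the weight ratio grows polynomially across shells and must be absorbed. Since the Gaussian dominates any polynomial in \(|\ell-k|\), this should be harmless, but the bookkeeping between \(|\zeta_Q-\zeta_{Q'}|\) and \(|\ell-k|\) has to be done carefully. When one side is infinite the inequality is trivial, so we may assume \(u_{\mu,R}\in\ell^{p_s^*,q_s^*}\).
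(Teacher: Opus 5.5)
Your proposal is correct and follows essentially the same route as the paper: the lower bound comes from the pointwise estimate in Lemma~\ref{lem:pointwise-comparison-u-sharp-ball}, and the upper bound comes from shell blocks of the smoothing matrix with Gaussian decay in the shell separation, a Schur/interpolation bound on each block in \(\ell^{p_s^*}\), and Young's inequality for the outer \(\ell^{q_s^*}\)-norm. The only cosmetic difference is how the polynomial factor is handled: the paper bounds it by \((1+|\zeta_Q-\zeta_{Q'}|)^{|\sigma|}\) and absorbs it into the Gaussian, whereas you bound it by \((1+|\ell-k|)^{|\sigma|}\) and keep it in the convolution kernel, which remains summable.
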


\begin{proof}
By the lower bound in Lemma~\ref{lem:pointwise-comparison-u-sharp-ball},
\[
u_{\mu,R}(Q)\lesssim u^\sharp(Q),
\qquad Q\in\mathcal P_\delta .
\]
Therefore, by monotonicity of the \(\ell^{p_s^*}\)-norm on each block,
\[
\|(u_{\mu,R})_k\|_{\ell^{p_s^*}(\mathcal P_{\delta,k})}
\lesssim
\|(u^\sharp)_k\|_{\ell^{p_s^*}(\mathcal P_{\delta,k})},
\qquad k\ge0.
\]
Taking the outer \(\ell^{q_s^*}\)-norm gives
\[ \|u_{\mu,R}\|_{\ell^{p_s^*,q_s^*}}
\lesssim \|u^\sharp\|_{\ell^{p_s^*,q_s^*}}.
\]

\smallskip
\noindent
For the converse estimate, the upper bound in Lemma~\ref{lem:pointwise-comparison-u-sharp-ball} gives
\[
u^\sharp(Q)
\lesssim
\sum_{Q'\in\mathcal P_\delta}
K(Q,Q')u_{\mu,R}(Q'),
\qquad Q\in\mathcal P_\delta,
\]
where
\[
K(Q,Q')
:=
\left(\frac{1+|\zeta_Q|}{1+|\zeta_{Q'}|}\right)^\sigma
e^{-\frac{\alpha s}{8}|\zeta_Q-\zeta_{Q'}|^2},
\qquad
\sigma=s\left(\frac1p-\frac1q\right).
\]
Since
\[
\left(\frac{1+|\zeta_Q|}{1+|\zeta_{Q'}|}\right)^\sigma
\le
(1+|\zeta_Q-\zeta_{Q'}|)^{|\sigma|},
\]
the polynomial factor is absorbed by the Gaussian. Hence, after changing the implicit
constant,
\[
K(Q,Q')\lesssim
e^{-\frac{\alpha s}{16}|\zeta_Q-\zeta_{Q'}|^2}.
\]
Moreover, if \(Q\in\mathcal P_{\delta,k}\) and \(Q'\in\mathcal P_{\delta,m}\), then
\(
|k-m|
\le
|\zeta_Q-\zeta_{Q'}|+1\).
Thus
\[
K(Q,Q')
\lesssim
e^{-\frac{\alpha s}{64}|k-m|^2}
e^{-\frac{\alpha s}{32}|\zeta_Q-\zeta_{Q'}|^2}.
\]
For \(k,m\ge0\), define
\[
(T_{k,m}b)_Q
:=
\sum_{Q'\in\mathcal P_{\delta,m}}
K(Q,Q')b_{Q'},
\qquad Q\in\mathcal P_{\delta,k}.
\]
By the same Gaussian summability estimate over pocket centers used in the
proof of
\eqref{eq:uniform-gaussian-sum-pockets}, with
\(\frac{\alpha s}{32}\) in place of \(\frac{\alpha s'}{4}\), we have
\[
\sup_{Q\in\mathcal P_{\delta,k}}
\sum_{Q'\in\mathcal P_{\delta,m}}
e^{-\frac{\alpha s}{32}|\zeta_Q-\zeta_{Q'}|^2}
<\infty \qquad \text{and} \qquad
\sup_{Q'\in\mathcal P_{\delta,m}}
\sum_{Q\in\mathcal P_{\delta,k}}
e^{-\frac{\alpha s}{32}|\zeta_Q-\zeta_{Q'}|^2}
<\infty .
\]
Consequently,
\[
\sup_{Q\in\mathcal P_{\delta,k}}
\sum_{Q'\in\mathcal P_{\delta,m}}K(Q,Q')
+
\sup_{Q'\in\mathcal P_{\delta,m}}
\sum_{Q\in\mathcal P_{\delta,k}}K(Q,Q')
\lesssim
e^{-\frac{\alpha s}{64}|k-m|^2}.
\]
Since \(p_s^*\in[1,\infty]\), the standard \(\ell^1\)- and \(\ell^\infty\)-bounds for
nonnegative matrices, together with interpolation, give
\[
\|T_{k,m}b\|_{\ell^{p_s^*}(\mathcal P_{\delta,k})}
\lesssim
e^{-\frac{\alpha s}{64}|k-m|^2}
\|b\|_{\ell^{p_s^*}(\mathcal P_{\delta,m})}.
\]
Therefore, for every \(k\ge0\),
\begin{equation}\label{eq:u-sharp-ball}
\|(u^\sharp)_k\|_{\ell^{p_s^*}(\mathcal P_{\delta,k})}
\lesssim
\sum_{m\ge0}
\|T_{k,m}(u_{\mu,R})_m\|_{\ell^{p_s^*}(\mathcal P_{\delta,k})} \lesssim
\sum_{m\ge0}
e^{-\frac{\alpha s}{64}|k-m|^2}
\|(u_{\mu,R})_m\|_{\ell^{p_s^*}(\mathcal P_{\delta,m})}.
\end{equation}
Since the sequence \((e^{- \frac{\alpha s}{64} n^2})_{n\ge0}\) belongs to \(\ell^1(\mathbb N_0)\), and since
\(q_s^*\in[1,\infty]\), Young's inequality for discrete convolution gives
\[
\left\|
\left(
\|(u^\sharp)_k\|_{\ell^{p_s^*}(\mathcal P_{\delta,k})}
\right)_{k\ge0}
\right\|_{\ell^{q_s^*}}
\lesssim
\left\|
\left(
\|(u_{\mu,R})_k\|_{\ell^{p_s^*}(\mathcal P_{\delta,k})}
\right)_{k\ge0}
\right\|_{\ell^{q_s^*}},  \text{i.e., } \  
\|u^\sharp\|_{\ell^{p_s^*,q_s^*}} \lesssim \|u_{\mu,R}\|_{\ell^{p_s^*,q_s^*}}.
\]
Combining this with the first estimate proves the desired equivalence.
\end{proof}

\subsubsection{Discrete characterization of Carleson measures}
\label{subsubsec:carleson-discrete-characterization}

Combining the reductions with the comparison between \(u^\sharp\) and
\(u_{\mu,R}\) gives the discrete characterization and the equivalence of
assertions \((i)\) and \((ii)\) in Theorem~\ref{thm:carleson-intro}.
\begin{proposition}
\label{prop:carleson-discrete-characterization}
Assertions \((i)\) and \((ii)\) in Theorem~\ref{thm:carleson-intro} are equivalent.
More precisely, \(\mu\) is an \(s\)-Carleson measure for
\(\mathcal F_\alpha^{p,q}\) if and only if the discrete ball-mass sequence
\(u_{\mu,R}\) belongs to \(\ell^{p_s^*,q_s^*}\). Moreover,
\[
\|i_\mu\|^s
\asymp
\|u_{\mu,R}\|_{\ell^{p_s^*,q_s^*}},
\]
where the implicit constants depend only on \(\alpha,p,q,s,\delta\), and \(R\).
\end{proposition}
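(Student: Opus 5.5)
The argument is a chain of the reductions already established, closed by the exact norm identity of Proposition~\ref{prop:boundedness-iomega}. We prove the two inequalities $\|u_{\mu,R}\|_{\ell^{p_s^*,q_s^*}}\lesssim\|i_\mu\|^s$ and $\|i_\mu\|^s\lesssim\|u_{\mu,R}\|_{\ell^{p_s^*,q_s^*}}$ separately; together they give both the equivalence of (i) and (ii) and the two-sided norm estimate.

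\emph{Necessity.} Assume $\mu$ is an $s$-Carleson measure. Proposition~\ref{prop:carleson-necessity-reduction} shows that $i_{u_{\mu,R}}:\ell^{p,q}\to\ell^s(u_{\mu,R})$ is bounded, with $\|i_{u_{\mu,R}}\|\lesssim\|i_\mu\|$. Proposition~\ref{prop:boundedness-iomega}, applied with $\omega=u_{\mu,R}$, then gives
\[
\|u_{\mu,R}\|_{\ell^{p_s^*,q_s^*}}=\|i_{u_{\mu,R}}\|^s\lesssim\|i_\mu\|^s .
\]
In particular $u_{\mu,R}\in\ell^{p_s^*,q_s^*}$.

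\emph{Sufficiency.} Assume $u_{\mu,R}\in\ell^{p_s^*,q_s^*}$. Proposition~\ref{prop:equivalence-u-sharp-ball} gives $\|u^\sharp\|_{\ell^{p_s^*,q_s^*}}\lesssim\|u_{\mu,R}\|_{\ell^{p_s^*,q_s^*}}<\infty$. Proposition~\ref{prop:boundedness-iomega}, applied with $\omega=u^\sharp$, then shows that $i_{u^\sharp}:\ell^{p,q}\to\ell^s(u^\sharp)$ is bounded, with $\|i_{u^\sharp}\|^s=\|u^\sharp\|_{\ell^{p_s^*,q_s^*}}$. Finally, Proposition~\ref{prop:carleson-sufficiency-reduction} yields that $\mu$ is an $s$-Carleson measure and
\[
\|i_\mu\|^s\lesssim\|i_{u^\sharp}\|^s=\|u^\sharp\|_{\ell^{p_s^*,q_s^*}}\lesssim\|u_{\mu,R}\|_{\ell^{p_s^*,q_s^*}}.
\]

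The main point to check is a tacit one. Proposition~\ref{prop:boundedness-iomega} is stated as an ``if and only if'' with an exact norm identity, so we must confirm that it allows $\|\omega\|_{\ell^{p_s^*,q_s^*}}$ to be infinite and still concludes boundedness when that quantity is finite. Its proof handles this correctly: the blockwise extremizers $a^{(k)}$ and the outer one-level argument bound $\|i_\omega\|^s$ by $\|\omega\|_{\ell^{p_s^*,q_s^*}}$, and conversely. The same caution applies to $u^\sharp$, which could in principle be infinite on some pockets. Proposition~\ref{prop:equivalence-u-sharp-ball} excludes this, because it bounds the entire mixed norm of $u^\sharp$ by that of $u_{\mu,R}$.

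The implicit constants come from Propositions~\ref{prop:carleson-necessity-reduction}, \ref{prop:carleson-sufficiency-reduction} and~\ref{prop:equivalence-u-sharp-ball}. Each depends only on $\alpha,p,q,s,\delta$ and $R$, so the final estimate does as well.
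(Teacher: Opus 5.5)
Your proposal is correct and follows the paper's proof step for step. Necessity goes through Proposition~\ref{prop:carleson-necessity-reduction} and then the exact norm identity of Proposition~\ref{prop:boundedness-iomega} with $\omega=u_{\mu,R}$; sufficiency goes through Proposition~\ref{prop:equivalence-u-sharp-ball}, then Proposition~\ref{prop:boundedness-iomega} with $\omega=u^\sharp$, then Proposition~\ref{prop:carleson-sufficiency-reduction}.
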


\begin{proof}
Assume first that \(\mu\) is an \(s\)-Carleson measure for \(\mathcal F_\alpha^{p,q}\).
By Proposition~\ref{prop:carleson-necessity-reduction}, the discrete embedding
\(
i_{u_{\mu,R}}:\ell^{p,q}\longrightarrow \ell^s(u_{\mu,R})
\)
is bounded and
\(
\|i_{u_{\mu,R}}\|\lesssim \|i_\mu\|\).
Therefore, by Proposition~\ref{prop:boundedness-iomega},
\[
u_{\mu,R}\in \ell^{p_s^*,q_s^*}
\qquad \text{and} \qquad
\|u_{\mu,R}\|_{\ell^{p_s^*,q_s^*}}
=
\|i_{u_{\mu,R}}\|^s
\lesssim
\|i_\mu\|^s.
\]

\smallskip
\noindent
Conversely, assume that \(u_{\mu,R}\in \ell^{p_s^*,q_s^*}\). By
Proposition~\ref{prop:equivalence-u-sharp-ball},
\[
u^\sharp\in \ell^{p_s^*,q_s^*}
\qquad\text{and}\qquad
\|u^\sharp\|_{\ell^{p_s^*,q_s^*}}
\asymp
\|u_{\mu,R}\|_{\ell^{p_s^*,q_s^*}}.
\]
Applying Proposition~\ref{prop:boundedness-iomega} to \(u^\sharp\), we obtain the
boundedness of
\(
i_{u^\sharp}:\ell^{p,q}\longrightarrow \ell^s(u^\sharp)
\)
and
\[
\|i_{u^\sharp}\|^s
=
\|u^\sharp\|_{\ell^{p_s^*,q_s^*}}
\asymp
\|u_{\mu,R}\|_{\ell^{p_s^*,q_s^*}}.
\]
Hence Proposition~\ref{prop:carleson-sufficiency-reduction} implies that \(\mu\) is an
\(s\)-Carleson measure for \(\mathcal F_\alpha^{p,q}\), and
\(
\|i_\mu\|
\lesssim
\|i_{u^\sharp}\|\).
Consequently,
\[
\|i_\mu\|^s
\lesssim
\|i_{u^\sharp}\|^s
\asymp
\|u_{\mu,R}\|_{\ell^{p_s^*,q_s^*}}.
\]
Combining the two estimates gives both the equivalence of \((i)\) and \((ii)\), and the
norm comparison
\(
\|i_\mu\|^s
\asymp
\|u_{\mu,R}\|_{\ell^{p_s^*,q_s^*}}\).
\end{proof}

\subsubsection{Discrete--continuous transfer for Carleson measures}
\label{subsubsec:carleson-discrete-to-continuous}

It remains to pass from the discrete ball-mass sequence \(u_{\mu,R}\) to the continuous
ball-mass function \(U_{\mu,R}\). This is a purely geometric transfer: the bounded
Carleson embedding has already been characterized discretely in
Proposition~\ref{prop:carleson-discrete-characterization}. The next proposition proves
the equivalence between assertions \((ii)\) and \((iii)\) in
Theorem~\ref{thm:carleson-intro}.

\begin{proposition}[Discrete--continuous equivalence]
\label{prop:carleson-discrete-continuous}
Assertions \((ii)\) and \((iii)\) in Theorem~\ref{thm:carleson-intro} are equivalent.
More precisely, the discrete ball-mass sequence \(u_{\mu,R}\) belongs to
\(\ell^{p_s^*,q_s^*}\) if and only if the continuous ball-mass function
\(U_{\mu,R}\) belongs to \(L_{\rm sh}^{p_s^*,q_s^*}(\C)\). Moreover,
\[
\|u_{\mu,R}\|_{\ell^{p_s^*,q_s^*}}
\asymp
\|U_{\mu,R}\|_{L_{\rm sh}^{p_s^*,q_s^*}},
\]
where the implicit constants depend only on \(\alpha,p,q,s,\delta\), and \(R\).
\end{proposition}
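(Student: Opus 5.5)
The plan is to compare \(U_{\mu,R}\) and \(u_{\mu,R}\) pointwise up to a change of radius, and then to use the fact that the shell mixed norms are insensitive to enlarging the ball radius. The relevant geometry is the uniform ball control \(B(\zeta_Q,c_{\rm in}\delta)\subset Q\subset B(\zeta_Q,C_{\rm out}\delta)\), the exact area \(|Q|=\pi/N_\delta^2\asymp\delta^2\), the bounded overlap of the balls \(B(\zeta_Q,R)\) (Lemma~\ref{lem:geo-pockets}(d)), and the kernel-norm estimate of Proposition~\ref{prop:T1}, which gives \(\|\kappa_\zeta\|_{\mathcal F_\alpha^{p,q}}^{-s}\asymp\|\kappa_{\zeta'}\|_{\mathcal F_\alpha^{p,q}}^{-s}\) whenever \(|\zeta-\zeta'|\le C\).

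\emph{Continuous controlled by discrete.} If \(z\in Q\), then \(B(z,R)\subset B(\zeta_Q,R+C_{\rm out}\delta)\), so \(U_{\mu,R}(z)\lesssim u_{\mu,R'}(Q)\) with \(R'=R+C_{\rm out}\delta\). The pockets meeting \(A_k\) lie in \(A_{k-1}\cup A_k\cup A_{k+1}\). Since \(|Q|\asymp\delta^2\), integrating over \(A_k\) gives
\[
\|U_{\mu,R}\|_{L^{p_s^*}(A_k)}\lesssim \sum_{|m-k|\le1}\|(u_{\mu,R'})_m\|_{\ell^{p_s^*}},
\]
and this also holds when \(p_s^*=\infty\). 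Taking the \(\ell^{q_s^*}\) norm then gives \(\|U_{\mu,R}\|_{L^{p_s^*,q_s^*}_{\rm sh}}\lesssim\|u_{\mu,R'}\|_{\ell^{p_s^*,q_s^*}}\).

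\emph{Discrete controlled by continuous.} For \(z\in B(\zeta_Q,c_{\rm in}\delta)\) we have \(B(\zeta_Q,R)\subset B(z,R+c_{\rm in}\delta)\), so \(u_{\mu,R}(Q)\lesssim U_{\mu,R''}(z)\) with \(R''=R+c_{\rm in}\delta\). Averaging over this ball of area \(\asymp\delta^2\), or taking the essential supremum when \(p_s^*=\infty\), and using that these balls are disjoint and lie in \(A_{k-1}\cup A_k\cup A_{k+1}\), gives the reverse estimate
\[
\|u_{\mu,R}\|_{\ell^{p_s^*,q_s^*}}\lesssim\|U_{\mu,R''}\|_{L^{p_s^*,q_s^*}_{\rm sh}}.
\]

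\emph{Radius independence (main obstacle).} Both radius changes above must be removed, and this requires that the norms for any two radii \(R_1<R_2\), both larger than \(C_{\rm out}\delta\), be comparable. The discrete case follows from Proposition~\ref{prop:carleson-discrete-characterization}: both \(\|u_{\mu,R_1}\|\) and \(\|u_{\mu,R_2}\|\) are \(\asymp\|i_\mu\|^s\), with constants depending on the radius. This yields \(\|U_{\mu,R}\|\lesssim\|u_{\mu,R}\|\). For the reverse direction, \(\|U_{\mu,R''}\|\) must be controlled by \(\|U_{\mu,R}\|\). Cover \(B(z,R'')\) by boundedly many balls \(B(z_j,R)\) with \(|z_j-z|\le R''\) and \(z_j=z+v_j\) for fixed vectors \(v_j\). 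Then \(U_{\mu,R''}(z)\lesssim\sum_j U_{\mu,R}(z+v_j)\). Translation by a fixed vector changes shell indices by at most \(\lceil|v_j|\rceil\), so it is bounded on \(L_{\rm sh}^{p_s^*,q_s^*}\) because Lebesgue measure is translation invariant. Combining this with the previous two steps proves \(\|u_{\mu,R}\|_{\ell^{p_s^*,q_s^*}}\asymp\|U_{\mu,R}\|_{L_{\rm sh}^{p_s^*,q_s^*}}\).
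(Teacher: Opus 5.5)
Your argument is correct, but it is organized differently from the paper's. You perturb the radius in both directions ($R\to R+C_{\rm out}\delta$ and $R\to R+c_{\rm in}\delta$) and then remove the perturbation. On the continuous side you cover $B(0,R'')$ by finitely many translates $B(v_j,R)$ and use that translations act boundedly on $L^{p_s^*,q_s^*}_{\rm sh}$. On the discrete side you invoke Proposition~\ref{prop:carleson-discrete-characterization} at two radii, so that $\|u_{\mu,R'}\|\asymp\|i_\mu\|^s\asymp\|u_{\mu,R}\|$. This is valid, including when the norms are infinite, and it is not circular, since that proposition does not use the present one.

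The paper never changes the radius. For $U\lesssim u$, it covers $B(z,R)$ with $z\in Q$ by the balls $B(\zeta_{Q'},R)$ with $|\zeta_{Q'}-\zeta_Q|\le R+2C_{\rm out}\delta$. This works precisely because $R>C_{\rm out}\delta$ places each pocket inside the $R$-ball about its own center. For $u\lesssim U$, it uses the Fubini identity $\int_{B(\zeta_Q,R)}\mu(B(z,R))\,dA(z)=\int|B(\zeta_Q,R)\cap B(w,R)|\,d\mu(w)\ge\frac{\pi R^2}{4}\,\mu(B(\zeta_Q,R))$.

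What the paper's route buys is that it is purely geometric and independent of the atomic decomposition. It also yields two-sided \emph{shellwise} bounds at the same radius, namely $\|U_{\mu,R}\|_{L^{p_s^*}(A_k)}\lesssim\sum_{|m-k|\le N}\|(u_{\mu,R})_m\|_{\ell^{p_s^*}}$ and conversely. These are reused verbatim in Proposition~\ref{prop:vanishing-discrete-continuous}(a).

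Your discrete radius step gives only a global norm comparison. For the shellwise-vanishing analogue you would therefore have to pass through the vanishing Carleson characterization again. Alternatively, you could replace that step by the paper's covering of $B(\zeta_Q,R')$ by balls $B(\zeta_{Q'},R)$ centered at nearby pocket centers, which also removes the reliance on the analytic machinery. Your translation argument on the continuous side is, by contrast, a good substitute for the Fubini trick and is itself shellwise.
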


\begin{proof}
For simplicity, write
\[
u:=u_{\mu,R},\qquad U:=U_{\mu,R}.
\]
We shall use repeatedly the following local consequence of Proposition~\ref{prop:T1}: for
each fixed \(\rho>0\),
\begin{equation}
\label{eq:local-kappa-comparison}
\|\kappa_z\|_{\mathcal F_\alpha^{p,q}}^{-s}
\asymp
\|\kappa_w\|_{\mathcal F_\alpha^{p,q}}^{-s},
\qquad |z-w|\le \rho,
\end{equation}
where the implicit constants depend only on \(\alpha,p,q,s\), and \(\rho\).

\smallskip
\noindent
We prove
\[
\|U\|_{L_{\rm sh}^{p_s^*,q_s^*}}
\lesssim
\|u\|_{\ell^{p_s^*,q_s^*}}.
\]
Fix \(Q\in\mathcal P_\delta\) and \(z\in Q\). If \(w\in B(z,R)\) and \(Q'\in\mathcal P_\delta\)
is such that \(w\in Q'\), then Lemma~\ref{lem:geo-pockets}(c) gives
\[
|\zeta_{Q'}-\zeta_Q|
\le
|\zeta_{Q'}-w|+|w-z|+|z-\zeta_Q|
\le
R+2C_{\rm out}\delta .
\]
Set
\(
R_1:=R+2C_{\rm out}\delta\).
Then
\[
B(z,R)
\subset
\bigcup_{\zeta_{Q'}\in B(\zeta_Q,R_1)} B(\zeta_{Q'},R).
\]
Moreover, if \(z\in Q\) and \(\zeta_{Q'}\in B(\zeta_Q,R_1)\), then
\(
|z-\zeta_{Q'}|\le C_{\rm out}\delta+R_1\).
Hence, by \eqref{eq:local-kappa-comparison} with $\rho = C_{\rm out}\delta+R_1$,
\[
\|\kappa_z\|_{\mathcal F_\alpha^{p,q}}^{-s}
\lesssim
\|\kappa_{\zeta_{Q'}}\|_{\mathcal F_\alpha^{p,q}}^{-s}.
\]
Consequently, for every $z\in Q$,
\[
U(z)
=
\|\kappa_z\|_{\mathcal F_\alpha^{p,q}}^{-s} \, \mu(B(z,R))
\lesssim
\sum_{\zeta_{Q'}\in B(\zeta_Q,R_1)}
\|\kappa_{\zeta_{Q'}}\|_{\mathcal F_\alpha^{p,q}}^{-s}\, \mu(B(\zeta_{Q'},R)) = 
\sum_{\zeta_{Q'}\in B(\zeta_Q,R_1)} u(Q').
\]

\smallskip
\noindent
Now fix \(k\ge0\). Since all pockets have the same area by Lemma~\ref{lem:geo-pockets}(b),
and since Lemma~\ref{lem:geo-pockets}(d) implies that for each \(Q\) the set
\(
\{Q'\in\mathcal P_\delta: \zeta_{Q'} \in B(\zeta_Q, R_1)\}
\)
has uniformly bounded cardinality, we get for \(1 \leq p_s^\ast<\infty\),
\begin{align*}
\|U\|_{L^{p_s^\ast}(A_k)}^{p_s^\ast}
&=
\sum_{Q\in\mathcal P_{\delta,k}} \int_Q |U(z)|^{p_s^\ast}\,dA(z) \\
&\lesssim
\sum_{Q\in\mathcal P_{\delta,k}}
\left(
\sum_{\zeta_{Q'} \in B(\zeta_Q, R_1)} u(Q')
\right)^{p_s^\ast} \quad \lesssim
\sum_{Q\in\mathcal P_{\delta,k}}
\sum_{\zeta_{Q'} \in B(\zeta_Q, R_1)} u(Q')^{p_s^\ast}.
\end{align*}
On the other hand, for $\zeta_{Q'} \in B(\zeta_Q, R_1)$ with \(Q\in\mathcal P_{\delta,k}\) and \(Q'\in\mathcal P_{\delta,m}\), we have
\[
|m-k| \le |\zeta_Q - \zeta_{Q'}| + 1 \leq \lceil R_1\rceil  + 1 =: R_2.
\]
Using again Lemma~\ref{lem:geo-pockets}(d), now in the reverse direction, we find that for
each fixed \(Q'\) the number of pockets \(Q\in\mathcal P_{\delta,k}\) with
\(\zeta_Q \in B(\zeta_{Q'}, R_1)\) is uniformly bounded. Hence
\[
\|U\|_{L^{p_s^\ast}(A_k)}^{p_s^\ast}
\lesssim
\sum_{|m-k|\le R_2}
\sum_{Q'\in\mathcal P_{\delta,m}} u(Q')^{p_s^\ast}
=
\sum_{|m-k|\le R_2}
\|u_m\|_{\ell^{p_s^\ast}(\mathcal P_{\delta,m})}^{p_s^\ast}.
\]
Therefore, since \(p_s^{\ast} \geq 1\),
\[
\|U\|_{L^{p_s^\ast}(A_k)}
\lesssim
\sum_{|m-k|\le R_2}
\|u_m\|_{\ell^{p_s^\ast}(\mathcal P_{\delta,m})}.
\]
If \(p_s^\ast=\infty\), the same argument gives directly
\[
\|U\|_{L^\infty(A_k)}
\lesssim
\sum_{|m-k|\le R_2}
\|u_m\|_{\ell^\infty(\mathcal P_{\delta,m})}.
\]
Taking the outer \(\ell^{q_s^\ast}\)-norm in \(k\), and using Young's inequality for discrete
convolution with the finitely supported kernel
\(\mathbf 1_{\{|n|\le R_2\}}\), we conclude that
\[
\|U\|_{L_{\mathrm{sh}}^{p_s^\ast,q_s^\ast}}
\lesssim
\|u\|_{\ell^{p_s^\ast,q_s^\ast}}.
\]

\smallskip
\noindent
We prove the reverse estimate
\[
\|u\|_{\ell^{p_s^\ast,q_s^\ast}}
\lesssim
\|U\|_{L_{\mathrm{sh}}^{p_s^\ast,q_s^\ast}}.
\]
Fix \(Q\in\mathcal P_\delta\). By Fubini's theorem,
\[
\int_{B(\zeta_Q,R)} \mu(B(z,R))\,dA(z)
=
\int_{\C} |B(\zeta_Q,R)\cap B(w,R)|\,d\mu(w).
\]
If \(w\in B(\zeta_Q,R)\), then
\[
B\!\left(\frac{\zeta_Q+w}{2},\frac{R}{2}\right)
\subset
B(\zeta_Q,R)\cap B(w,R), \quad \text{ hence } \quad
|B(\zeta_Q,R)\cap B(w,R)|\ge \frac{\pi R^2}{4}.
\]
Thus
\[
\mu(B(\zeta_Q,R))
\le
\frac{4}{\pi R^2}
\int_{B(\zeta_Q,R)} \mu(B(z,R))\,dA(z).
\]
Moreover, for \(z\in B(\zeta_Q,R)\), \eqref{eq:local-kappa-comparison} with $\rho = R$ yields
\[
\|\kappa_{\zeta_Q}\|_{\mathcal F_\alpha^{p,q}}^{-s}
\lesssim
\|\kappa_z\|_{\mathcal F_\alpha^{p,q}}^{-s}.
\]
Therefore
\[
u(Q)
=
\|\kappa_{\zeta_Q}\|_{\mathcal F_\alpha^{p,q}}^{-s}\mu(B(\zeta_Q,R))
\lesssim
\int_{B(\zeta_Q,R)} U(z)\,dA(z).
\]

\smallskip
\noindent
Fix \(k\ge0\). If \(1 \leq p_s^\ast<\infty\), H\"older's inequality gives
\[
u(Q)^{p_s^\ast}
\lesssim
\int_{B(\zeta_Q,R)} U(z)^{p_s^\ast}\,dA(z),
\qquad Q\in\mathcal P_{\delta,k}.
\]
Summing over \(Q\in\mathcal P_{\delta,k}\), using Lemma~\ref{lem:geo-pockets}(d) to obtain
uniformly bounded overlap of the family
\(
\{B(\zeta_Q,R):\ Q\in\mathcal P_{\delta,k}\}\),
and noting that \(B(\zeta_Q,R)\) can meet only shells \(A_m\) with
\(
|m-k|\le \lceil R\rceil+1 =: R_3\),
we get
\[
\|u_k\|_{\ell^{p_s^\ast}(\mathcal P_{\delta,k})}^{p_s^\ast}
\lesssim
\sum_{|m-k|\le R_3}
\int_{A_m} U(z)^{p_s^\ast}\,dA(z)
=
\sum_{|m-k|\le R_3}
\|U\|_{L^{p_s^\ast}(A_m)}^{p_s^\ast}.
\]
Hence, since $p_s^{\ast} \geq 1$,
\[
\|u_k\|_{\ell^{p_s^\ast}(\mathcal P_{\delta,k})}
\lesssim
\sum_{|m-k|\le R_3}
\|U\|_{L^{p_s^\ast}(A_m)}.
\]
If \(p_s^\ast=\infty\), then
\[
u(Q)
\lesssim
\sup_{z\in B(\zeta_Q,R)} U(z)
\lesssim
\sum_{|m-k|\le R_3}
\|U\|_{L^\infty(A_m)},
\qquad Q\in\mathcal P_{\delta,k},
\]
and therefore
\[
\|u_k\|_{\ell^\infty(\mathcal P_{\delta,k})}
\lesssim
\sum_{|m-k|\le R_3}
\|U\|_{L^\infty(A_m)}.
\]
Taking the outer \(\ell^{q_s^\ast}\)-norm in \(k\), and using Young's inequality for discrete
convolution once more, we conclude that
\[
\|u\|_{\ell^{p_s^\ast,q_s^\ast}}
\lesssim
\|U\|_{L_{\mathrm{sh}}^{p_s^\ast,q_s^\ast}}.
\]
Combining the two estimates proves the desired equivalence and the norm comparability.
\end{proof}

\subsection{Vanishing Carleson measures}
\label{subsec:carleson-vanishing}

We prove Theorem~\ref{thm:vanishing-carleson-intro}.  The proof follows
the bounded case, with compactness and
Proposition~\ref{prop:compactness-iomega} replacing boundedness and
Proposition~\ref{prop:boundedness-iomega}.  The compact reductions,
weight comparison, and discrete--continuous transfer are given in
Propositions~\ref{prop:vanishing-discrete-characterization} and
\ref{prop:vanishing-discrete-continuous}.

\subsubsection{Atomic reductions for compactness}
\label{subsubsec:vanishing-atomic-reduction}

We begin with the compact analogues of the two reduction steps in
Subsection~\ref{subsubsec:carleson-atomic-reduction}.  The necessity
argument gives compactness of the discrete embedding associated with the
ball-mass weight \(u_{\mu,R}\), while the sufficiency argument reduces
compactness of the Carleson embedding to compactness of the auxiliary
discrete embedding associated with \(u^\sharp\).

\begin{proposition}
\label{prop:compact-necessity-ball}
If \(\mu\) is a vanishing \(s\)-Carleson measure for
\(\mathcal F_\alpha^{p,q}\), then the identity map
\(
i_{u_{\mu,R}}:\ell^{p,q}\longrightarrow \ell^s(u_{\mu,R})
\)
is compact.
\end{proposition}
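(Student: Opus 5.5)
The plan is to use the finite-shell truncation criterion in Lemma~\ref{lem:finite-shell-tails}. By that lemma it suffices to show that $\|R_{u_{\mu,R},N}\|\to0$ as $N\to\infty$. Here $R_{u_{\mu,R},N}$ is the identity map $\ell^{p,q}\to\ell^s(u_{\mu,R})$ restricted to sequences supported on $\mathcal P_\delta^{\,N}$.

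\textbf{Step 1: reduce the tail norm to $i_\mu$ on tail syntheses.} I will repeat the Rademacher/Khinchine argument of Proposition~\ref{prop:carleson-necessity-reduction}, but now only for $d$ supported on $\mathcal P_\delta^{\,N}$ with $\|d\|_{\ell^{p,q}}\le1$. The same lower bound for $G_d$ gives
\[
\sum_{Q\in\mathcal P_\delta^{\,N}}u_{\mu,R}(Q)|d_Q|^s\lesssim\int_0^1\|i_\mu S_\delta d^{(t)}\|_{L^s_\alpha(\mu)}^s\,dt .
\]
Each $f_t=S_\delta d^{(t)}$ lies in the bounded set $S_\delta(\text{unit ball})$, and $f_t$ is a synthesis using only kernels centered at $|\zeta_Q|\ge N$.

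\textbf{Step 2: show $\sup_{t,d}\|i_\mu f_t\|\to0$.} The key point is that these functions go to $0$ locally uniformly, uniformly in $t$ and $d$. On a compact set $|z|\le\rho$, each kernel $\kappa_{\zeta_Q}$ with $|\zeta_Q|\ge N$ is bounded by $e^{-\frac{\alpha}{4}(N-\rho)^2}$. The estimates of Lemma~\ref{lem:kernel-series-entire} and Lemma~\ref{lem:jet-series-entire}, applied to the tail, then give $\sup_{|z|\le\rho}|f_t(z)|\lesssim \eta_N(\rho)\to0$ uniformly. Next I use the standard fact that a compact operator into $L^s_\alpha(\mu)$ maps bounded sequences converging to $0$ locally uniformly to sequences converging to $0$ in norm. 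To prove this, argue by contradiction. Take $f_n$ bounded and tending to $0$ locally uniformly, with $\|i_\mu f_n\|\ge\varepsilon$. By compactness, pass to a subsequence with $i_\mu f_n\to g$ in $L^s_\alpha(\mu)$. Passing to a further $\mu$-a.e.\ convergent subsequence and using the local uniform limit shows $g=0$ $\mu$-a.e., which is a contradiction. Applied to the sequence of tail syntheses achieving near-supremum, this yields
\[
\sup\bigl\{\|i_\mu S_\delta d\|:\ \operatorname{supp}d\subset\mathcal P_\delta^{\,N},\ \|d\|_{\ell^{p,q}}\le1\bigr\}\to0 .
\]
Combined with Step~1, this gives $\|R_{u_{\mu,R},N}\|\to0$.

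I expect the main obstacle to be the uniformity in Step~2. The supremum runs over all $t$ and all $d$ simultaneously, and compactness of $i_\mu$ must be converted into a uniform rate. The contradiction argument handles this cleanly, provided the locally uniform decay estimate holds uniformly over the unit ball of $\ell^{p,q}$ restricted to shells $k>N$. For $q=\infty$ the tail sum bound in Lemma~\ref{lem:kernel-series-entire} is still controlled by $\sum_{k>N}e^{-\frac{\alpha}{4}(k-\rho)^2}(1+k)^{C}$, so the uniformity also holds there.
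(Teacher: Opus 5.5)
Your proof is correct, but it is organized differently from the paper's.

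\textbf{The paper's route.} The paper splits into two regimes. For $s<q\le\infty$ it combines Theorem~\ref{thm:carleson-intro} with Proposition~\ref{prop:compactness-iomega}; in this regime compactness of $i_{u_{\mu,R}}$ is equivalent to boundedness. Only for $0<q\le s$ does it use compactness of $i_\mu$. There it proves the shellwise decay $\|(u_{\mu,R})_k\|_{\ell^{p_s^*}}\to0$ by testing on randomized single-shell packets, built from near-extremal block vectors supplied by Lemma~\ref{lem:block-norm}.

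\textbf{Your route.} You verify the tail criterion of Lemma~\ref{lem:finite-shell-tails} directly, testing on all coefficient sequences supported on $\mathcal P_\delta^{\,N}$ at once. Your ingredients are the same as the paper's: the Rademacher--Khinchine localization from Proposition~\ref{prop:carleson-necessity-reduction}, and the fact that bounded, locally uniformly null families are sent to $L^s_\alpha(\mu)$-null families by a compact $i_\mu$.

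\textbf{What each buys.} Your route gives a single argument for every $0<q\le\infty$. It needs neither the bounded characterization nor the explicit block-norm computation. The paper's route instead produces the shellwise decay in exactly the form of Theorem~\ref{thm:vanishing-carleson-intro}\textup{(ii)(a)}, and it only has to check local uniform vanishing for one shell at a time.

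\textbf{One point to phrase carefully.} The tail bounds in Lemma~\ref{lem:kernel-series-entire} are written for a fixed coefficient sequence. For $0<q\le1$ they also carry a factor $\sup_{k\ge0}e^{-\frac{\alpha}{4}(k-R)^2}(1+k)^{1-\frac1q}$, so they are not literally uniform over the unit ball. The uniform estimate you need follows directly from three facts:
\begin{itemize}
\item $\|d_k\|_{\ell^p}\le\|d\|_{\ell^{p,q}}\le1$ for every $k$;
\item $\#\mathcal P_{\delta,k}\lesssim 1+k$;
\item $\|\kappa_{\zeta_Q}\|_{\mathcal F^{p,q}_\alpha}^{-1}\lesssim(1+k)^{|\frac1p-\frac1q|}$ for $Q\in\mathcal P_{\delta,k}$, by Proposition~\ref{prop:T1}.
\end{itemize}
Together these give $\sup_{|z|\le\rho}|S_\delta d(z)|\lesssim\sum_{k>N}(1+k)^{C}e^{-\frac{\alpha}{4}(k-\rho)^2}$, uniformly over such $d$. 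This is what your final remark asserts, so the argument goes through as you describe.
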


\begin{proof}
Assume that \(\mu\) is a vanishing \(s\)-Carleson measure. If \(s<q\le\infty\), then
\(i_\mu\) is bounded, so Theorem~\ref{thm:carleson-intro} gives
\[
u_{\mu,R}\in\ell^{p_s^*,q_s^*}.
\]
By Proposition~\ref{prop:compactness-iomega}, this is equivalent to compactness of
\(i_{u_{\mu,R}}\).

\smallskip
\noindent
It remains to treat \(0<q\le s\). By Proposition~\ref{prop:compactness-iomega}, it is
enough to prove
\[
\|(u_{\mu,R})_k\|_{\ell^{p_s^*}(\mathcal P_{\delta,k})}\to0.
\]
\smallskip
\noindent
\emph{Step 1. } We record a consequence of compactness. If \((\mathcal G_k)_{k\ge0}\) is a family of subsets of
\(\mathcal F_\alpha^{p,q}\) such that
\[
\sup_{k\ge0}\sup_{g\in\mathcal G_k}\|g\|_{\mathcal F_\alpha^{p,q}}<\infty
\]
and, for every compact set \(K\Subset\C\),
\[
\sup_{g\in\mathcal G_k}\sup_{z\in K}|g(z)|\to0,
\qquad k\to\infty,
\]
then compactness of \(i_\mu\) implies
\[
\sup_{g\in\mathcal G_k}\|g\|_{L_\alpha^s(\mu)}\to0.
\]
Suppose this conclusion fails. Then there exist \(\varepsilon>0\), integers
\(k_j\to\infty\), and functions \(g_j\in\mathcal G_{k_j}\) such that
\[
\|g_j\|_{L_\alpha^s(\mu)}\ge \varepsilon,
\qquad j\ge1.
\]
By the uniform boundedness assumption, \((g_j)_j\) is bounded in
\(\mathcal F_\alpha^{p,q}\). Since \(i_\mu\) is compact, after passing to a subsequence we
may assume that \((g_j)_j\) converges in \(L_\alpha^s(\mu)\) to some function \(g\). 
On the other hand, the local uniform vanishing assumption implies that
\[
g_j(z)\to0,\qquad z\in\C.
\]
Set
\(
d\nu(z):=e^{-\frac{\alpha s}{2}|z|^2}\,d\mu(z)\),
so that \(L_\alpha^s(\mu)=L^s(\nu)\).
Since convergence in \(L^s(\nu)\) implies, after passing to a further
subsequence, almost everywhere convergence to the \(L^s(\nu)\)-limit, we must have
\(g=0\) \(\nu\)-almost everywhere, equivalently \(\mu\)-almost everywhere.
Hence \(g_j\to0\) in \(L_\alpha^s(\mu)\), contradicting
\(\|g_j\|_{L_\alpha^s(\mu)}\ge\varepsilon\). Therefore
\[
\sup_{g\in\mathcal G_k}\|g\|_{L_\alpha^s(\mu)}\to0.
\]

\smallskip
\noindent
\emph{Step 2.} 
For each \(k\ge0\), by the block norm estimate in Lemma \ref{lem:block-norm}, choose
\(
c^{(k)}=(c_Q^{(k)})_{Q\in\mathcal P_{\delta,k}}
\)
with
\[
\|c^{(k)}\|_{\ell^p(\mathcal P_{\delta,k})}=1
\qquad \text{ and } \qquad
\sum_{Q\in\mathcal P_{\delta,k}}
u_{\mu,R}(Q)|c_Q^{(k)}|^s
\ge
\frac12
\|(u_{\mu,R})_k\|_{\ell^{p_s^*}(\mathcal P_{\delta,k})}.
\]
Let \((\varepsilon_Q(t))_{Q\in\mathcal P_\delta}\) be a Rademacher family on \((0,1)\). For
\(k\ge0\) and \(0<t<1\), define
\[
g_{k,t}(z)
:=
\sum_{Q\in\mathcal P_{\delta,k}}
\varepsilon_Q(t)c_Q^{(k)}\widetilde\kappa_{\zeta_Q}(z).
\]
We verify three properties of this family.

\noindent
First, \((g_{k,t})_{k, t}\) is uniformly bounded in \(\mathcal F_\alpha^{p,q}\). If
\(d^{(k,t)}\) denotes the sequence supported on \(\mathcal P_{\delta,k}\) with
\[
d_Q^{(k,t)}=\varepsilon_Q(t)c_Q^{(k)},
\qquad Q\in\mathcal P_{\delta,k},
\]
then
\[
\|d^{(k,t)}\|_{\ell^{p,q}}=\|c^{(k)}\|_{\ell^p(\mathcal P_{\delta,k})}=1
\qquad \text{and} \qquad g_{k,t}=S_\delta d^{(k,t)}.
\]
Hence the boundedness of
\(S_\delta:\ell^{p,q}\to\mathcal F_\alpha^{p,q}\) gives
\[
\sup_{k,t}\|g_{k,t}\|_{\mathcal F_\alpha^{p,q}}<\infty.
\]
Second, the family \((g_{k,t})_{k, t}\) tends to zero locally uniformly, uniformly in \(t\) as \(k \to \infty\).
Let \(K\Subset\C\), and choose \(R_K>0\) such that \(K\subset B(0,R_K)\). As in the proof
of Lemma~\ref{lem:kernel-series-entire}, for all sufficiently large \(k\), for all \(Q\in\mathcal P_{\delta,k}\) and
all \(z\in K\), one has
\[
|\kappa_{\zeta_Q}(z)|
=
e^{\frac{\alpha}{2}|z|^2}
e^{-\frac{\alpha}{2}|z-\zeta_Q|^2}
\lesssim
e^{-\frac{\alpha}{4}(k-R_K)^2},
\]
where the implicit constant depends only on \(\alpha\) and \(K\). By
Proposition~\ref{prop:T1},
\[
\|\kappa_{\zeta_Q}\|_{\mathcal F_\alpha^{p,q}}^{-1}
\lesssim
(1+k)^{\left|\frac1p-\frac1q\right|},
\qquad Q\in\mathcal P_{\delta,k}.
\]
Hence
\[
\sup_{z\in K}|\widetilde\kappa_{\zeta_Q}(z)|
\lesssim
(1+k)^{\left|\frac1p-\frac1q\right|}
e^{-\frac{\alpha}{4}(k-R_K)^2},
\qquad Q\in\mathcal P_{\delta,k}.
\]
It remains only to estimate the \(\ell^1\)-size of \(c^{(k)}\). Since \(\delta\) is fixed, the explicit construction of the pocket tiling gives
\[
\#\mathcal P_{\delta,k}\lesssim 1+k,
\qquad k\ge0.
\]
Using this and
\(\|c^{(k)}\|_{\ell^p(\mathcal P_{\delta,k})}=1\),  we have
\[
\sum_{Q\in\mathcal P_{\delta,k}} |c_Q^{(k)}|
\lesssim 
(1+k)^{\max\{1-\frac1p,0\}},
\]
where, as usual, \(\frac{1}{\infty}=0\).  This follows from H\"older's
inequality when \(1\le p\le\infty\), and from
\(\sum_Q |c_Q^{(k)}|\le \sum_Q |c_Q^{(k)}|^p=1\) when \(0<p<1\).
Therefore
\[
\sup_{0<t<1}\sup_{z\in K}|g_{k,t}(z)|
\le
\sum_{Q\in\mathcal P_{\delta,k}}
|c_Q^{(k)}|
\sup_{z\in K}|\widetilde\kappa_{\zeta_Q}(z)| \lesssim
(1+k)^{\left|\frac1p-\frac1q\right| + \max\{1-\frac1p,0\}} e^{-\frac{\alpha}{4}(k-R_K)^2}
\to0.
\]

\smallskip
\noindent
Third, we claim that the randomized packets satisfy the lower estimate
\[
\int_0^1
\|g_{k,t}\|_{L_\alpha^s(\mu)}^s\,dt
\gtrsim
\sum_{Q\in\mathcal P_{\delta,k}}
u_{\mu,R}(Q)|c_Q^{(k)}|^s,
\]
where the implicit constant depends only on \(\alpha,s, \delta\), and \(R\), and is independent
of \(k\), \(\mu\), and \(c^{(k)}\).
This is the same randomization and localization argument used in the proof of
Proposition~\ref{prop:carleson-necessity-reduction}, restricted to the single shell
\(\mathcal P_{\delta,k}\). By Fubini's theorem and Khintchine's inequality,
\[
\int_0^1
\|g_{k,t}\|_{L_\alpha^s(\mu)}^s\,dt
\gtrsim
\int_{\C} G_k(z)\,d\mu(z),
\]
where
\[
G_k(z)
:=
\left(
\sum_{Q\in\mathcal P_{\delta,k}}
|c_Q^{(k)}|^2
|\widetilde\kappa_{\zeta_Q}(z)|^2
\right)^{\frac{s}{2}}
e^{-\frac{\alpha s}{2}|z|^2}.
\]
For \(z\in B(\zeta_Q,R)\), 
\[
G_k(z) \geq |c_Q^{(k)}|^s  |\widetilde\kappa_{\zeta_Q}(z)|^s e^{-\frac{\alpha s}{2}|z|^2}
\ge
e^{-\frac{\alpha s}{2}R^2}
|c_Q^{(k)}|^s
\|\kappa_{\zeta_Q}\|_{\mathcal F_\alpha^{p,q}}^{-s}.
\]
Hence, with the convention that the maximum over an empty set is zero,
\[
G_k(z)
\ge
e^{-\frac{\alpha s}{2}R^2}
\max_{Q\in\mathcal P_{\delta,k}:z\in B(\zeta_Q,R)}
\left(
|c_Q^{(k)}|^s
\|\kappa_{\zeta_Q}\|_{\mathcal F_\alpha^{p,q}}^{-s}
\right).
\]
Moreover, by the upper counting estimate in
Lemma~\ref{lem:geo-pockets}(d), there exists \(N_R\ge1\), depending only on \(R\) and
\(\delta\), such that
\[
\sum_{Q\in\mathcal P_{\delta,k}}
\mathbf 1_{B(\zeta_Q,R)}(z)
\le N_R,
\qquad z\in\C,\ k\ge0,
\]
which implies that
\[
\max_{Q\in\mathcal P_{\delta,k}:z\in B(\zeta_Q,R)}
\left(
|c_Q^{(k)}|^s
\|\kappa_{\zeta_Q}\|_{\mathcal F_\alpha^{p,q}}^{-s}
\right)
\ge
\frac1{N_R}
\sum_{Q\in\mathcal P_{\delta,k}}
|c_Q^{(k)}|^s
\|\kappa_{\zeta_Q}\|_{\mathcal F_\alpha^{p,q}}^{-s}
\mathbf 1_{B(\zeta_Q,R)}(z).
\]
Integrating this estimate with respect to \(d\mu(z)\), we obtain
\[
\int_{\C}G_k(z)\,d\mu(z)
\gtrsim
\sum_{Q\in\mathcal P_{\delta,k}}
|c_Q^{(k)}|^s
\|\kappa_{\zeta_Q}\|_{\mathcal F_\alpha^{p,q}}^{-s} \,
\mu(B(\zeta_Q,R))  =
\sum_{Q\in\mathcal P_{\delta,k}}
u_{\mu,R}(Q)|c_Q^{(k)}|^s.
\]
Together with the Khintchine lower bound above, this proves the claimed estimate.

\smallskip
\noindent
\emph{Step 3.}
Now set
\[
\mathcal G_k:=\{g_{k,t}:0<t<1\}.
\]
The first two properties in Step 2 and the compactness observation in Step 1 imply
\[
\sup_{0<t<1}\|g_{k,t}\|_{L_\alpha^s(\mu)}\to0.
\]
Therefore,
\[
\|(u_{\mu,R})_k\|_{\ell^{p_s^*}(\mathcal P_{\delta,k})}
\lesssim
\sum_{Q\in\mathcal P_{\delta,k}}
u_{\mu,R}(Q)|c_Q^{(k)}|^s \lesssim
\int_0^1\|g_{k,t}\|_{L_\alpha^s(\mu)}^s\,dt \le
\sup_{0<t<1}\|g_{k,t}\|_{L_\alpha^s(\mu)}^s
\to0.
\]
Thus the shellwise decay condition holds. By
Proposition~\ref{prop:compactness-iomega}, the embedding
\(i_{u_{\mu,R}}\) is compact.
\end{proof}

\begin{proposition}
\label{prop:compact-sufficiency-u-sharp}
If the identity map
\(
i_{u^\sharp}:\ell^{p,q}\longrightarrow \ell^s(u^\sharp)
\)
is compact, then \(\mu\) is a vanishing \(s\)-Carleson measure for
\(\mathcal F_\alpha^{p,q}\).
\end{proposition}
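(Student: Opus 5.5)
The plan is to write \(i_\mu\) as the composition of \(A_\delta\) with the diagonal estimate of Lemma~\ref{lem:carleson-diagonalization}, and then import the finite-rank truncations \(P_{u^\sharp,N}\), \(R_{u^\sharp,N}\) of Lemma~\ref{lem:finite-shell-tails}. Fix \(f\in\mathcal F_\alpha^{p,q}\) and set \(d:=A_\delta f\in\ell^{p,q}\), so that \(f=S_\delta d\) by \eqref{eq:inverse_S}. For \(N\in\mathbb N_0\), split \(d=d_N+d^N\), with \(d_N\) supported on the first \(N+1\) shells and \(d^N\) on the tail. This gives \(f=S_\delta d_N+S_\delta d^N\). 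We then define
\[
T_N f:=S_\delta\bigl((A_\delta f)\mathbf 1_{\mathcal P_{\delta,N}}\bigr),
\]
viewed as a map into \(L_\alpha^s(\mu)\).

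\textbf{Step 1: \(T_N\) is compact.} The range of \(T_N\) lies in the span of the finitely many kernels \(\widetilde\kappa_{\zeta_Q}\) with \(Q\in\mathcal P_{\delta,N}\), so \(T_N\) has finite rank. It is bounded, because \(A_\delta\) is bounded, the truncation is bounded on \(\ell^{p,q}\), and each \(\widetilde\kappa_{\zeta_Q}\) lies in \(L_\alpha^s(\mu)\). That last fact follows from Lemma~\ref{lem:carleson-diagonalization} applied to a single coordinate, using that \(u^\sharp(Q)<\infty\), since \(i_{u^\sharp}\) is bounded. A bounded finite-rank operator is compact.

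\textbf{Step 2: the remainder is small.} Lemma~\ref{lem:carleson-diagonalization} gives
\[
\|f-T_Nf\|_{L^s_\alpha(\mu)}=\|S_\delta d^N\|_{L_\alpha^s(\mu)}\lesssim \|d^N\|_{\ell^s(u^\sharp)}=\|R_{u^\sharp,N}d\|_{\ell^s(u^\sharp)}\le \|R_{u^\sharp,N}\|\,\|A_\delta\|\,\|f\|_{\mathcal F^{p,q}_\alpha}.
\]
Since \(i_{u^\sharp}\) is compact, Lemma~\ref{lem:finite-shell-tails} gives \(\|R_{u^\sharp,N}\|\to0\). Hence \(\|i_\mu-T_N\|\to0\) in operator (quasi-)norm from \(\mathcal F_\alpha^{p,q}\) to \(L_\alpha^s(\mu)\).

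\textbf{Step 3: passing to the limit.} The remaining point, which I expect to be the only delicate one, is that an operator-norm limit of compact operators is compact when the spaces may be quasi-Banach (\(p,q<1\) or \(s<1\)). I would argue directly with total boundedness. Given \(\varepsilon>0\), choose \(N\) with \(\|i_\mu-T_N\|<\varepsilon\). The image of the unit ball under \(T_N\) is a bounded subset of a finite-dimensional space, hence totally bounded, so it is covered by finitely many \(\varepsilon\)-balls. By the \(\min\{1,s\}\)-triangle inequality in \(L^s_\alpha(\mu)\), the image of the unit ball under \(i_\mu\) is then covered by finitely many balls of radius \(C\varepsilon\). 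So \(i_\mu\) maps the unit ball to a totally bounded set, and since \(L^s_\alpha(\mu)\) is complete, this set is relatively compact. Therefore \(i_\mu\) is compact, that is, \(\mu\) is a vanishing \(s\)-Carleson measure.

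Boundedness of \(i_\mu\) itself is already supplied by Proposition~\ref{prop:carleson-sufficiency-reduction}, since a compact \(i_{u^\sharp}\) is in particular bounded.
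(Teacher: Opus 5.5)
Your proof is correct. It shares the paper's skeleton: factor $i_\mu$ through the bounded right inverse $A_\delta$, control $\|S_\delta d\|_{L^s_\alpha(\mu)}$ by $\|d\|_{\ell^s(u^\sharp)}$ via Lemma~\ref{lem:carleson-diagonalization}, and conclude from total boundedness of $i_\mu(B)$ in the complete space $L^s_\alpha(\mu)$. Your Step~3 is essentially the paper's closing step.

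The difference is in how you use compactness of $i_{u^\sharp}$.
\begin{itemize}
\item The paper uses it directly. $A_\delta(B)$ is relatively compact in $\ell^s(u^\sharp)$, so it has a finite $\varepsilon$-net $\{d^{(j)}\}\subset A_\delta(B)$. Then $f_j:=S_\delta d^{(j)}$ is a $C\varepsilon$-net for $i_\mu(B)$, because $f-f_j=S_\delta(A_\delta f-d^{(j)})$.
\item You first convert compactness into the tail condition $\|R_{u^\sharp,N}\|\to0$ via Lemma~\ref{lem:finite-shell-tails}. You then build finite-rank approximants $T_N$ with $\|i_\mu-T_N\|\lesssim\|A_\delta\|\,\|R_{u^\sharp,N}\|$.
\end{itemize}

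What each route buys:
\begin{itemize}
\item The paper's route is shorter. It needs neither the truncation lemma (whose compact-implies-uniform-tail-decay direction rests on a cited standard criterion) nor your Step~1 finite-rank and boundedness check.
\item Your route is quantitative. By the formula in the proof of Proposition~\ref{prop:compactness-iomega}, $\|R_{u^\sharp,N}\|^s=\bigl\|(\|(u^\sharp)_k\|_{\ell^{p_s^*}})_{k>N}\bigr\|_{\ell^{q_s^*}}$. So you obtain an explicit bound on the distance from $i_\mu$ to finite-rank operators in terms of tail shell norms of $u^\sharp$, which the net argument does not provide.
\end{itemize}

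A small simplification: boundedness of $T_N$ follows at once from Lemma~\ref{lem:carleson-diagonalization} together with $\|i_{u^\sharp}\|<\infty$. You do not need the single-coordinate remark about $u^\sharp(Q)<\infty$.
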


\begin{proof}
Let \(B\) denote the unit ball of \(\mathcal F_\alpha^{p,q}\). Since
\(A_\delta:\mathcal F_\alpha^{p,q}\to \ell^{p,q}\) is bounded, the set
\(
\mathcal B:=A_\delta(B)
\)
is bounded in \(\ell^{p,q}\). By compactness of
\(i_{u^\sharp}:\ell^{p,q}\to\ell^s(u^\sharp)\), the set \(\mathcal B\), viewed in
\(\ell^s(u^\sharp)\), is relatively compact.

\smallskip
\noindent
Fix \(\varepsilon>0\). After replacing \(\varepsilon\) by a smaller number if necessary, there exist finitely many sequences
\(d^{(1)},\ldots,d^{(N)}\in\mathcal B\) such that, for every \(f\in B\), one can choose
\(1\le j\le N\) with
\[
\|A_\delta f-d^{(j)}\|_{\ell^s(u^\sharp)}<\varepsilon.
\]
Set
\[
f_j:=S_\delta d^{(j)},\qquad 1\le j\le N.
\]
Since \(S_\delta A_\delta=I\) on \(\mathcal F_\alpha^{p,q}\) by \eqref{eq:inverse_S}, Lemma~\ref{lem:carleson-diagonalization}
gives
\[
\|f-f_j\|_{L_\alpha^s(\mu)}
=
\|S_\delta(A_\delta f-d^{(j)})\|_{L_\alpha^s(\mu)}
\le C
\|A_\delta f-d^{(j)}\|_{\ell^s(u^\sharp)}
< C\varepsilon,
\]
where $C$ is the implicit constant in Lemma \ref{lem:carleson-diagonalization}, independent of \(f\), \(j\), and \(\varepsilon\).

\smallskip
\noindent
Thus \(i_\mu(B)\) is totally bounded in \(L_\alpha^s(\mu)\). Since
\(L_\alpha^s(\mu)\) is complete, total boundedness implies relative compactness. Hence the embedding
\(
i_\mu:\mathcal F_\alpha^{p,q}\longrightarrow L_\alpha^s(\mu)\)
is compact. 
\end{proof}

\subsubsection{Compact comparison of the auxiliary and ball weights}
\label{subsubsec:vanishing-weight-comparison}

We transfer compactness between the auxiliary weight \(u^\sharp\) and the ball-mass
weight \(u_{\mu,R}\), the compact analogue of
Subsection~\ref{subsubsec:carleson-weight-comparison}.

\begin{proposition}
\label{prop:compact-equivalence-u-sharp-ball}
The identity map
\(
i_{u^\sharp}:\ell^{p,q}\longrightarrow \ell^s(u^\sharp)
\)
is compact if and only if the identity map
\(
i_{u_{\mu,R}}:\ell^{p,q}\longrightarrow \ell^s(u_{\mu,R})
\)
is compact.
\end{proposition}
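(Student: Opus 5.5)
By Proposition~\ref{prop:compactness-iomega}, compactness of \(i_\omega:\ell^{p,q}\to\ell^s(\omega)\) is equivalent to the tail condition
\[
\Bigl\|\bigl(\|\omega_k\|_{\ell^{p_s^*}}\bigr)_{k>N}\Bigr\|_{\ell^{q_s^*}}\to0
\qquad (N\to\infty).
\]
The plan is therefore to compare these shell tails for \(\omega=u^\sharp\) and \(\omega=u_{\mu,R}\), reusing the shellwise estimates from the proof of Proposition~\ref{prop:equivalence-u-sharp-ball}. Throughout, write \(a_k:=\|(u^\sharp)_k\|_{\ell^{p_s^*}(\mathcal P_{\delta,k})}\) and \(b_k:=\|(u_{\mu,R})_k\|_{\ell^{p_s^*}(\mathcal P_{\delta,k})}\).

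\emph{Direction \(i_{u^\sharp}\) compact \(\Rightarrow\) \(i_{u_{\mu,R}}\) compact.} The pointwise lower bound \(u_{\mu,R}(Q)\lesssim u^\sharp(Q)\) from Lemma~\ref{lem:pointwise-comparison-u-sharp-ball} gives \(b_k\lesssim a_k\) for every \(k\). Hence the tail of \((b_k)\) in \(\ell^{q_s^*}\) is dominated by the tail of \((a_k)\), and this direction is immediate.

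\emph{Direction \(i_{u_{\mu,R}}\) compact \(\Rightarrow\) \(i_{u^\sharp}\) compact.} Estimate \eqref{eq:u-sharp-ball} gives
\[
a_k\lesssim \sum_{m\ge0}g(k-m)\,b_m,\qquad g(n)=e^{-\frac{\alpha s}{64}n^2}.
\]
Fix \(N\) and split the sum into \(m>N/2\) and \(m\le N/2\), with \(k>N\) in both cases.
\begin{itemize}
\item[(a)] For \(m>N/2\), apply Young's inequality to \(g*\bigl(b\mathbf 1_{m>N/2}\bigr)\). This contributes at most \(\|g\|_{\ell^1}\,\|(b_m)_{m>N/2}\|_{\ell^{q_s^*}}\), which tends to \(0\) by hypothesis.
\item[(b)] For \(m\le N/2\) and \(k>N\), we have \(|k-m|\ge k/2\). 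Hence
\[
\sum_{m\le N/2} g(k-m)\,b_m\le e^{-\frac{\alpha s}{256}k^2}\sum_{m\le N/2} b_m .
\]
\end{itemize}

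Part (b) is the main obstacle. In the case \(0<q\le s\) we have \(q_s^*=\infty\), and we only know \(b_m\to0\), not summability. Still, \(\sup_m b_m<\infty\), so the right side of (b) is at most \(N\,\sup_m b_m\, e^{-ck^2}\). Taking the \(\ell^{q_s^*}\) norm (or the supremum) over \(k>N\) gives at most \(C N e^{-cN^2}\to0\). In the case \(q_s^*<\infty\) the same crude bound works, now using \(\sum_{m\le N/2} b_m\le C N\,\|b\|_{\ell^{q_s^*}}\).

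Combining (a) and (b), the tails of \((a_k)\) in \(\ell^{q_s^*}\) tend to zero. Proposition~\ref{prop:compactness-iomega} then yields compactness of \(i_{u^\sharp}\).

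When \(s<q\le\infty\) there is a shortcut: by Proposition~\ref{prop:compactness-iomega} compactness reduces to membership in \(\ell^{p_s^*,q_s^*}\), and Proposition~\ref{prop:equivalence-u-sharp-ball} settles this directly. So the only genuinely new work is the tail estimate (b) in the case \(0<q\le s\).
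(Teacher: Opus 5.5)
Your proof is correct and follows the paper's argument. The lower pointwise bound gives one direction, the shellwise convolution estimate \eqref{eq:u-sharp-ball} gives the other, and you handle the case $s<q\le\infty$ via Proposition~\ref{prop:equivalence-u-sharp-ball} exactly as the paper does; the only difference is that where the paper, for $0<q\le s$, cites the fact that convolution with an $\ell^1(\mathbb Z)$ sequence maps $c_0(\mathbb Z)$ into itself, your split into $m\le N/2$ and $m>N/2$ proves that fact directly (and uniformly in $q_s^*$).
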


\begin{proof}
We consider the two regimes separately.

\smallskip
\noindent
First assume that \(s<q\le\infty\). By Proposition~\ref{prop:compactness-iomega}, compactness
of \(i_\omega:\ell^{p,q}\to\ell^s(\omega)\) is equivalent to
\[
\omega\in\ell^{p_s^*,q_s^*}.
\]
Therefore the desired equivalence follows immediately from
Proposition~\ref{prop:equivalence-u-sharp-ball}.

\smallskip
\noindent
It remains to treat the regime \(0<q\le s\). By Proposition~\ref{prop:compactness-iomega},
the compactness of \(i_\omega\) is equivalent to the shellwise decay condition
\[
\|\omega_k\|_{\ell^{p_s^*}(\mathcal P_{\delta,k})}\to0, \qquad k \to \infty.
\]
The lower estimate in Lemma~\ref{lem:pointwise-comparison-u-sharp-ball} gives
\[
u_{\mu,R}(Q)\lesssim u^\sharp(Q),
\qquad Q\in\mathcal P_\delta.
\]
Hence
\[
\|(u_{\mu,R})_k\|_{\ell^{p_s^*}(\mathcal P_{\delta,k})}
\lesssim
\|(u^\sharp)_k\|_{\ell^{p_s^*}(\mathcal P_{\delta,k})},
\qquad k\ge0.
\]
Thus shellwise decay of \(u^\sharp\) implies shellwise decay of \(u_{\mu,R}\).

\smallskip
\noindent
Conversely, assume that
\[
\|(u_{\mu,R})_k\|_{\ell^{p_s^*}(\mathcal P_{\delta,k})}\to0.
\]
By the shellwise estimate \eqref{eq:u-sharp-ball} in the proof of
Proposition~\ref{prop:equivalence-u-sharp-ball}, which follows from the
upper estimate in Lemma~\ref{lem:pointwise-comparison-u-sharp-ball}, we have 
\[
\|(u^\sharp)_k\|_{\ell^{p_s^*}(\mathcal P_{\delta,k})}
\lesssim
\sum_{m\ge0}e^{-\frac{\alpha s}{64}|k-m|^2}
\|(u_{\mu,R})_m\|_{\ell^{p_s^*}(\mathcal P_{\delta,m})}.
\]
The right-hand side tends to zero: extend the sequence
\[
b_m:=\|(u_{\mu,R})_m\|_{\ell^{p_s^*}(\mathcal P_{\delta,m})},
\qquad m\ge0,
\]
by zero to \(\mathbb Z\), and set
\[
H_n:=e^{-\frac{\alpha s}{64}|n|^2},
\qquad n\in\mathbb Z.
\]
Then \(b\in c_0(\mathbb Z)\) and \(H\in\ell^1(\mathbb Z)\). Since convolution by an \(\ell^1(\mathbb Z)\)-sequence maps \(c_0(\mathbb Z)\) into itself,
the convolution term on the right-hand side tends to zero as \(k\to\infty\). Hence
\[
\|(u^\sharp)_k\|_{\ell^{p_s^*}(\mathcal P_{\delta,k})}
\to 0.
\]
Therefore \(i_{u^\sharp}\) is compact if and only if \(i_{u_{\mu,R}}\) is compact.
\end{proof}

\subsubsection{Discrete characterization of vanishing Carleson measures}
\label{subsubsec:vanishing-discrete-characterization}

Combining the compact reductions, the compact comparison, and
Proposition~\ref{prop:compactness-iomega} gives the discrete
characterization in terms of the ball-mass sequence \(u_{\mu,R}\).

\begin{proposition}
\label{prop:vanishing-discrete-characterization}
Assertions \((i)\) and \((ii)\) in Theorem~\ref{thm:vanishing-carleson-intro} are
equivalent. More precisely, \(\mu\) is a vanishing \(s\)-Carleson measure for
\(\mathcal F_\alpha^{p,q}\) if and only if the discrete ball-mass sequence \(u_{\mu,R}\)
satisfies:
\begin{itemize}
    \item[\textup{(a)}] \(
\|(u_{\mu,R})_k\|_{\ell^{p_s^*}(\mathcal P_{\delta,k})}\to0\) as \( k\to\infty\)
  when \(0<q\le s\);
  \item[\textup{(b)}] \(
u_{\mu,R}\in \ell^{p_s^*,q_s^*}
\) when \(s<q\le\infty\).
\end{itemize}
\end{proposition}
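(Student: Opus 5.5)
The plan is to chain the three compact-case results already established, in exact parallel with the proof of Proposition~\ref{prop:carleson-discrete-characterization}. First, I would observe that, by Proposition~\ref{prop:compactness-iomega} applied to the weight \(\omega=u_{\mu,R}\), the conditions (a) and (b) in the statement are together equivalent to compactness of the discrete embedding
\[
i_{u_{\mu,R}}:\ell^{p,q}\longrightarrow \ell^s(u_{\mu,R}).
\]
Case (a) corresponds to \(0<q\le s\) (shellwise decay), and case (b) corresponds to \(s<q\le\infty\) (membership in \(\ell^{p_s^*,q_s^*}\)). So the proposition reduces to the equivalence: \(\mu\) is vanishing \(s\)-Carleson if and only if \(i_{u_{\mu,R}}\) is compact.

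For necessity, if \(\mu\) is a vanishing \(s\)-Carleson measure, then Proposition~\ref{prop:compact-necessity-ball} gives compactness of \(i_{u_{\mu,R}}\) directly. Its proof already handles both regimes: the case \(s<q\) through Theorem~\ref{thm:carleson-intro}, and the case \(q\le s\) through randomized single-shell packets. Hence (a) or (b) holds by the first step.

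For sufficiency, assume (a) or (b), so that \(i_{u_{\mu,R}}\) is compact. Proposition~\ref{prop:compact-equivalence-u-sharp-ball} then transfers compactness to the auxiliary embedding \(i_{u^\sharp}:\ell^{p,q}\to\ell^s(u^\sharp)\). Proposition~\ref{prop:compact-sufficiency-u-sharp}, which uses the fixed recovery operator \(A_\delta\) with \(S_\delta A_\delta=I\) from \eqref{eq:inverse_S} together with Lemma~\ref{lem:carleson-diagonalization}, then yields compactness of \(i_\mu\).

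There is no real obstacle left at this point, since the substantive work (the randomization and compactness argument, and the Gaussian convolution comparison of \(u^\sharp\) with \(u_{\mu,R}\) in the \(c_0\) setting) sits in the cited propositions. The only point requiring care is the bookkeeping of the two regimes, namely checking that the dichotomy \(q\le s\) versus \(q>s\) in Proposition~\ref{prop:compactness-iomega}(iii) matches (a) and (b) exactly. This holds because \(q_s^*=\infty\) precisely when \(q\le s\).
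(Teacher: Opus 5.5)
Your proposal is correct and matches the paper's proof: necessity comes from Proposition~\ref{prop:compact-necessity-ball} followed by Proposition~\ref{prop:compactness-iomega}, and sufficiency chains Proposition~\ref{prop:compactness-iomega}, Proposition~\ref{prop:compact-equivalence-u-sharp-ball}, and Proposition~\ref{prop:compact-sufficiency-u-sharp}. Your check that the regime split lines up, because \(q_s^*=\infty\) exactly when \(q\le s\), is the only bookkeeping the argument needs.
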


\begin{proof}
Assume first that \(\mu\) is a vanishing \(s\)-Carleson measure. By
Proposition~\ref{prop:compact-necessity-ball}, the identity map
\(
i_{u_{\mu,R}}:\ell^{p,q}\longrightarrow \ell^s(u_{\mu,R})
\)
is compact. Hence Proposition~\ref{prop:compactness-iomega} gives condition \({\rm(a)}\)
when \(0<q\le s\), and condition \({\rm(b)}\) when \(s<q\le\infty\).

\smallskip
\noindent
Conversely, assume that \(u_{\mu,R}\) satisfies the corresponding condition in
(a) or (b), according as \(0<q\le s\) or \(s<q\le\infty\).
 By
Proposition~\ref{prop:compactness-iomega}, the identity map
\(
i_{u_{\mu,R}}:\ell^{p,q}\longrightarrow \ell^s(u_{\mu,R})
\)
is compact. Proposition~\ref{prop:compact-equivalence-u-sharp-ball} then implies that
\(
i_{u^\sharp}:\ell^{p,q}\longrightarrow \ell^s(u^\sharp)
\)
is compact. Therefore Proposition~\ref{prop:compact-sufficiency-u-sharp} implies that
\(\mu\) is a vanishing \(s\)-Carleson measure.
\end{proof}

\subsubsection{Discrete--continuous transfer for vanishing conditions}
\label{subsubsec:vanishing-discrete-to-continuous}
It remains to pass from the discrete vanishing condition for \(u_{\mu,R}\) to the
continuous shell condition for \(U_{\mu,R}\). This is the compact analogue of
Subsection~\ref{subsubsec:carleson-discrete-to-continuous}. The case
\(s<q\le\infty\) follows directly from Proposition~\ref{prop:carleson-discrete-continuous};
the case \(0<q\le s\) follows from the shellwise estimates proved in the proof of that
proposition.

\begin{proposition}
\label{prop:vanishing-discrete-continuous}
Assertions \((ii)\) and \((iii)\) in
Theorem~\ref{thm:vanishing-carleson-intro} are equivalent. More precisely:
\begin{enumerate}
\item[\textup{(a)}] If \(0<q\le s\), then
\(
\|(u_{\mu,R})_k\|_{\ell^{p_s^*}(\mathcal P_{\delta,k})}\to0
\)
if and only if
\(
\|U_{\mu,R}\|_{L^{p_s^*}(A_k)}\to0
\) as \(k \to \infty\).
\item[\textup{(b)}] If \(s<q\le\infty\), then
\(
u_{\mu,R}\in\ell^{p_s^*,q_s^*}
\)
if and only if
\(
U_{\mu,R}\in L_{\rm sh}^{p_s^*,q_s^*}(\C)\).
\end{enumerate}
\end{proposition}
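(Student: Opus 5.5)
Part (b) is immediate: when \(s<q\le\infty\), Proposition~\ref{prop:carleson-discrete-continuous} already gives the norm equivalence \(\|u_{\mu,R}\|_{\ell^{p_s^*,q_s^*}}\asymp\|U_{\mu,R}\|_{L_{\rm sh}^{p_s^*,q_s^*}}\), with constants independent of \(\mu\). So \(u_{\mu,R}\in\ell^{p_s^*,q_s^*}\) if and only if \(U_{\mu,R}\in L_{\rm sh}^{p_s^*,q_s^*}(\C)\), and nothing more is needed.

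For part (a) I will not use the global equivalence. Instead I use the two shellwise inequalities established inside the proof of Proposition~\ref{prop:carleson-discrete-continuous}. These hold for every \(k\ge0\), with constants depending only on \(\alpha,p,q,s,\delta,R\), and with fixed integers \(R_2,R_3\):
\[
\|U_{\mu,R}\|_{L^{p_s^*}(A_k)}\lesssim\sum_{|m-k|\le R_2}\|(u_{\mu,R})_m\|_{\ell^{p_s^*}(\mathcal P_{\delta,m})},
\qquad
\|(u_{\mu,R})_k\|_{\ell^{p_s^*}(\mathcal P_{\delta,k})}\lesssim\sum_{|m-k|\le R_3}\|U_{\mu,R}\|_{L^{p_s^*}(A_m)}.
\]
Both inequalities were derived blockwise, before the outer \(\ell^{q_s^*}\)-norm was taken, for \(1\le p_s^*<\infty\) and separately for \(p_s^*=\infty\). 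Only the local geometry was used: bounded overlap from Lemma~\ref{lem:geo-pockets}(d), the local comparison \eqref{eq:local-kappa-comparison}, and the fact that the relevant balls meet only shells within a fixed distance. Finiteness of any global norm was never assumed, so both inequalities remain valid without any a priori summability of \(u_{\mu,R}\) or \(U_{\mu,R}\).

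With these in hand, suppose \(b_m:=\|(u_{\mu,R})_m\|_{\ell^{p_s^*}}\to0\). The first inequality bounds \(\|U_{\mu,R}\|_{L^{p_s^*}(A_k)}\) by a sum of at most \(2R_2+1\) terms \(b_m\), each with \(m\ge k-R_2\), so the bound tends to \(0\) as \(k\to\infty\). The converse is symmetric: use the second inequality and the window \(|m-k|\le R_3\). Indices \(m<0\) are interpreted as absent, following the convention \(A_{-1}=\emptyset\).

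The only real point to check is that the shellwise inequalities hold for \(\mu\) whose shell quantities may be infinite. They do: each is a termwise inequality between nonnegative extended reals. In addition, a local ball mass \(\mu(B(\zeta,R))\) that is infinite on some shell persists across neighbouring shells, so it is not an issue in the limit statement either.
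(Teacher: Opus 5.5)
Your proposal is correct and follows the paper's proof essentially verbatim. Part (b) is read off from the norm equivalence in Proposition~\ref{prop:carleson-discrete-continuous}, and part (a) follows from the two blockwise finite-window inequalities established in its proof; your remark that these inequalities hold termwise for extended nonnegative reals, without any a priori finiteness, is a harmless clarification that the paper leaves implicit.
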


\begin{proof}
The equivalence in part \({\rm(b)}\) follows directly from
Proposition~\ref{prop:carleson-discrete-continuous}, restricted to the regime
\(s<q\le\infty\). It remains to prove the shellwise vanishing equivalence in
part \({\rm(a)}\).

\smallskip
\noindent
For simplicity, write
\[
u:=u_{\mu,R},\qquad U:=U_{\mu,R}.
\]
The proof of
Proposition~\ref{prop:carleson-discrete-continuous} gives an integer \(N_R\ge0\),
depending only on \(R\) and \(\delta\), such that for every \(k\ge0\),
\[
\|U\|_{L^{p_s^*}(A_k)}
\lesssim
\sum_{|m-k|\le N_R}
\|u_m\|_{\ell^{p_s^*}(\mathcal P_{\delta,m})}
\qquad \text{and} \qquad
\|u_k\|_{\ell^{p_s^*}(\mathcal P_{\delta,k})}
\lesssim
\sum_{|m-k|\le N_R}
\|U\|_{L^{p_s^*}(A_m)}.
\]
Since these are finite shell convolutions, they imply
\[
\|u_k\|_{\ell^{p_s^*}(\mathcal P_{\delta,k})}\to0
\quad\Longleftrightarrow\quad
\|U\|_{L^{p_s^*}(A_k)}\to0.
\]
This completes the proof.
\end{proof}

\section{Dual spaces}
\label{sec:dual-spaces}

This section proves the duality theorems stated in
Subsection~\ref{subsec:dual-spaces-intro}.  The main task is to identify
each continuous linear functional with a unique entire representative and
to determine the precise weighted mixed-norm space to which this
representative belongs.  The proof is intrinsic to the analytic Fock
spaces: representatives are constructed from reproducing kernels, and
their mixed-norm estimates are obtained by testing functionals on
families adapted to circles and annular shells.

\smallskip
\noindent
We organize the section as follows.  In
Subsection~\ref{subsec:dual-prelim}, we fix the representing spaces, Gaussian
and transported pairings, dilation notation, and canonical representing functions associated with functionals.  In Subsection~\ref{subsec:dual-equal}, we prove the
equal-parameter duality theorem under the Gaussian pairing
\(\langle\cdot,\cdot\rangle_\alpha\); this is the core of the section.  The
proof has two directions: functions in the proposed representing space
define bounded functionals, and every bounded functional gives a
canonical representative with the required weighted mixed-norm estimates.  The
polynomial correction \(\sigma(p,q)\) arises in this step from the interaction
between the angular and radial exponents.
In Subsection~\ref{subsec:dual-transport}, we transport the equal-parameter theorem
to arbitrary Gaussian pairing parameters by means of the dilation operator.
Subsection~\ref{subsec:dual-little} proves the corresponding
duality theorem for the little endpoint space \(f_\alpha^{p,\infty}\), where the endpoint \(q=\infty\) is reflected in the
outer \(\ell^1\)-type condition for the representative.

\subsection{Representing spaces, pairings, and preliminary facts}
\label{subsec:dual-prelim}
We begin by recalling the weighted mixed-norm scale introduced in
Subsection~\ref{subsec:dual-spaces-intro} and by fixing the notation
used throughout the duality arguments.  We record the annular
discretization of the weighted norm, the dilation lemma for changing
Gaussian parameters, the Gaussian and transported pairings, and the
canonical representing function associated with a continuous linear
functional.

\smallskip
\noindent
For \(\alpha>0\), \(\sigma\in\mathbb R\), and \(0<p,q\le\infty\), the
weighted mixed-norm Fock space \(\mathcal F_{\alpha;\sigma}^{p,q}\) consists of
all entire functions \(f\) such that
\[
\|f\|_{\mathcal F_{\alpha;\sigma}^{p,q}}
:=
\begin{cases}
\displaystyle
\left(
\int_0^\infty
\left[M_p(f,r)(1+r)^\sigma\right]^q
\,d\lambda_{\alpha q}(r)
\right)^{\frac{1}{q}}, & 0<q<\infty,\\[3mm]
\displaystyle
\sup_{r\ge0} M_p(f,r)e^{-\frac{\alpha}{2}r^2}(1+r)^\sigma,
& q=\infty,
\end{cases}
\]
is finite, with the usual interpretation when \(p=\infty\).  In particular,
\(\mathcal F_{\alpha;0}^{p,q}=\mathcal F_\alpha^{p,q}\).

\smallskip
\noindent
We also use the generalized conjugates \(p^*\) and \(q^*\) in the sense fixed in
the notation section, that is,
\[
p^*=
\begin{cases}
\infty, & 0<p\le1,\\
\dfrac{p}{p-1}, & 1<p<\infty,\\
1, & p=\infty,
\end{cases}
\qquad
q^*=
\begin{cases}
\infty, & 0<q\le1,\\
\dfrac{q}{q-1}, & 1<q<\infty,\\
1, & q=\infty.
\end{cases}
\]
We set the polynomial correction
\[
\sigma(p,q)
:=
\left(\frac1p-1\right)_+
-
\left(\frac1q-1\right)_+,
\qquad
(x)_+:=\max\{x,0\},
\]
with the convention \(\frac{1}{\infty}=0\).

\smallskip
\noindent
We record the annular form of the weighted norm, used below to pass from
radial estimates to shell estimates.
\begin{lemma}[Weighted annular discretization]
\label{lem:dual-weighted-annular}
Let \(\alpha>0\), \(\sigma\in\mathbb R\), and
\(0<p, q\le\infty\). For \(f\in H(\mathbb C)\) and \(k\ge0\), set
\[
 B_{p,\sigma}(f;k)
:=
\sup_{r\in[k,k+1)}
M_p(f,r)e^{-\frac{\alpha}{2}r^2}(1+r)^\sigma .
\]
Then
\[
\|f\|_{\mathcal F_{\alpha;\sigma}^{p,\infty}}
=
\sup_{k\ge0} B_{p,\sigma}(f;k) \qquad \text{ and }
\qquad
\|f\|_{\mathcal F_{\alpha;\sigma}^{p,q}}^q
\asymp
\sum_{k=0}^\infty
(1+k) B^q_{p,\sigma}(f;k) \quad (0<q<\infty),
\]
where the implicit constants depend only on \(\alpha,\sigma,p\), and \(q\).
\end{lemma}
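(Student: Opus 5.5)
The plan is to mimic the proof of Proposition~\ref{prop:annular}, treating the polynomial weight \((1+r)^\sigma\) as a harmless perturbation on unit shells.

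The case \(q=\infty\) is immediate. We have \([0,\infty)=\bigsqcup_k[k,k+1)\), and the supremum over \(r\ge0\) splits as a supremum over \(k\) of suprema over \([k,k+1)\). This gives the identity directly from the definition.

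For \(0<q<\infty\), the upper bound comes from decomposing the radial integral over \([k,k+1)\). On each shell we bound
\[
M_p^q(f,r)(1+r)^{\sigma q}e^{-\frac{\alpha q}{2}r^2}\le B_{p,\sigma}^q(f;k),
\]
and then use \(\int_k^{k+1}r\,dr\lesssim 1+k\). This yields \(\|f\|^q_{\mathcal F^{p,q}_{\alpha;\sigma}}\lesssim\sum_k(1+k)B^q_{p,\sigma}(f;k)\).

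For the lower bound, the key observation is that \((1+r)\asymp(1+s)\asymp(1+k)\) whenever \(r\in[k,k+1)\) and \(s\in[k-2,k+2]\cap[0,\infty)\), with constants depending only on \(\sigma\) after raising to the power \(\sigma q\). Therefore
\[
B_{p,\sigma}^q(f;k)\asymp(1+k)^{\sigma q}B_p^q(f;k).
\]
Lemma~\ref{lem:window} (valid for every entire \(f\)) then gives
\[
(1+k)B^q_{p,\sigma}(f;k)\lesssim (1+k)^{\sigma q}\int_{[k-2,k+2]\cap[0,\infty)}M_p^q(f,s)e^{-\frac{\alpha q}{2}s^2}s\,ds .
\]
Moving \((1+k)^{\sigma q}\asymp(1+s)^{\sigma q}\) inside the integral turns the right side into the weighted integral over the window. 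Summing over \(k\) and using the bounded overlap (at most five) of the windows \([k-2,k+2]\) gives
\[
\sum_k(1+k)B^q_{p,\sigma}(f;k)\lesssim\|f\|^q_{\mathcal F^{p,q}_{\alpha;\sigma}}.
\]

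There is no serious obstacle. The only point needing care is the uniform comparability of \((1+s)^{\sigma}\) and \((1+k)^\sigma\) on the enlarged window, including the small shells \(k=0,1\) where the window is truncated at \(0\). This holds because the window has bounded width. The constants therefore acquire a dependence on \(\sigma\) and \(q\) in addition to those of Lemma~\ref{lem:window}.
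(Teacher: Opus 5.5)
Your proposal is correct and follows the paper's proof essentially step for step: the $q=\infty$ identity from splitting $[0,\infty)$ into unit intervals, the upper bound by shell decomposition with $\int_k^{k+1} r\,dr \lesssim 1+k$, and the lower bound via Lemma~\ref{lem:window} with $(1+k)^{\sigma q}\asymp(1+s)^{\sigma q}$ on the window and bounded overlap. Your explicit note that $B_{p,\sigma}^q(f;k)\asymp(1+k)^{\sigma q}B_p^q(f;k)$ makes precise a step the paper leaves implicit.
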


\begin{proof}
The case \(q=\infty\) follows immediately from the definition.
Assume \(0<q<\infty\). Decomposing the defining integral over the
intervals \([k,k+1)\),
we obtain
\[
\begin{aligned}
\|f\|_{\mathcal F_{\alpha;\sigma}^{p,q}}^q
 & \asymp \, \sum_{k=0}^\infty
\int_k^{k+1}
\left[
M_p(f,r)e^{-\frac{\alpha}{2}r^2}(1+r)^\sigma
\right]^q r\,dr  \\
&\le \,
\sum_{k=0}^\infty
 B^q_{p,\sigma}(f;k)
\int_k^{k+1} r\,dr   \, \asymp \,
\sum_{k=0}^\infty
(1+k) B^q_{p,\sigma}(f;k).
\end{aligned}
\]
For the reverse inequality, we argue as in Proposition~\ref{prop:annular}.
By Lemma~\ref{lem:window},
\[
\sup_{r\in[k,k+1)}M_p^q(f,r)e^{-\frac{\alpha q}{2}r^2}
\lesssim
\frac1{1+k}
\int_{[k-2,k+2]\cap[0,\infty)}
M_p^q(f,s)e^{-\frac{\alpha q}{2}s^2}\,s\,ds .
\]
Multiplying by \((1+k)^{\sigma q}\), and using
\(
(1+k)^\sigma \asymp (1+s)^\sigma\) for \(s\in [k-2,k+2]\cap[0,\infty)\),
we obtain
\[
(1+k) B^q_{p,\sigma}(f;k)
\lesssim
\int_{[k-2,k+2]\cap[0,\infty)}
M_p^q(f,s)e^{-\frac{\alpha q}{2}s^2}(1+s)^{\sigma q}s\,ds .
\]
Summing over \(k\ge0\) and using the bounded overlap of the intervals
\([k-2,k+2]\cap[0,\infty)\), we get
\[
\sum_{k=0}^\infty (1+k) B^q_{p,\sigma}(f;k)
\lesssim
\int_0^\infty
M_p^q(f,s)e^{-\frac{\alpha q}{2}s^2}(1+s)^{\sigma q}s\,ds
\asymp
\|f\|_{\mathcal F_{\alpha;\sigma}^{p,q}}^q .
\]
This proves the desired equivalence.
\end{proof}

\smallskip
\noindent
We record the behavior of the weighted scale under dilations. This mechanism
passes from the equal-parameter pairing to
arbitrary Gaussian pairing parameters.
For \(\tau >0\), let \(D_{\tau}\) denote the dilation operator
\[
(D_{\tau}g)(z):=g({\tau} z), \qquad z\in\mathbb C.
\]
\begin{lemma}
\label{lem:dual-dilation-weighted}
Let \(\alpha>0\), \(\tau >0\), \(\sigma\in\mathbb R\), and \(0<p,q\le\infty\).
Then \(D_{\tau}\) is an isomorphism from
\(
\mathcal F_{\alpha/\tau^2;\sigma}^{p,q}\) onto \(
\mathcal F_{\alpha;\sigma}^{p,q}\),
and
\[
\|D_\tau f\|_{\mathcal F_{\alpha;\sigma}^{p,q}}
\asymp
\|f\|_{\mathcal F_{\alpha/\tau^2;\sigma}^{p,q}},
\]
where the implicit constants depend only on \(\tau,\sigma,p\), and \(q\).  The inverse of \(D_{\tau}\) is
\(D_{1/\tau}\).
\end{lemma}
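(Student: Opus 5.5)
The plan is a direct change of variables in the defining integral. The key identity is \(M_p(D_\tau f,r)=M_p(f,\tau r)\), valid for every \(0<p\le\infty\) because dilation commutes with the angular mean. Since \(D_\tau\) is clearly linear and maps entire functions to entire functions, with inverse \(D_{1/\tau}\), it suffices to prove the two-sided norm estimate.

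\emph{Case \(q=\infty\).} Substitute \(\rho=\tau r\):
\[
\|D_\tau f\|_{\mathcal F_{\alpha;\sigma}^{p,\infty}}
=\sup_{\rho\ge0}M_p(f,\rho)e^{-\frac{\alpha}{2\tau^2}\rho^2}\bigl(1+\rho/\tau\bigr)^\sigma .
\]
Then use the elementary comparison
\[
\min\{1,\tau^{-1}\}(1+\rho)\le 1+\rho/\tau\le\max\{1,\tau^{-1}\}(1+\rho),
\]
so that \((1+\rho/\tau)^\sigma\asymp(1+\rho)^\sigma\) with constants depending only on \(\tau\) and \(\sigma\). This gives the claim.

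\emph{Case \(0<q<\infty\).} Write
\[
\|D_\tau f\|^q=\int_0^\infty M_p^q(f,\tau r)(1+r)^{\sigma q}\,\alpha q\,e^{-\frac{\alpha q}{2}r^2}r\,dr .
\]
Substituting \(\rho=\tau r\), so that \(r\,dr=\tau^{-2}\rho\,d\rho\), this becomes
\[
\int_0^\infty M_p^q(f,\rho)\bigl(1+\rho/\tau\bigr)^{\sigma q}\,\frac{\alpha q}{\tau^2}\,e^{-\frac{(\alpha/\tau^2)q}{2}\rho^2}\rho\,d\rho .
\]
Here the measure is exactly \(d\lambda_{(\alpha/\tau^2)q}(\rho)\). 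The same comparison of polynomial weights then yields
\[
\|D_\tau f\|_{\mathcal F_{\alpha;\sigma}^{p,q}}\asymp\|f\|_{\mathcal F_{\alpha/\tau^2;\sigma}^{p,q}},
\]
with constants \(\max\{1,\tau^{\pm\sigma}\}\)-type depending only on \(\tau,\sigma,p,q\).

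Surjectivity follows by applying the same estimate to \(D_{1/\tau}\), with \(\alpha\) replaced by \(\alpha/\tau^2\). There is no real obstacle; the only point requiring care is tracking that the Gaussian parameter rescales exactly into \(\alpha/\tau^2\) while only the polynomial weight changes by bounded factors.
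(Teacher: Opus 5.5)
Your proposal is correct and follows the same route as the paper: the identity \(M_p(D_\tau f,r)=M_p(f,\tau r)\), the substitution \(s=\tau r\) turning \(d\lambda_{\alpha q}\) exactly into \(d\lambda_{(\alpha/\tau^2)q}\), and the comparison \((1+s/\tau)^\sigma\asymp(1+s)^\sigma\). Your explicit treatment of surjectivity via \(D_{1/\tau}\) with \(\alpha\) replaced by \(\alpha/\tau^2\) is a reasonable spelling-out of what the paper leaves implicit.
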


\begin{proof}
Since \(M_p(D_\tau f,r)=M_p(f,\tau r)\), the assertion follows from the change of
variables \(s=\tau r\).  For \(0<q<\infty\),
\[
d\lambda_{\alpha q}(r)
=
\alpha q e^{-\frac{\alpha q}{2}r^2}r\,dr
=
\frac{\alpha q}{\tau^2}
e^{-\frac{\alpha q}{2\tau^2}s^2}s\,ds
=
d\lambda_{(\alpha/\tau^2)q}(s),
\]
and \((1+\frac{s}{\tau})^\sigma\asymp(1+s)^\sigma\).  The case \(q=\infty\) is
identical, using the same comparison of polynomial weights.
\end{proof}

\smallskip
\noindent
The dilation lemma transports equal-parameter duality to arbitrary Gaussian
pairing parameters.  We use the sesquilinear Gaussian pairing
\[
\langle f,g\rangle_\alpha
:=
\frac{\alpha}{\pi}
\int_{\mathbb C}
f(z)\overline{g(z)}e^{-\alpha|z|^2}\,dA(z),
\]
whenever the integral is absolutely convergent. For \(\alpha,\gamma>0\), we define the transported \(\gamma\)-pairing with
base parameter \(\alpha\) by
\[
\langle f,g\rangle_{\alpha\to\gamma}^{\rm tr}
:=
\big\langle f,D_{\alpha/\gamma}g\big\rangle_\alpha,
\]
whenever the right-hand side is well defined.

\smallskip
\noindent
We fix the kernel notation and define the canonical representing function.
Since several Gaussian parameters occur in the duality argument, we keep the
parameter in the notation for kernels:
\[
K_{\alpha,w}(z):=e^{\alpha\overline w z},
\qquad
\kappa_{\alpha,w}(z):=
e^{\alpha\overline w z-\frac{\alpha}{2}|w|^2}.
\]
By Proposition~\ref{prop:T1}, \(K_{\alpha,w}\in\mathcal F_\alpha^{p,q}\)
for all \(0<p,q\le\infty\). Moreover, the estimate
\[
M_p(K_{\alpha,w},r)e^{-\frac{\alpha}{2}r^2}
\le
e^{\alpha r|w|-\frac{\alpha}{2}r^2}
\to0,
\qquad r\to\infty,
\]
shows that \(K_{\alpha,w}\in f_\alpha^{p,\infty}\) for every
\(0<p\le\infty\).

\smallskip
\noindent
For a continuous linear functional  \(F\)  on
\(\mathcal F_\alpha^{p,q}\), where \(0<p\le\infty\) and \(0<q<\infty\), or on
the little space \(f_\alpha^{p,\infty}\), where \(0<p\le\infty\), we define
its \emph{canonical representing function} by
\[
g_F(w):=\overline{F(K_{\alpha,w})},
\qquad w\in\mathbb C.
\]
The next lemma shows that the canonical representing function is entire and
identifies its Taylor coefficients.
\begin{lemma}
\label{lem:dual-gF-expansion}
Let \(\alpha>0\), \(0<p\le\infty\), and \(0<q<\infty\). For each 
\(
F\in(\mathcal F_\alpha^{p,q})^*\)
or
\(
F\in(f_\alpha^{p,\infty})^*\), its canonical representing function \(g_F\) is entire and admits the
locally uniformly convergent expansion
\begin{equation}
\label{eq:dual-gF-expansion}
g_F(w)
=
\sum_{n=0}^{\infty}
\frac{\alpha^n}{n!}\overline{F(z^n)}\,w^n,
\qquad w\in\mathbb C.
\end{equation}
\end{lemma}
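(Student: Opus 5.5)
The plan is to expand the kernel in its Taylor series,
\[
K_{\alpha,w}(z)=\sum_{n\ge0}\frac{\alpha^n\overline w^{\,n}}{n!}z^n ,
\]
show that this series converges to \(K_{\alpha,w}\) in the quasi-norm of the relevant space, and then pass \(F\) through the sum using its continuity and linearity. Let \(X\) denote either \(\mathcal F_\alpha^{p,q}\) with \(q<\infty\) or \(f_\alpha^{p,\infty}\). Set \(s=\min\{1,p,q\}\), with \(s=\min\{1,p\}\) in the little space. By Lemma~\ref{lem-norm}, \(X\) carries an \(s\)-norm, so it suffices to prove
\[
\sum_{n\ge0}\frac{\alpha^{ns}|w|^{ns}}{(n!)^s}\,\|z^n\|_X^s<\infty .
\]
An absolutely \(s\)-summable series converges in the complete quasi-Banach space \(X\). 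Its limit must be \(K_{\alpha,w}\), because convergence in \(X\) implies locally uniform convergence (Lemma~\ref{lem:p-est}), and the Taylor series converges pointwise to \(K_{\alpha,w}\). Note that \(\|z^n\|_{f_\alpha^{p,\infty}}=\|z^n\|_{\mathcal F_\alpha^{p,\infty}}\), and polynomials lie in \(f_\alpha^{p,\infty}\).

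\textbf{Main step.} For the coefficient bound I use the explicit monomial norms from the proof of Lemma~\ref{lem-sh}:
\[
\|z^n\|_X\asymp\Big(\frac{n}{e\alpha}\Big)^{n/2}n^{\frac1{2q}}.
\]
Stirling gives \(n!\asymp (n/e)^n\sqrt{n}\), hence
\[
\frac{\alpha^n|w|^n}{n!}\,\|z^n\|_X\lesssim \frac{(\alpha|w|^2)^{n/2}\,e^{n/2}}{n^{n/2}}\,n^{\frac1{2q}}=\Big(\frac{e\alpha|w|^2}{n}\Big)^{n/2}n^{\frac1{2q}}.
\]
This decays superexponentially in \(n\), uniformly for \(|w|\le\rho\) on any compact disc. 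So the \(s\)-series converges, uniformly in \(|w|\le\rho\). This estimate is the only real content of the proof; I expect no obstacle beyond correctly quoting the monomial asymptotics, including the \(q=\infty\) case.

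\textbf{Applying \(F\).} By continuity and linearity,
\[
F(K_{\alpha,w})=\sum_{n\ge0}\frac{\alpha^n\overline w^{\,n}}{n!}F(z^n).
\]
Conjugating gives the expansion \eqref{eq:dual-gF-expansion}.

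\textbf{Entirety.} Since \(F\) is continuous on a quasi-normed space, \(|F(h)|\le\|F\|\,\|h\|_X\). Therefore
\[
\Big|\frac{\alpha^n}{n!}\overline{F(z^n)}\,w^n\Big|\le\|F\|\,\Big(\frac{e\alpha\rho^2}{n}\Big)^{n/2}n^{\frac1{2q}}\qquad\text{on } |w|\le\rho .
\]
These bounds are summable in \(n\). By the Weierstrass M-test the power series converges locally uniformly, so \(g_F\) is entire.
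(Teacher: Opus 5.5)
Your proposal is correct and follows the paper's proof. You expand $K_{\alpha,w}$ in its Taylor series, use the monomial norms from Lemma~\ref{lem-sh} together with Stirling to get $s$-summability uniformly for $|w|\le\rho$, invoke completeness, and apply $F$ term by term. Two of your steps spell out what the paper leaves implicit: you identify the limit as $K_{\alpha,w}$ via the bounded point evaluations of Lemma~\ref{lem:p-est}, and you prove entirety directly by an M-test with majorants $\|F\|\,\alpha^n\rho^n\|z^n\|_X/n!$.
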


\begin{proof}
For \(w\in\mathbb C\), we use the formal expansion
\[
K_{\alpha,w}(z)
=
\sum_{n=0}^{\infty}
\frac{\alpha^n}{n!}\overline w^{\,n}z^n.
\]
We justify convergence in the underlying space, locally uniformly with
respect to \(w\).  The monomial norm computation from Lemma~\ref{lem-sh}
gives, for
\(n\ge1\),
\[
\|z^n\|_{\Fpq} \asymp \Big(\frac{n}{e\alpha}\Big)^{\frac{n}{2}}\; n^{\frac{1}{2q}}, \quad n \geq 1.
\]
Hence, by Stirling's formula, for every \(R>0\),
\[
\sum_{n=0}^{\infty}
\left(
\frac{\alpha^nR^n}{n!}
\|z^n\|_{\mathcal F_\alpha^{p,q}}
\right)^s
<\infty,
\qquad
s:=\min\{1,p,q\}.
\]
The same argument gives the corresponding summability in
\(f_\alpha^{p,\infty}\), with \(s=\min\{1,p\}\).
Since \(\mathcal F_\alpha^{p,q}\) and \(f_\alpha^{p,\infty}\) are complete, the kernel expansion converges
in these spaces, uniformly for \(|w|\le R\).

\smallskip
\noindent
Therefore, by the continuity of \(F\), we may apply \(F\) term by term and
obtain
\[
g_F(w)
=
\overline{F(K_{\alpha,w})}
=
\sum_{n=0}^{\infty}
\frac{\alpha^n}{n!}\overline{F(z^n)}\,w^n,
\]
locally uniformly in \(w\). Hence \(g_F\) is entire.
\end{proof}

\noindent
We shall also use the polynomial core in the completion and uniqueness
arguments. By
Corollary~\ref{cor:density-poly}, polynomials are dense in
\(\mathcal F_\alpha^{p,q}\) whenever \(0<p\le\infty\) and \(0<q<\infty\),
and they are dense in \(f_\alpha^{p,\infty}\).

\subsection{Equal-parameter duality}
\label{subsec:dual-equal}

We prove the duality theorem in the equal-parameter case
\(\alpha=\beta=\gamma\).  The proof establishes boundedness of the
Gaussian pairing, estimates the canonical representative in the proposed
space, and then uses polynomial density and Gaussian orthogonality for
representation and uniqueness.

\subsubsection{Boundedness of the pairing}
\label{subsubsec:dual-forward}
We start with the forward direction, namely the boundedness of the Gaussian
pairing on the proposed pair of spaces.

\begin{proposition}
\label{prop:dual-forward}
Let \(\alpha>0\), \(0<p\le\infty\), and \(0<q<\infty\). Then every 
\(g\in  \mathcal F_{\alpha;\sigma(p,q)}^{p^*,q^*}\) defines a continuous linear functional \(F_g\) on \(\mathcal F_\alpha^{p,q}\) by 
\[
F_g(f):=\langle f,g\rangle_\alpha,\qquad f\in\mathcal F_\alpha^{p,q},
\quad\text{and}\quad
\|F_g\|_{(\mathcal F_\alpha^{p,q})^*}
\lesssim
\|g\|_{\mathcal F_{\alpha;\sigma(p,q)}^{p^*,q^*}},
\]
where the implicit constant depends only on \(\alpha,p\), and \(q\).
\end{proposition}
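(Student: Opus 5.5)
The plan is to bound $|\langle f,g\rangle_\alpha|$ by the integral of $|f\,\overline g|$, which also shows the pairing converges absolutely. We write it in polar coordinates,
\[
|\langle f,g\rangle_\alpha|\le 2\alpha\int_0^\infty\Big(\frac1{2\pi}\int_0^{2\pi}|f(re^{i\theta})||g(re^{i\theta})|\,d\theta\Big)e^{-\alpha r^2}r\,dr,
\]
and estimate first the angular integral and then the radial integral. The weight $e^{-\alpha r^2}$ is split as $e^{-\frac{\alpha}{2}r^2}\cdot e^{-\frac{\alpha}{2}r^2}$, one factor going to $f$ and one to $g$. Throughout, write $\sigma_1:=(\frac1p-1)_+$ and $\sigma_2:=(\frac1q-1)_+$, so that $\sigma(p,q)=\sigma_1-\sigma_2$.

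\emph{Angular step.} We claim that for every $r\ge0$,
\[
\frac1{2\pi}\int_0^{2\pi}|f||g|\,d\theta\lesssim (1+r)^{\sigma_1}M_p(f,r)M_{p^*}(g,r).
\]
If $1\le p\le\infty$, this is H\"older's inequality on the circle, with $\sigma_1=0$; the case $p=\infty$ pairs with $p^*=1$. If $0<p<1$, then $p^*=\infty$, and we use the angular comparison \eqref{eq:angular-comparison} with $p_1=p$, $p_2=1$. This gives
\[
\frac1{2\pi}\int_0^{2\pi}|f||g|\,d\theta\le M_1(f,r)M_\infty(g,r)\lesssim (1+r)^{\frac1p-1}M_p(f,r)M_\infty(g,r).
\]
Thus the whole angular loss is the factor $(1+r)^{\sigma_1}$, which we attach to $g$.

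\emph{Radial step, $1\le q<\infty$.} Here $\sigma_2=0$, so $\sigma(p,q)=\sigma_1$. We apply H\"older's inequality in $r$ with respect to $r\,dr$ and exponents $q,q^*$, taking
\[
M_p(f,r)e^{-\frac\alpha2 r^2}\qquad\text{and}\qquad M_{p^*}(g,r)(1+r)^{\sigma_1}e^{-\frac\alpha2 r^2}
\]
as the two factors. This yields $\|f\|_{\mathcal F^{p,q}_\alpha}\|g\|_{\mathcal F^{p^*,q^*}_{\alpha;\sigma(p,q)}}$ up to normalizing constants of $d\lambda_{\alpha q}$ and $d\lambda_{\alpha q^*}$. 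When $q=1$ we have $q^*=\infty$, and the second factor is simply bounded by its supremum.

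\emph{Radial step, $0<q<1$.} This is the step I expect to be the main obstacle, since H\"older's inequality is unavailable in $r$. Here $q^*=\infty$ and $\sigma(p,q)=\sigma_1-(\frac1q-1)$. The definition of the representing norm gives
\[
M_{p^*}(g,r)(1+r)^{\sigma_1}e^{-\frac\alpha2 r^2}\le (1+r)^{\frac1q-1}\|g\|_{\mathcal F^{p^*,\infty}_{\alpha;\sigma(p,q)}}.
\]
It therefore remains to show that
\[
\int_0^\infty M_p(f,r)e^{-\frac\alpha2 r^2}(1+r)^{\frac1q-1}\,r\,dr\lesssim\|f\|_{\mathcal F^{p,q}_\alpha}.
\]
To do this, we bound the integral over $[k,k+1)$ by $(1+k)^{1/q}B_p(f;k)$ and sum over $k$. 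Since $q<1$, the $\ell^q\subset\ell^1$ inequality gives
\[
\sum_k(1+k)^{1/q}B_p(f;k)\le\Big(\sum_k(1+k)B_p^q(f;k)\Big)^{1/q}.
\]
By the annular discretization of Proposition~\ref{prop:annular}, the right-hand side is comparable to $\|f\|_{\mathcal F^{p,q}_\alpha}$. This is exactly where the correction $-(\frac1q-1)_+$ in $\sigma(p,q)$ is used: the exponent $\frac1q-1$ on $(1+r)$ combines with the $r$ in $r\,dr$ to give precisely $(1+k)^{1/q}$. Linearity of $F_g$ is clear, and the absolute convergence obtained along the way shows that the pairing is well defined.
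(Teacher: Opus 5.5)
Your proposal is correct. The angular step and the radial H\"older step for $1\le q<\infty$ coincide with the paper's argument; the paper groups $q=1$ with $q<1$ rather than with $q>1$, which makes no difference.

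The genuine difference is in the case $0<q<1$.

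\emph{The paper's route.} The paper stays on the continuous side. It uses the sharp circle-mean estimate of Theorem~\ref{thm:estimate},
$A_f(r):=M_p(f,r)e^{-\frac{\alpha}{2}r^2}\lesssim(1+r)^{-\frac1q}\|f\|_{\mathcal F_\alpha^{p,q}}$, and writes $A_f=A_f^{\,q}A_f^{\,1-q}$. The bound $A_f^{\,1-q}(r)\lesssim\|f\|_{\mathcal F_\alpha^{p,q}}^{1-q}(1+r)^{1-\frac1q}$ then cancels the factor $(1+r)^{\frac1q-1}$ exactly. What remains is $\|f\|_{\mathcal F_\alpha^{p,q}}^{1-q}\int_0^\infty A_f^{\,q}(r)\,r\,dr\asymp\|f\|_{\mathcal F_\alpha^{p,q}}$.

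\emph{Your route.} You discretize into shells and bound each shell by $(1+k)^{\frac1q}B_p(f;k)$. You then combine $\ell^q\subset\ell^1$ with the nontrivial direction of Proposition~\ref{prop:annular}. This is the discrete counterpart of the paper's interpolation trick. Indeed, $\sum_k x_k\le\bigl(\sum_k x_k^q\bigr)^{1/q}$ follows by pulling out $\sup_k x_k^{1-q}$, just as the paper pulls out the pointwise bound on $A_f^{\,1-q}$. Both routes rest on the local mass principle of Lemma~\ref{lem:uniform-local-mass-convex}: yours through Lemma~\ref{lem:window}, the paper's through Theorem~\ref{thm:estimate}.

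\emph{What each buys.} The paper's version is a two-line computation once the sharp growth theorem is available. It shows directly that the correction $-(\frac1q-1)$ in $\sigma(p,q)$ is dictated by the optimal exponent $-\frac1q$. Yours avoids the pointwise growth theorem. It also makes the shell bookkeeping explicit, in the same annular language the paper uses for its atomic, Carleson, and representer arguments.
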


\begin{proof}
Let \(f\in\mathcal F_\alpha^{p,q}\) and
\(g\in\mathcal F_{\alpha;\sigma(p,q)}^{p^*,q^*}\).  Put
\[
A_f(r):=M_p(f,r)e^{-\frac{\alpha}{2}r^2},
\qquad
B_g(r):=M_{p^*}(g,r)e^{-\frac{\alpha}{2}r^2}(1+r)^{\sigma(p,q)}.
\]
We use the following circle estimate.  If \(1\le p\le\infty\), then
H\"older's inequality on the circle gives
\[
\frac1{2\pi}\int_0^{2\pi}
|f(re^{i\theta})g(re^{i\theta})|\,d\theta
\le
M_p(f,r)M_{p^*}(g,r).
\]
If \(0<p<1\), then, using \eqref{eq:angular-comparison} with \(p_1=p\) and
\(p_2=1\),
\[
\frac1{2\pi}\int_0^{2\pi}
|f(re^{i\theta})g(re^{i\theta})|\,d\theta
\le
M_1(f,r)M_\infty(g,r)
\lesssim
(1+r)^{\frac1p-1}M_p(f,r)M_\infty(g,r).
\]
Hence, for all \(0<p\le\infty\),
\[
\frac1{2\pi}\int_0^{2\pi}
|f(re^{i\theta})g(re^{i\theta})|\,d\theta
\lesssim
(1+r)^{(\frac1p-1)_+}M_p(f,r)M_{p^*}(g,r).
\]
Therefore,
\[
\begin{aligned}
|\langle f,g\rangle_\alpha|
&\lesssim
\int_0^\infty
A_f(r)B_g(r)
(1+r)^{(\frac1p-1)_+-\sigma(p,q)}\,r\,dr  \\
&=
\int_0^\infty
A_f(r)B_g(r)
(1+r)^{(\frac1q-1)_+}\,r\,dr,
\end{aligned}
\]
because
\[
\sigma(p,q)
=
\left(\frac1p-1\right)_+
-
\left(\frac1q-1\right)_+ .
\]
If \(1<q<\infty\), then \(\left(\frac{1}{q}-1\right)_+=0\) and \(q^*=q'\).  H\"older's
inequality in the radial variable gives
\[
|\langle f,g\rangle_\alpha|
\lesssim
\left(\int_0^\infty A_f^q(r) r\,dr\right)^{\frac{1}{q}}
\left(\int_0^\infty B_g^{q'}(r) r\,dr\right)^{\frac{1}{q'}}
\lesssim
\|f\|_{\mathcal F_\alpha^{p,q}}
\|g\|_{\mathcal F_{\alpha;\sigma(p,q)}^{p^*,q^*}}.
\]
It remains to consider \(0<q\le1\).  In this case \(q^*=\infty\), and so
\[
B_g(r)\le
\|g\|_{\mathcal F_{\alpha;\sigma(p,q)}^{p^*,\infty}}.
\]
Thus
\[
|\langle f,g\rangle_\alpha|
\lesssim
\|g\|_{\mathcal F_{\alpha;\sigma(p,q)}^{p^*,\infty}}
\int_0^\infty A_f(r)(1+r)^{\frac1q-1}r\,dr.
\]
If \(q=1\), the last integral is directly comparable to
\(\|f\|_{\mathcal F_\alpha^{p,1}}\).  If \(0<q<1\), then by the sharp
\(M_p\)-estimate \eqref{eq-newest} in Theorem~\ref{thm:estimate},
\[
A_f(r)=M_p(f,r)e^{-\frac{\alpha}{2}r^2}
\lesssim
\|f\|_{\mathcal F_\alpha^{p,q}}(1+r)^{-\frac1q}.
\]
Thus
\[
A_f^{1-q}(r)
\lesssim
\|f\|_{\mathcal F_\alpha^{p,q}}^{1-q}
(1+r)^{1-\frac1q}.
\]
Consequently,
\[
\begin{aligned}
\int_0^\infty A_f(r)(1+r)^{\frac1q-1}r\,dr
&=
\int_0^\infty A_f^q(r) A_f^{1-q}(r)
(1+r)^{\frac1q-1}r\,dr \\
&\lesssim
\|f\|_{\mathcal F_\alpha^{p,q}}^{1-q}
\int_0^\infty A_f(r)^q r\,dr  
\quad \asymp \quad
\|f\|_{\mathcal F_\alpha^{p,q}} .
\end{aligned}
\]
Hence, for all \(0<q\le1\),
\[
|\langle f,g\rangle_\alpha|
\lesssim
\|f\|_{\mathcal F_\alpha^{p,q}}
\|g\|_{\mathcal F_{\alpha;\sigma(p,q)}^{p^*,\infty}}.
\]
The preceding estimates show that the Gaussian integral is absolutely
convergent and that \(F_g(f):=\langle f,g\rangle_\alpha\) defines a continuous
linear functional on \(\mathcal F_\alpha^{p,q}\), with
\[
\|F_g\|_{(\mathcal F_\alpha^{p,q})^*}
\lesssim
\|g\|_{\mathcal F_{\alpha;\sigma(p,q)}^{p^*,q^*}}.
\]
This completes the proof.
\end{proof}

\subsubsection{Representer estimates}
\label{subsubsec:dual-representer-estimates}

We prove the converse estimates.  Given a continuous linear functional
\(F\), we estimate the canonical representing function $g_F$ of $F$
in the proposed representing space. The proof is divided according to the
inner exponent \(p\).  When \(0<p\le1\), the representing space has inner
exponent \(p^*=\infty\), and the argument relies on pointwise kernel testing
together with one-point-per-shell synthesis.  When \(1<p\le\infty\), one uses
dual testing on the angular variable, encoded through the circle-test operators \(T_r\).

\smallskip
\noindent
We begin with the quasi-Banach inner range \(0<p\le1\).  The following
one-point-per-shell synthesis estimate is the basic testing device needed to
recover the outer \(q^*\)-summability of \(g_F\).

\begin{lemma} 
\label{lem:dual-one-point-shell}
Let \(\alpha>0\), \(0<p\le1<q<\infty\), and let
\(\{w_k\}_{k\ge0}\subset\mathbb C\) satisfy \(k\le |w_k|<k+1\).  If
\(b=(b_k)_{k\ge0}\) satisfies
\[
\sum_{k=0}^\infty |b_k|^q(1+k)^{1-\frac qp}<\infty,
\]
then the series
\(
\sum_{k=0}^\infty b_k\kappa_{\alpha,w_k}
\)
converges  in
\(\mathcal F_\alpha^{p,q}\). Moreover, its sum satisfies
\[
\left\|
\sum_{k=0}^\infty b_k\kappa_{\alpha,w_k}
\right\|_{\mathcal F_\alpha^{p,q}}
\lesssim
\left(
\sum_{k=0}^\infty |b_k|^q(1+k)^{1-\frac qp}
\right)^{\frac1q},
\]
where the implicit constant depends only on \(\alpha,p\), and \(q\).
\end{lemma}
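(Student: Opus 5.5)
The plan is to reduce the statement to the quasi-Banach synthesis bound already proved. For each shell index \(k\), pick the pocket \(Q_k\in\mathcal P_{\delta,k}\) of a fixed admissible tiling whose center \(\zeta_{Q_k}\) is closest to \(w_k\). The points \(w_k\) themselves are not pocket centers, so Corollary~\ref{cor:synthesis<1} cannot be applied verbatim. Instead I would argue directly, shell by shell, exactly as in the proof of Lemma~\ref{lem:Sr-synth} with \(m=0\) and a single atom per shell. The roles are \(b_k\) as the block \(c_k\) and \(\|c_k\|_{\ell^p}=|b_k|\). The hypothesis then reads \(\sum_k (w_k\|c_k\|_{\ell^p})^q<\infty\) with weight \((1+k)^{\frac1q-\frac1p}\), which is precisely the \(\ell_w^{p,q}\) quasi-norm.

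The key steps are as follows.

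\emph{Step 1 (single-kernel profile).} By Lemma~\ref{lem:packet-profile}, via rotation \(\kappa_{\alpha,w}(z)=G_{|w|}(ze^{-i\arg w})\), together with \(|w_k|\in[k,k+1)\), we get
\[
B_p^p(\kappa_{\alpha,w_k};\ell)\lesssim (1+k)^{-1}e^{-\frac{\alpha p}{16}(\ell-k)^2},
\]
with the same computation as in \eqref{eq:jet-atom-profile}.

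\emph{Step 2 (shellwise sum).} Since \(p\le1\), \(p\)-subadditivity of \(M_p^p\) applied to partial sums gives
\[
B_p^p\Bigl(\sum_{k\le N} b_k\kappa_{\alpha,w_k};\ell\Bigr)\lesssim \sum_k (1+k)^{-1}e^{-\frac{\alpha p}{16}(\ell-k)^2}|b_k|^p .
\]

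\emph{Step 3 (outer summation).} Set \(Y_k=((1+k)^{\frac1q-\frac1p}|b_k|)^p\) and apply the outer Schur test, Lemma~\ref{lem:gen-schur-test}, with exponent \(q/p\) and kernel \(e^{-c(\ell-k)^2}(1+k)^{-p/q}\). Combined with Proposition~\ref{prop:annular}, this yields the norm bound for every finite partial sum, uniformly in \(N\).

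\emph{Step 4 (convergence).} Apply the same estimate to tails \(\sum_{M<k\le N}\). This shows the partial sums are Cauchy in \(\mathcal F_\alpha^{p,q}\), because the right side is the tail of a convergent series. Completeness from Lemma~\ref{lem-norm} then gives convergence, and the bound passes to the limit by the \(s\)-triangle inequality or Fatou.

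The main obstacle is Step 1, namely getting the uniform Gaussian-times-\((1+k)^{-1}\) profile for \(M_p^p\) when \(p<1\) and \(r\) is far from \(|w_k|\). Near \(r\approx k\) this is the Bessel asymptotic \((1+\alpha rs)^{-1/2}\). For small \(r\), the factor \((1+\alpha r|w_k|)^{-1/2}\) is not small, so one must borrow decay from the Gaussian \(e^{-\frac{\alpha p}{2}(r-|w_k|)^2}\) to manufacture the \((1+k)^{-1}\), splitting at \(r=|w_k|/2\) as in the proof of Lemma~\ref{lem:single-shell-synthesis}. Everything else is a transcription of the \(m=0\) case of Lemma~\ref{lem:Sr-synth}.
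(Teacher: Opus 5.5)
Your proposal is correct and follows the paper's proof essentially step for step: the single-kernel shell profile from the packet estimate, $p$-subadditivity, the outer Schur test with exponent $q/p$ applied to $Y_k=|b_k|^p(1+k)^{\frac pq-1}$, annular discretization, and a Cauchy/completeness argument on finite partial sums. The opening detour about choosing nearest pocket centers is never used and can simply be dropped.
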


\begin{proof}
Let
\[
S_b:=
\left(
\sum_{k=0}^{\infty}|b_k|^q(1+k)^{1-\frac{q}{p}}
\right)^{1/q}<\infty.
\]
For a finite set \(E\subset\mathbb N_0\), put
\[
f_E(z):=\sum_{k\in E}b_k\kappa_{\alpha,w_k}(z).
\]
We prove the uniform estimate
\[
\|f_E\|_{\mathcal F_\alpha^{p,q}}
\lesssim
\left(
\sum_{k\in E}|b_k|^q(1+k)^{1-\frac{q}{p}}
\right)^{1/q}.
\]
By the single-kernel shell estimate obtained from the packet profile
estimate \eqref{eq:packet-profile} and used in \eqref{eq:jet-atom-profile},
\[
\sup_{r\in[\ell,\ell+1)}
M_p^p(\kappa_{\alpha,w_k},r)e^{-\frac{\alpha p}{2}r^2}
\lesssim
(1+k)^{-1}e^{-\frac{\alpha p}{8}|\ell-k|^2},
\qquad \ell,k\ge0.
\]
where the implicit constant depends only on $\alpha$ and $p$. Since \(0<p\le1\), \(p\)-subadditivity gives
\[
\begin{aligned}
B_p^p(f_E;\ell)
&=
\sup_{r\in[\ell,\ell+1)}
M_p^p(f_E,r)e^{-\frac{\alpha p}{2}r^2}  \\
&\le
\sum_{k\in E}|b_k|^p
\sup_{r\in[\ell,\ell+1)}
M_p^p(\kappa_{\alpha,w_k},r)e^{-\frac{\alpha p}{2}r^2}  \lesssim
\sum_{k\in E}|b_k|^p(1+k)^{-1}
e^{-\frac{\alpha p}{8}|\ell-k|^2}.
\end{aligned}
\]
Set
\[
Y_k:=|b_k|^p(1+k)^{\frac{p}{q}-1}\mathbf 1_E(k).
\]
Then
\(
|b_k|^p(1+k)^{-1}
=
(1+k)^{-\frac{p}{q}}Y_k\).
Hence
\[
B_p^p(f_E;\ell)
\lesssim
\sum_{k=0}^{\infty}
e^{-\frac{\alpha p}{8}|\ell-k|^2}(1+k)^{-\frac{p}{q}}Y_k.
\]
By the generalized Schur test in Lemma~\ref{lem:gen-schur-test} applied
with exponent \(\frac{q}{p}\), we get
\[
\sum_{\ell=0}^{\infty}(1+\ell)B_p^q(f_E;\ell)
=
\sum_{\ell=0}^{\infty}(1+\ell)
\bigl(B_p^p(f_E;\ell)\bigr)^{\frac{q}{p}}
\lesssim
\sum_{k=0}^{\infty}Y_k^{\frac{q}{p}} = \sum_{k\in E}|b_k|^q(1+k)^{1-\frac{q}{p}}.
\]
By the annular discretization in Proposition~\ref{prop:annular},
\[
\|f_E\|_{\mathcal F_\alpha^{p,q}}
\lesssim
\left(
\sum_{k\in E}|b_k|^q(1+k)^{1-\frac{q}{p}}
\right)^{\frac{1}{q}}.
\]
Now let
\[
f_n:=\sum_{k=0}^{n}b_k\kappa_{\alpha,w_k},\qquad n\in\mathbb N.
\]
Applying the preceding estimate to the differences \(f_m-f_n\), we see
that \((f_n)_{n \geq 0}\) is a Cauchy sequence in \(\mathcal F_\alpha^{p,q}\), because
\[
\sum_{k=0}^{\infty}|b_k|^q(1+k)^{1-\frac{q}{p}}<\infty.
\]
Since the space
\(\mathcal F_\alpha^{p,q}\) is complete,  \(f_n\) converges in
\(\mathcal F_\alpha^{p,q}\) to some \(f\). Passing to the limit in the
finite-set estimate gives
\[
\|f\|_{\mathcal F_\alpha^{p,q}}
\lesssim
\left(
\sum_{k=0}^{\infty}|b_k|^q(1+k)^{1-\frac{q}{p}}
\right)^{\frac1q}.
\]
This proves the lemma.
\end{proof}

\begin{proposition} 
\label{prop:dual-rep-inner-quasi}
Let \(\alpha>0\), \(0<p\le1\), and \(0<q<\infty\).  For each
\(F\in(\mathcal F_\alpha^{p,q})^*\), the canonical representing function  $g_F$ of $F$
belongs to 
\(
\mathcal F_{\alpha;\sigma(p,q)}^{\infty,q^*}
\)
and satisfies
\[
\|g_F\|_{\mathcal F_{\alpha;\sigma(p,q)}^{\infty,q^*}}
\lesssim
\|F\|_{(\mathcal F_\alpha^{p,q})^*},
\]
where the implicit constant depends only on \(\alpha,p\), and \(q\).
\end{proposition}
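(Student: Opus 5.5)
Since \(0<p\le1\), we have \(p^*=\infty\), and the target norm only involves the circular suprema \(M_\infty(g_F,r)\). The weight is \((1+r)^{\sigma(p,q)}\) with \(\sigma(p,q)=\frac1p-1-(\frac1q-1)_+\). The plan is to test \(F\) on single normalized kernels to get a pointwise bound for \(g_F\), and, when \(1<q<\infty\), to upgrade this to outer \(\ell^{q^*}\)-summability by testing on one-point-per-shell sums via Lemma~\ref{lem:dual-one-point-shell}. Throughout, we use the identity \(|g_F(w)|=|F(K_{\alpha,w})|=e^{\frac{\alpha}{2}|w|^2}|F(\kappa_{\alpha,w})|\).

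\emph{Case \(0<q\le1\).} Here \(q^*=\infty\) and \(\sigma(p,q)=\frac1p-\frac1q\). Proposition~\ref{prop:T1} gives
\[
|g_F(w)|\le \|F\|\,\|K_{\alpha,w}\|_{\mathcal F_\alpha^{p,q}}\lesssim \|F\|(1+|w|)^{\frac1q-\frac1p}e^{\frac{\alpha}{2}|w|^2}.
\]
Multiplying by \(e^{-\frac{\alpha}{2}|w|^2}(1+|w|)^{\frac1p-\frac1q}\) and taking the supremum over \(|w|=r\) gives \(\sup_r M_\infty(g_F,r)e^{-\frac{\alpha}{2}r^2}(1+r)^{\sigma(p,q)}\lesssim\|F\|\), which is the claimed estimate. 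No further work is needed in this case.

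\emph{Case \(1<q<\infty\).} Here \(q^*=q'\) and \(\sigma(p,q)=\frac1p-1\). By Lemma~\ref{lem:dual-weighted-annular}, it suffices to bound
\[
\sum_k(1+k)B_{\infty,\sigma}(g_F;k)^{q'}.
\]
For each \(k\), choose \(w_k\) with \(k\le|w_k|<k+1\) that (nearly) attains \(B_{\infty,\sigma}(g_F;k)\). This is possible since \(M_\infty\) is continuous in \(r\); on \([k,k+1)\) the weight \(e^{-\frac{\alpha}{2}r^2}(1+r)^\sigma\) can be handled by taking a point within a factor \(2\) of the supremum. We then get \(B_{\infty,\sigma}(g_F;k)\asymp(1+k)^{\frac1p-1}|F(\kappa_{\alpha,w_k})|\).

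For a finitely supported \(b\), set \(f_b=\sum_k b_k\kappa_{\alpha,w_k}\), where \(b_k\) is chosen with phase aligned so that \(b_kF(\kappa_{\alpha,w_k})\ge0\). Linearity and Lemma~\ref{lem:dual-one-point-shell} give
\[
\sum_k |b_k|\,|F(\kappa_{\alpha,w_k})|=F(f_b)\le\|F\|\Bigl(\sum_k|b_k|^q(1+k)^{1-\frac qp}\Bigr)^{1/q}.
\]
By \(\ell^q\)–\(\ell^{q'}\) duality with weight \((1+k)^{1-q/p}\), this yields
\[
\Bigl(\sum_k |F(\kappa_{\alpha,w_k})|^{q'}(1+k)^{-(1-\frac qp)\frac{q'}{q}}\Bigr)^{1/q'}\lesssim\|F\|.
\]
The exponent bookkeeping is the step to check: \(-(1-\frac qp)\frac{q'}{q}=q'(\frac1p-\frac1q)=q'(\frac1p-1)+1\), since \(q'(1-\frac1q)=1\). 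So the left side is exactly \(\bigl(\sum_k(1+k)B_{\infty,\sigma}(g_F;k)^{q'}\bigr)^{1/q'}\), up to constants. Letting the support of \(b\) exhaust \(\mathbb N_0\) gives the bound with implicit constants depending only on \(\alpha,p,q\).

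The main obstacle is this duality and exponent matching in the case \(1<q<\infty\), together with the near-maximizer selection on each shell. Once \(p^*=\infty\) is in place, the inner angular structure plays no role and only pointwise kernel testing is required.
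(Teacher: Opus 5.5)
Your proof is correct and follows essentially the same route as the paper. The case $0<q\le1$ uses the kernel-size bound from Proposition~\ref{prop:T1}. For $1<q<\infty$ you take one near-maximizer per shell and build phase-aligned one-point-per-shell sums controlled by Lemma~\ref{lem:dual-one-point-shell}, then apply weighted $\ell^q$--$\ell^{q'}$ duality, the same exponent bookkeeping, and Lemma~\ref{lem:dual-weighted-annular}. The only cosmetic difference is that you test on finitely supported coefficient sequences and exhaust $\mathbb N_0$, whereas the paper synthesizes the infinite series and invokes the abstract $L^q$-duality theorem.
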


\begin{proof}
By the preliminary discussion in Subsection~\ref{subsec:dual-prelim}, the function
\(g_F\) is entire.  We split the proof according to the outer exponent \(q\).

\smallskip
\noindent
First suppose that \(0<q\le1\).  Then \(q^*=\infty\) and
\(
\sigma(p,q)=\frac1p-\frac1q\).
Using the kernel-size estimate in Proposition \ref{prop:T1}, we get
\[
|g_F(w)|
=
|F(K_{\alpha,w})|  
\le
\|F\|_{(\mathcal F_\alpha^{p,q})^*}
\|K_{\alpha,w}\|_{\mathcal F_\alpha^{p,q}}  \lesssim
\|F\|_{(\mathcal F_\alpha^{p,q})^*}
(1+|w|)^{\frac1q-\frac1p}e^{\frac{\alpha}{2}|w|^2}.
\]
Therefore
\[
\sup_{w\in\mathbb C}
|g_F(w)|e^{-\frac{\alpha}{2}|w|^2}
(1+|w|)^{\frac1p-\frac1q}
\lesssim
\|F\|_{(\mathcal F_\alpha^{p,q})^*}.
\]
This is exactly
\[
\|g_F\|_{\mathcal F_{\alpha;\frac1p-\frac1q}^{\infty,\infty}}
\lesssim
\|F\|_{(\mathcal F_\alpha^{p,q})^*},
\]
which proves the assertion in the range \(0<q\le1\).

\smallskip
\noindent
It remains to consider \(1<q<\infty\).  In this case \(q^*=q'\) and
\(
\sigma(p,q)=\frac1p-1\).
For \(k\ge0\), set
\[
H_k
:= B_{\infty, \frac{1}{p} - 1}(g_F; k) = 
\sup_{w\in A_k}
|g_F(w)|e^{-\frac{\alpha}{2}|w|^2}
(1+|w|)^{\frac1p-1}.
\]
We prove
\[
\left(\sum_{k=0}^\infty (1+k)H_k^{q'}\right)^{\frac{1}{q'}}
\lesssim
\|F\|_{(\mathcal F_\alpha^{p,q})^*},
\]
where the implicit constant depends only on $\alpha, p$, and $q$. 
Fix \(\varepsilon\in(0,1)\). For each \(k\ge0\), choose
\(w_k\in A_k\) such that
\[
|g_F(w_k)|e^{-\frac{\alpha}{2}|w_k|^2}
(1+|w_k|)^{\frac1p-1}
\ge
(1-\varepsilon)H_k .
\]
We put
\[
\xi_k :=
\begin{cases}
\dfrac{g_F(w_k)}{|g_F(w_k)|}, & g_F(w_k)\ne0,\\[2mm]
1, & g_F(w_k)=0.
\end{cases}
\]
Let \(a=(a_k)_{k\ge0}\) be a nonnegative sequence satisfying
\[
\sum_{k=0}^\infty a_k^q(1+k)^{1-q}<\infty .
\]
Define
\[
b_k:=a_k\xi_k(1+|w_k|)^{\frac1p-1},
\qquad k\ge0.
\]
Since \(k\le |w_k|<k+1\), we have
\[
\sum_{k=0}^\infty |b_k|^q(1+k)^{1-\frac qp}
\asymp
\sum_{k=0}^\infty a_k^q(1+k)^{1-q}<\infty .
\]
By Lemma~\ref{lem:dual-one-point-shell}, the series
\[
\sum_{k=0}^\infty
a_k\xi_k(1+|w_k|)^{\frac1p-1}\kappa_{\alpha,w_k}(z)
\]
defines a function $f_a$  in \(\mathcal F_\alpha^{p,q}\), and
\[
\|f_a\|_{\mathcal F_\alpha^{p,q}}
\lesssim
\left(
\sum_{k=0}^\infty a_k^q(1+k)^{1-q}
\right)^{\frac{1}{q}}.
\]
Using the continuity of \(F\) and the choice of \(w_k\), we get
\[
\begin{aligned}
F(f_a)
&=
\sum_{k=0}^\infty
a_k\xi_k(1+|w_k|)^{\frac1p-1}
e^{-\frac{\alpha}{2}|w_k|^2}\overline{g_F(w_k)} \\
&=
\sum_{k=0}^\infty
a_k(1+|w_k|)^{\frac1p-1}
e^{-\frac{\alpha}{2}|w_k|^2}|g_F(w_k)| \ge
(1-\varepsilon)\sum_{k=0}^\infty a_kH_k.
\end{aligned}
\]
On the other hand,
\[
|F(f_a)|
\le
\|F\|_{(\mathcal F_\alpha^{p,q})^*}
\|f_a\|_{\mathcal F_\alpha^{p,q}}
\lesssim
\|F\|_{(\mathcal F_\alpha^{p,q})^*}
\left(
\sum_{k=0}^\infty a_k^q(1+k)^{1-q}
\right)^{\frac{1}{q}}.
\]
Therefore
\[
(1-\varepsilon)\sum_{k=0}^\infty a_kH_k
\lesssim
\|F\|_{(\mathcal F_\alpha^{p,q})^*}
\left(
\sum_{k=0}^\infty a_k^q(1+k)^{1-q}
\right)^{\frac{1}{q}}.
\]
Letting \(\varepsilon\downarrow0\), we obtain
\[
\sum_{k=0}^\infty a_kH_k
\lesssim
\|F\|_{(\mathcal F_\alpha^{p,q})^*}
\left(
\sum_{k=0}^\infty a_k^q(1+k)^{1-q}
\right)^{\frac{1}{q}}
\]
for every nonnegative sequence \(a=(a_k)_{k\ge0}\) satisfying
\(
\sum_{k=0}^\infty a_k^q(1+k)^{1-q}<\infty\).
Since \(H_k\ge0\), applying this estimate to \(|a|\) gives
\[
\left|\sum_{k=0}^\infty a_kH_k\right|
\le
\sum_{k=0}^\infty |a_k|H_k
\lesssim
\|F\|_{(\mathcal F_\alpha^{p,q})^*}
\left(
\sum_{k=0}^\infty |a_k|^q(1+k)^{1-q}
\right)^{\frac{1}{q}}
\]
for every \(a\in L^q(\mathbb N_0,d\mu)\), where
\(
\mu(\{k\})=(1+k)^{1-q}\), \( k\in\mathbb N_0\).
Thus the functional
\[
a\longmapsto \sum_{k=0}^\infty a_kH_k
\]
is bounded on \(L^q(\mathbb N_0,d\mu)\).
By the \(L^q\)-duality theorem \cite[Theorem~6.6]{Rud87}, it follows that
\[
\left(
\sum_{k=0}^\infty (1+k)H_k^{q'}
\right)^{\frac{1}{q'}} = \left(
\sum_{k=0}^\infty
\left[
\frac{H_k}{(1+k)^{1-q}}
\right]^{q'}
(1+k)^{1-q}
\right)^{\frac{1}{q'}}
\lesssim
\|F\|_{(\mathcal F_\alpha^{p,q})^*}.
\]
By the weighted annular discretization Lemma~\ref{lem:dual-weighted-annular},
this is precisely
\[
\|g_F\|_{\mathcal F_{\alpha;\frac1p-1}^{\infty,q'}}
\lesssim
\|F\|_{(\mathcal F_\alpha^{p,q})^*}.
\]
The proof is complete.
\end{proof}

\smallskip
\noindent
We turn to the range \(1<p\le\infty\).  In this regime the inner dual
exponent \(p^*\) is finite when \(1<p<\infty\) and equals \(1\) when
\(p=\infty\).  Thus pointwise kernel testing alone is no longer sufficient to
recover the full inner \(L^{p^*}\)-norm of \(g_F\) on each circle.  We therefore
use a circlewise testing argument, based on the functions \(T_r(a)\), to capture
the angular dual norm before summing in the radial variable.

\begin{lemma} 
\label{lem:dual-circle-tests}
Let \(\alpha>0\), \(1<p\le\infty\), and \(0<q<\infty\).  For each fixed \(r \geq 0\), the map $T_r$ defined as 
\[
(T_r a)(z)
:=
\frac{1}{2\pi}\int_0^{2\pi}
a(\theta)\kappa_{\alpha,re^{i\theta}}(z)\,d\theta, \qquad a\in L^p(\T),
\]
is a bounded linear operator from \(L^p(\T)\) into \(\mathcal F_{\alpha}^{p, q}\), and
\[
\|T_r a\|_{\mathcal F_\alpha^{p,q}}
\lesssim
(1+r)^{\frac1q-1}\|a\|_{L^p(\T)},
\]
where the implicit constant depends only on \(\alpha,p\), and \(q\).
Moreover, for each \(F\in(\mathcal F_\alpha^{p,q})^*\), one has
\[
F(T_r a)
=
e^{-\frac{\alpha}{2}r^2}
\frac1{2\pi}\int_0^{2\pi}
a(\theta)\overline{g_F(re^{i\theta})}\,d\theta ,
\]
where  \(g_F\) is the canonical representing function of $F$.
\end{lemma}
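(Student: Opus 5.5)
The plan has three parts: reduce \(T_r a\) to a rotation-convolution, bound its circle means with kernel profiles and Young's inequality, then identify \(F(T_r a)\) by a vector-valued integral.

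\emph{Rotation structure.} For \(z=se^{i\phi}\),
\[
\kappa_{\alpha,re^{i\theta}}(se^{i\phi})=e^{-\frac{\alpha}{2}r^2}e^{\alpha rs e^{i(\phi-\theta)}}=G_r(se^{i(\phi-\theta)}),
\]
with \(G_r\) the ring packet of Definition~\ref{def:ring-packets}. Hence, on each circle \(|z|=s\), \((T_ra)(se^{i\cdot})\) is the convolution on \(\T\) of \(a\) with \(\phi\mapsto G_r(se^{i\phi})\).

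\emph{Circle means.} Young's inequality on \(\T\) with \(L^p*L^1\to L^p\), valid since \(p\ge 1\), gives
\[
M_p(T_ra,s)\le \|a\|_{L^p(\T)}\,M_1(G_r,s).
\]
Lemma~\ref{lem:packet-profile} with inner exponent \(1\) then yields
\[
M_p(T_ra,s)e^{-\frac{\alpha}{2}s^2}\lesssim (1+\alpha rs)^{-\frac12}e^{-\frac{\alpha}{2}(s-r)^2}\|a\|_{L^p}.
\]
The right-hand side is exactly the profile of \(M_1(G_r,s)e^{-\frac{\alpha}{2}s^2}\). The radial computation in Proposition~\ref{prop:T1}, with inner exponent \(1\), shows that its \(L^q(d\lambda_{\alpha q})\)-norm, or its supremum when \(q=\infty\), is \(\asymp(1+r)^{\frac1q-1}\). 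Therefore \(\|T_ra\|_{\mathcal F_\alpha^{p,q}}\lesssim (1+r)^{\frac1q-1}\|a\|_{L^p}\).

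\emph{Entirety.} Before taking means, one checks that \(T_ra\) is entire. For this, differentiate under the integral: on compact sets the integrand is holomorphic in \(z\) and bounded uniformly in \(\theta\).

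\emph{The identity for \(F(T_ra)\).} This is the step I expect to be the main obstacle. My first approach is to view \(T_ra\) as a Bochner integral in \(\mathcal F_\alpha^{p,q}\) of \(\theta\mapsto a(\theta)\kappa_{\alpha,re^{i\theta}}\). Three facts are needed.
\begin{itemize}
\item The map \(w\mapsto\kappa_{\alpha,w}\) is continuous into \(\mathcal F_\alpha^{p,q}\) on the compact circle \(|w|=r\). This follows from the kernel expansion in Lemma~\ref{lem:dual-gF-expansion}, which converges uniformly in \(|w|\le R\).
\item \(\|\kappa_{\alpha,w}\|\) is bounded on \(|w|=r\), and \(a\in L^1(\T)\).
\item When \(q<1\) the space is only quasi-Banach, so Bochner integration is unavailable.
\end{itemize}
For the quasi-Banach case I would instead expand \(\kappa_{\alpha,re^{i\theta}}=e^{-\frac{\alpha}{2}r^2}\sum_n \frac{\alpha^n}{n!}r^ne^{-in\theta}z^n\), which converges absolutely in the \(s\)-norm uniformly in \(\theta\). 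Integrating termwise gives
\[
T_ra=e^{-\frac{\alpha}{2}r^2}\sum_n \frac{\alpha^n r^n}{n!}\,\hat a(n)\,z^n,
\]
where the series converges in \(\mathcal F_\alpha^{p,q}\) because \(|\hat a(n)|\le\|a\|_{L^1}\) and the \(s\)-summability of Lemma~\ref{lem:dual-gF-expansion} applies. Applying \(F\) termwise and using \eqref{eq:dual-gF-expansion} to recognize \(\sum_n\frac{\alpha^n}{n!}F(z^n)\bar w^n=\overline{g_F(w)}\) at \(w=re^{i\theta}\), after interchanging sum and integral, gives
\[
F(T_ra)=e^{-\frac{\alpha}{2}r^2}\frac1{2\pi}\int_0^{2\pi}a(\theta)\overline{g_F(re^{i\theta})}\,d\theta.
\]
The interchange is justified by uniform absolute convergence on the circle. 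This route handles all \(q\) uniformly, so I would use it throughout.
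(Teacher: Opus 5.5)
Your proof is correct, and the norm estimate is the paper's argument: the rotation-convolution structure on each circle, Young's inequality $L^p*L^1\to L^p$ on $\T$, and $\|G_r\|_{\mathcal F_\alpha^{1,q}}\asymp(1+r)^{\frac1q-1}$ from Proposition~\ref{prop:T1}.

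The identity for $F(T_ra)$ is where you take a different route.

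\emph{The paper's route.} It first treats trigonometric polynomials $a$, for which $T_ra$ is a polynomial and $F$ can be applied term by term. It then passes to general $a$ through Fej\'er means $\sigma_na\to a$ in $L^1(\T)$ (citing Katznelson). For this it needs the extra estimate $\|T_rb\|_{\mathcal F_\alpha^{p,q}}\le\|G_r\|_{\mathcal F_\alpha^{p,q}}\|b\|_{L^1(\T)}$, which gives $T_r\sigma_na\to T_ra$ in the space.

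\emph{Your route.} You handle arbitrary $a$ at once by writing $T_ra=e^{-\frac{\alpha}{2}r^2}\sum_n\frac{(\alpha r)^n}{n!}\hat a(n)z^n$. The series converges in $\mathcal F_\alpha^{p,q}$ by the $s$-summability in the proof of Lemma~\ref{lem:dual-gF-expansion}, with $s=\min\{1,q\}$ since $p>1$. $F$ passes through the sum by continuity. The Fubini interchange is justified because $\sum_n\frac{(\alpha r)^n}{n!}|F(z^n)|\le\|F\|\sum_n\frac{(\alpha r)^n}{n!}\|z^n\|<\infty$ and $a\in L^1(\T)$.

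\emph{What each buys.} Your version avoids the density step and the Fej\'er machinery. It also shows in passing that $T_ra$ is a norm limit of polynomials. That would give $T_ra\in f_\alpha^{p,\infty}$ at once in the endpoint setting, which the paper instead verifies separately in the little-space section. The paper's route, in exchange, records the $L^1$-continuity of $T_r$ as an operator statement.

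\emph{Points to tidy.} Read ``integrating termwise'' pointwise in $z$, not as a vector-valued integral. Identifying the $\mathcal F_\alpha^{p,q}$-limit of the partial sums with the pointwise-defined $T_ra$ uses bounded point evaluations (Lemma~\ref{lem:p-est}); say so explicitly. The Bochner-integral detour and the remark about $q=\infty$ can be dropped, since the lemma assumes $0<q<\infty$ and your series argument covers every such $q$ uniformly.
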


\begin{proof}
The case \(r=0\) is immediate: since \(\kappa_{\alpha,0}\equiv1\), we have
\[
(T_0a)(z)=\frac1{2\pi}\int_0^{2\pi}a(\theta)\,d\theta .
\]
Thus \(T_0a\) is constant,
\[
\|T_0a\|_{\mathcal F_\alpha^{p,q}}
=
\left|
\frac1{2\pi}\int_0^{2\pi}a(\theta)\,d\theta
\right|
\le
\|a\|_{L^p(\T)},
\]
and the asserted norm estimate follows because
\((1+0)^{\frac{1}{q}-1}=1\). Moreover,
\[
F(T_0a)
=
\left(\frac1{2\pi}\int_0^{2\pi}a(\theta)\,d\theta\right)\,F(K_{\alpha,0})
=
\frac1{2\pi}\int_0^{2\pi}
a(\theta)\overline{g_F(0)}\,d\theta,
\]
which is the desired identity at \(r=0\).
Hence, in the rest of the proof, we may assume \(r>0\).

\smallskip
\noindent
The defining integral for \(T_ra\) converges locally uniformly in
\(z\), since \(a\in L^1(\T)\) and the kernel is uniformly bounded on
compact subsets of \(\mathbb C\). Hence \(T_ra\) is entire.

\smallskip
\noindent
For the norm estimate, put
\[
G_r(z):=\exp\left(\alpha rz-\frac{\alpha}{2}r^2\right).
\]
Fix \(\rho>0\) and write \(z=\rho e^{i\varphi}\). Set
\[
k_{\rho,r}(\psi):=G_r(\rho e^{i\psi}),\qquad \psi\in\mathbb R.
\]
Then
\[
(T_ra)(\rho e^{i\varphi})
=
\frac1{2\pi}\int_0^{2\pi}
a(\theta)k_{\rho,r}(\varphi-\theta)\,d\theta.
\]
Thus \(\varphi\mapsto (T_ra)(\rho e^{i\varphi})\) is the normalized circular
convolution \(a*k_{\rho,r}\). Young's inequality on the circle gives, for \(1<p\le\infty\),
\[
M_p(T_ra,\rho)
=
\|a*k_{\rho,r}\|_{L^p(\T)}
\le
\|a\|_{L^p(\T)}\|k_{\rho,r}\|_{L^1(\T)}.
\]
Since
\(
\|k_{\rho,r}\|_{L^1(\T)}=M_1(G_r,\rho)\),
we obtain
\[
M_p(T_ra,\rho)
\le
\|a\|_{L^p(\T)}M_1(G_r,\rho).
\]
Therefore, 
\[
\|T_ra\|_{\mathcal F_\alpha^{p,q}}
\le
\|a\|_{L^p(\T)}\,
\|G_r\|_{\mathcal F_\alpha^{1,q}}.
\]
By Proposition~\ref{prop:T1}, applied with inner exponent \(1\), we have
\[
\|G_r\|_{\mathcal F_\alpha^{1,q}}
\asymp
(1+r)^{\frac1q-1}.
\]
Thus
\[
\|T_ra\|_{\mathcal F_\alpha^{p,q}}
\lesssim
(1+r)^{\frac1q-1}\|a\|_{L^p(\T)},
\]
which also shows that \(T_ra\in\mathcal F_\alpha^{p,q}\).

\smallskip
\noindent
It remains to prove the identity involving \(F\):
\[
F(T_ra)
=
e^{-\frac{\alpha}{2}r^2}
\frac1{2\pi}\int_0^{2\pi}
a(\theta)\overline{g_F(re^{i\theta})}\,d\theta, \qquad a\in L^p(\T).
\]

\smallskip
\noindent
First suppose that \(a\) is a trigonometric polynomial. Since
\[
\kappa_{\alpha,re^{i\theta}}(z)
=
e^{-\frac{\alpha}{2}r^2}
K_{\alpha,re^{i\theta}}(z) = e^{-\frac{\alpha}{2}r^2}\sum_{n=0}^{\infty}
\frac{\alpha^n r^n}{n!}e^{-in\theta}z^n,
\]
\(T_ra\) is then a finite linear combination of monomials. Thus we may
apply \(F\) term by term to obtain
\[
F(T_ra)
=
e^{-\frac{\alpha}{2}r^2}
\frac1{2\pi}\int_0^{2\pi}
a(\theta)F(K_{\alpha,re^{i\theta}})\,d\theta.
\]
Since \(F(K_{\alpha,re^{i\theta}})=\overline{g_F(re^{i\theta})}\), this gives
\[
F(T_ra)
=
e^{-\frac{\alpha}{2}r^2}
\frac1{2\pi}\int_0^{2\pi}
a(\theta)\overline{g_F(re^{i\theta})}\,d\theta
\]
for trigonometric polynomials \(a\).

\smallskip
\noindent
Now let \(a\in L^p(\T)\) and 
\(\sigma_n a\) denote the \(n\)-th Fej\'er mean of \(a\). Since
\(a\in L^p(\T)\subset L^1(\T)\) for \(1<p\le\infty\), and since
the Fej\'er kernels form a summability kernel, Katznelson's theorem for
homogeneous Banach spaces gives
\[
\sigma_n a\to a
\quad\text{in }L^1(\T);
\]
see \cite[Chapter~I, Theorem~2.11]{Kat04}.
Moreover, Young's inequality on the circle also gives
\[
M_p(T_rb,s)
\le
\|b\|_{L^1(\T)}M_p(G_r,s),
\qquad b\in L^1(\T).
\]
Hence,
\[
\|T_rb\|_{\mathcal F_\alpha^{p,q}}
\le
\|G_r\|_{\mathcal F_\alpha^{p,q}}\|b\|_{L^1([0,2\pi])}.
\]
Applying this to \(b=\sigma_n a-a\), we get
\[
T_r(\sigma_n a)\to T_ra
\quad\text{in }\mathcal F_\alpha^{p,q}.
\]
Therefore, by the continuity of \(F\),
\[
F(T_r\sigma_n a)\to F(T_ra).
\]
On the other hand, since \(g_F\) is entire, the function
\(\theta\mapsto \overline{g_F(re^{i\theta})}\) is bounded on
\([0,2\pi]\). Thus the \(L^1\)-convergence of \(\sigma_n a\) also implies
\[
\int_0^{2\pi}
\sigma_n a(\theta)\overline{g_F(re^{i\theta})}\,d\theta
\to
\int_0^{2\pi}
a(\theta)\overline{g_F(re^{i\theta})}\,d\theta.
\]
Passing to the limit in the identity for the trigonometric polynomials
\(\sigma_n a\), we obtain
\[
F(T_ra)
=
e^{-\frac{\alpha}{2}r^2}
\frac1{2\pi}\int_0^{2\pi}
a(\theta)\overline{g_F(re^{i\theta})}\,d\theta.
\]
The proof is complete.
\end{proof}

\begin{lemma} 
\label{lem:dual-circle-extraction}
Let \(\alpha>0\), \(1<p\le\infty\), and \(0<q<\infty\). Let
\(F\in(\mathcal F_\alpha^{p,q})^*\), and let \(g_F\) be the canonical
representing function of \(F\). Then, for every \(r>0\),
\[
M_{p^*}(g_F,r)e^{-\frac{\alpha}{2}r^2}
\lesssim
(1+r)^{\frac1q-1}\|F\|_{(\mathcal F_\alpha^{p,q})^*},
\]
where the implicit constant depends only on $\alpha, p$, and $q$.
\end{lemma}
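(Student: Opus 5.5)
This is a duality argument on the single circle $|z|=r$, using the circle-test operators $T_r$ from Lemma~\ref{lem:dual-circle-tests}. Fix $r>0$ and $a\in L^p(\T)$. By Lemma~\ref{lem:dual-circle-tests},
\[
\left|e^{-\frac{\alpha}{2}r^2}\frac1{2\pi}\int_0^{2\pi}a(\theta)\overline{g_F(re^{i\theta})}\,d\theta\right|
=|F(T_ra)|\le \|F\|_{(\mathcal F_\alpha^{p,q})^*}\|T_ra\|_{\mathcal F_\alpha^{p,q}}
\lesssim (1+r)^{\frac1q-1}\|F\|_{(\mathcal F_\alpha^{p,q})^*}\|a\|_{L^p(\T)}.
\]
So the linear functional $a\mapsto \frac1{2\pi}\int a\,\overline{h}\,d\theta$, with $h(\theta):=g_F(re^{i\theta})$, is bounded on $L^p(\T)$. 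Its norm is at most $e^{\frac{\alpha}{2}r^2}(1+r)^{\frac1q-1}\|F\|$, with constants depending only on $\alpha,p,q$ (these come from the norm estimate for $T_r$).

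It remains to identify this functional norm with $\|h\|_{L^{p^*}(\T)}=M_{p^*}(g_F,r)$. Since $g_F$ is entire by Lemma~\ref{lem:dual-gF-expansion}, $h$ is continuous and hence lies in every $L^{p^*}(\T)$, so no a priori integrability question arises. We use explicit extremizers rather than abstract duality.

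For $1<p<\infty$, so that $p^*=p'<\infty$, take $a:=|h|^{p^*-2}h$ where $h\neq 0$ and $a:=0$ elsewhere. Then $a\in L^p(\T)$ with $\|a\|_{L^p}=\|h\|_{p^*}^{p^*-1}$, and the pairing equals $\|h\|_{p^*}^{p^*}$. Dividing gives $\|h\|_{L^{p^*}}\lesssim e^{\frac{\alpha}{2}r^2}(1+r)^{\frac1q-1}\|F\|$.

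For $p=\infty$, so that $p^*=1$, take $a:=h/|h|$ where $h\neq0$ and $a:=0$ elsewhere. This is a measurable function with $\|a\|_{L^\infty}\le1$, and the pairing equals $\|h\|_{L^1}$. Lemma~\ref{lem:dual-circle-tests} applies to every $a\in L^p(\T)$, including this discontinuous one, so no further approximation is needed.

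Multiplying by $e^{-\frac{\alpha}{2}r^2}$ gives the claim in both cases. The main point to check is that the identity in Lemma~\ref{lem:dual-circle-tests} really holds for general $L^p$ data and not only for trigonometric polynomials; that lemma's Fej\'er-mean argument gives exactly this. With it in hand, the rest is H\"older extremality.
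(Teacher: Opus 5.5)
Your proof is correct and follows the paper's argument essentially verbatim: you bound $|F(T_ra)|$ using Lemma~\ref{lem:dual-circle-tests} and then test with the H\"older extremizer $a=|h|^{p^*-2}h$, which becomes $h/|h|$ when $p=\infty$. The only detail to add is the trivial case $M_{p^*}(g_F,r)=0$, which should be set aside before you divide by $\|h\|_{L^{p^*}}^{p^*-1}$; the paper disposes of it in one line.
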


\begin{proof}
Fix \(r>0\). By Lemma~\ref{lem:dual-circle-tests} and the boundedness of \(F\), for every
\(a\in L^p(\T)\), we get
\[
\begin{aligned}
e^{-\frac{\alpha}{2}r^2} \left|
\frac1{2\pi}
\int_0^{2\pi}
a(\theta)\overline{g_F(re^{i\theta})}\,d\theta
\right| & = |F(T_ra)| \leq 
\|F\|_{(\mathcal F_\alpha^{p,q})^*} \|T_ra\|_{\mathcal F_\alpha^{p,q}} \\
& 
\lesssim
(1+r)^{\frac1q-1}
\|F\|_{(\mathcal F_\alpha^{p,q})^*}
\|a\|_{L^p(\T)}.
\end{aligned}
\]
If \(M_{p^*}(g_F,r)=0\), there is nothing to prove. Otherwise,  choose
\[
a(\theta):=
\begin{cases}
g_F(re^{i\theta})\bigl|g_F(re^{i\theta})\bigr|^{p^*-2}, & g_F(re^{i\theta})\ne0,\\[3pt]
0, & g_F(re^{i\theta})=0.
\end{cases}
\]
Then \(a\in L^p(\T)\), moreover, 
\[
\|a\|_{L^p(\T)}
=
M_{p^*}^{p^*-1}(g_F,r) \qquad \text{ and } \qquad \frac1{2\pi}\int_0^{2\pi}
a(\theta)\overline{g_F(re^{i\theta})}\,d\theta
=
M_{p^*}^{p^*}(g_F,r).
\]
From this and the estimate above, it follows that
\[
e^{-\frac{\alpha}{2}r^2}M_{p^*}^{p^*}(g_F,r)
\lesssim
(1+r)^{\frac1q-1} \|F\|_{(\mathcal F_\alpha^{p,q})^*}
M_{p^*}^{p^*-1}(g_F,r).
\]
Equivalently,
\[
M_{p^*}(g_F,r)e^{-\frac{\alpha}{2}r^2}
\lesssim
(1+r)^{\frac1q-1}\|F\|_{(\mathcal F_\alpha^{p,q})^*}.
\]
The proof is complete.
\end{proof}

\begin{lemma} 
\label{lem:dual-circle-shell-synthesis}
Let \(\alpha>0\), \(1<p\le\infty\), and \(1<q<\infty\). For each
\(k\ge0\), let \(r_k\in[k,k+1)\), and let
\(a_k\in L^p(\T)\) satisfy
\(
\|a_k\|_{L^p(\T)}\le1\).
If \(c=(c_k)_{k=0}^{\infty}\) is a scalar sequence such that
\[
\sum_{k=0}^{\infty}|c_k|^q(1+k)^{1-q}<\infty,
\]
then the series
\(
\sum_{k=0}^{\infty}c_kT_{r_k}a_k(z)
\)
converges  in \(\mathcal F_\alpha^{p,q}\), and its sum satisfies
\[
\left\|\sum_{k=0}^{\infty} c_kT_{r_k}a_k\right\|_{\mathcal F_\alpha^{p,q}}
\lesssim
\left(
\sum_{k=0}^{\infty}|c_k|^q(1+k)^{1-q}
\right)^{\frac1q},
\]
where the implicit constant depends only on \(\alpha,p\), and \(q\).
\end{lemma}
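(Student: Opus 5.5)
The plan is to follow the template of Lemma~\ref{lem:dual-one-point-shell}, replacing single kernels by the circle tests $T_{r_k}a_k$, and to control the shell quantities $B_p$ through a pointwise Gaussian profile plus the outer Schur test. First I would fix a finite set $E\subset\mathbb N_0$, put $f_E:=\sum_{k\in E}c_kT_{r_k}a_k$, and prove the uniform bound
\[
\|f_E\|_{\mathcal F_\alpha^{p,q}}
\lesssim
\Bigl(\sum_{k\in E}|c_k|^q(1+k)^{1-q}\Bigr)^{\frac1q}.
\]
Convergence of the full series then follows by applying this to differences of partial sums (Cauchy criterion), using completeness from Lemma~\ref{lem-norm}, and passing to the limit exactly as in Lemma~\ref{lem:dual-one-point-shell}.

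For the single-shell profile, I would use the convolution representation from the proof of Lemma~\ref{lem:dual-circle-tests}. Young's inequality gives
\[
M_p(T_{r}a,\rho)\le \|a\|_{L^p(\T)}\,M_1(G_{r},\rho).
\]
By the packet profile \eqref{eq:packet-profile} with inner exponent $1$,
\[
M_1(G_r,\rho)e^{-\frac{\alpha}{2}\rho^2}\asymp (1+\alpha r\rho)^{-\frac12}e^{-\frac{\alpha}{2}(\rho-r)^2}.
\]
The same case analysis as in the $p=1$ endpoint of Lemma~\ref{lem:single-shell-synthesis} (separating $\rho\ge r/2$ from $\rho<r/2$) bounds this by $(1+k)^{-1}e^{-\frac{\alpha}{8}(\rho-k)^2}$ for $r=r_k\in[k,k+1)$. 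Taking the supremum over $\rho\in[\ell,\ell+1)$ then yields
\[
B_p(T_{r_k}a_k;\ell)\lesssim (1+k)^{-1}e^{-\frac{\alpha}{16}(\ell-k)^2}.
\]

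For the shell summation, since $p>1$, Minkowski's inequality in $L^p(\T)$ gives
\[
B_p(f_E;\ell)\lesssim \sum_{k\in E}|c_k|(1+k)^{-1}e^{-\frac{\alpha}{16}(\ell-k)^2}.
\]
I would set $Y_k:=|c_k|(1+k)^{\frac1q-1}\mathbf 1_E(k)$, so that $|c_k|(1+k)^{-1}=(1+k)^{-\frac1q}Y_k$. The outer Schur test, Lemma~\ref{lem:gen-schur-test} with $\beta=2$, then gives
\[
\sum_\ell(1+\ell)B_p^q(f_E;\ell)\lesssim \sum_k Y_k^q=\sum_{k\in E}|c_k|^q(1+k)^{1-q}.
\]
Proposition~\ref{prop:annular} converts this into the asserted norm bound.

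The main obstacle is the profile step: one needs the Gaussian decay in $|\ell-k|$ together with the sharp factor $(1+k)^{-1}$ uniformly over $\ell$, including small $\rho$. For $\rho<r/2$ the factor $(1+\alpha r\rho)^{-1/2}$ gives no gain, so there the decay must come entirely from absorbing $(1+k)$ into the Gaussian $e^{-\frac{\alpha}{8}r^2}$. Once that pointwise envelope is in place, the remaining steps are routine.
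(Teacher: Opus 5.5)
Your proposal is correct and follows the paper's proof essentially step by step: the finite-set sum $h_E$, the Young bound $M_p(T_{r_k}a_k,\rho)\le M_1(G_{r_k},\rho)$, the packet profile giving a factor $(1+k)^{-1}$ times a Gaussian in $|\ell-k|$, Minkowski on the circle, the outer Schur test with $Y_k=|c_k|(1+k)^{\frac1q-1}$, annular discretization, and a Cauchy/completeness argument for convergence. Your explicit case split $\rho\ge r/2$ versus $\rho<r/2$ spells out the single-kernel shell estimate that the paper only cites, and the different Gaussian constant ($\alpha/16$ instead of $\alpha/8$) is immaterial.
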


\begin{proof}
The proof follows the same finite-set approximation argument as in
Lemma~\ref{lem:dual-one-point-shell}. Let \(E\subset\mathbb N_0\) be finite and put
\[
h_E(z):=\sum_{k\in E}c_kT_{r_k}a_k(z).
\]
We prove the uniform estimate
\[
\|h_E\|_{\mathcal F_\alpha^{p,q}}
\lesssim
\left(
\sum_{k\in E}|c_k|^q(1+k)^{1-q}
\right)^{\frac1q}.
\]
By the proof of Lemma~\ref{lem:dual-circle-tests}, since
\(\|a_k\|_{L^p(\T)}\le1\), we have
\[
M_p(T_{r_k}a_k,\rho)
\le
M_1(G_{r_k},\rho),
\qquad \rho>0, \qquad \text{ where } \quad
G_r(z):=\exp\left(\alpha rz-\frac{\alpha}{2}r^2\right).
\]
By the single-kernel shell estimate obtained from the packet profile estimate
\eqref{eq:packet-profile} and used in \eqref{eq:jet-atom-profile}, 
\[
\sup_{\rho\in[\ell,\ell+1)}
M_1(G_{r_k},\rho)e^{-\frac{\alpha}{2}\rho^2}
\lesssim
(1+k)^{-1}e^{-\frac{\alpha}{8}|\ell-k|^2}, \qquad \ell, k \geq 0.
\]
Consequently, for each \(\ell\ge0\),  since $1 < p \leq \infty$, Minkowski's inequality on the circle gives
\[
\begin{aligned}
B_p(h_E;\ell)
&=
\sup_{\rho\in[\ell,\ell+1)}
M_p(h_E,\rho)e^{-\frac{\alpha}{2}\rho^2} \\
&\le
\sum_{k\in E}|c_k|
\sup_{\rho\in[\ell,\ell+1)}
M_p(T_{r_k}a_k,\rho)e^{-\frac{\alpha}{2}\rho^2}  \lesssim
\sum_{k\in E}|c_k|(1+k)^{-1}
e^{-\frac{\alpha}{8}|\ell-k|^2}.
\end{aligned}
\]
For \(k\ge0\), set
\(
Y_k:=|c_k|(1+k)^{\frac1q-1}\).
Then
\(
|c_k|(1+k)^{-1}
=
(1+k)^{-\frac1q}Y_k\).
Hence
\[
B_p(h_E;\ell)
\lesssim
\sum_{k=0}^{\infty}
e^{-\frac{\alpha}{8}|\ell-k|^2}(1+k)^{-\frac1q}
Y_k\mathbf 1_E(k).
\]
By the generalized Schur test, Lemma~\ref{lem:gen-schur-test}, applied with
exponent \(q\), we obtain
\[
\sum_{\ell=0}^{\infty}(1+\ell)B_p^q(h_E;\ell)
\lesssim
\sum_{k\in E}Y_k^q
=
\sum_{k\in E}|c_k|^q(1+k)^{1-q}.
\]
Using the annular discretization in Proposition~\ref{prop:annular}, we get
\[
\|h_E\|_{\mathcal F_\alpha^{p,q}}
\lesssim
\left(
\sum_{k\in E}|c_k|^q(1+k)^{1-q}
\right)^{\frac1q}.
\]
Let
\[
h_n:=\sum_{k=0}^{n}c_kT_{r_k}a_k,\qquad n\in\mathbb N.
\]
Applying the preceding estimate to  \(h_m-h_n\), we see
that \((h_n)_{n \geq 0}\) is a Cauchy sequence in \(\mathcal F_\alpha^{p,q}\), because
\[
\sum_{k=0}^{\infty}|c_k|^q(1+k)^{1-q}<\infty.
\]
Since the space
\(\mathcal F_\alpha^{p,q}\) is complete,  \(h_n\) converges in
\(\mathcal F_\alpha^{p,q}\) to some \(h\). Passing to the limit in the
finite-set estimate gives
\[
\|h\|_{\mathcal F_\alpha^{p,q}}
\lesssim
\left(
\sum_{k=0}^{\infty}|c_k|^q(1+k)^{1-q}
\right)^{\frac1q}.
\]
This proves the lemma.
\end{proof}

\begin{proposition} 
\label{prop:dual-rep-inner-banach}
Let \(\alpha>0\), \(1<p\le\infty\), and \(0<q<\infty\). For each
\(F\in(\mathcal F_\alpha^{p,q})^*\), the canonical
representing function \(g_F\) of \(F\) belongs to \(
\mathcal F_{\alpha;\sigma(p,q)}^{p^*,q^*}\)
and
\[
\|g_F\|_{\mathcal F_{\alpha;\sigma(p,q)}^{p^*,q^*}}
\lesssim
\|F\|_{(\mathcal F_\alpha^{p,q})^*},
\]
where the implicit constant depends only on \(\alpha,p\), and \(q\).
\end{proposition}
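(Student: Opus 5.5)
The plan mirrors the proof of Proposition~\ref{prop:dual-rep-inner-quasi}. Circlewise dual testing via Lemma~\ref{lem:dual-circle-extraction} replaces pointwise kernel testing, and we split according to the outer exponent $q$. Note first that $(\frac1p-1)_+=0$ for $1<p\le\infty$, so $\sigma(p,q)=-(\frac1q-1)_+$. Also, $g_F$ is entire by Lemma~\ref{lem:dual-gF-expansion}.

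\emph{Case $0<q\le1$.} Here $q^*=\infty$ and $\sigma(p,q)=1-\frac1q$. Lemma~\ref{lem:dual-circle-extraction} gives, for every $r>0$,
\[
M_{p^*}(g_F,r)e^{-\frac{\alpha}{2}r^2}(1+r)^{1-\frac1q}\lesssim\|F\|_{(\mathcal F_\alpha^{p,q})^*}.
\]
At $r=0$ we use $|g_F(0)|=|F(1)|\le\|F\|$. Taking the supremum over $r\ge0$ gives exactly the bound for $\|g_F\|_{\mathcal F^{p^*,\infty}_{\alpha;\sigma(p,q)}}$.

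\emph{Case $1<q<\infty$.} Here $\sigma(p,q)=0$ and $q^*=q'$. The single-circle bound only gives $M_{p^*}(g_F,r)e^{-\alpha r^2/2}\lesssim(1+r)^{\frac1q-1}$, which is not $q'$-summable against $(1+k)$. So the circles must be tested jointly across shells, and we use Lemma~\ref{lem:dual-circle-shell-synthesis}.

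For each $k$, set $H_k:=B_{p^*,0}(g_F;k)$. Fix $\varepsilon\in(0,1)$, pick $r_k\in[k,k+1)$ nearly attaining $H_k$, and choose the normalized extremizer $a_k$ as in the proof of Lemma~\ref{lem:dual-circle-extraction}, namely $a_k=g_F|g_F|^{p^*-2}/M_{p^*}^{p^*-1}$ on the circle $|z|=r_k$ (and $a_k=0$ if $M_{p^*}(g_F,r_k)=0$). Then $\|a_k\|_{L^p}\le1$, and
\[
e^{-\frac{\alpha}{2}r_k^2}\frac1{2\pi}\int_0^{2\pi} a_k\overline{g_F(r_ke^{i\theta})}\,d\theta=M_{p^*}(g_F,r_k)e^{-\frac{\alpha}{2}r_k^2}\ge(1-\varepsilon)H_k .
\]
For $p=\infty$, $p^*=1$ and $a_k$ is the sign of $g_F$, which is still fine.

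Now take nonnegative $c_k$ with $\sum c_k^q(1+k)^{1-q}<\infty$, and let $h=\sum c_kT_{r_k}a_k$. This series converges in $\mathcal F_\alpha^{p,q}$ by Lemma~\ref{lem:dual-circle-shell-synthesis}. By continuity of $F$ and the identity in Lemma~\ref{lem:dual-circle-tests},
\[
(1-\varepsilon)\sum_k c_kH_k\le F(h)\lesssim \|F\|\Big(\sum_k c_k^q(1+k)^{1-q}\Big)^{1/q}.
\]
Duality of the weighted $L^q$ space with $\mu(\{k\})=(1+k)^{1-q}$ then gives $\sum_k(1+k)H_k^{q'}\lesssim\|F\|^{q'}$, exactly as in Proposition~\ref{prop:dual-rep-inner-quasi}. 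Finally, Lemma~\ref{lem:dual-weighted-annular} with $\sigma=0$ converts this into the desired bound on $\|g_F\|_{\mathcal F^{p^*,q'}_{\alpha}}$.

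The main subtlety is in the second case: after applying $F$ termwise, the phases of the terms must align so that every contribution is nonnegative. The extremizer choice guarantees this. The near-supremum choice of $r_k$ is harmless because $M_{p^*}(g_F,\cdot)$ is continuous.
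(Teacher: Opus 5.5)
Your proposal is correct and follows the paper's proof essentially step by step. The case $0<q\le1$ is read off from Lemma~\ref{lem:dual-circle-extraction}. The case $1<q<\infty$ combines extremizers $a_k$ on near-maximizing circles $r_k\in[k,k+1)$ with the shell synthesis of Lemma~\ref{lem:dual-circle-shell-synthesis}, weighted $L^q$-duality with $\mu(\{k\})=(1+k)^{1-q}$, and the annular discretization. Your explicit handling of $r=0$ via $|g_F(0)|=|F(1)|\le\|F\|$ is a small point the paper leaves implicit by taking the supremum over $r>0$.
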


\begin{proof}
We split the proof according to the outer exponent \(q\).

\smallskip
\noindent
\emph{Case 1: \(0<q\le1\).}
Since \(1<p\le\infty\), we have
\[
\sigma(p,q)
=
-\left(\frac1q-1\right)
=
1-\frac1q,
\qquad q^*=\infty.
\]
Then, by Lemma~\ref{lem:dual-circle-extraction}, for every \(r>0\),
\[
M_{p^*}(g_F,r)e^{-\frac{\alpha}{2}r^2}(1+r)^{\sigma(p,q)} =
M_{p^*}(g_F,r)e^{-\frac{\alpha}{2}r^2}(1+r)^{1-\frac1q}
\lesssim
\|F\|_{(\mathcal F_\alpha^{p,q})^*}.
\]
Taking the supremum over \(r>0\), we get
\[
\|g_F\|_{\mathcal F_{\alpha;\sigma(p,q)}^{p^*,\infty}}
\lesssim
\|F\|_{(\mathcal F_\alpha^{p,q})^*}.
\]
This proves the assertion in the case \(0<q\le1\).

\smallskip
\noindent
\emph{Case 2: \(1<q<\infty\).}
In this case \(q^*=q'=\frac{q}{q-1}\) and
\(
\sigma(p,q)=0\).
It remains to prove that \(g_F\in\mathcal F_\alpha^{p^*,q'}\).
For \(k\ge0\), set
\[
H_k
:=
B_{p^*}(g_F;k)
=
\sup_{r\in[k,k+1)}
M_{p^*}(g_F,r)e^{-\frac{\alpha}{2}r^2}.
\]
We shall prove
\[
\left(
\sum_{k=0}^{\infty}(1+k)H_k^{q'}
\right)^{\frac1{q'}}
\lesssim
\|F\|_{(\mathcal F_\alpha^{p,q})^*}.
\]
Fix  \(\varepsilon\in(0,1)\). For each
\(k\geq 0\), choose \(r_k\in[k,k+1)\), with \(r_k>0\), such that
\[
M_{p^*}(g_F,r_k)e^{-\frac{\alpha}{2}r_k^2}
\ge
(1-\varepsilon)H_k.
\]
If \(M_{p^*}(g_F,r_k)=0\), put \(a_k=0\). Otherwise, define
\[
a_k(\theta)
:=
\frac{
g_F(r_ke^{i\theta})
\bigl|g_F(r_ke^{i\theta})\bigr|^{p^*-2}
}{
M_{p^*}(g_F,r_k)^{p^*-1}
},
\qquad 1<p^*<\infty,
\]
and, in the endpoint case \(p^*=1\),
\[
a_k(\theta)
:=
\begin{cases}
\dfrac{g_F(r_ke^{i\theta})}
{|g_F(r_ke^{i\theta})|},
& g_F(r_ke^{i\theta})\ne0,\\[2mm]
0,
& g_F(r_ke^{i\theta})=0.
\end{cases}
\]
Then \(a_k\in L^p(\T)\), \(\|a_k\|_{L^p(\T)}\le1\), and
\[
\frac1{2\pi}\int_0^{2\pi}
a_k(\theta)\overline{g_F(r_ke^{i\theta})}\,d\theta
=
M_{p^*}(g_F,r_k).
\]
Therefore, by Lemma~\ref{lem:dual-circle-tests},
\begin{equation}
\label{eq:circle-test-extract-Hk}
F(T_{r_k}a_k)
=
e^{-\frac{\alpha}{2}r_k^2}
M_{p^*}(g_F,r_k)
\ge
(1-\varepsilon)H_k.
\end{equation}
Let \(c=(c_k)_{k\ge0}\) be a nonnegative sequence such that
\[
\sum_{k=0}^{\infty}c_k^q(1+k)^{1-q}<\infty.
\]
By Lemma~\ref{lem:dual-circle-shell-synthesis}, the series
\(
h:=\sum_{k=0}^{\infty}c_kT_{r_k}a_k
\)
converges in \(\mathcal F_\alpha^{p,q}\), and
\[
\|h\|_{\mathcal F_\alpha^{p,q}}
\lesssim
\left(
\sum_{k=0}^{\infty}c_k^q(1+k)^{1-q}
\right)^{\frac1q}.
\]
Since \(F\) is continuous on \(\mathcal F_\alpha^{p,q}\), using \eqref{eq:circle-test-extract-Hk} and the nonnegativity of \(c_k\), we have
\[
(1-\varepsilon)\sum_{k=0}^{\infty}c_kH_k \leq \sum_{k=0}^{\infty}c_kF(T_{r_k}a_k) = F(h) \leq |F(h)| 
\leq \|F\|_{(\mathcal F_\alpha^{p,q})^*}\,
\|h\|_{\mathcal F_\alpha^{p,q}}. 
\]
Hence
\[
(1-\varepsilon)\sum_{k=0}^{\infty}c_kH_k
\lesssim
\|F\|_{(\mathcal F_\alpha^{p,q})^*}
\left(
\sum_{k=0}^{\infty}c_k^q(1+k)^{1-q}
\right)^{\frac1q}.
\]
Letting \(\varepsilon\downarrow0\), we conclude that
\[
\sum_{k=0}^{\infty}c_kH_k
\lesssim
\|F\|_{(\mathcal F_\alpha^{p,q})^*}
\left(
\sum_{k=0}^{\infty}c_k^q(1+k)^{1-q}
\right)^{\frac1q}
\]
for every nonnegative sequence \(c=(c_k)_{k\ge0}\) satisfying
\[
\sum_{k=0}^{\infty}c_k^q(1+k)^{1-q}<\infty.
\]
Since \(H_k\ge0\), the preceding estimate applied to \(|c|\) gives
\[
\left|
\sum_{k=0}^{\infty}c_kH_k
\right|
\lesssim
\|F\|_{(\mathcal F_\alpha^{p,q})^*}
\left(
\sum_{k=0}^{\infty}|c_k|^q(1+k)^{1-q}
\right)^{\frac1q},
\]
for every \(c \in L^q(\mathbb N_0,d\mu)\), where
\(
\mu(\{k\})=(1+k)^{1-q}\).
Thus the functional
\[
c\longmapsto \sum_{k=0}^{\infty}c_kH_k
\]
is bounded on the weighted space \(L^q(\mathbb N_0,d\mu)\).
By the \(L^q\)-duality theorem \cite[Theorem~6.6]{Rud87}, it follows that
\[
\left(\sum_{k=0}^{\infty}(1+k)H_k^{q'}
\right)^{\frac1{q'}} = \left(
\sum_{k=0}^{\infty}
\left[\frac{H_k}{(1+k)^{1-q}}\right]^{q'}
(1+k)^{1-q}
\right)^{\frac1{q'}}
\lesssim
\|F\|_{(\mathcal F_\alpha^{p,q})^*}.
\]
By Proposition~\ref{prop:annular} applied to \(g_F\) with exponents
\((p^*,q')\),
\[
\|g_F\|_{\mathcal F_\alpha^{p^*,q'}}^{q'}
\asymp
\sum_{k=0}^{\infty}(1+k)H_k^{q'}.
\]
Thus
\[
\|g_F\|_{\mathcal F_\alpha^{p^*,q'}}
\lesssim
\|F\|_{(\mathcal F_\alpha^{p,q})^*}.
\]
Since \(\sigma(p,q)=0\) and \(q^*=q'\) in the present case, this is exactly
\[
\|g_F\|_{\mathcal F_{\alpha;\sigma(p,q)}^{p^*,q^*}}
\lesssim
\|F\|_{(\mathcal F_\alpha^{p,q})^*}.
\]
Combining the two cases completes the proof.
\end{proof}

\subsubsection{Completion of the equal-parameter duality theorem}
\label{subsubsec:dual-equal-completion}

Combining boundedness of the pairing with the representative estimates,
it remains to identify a functional with its canonical representing
function on the whole space.  This follows from polynomial density and
Gaussian orthogonality.

\begin{lemma} 
\label{lem:dual-polynomial-completion}
Let \(\alpha>0\), \(0<p\le\infty\), and \(0<q<\infty\). For each
\(F\in(\mathcal F_\alpha^{p,q})^*\), let \(g_F\) be the canonical
representing function of \(F\). 
Then
\[
F(f)=\langle f,g_F\rangle_\alpha,
\qquad f\in\mathcal F_\alpha^{p,q}.
\]
Moreover, \(g_F\) is the unique function in
\(\mathcal F_{\alpha;\sigma(p,q)}^{p^*,q^*}\) representing \(F\).
\end{lemma}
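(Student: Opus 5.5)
The plan is to verify the identity first on monomials, then extend by density and continuity, and finally derive uniqueness from Gaussian orthogonality.

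\emph{Step 1: monomials.} By Propositions~\ref{prop:dual-rep-inner-quasi} and \ref{prop:dual-rep-inner-banach}, we already know that \(g_F\in\mathcal F_{\alpha;\sigma(p,q)}^{p^*,q^*}\). Proposition~\ref{prop:dual-forward} then shows that \(f\mapsto\langle f,g_F\rangle_\alpha\) is a continuous functional on \(\mathcal F_\alpha^{p,q}\), and that its defining integral converges absolutely. Take \(f(z)=z^n\). Integrating in polar coordinates and inserting the Taylor expansion \eqref{eq:dual-gF-expansion} of \(g_F\), which converges uniformly on each circle, angular orthogonality leaves only the \(n\)-th coefficient:
\[
\langle z^n,g_F\rangle_\alpha=\frac{\alpha^n}{n!}F(z^n)\,\frac{\alpha}{\pi}\int_{\C}|z|^{2n}e^{-\alpha|z|^2}\,dA(z)=\frac{\alpha^n}{n!}F(z^n)\cdot\frac{n!}{\alpha^n}=F(z^n).
\]
The term-by-term integration on each circle is justified by uniform convergence there. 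The subsequent radial integration is justified by Fubini, since \(\int|z^n g_F|e^{-\alpha|z|^2}<\infty\) by the forward estimate applied to \(f=z^n\). Linearity then gives \(F(P)=\langle P,g_F\rangle_\alpha\) for every polynomial \(P\).

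\emph{Step 2: density.} Both sides are continuous on \(\mathcal F_\alpha^{p,q}\): the left side by hypothesis, the right side by Proposition~\ref{prop:dual-forward}. They agree on polynomials, which are dense by Corollary~\ref{cor:density-poly} because \(q<\infty\). Hence \(F(f)=\langle f,g_F\rangle_\alpha\) for all \(f\in\mathcal F_\alpha^{p,q}\).

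\emph{Step 3: uniqueness.} Suppose \(g\in\mathcal F_{\alpha;\sigma(p,q)}^{p^*,q^*}\) also represents \(F\), and set \(h=g-g_F\). Then \(h\) lies in the same space, since that space is a vector space by the triangle or quasi-triangle inequality on circle means. Moreover \(\langle z^n,h\rangle_\alpha=0\) for all \(n\). Write \(h=\sum_m h_m z^m\). The same polar computation, now with Fubini justified by Proposition~\ref{prop:dual-forward}, gives
\[
\langle z^n,h\rangle_\alpha=\overline{h_n}\,\frac{n!}{\alpha^n}.
\]
Hence every \(h_n\) vanishes, so \(h\equiv0\).

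The main obstacle is the justification of the interchange in Step~1, and of the analogous interchange in Step~3 for a general \(h\). There I would expand \(h\) on the circle \(|z|=r\), where its Taylor series converges uniformly. This gives
\[
\frac{1}{2\pi}\int_0^{2\pi} r^n e^{in\theta}\,\overline{h(re^{i\theta})}\,d\theta=\overline{h_n}\,r^{2n}.
\]
I would then integrate in \(r\), invoking Fubini through the absolute integrability from Proposition~\ref{prop:dual-forward}.
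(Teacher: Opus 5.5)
Your proposal is correct and follows the paper's proof essentially step for step. Membership of \(g_F\) in the representing space together with Proposition~\ref{prop:dual-forward} gives continuity of \(f\mapsto\langle f,g_F\rangle_\alpha\); the Taylor expansion \eqref{eq:dual-gF-expansion} and angular orthogonality give agreement on monomials; density of polynomials extends the identity; and Gaussian orthogonality applied to the Taylor coefficients of the difference gives uniqueness. The only cosmetic difference is that the paper integrates over discs \(\{|w|\le R\}\) and lets \(R\to\infty\), while you integrate circle by circle and invoke Fubini through the absolute convergence supplied by the forward estimate, which is an equally valid and slightly more explicit justification.
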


\begin{proof}
By Propositions~\ref{prop:dual-rep-inner-quasi} and
\ref{prop:dual-rep-inner-banach}, we have
\(
g_F\in \mathcal F_{\alpha;\sigma(p,q)}^{p^*,q^*}\).
Hence, by Proposition~\ref{prop:dual-forward}, the functional
\[
F_{g_F}(f):=\langle f,g_F\rangle_\alpha,
\qquad f\in\mathcal F_\alpha^{p,q},
\]
is continuous on \(\mathcal F_\alpha^{p,q}\).

\smallskip
\noindent
We show that \(F\) and \(F_{g_F}\) agree on monomials. By
\eqref{eq:dual-gF-expansion}, we have
\[
\overline{g_F(w)}
=
\sum_{m=0}^{\infty}
\frac{\alpha^m}{m!}F(z^m)\,\overline{w}^{\,m},
\qquad w\in\mathbb C,
\]
with locally uniform convergence. Fix \(n\in\mathbb N_0\). For \(R>0\),
termwise integration over \(\{|w|\le R\}\) is justified by locally uniform
convergence. Therefore, by angular orthogonality,
\[
\frac{\alpha}{\pi}
\int_{|w|\le R}
w^n\overline{g_F(w)}e^{-\alpha|w|^2}\,dA(w)
=
\left(\frac{\alpha^n}{n!}F(z^n) \right) \,
\frac{\alpha}{\pi}
\int_{|w|\le R}
|w|^{2n}e^{-\alpha|w|^2}\,dA(w).
\]
Letting \(R\to\infty\), and using
\[
\frac{\alpha}{\pi}
\int_{\mathbb C}
|w|^{2n}e^{-\alpha|w|^2}\,dA(w)
=
\frac{n!}{\alpha^n},
\]
we obtain
\[
F_{g_F}(z^n)
=
\langle z^n,g_F\rangle_\alpha
=
F(z^n).
\]
Thus \(F(P)=F_{g_F}(P)\) for every polynomial \(P\).
Since polynomials are dense in \(\mathcal F_\alpha^{p,q}\), and both
\(F\) and \(F_{g_F}\) are continuous on \(\mathcal F_\alpha^{p,q}\), it follows
that
\[
F(f)=F_{g_F}(f)=\langle f,g_F\rangle_\alpha,
\qquad f\in\mathcal F_\alpha^{p,q}.
\]
It remains to prove uniqueness. Suppose that
\(
g_1,g_2\in
\mathcal F_{\alpha;\sigma(p,q)}^{p^*,q^*}
\)
represent the same functional on \(\mathcal F_\alpha^{p,q}\). Then
\(
\langle P,g_1-g_2\rangle_\alpha=0
\)
for every polynomial \(P\). Write
\[
g_1(z)-g_2(z)=\sum_{m=0}^{\infty}c_mz^m.
\]
Testing against \(P(z)=z^n\) and using Gaussian orthogonality gives
\[
0
=
\langle z^n,g_1 -g_2\rangle_\alpha =\overline{c_n}\frac{n!}{\alpha^n},
\qquad n\ge0.
\]
Hence \(c_n=0\) for every \(n\ge0\), and therefore \(g_1=g_2\).
\end{proof}

\begin{theorem}[Equal-parameter duality]
\label{thm:dual-equal}
Let \(\alpha>0\), \(0<p\le\infty\), and \(0<q<\infty\). Then the
continuous dual of \(\mathcal F_\alpha^{p,q}\) can be identified with
\(
\mathcal F_{\alpha;\sigma(p,q)}^{p^*,q^*}
\)
under the Gaussian pairing \(\langle\cdot,\cdot\rangle_\alpha\). More precisely,
every \(g\in \mathcal F_{\alpha;\sigma(p,q)}^{p^*,q^*}\) defines a continuous
linear functional on \(\mathcal F_\alpha^{p,q}\) by
\[
F_g(f)=\langle f,g\rangle_\alpha,
\qquad f\in\mathcal F_\alpha^{p,q},
\]
and every \(F\in(\mathcal F_\alpha^{p,q})^*\) is represented uniquely in this
way by its canonical representing function
\(
g_F\in \mathcal F_{\alpha;\sigma(p,q)}^{p^*,q^*}\).
Moreover,
\[
\|F\|_{(\mathcal F_\alpha^{p,q})^*}
\asymp
\|g_F\|_{\mathcal F_{\alpha;\sigma(p,q)}^{p^*,q^*}},
\]
where the implicit constants depend only on \(\alpha,p\), and \(q\).
\end{theorem}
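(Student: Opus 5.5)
The theorem is essentially an assembly of the results already proved in Subsection~\ref{subsec:dual-equal}, so the plan is to combine them in four steps.

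\emph{Step 1 (forward direction).} For $g\in\mathcal F_{\alpha;\sigma(p,q)}^{p^*,q^*}$, Proposition~\ref{prop:dual-forward} shows that $F_g(f)=\langle f,g\rangle_\alpha$ converges absolutely. It therefore defines a continuous linear functional on $\mathcal F_\alpha^{p,q}$, with $\|F_g\|\lesssim\|g\|_{\mathcal F_{\alpha;\sigma(p,q)}^{p^*,q^*}}$.

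\emph{Step 2 (representer estimate).} Let $F\in(\mathcal F_\alpha^{p,q})^*$ and form the canonical representing function $g_F(w)=\overline{F(K_{\alpha,w})}$, which is entire by Lemma~\ref{lem:dual-gF-expansion}. I split by the inner exponent. For $0<p\le1$, Proposition~\ref{prop:dual-rep-inner-quasi} gives $g_F\in\mathcal F_{\alpha;\sigma(p,q)}^{\infty,q^*}$, and here $p^*=\infty$. For $1<p\le\infty$, Proposition~\ref{prop:dual-rep-inner-banach} gives $g_F\in\mathcal F_{\alpha;\sigma(p,q)}^{p^*,q^*}$. In both cases
\[
\|g_F\|_{\mathcal F_{\alpha;\sigma(p,q)}^{p^*,q^*}}\lesssim\|F\|_{(\mathcal F_\alpha^{p,q})^*}.
\]

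\emph{Step 3 (representation and uniqueness).} Lemma~\ref{lem:dual-polynomial-completion} gives $F=F_{g_F}$ on all of $\mathcal F_\alpha^{p,q}$. It also shows that $g_F$ is the only element of the representing space that represents $F$, because the Gaussian orthogonality of monomials forces all Taylor coefficients of a difference of two representers to vanish. Consequently, every $g$ in the space satisfies $g=g_{F_g}$. This identity makes the map $g\mapsto F_g$ a conjugate-linear bijection.

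\emph{Step 4 (norm equivalence).} The two inequalities follow from the earlier steps:
\[
\|g_F\|\lesssim\|F\| \quad\text{by Step 2},\qquad \|F\|=\|F_{g_F}\|\lesssim\|g_F\| \quad\text{by Step 1 applied to } g=g_F.
\]
Every implicit constant depends only on $\alpha,p,q$.

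The only point needing care is consistency: one should check that the exponents and the weight $\sigma(p,q)$ in Propositions~\ref{prop:dual-rep-inner-quasi} and \ref{prop:dual-rep-inner-banach} match the definition of the target space in each subcase. For $p\le1$ with $q>1$, the weight is $\frac1p-1=\sigma(p,q)$, and the other subcases are checked the same way. With that verified, the proof involves no new estimates.
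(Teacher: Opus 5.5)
Your proposal is correct and follows the paper's proof essentially step for step. You get forward boundedness from Proposition~\ref{prop:dual-forward}, the representer estimates split at $p=1$ via Propositions~\ref{prop:dual-rep-inner-quasi} and \ref{prop:dual-rep-inner-banach}, representation and uniqueness from Lemma~\ref{lem:dual-polynomial-completion}, and the reverse norm bound from $F=F_{g_F}$. Your extra observation that $g=g_{F_g}$ follows from uniqueness is a harmless addition that makes the bijectivity of $g\mapsto F_g$ explicit.
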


\begin{proof}
By Proposition~\ref{prop:dual-forward}, every
\(g\in \mathcal F_{\alpha;\sigma(p,q)}^{p^*,q^*}\) defines a continuous
linear functional \(F_g\) on \(\mathcal F_\alpha^{p,q}\), and
\[
\|F_g\|_{(\mathcal F_\alpha^{p,q})^*}
\lesssim
\|g\|_{\mathcal F_{\alpha;\sigma(p,q)}^{p^*,q^*}}.
\]
Conversely, let \(F\in(\mathcal F_\alpha^{p,q})^*\), and let \(g_F\) be its
canonical representing function. Then
Proposition~\ref{prop:dual-rep-inner-quasi} for \(0<p\le1\) and Proposition~\ref{prop:dual-rep-inner-banach} for \(1<p\le\infty\) give
\[
g_F\in \mathcal F_{\alpha;\sigma(p,q)}^{p^*,q^*} \qquad \text{and} \qquad \|g_F\|_{\mathcal F_{\alpha;\sigma(p,q)}^{p^*,q^*}}
\lesssim
\|F\|_{(\mathcal F_\alpha^{p,q})^*}.
\]
By Lemma~\ref{lem:dual-polynomial-completion},
\(
F(f)=\langle f,g_F\rangle_\alpha\) for all \(f\in\mathcal F_\alpha^{p,q}\), 
and \(g_F\) is the unique representing function in
\(\mathcal F_{\alpha;\sigma(p,q)}^{p^*,q^*}\).

\smallskip
\noindent
It remains only to obtain the opposite norm estimate. Since the representation
identity just proved gives \(F=F_{g_F}\), Proposition~\ref{prop:dual-forward}
implies
\[
\|F\|_{(\mathcal F_\alpha^{p,q})^*}
=
\|F_{g_F}\|_{(\mathcal F_\alpha^{p,q})^*}
\lesssim
\|g_F\|_{\mathcal F_{\alpha;\sigma(p,q)}^{p^*,q^*}}.
\]
Together with the estimate obtained above, this gives the asserted norm
equivalence and completes the proof. In particular, this proves
Theorem~\ref{thm:duality-intro} in the case \(\gamma=\alpha\).
\end{proof}

\begin{remark}[Comparison with the projection method]
In the Banach mixed range \(1\leq p<\infty\) and \(1<q<\infty\), the
equal-parameter duality
\[
(\mathcal F_\alpha^{p,q})^*
\simeq
\mathcal F_\alpha^{p^*,q^*}
\]
can also be obtained by a projection argument; see Liu~\cite[Theorem~7]{Liu24}.
In that approach one proves boundedness of the Fock projection \(P_\alpha\) on
the ambient mixed-norm space \(L_\alpha^{p,q}\), extends functionals from
\(\mathcal F_\alpha^{p,q}\) to \(L_\alpha^{p,q}\) by the Hahn--Banach theorem,
uses the Benedek--Panzone duality theorem for mixed-norm Lebesgue
spaces~\cite[Theorem~1, p.~304]{BP61}, and then projects the resulting
representative back to the analytic subspace.

\smallskip
\noindent
This method is effective in the Banach projection range, but it depends on two
inputs: boundedness of the ambient Fock projection and Banach-space duality.
Thus it does not give a uniform proof of the quasi-Banach cases, the endpoint
\(q=\infty\), or the weighted representing spaces appearing in
Theorems~\ref{thm:duality-intro} and \ref{thm:little-duality-intro}.  More
general projection problems, allowing different Gaussian parameters and mixed
exponents in the source and target ambient spaces, are natural in this context,
but they are separate from the intrinsic duality argument used here.

\smallskip
\noindent
The proof in this paper does not use boundedness of the Fock projection.
Instead, for a functional \(F\), it constructs the canonical representing
function
\[
g_F(w)=F(K_{\alpha,w})
\]
and estimates it directly by kernel tests, circle tests, and shellwise sequence
estimates.  This gives a single argument covering the Banach, quasi-Banach, and
endpoint regimes treated in the duality theorem.
\end{remark}

\subsection{Transport to arbitrary Gaussian pairing parameters}
\label{subsec:dual-transport}

We prove Theorem~\ref{thm:duality-intro}.  By the equal-parameter theorem,
it remains to transport the representing space from \(\alpha\) to
\(\beta\) and identify the resulting pairing.  This is done by
\(D_{\alpha/\gamma}\), where \(\gamma = \sqrt{\alpha\beta}\).

\begin{proof}[Proof of Theorem~\ref{thm:duality-intro}]
Throughout the proof, the pairing between
\(\mathcal F_\alpha^{p,q}\) and
\(\mathcal F_{\beta;\sigma(p,q)}^{p^*,q^*}\) is the transported pairing
\begin{equation}
\label{eq:transported-pairing-identity}
\langle f,g\rangle_{\alpha\to\gamma}^{\rm tr}
:=
\big\langle f,D_{\alpha/\gamma}g\big\rangle_\alpha .
\end{equation}

\smallskip
\noindent
Since \(\gamma^2=\alpha\beta\), Lemma~\ref{lem:dual-dilation-weighted}, applied
with \(\tau=\frac{\alpha}{\gamma}\) and with exponents \(p^*,q^*\), shows that the dilation
\(D_{\alpha/\gamma}\) is an isomorphism from
\(
\mathcal F_{\beta;\sigma(p,q)}^{p^*,q^*}
\)
onto
\(
\mathcal F_{\alpha;\sigma(p,q)}^{p^*,q^*}\).
Moreover,
\begin{equation}
\label{eq:transport-dilation-norm}
\|D_{\alpha/\gamma}g\|_{\mathcal F_{\alpha;\sigma(p,q)}^{p^*,q^*}}
\asymp
\|g\|_{\mathcal F_{\beta;\sigma(p,q)}^{p^*,q^*}}, \qquad g \in \mathcal F_{\beta;\sigma(p,q)}^{p^*,q^*}.
\end{equation}

\smallskip
\noindent
Every
\(
g\in \mathcal F_{\beta;\sigma(p,q)}^{p^*,q^*}
\)
defines a continuous linear functional on \(\mathcal F_\alpha^{p,q}\):
set
\(
h:=D_{\alpha/\gamma}g\).
Then
\(
h\in \mathcal F_{\alpha;\sigma(p,q)}^{p^*,q^*}\).
By the equal-parameter duality theorem, Theorem~\ref{thm:dual-equal}, the map
\[
F_g(f)
:=
\langle f,g\rangle_{\alpha\to\gamma}^{\rm tr}
=
\langle f,h\rangle_\alpha,
\qquad f\in\mathcal F_\alpha^{p,q},
\]
belongs to \((\mathcal F_\alpha^{p,q})^*\), and
\(
\|F_g\|_{(\mathcal F_\alpha^{p,q})^*}
\asymp
\|h\|_{\mathcal F_{\alpha;\sigma(p,q)}^{p^*,q^*}}\). Together with \eqref{eq:transport-dilation-norm}, this gives
\[
\|F_g\|_{(\mathcal F_\alpha^{p,q})^*}
\asymp
\|g\|_{\mathcal F_{\beta;\sigma(p,q)}^{p^*,q^*}}.
\]

\smallskip
\noindent
Conversely, let
\(
F\in(\mathcal F_\alpha^{p,q})^*\).
Again by Theorem~\ref{thm:dual-equal}, there exists a unique
\(
h\in\mathcal F_{\alpha;\sigma(p,q)}^{p^*,q^*}
\)
such that
\[
F(f)=\langle f,h\rangle_\alpha,
\qquad f\in\mathcal F_\alpha^{p,q} \qquad \text{ and } \qquad
\|h\|_{\mathcal F_{\alpha;\sigma(p,q)}^{p^*,q^*}}
\asymp
\|F\|_{(\mathcal F_\alpha^{p,q})^*}.
\]
Define
\(
g:=D_{\gamma/\alpha}h\).
Since \(D_{\gamma/\alpha}\) is the inverse of \(D_{\alpha/\gamma}\), Lemma~\ref{lem:dual-dilation-weighted} gives
\[
g\in\mathcal F_{\beta;\sigma(p,q)}^{p^*,q^*}
\qquad \text{ and } \qquad
\|g\|_{\mathcal F_{\beta;\sigma(p,q)}^{p^*,q^*}}
\asymp
\|h\|_{\mathcal F_{\alpha;\sigma(p,q)}^{p^*,q^*}}
\asymp
\|F\|_{(\mathcal F_\alpha^{p,q})^*}.
\]
Moreover,
\(
D_{\alpha/\gamma}g=h\).
Hence, for every \(f\in\mathcal F_\alpha^{p,q}\),
\[
F(f)
=
\langle f,h\rangle_\alpha
=
\big\langle f,D_{\alpha/\gamma}g\big\rangle_\alpha
=
\langle f,g\rangle_{\alpha\to\gamma}^{\rm tr}.
\]
Thus every continuous linear functional on \(\mathcal F_\alpha^{p,q}\) is
represented by an element of
\(\mathcal F_{\beta;\sigma(p,q)}^{p^*,q^*}\).

\smallskip
\noindent
It remains only to prove uniqueness. Suppose that
\(
g_1,g_2\in\mathcal F_{\beta;\sigma(p,q)}^{p^*,q^*}
\)
represent the same functional under the transported pairing. Then
\[
\big\langle f,D_{\alpha/\gamma}(g_1-g_2)\big\rangle_\alpha=0,
\qquad f\in\mathcal F_\alpha^{p,q}.
\]
By the uniqueness part of Theorem~\ref{thm:dual-equal},
\(
D_{\alpha/\gamma}(g_1-g_2)=0\).
Since \(D_{\alpha/\gamma}\) is injective on entire functions, we obtain
\(
g_1=g_2\).

\smallskip
\noindent
Therefore the conjugate-linear map
\[
g\longmapsto F_g,\qquad
F_g(f)=\langle f,g\rangle_{\alpha\to\gamma}^{\rm tr},
\]
identifies
\(
\mathcal F_{\beta;\sigma(p,q)}^{p^*,q^*}
\)
with
\(
(\mathcal F_\alpha^{p,q})^*
\)
with equivalent norms. This proves Theorem~\ref{thm:duality-intro}.
\end{proof}

\begin{remark}
The pairing used in Theorem~\ref{thm:duality-intro} is the transported pairing
\eqref{eq:transported-pairing-identity}. On polynomial pairs it agrees with the Gaussian pairing at parameter \(\gamma\): for \(m,n\in\mathbb N_0\),
\[
\langle z^n, z^m\rangle_{\alpha\to\gamma}^{\rm tr}  = \big\langle z^n,D_{\alpha/\gamma}z^m\big\rangle_\alpha
=
\left(\frac{\alpha}{\gamma}\right)^n
\frac{n!}{\alpha^n}\delta_{n,m}
=
\frac{n!}{\gamma^n}\delta_{n,m}
=
\langle z^n,z^m\rangle_\gamma.
\]
By linearity in the first variable and conjugate-linearity in the second
variable, the same identity holds for all polynomial pairs.
\end{remark}

\subsection{The little mixed-norm Fock space}
\label{subsec:dual-little}

We finish with the duality theorem for the little endpoint space
\(f_\alpha^{p,\infty}\).  The argument parallels the full-space proof,
with \(q=\infty\) replacing the finite outer exponent, followed by the
same dilation transport as in Subsection~\ref{subsec:dual-transport}.

\smallskip
\noindent
We prove the equal-parameter little-space duality.

\begin{proposition}[Little endpoint duality]
\label{prop:dual-little-equal}
Let \(\alpha>0\) and \(0<p\le\infty\). Then the continuous dual of
\(f_\alpha^{p,\infty}\) can be identified with
\(
\mathcal F_{\alpha;\sigma(p,\infty)}^{p^*,1}
\)
under the Gaussian pairing \(\langle\cdot,\cdot\rangle_\alpha\).
More precisely, every
\(
g\in\mathcal F_{\alpha;\sigma(p,\infty)}^{p^*,1}
\)
defines a continuous linear functional on \(f_\alpha^{p,\infty}\) by
\[
F_g(f):=\langle f,g\rangle_\alpha,
\qquad f\in f_\alpha^{p,\infty},
\]
and every
\(
F\in(f_\alpha^{p,\infty})^*
\)
is represented uniquely in this way by its canonical representing function
\(
g_F\in\mathcal F_{\alpha;\sigma(p,\infty)}^{p^*,1}\).
Moreover,
\[
\|F\|_{(f_\alpha^{p,\infty})^*}
\asymp
\|g_F\|_{\mathcal F_{\alpha;\sigma(p,\infty)}^{p^*,1}},
\]
where the implicit constants depend only on \(\alpha\) and \(p\).
\end{proposition}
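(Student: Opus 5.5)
The proof will run parallel to Theorem~\ref{thm:dual-equal}, with the outer exponent \(q=\infty\) and \(q^*=1\). Here \(\sigma(p,\infty)=(\frac1p-1)_+\). The argument has three parts: a forward estimate, representer estimates obtained by finite shell testing, and completion by polynomial density.

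\emph{Forward direction.} Let \(f\in f_\alpha^{p,\infty}\) and \(g\in\mathcal F_{\alpha;\sigma(p,\infty)}^{p^*,1}\). The circle estimate from the proof of Proposition~\ref{prop:dual-forward} gives
\[
\frac1{2\pi}\int_0^{2\pi}|fg|\,d\theta\lesssim(1+r)^{(\frac1p-1)_+}M_p(f,r)M_{p^*}(g,r).
\]
Bounding \(M_p(f,r)e^{-\frac{\alpha}{2}r^2}\le\|f\|_{\mathcal F_\alpha^{p,\infty}}\) and integrating in \(r\,dr\) then yields
\[
|\langle f,g\rangle_\alpha|\lesssim\|f\|_{\mathcal F_\alpha^{p,\infty}}\|g\|_{\mathcal F^{p^*,1}_{\alpha;\sigma(p,\infty)}}.
\]
In fact this bound holds on all of \(\mathcal F_\alpha^{p,\infty}\).

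\emph{Representer estimates.} For \(F\in(f_\alpha^{p,\infty})^*\), Lemma~\ref{lem:dual-gF-expansion} shows that \(g_F\) is entire. Using Lemma~\ref{lem:dual-weighted-annular}, the goal is
\[
\sum_k(1+k)H_k\lesssim\|F\|,\qquad H_k:=B_{p^*,\sigma(p,\infty)}(g_F;k).
\]
Fix \(N\), choose unimodular signs \(\epsilon_k\), and test \(F\) on a finite sum \(h_N=\sum_{k\le N}\epsilon_k(1+k)\,\phi_k\). The building blocks \(\phi_k\) depend on \(p\).

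If \(1<p\le\infty\), take \(\phi_k=T_{r_k}a_k\), with \(r_k\) and \(a_k\) chosen as in Proposition~\ref{prop:dual-rep-inner-banach}. By the shell estimate in the proof of Lemma~\ref{lem:dual-circle-shell-synthesis},
\[
B_p(h_N;\ell)\lesssim\sum_k e^{-\frac{\alpha}{8}|\ell-k|^2}=O(1),
\]
uniformly in \(N\), so \(\|h_N\|_{\mathcal F^{p,\infty}_\alpha}\lesssim1\) by the \(q=\infty\) case of Lemma~\ref{lem:gen-schur-test}. Since \(h_N\) is a finite combination of circle averages of kernels in \(f^{p,\infty}_\alpha\), it lies in \(f^{p,\infty}_\alpha\). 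Applying \(F\) through Lemma~\ref{lem:dual-circle-tests} gives \(\sum_{k\le N}(1+k)H_k\lesssim\|F\|\) up to a factor \(1-\varepsilon\).

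If \(0<p\le1\), take \(\phi_k=(1+|w_k|)^{1/p}\kappa_{\alpha,w_k}\), with \(w_k\) a near-maximizer of \(|g_F|e^{-\frac{\alpha}{2}|w|^2}(1+|w|)^{\frac1p-1}\) on \(A_k\), and \(\epsilon_k\) the phase of \(g_F(w_k)\). The \(p\)-subadditive estimate from Lemma~\ref{lem:dual-one-point-shell} gives
\[
B_p^p(h_N;\ell)\lesssim\sum_k(1+k)^{p}\,(1+k)\,(1+k)^{-1}e^{-\frac{\alpha p}{8}|\ell-k|^2}\cdot(1+k)^{-p}=O(1).
\]
Here the factor \((1+k)^{-1}\) comes from the kernel shell estimate and \((1+k)\cdot(1+k)^{-p}\) from the chosen weights. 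The exponents have to be checked so that the weight is exactly \((1+k)^{1/p-1}\cdot(1+k)\). Linearity of \(F\) then gives \(F(h_N)\ge(1-\varepsilon)\sum_{k\le N}(1+k)H_k\). Letting \(N\to\infty\) gives the bound \(\|g_F\|_{\mathcal F^{p^*,1}_{\alpha;\sigma}}\lesssim\|F\|\).

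\emph{Completion.} Polynomials are dense in \(f^{p,\infty}_\alpha\) by Corollary~\ref{cor:density-poly}. Together with the forward bound and the Gaussian orthogonality computation of Lemma~\ref{lem:dual-polynomial-completion}, this gives \(F=\langle\cdot,g_F\rangle_\alpha\). The same monomial testing gives uniqueness, and the forward bound gives the reverse norm estimate.

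The main obstacle is the uniformity in \(N\) of \(\|h_N\|_{\mathcal F^{p,\infty}_\alpha}\). The weights \((1+k)\) must be exactly balanced against the shell decay \((1+k)^{-1}\) of the packets, which is where the outer \(\ell^1\) condition comes from. The exponent bookkeeping in the case \(p\le1\) needs the most care.
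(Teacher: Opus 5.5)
Your proposal is correct and follows essentially the same route as the paper. It uses the same forward bound via H\"older or angular comparison on circles, the same finite shell testing (one kernel per shell for \(0<p\le1\), circle tests \(T_{r_k}a_k\) for \(1<p\le\infty\), with weights \(1+k\) balancing the \((1+k)^{-1}\) shell decay), and the same completion by polynomial density and Gaussian orthogonality.

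One correction is needed for \(0<p\le1\): the building block must be \(\phi_k=(1+|w_k|)^{\frac1p-1}\kappa_{\alpha,w_k}\), so that the total coefficient is \(\asymp(1+k)^{1/p}\), as your displayed computation and closing remark already assume. With the exponent \(\frac1p\) as written, the shell bound picks up an extra factor \((1+k)^p\) and is no longer uniform in \(N\).
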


\begin{proof}
The proof follows the equal-parameter argument for
\(\mathcal F_\alpha^{p,q}\), with \(q=\infty\).

\smallskip
\noindent
\emph{Step 1: Boundedness of the pairing.}
Every
\(
g\in\mathcal F_{\alpha;\sigma(p,\infty)}^{p^*,1}
\)
induces a bounded linear functional $F_g$ on \(f_\alpha^{p,\infty}\). Let
\(f\in f_\alpha^{p,\infty}\).

\smallskip
\noindent
If \(0<p\le1\), then \(p^*=\infty\) and
\(
\sigma(p,\infty)=\frac1p-1\).
Using polar coordinates and \eqref{eq:angular-comparison} with
\((p_1,p_2)=(p,1)\), we obtain
\[
\begin{aligned}
|\langle f,g\rangle_\alpha|
&\le
2\alpha
\int_0^\infty
M_1(f,r)M_\infty(g,r)e^{-\alpha r^2}r\,dr  \\
&\lesssim
\|f\|_{\mathcal F_\alpha^{p,\infty}}
\int_0^\infty
M_\infty(g,r)e^{-\frac{\alpha}{2}r^2}
(1+r)^{\frac1p-1}r\,dr  \ \asymp \
\|f\|_{\mathcal F_\alpha^{p,\infty}}
\|g\|_{\mathcal F_{\alpha;\sigma(p,\infty)}^{\infty,1}} .
\end{aligned}
\]
If \(1<p\le\infty\), then \(\sigma(p,\infty)=0\). By H\"older's inequality
on circles, with the usual interpretation when \(p=\infty\),
\[
\begin{aligned}
|\langle f,g\rangle_\alpha|
&\le
2\alpha
\int_0^\infty
M_p(f,r)M_{p^*}(g,r)e^{-\alpha r^2}r\,dr  \\
&\le
2\alpha
\|f\|_{\mathcal F_\alpha^{p,\infty}}
\int_0^\infty
M_{p^*}(g,r)e^{-\frac{\alpha}{2}r^2}r\,dr  \ \asymp \ 
\|f\|_{\mathcal F_\alpha^{p,\infty}}
\|g\|_{\mathcal F_\alpha^{p^*,1}} .
\end{aligned}
\]
Thus the map
\(
F_g(f):=\langle f,g\rangle_\alpha, f\in f_\alpha^{p,\infty}\),
defines an element of \((f_\alpha^{p,\infty})^*\), and
\begin{equation}
\label{eq:little-forward-bound}
\|F_g\|_{(f_\alpha^{p,\infty})^*}
\lesssim
\|g\|_{\mathcal F_{\alpha;\sigma(p,\infty)}^{p^*,1}}.
\end{equation}

\smallskip
\noindent
\emph{Step 2: The canonical representing function belongs to the proposed
representing space.}
We estimate the canonical representing function. The endpoint
\(q=\infty\) leads to an outer \(\ell^1\)-condition in the representing space,
which is obtained by finite shell testing.

\smallskip
\noindent
Let
\(
F\in(f_\alpha^{p,\infty})^*\).
Then its canonical representing function is defined by
\[
g_F(w):=\overline{F(K_{\alpha,w})},
\qquad w\in\mathbb C.
\]
By Lemma~\ref{lem:dual-gF-expansion}, \(g_F\) is entire and admits the
locally uniformly convergent expansion
\begin{equation}
\label{eq:little-gF-expansion}
g_F(w)
=
\sum_{n=0}^{\infty}
\frac{\alpha^n}{n!}\overline{F(z^n)}\,w^n .
\end{equation}
We prove that
\(
g_F\in\mathcal F_{\alpha;\sigma(p,\infty)}^{p^*,1}
\)
and
\begin{equation}
\label{eq:little-rep-estimate}
\|g_F\|_{\mathcal F_{\alpha;\sigma(p,\infty)}^{p^*,1}}
\lesssim
\|F\|_{(f_\alpha^{p,\infty})^*}.
\end{equation}

\smallskip
\noindent
\emph{Case 1: \(0<p\le1\).}
In this case
\(
p^*=\infty\) and \(
\sigma(p,\infty)=\frac1p-1\).
As in the proof of Proposition \ref{prop:dual-rep-inner-quasi}, for \(k\ge0\), set
\[
H_k
:= B_{\infty, \frac{1}{p} - 1}(g_F; k) = 
\sup_{w\in A_k}
|g_F(w)|e^{-\frac{\alpha}{2}|w|^2}
(1+|w|)^{\frac1p-1}.
\]
We shall prove
\begin{equation}
\label{eq:little-quasi-shell-l1}
\sum_{k=0}^{\infty}(1+k)H_k
\lesssim
\|F\|_{(f_\alpha^{p,\infty})^*}.
\end{equation}
Fix a finite set \(E\subset\mathbb N_0\) and an arbitrary number
\(\varepsilon\in(0,1)\). For each \(k\in E\), choose \(w_k\in A_k\) such
that
\[
|g_F(w_k)|e^{-\frac{\alpha}{2}|w_k|^2}
(1+|w_k|)^{\frac1p-1}
\ge
(1-\varepsilon)H_k.
\]
We choose
\[
\xi_k :=
\begin{cases}
\dfrac{g_F(w_k)}{|g_F(w_k)|}, & g_F(w_k)\ne0,\\[2mm]
1, & g_F(w_k)=0.
\end{cases}
\]
Let \(c=(c_k)_{k\in E}\) be a finite nonnegative sequence and define
\[
h_E(z)
:=
\sum_{k\in E}
c_k \xi_k (1+|w_k|)^{\frac1p-1}
\kappa_{\alpha,w_k}(z).
\]
Since \(h_E\) is a finite linear combination of normalized kernels,
\(h_E\in f_\alpha^{p,\infty}\). We claim that
\begin{equation}
\label{eq:little-quasi-finite-synthesis}
\|h_E\|_{\mathcal F_\alpha^{p,\infty}}
\lesssim
\sup_{k\in E}\frac{c_k}{1+k}.
\end{equation}
By the same single-kernel shell estimate used in
Lemma~\ref{lem:dual-one-point-shell} and  \(p\)-subadditivity, we get
\[
\begin{aligned}
B_p^p(h_E;\ell) & = \sup_{r\in[\ell,\ell+1)}
M_p^p(h_E, r) e^{-\frac{\alpha p}{2}r^2}
\lesssim
\sum_{k\in E}
c_k^p(1+|w_k|)^{1-p}
(1+k)^{-1}e^{-\frac{\alpha p}{8}|\ell-k|^2} \\
&\asymp
\sum_{k\in E}
\left(\frac{c_k}{1+k}\right)^p
e^{-\frac{\alpha p}{8}|\ell-k|^2} \lesssim
\left(\sup_{k\in E}\frac{c_k}{1+k}\right)^p .
\end{aligned}
\]
Taking the supremum over \(\ell\) and using the annular discretization in Proposition \ref{prop:annular} proves
\eqref{eq:little-quasi-finite-synthesis}.

\smallskip
\noindent
On the other hand, by the definition of \(g_F\),
\[
F(\kappa_{\alpha,w_k})
=
e^{-\frac{\alpha}{2}|w_k|^2}F(K_{\alpha,w_k})
=
e^{-\frac{\alpha}{2}|w_k|^2}\overline{g_F(w_k)}.
\]
Hence, by the choice of $w_k$ and $\xi_k$,
\[
F(h_E)
=
\sum_{k\in E}
c_k  (1+|w_k|)^{\frac1p-1}
e^{-\frac{\alpha}{2}|w_k|^2}|g_F(w_k)| \ge
(1-\varepsilon)\sum_{k\in E}c_kH_k.
\]
Therefore, by \eqref{eq:little-quasi-finite-synthesis},
\[
(1-\varepsilon)\sum_{k\in E}c_kH_k
\le
|F(h_E)|
\lesssim
\|F\|_{(f_\alpha^{p,\infty})^*}
\sup_{k\in E}\frac{c_k}{1+k}.
\]
Letting \(\varepsilon\downarrow0\), we obtain
\begin{equation}
\label{eq:little-quasi-finite-dual}
\sum_{k\in E}c_kH_k
\lesssim
\|F\|_{(f_\alpha^{p,\infty})^*}
\left(\sup_{k\in E}\frac{c_k}{1+k}\right).
\end{equation}
Now choose \(c_k=1+k\) for \(k\in E\). Then
\[
\sum_{k\in E}(1+k)H_k
\lesssim
\|F\|_{(f_\alpha^{p,\infty})^*}.
\]
Taking the supremum over all finite \(E\subset\mathbb N_0\) proves
\eqref{eq:little-quasi-shell-l1}. By the annular discretization in Lemma \ref{lem:dual-weighted-annular} applied to
the weighted space \(\mathcal F_{\alpha;\sigma(p,\infty)}^{\infty,1}\),
we get
\[
\|g_F\|_{\mathcal F_{\alpha;\sigma(p,\infty)}^{\infty,1}}
\lesssim
\|F\|_{(f_\alpha^{p,\infty})^*}.
\]

\smallskip
\noindent
\emph{Case 2: \(1<p\le\infty\).}
In this case
\(
\sigma(p,\infty)=0\). As in the proof of Proposition \ref{prop:dual-rep-inner-banach},
for \(k\ge0\), set
\[
H_k
:=
B_{p^*}(g_F;k)
=
\sup_{r\in[k,k+1)}
M_{p^*}(g_F,r)e^{-\frac{\alpha}{2}r^2}.
\]
We shall prove
\begin{equation}
\label{eq:little-banach-shell-l1}
\sum_{k=0}^{\infty}(1+k)H_k
\lesssim
\|F\|_{(f_\alpha^{p,\infty})^*}.
\end{equation}
Fix a finite set \(E\subset\mathbb N_0\) and an arbitrary number
\(\varepsilon\in(0,1)\). For each \(k\in E\), choose \(r_k\in[k,k+1)\) such
that
\[
M_{p^*}(g_F,r_k)e^{-\frac{\alpha}{2}r_k^2}
\ge
(1-\varepsilon)H_k.
\]
If \(M_{p^*}(g_F,r_k)=0\), put \(a_k=0\). Otherwise, choose
\(a_k\in L^p(\T)\), as in the proof of
Proposition~\ref{prop:dual-rep-inner-banach}, such that
\[
\|a_k\|_{L^p(\T)}\le1
\qquad \text{ and } \qquad
\frac1{2\pi}\int_0^{2\pi}
a_k(\theta)\overline{g_F(r_ke^{i\theta})}\,d\theta
=
M_{p^*}(g_F,r_k).
\]
We note that \(T_r a\in f_\alpha^{p,\infty}\) for every fixed
\(r\ge0\) and every \(a\in L^p(\T)\). The proof of
Lemma~\ref{lem:dual-circle-tests} gives
\[
M_p(T_r a,\rho)
\le
\|a\|_{L^p(\T)}M_1(G_r,\rho),
\quad \rho>0, \quad \text{ where }
 \quad 
G_r(z):=\exp\left(\alpha rz-\frac{\alpha}{2}r^2\right).
\]
By the packet profile estimate \eqref{eq:packet-profile}, with \(p=1\),
\[
M_1(G_r,\rho)e^{-\frac{\alpha}{2}\rho^2}
\lesssim
(1+\alpha r\rho)^{-\frac{1}{2}}
e^{-\frac{\alpha}{2}(\rho-r)^2}.
\]
For fixed \(r\), the right-hand side tends to \(0\) as \(\rho\to\infty\).
Hence
\(
M_p(T_r a,\rho)e^{-\frac{\alpha}{2}\rho^2}\to0\) as \(
\rho\to\infty\),
and therefore \(T_r a\in f_\alpha^{p,\infty}\).

\smallskip
\noindent
Consequently, the proof of Lemma~\ref{lem:dual-circle-tests} applies
verbatim to functionals on \(f_\alpha^{p,\infty}\), and gives
\[
F(T_r a)
=
e^{-\frac{\alpha}{2}r^2}
\frac1{2\pi}\int_0^{2\pi}
a(\theta)\overline{g_F(re^{i\theta})}\,d\theta .
\]
Thus
\begin{equation}
\label{eq:little-circle-extract-Hk}
F(T_{r_k}a_k) = M_{p^*}(g_F,r_k) e^{-\frac{\alpha}{2}r_k^2} 
\ge
(1-\varepsilon)H_k.
\end{equation}
Let \(c=(c_k)_{k\in E}\) be a finite nonnegative sequence and define
\[
h_E(z):=\sum_{k\in E}c_kT_{r_k}a_k(z).
\]
Then \(h_E\in f_\alpha^{p,\infty}\). We claim that
\begin{equation}
\label{eq:little-circle-finite-synthesis}
\|h_E\|_{\mathcal F_\alpha^{p,\infty}}
\lesssim
\sup_{k\in E}\frac{c_k}{1+k}.
\end{equation}
By the estimate used in Lemma~\ref{lem:dual-circle-shell-synthesis}, 
\[
M_p(T_{r_k}a_k,\rho)
\le
M_1(G_{r_k},\rho),
\qquad \rho>0,
\]
and hence
\[
\sup_{\rho\in[\ell,\ell+1)}
M_p(T_{r_k}a_k,\rho)e^{-\frac{\alpha}{2}\rho^2} \leq \sup_{\rho\in[\ell,\ell+1)}
M_1(G_{r_k},\rho)e^{-\frac{\alpha}{2}\rho^2} 
\lesssim
(1+k)^{-1}e^{-\frac{\alpha}{8}|\ell-k|^2}.
\]
Minkowski's inequality on the circle gives
\[
B_p(h_E;\ell) = \sup_{\rho\in[\ell,\ell+1)}
M_p(h_E,\rho)e^{-\frac{\alpha}{2}\rho^2}
\lesssim
\sum_{k\in E}
c_k(1+k)^{-1}e^{-\frac{\alpha}{8}|\ell-k|^2}  \lesssim
\sup_{k\in E}\frac{c_k}{1+k}.
\]
Taking the supremum over \(\ell\) and using the annular discretization in Proposition  \ref{prop:annular} proves
\eqref{eq:little-circle-finite-synthesis}.

\smallskip
\noindent
On the other hand, using \eqref{eq:little-circle-extract-Hk}, we obtain
\[
(1-\varepsilon)\sum_{k\in E}c_kH_k
\le
F(h_E).
\]
Hence, by \eqref{eq:little-circle-finite-synthesis},
\[
(1-\varepsilon)\sum_{k\in E}c_kH_k
\le
|F(h_E)|
\lesssim
\|F\|_{(f_\alpha^{p,\infty})^*}
\left(\sup_{k\in E}\frac{c_k}{1+k}\right).
\]
Letting \(\varepsilon\downarrow0\), we get
\begin{equation}
\label{eq:little-circle-finite-dual}
\sum_{k\in E}c_kH_k
\lesssim
\|F\|_{(f_\alpha^{p,\infty})^*}
\left(\sup_{k\in E}\frac{c_k}{1+k}\right).
\end{equation}
Taking \(c_k=1+k\) for \(k\in E\) and then taking the supremum over all
finite \(E\subset\mathbb N_0\), we obtain \eqref{eq:little-banach-shell-l1}.
By Proposition~\ref{prop:annular}, applied with exponents \((p^*,1)\), this
gives
\[
\|g_F\|_{\mathcal F_\alpha^{p^*,1}}
\lesssim
\|F\|_{(f_\alpha^{p,\infty})^*}.
\]
Combining the two cases proves \eqref{eq:little-rep-estimate}.

\smallskip
\noindent
\emph{Step 3: Polynomial completion and uniqueness.}
The polynomial-completion argument from
Lemma~\ref{lem:dual-polynomial-completion} applies verbatim. By Step~1 the functional
\(F_{g_F}:f\mapsto \langle f,g_F\rangle_\alpha\) is continuous on
\(f_\alpha^{p,\infty}\), and by \eqref{eq:little-gF-expansion} and Gaussian
orthogonality we have
\[
F_{g_F}(z^n)=F(z^n),\qquad n\ge0.
\]
Hence \(F_{g_F}=F\) on polynomials. Since polynomials are dense in
\(f_\alpha^{p,\infty}\), the identity extends to all
\(f\in f_\alpha^{p,\infty}\).

\smallskip
\noindent
For uniqueness, suppose that
\(
g_1,g_2\in
\mathcal F_{\alpha;\sigma(p,\infty)}^{p^*,1}
\)
represent the same functional on \(f_\alpha^{p,\infty}\). Then
\(
\langle P,g_1-g_2\rangle_\alpha=0
\)
for every polynomial \(P\). Write
\[
g_1(z)-g_2(z)=\sum_{m=0}^{\infty}c_m z^m.
\]
Testing against \(P(z)=z^n\) and using Gaussian orthogonality gives
\(
\overline{c_n}\frac{n!}{\alpha^n}=0\) for all \(n\ge0\).
Thus all Taylor coefficients vanish, and \(g_1=g_2\).

\smallskip
\noindent
\emph{Step 4: Norm equivalence.}
Estimate \eqref{eq:little-rep-estimate} gives one side of the norm
equivalence. The opposite estimate follows from the boundedness of the
pairing proved in Step~1:
\[
\|F\|_{(f_\alpha^{p,\infty})^*}
=
\|F_{g_F}\|_{(f_\alpha^{p,\infty})^*}
\lesssim
\|g_F\|_{\mathcal F_{\alpha;\sigma(p,\infty)}^{p^*,1}}.
\]
Hence
\[
\|F\|_{(f_\alpha^{p,\infty})^*}
\asymp
\|g_F\|_{\mathcal F_{\alpha;\sigma(p,\infty)}^{p^*,1}}.
\]
This completes the proof.
\end{proof}

\smallskip
\noindent
We transport the equal-parameter little-space duality to arbitrary
Gaussian pairing parameters by the same dilation argument as in
Subsection~\ref{subsec:dual-transport}, with
Proposition~\ref{prop:dual-little-equal} replacing
Theorem~\ref{thm:dual-equal}.

\begin{proof}[Proof of Theorem~\ref{thm:little-duality-intro}]
Throughout the proof, the pairing between \(f_\alpha^{p,\infty}\) and
\(\mathcal F_{\beta;\sigma(p,\infty)}^{p^*,1}\) is the transported pairing
\[
\langle f,g\rangle_{\alpha\to\gamma}^{\rm tr}
:=
\big\langle f,D_{\alpha/\gamma}g\big\rangle_\alpha .
\]
Since \(\gamma^2=\alpha\beta\), Lemma~\ref{lem:dual-dilation-weighted},
applied with \(\tau= \frac{\alpha}{\gamma}\) and with exponents \(p^*,1\), shows that
\(D_{\alpha/\gamma}\) is an isomorphism from
\(
\mathcal F_{\beta;\sigma(p,\infty)}^{p^*,1}
\)
onto
\(
\mathcal F_{\alpha;\sigma(p,\infty)}^{p^*,1}\).
Moreover,
\begin{equation}
\label{eq:little-transport-dilation-norm}
\|D_{\alpha/\gamma}g\|_{\mathcal F_{\alpha;\sigma(p,\infty)}^{p^*,1}}
\asymp
\|g\|_{\mathcal F_{\beta;\sigma(p,\infty)}^{p^*,1}}, \qquad g\in\mathcal F_{\beta;\sigma(p,\infty)}^{p^*,1}.
\end{equation}
Let
\(
g\in\mathcal F_{\beta;\sigma(p,\infty)}^{p^*,1}\)
and set
\(
h:=D_{\alpha/\gamma}g\).
Then
\(
h\in\mathcal F_{\alpha;\sigma(p,\infty)}^{p^*,1}\).
By Proposition~\ref{prop:dual-little-equal}, the map
\[
F_g(f)
:=
\langle f,g\rangle_{\alpha\to\gamma}^{\rm tr}
=
\langle f,h\rangle_\alpha,
\qquad f\in f_\alpha^{p,\infty},
\]
belongs to \((f_\alpha^{p,\infty})^*\), and
\(
\|F_g\|_{(f_\alpha^{p,\infty})^*}
\asymp
\|h\|_{\mathcal F_{\alpha;\sigma(p,\infty)}^{p^*,1}}\).
Together with \eqref{eq:little-transport-dilation-norm}, this gives
\[
\|F_g\|_{(f_\alpha^{p,\infty})^*}
\asymp
\|g\|_{\mathcal F_{\beta;\sigma(p,\infty)}^{p^*,1}}.
\]

\smallskip
\noindent
Conversely, let
\(
F\in(f_\alpha^{p,\infty})^*\).
By Proposition~\ref{prop:dual-little-equal}, there exists a unique
\(
h\in\mathcal F_{\alpha;\sigma(p,\infty)}^{p^*,1}
\)
such that
\[
F(f)=\langle f,h\rangle_\alpha,
\qquad f\in f_\alpha^{p,\infty},
\qquad \text{ and } \qquad
\|h\|_{\mathcal F_{\alpha;\sigma(p,\infty)}^{p^*,1}}
\asymp
\|F\|_{(f_\alpha^{p,\infty})^*}.
\]
Define
\(
g:=D_{\gamma/\alpha}h\).
Since \(D_{\gamma/\alpha}\) is the inverse of \(D_{\alpha/\gamma}\),
Lemma~\ref{lem:dual-dilation-weighted} gives
\[
g\in
\mathcal F_{\beta;\sigma(p,\infty)}^{p^*,1}
\qquad \text{ and } \qquad
\|g\|_{\mathcal F_{\beta;\sigma(p,\infty)}^{p^*,1}}
\asymp
\|h\|_{\mathcal F_{\alpha;\sigma(p,\infty)}^{p^*,1}}
\asymp
\|F\|_{(f_\alpha^{p,\infty})^*}.
\]
Moreover,
\(
D_{\alpha/\gamma}g=h\).
Therefore, for every \(f\in f_\alpha^{p,\infty}\),
\[
F(f)
=
\langle f,h\rangle_\alpha
=
\big\langle f,D_{\alpha/\gamma}g\big\rangle_\alpha
=
\langle f,g\rangle_{\alpha\to\gamma}^{\rm tr}.
\]
The uniqueness follows in the same way. If
\(
g_1,g_2\in
\mathcal F_{\beta;\sigma(p,\infty)}^{p^*,1}
\)
represent the same functional under the transported pairing, then
\[
\big\langle f,D_{\alpha/\gamma}(g_1-g_2)\big\rangle_\alpha=0,
\qquad f\in f_\alpha^{p,\infty}.
\]
By the uniqueness part of Proposition~\ref{prop:dual-little-equal},
\(
D_{\alpha/\gamma}(g_1-g_2)=0\).
Since \(D_{\alpha/\gamma}\) is injective on entire functions, \(g_1=g_2\).

\smallskip
\noindent
Thus the conjugate-linear map
\[
g\longmapsto F_g,
\qquad
F_g(f)=\langle f,g\rangle_{\alpha\to\gamma}^{\rm tr},
\]
identifies
\(
\mathcal F_{\beta;\sigma(p,\infty)}^{p^*,1}
\)
with
\(
(f_\alpha^{p,\infty})^*
\)
with equivalent norms. This proves Theorem~\ref{thm:little-duality-intro}.
\end{proof}


\section*{Acknowledgements}

We thank Shenzhao Hou for informing us of related unpublished work on embeddings between
mixed-norm Fock spaces.  X. F. is
supported by a grant from the National Science and Technology Council, Taiwan
(NSTC 114-2115-M-A49-003-MY3).

\bigskip
\textit{AI Statement.}
The authors used artificial-intelligence tools for language editing,
\LaTeX\ formatting, and limited assistance with local mathematical
reasoning.  The proof strategy and mathematical development are the
authors' own; all mathematical content was independently written and checked by the authors, who take full responsibility for it.


\bibliographystyle{amsplain}

\begin{thebibliography}{99}

\bibitem{AD16}
E. Abakumov and E. Doubtsov,
\emph{Volterra type operators on growth Fock spaces},
Arch. Math. (Basel) \textbf{107} (2016), no.~6, 543--553.

\bibitem{AS}
M. Abramowitz and I. A. Stegun (eds.),
\emph{Handbook of Mathematical Functions with Formulas, Graphs, and Mathematical Tables},
National Bureau of Standards Applied Mathematics Series, vol.~55,
U.S. Government Printing Office, Washington, DC, 1964.

\bibitem{A16}
I. Ar\'evalo,
\emph{Corrigendum to ``A characterization of the inclusions between mixed norm spaces''
[J. Math. Anal. Appl. 429 (2015), no.~2, 942--955]},
J. Math. Anal. Appl. \textbf{433} (2016), 1904--1905.

\bibitem{BP61}
A. Benedek and R. Panzone,
\emph{The spaces \(L^p\), with mixed norm},
Duke Math. J. \textbf{28} (1961), 301--324.

\bibitem{B95}
O. Blasco,
\emph{Multipliers on spaces of analytic functions},
Canad. J. Math. \textbf{47} (1995), 44--64.

\bibitem{BG24}
O. Blasco and A. Galbis,
\emph{Boundedness and compactness of Hausdorff operators on Fock spaces},
Trans. Amer. Math. Soc. \textbf{377} (2024), 5165--5196.

\bibitem{Buckley00}
S. M. Buckley,
\emph{Mixed norms and analytic function spaces},
Math. Proc. R. Ir. Acad. \textbf{100A} (2000), no.~1, 1--9.

\bibitem{Christensen}
O. Christensen,
\emph{An Introduction to Frames and Riesz Bases},
2nd ed., Birkh\"auser, Cham, 2016.

\bibitem{ConwayFA}
J. B. Conway,
\emph{A Course in Functional Analysis},
2nd ed., Graduate Texts in Mathematics, vol.~96,
Springer, New York, 1990.

\bibitem{CP16}
O. Constantin and J. A. Pel\'aez,
\emph{Integral operators, embedding theorems and a Littlewood--Paley formula on weighted Fock spaces},
J. Geom. Anal. \textbf{26} (2016), no.~2, 1109--1154.

\bibitem{DurenHp}
P. L. Duren,
\emph{Theory of $H^p$ Spaces},
Pure and Applied Mathematics, Vol. 38,
Academic Press, New York--London, 1970.

\bibitem{DS04}
P. Duren and A. Schuster,
\emph{Bergman Spaces},
Mathematical Surveys and Monographs, vol.~100,
American Mathematical Society, Providence, RI, 2004.

\bibitem{D19}
R. Durrett,
\emph{Probability: Theory and Examples},
Cambridge Series in Statistical and Probabilistic Mathematics, vol.~49,
Cambridge University Press, Cambridge, 2019.

\bibitem{FT23}
X. Fang and P. T. Tien,
\emph{Two problems on random analytic functions in Fock spaces},
Canad. J. Math. \textbf{75} (2023), no.~4, 1176--1198.


\bibitem{Flett72}
T. M. Flett,
\emph{Lipschitz spaces of functions on the circle and the disc},
J. Math. Anal. Appl. \textbf{39} (1972), 125--158.

\bibitem{Folland89}
G. B. Folland,
\emph{Harmonic Analysis in Phase Space},
Annals of Mathematics Studies, vol.~122,
Princeton University Press, Princeton, NJ, 1989.

\bibitem{FG05}
M.~Fornasier and K.~Gr\"ochenig,
\emph{Intrinsic localization of frames},
Constr. Approx. \textbf{22} (2005), no.~3, 395--415.

\bibitem{Gad88}
S. Gadbois,
\emph{Mixed-norm generalizations of Bergman spaces and duality},
Proc. Amer. Math. Soc. \textbf{104} (1988), no.~4, 1171--1180.

\bibitem{Gro04}
K.~Gr\"ochenig,
\emph{Localization of frames, Banach frames, and the invertibility of the frame
operator},
J. Fourier Anal. Appl. \textbf{10} (2004), no.~2, 105--132.

\bibitem{GL06}
K. Gr\"ochenig and M. Leinert,
\emph{Symmetry and inverse-closedness of matrix algebras and functional calculus for infinite matrices},
Trans. Amer. Math. Soc. \textbf{358} (2006), no.~6, 2695--2711.

\bibitem{GW92}
K.~Gr\"ochenig and D.~Walnut,
\emph{A Riesz basis for Bargmann--Fock space related to sampling and
interpolation},
Ark. Mat. \textbf{30} (1992), no.~2, 283--295.

\bibitem{Gue92}
D. Gu,
\emph{Bergman projections and duality in weighted mixed-norm spaces of analytic functions},
Michigan Math. J. \textbf{39} (1992), no.~1, 71--84.

\bibitem{HKZ00}
H. Hedenmalm, B. Korenblum, and K. Zhu,
\emph{Theory of Bergman Spaces},
Graduate Texts in Mathematics, vol.~199,
Springer, New York, 2000.

\bibitem{HL11}
Z. Hu and X. Lv,
\emph{Toeplitz operators from one Fock space to another},
Integral Equations Operator Theory \textbf{70} (2011), 541--559.

\bibitem{KoosisHp}
P. Koosis,
\emph{Introduction to $H_p$ Spaces},
2nd ed., Cambridge Tracts in Mathematics, Vol. 115,
Cambridge University Press, Cambridge, 1999.

\bibitem{IZ10}
J. Isralowitz and K. Zhu,
\emph{Toeplitz operators on the Fock space},
Integral Equations Operator Theory \textbf{66} (2010), 593--611.

\bibitem{Jev87}
M. Jevti\'c,
\emph{Bounded projections and duality in mixed-norm spaces of analytic functions},
Complex Variables Theory Appl. \textbf{8} (1987), no.~3--4, 293--301.

\bibitem{K22}
B. Karapetrovi\'c,
\emph{Randomization in generalized mixed norm spaces},
Complex Anal. Oper. Theory \textbf{16} (2022), Art.~25.

\bibitem{Kat04}
Y. Katznelson,
\emph{An Introduction to Harmonic Analysis},
3rd ed., Cambridge University Press, Cambridge, 2004.

\bibitem{L92}
M. Laczkovich,
\emph{Uniformly spread discrete sets in \(\mathbb R^d\)},
J. London Math. Soc. (2) \textbf{46} (1992), no.~1, 39--57.

\bibitem{Liu24}
Y. Liu,
\emph{Fock projections on mixed norm spaces},
Mediterr. J. Math. \textbf{21} (2024), Art.~171.

\bibitem{Lyub92}
Y.~Lyubarskii,
\emph{Frames in the Bargmann space of entire functions},
in \emph{Entire and Subharmonic Functions},
Adv. Soviet Math., vol.~11, Amer. Math. Soc., Providence, RI, 1992,
167--180.

\bibitem{Meng13}
T. Mengestie,
\emph{Volterra type and weighted composition operators on weighted Fock spaces},
Integral Equations Operator Theory \textbf{76} (2013), no.~1, 81--94.

\bibitem{MP26}
\'A. M. Moreno and J. \'A. Pel\'aez,
\emph{Duality of mixed norm spaces induced by radial one-sided doubling weights},
J. Geom. Anal. \textbf{36} (2026), no.~1, Art.~31.


\bibitem{Pav86}
M. Pavlovi\'c,
\emph{Mixed norm spaces of analytic and harmonic functions. I},
Publ. Inst. Math. (Beograd) (N.S.) \textbf{40}(54) (1986), 117--141.

\bibitem{Pav87}
M. Pavlovi\'c,
\emph{Mixed norm spaces of analytic and harmonic functions. II},
Publ. Inst. Math. (Beograd) (N.S.) \textbf{41}(55) (1987), 97--110.

\bibitem{PP08}
M. Pavlovi\'c and J. A. Pel\'aez,
\emph{An equivalence for weighted integrals of an analytic function and its derivative},
Math. Nachr. \textbf{281} (2008), no.~11, 1612--1623.

\bibitem{PRS2019}
J. A. Pel\'aez, J. R\"atty\"a, and K. Sierra,
\emph{Atomic decomposition and Carleson measures for weighted mixed norm spaces},
J. Geom. Anal. \textbf{29} (2019), 2055--2084.

\bibitem{Seip92}
K.~Seip,
\emph{Density theorems for sampling and interpolation in the Bargmann--Fock
space. I},
J. Reine Angew. Math. \textbf{429} (1992), 91--106.

\bibitem{SW92}
K.~Seip and R.~Wallst\'en,
\emph{Density theorems for sampling and interpolation in the Bargmann--Fock
space. II},
J. Reine Angew. Math. \textbf{429} (1992), 107--114.



\bibitem{Rud87}
W. Rudin,
\emph{Real and Complex Analysis},
3rd ed., McGraw--Hill, New York, 1987.

\bibitem{Zhu}
K. Zhu,
\emph{Analysis on Fock Spaces},
Graduate Texts in Mathematics, vol.~263,
Springer, New York, 2012.

\end{thebibliography}

\end{document}